\documentclass[12pt, leqno]{article}

\usepackage{amsmath, amsthm, amssymb}
\usepackage{color}
\usepackage[cmtip,all]{xy}

\usepackage[top=26truemm,bottom=26truemm,left=30truemm,right=30truemm]{geometry}

\makeatletter
    
    \@addtoreset{equation}{section}
  \makeatother

\theoremstyle{plain}
\newtheorem{thm}{Theorem}[section]
\newtheorem{prop}[thm]{Proposition}
\newtheorem{lem}[thm]{Lemma}
\newtheorem{cor}[thm]{Corollary}
\newtheorem{conj}[thm]{Conjecture}
\theoremstyle{definition}
\newtheorem{defn}[thm]{Definition}
\newtheorem{exa}[thm]{Example}
\newtheorem{rem}[thm]{Remark}
\newtheorem*{conv}{Conventions}

\newcommand{\FRAC}[2]{\leavevmode\kern.1em\raise.5ex\hbox{\the\scriptfont0 #1}\kern-.1em/\kern-.15em\lower.25ex\hbox{\the\scriptfont0 #2}}

\newcommand{\BQ}{{\mathbf{Q}}}

\newcommand{\BZ}{{\mathbf{Z}}}
\newcommand{\dvr}{\mathcal{O}}
\newcommand{\et}{\mathrm{\acute{e}t}}
\DeclareMathOperator{\Hom}{Hom}

\newcommand{\sw}{\mathrm{sw}}
\newcommand{\ab}{\mathrm{ab}}
\newcommand{\mf}{\mathcal{F}}

\newcommand{\id}{\mathrm{id}}
\newcommand{\pr}{\mathrm{pr}}
\newcommand{\cont}{\mathrm{cont}}

\newcommand{\rsw}{\mathrm{rsw}}
\DeclareMathOperator{\cform}{char}

\newcommand{\fil}{\mathrm{fil}'}
\newcommand{\fillog}{\mathrm{fil}}

\DeclareMathOperator{\ord}{ord}

\newcommand{\gr}{\mathrm{gr}'}
\newcommand{\grlog}{\mathrm{gr}}

\newcommand{\dlog}{d\log}
\DeclareMathOperator{\Supp}{Supp}

\DeclareMathOperator{\dimtot}{dimtot}

\newcommand{\mII}{\mathrm{II}}
\newcommand{\mI}{\mathrm{I}}
\newcommand{\mT}{\mathrm{T}}
\newcommand{\mW}{\mathrm{W}}

\newcommand{\dt}{\mathrm{dt}}
\DeclareMathOperator{\res}{res}

\newcommand{\red}{\mathrm{red}}
\newcommand{\mg}{\mathcal{G}}

\newcommand{\mh}{\mathcal{H}}

\DeclareMathOperator{\Spec}{Spec}
\DeclareMathOperator{\Frac}{Frac}

\DeclareMathOperator{\Ker}{Ker}
\DeclareMathOperator{\Image}{Im}

\makeindex

\begin{document}
\title
{Singular support and Characteristic cycle \\ of a rank one sheaf in codimension two}
\author{YURI YATAGAWA}
\date{}
\maketitle

\begin{abstract}
We compute the singular support and the characteristic cycle of a rank 1 sheaf on a smooth variety in codimension 2 using ramification theory, 
when the ramification of the sheaf is clean.
We develop a general theory, called the partially logarithmic ramification theory, 
and define an algebraic cycle on a logarithmic cotangent bundle with partial logarithmic poles along the boundary.
We prove that the inverse image of the support of the cycle 
and the pull-back of the cycle
to the cotangent bundle are equal to the singular support and
the characteristic cycle, respectively, outside a closed subset of the variety of codimension $\ge 3$ under a mild assumption.
\end{abstract}

\begin{center}
2010 Mathematics Subject Classification: primary 11S15, secondary 14F20
\end{center}
\section*{Introduction}

Motivated by the observation in \cite{de},
Beilinson (\cite{be}) and Saito (\cite{sacc}) have defined the singular support $SS(\mg)$ and the characteristic cycle $CC(\mg)$ of an \'{e}tale constructible sheaf $\mg$ on a smooth algebraic variety $X$ in any characteristic.
Similarly as in the theory in \cite{ks} for complex manifolds,
the singular support $SS(\mg)$ is a closed conical subset of the cotangent bundle $T^{*}X$ of $X$ of dimension $\dim X$.
The characteristic cycle $CC(\mg)$ is an algebraic cycle of dimension $\dim X$ on $T^{*}X$ whose support is contained in $SS(\mg)$.
One of the differences from the theory for complex manifolds is
that the vanishing cycles are mainly used to construct $SS(\mg)$ and $CC(\mg)$,
which makes it difficult to compute $SS(\mg)$ and $CC(\mg)$.

In this article, we consider the zero extension $j_{!}\mf$
of a smooth sheaf $\mf$ of $\Lambda$-modules of rank 1 on an open subvariety $U$ of 
a smooth scheme $X$ over a perfect field $k$ of positive characteristic 
such that the boundary
$D=X-U$ is a divisor on $X$ with simple normal crossings.
Here $\Lambda$ is a finite field of characteristic invertible in $k$ and
$j\colon U\rightarrow X$ denotes the canonical open immersion.
We assume that the ramification of $\mf$ is clean along $D$ in the sense of \cite{kalog},
and consider computations of the singular support $SS(j_{!}\mf)$
and the characteristic cycle $CC(j_{!}\mf)$ of $j_{!}\mf$ in codimension $2$,
namely outside a closed subscheme of $X$ of codimension $\ge 3$,
in terms of ramification theory.

As a previous result on a computation of
the characteristic cycle $CC(j_{!}\mf)$,
Saito (\cite[Proposition 4.13, Theorem 7.14]{sacc}) has given 
a computation of $CC(j_{!}\mf)$ in codimension $1$.
In the case where $X$ is a surface, a computation of $CC(j_{!}\mf)$ has been
given in \cite[Theorem 6.1]{yacc} as a complement of Saito's computation.
Once we obtain a computation of $CC(j_{!}\mf)$, the singular support $SS(j_{!}\mf)$ of $j_{!}\mf$ is computed as the support of $CC(j_{!}\mf)$ (Corollary \ref{corssccsm} (2)).
The condition that the ramification of $\mf$ is clean along $D$,
which appears in our setting, is satisfied by taking 
a blow-up of $X$ in the case where $X$ is a surface (\cite[Theorem 4.1]{kalog}) and it is expected to be satisfied even in general. 

For a computation of $CC(j_{!}\mf)$, we introduce a general theory
called the partially logarithmic ramification theory.
The theory generalizes the logarithmic and non-logarithmic ramification theories developed by Brylinski (\cite{br}), Kato (\cite{kasw}, \cite{kalog}),
Matsuda (\cite{ma}), and Abbes-Saito (\cite{as1}, \cite{aschar}, \cite{asan}, \cite{ascl}), and Saito (\cite{sacc}).
The adjective ``partially logarithmic" means that log poles are imposed on
any union $D'$ of irreducible components of $D$ and
the theory is a refinement of the theory
on the mixed refined Swan conductor introduced by Matsuda (\cite{ma}).
Similarly as in the logarithmic and non-logarithmic ramification theories,
we introduce a notion the $\log$-$D'$-cleanliness for the ramification of $\mf$ along $D$ for a union $D'$ of irreducible components of $D$.
The $\log$-$D'$-cleanliness 
is the cleanliness introduced in \cite{kalog} when $D'=D$ and the non-degeneration introduced in \cite{sacot} when $D'=\emptyset$.
Following the construction of the logarithmic characteristic cycle given by Kato 
(\cite{kalog}) in the logarithmic ramification theory, we construct the $\log$-$D'$-characteristic cycle $CC_{D'}^{\log}(j_{!}\mf)$ 
as an algebraic cycle
on the logarithmic cotangent bundle $T^{*}X(\log D')$ with logarithmic poles along $D'$,
when the ramification of $\mf$ is $\log$-$D'$-clean along $D$,
in the partially logarithmic ramification theory.
We compute $CC(j_{!}\mf)$ in terms of ramification theory 
by proving that $CC(j_{i}\mf)$ is equal to the
pull-back of $CC_{D'}^{\log}(j_{!}\mf)$ to the cotangent bundle $T^{*}X$.

We explain the strategy of a computation of $CC(j_{!}\mf)$ 
in this article more precisely.
Under the assumption that the ramification of $\mf$ is clean along $D$,
we can take a union $D'$ of irreducible components of $D$
such that the ramification of $\mf$ is $\log$-$D'$-clean along $D$ (Proposition \ref{propdpdtdi})
and that $\mf$ is wildly ramified along any irreducible component of $D$ not contained in $D'$.
If the inverse image $\tau_{D'}^{-1}(S_{D'}^{\log}(j_{!}\mf))$ of the support $S_{D'}^{\log}(j_{!}\mf)$ of $CC_{D'}^{\log}(j_{!}\mf)$ 
by the canonical morphism $\tau_{D'}\colon T^{*}X\rightarrow T^{*}X(\log D')$
is of dimension $d=\dim X$,
then we obtain computations of $SS(j_{!}\mf)$ and $CC(j_{!}\mf)$
in codimension $2$:

\begin{thm}[{Theorem \ref{mainthmosct}, Corollary \ref{corosct}}]
\label{thmintro}
Suppose that $X$ is purely of dimension $d$. 
Let $D'$ be a union of irreducible components of $D$ such that
$\mf$ is wildly ramified along any irreducible component of $D$ not contained in $D'$.
Assume that the ramification of $\mf$ is $\log$-$D'$-clean along $D$ 
and that
the inverse image $\tau_{D'}^{-1}(S_{D'}^{\log}(j_{!}\mf))$ of the support $S_{D'}^{\log}(j_{!}\mf)$ of $CC_{D'}^{\log}(j_{!}\mf)$ 
by the canonical morphism $\tau_{D'}\colon T^{*}X\rightarrow T^{*}X(\log D')$ is of dimension $d$.
Let 
\begin{equation}
\tau_{D'}^{!}\colon Z_{d}(S_{D'}^{\log}(j_{!}\mf))
=CH_{d}(S_{D'}^{\log}(j_{!}\mf))\rightarrow CH_{d}(\tau_{D'}^{-1}(S_{D'}^{\log}(j_{!}\mf)))=Z_{d}(\tau_{D'}^{-1}(S_{D'}^{\log}(j_{!}\mf))) \notag
\end{equation}
denote the Gysin homomorphism (\cite[6.6]{ful}) defined by the l.c.i.\ morphism $\tau_{D'}$.
If the bases of irreducible components of $\tau_{D'}^{-1}(S_{D'}^{\log}(j_{!}\mf))\subset T^{*}X$
are of codimension $\le 2$ in $X$,
then we have
\begin{equation}
\label{eqintross}
SS(j_{!}\mf)=\tau_{D'}^{-1}(S_{D'}(j_{!}\mf))
\end{equation}
and we have
\begin{equation}
\label{eqintrocc}
CC(j_{!}\mf)=\tau_{D'}^{!}CC_{D'}^{\log}(j_{!}\mf)
\end{equation}
in $Z_{d}(\tau_{D'}^{-1}(S_{D'}^{\log}(j_{!}\mf)))$.
\end{thm}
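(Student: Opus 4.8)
The plan is to deduce the singular support equality \eqref{eqintross} as a formal consequence of the characteristic cycle equality \eqref{eqintrocc}, concentrating the real work on the latter. Granting \eqref{eqintrocc}, I would take supports of both sides: since $SS(j_!\mf)=\Supp CC(j_!\mf)$ by Corollary \ref{corssccsm} (2), it suffices to know that the support of $\tau_{D'}^!CC_{D'}^{\log}(j_!\mf)$ is all of $\tau_{D'}^{-1}(S_{D'}^{\log}(j_!\mf))$. This is where the hypothesis that $\tau_{D'}^{-1}(S_{D'}^{\log}(j_!\mf))$ is of dimension $d$ enters: it forces each irreducible component of the inverse image to have the same dimension as its image, so that the Gysin homomorphism $\tau_{D'}^!$ assigns to it a nonzero multiple of the (nonzero) coefficient of the corresponding component of $CC_{D'}^{\log}(j_!\mf)$, with neither excess nor a drop in dimension. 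This yields \eqref{eqintross}.

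For \eqref{eqintrocc}, both $CC(j_!\mf)$ and $\tau_{D'}^!CC_{D'}^{\log}(j_!\mf)$ are $d$-dimensional cycles supported on the same set $\tau_{D'}^{-1}(S_{D'}^{\log}(j_!\mf))$, so the problem reduces to matching their multiplicities along each irreducible component. Since the assertion is only asked in codimension $2$, i.e. modulo a closed subset of codimension $\ge 3$, I would verify the equality of multiplicities at the generic point of each component, organizing the components by the codimension $c\le 2$ of their base in $X$. For $c=0$ the component is the zero section and the multiplicity is the generic rank, and for $c=1$ the equality is Saito's codimension-one computation (\cite[Proposition 4.13, Theorem 7.14]{sacc}) read through the partially logarithmic formalism; the genuinely new content is therefore the components whose base has codimension exactly $2$.

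To treat such a component, with generic base point lying over a closed subvariety $W\subseteq X$ of codimension $2$, I would choose a smooth surface $Z\subseteq X$ meeting $W$ transversally at a general closed point and transversal to $D$, so that the closed immersion $i\colon Z\hookrightarrow X$ is properly transversal to $j_!\mf$; here $i^*j_!\mf=j'_!(\mf|_{Z\cap U})$ automatically, and $Z\cap U$ is a surface minus a simple normal crossings arrangement of curves. Saito's pullback formula (\cite{sacc}) then gives $CC(i^*j_!\mf)=i^!CC(j_!\mf)$, while $\mf|_{Z\cap U}$ is a clean rank-one sheaf on the surface $Z\cap U$, for which $CC$ is computed in \cite[Theorem 6.1]{yacc}. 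The remaining step is to show that the partially logarithmic construction is compatible with this slicing, namely that the refined Swan conductor, the characteristic form, and the $\log$-$D'$-cleanliness all restrict to $Z$, so that $i^!\bigl(\tau_{D'}^!CC_{D'}^{\log}(j_!\mf)\bigr)$ equals the corresponding surface cycle $\tau_{D'}^{\prime!}CC_{D'}^{\log}(j'_!(\mf|_{Z\cap U}))$. Matching on the surface via \cite{yacc} and reading the multiplicity of the codimension-two component off its transversal slice then forces the two multiplicities on $X$ to coincide.

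I expect the principal obstacle to lie in this last compatibility. One must check that a general transversal surface slice preserves $\log$-$D'$-cleanliness and that the characteristic form computed on $X$ restricts to the one computed on $Z$; equivalently, one must control how the support $S_{D'}^{\log}(j_!\mf)$ and its defining data behave under $i$, and verify that the dimension hypothesis on $\tau_{D'}^{-1}(S_{D'}^{\log}(j_!\mf))$ persists after restriction so that both Gysin pullbacks $\tau_{D'}^!$ and $i^!$ remain free of excess intersection. The interplay of these two Gysin maps, ensuring $i^!\tau_{D'}^!=\tau_{D'}^{\prime!}i^!$ on the relevant cycles, is the technical heart of the argument, and it is precisely here that the assumption that the bases have codimension $\le 2$ is used, since it confines the verification to surface slices on which the computation of \cite{yacc} is available.
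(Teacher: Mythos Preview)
Your strategy is essentially the paper's: deduce \eqref{eqintross} from \eqref{eqintrocc} by taking supports (Corollary \ref{corssccsm} (2) plus $(-1)^dCC_{D'}^{\log}>0$), and prove \eqref{eqintrocc} by reducing to surfaces via a properly transversal slice, invoking Saito's pullback formula (Theorem \ref{thmccpb}) for $CC$ and a compatibility result (the paper's Proposition \ref{propinvcc}) giving $i^!\tau_{D'}^!=\tau_{i^*D'}^! i^!$ on the log side. Two refinements the paper makes that you do not mention: it first reduces to $I_{\mT,\mf}=\emptyset$ via Propositions \ref{proptowild} and \ref{proptauinvtw}, and it uses the codimension $\le 2$ hypothesis together with the structure of $B_{I'',\mf}^{D'}$ to show that \emph{locally there is at most one} irreducible component of $\tau_{D'}^{-1}(S_{D'}^{\log}(j_!\mf))$ with base of codimension $2$, so that a single surface slice determines its coefficient.

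The one substantive gap is your appeal to \cite{yacc} for ``matching on the surface''. Theorem 6.1 of \cite{yacc} gives an explicit formula for $CC(j_!\mf)$ on a surface, and Theorem 4.2 (ii) there matches it with the \emph{fully} logarithmic pullback $\tau_D^!CC_D^{\log}(j_!\mf)$. But when $D'\subsetneq D$ on the surface---which is precisely the new case produced by the partially logarithmic setup after slicing---the equality $CC(j_!\mf)=\tau_{D'}^!CC_{D'}^{\log}(j_!\mf)$ is \emph{not} in \cite{yacc}. The paper must still establish it by hand: one computes $\tau_{D'}^!CC_{D'}^{\log}(j_!\mf)$ via Corollary \ref{corcompcc}, computes the invariant $\lambda_x$ of \cite[Remark 5.8]{kalog} for the particular shape of characteristic form that arises (Lemma \ref{lemcleanatx} and the explicit blow-up computation in Example \ref{exalamdax}), and then checks the coefficient of $[T^*_xX]$ against the formula of Theorem \ref{thmcalsf}. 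This explicit local calculation is the missing ingredient in your surface step, and it is where the genuinely new content of the partially logarithmic theory enters.
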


\noindent
We can remove the assumption on the dimension of $\tau_{D'}^{-1}(S_{D'}^{\log}(j_{!}\mf))$ in Theorem \ref{thmintro}
by admitting blowing up $X$ along a closed subscheme of $D$:

\begin{prop}[{Corollary \ref{corsddim}}]
\label{propintroblup}
Suppose that $X$ is purely of dimension $d$ and
that the ramification of $\mf$ is $\log$-$D'$-clean along $D$
for some union $D'$ of irreducible components of $D$.
Then there exist a blow-up 
$f\colon X'\rightarrow X$ of $X$ along a closed subscheme of $D$
and a union $E'$ of irreducible components of $(f^{*}D)_{\red}$
such that the ramification of $f^{*}\mf$ is $\log$-$E'$-clean along $(f^{*}D)_{\red}$,
that $f^{*}\mf$ is  
wildly ramified along any irreducible component of $(f^{*}D)_{\red}$ not contained in $E'$,
and that the inverse image $\tau_{E'}^{-1}(S_{E'}^{\log}(j'_{!}f^{*}\mf))$ is purely of dimension $d$,
where $j'\colon f^{*}U\rightarrow X'$ is the base change of $j$ by $f$.
\end{prop}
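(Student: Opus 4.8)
The plan is to localize the failure of the dimension condition to a closed subset of the boundary of codimension $\ge 2$, to remove it by a blow-up with smooth center inside that locus, and to check that the three required properties survive on the blow-up.

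First I would make precise where $\tau_{D'}^{-1}(S_{D'}^{\log}(j_!\mf))$ can fail to have dimension $d$. Using the explicit description of the support $S_{D'}^{\log}(j_!\mf)$ furnished by the partially logarithmic theory together with the fibrewise form of $\tau_{D'}$, this reduces to a linear-algebra computation over $X$. At a point $b$ lying on the intersection of $r$ components of $D'$, the fibre map $\tau_{D',b}\colon T^{*}_{b}X\to T^{*}_{b}X(\log D')$ kills exactly the $r$ logarithmic covectors $dt_{1},\dots,dt_{r}$, so that $\Ker(\tau_{D',b})$ has dimension $r$ and $\Image(\tau_{D',b})$ is the subspace of non-logarithmic cotangent directions. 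Writing $S_{b}$ for the fibre of the support over $b$ (a cone of dimension $c$ when the base $B$ has codimension $c$), one gets
\[ \dim\tau_{D',b}^{-1}(S_{b}) = r + \dim\bigl(S_{b}\cap\Image(\tau_{D',b})\bigr). \]
Adding the dimension of $B$ gives $(d-c)+r+\dim(S_{b}\cap\Image(\tau_{D',b}))$, which equals $d$ precisely when $S_{b}$ is transverse to the non-logarithmic directions and exceeds $d$ exactly when it is not. Since the codimension-one computations of Kato and Saito give transversality over the smooth locus of $D$, the bad locus $W$ where transversality fails is closed in $X$, contained in $D$, and of codimension $\ge 2$.

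Next I would blow up. After enlarging $W$ to a smooth closed subscheme of $D$ containing it (and checking that the boundary remains simple normal crossing), let $f\colon X'\to X$ be the blow-up along this center and write $(f^{*}D)_{\red}$ as the union of the strict transforms of the components of $D$ and the exceptional divisors. I would then take $E'$ to be the union of the strict transforms of the components of $D'$, together with those exceptional divisors forced into $E'$ by the requirement that every non-wildly-ramified component of $(f^{*}D)_{\red}$ be logarithmic, and with any further exceptional divisors whose logarithmic conormal is needed to absorb the excess directions detected above.

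Then I would verify the three properties. Cleanliness of $f^{*}\mf$ along $(f^{*}D)_{\red}$ relative to $E'$ should follow from the functoriality of the partially logarithmic ramification theory under the proper birational $f$ — concretely, from the compatibility of the refined Swan conductor and the characteristic form with pullback by $f$ established earlier — together with the fact that a blow-up with center in $D$ produces only divisors along which the transformed characteristic form is again clean. Wildness of $f^{*}\mf$ along the components of $(f^{*}D)_{\red}$ not in $E'$ holds because these are either strict transforms of wild components, along which wildness is preserved by the dominant generically finite $f$, or exceptional divisors lying over the wild locus, which are themselves wild; the non-wild components are placed in $E'$ by construction. The dimension statement is the payoff of the choice of center: after blowing up, the offending crossing of $D'$ is separated by the exceptional divisor, the support $S_{E'}^{\log}(j'_{!}f^{*}\mf)$ becomes transverse to the non-logarithmic directions, and the fibre identity above then yields $\dim\tau_{E'}^{-1}(S_{E'}^{\log}(j'_{!}f^{*}\mf))=d$ everywhere.

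The hard part will be arranging cleanliness and transversality simultaneously: a blow-up chosen only to restore transversality may destroy $\log$-$E'$-cleanliness along the new exceptional divisors, and a blow-up chosen to improve the characteristic form may reintroduce non-transverse crossings. The crux is therefore a local analysis of the characteristic form at $W$ — essentially a monomialization of the characteristic form under the blow-up — showing that the smooth center in $D$ can be chosen so that the transform of the characteristic form is clean and at the same time points transversally to the non-logarithmic directions. If no single blow-up suffices, I would run a Noetherian induction on $W$, composing finitely many such blow-ups into one whose center is a closed subscheme of $D$.
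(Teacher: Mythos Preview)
Your proposal has the right overall shape but is missing the key structural ingredients that make the argument work, and the choice of blow-up center you suggest is not the one that allows cleanliness to be preserved.

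The paper does not blow up along a smooth enlargement of an abstract ``bad locus'' $W$. Instead, it first replaces $D'$ by the canonical choice $D_{\mI,\mf}\cup D_{\mT,\mf}$ (Proposition~\ref{propdpdtdi}), and then identifies the obstruction to the dimension condition as the vanishing locus $E_{\mf}^{D'}$ (Definition~\ref{defbcf}), whose components are specific irreducible components of intersections $D_{I''}=\bigcap_{i\in I''}D_i$ on which all residue maps $\xi_i^{D'}(\mf)$ vanish. The equivalence $\dim\tau_{D'}^{-1}(S_{D'}^{\log})=d\Leftrightarrow E_{\mf}^{D'}=\emptyset$ (Corollary~\ref{corinvli}) gives a concrete combinatorial target. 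Each blow-up is then taken along such a component of $D_{I''}$. Preservation of $\log$-$D'$-cleanliness under blow-up is \emph{not} a formal consequence of functoriality; it is Proposition~\ref{propblupcl}, whose proof is a direct computation valid precisely for blow-ups along intersections of components of $D'$. Your suggestion to enlarge $W$ to an arbitrary smooth center in $D$ gives no mechanism for preserving cleanliness.

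Your proposed $E'$ is also off. In the paper the exceptional divisor of each blow-up turns out to be of type~$\mII$ (Lemma~\ref{lemblupbth}~(1)): the sum of the residues $\xi_i$ over the center vanishes, forcing the residue along the exceptional divisor to be zero. Hence the exceptional divisor does \emph{not} go into the new $E'$; one always takes $E'=D_{\mI,f^*\mf}\cup D_{\mT,f^*\mf}$, and this set does not grow. This is what makes termination possible: Lemma~\ref{lemblupbth}~(3) shows that after each blow-up the maximal cardinality of subsets $I''$ with $E_{I'',\mf}\neq\emptyset$ strictly decreases at each new point of the exceptional locus, giving the inductive invariant your ``Noetherian induction on $W$'' lacks. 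Without these two facts---the exceptional divisor is type~$\mII$, and the combinatorial invariant decreases---there is no reason your sequence of blow-ups would ever achieve $E_{f^*\mf}^{E'}=\emptyset$.
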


\noindent
We can expect Theorem \ref{thmintro} holds
without the assumption
on the codimensions of the bases of $\tau_{D'}^{-1}(S_{D'}^{\log}(j_{!}\mf))$.
In fact, under the assumptions of Theorem \ref{thmintro} without the assumption
on the codimensions of the bases of $\tau_{D'}^{-1}(S_{D'}^{\log}(j_{!}\mf))$,
the pull-back $\tau_{D'}^{!}CC_{D'}^{\log}(j_{!}\mf)$ 
determines the characteristic cycle $CC(j_{!}\mf)$ 
in the following sense:

\begin{prop}[{Proposition \ref{prophicc}}]
\label{propintrohi}
Suppose that $X$ is purely of dimension $d$. 
Let $D'$ be a union of irreducible components of $D$. 
Let $\mf_{i}$ for $i=0,1$ be smooth sheaves of $\Lambda$-modules of
rank $1$ on $U$ such that the ramifications of $\mf_{i}$ are $\log$-$D'$-clean along $D$ and that
$\mf_{i}$ are wildly ramified along any irreducible component of $D$ not contained in $D'$.
If $\tau_{D'}^{-1}(S_{D'}^{\log}(j_{!}\mf_{i}))$ are of dimension $d$ for $i=0,1$ 
and if $\tau_{D'}^{!}CC_{D'}^{\log}(j_{!}\mf_{0})=\tau_{D'}^{!}CC_{D'}^{\log}(j_{!}\mf_{1})$,
then we have $CC(j_{!}\mf_{0})=CC(j_{!}\mf_{1})$.
\end{prop}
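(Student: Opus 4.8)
The plan is to deduce the statement from Theorem \ref{thmintro} by passing to a common blow-up and then descending along the blow-down. First I would produce a single blow-up $f\colon X'\to X$ along a closed subscheme of $D$ that resolves both sheaves simultaneously. Applying Proposition \ref{propintroblup} (Corollary \ref{corsddim}) to each of $\mf_{0}$ and $\mf_{1}$ gives blow-ups along closed subschemes of $D$ after which the hypotheses of Theorem \ref{thmintro} hold; taking a common refinement, which is again a blow-up of $X$ along a closed subscheme of $D$, I obtain one $f\colon X'\to X$ and one union $E'$ of irreducible components of $(f^{*}D)_{\red}$ such that, for both $i=0,1$, the ramification of $f^{*}\mf_{i}$ is $\log$-$E'$-clean along $(f^{*}D)_{\red}$, the sheaf $f^{*}\mf_{i}$ is wildly ramified along every component not in $E'$, and $\tau_{E'}^{-1}(S_{E'}^{\log}(j'_{!}f^{*}\mf_{i}))$ is of dimension $d$ with all bases of codimension $\le 2$; here $j'\colon U'=f^{-1}(U)\to X'$. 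I use that $\log$-$E'$-cleanliness and the dimension condition, once achieved, are preserved under further blow-ups along the boundary, and that such a blow-up can be chosen to lower the codimension of the bases of the high-codimension components of $\tau^{-1}(S^{\log})$ lying over its center, thereby arranging the codimension $\le 2$ condition. With this choice Theorem \ref{thmintro} applies on $X'$ to each $f^{*}\mf_{i}$, giving
\begin{equation}
CC(j'_{!}f^{*}\mf_{i})=\tau_{E'}^{!}CC_{E'}^{\log}(j'_{!}f^{*}\mf_{i})\qquad(i=0,1).\notag
\end{equation}

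Next I would descend to $X$. Since $f$ is proper and is an isomorphism over $U$, and since its center is contained in $D$, one has $f^{-1}(D)=X'\setminus U'$; as $j'_{!}f^{*}\mf_{i}$ vanishes on $f^{-1}(D)$, the stalk of $Rf_{*}(j'_{!}f^{*}\mf_{i})$ at every point of $D$ is zero, while over $U$ the morphism $f$ is an isomorphism. Hence $Rf_{*}(j'_{!}f^{*}\mf_{i})\cong j_{!}\mf_{i}$. By the compatibility of the characteristic cycle with proper push-forward (\cite{sacc}) this yields
\begin{equation}
CC(j_{!}\mf_{i})=f_{*}CC(j'_{!}f^{*}\mf_{i})=f_{*}\tau_{E'}^{!}CC_{E'}^{\log}(j'_{!}f^{*}\mf_{i})\qquad(i=0,1),\notag
\end{equation}
where $f_{*}$ is the push-forward of cycles along the cotangent correspondence attached to $f$. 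It therefore suffices to prove the single equality $\tau_{E'}^{!}CC_{E'}^{\log}(j'_{!}f^{*}\mf_{0})=\tau_{E'}^{!}CC_{E'}^{\log}(j'_{!}f^{*}\mf_{1})$ on $X'$.

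The heart of the argument, and the step I expect to be the main obstacle, is to transfer the hypothesis $\tau_{D'}^{!}CC_{D'}^{\log}(j_{!}\mf_{0})=\tau_{D'}^{!}CC_{D'}^{\log}(j_{!}\mf_{1})$ from $X$ up to $X'$. For this I would establish a functoriality of the partially logarithmic characteristic cycle under $f$: because $CC_{E'}^{\log}$ is built from the refined Swan conductors and characteristic forms of the ramification, which pull back along $f$ through $f^{*}\Omega^{1}_{X}(\log D')\to\Omega^{1}_{X'}(\log E')$, the cycle $\tau_{E'}^{!}CC_{E'}^{\log}(j'_{!}f^{*}\mf)$ should depend on $\mf$ only through the datum $\tau_{D'}^{!}CC_{D'}^{\log}(j_{!}\mf)$ on $X$, with $f$ fixed. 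Granting this compatibility, the assumed equality downstairs forces $\tau_{E'}^{!}CC_{E'}^{\log}(j'_{!}f^{*}\mf_{0})=\tau_{E'}^{!}CC_{E'}^{\log}(j'_{!}f^{*}\mf_{1})$, whence $CC(j'_{!}f^{*}\mf_{0})=CC(j'_{!}f^{*}\mf_{1})$ by the first display, and finally $CC(j_{!}\mf_{0})=CC(j_{!}\mf_{1})$ after applying $f_{*}$. The difficulty is precisely that, away from codimension $2$, the individual cycles $\tau_{D'}^{!}CC_{D'}^{\log}(j_{!}\mf_{i})$ and $CC(j_{!}\mf_{i})$ need not coincide, so one cannot compare them componentwise on $X$; the entire role of the blow-up is to relocate the discrepancy, which is supported over the center in codimension $\ge 3$, into codimension $\le 2$ on $X'$, where Theorem \ref{thmintro} computes it exactly, and the functoriality above is what guarantees that this relocation respects the equality of the pull-back cycles.
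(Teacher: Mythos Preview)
Your approach is fundamentally different from the paper's and has several genuine gaps.

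First, the step where you arrange that the bases of all irreducible components of $\tau_{E'}^{-1}(S_{E'}^{\log}(j'_{!}f^{*}\mf_{i}))$ have codimension $\le 2$ is not justified. Proposition~\ref{propintroblup} only gives dimension $d$; after that blow-up the bases $D_{I''}$ for $|I''|\ge 3$ in Corollary~\ref{corcompcc} still contribute codimension $\ge 3$ components, and nothing in the paper shows these can be removed by further blow-ups. This is precisely why Theorem~\ref{thmcompccssblup} only asserts the equality \emph{outside} codimension $\ge 3$, and why Conjecture~\ref{conjcc} remains open in higher codimension.

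Second, your descent step $CC(j_{!}\mf_{i})=f_{*}CC(j'_{!}f^{*}\mf_{i})$ invokes the push-forward formula of \cite{sacond}, but that formula requires $f_{\circ}SS(j'_{!}f^{*}\mf_{i})$ to have dimension $\le d$. The paper's Example~\ref{exapushdim} shows explicitly that this can fail: there $f_{\circ}SS(j'_{!}f^{*}\mf)$ contains $T^{*}X\times_{X}(D_{1}\cap D_{2})$, of dimension $d+1$. So the push-forward step is not available in general.

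Third, the functoriality you need---that $\tau_{E'}^{!}CC_{E'}^{\log}(j'_{!}f^{*}\mf)$ depends only on $\tau_{D'}^{!}CC_{D'}^{\log}(j_{!}\mf)$---is not established and is far from obvious: the Swan conductors along the exceptional divisors depend on finer data than the downstairs cycle records.

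The paper's proof avoids all of this by a homotopy argument (the subsection is titled ``Homotopy invariance''). From the hypothesis it first extracts that $R_{\mf_0}^{D'}=R_{\mf_1}^{D'}$, $I_{\mW,\mf_0}=I_{\mW,\mf_1}$, $I_{\mI,\mf_0}=I_{\mI,\mf_1}$, and the relevant $B^{D'}_{I'',\mf_i}$ and $Y_{\mathfrak p}$ coincide. It then builds an interpolating sheaf $\mg$ on $\tilde X=X\times_{k}\mathbf{A}^{1}_{k}$ via an explicit Witt vector combining lifts of $\chi_0$ and $\chi_1$, shows $\mg$ is $\log$-$\tilde D'$-clean near the fibers $T=0,1$ with $E_{\mg}^{\tilde D'}=\emptyset$, and checks that the closed immersions $h_i\colon X\times\{i\}\hookrightarrow\tilde X$ are properly $SS(\tilde j_{!}\mg)$-transversal with $h_0^{!}CC(\tilde j_{!}\mg)=h_1^{!}CC(\tilde j_{!}\mg)$. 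The pull-back formula (Theorem~\ref{thmccpb}) then gives $CC(j_{!}\mf_i)=h_i^{!}CC(\tilde j_{!}\mg)$ and hence the conclusion. No blow-up, no push-forward, and no appeal to Theorem~\ref{thmintro} are used.
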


We give a brief outline of the proof of Theorem \ref{thmintro}.
If $X$ is a surface, then the equality (\ref{eqintrocc}) is obtained by an explicit 
computation of $\tau_{D'}^{!}CC_{D'}^{\log}(j_{!}\mf)$ and the comparison 
with the computation of $CC(j_{!}\mf)$ given in \cite[Theorem 6.1]{yacc}.
In general, the proof of the equality (\ref{eqintrocc}) is
reduced to the case where $X$ is a surface, and
we obtain the equality (\ref{eqintross}) by taking the supports of the
both sides of the equality (\ref{eqintrocc}). 
To reduce the proof of the equality (\ref{eqintross}) to the surface case, we use the equality (\ref{eqintrocc}) in codimension $1$,
which is obtained by the computation of $CC(j_{!}\mf)$ given in \cite[Proposition 4.13, Theorem 7.14]{sacc},
and use the pull-back formulas for $CC(j_{!}\mf)$ (\cite[Theorem 7.6]{sacc}) and $CC_{D'}^{\log}(j_{!}\mf)$ (Proposition \ref{proppblogcc})
with taking an appropriate immersion from a smooth surface.
As a key of the reduction, we
prove that $SS(j_{!}\mf)$ locally has at most one irreducible component
whose base is of codimension $2$ in $X$.
The partially logarithmic ramification theory enables us to make
the reduction by allowing
the divisor $D$ on $X$ not to have simple normal crossings
as long as $D$ has smooth irreducible components. 
We devote almost a half of this article to develop the partially logarithmic ramification theory where the divisor $D$ on $X$ has smooth irreducible components but does not
necessarily have simple normal crossings.

We briefly explain the content of each section.
We devote from Section \ref{ariram} to Section \ref{secdil}
to the study of ramification theory.
In Subsection \ref{ariramloc}, we recall the arithmetic theories of logarithmic and non-logarithmic ramifications of complete discrete
valuation fields possibly with imperfect residue fields
introduced by Brylinski (\cite{br}), Kato (\cite{kasw}, \cite{kalog}), Matsuda (\cite{ma}), 
Abbes-Saito (\cite{as1}, \cite{asan}), and Saito (\cite{sacot}).
From Subsection \ref{ariramgl} to Section \ref{secdil},
we develop the partially logarithmic ramification theory, 
which admits the log poles to be along a
divisor $D'$ on $X$ with simple normal crossings contained in the boundary $D$.
We construct a conductor $\cform^{D'}(\mf)$ called $\log$-$D'$-characteristic form in Subsection \ref{ariramgl} and
compare the characteristic forms in various settings in Subsection \ref{sscompcf}.
We introduce the notion that the ramification of $\mf$ is $\log$-$D'$-clean along $D$ by using $\cform^{D'}(\mf)$ in Subsection \ref{sslogdpcl}.
Subsection \ref{sspresecfive} is just preparation 
for Subsections \ref{sscandss}, \ref{sshicc}, and Section \ref{ssrk1} and is not used in the other sections. 
As a refinement of the corresponding result \cite[Proposition 2.34]{yacc} in logarithmic ramification theory,
we prove that the canonical morphism $j_{!}\mf\rightarrow Rj_{*}\mf$
is an isomorphism when the ramification of $\mf$ is $\log$-$D'$-clean along $D$ and when $\mf$ is wildly ramified along all irreducible components of $D$ 
in Subsection \ref{seccldim} by using the dilatations constructed in Subsection \ref{dil}.

The singular support and the characteristic cycle of a constructible sheaf on $X$ are recalled in Subsections \ref{ssss} and \ref{sscc}, respectively.
We devote Section \ref{seclogtrans} to construct and study
an algebraic cycle $CC_{D'}^{\log}(j_{!}\mf)$ on $T^{*}X(\log D')$ called the $\log$-$D'$-characteristic cycle.
We construct $CC_{D'}^{\log}(j_{!}\mf)$ by using the conductor $\cform^{D'}(\mf)$ in Subsection \ref{sslogtrans}
when the ramification of $\mf$ is $\log$-$D'$-clean along $D$.
We construct a closed conical subset $S_{D'}(j_{!}\mf)$ of $T^{*}X$ 
in Subsection \ref{ssinvimlogss} by using the inverse image of the support $S_{D'}^{\log}(j_{!}\mf)$ 
of $CC_{D'}^{\log}(j_{!}\mf)$ by the canonical morphism $\tau_{D'}\colon T^{*}X\rightarrow T^{*}X(\log D')$
and prove that $S_{D'}(j_{!}\mf)$ contains the singular support $SS(j_{!}\mf)$.
For the proof of the inclusion, 
we a proposition prepared in Subsection \ref{ssredtw}
to reduce the proof to the case where $\mf$ is wildly ramified along
any irreducible component of $D$, and use
the the isomorphism $j_{!}\mf\rightarrow Rj_{*}\mf$ given in Section \ref{seccldim}.
In Subsection \ref{sscandss}, we refine the result.
More precisely, we prove that $SS(j_{!}\mf)$ is a union of irreducible components of the inverse image $\tau_{D'}^{-1}(S_{D'}^{\log}(j_{!}\mf))
\subset S_{D'}(j_{!}\mf)$ 
when $\tau_{D'}^{-1}(S_{D'}^{\log}(j_{!}\mf))$ is of dimension $\dim X$.
We prove Proposition \ref{propintroblup} in Subsection \ref{sscandss}
and we prove Proposition \ref{propintrohi} in Subsection \ref{sshicc}.
In Section \ref{ssrk1}, which is the last section of this article,
we consider computations of $SS(j_{!}\mf)$ and $CC(j_{!}\mf)$ in terms of ramification theory.
We first recall the computations of $CC(j_{!}\mf)$ given in \cite{sacc} 
in the case where the ramification of $\mf$ is non-degenerate along $D$ 
and given in \cite{yacc} in the case where $X$ is a surface in Subsection \ref{sscompndeg}.
We then prove Theorem \ref{thmintro} 
in Subsection \ref{sscompccvar} by reducing the proof to
the case where $X$ is a surface.
Finally, we give a few comments in the exceptional case in 
Subsection \ref{sscompccvarrem}.

The author would like to express her sincere gratitude to Takeshi Saito for asking 
about computations of singular supports and characteristic cycles of rank 1 sheaves 
in codimension 2.
She is very grateful to him for his careful reading of the manuscript,
for his pointing out mistakes,  
for a lot of comments on earlier versions, and for discussions.
The research is supported by JSPS KAKENHI Grant Numbers JP 19K21020
and 21K13769.

\begin{conv}
In this article, the symbol $k$ denotes a perfect field of characteristic $p>0$ and $X$ denotes a smooth scheme over $k$.
Let $U$ be an open subscheme of $X$ and let $D=X-U$ be the complement of $U$ in $X$.
We assume that $D$ is a divisor on $X$ whose irreducible components
$\{D_{i}\}_{i\in I}$ are smooth.
The symbol $\Lambda$ denotes a finite field of characteristic prime to $p$
and the symbol $\mf$ denotes a smooth sheaf of $\Lambda$-modules of rank 1 on $U$,
which is the sheaf we mainly consider in this article.
For a subset $I'\subset I$ of the index set $I$ of irreducible components of $D$,
we say that $D'=\bigcup_{i\in I'}D_{i}$ {\it has simple normal crossings}
(or $D'$ {\it is a divisor on $X$ with simple normal crossings}) if $I'=\emptyset$ or if $D'$ is a divisor on $X$ with simple normal crossings.
The cardinality of a subset $I'\subset I$ is sometimes denoted by $\sharp I'$.
Except for Section \ref{ss}, we denote the canonical open immersion from $U$ to $X$ by $j\colon U\rightarrow X$.

For a point $\mathfrak{p}\in X$ of codimension $1$, the local ring $\dvr_{X,\mathfrak{p}}$ is a discrete valuation ring. 
We call the fraction field of the completion $\hat{\dvr}_{X,\mathfrak{p}_{i}}$ of $\dvr_{X,\mathfrak{p}}$ the {\it local field} at $\mathfrak{p}$.
We denote the local field at the generic point $\mathfrak{p}_{i}$ of $D_{i}$ for $i\in I$ by $K_{i}$
and the valuation ring of $K_{i}$ by $\dvr_{K_{i}}$.
For a field $K$, the absolute Galois group of $K$ is denoted by $G_{K}$.
Then there is a canonical morphism $G_{K_{i}}^{\ab}\rightarrow \pi_{1}^{\ab}(U)$ of abelianized fundamental groups.
Except in Subsection  \ref{ariramloc}, we denote
by $\chi\colon \pi^{\ab}(U)\rightarrow \Lambda^{\times}$
the character corresponding to $\mf$.
For the character $\chi\colon \pi^{\ab}(U)\rightarrow \Lambda^{\times}$,
let $\chi|_{K_{i}}\colon G_{K_{i}} \rightarrow \Lambda^{\times}$
denote the composition
$G_{K_{i}}\rightarrow G_{K_{i}}^{\ab}\rightarrow \pi_{1}^{\ab}(U) \xrightarrow{\chi} \Lambda^{\times}$ of the canonical morphisms and $\chi$. 
We fix an inclusion $\psi\colon \Lambda\rightarrow \mathbf{Q}/\mathbf{Z}$ and often regard $\chi|_{K_{i}}$ as an element of $H^{1}(K,\mathbf{Q}/\mathbf{Z})=\Hom_{\cont}(G_{K},\mathbf{Q}/\mathbf{Z})$.

For a smooth scheme $Y$ over $k$, the cotangent bundle of $Y$
is denoted by $T^{*}Y$
and the zero section of $T^{*}Y$ is denoted 
by $T^{*}_{Y}Y$.
The symbol $T^{*}_{y}Y$ denotes the fiber of $T^{*}Y$ at a closed point $y\in Y$.
If $E$ is a divisor on $Y$ with simple normal crossings,
let $T^{*}Y(\log E)$ denote the logarithmic cotangent bundle with logarithmic poles along $E$.
The zero section and the fiber at $y\in Y$ of $T^{*}Y(\log E)$ are denoted by $T^{*}_{Y}Y(\log E)$ and $T^{*}_{y}Y(\log E)$, respectively.
For a closed subset $T$ of a scheme $S$ over $k$ of finite type, let $Z_{r}(T)$ denote the group of $r$-cycles on $T$  
and let $CH_{r}(T)$ denote the Chow group of $r$-cycles on $T$,
where we endow $T$ with the reduced subscheme structure.
If $f\colon S\rightarrow T$ is a morphism of schemes of finite types over $k$
and if $C$ is a closed subscheme of $T$, then we regard the inverse image $f^{-1}(C)$
as the fiber product $C\times_{T}S$.
We denote the algebraic cycle on a scheme $S$ of finite type over $k$ defined by a closed subscheme $C$ of $S$ by $[C]$.
\end{conv}

\tableofcontents

\section{Conductors}
\label{ariram}
See Conventions for the notation.
In this section, we recall and unify the arithmetic logarithmic and non-logarithmic ramification theories introduced and developed by Brylinski (\cite{br}), Kato (\cite{kasw}, \cite{kalog}), Matsuda (\cite{ma}), 
Abbes-Saito (\cite{as1}, \cite{asan}), and Saito (\cite{sacot}).
Let $D'$ be a divisor on $X$ with simple normal crossings
contained in $D$.
We introduce an invariant $\cform^{D'}(\mf)$ called the $\log$-$D'$-characteristic form of $\mf$ in Definition \ref{defcform}, and
a notion that the ramification of $\mf$ is $\log$-$D'$-clean 
along $D$ by using $\cform^{D'}(\mf)$ in Definition \ref{deflogdcl}.
The $\log$-$D'$-characteristic form $\cform^{D'}(\mf)$ is a key ingredient of computations of  
the singular support $SS(j_{!}\mf)$ and the characteristic cycle $CC(j_{!}\mf)$ of the zero extension $j_{!}\mf$ of $\mf$ in Subsection \ref{sscompccvar}.
We recall the definitions of $SS(j_{!}\mf)$ and $CC(j_{!}\mf)$ in Section \ref{ss}.
\subsection{Local definition}
\label{ariramloc}

We briefly recall the logarithmic and non-logarithmic 
ramification theories (\cite{br},  
\cite{kasw}, \cite{ma}, \cite{asan}, \cite{yafil}) for a complete discrete valuation field.

Let $K$ be a complete discrete valuation field of characteristic $p>0$.
For the Witt ring $W_{s}(K)$ of length $s\in\mathbf{Z}_{\ge 0}$, let $V\colon W_{s}(K)\rightarrow W_{s}(K)$  
denote the Verschiebung:
\begin{equation}
V\colon W_{s}(K)\rightarrow W_{s+1}(K)\; ;\; (a_{s-1},a_{s-2},\ldots,a_{0})\mapsto (0,a_{s-1},a_{s-2},\ldots,a_{0}). \notag
\end{equation}
By convention, we have $W_{0}(K)=0$ and $W_{1}(K)=K$.
Let $\ord_{K}\colon K\rightarrow \mathbf{Z}\cup\{\infty\}$ denote the normalized valuation of $K$.

\begin{defn}[{\cite[Proposition 1]{br}, \cite[Definition 1.12]{yafil}}, {cf.\ \cite[3.1]{ma}}]
\label{deffilw}

Let $K$ be a complete discrete valuation field of characteristic $p>0$.
Let $s\in \mathbf{Z}_{\ge 0}$.

\begin{enumerate}
\item We define the {\it normalized valuation} $\ord_{K}\colon W_{s}(K)\rightarrow \mathbf{Z}\cup\{\infty\}$ of $W_{s}(K)$ by
\begin{equation}
\ord_{K}(a)=\min_{0\le i \le s-1}\{p^{i}\ord_{K}(a_{i})\} \notag
\end{equation}
for $a=(a_{s-1},\ldots,a_{0})\in W_{s}(K)$.
\item We define an increasing filtration $\{\fillog_{n}W_{s}(K)\}_{n\in \mathbf{Z}}$
of $W_{s}(K)$ by
\begin{equation}
\fillog_{n}W_{s}(K)= \{a\in W_{s}(K)\; | \; \ord_{K}(a)\ge -n\}. \notag
\end{equation}
\item We define an increasing filtration $\{\fil_{m}W_{s}(K)\}_{m\in \mathbf{Z}_{\ge 1}}$ of $W_{s}(K)$ by
\begin{equation}
\fil_{m}W_{s}(K)=\fillog_{m-1}W_{s}(K)+V^{s-s'}(\fillog_{m}W_{s'}(K)), \notag
\end{equation}
where $s'=\min\{s,\ord_{p}(m)\}$ and $\ord_{p}$ denotes the $p$-adic valuation.
\end{enumerate}
\end{defn}

\begin{rem}
\label{remfilwsloc}
Let $K$ be a complete discrete valuation field of characteristic $p>0$.
By (3) in Definition \ref{deffilw}, the following relations hold between the two filtrations $\{\fillog_{n}W_{s}(K)\}_{n\in \mathbf{Z}}$ and
$\{\fil_{m}W_{s}(K)\}_{m\in \mathbf{Z}_{\ge 1}}$:
For $m=1$, we have \begin{equation}
\label{fileqzows}
\fil_{1}W_{s}(K)=\fillog_{0}W_{s}(K),
\end{equation} 
since $\ord_{p}(1)=0$.
For $m\in \mathbf{Z}_{\ge 1}$,
we have 
\begin{equation}
\label{fileqmnws}
\fillog_{m-1}W_{s}(K)\subset \fil_{m}W_{s}(K)\subset \fillog_{m}W_{s}(K), 
\end{equation}
since $V^{s-s'}(\fillog_{m}W_{s'}(K))\subset \fillog_{m}W_{s}(K)$
for all $m\in \mathbf{Z}$ and all $s'\in \mathbf{Z}_{\ge 0}$
such that $s'\le s$.
\end{rem}

For a complete discrete valuation field $K$ of characteristic $p>0$,
let $H^{1}(K,\mathbf{Z}/n\mathbf{Z})$
denote the Galois cohomology group 
\begin{equation}
H^{1}(G_{K},\mathbf{Z}/n\mathbf{Z})=\Hom_{\mathrm{cont}}(G_{K},\mathbf{Z}/n\mathbf{Z}) \notag
\end{equation}
for $n\in \mathbf{Z}_{\ge 1}$.
We regard $H^{1}(K,\mathbf{Z}/n\mathbf{Z})$ 
as a subgroup of 
\begin{equation}
H^{1}(K,\mathbf{Q}/\mathbf{Z})=\varinjlim_{m} H^{1}(K,\mathbf{Z}/m\mathbf{Z}).  \notag
\end{equation}
By the Artin-Schreier-Witt theory, for $s\in \mathbf{Z}_{\ge 0}$, there is an exact sequence
\begin{equation}
\label{esdelta}
0\rightarrow W_{s}(\mathbf{F}_{p})\rightarrow W_{s}(K) \xrightarrow{F-1}
W_{s}(K)\xrightarrow{\delta_{s}} H^{1}(K,\mathbf{Z}/p^{s}\mathbf{Z})\rightarrow 0. 
\end{equation}
Each of filtrations $\{\fillog_{n}W_{s}(K)\}_{n\in \mathbf{Z}}$ and
$\{\fil_{m}W_{s}(K)\}_{m\in \mathbf{Z}_{\ge 1}}$ of $W_{s}(K)$ defines
a filtration of $H^{1}(K,\mathbf{Q}/\mathbf{Z})$ as follows:

\begin{defn}[{\cite[Corollary (2.5), Theorem (3.2) (1)]{kasw}, \cite[Definition 1.13]{yafil}}, {cf.\ \cite[Definition 3.1.1]{ma}}]
\label{deffilh}

Let $K$ be a complete discrete valuation field of characteristic $p>0$.
Let $H^{1}(K,\mathbf{Q}/\mathbf{Z})'$ denote the prime to $p$-part of the Galois cohomology group $H^{1}(K,\mathbf{Q}/\mathbf{Z})$.
\begin{enumerate}
\item We define an increasing filtration $\{\fillog_{n}H^{1}(K,\mathbf{Q}/\mathbf{Z})\}_{n\in \mathbf{Z}}$
of $H^{1}(K,\mathbf{Q}/\mathbf{Z})$ by
\begin{equation}
\fillog_{n}H^{1}(K,\mathbf{Q}/\mathbf{Z})=
H^{1}(K,\mathbf{Q}/\mathbf{Z})'+\bigcup_{s\ge 0}\delta_{s}(\fillog_{n}W_{s}(K)). \notag
\end{equation}
\item We define an increasing filtration $\{\fil_{m}H^{1}(K,\mathbf{Q}/\mathbf{Z})\}_{m\in \mathbf{Z}_{\ge 1}}$
of $H^{1}(K,\mathbf{Q}/\mathbf{Z})$ by
\begin{equation}
\fil_{m}H^{1}(K,\mathbf{Q}/\mathbf{Z})=
H^{1}(K,\mathbf{Q}/\mathbf{Z})'+\bigcup_{s\ge 0}\delta_{s}(\fil_{m}W_{s}(K)). \notag
\end{equation}
\end{enumerate}
\end{defn}

\begin{rem}
\label{remfilhoneloc}
By the relations (\ref{fileqzows}) and (\ref{fileqmnws}) of filtrations of $W_{s}(K)$, respectively,
the following relations hold between the two filtrations 
$\{\fillog_{n}H^{1}(K,\mathbf{Q}/\mathbf{Z})\}_{n\in \mathbf{Z}}$ and $\{\fil_{m}H^{1}(K,\mathbf{Q}/\mathbf{Z})\}_{m\in \mathbf{Z}_{\ge 1}}$:
For $m=1$, we have
\begin{equation}
\label{filheq}
\fil_{1}H^{1}(K,\mathbf{Q}/\mathbf{Z})
=\fillog_{0}H^{1}(K,\mathbf{Q}/\mathbf{Z}). 
\end{equation}
For $m\in \mathbf{Z}_{\ge 1}$, we have
\begin{equation}
\label{filcont}
\fillog_{m-1}H^{1}(K,\BQ/\BZ)\subset \fil_{m}H^{1}(K,\BQ/\BZ)
\subset \fillog_{m}H^{1}(K,\BQ/\BZ).
\end{equation}
\end{rem}

We recall two kinds of conductors $\sw(\chi)$ and $\dt(\chi)$ for a character $\chi\in H^{1}(K,\mathbf{Q}/\mathbf{Z})$ of 
$G_{K}$ for a complete discrete valuation field $K$ of characteristic $p>0$.
The conductors $\sw(\chi)$ and $\dt(\chi)$ are defined by
using the filtrations $\{\fillog_{n}H^{1}(K,\mathbf{Q}/\mathbf{Z})\}_{n\in \mathbf{Z}}$ and $\{\fil_{m}H^{1}(K,\mathbf{Q}/\mathbf{Z})\}_{m\in \mathbf{Z}_{\ge 1}}$ of $H^{1}(K,\mathbf{Q}/\mathbf{Z})$
(Definition \ref{deffilh}), respectively.

\begin{defn}[{\cite[Definition (2.2)]{kasw}, \cite[Definition 1.14]{yafil}}]
\label{defwedt}
Let $K$ be a complete discrete valuation field of characteristic $p>0$
and let $\chi\in H^{1}(K,\mathbf{Q}/\mathbf{Z})$
be a character of $G_{K}$. 
\begin{enumerate}
\item We define the {\it Swan conductor} $\sw(\chi)$ of $\chi$ to be the smallest non-negative integer $n$ such that
$\chi\in \fillog_{n}H^{1}(K,\mathbf{Q}/\mathbf{Z})$.
\item We define the {\it total dimension} $\dt(\chi)$ of $\chi$ to be the smallest positive integer $m$ such that
$\chi\in \fil_{m}H^{1}(K,\mathbf{Q}/\mathbf{Z})$.
\end{enumerate}
\end{defn}

\begin{rem}
\label{remtameloc}
Let $K$ be a complete discrete valuation field of characteristic $p>0$.
\begin{enumerate}
\item 
By the construction of $\fillog_{n}H^{1}(K,\mathbf{Q}/\mathbf{Z})$ (resp.\ $\fil_{m}H^{1}(K,\mathbf{Q}/\mathbf{Z})$) as the sum of
the prime to $p$-part $H^{1}(K,\mathbf{Q}/\mathbf{Z})'$ of $H^{1}(K,\mathbf{Q}/\mathbf{Z})$ and a submodule of the $p$-part of 
$H^{1}(K,\mathbf{Q}/\mathbf{Z})$,
the Swan conductor $\sw(\chi)$ (resp.\ the total dimension $\dt(\chi)$) for
$\chi\in H^{1}(K,\mathbf{Q}/\mathbf{Z})$ is dependent only on the $p$-part of $\chi$.

\item 
Let $\chi\in H^{1}(K,\mathbf{Q}/\mathbf{Z})$. 
By (\ref{filheq}), the Swan conductor $\sw(\chi)$ is $0$ if and only if 
the total dimension $\dt(\chi)$ is $1$.
Generally,
the total dimension
$\dt(\chi)$ is equal to $\sw(\chi)$ or $\sw(\chi)+1$ 
by (\ref{filcont}).
By \cite[Propositions 3.7 (3), 3.15 (4)]{as1}, \cite[Corollaire 9.12]{asan},
and \cite[Theorem 3.1]{yafil},
if the residue field $F_{K}$ of $K$ is perfect, then 
$\fillog_{n}H^{1}(K,\mathbf{Q}/\mathbf{Z})=\fil_{n+1}H^{1}(K,\mathbf{Q}/\mathbf{Z})$ for all $n\in\mathbf{Z}_{\ge 0}$ and
we have $\dt(\chi)=\sw(\chi)+1$ for all $\chi \in H^{1}(K,\mathbf{Q}/\mathbf{Z})$.

\item 
Both of the filtrations $\{\fillog_{n}H^{1}(K,\mathbf{Q}/\mathbf{Z})\}_{n\in \mathbf{Z}}$ and $\{\fil_{m}H^{1}(K,\mathbf{Q}/\mathbf{Z})\}_{m\in \mathbf{Z}_{\ge 1}}$ of $H^{1}(K,\mathbf{Q}/\mathbf{Z})$
(Definition \ref{deffilh}) measure the ramification of a character $\chi\in H^{1}(K,\mathbf{Q}/\mathbf{Z})$ of $G_{K}$.
Indeed,
by \cite[Proposition 3.1.5 (2)]{as1} and \cite[Corollaire 9.12]{asan},
the ramification of $\chi\in H^{1}(K,\mathbf{Q}/\mathbf{Z})$ is
tamely ramified if and only if the Swan conductor $\sw(\chi)$ is $0$,
or equivalently the total dimension $\dt(\chi)$ is $1$ by (2).
\end{enumerate}
\end{rem}

According to Remark \ref{remtameloc} (2),
we introduce the types of $\chi$ for a character $\chi\in H^{1}(K,\mathbf{Q}/\mathbf{Z})$ of $G_{K}$ that is wildly ramified.

\begin{defn}[{cf. {\cite[Definition 3.2.10]{ma}}}]
\label{deftypes}
Let $K$ be a complete discrete valuation field of characteristic $p>0$.
Let $\chi\in H^{1}(K,\mathbf{Q}/\mathbf{Z})$ be a character of $G_{K}$ such that $\sw(\chi)>0$ (or equivalently $\dt(\chi)>1$ by Remark \ref{remtameloc} (2)).
\begin{enumerate}
\item We say that a character $\chi\in H^{1}(K,\mathbf{Q}/\mathbf{Z})$ is of
{\it type} $\mI$ if  $\dt(\chi)=\sw(\chi)+1$.
\item We say that a character $\chi\in H^{1}(K,\mathbf{Q}/\mathbf{Z})$ is of
{\it type} $\mII$ if $\dt(\chi)=\sw(\chi)$.
\end{enumerate}
\end{defn}

Let $K$ be a complete discrete valuation field of characteristic $p>0$.
We recall two kinds of refined conductors $\rsw(\chi)$ and $\cform(\chi)$ for a character $\chi\in H^{1}(K,\mathbf{Q}/\mathbf{Z})$ of $G_{K}$, which 
will appear as stalks of the $\log$-$D'$-characteristic form $\cform^{D'}(\mf)$ defined in the next subsection.
The refined conductors $\rsw(\chi)$ and $\cform(\chi)$ are 
defined to be elements of graded quotients of the filtrations $\{\fillog_{n}\Omega^{1}_{K}\}_{n\in \mathbf{Z}_{\ge 0}}$ and 
$\{\fil_{m}\Omega^{1}_{K}\}_{m\in \mathbf{Z}_{\ge 1}}$
of the module $\Omega_{K}^{1}$ of differential forms, 
which are recalled in the following, respectively.

For a perfect field $k$ of characteristic $p>0$ and a $k$-algebra $A$,
let $\Omega^{1}_{A}$ denote the module of differential forms:
\begin{equation}
\Omega^{1}_{A}=\Omega^{1}_{A/A^{p}}=\Omega^{1}_{A/k}. \notag
\end{equation}
Let $\dvr_{K}$ denote the valuation ring of $K$
and $\mathfrak{m}_{K}$
the maximal ideal of $\dvr_{K}$.
Let $\Omega^{1}_{\dvr_{K}}(\log)$ be the module of logarithmic differential forms:
\begin{equation}
\Omega^{1}_{\dvr_{K}}(\log)=(\Omega^{1}_{\dvr_{K}}\oplus (\dvr_{K}\otimes_{\mathbf{Z}} K^{\times}))/B, \notag
\end{equation}
where $B$ is the sub $\dvr_{K}$-module of $\Omega^{1}_{\dvr_{K}}\oplus 
(\dvr_{K}\otimes_{\mathbf{Z}} K^{\times})$ generated by 
$(da,0)-(0,a\otimes a)$ for all $a\in \dvr_{K}-\{0\}$.
Let $\dlog a\in \Omega^{1}_{\dvr_{K}}(\log)$ denote the class of
$(0,1\otimes a)$ for $a\in K^{\times}$.
Then there is a canonical isomorphism
\begin{equation}
\label{isomdifk}
\Omega^{1}_{\dvr_{K}}\otimes_{\dvr_{K}}K\xrightarrow{\sim}\Omega^{1}_{\dvr_{K}}(\log)\otimes_{\dvr_{K}}K\; ;\; da\otimes b \mapsto da\otimes b,
\end{equation}
where the image of 
$da\otimes a^{-1}\in \Omega_{\dvr_{K}}^{1}\otimes_{\dvr_{K}}K$ is $\dlog a\in \Omega^{1}_{\dvr_{K}}(\log)\otimes_{\dvr_{K}}K$ for $a\in \dvr_{K}-\{0\}$.
We define $\fillog_{n}\Omega^{1}_{K}$ for $n\in \mathbf{Z}_{\ge 0}$
and $\fil_{m}\Omega^{1}_{K}$ for $m\in \mathbf{Z}_{\ge 1}$ to be the images of the compositions
\begin{align}
\Omega^{1}_{\dvr_{K}}(\log)\otimes_{\dvr_{K}}&\mathfrak{m}_{K}^{-n}
\rightarrow \Omega^{1}_{\dvr_{K}}(\log)\otimes_{\dvr_{K}}K \underset{(\ref{isomdifk})}{\simeq}
\Omega^{1}_{\dvr_{K}}\otimes_{\dvr_{K}}K\xrightarrow{\sim}\Omega^{1}_{K} \notag 
\end{align}
and
\begin{align}
&\quad \Omega^{1}_{\dvr_{K}}\otimes_{\dvr_{K}}\mathfrak{m}_{K}^{-m}
\rightarrow \Omega^{1}_{\dvr_{K}}\otimes_{\dvr_{K}}K\xrightarrow{\sim}\Omega^{1}_{K}\notag
\end{align}
of the canonical morphisms, respectively.
These two filtrations $\{\fillog_{n}\Omega^{1}_{K}\}_{n\in \mathbf{Z}_{\ge 0}}$ and 
$\{\fil_{m}\Omega^{1}_{K}\}_{m\in \mathbf{Z}_{\ge 1}}$ of $\Omega_{K}^{1}$ 
are increasing filtrations, 
since $\mathfrak{m}_{K}^{-m'}\subset \mathfrak{m}_{K}^{-n'}$
if $m'\le n'$. 
If $\pi$ is a uniformizer of $K$, then we have
\begin{equation}
\fillog_{n}\Omega^{1}_{K}=\{(\alpha+\beta\dlog \pi)/\pi^{n}\; |\;\alpha \in \Omega^{1}_{\dvr_{K}}, \beta\in \dvr_{K}\}\subset \Omega^{1}_{K} \notag
\end{equation}
for $n\in \mathbf{Z}_{\ge 0}$ and 
\begin{equation}
\fil_{m}\Omega^{1}_{K}=\{\alpha/\pi^{m}\;|\; \alpha\in \Omega^{1}_{\dvr_{K}}\}
\subset \Omega^{1}_{K} \notag
\end{equation}
for $m\in \mathbf{Z}_{\ge 1}$.

We put $\grlog_{n}=\fillog_{n}/\fillog_{n-1}$ for $n\in \mathbf{Z}_{\ge 1}$
and $\gr_{m}=\fil_{m}/\fil_{m-1}$ for $m\in \mathbf{Z}_{\ge 2}$.
Then the morphism $\delta_{s}$ (\ref{esdelta}) for $s\in \mathbf{Z}_{\ge 0}$ induces the two morphisms
\begin{align}
\label{deltasloclog}
\delta_{s}^{(n)}&\colon \grlog_{n}W_{s}(K)\rightarrow \grlog_{n}H^{1}(K,\mathbf{Q}/\mathbf{Z}), 
\end{align}
for $n\in \mathbf{Z}_{\ge 1}$ and 
\begin{align}
\label{deltaslocnlog}
\delta'^{(m)}_{s}&\colon \gr_{m}W_{s}(K)\rightarrow \gr_{m}H^{1}(K,\mathbf{Q}/\mathbf{Z})
\end{align}
for $m\in \mathbf{Z}_{\ge 2}$.
Let $-F^{s-1}d\colon W_{s}(K)\rightarrow \Omega_{K}^{1}$ be 
the morphism
\begin{equation}
-F^{s-1}d\colon W_{s}(K)\rightarrow \Omega^{1}_{K}\; ;\; (a_{s-1},\ldots,a_{0})
\mapsto -\sum_{i=0}^{s-1}a_{i}^{p^{i}-1}da_{i}.  \notag
\end{equation} 
Then the morphism $-F^{s-1}d$ induces for $n\in \mathbf{Z}_{\ge 1}$
the morphism
\begin{equation}
\label{varphilogloc}
\varphi^{(n)}_{s}\colon \grlog_{n}W_{s}(K)\rightarrow \grlog_{n}\Omega^{1}_{K}.
\end{equation}
If $F_{K}$ denotes the residue field of $K$, then there exists for $m\in \mathbf{Z}_{\ge 2}$ a unique morphism
\begin{equation}
\label{varphinlogloc}
\varphi'^{(m)}_{s} \colon \gr_{m}W_{s}(K)\rightarrow \gr_{m}\Omega^{1}_{K}\otimes_{F_{K}}F_{K}^{1/p}  
\end{equation}
satisfying the equalities
\begin{equation}
\varphi'^{(m)}_{s}(\bar{a})=\begin{cases}
\overline{-F^{s-1}da}\otimes \overline{1} & ((m,p)\neq (2,2)), \\
\overline{-F^{s-1}da}\otimes \overline{1}+\overline{d\pi/\pi^{2}}\otimes\sqrt{\overline{a_{0}\pi^{2}}} & ((m,p)= (2,2)) 
\end{cases} \notag
\end{equation}
for every $a=(a_{s-1},\ldots,a_{0})\in \fil_{m}W_{s}(K)$ and every uniformizer $\pi$ of $K$.
The existence of $\varphi_{s}'^{(m)}$ (\ref{varphinlogloc}) follows from \cite[3.2]{ma} 
for $(p,m)\neq (2,2)$ and \cite[Proposition 1.17 (i)]{yafil} for $(p,m)=(2,2)$.

\begin{prop}[{\cite[Proposition 10.7]{asan}, \cite[Proposition 3.2.3]{ma},
\cite[Proposition 1.17 (ii)]{yafil}}] 
\label{swgldef}
Let the notation be as above.
\begin{enumerate}
\item For $n\in \mathbf{Z}_{\ge 1}$, there exists a unique injection
\begin{equation}
\phi^{(n)}\colon \grlog_{n}H^{1}(K,\mathbf{Q}/\mathbf{Z})\rightarrow
\grlog_{n}\Omega^{1}_{K} \notag
\end{equation}
such that the following diagram is commutative for every $s\in\mathbf{Z}_{\ge 0}$:
\begin{equation}
\xymatrix{
\grlog_{n}W_{s}(K) \ar[rd]_{\delta_{s}^{(n)}} \ar[rr]^-{\varphi_{s}^{(n)}} & & \grlog_{n}\Omega^{1}_{K}\\
& \grlog_{n}H^{1}(K,\BQ/\BZ) \ar[ru]_-{\phi^{(n)}}. &
} \notag
\end{equation}
\item Let $F_{K}$ denote the residue field of $K$.
For $m\in \mathbf{Z}_{\ge 2}$, there exists a unique injection
\begin{equation}
\phi'^{(m)}\colon \gr_{m}H^{1}(K,\mathbf{Q}/\mathbf{Z})\rightarrow
\gr_{m}\Omega^{1}_{K}\otimes_{F_{K}}F_{K}^{1/p} \notag
\end{equation}
such that the following diagram is commutative for every $s\in\mathbf{Z}_{\ge 0}$:
\begin{equation}
\xymatrix{
\gr_{m}W_{s}(K) \ar[rd]_{\delta'^{(m)}_{s}} \ar[rr]^-{\varphi_{s}'^{(m)}} & & \gr_{m}\Omega^{1}_{K}\otimes_{F_{K}}
F_{K}^{1/p}\\
& \gr_{m}H^{1}(K,\BQ/\BZ)\ar[ru]_-{\phi^{\prime (m)}}. &
}\notag
\end{equation}
\end{enumerate}
\end{prop}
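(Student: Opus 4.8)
The two statements are parallel, so the plan is to prove (1) in detail and indicate the modifications for (2). Throughout I would work with the $p$-part of $H^{1}(K,\BQ/\BZ)$: both $\{\fillog_{n}\}$ and $\{\fil_{m}\}$ are built from the prime-to-$p$ part $H^{1}(K,\BQ/\BZ)'$ together with a submodule of the $p$-part, and since $H^{1}(K,\BQ/\BZ)'\subset\fillog_{0}\subset\fillog_{n-1}$ for $n\ge 1$, the prime-to-$p$ part contributes nothing to $\grlog_{n}H^{1}(K,\BQ/\BZ)$. First I would record that $\varinjlim_{s}\delta_{s}^{(n)}$ is surjective onto $\grlog_{n}H^{1}(K,\BQ/\BZ)$: this is immediate from the definition $\fillog_{n}H^{1}(K,\BQ/\BZ)=H^{1}(K,\BQ/\BZ)'+\bigcup_{s}\delta_{s}(\fillog_{n}W_{s}(K))$. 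Surjectivity forces any $\phi^{(n)}$ with $\phi^{(n)}\circ\delta_{s}^{(n)}=\varphi_{s}^{(n)}$ to be unique, and reduces its existence to the well-definedness of the induced map.

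Next I would glue the maps $\varphi_{s}^{(n)}$ over $s$. Since $V$ preserves $\ord_{K}$, hence the filtration, and since $(Va)_{s}=0$ gives $-F^{s}d(Va)=-F^{s-1}da$, one obtains $\varphi_{s+1}^{(n)}\circ V=\varphi_{s}^{(n)}$; likewise $\delta_{s+1}(Va)=\delta_{s}(a)$ in $H^{1}(K,\BQ/\BZ)$. Thus both families descend to the colimit, giving a surjection $\delta^{(n)}$ and a map $\varphi^{(n)}$ on $\varinjlim_{s}\grlog_{n}W_{s}(K)$. The whole assertion now reduces to the single identity $\Ker\delta^{(n)}=\Ker\varphi^{(n)}$: the inclusion $\subset$ yields existence of $\phi^{(n)}$, and $\supset$ yields its injectivity.

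For $\Ker\delta^{(n)}\subset\Ker\varphi^{(n)}$ I would use Artin--Schreier--Witt: $\Ker\delta_{s}=\Image(F-1)$, and since $\ord_{K}(Fc)=p\,\ord_{K}(c)$, the graded kernel $\Ker\delta^{(n)}$ is generated by Frobenius classes $\overline{Fc}=\overline{(F-1)c}$ with $p\mid n$ and $\ord_{K}(c)=-n/p$ (and is trivial when $p\nmid n$). These are annihilated by $\varphi^{(n)}$ because $d(x^{p})=0$ in characteristic $p$: explicitly $-F^{s-1}d(Fc)=-\sum_{i}(c_{i}^{p})^{p^{i}-1}d(c_{i}^{p})=0$. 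This already establishes existence of $\phi^{(n)}$. The reverse inclusion $\Ker\varphi^{(n)}\subset\Ker\delta^{(n)}$, which gives injectivity, is the hard part. Here I would run a leading-term analysis: for a representative $a=(a_{s-1},\dots,a_{0})$ with $\ord_{K}(a)=-n$, the class $\varphi_{s}^{(n)}(\bar a)=\overline{-\sum_{i}a_{i}^{p^{i}-1}da_{i}}$ in $\grlog_{n}\Omega^{1}_{K}$ is governed, via the residue map, by the coefficients of the indices attaining $p^{i}\ord_{K}(a_{i})=-n$. One must show that its vanishing forces those leading coefficients into the image of Frobenius on the residue field $F_{K}$, so that $\bar a$ equals a Frobenius class modulo $\fillog_{n-1}$, i.e.\ lies in $\Ker\delta^{(n)}$. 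This step uses the explicit description of $\grlog_{n}\Omega^{1}_{K}$ and the residue symbol, and is where the real work lies.

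Finally, for (2) the plan is entirely parallel, replacing $\fillog_{n}$, $\grlog_{n}$, $\delta_{s}^{(n)}$, $\varphi_{s}^{(n)}$ by $\fil_{m}$, $\gr_{m}$, $\delta'^{(m)}_{s}$, $\varphi'^{(m)}_{s}$ and twisting the target by $F_{K}^{1/p}$; the twist is precisely what makes the non-logarithmic leading-term computation land in a module where the vanishing argument of the previous paragraph goes through. The only genuinely new feature is the exceptional pair $(p,m)=(2,2)$, where $\varphi'^{(m)}_{s}$ acquires the correction term $\overline{d\pi/\pi^{2}}\otimes\sqrt{\overline{a_{0}\pi^{2}}}$; I would handle this case separately, verifying both kernel inclusions directly as in \cite[Proposition 1.17]{yafil}. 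In every case the decisive obstacle is the injectivity step: extracting from the vanishing of the refined differential an actual Artin--Schreier--Witt relation exhibiting the Witt class as a Frobenius image.
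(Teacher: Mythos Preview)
The paper does not provide a proof of this proposition: it is quoted from the literature, with the three references \cite[Proposition 10.7]{asan}, \cite[Proposition 3.2.3]{ma}, and \cite[Proposition 1.17 (ii)]{yafil} appearing in the statement header, and no argument is given in the text. So there is no ``paper's own proof'' to compare against.

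Your outline is a faithful sketch of how the result is actually proved in those references. The reduction to the $p$-part, the passage to the colimit over $s$ via Verschiebung, and the observation that the graded kernel of $\delta^{(n)}$ is spanned by Frobenius classes (forcing $\varphi^{(n)}$ to vanish there, hence existence) are all correct and match the standard route. You are also right that the converse inclusion $\Ker\varphi^{(n)}\subset\Ker\delta^{(n)}$ is the substantive part. In \cite{ma} and \cite{kasw} this is carried out via the inverse Cartier operator: one shows that $\grlog_{n}\Omega^{1}_{K}$ (respectively $\gr_{m}\Omega^{1}_{K}\otimes F_{K}^{1/p}$) embeds, through the residue map and the structure of $\Omega^{1}_{F_{K}}$, into a target where the vanishing of $\varphi_{s}^{(n)}(\bar a)$ forces the leading Witt coefficients to be $p$-th powers, hence to come from $Fc$ for suitable $c$. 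Your ``leading-term analysis'' paragraph is the right shape for this, though in a full proof one must also keep track of the lower-order Witt entries and iterate; this is where the filtration bookkeeping in \cite[\S 3]{ma} earns its keep. The exceptional case $(p,m)=(2,2)$ in (2) is indeed handled separately in \cite[Proposition 1.17]{yafil}, exactly as you indicate.
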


The following are the definitions of the refined conductors:

\begin{defn}[{\cite[(3.4.2)]{kasw}, \cite[Definition 1.18]{yafil}}]
\label{defrefcond}
Let $K$ be a complete discrete valuation field of characteristic $p>0$.
Let $\chi\in H^{1}(K,\mathbf{Q}/\mathbf{Z})$ be a character of $G_{K}$.
\begin{enumerate}
\item Suppose that $\sw(\chi)=n\ge 1$ (Definition \ref{defwedt} (1)).
Let $\bar{\chi}$ denote the image of $\chi$ in $\grlog_{n} H^{1}(K,\mathbf{Q}/\mathbf{Z})$.
Then we define the {\it refined Swan conductor} $\rsw(\chi)$ of $\chi$ to be
the image of $\bar{\chi}$ by the morphism $\phi^{(n)}$ in Proposition \ref{swgldef} (1).
\item Suppose that $\dt(\chi)=m\ge 2$ (Definition \ref{defwedt} (2)). 
Let $\bar{\chi}$ denote the image of $\chi$ in $\gr_{m} H^{1}(K,\mathbf{Q}/\mathbf{Z})$.
Then we define the {\it characteristic form} 
$\cform(\chi)$ of $\chi$ to be the image of 
$\bar{\chi}$ by the morphism $\phi'^{(m)}$ in Proposition \ref{swgldef} (2).
\end{enumerate}
\end{defn}

The next lemma is useful to compute the refined conductors.

\begin{lem}[{\cite[Lemmas 2.3, 2.11]{yacc}}]
\label{lemrsw}
Let $K$ be a complete discrete valuation field of characteristic $p>0$
and let $\chi\in H^{1}(K,\mathbf{Q}/\mathbf{Z})$ be a character of $G_{K}$. 
Let $a\in W_{s}(\dvr_{K})$ be an element of the Witt ring of the valuation ring $\dvr_{K}$ of $K$ of length $s\in \mathbf{Z}_{\ge 0}$
whose image by $\delta_{s}$ (\ref{esdelta}) 
is the $p$-part of $\chi$.
\begin{enumerate}
\item Suppose that $a\in \fillog_{n}W_{s}(K)$ for $n\in \mathbf{Z}_{\ge 1}$
(Definition \ref{deffilw} (2)). 
Let $\bar{a}$ denote the image of $a$ in $\grlog_{n}W_{s}(K)$.
Then the following three conditions are equivalent:
\begin{enumerate}
\item $\sw(\chi)=n$.
\item $\rsw(\chi)=\varphi_{s}^{(n)}(\bar{a})$,
where the morphism $\varphi_{s}^{(n)}$ is as in (\ref{varphilogloc}).
\item $\varphi_{s}^{(n)}(\bar{a})\neq 0$ in $\grlog_{n}\Omega_{K}^{1}$.
\end{enumerate}
\item Suppose that $a\in \fil_{m}W_{s}(K)$ for $m\in \mathbf{Z}_{\ge 2}$ (Definition \ref{deffilw} (3)).
Let $\bar{a}$ denote the image of $a$ in $\gr_{m}W_{s}(K)$.
Then the following three conditions are equivalent:
\begin{enumerate}
\item $\dt(\chi)=m$.
\item $\cform(\chi)=\varphi_{s}'^{(m)}(\bar{a})$, where the morphism $\varphi_{s}'^{(m)}$ is as in (\ref{varphinlogloc}).
\item $\varphi_{s}'^{(m)}(\bar{a})\neq 0$ in $\gr_{m}\Omega_{K}^{1}$.
\end{enumerate}
\end{enumerate}
\end{lem}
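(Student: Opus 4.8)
The plan is to deduce both parts formally from Proposition \ref{swgldef}, whose commutative triangles and injectivity statements carry all the genuine content; the lemma itself is essentially bookkeeping about the matching of filtration levels. First I would reduce to the $p$-part: since by Remark \ref{remtameloc} (1) each of $\sw(\chi)$, $\dt(\chi)$, $\rsw(\chi)$, and $\cform(\chi)$ depends only on the $p$-part of $\chi$, I may replace $\chi$ by $\chi_{p}:=\delta_{s}(a)\in H^{1}(K,\mathbf{Z}/p^{s}\mathbf{Z})$, which is legitimate because $\delta_{s}(a)$ is by hypothesis exactly the $p$-part of $\chi$. After this replacement $\delta_{s}(a)=\chi$, so the image $\bar a\in\grlog_{n}W_{s}(K)$ maps to the image $\bar\chi\in\grlog_{n}H^{1}(K,\mathbf{Q}/\mathbf{Z})$ under $\delta_{s}^{(n)}$, and similarly in the non-logarithmic setting.

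For part (1) the key identity is $\varphi_{s}^{(n)}(\bar a)=\phi^{(n)}(\bar\chi)$, immediate from the commutative triangle of Proposition \ref{swgldef} (1) together with $\delta_{s}^{(n)}(\bar a)=\bar\chi$. Since $\delta_{s}$ carries $\fillog_{n}W_{s}(K)$ into $\fillog_{n}H^{1}(K,\mathbf{Q}/\mathbf{Z})$ by the very definition of the latter filtration, one always has $\sw(\chi)\le n$, so the sole question is whether the top step is attained. I would then argue the equivalences cyclically. For (a) $\Leftrightarrow$ (c): by definition of the Swan conductor, $\sw(\chi)=n$ holds exactly when $\chi\notin\fillog_{n-1}H^{1}(K,\mathbf{Q}/\mathbf{Z})$, i.e. when $\bar\chi\ne 0$ in $\grlog_{n}H^{1}(K,\mathbf{Q}/\mathbf{Z})$; as $\phi^{(n)}$ is injective, this is equivalent to $\phi^{(n)}(\bar\chi)=\varphi_{s}^{(n)}(\bar a)\ne 0$, which is (c). For (a) $\Rightarrow$ (b): when $\sw(\chi)=n$, the refined Swan conductor is by Definition \ref{defrefcond} (1) defined as $\rsw(\chi)=\phi^{(n)}(\bar\chi)$, which equals $\varphi_{s}^{(n)}(\bar a)$ by the key identity. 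For (b) $\Rightarrow$ (c): whenever $\rsw(\chi)$ is defined it is the image under the injection $\phi^{(\sw(\chi))}$ of the nonzero class of $\chi$ in $\grlog_{\sw(\chi)}H^{1}(K,\mathbf{Q}/\mathbf{Z})$, hence is nonzero, so (b) forces $\varphi_{s}^{(n)}(\bar a)\ne 0$. These three implications close the loop.

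Part (2) is proved by the verbatim argument with $\fillog$, $\grlog$, $\sw$, $\rsw$, $\phi^{(n)}$, $\varphi_{s}^{(n)}$ replaced by $\fil$, $\gr$, $\dt$, $\cform$, $\phi'^{(m)}$, $\varphi_{s}'^{(m)}$, now invoking Proposition \ref{swgldef} (2) and Definition \ref{defrefcond} (2); here one uses that $a\in\fil_{m}W_{s}(K)$ forces $\dt(\chi)\le m$, and that $\dt(\chi)=m$ is equivalent to the nonvanishing of the class $\bar\chi$ in $\gr_{m}H^{1}(K,\mathbf{Q}/\mathbf{Z})$. The only cosmetic difference is that the graded pieces now land in $\gr_{m}\Omega^{1}_{K}\otimes_{F_{K}}F_{K}^{1/p}$, where nonvanishing is tested; the special formula for $\varphi_{s}'^{(m)}$ when $(m,p)=(2,2)$ plays no role, since the commutativity $\phi'^{(m)}\circ\delta'^{(m)}_{s}=\varphi_{s}'^{(m)}$ asserted in Proposition \ref{swgldef} (2) holds uniformly.

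The main, and really the only, obstacle does not lie in this lemma but in Proposition \ref{swgldef}: producing the injections $\phi^{(n)}$ and $\phi'^{(m)}$ together with the commutativity of the defining triangles, where the Artin--Schreier--Witt computations of Abbes--Saito, Matsuda, and \cite{yafil} are used. Granting that input, the present lemma is purely formal: one need only track that $a\in\fillog_{n}W_{s}(K)$ (resp. $a\in\fil_{m}W_{s}(K)$) guarantees $\sw(\chi)\le n$ (resp. $\dt(\chi)\le m$), so that the question reduces to whether the top filtration step is attained, which by injectivity of $\phi^{(n)}$ (resp. $\phi'^{(m)}$) is detected precisely by the nonvanishing of $\varphi_{s}^{(n)}(\bar a)$ (resp. $\varphi_{s}'^{(m)}(\bar a)$).
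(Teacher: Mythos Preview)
The paper does not supply its own proof of this lemma; it is quoted from \cite[Lemmas 2.3, 2.11]{yacc} and stated without argument. Your proof is correct and is precisely the formal deduction one expects from Proposition~\ref{swgldef}: the commutative triangle gives $\varphi_{s}^{(n)}(\bar a)=\phi^{(n)}(\delta_{s}^{(n)}(\bar a))=\phi^{(n)}(\bar\chi)$, and then the injectivity of $\phi^{(n)}$ translates nonvanishing of $\bar\chi$ (i.e.\ $\sw(\chi)=n$) into nonvanishing of $\varphi_{s}^{(n)}(\bar a)$, with the identification $\rsw(\chi)=\phi^{(n)}(\bar\chi)$ from Definition~\ref{defrefcond} closing the loop; the non-logarithmic case is identical.
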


\begin{rem}
\label{remrsw}
Let $K$ be a complete discrete valuation field of characteristic $p>0$
and let $\chi\in H^{1}(K,\mathbf{Q}/\mathbf{Z})$ be a character of $G_{K}$.
If the refined conductors of $\chi$ are defined, namely if 
$\sw(\chi)\ge 1$
(or equivalently $\dt(\chi)\ge 2$ by Remark \ref{remtameloc} (2)), then
we have $\rsw(\chi)\neq 0$ and $\cform(\chi)\neq 0$ by Lemma \ref{lemrsw}.
\end{rem}

\begin{exa}
Let $\pi$ be a uniformizer of a complete discrete valuation field $K$ 
of characteristic $p>0$.
Let $u$ be a unit of the valuation ring $\dvr_{K}$ of $K$.
Suppose that the $p$-part of $\chi\in H^{1}(K,\mathbf{Q}/\mathbf{Z})$ is
the image of $a=(u/\pi, u, u/\pi^{p^{2}})\in W_{3}(K)$ by 
$\delta_{3}$ (\ref{esdelta})
and that the images $\bar{1}$ and $\bar{u}$ of $1\in \dvr_{K}$ and $u\in \dvr_{K}$ in the residue field $F_{K}$ of $K$ are 
a part of a $p$-basis of $F_{K}$ 
over a perfect subfield of $F_{K}$.
Then the smallest $n\in \mathbf{Z}_{\ge 0}$ and $m\in \mathbf{Z}_{\ge 1}$ such that
$a\in \fillog_{n}W_{3}(K)$ and that $a\in \fil_{m}W_{3}(K)$ are
$p^{2}$ and $p^{2}+1$, respectively.
By the assumption that $\bar{1}$ and $\bar{u}$ are linearly independent over $\mathbf{F}_{p}$,
we have $\sw(\chi)=p^{2}$ and $\dt(\chi)=p^{2}+1$.
Thus, by Lemma \ref{lemrsw}, we have $\rsw(\chi)=\varphi_{3}^{(p^{2})}(a)=(u^{p^{2}}\dlog \pi-(u^{p^{2}-1}+1)du)/\pi^{p^{2}}$
and $\cform(\chi)=\varphi_{3}'^{(p^{2}+1)}(a)=u^{p^{2}}d\pi/\pi^{p^{2}+1}$.
\end{exa}

\subsection{Global definition}
\label{ariramgl}
We define a refinement $\cform^{D'}(\mf)$ of the mixed refined Swan conductor of $\mf$ introduced  in \cite[4.1]{ma} in Definition \ref{defcform}.
The refinement $\cform^{D'}(\mf)$ is defined for $\mf$ and a divisor $D'$
on $X$ with simple normal crossings contained in $D$ 
(possibly $D'=\emptyset$ as is explained in Conventions).
The conductor $\cform^{D'}(\mf)$ is also a refinement of
the refined Swan conductor of $\mf$ and the characteristic form of $\mf$ introduced in \cite{kasw} and \cite{sacot}, respectively
(see Remark \ref{remcfnormal} (4)).
We construct the conductor $\cform^{D'}(\mf)$ by using the sheaf theory 
and following the constructions of conductors $\rsw(\chi)$ and $\cform(\chi)$ in the previous subsection. 

Let $W_{s}(\dvr_{U_{\et}})$ (resp.\ $W_{s}(\dvr_{U})$) denote the \'{e}tale 
(resp.\ Zariski) sheaf of Witt vectors of length $s\in \mathbf{Z}_{\ge 0}$.
Let $F\colon W_{s}(\dvr_{U_{\et}})\rightarrow W_{s}(\dvr_{U_{\et}})$ 
(resp.\ $F\colon W_{s}(\dvr_{U})\rightarrow W_{s}(\dvr_{U})$) denote
the Frobenius morphism:
\begin{align}
F\colon W_{s}(\dvr_{U_{\et}})\rightarrow W_{s}(\dvr_{U_{\et}})\; &;\;
(a_{s-1},a_{s-2},\ldots,a_{0})\mapsto (a_{s-1}^{p}, a_{s-2}^{p}, \ldots, a_{0}^{p}) \notag \\
(\text{resp.\ }F\colon W_{s}(\dvr_{U})\rightarrow W_{s}(\dvr_{U})\; &;\; 
(a_{s-1},a_{s-2},\ldots,a_{0})\mapsto (a_{s-1}^{p}, a_{s-2}^{p}, \ldots, a_{0}^{p})\,).
\notag
\end{align}
Let $j\colon U=X-D\rightarrow X$ be the canonical open immersion
and let $\varepsilon\colon X_{\et}\rightarrow X_{\mathrm{Zar}}$ be the canonical mapping from the \'{e}tale site of $X$ to the Zariski site of $X$.
Applying $(\varepsilon\circ j)_{*}$ to the exact sequence
\begin{equation}
0\rightarrow W_{s}(\mathbf{F}_{p})\rightarrow W_{s}(\dvr_{U_{\et}})
\xrightarrow{F-1} W_{s}(\dvr_{U_{\et}}) \rightarrow 0 \notag
\end{equation}
of \'{e}tale sheaves on $U$,
we obtain the exact sequence
\begin{equation}
\label{deltassh}
0\rightarrow W_{s}(\mathbf{F}_{p})\rightarrow j_{*}W_{s}(\dvr_{U})
\xrightarrow{F-1} j_{*}W_{s}(\dvr_{U}) \xrightarrow{\delta_{s,j}} R^{1}(\varepsilon\circ j)_{*}\mathbf{Z}/p^{s}\mathbf{Z} \rightarrow 0
\end{equation}
of Zariski sheaves on $X$ by $R^{1}(\varepsilon\circ j)_{*}W_{s}(\dvr_{U_{\et}})=0$.
If $V$ is an open subscheme of $X$, then the canonical morphism
\begin{equation}
H^{1}_{\et}(U\cap V,\BZ/p^{s}\BZ)\rightarrow \Gamma(V, R^{1}(\varepsilon\circ j)_{*}\BZ/p^{s}\BZ) \notag
\end{equation}
is an isomorphism, since $E^{1,0}_{2}=E_{2}^{2,0}=0$ for the spectral sequence
\begin{equation}
E^{a,b}_{2}=H^{a}_{\mathrm{Zar}}(V,R^{b}(\varepsilon\circ j)_{*}\BZ/p^{s}\BZ)
\Rightarrow H^{a+b}_{\et}(U\cap V,\BZ/p^{s}\BZ). \notag
\end{equation}

If $D'$ is a divisor on $X$ with simple normal crossings contained in $D$
and if $R=\sum_{i\in I}n_{i}D_{i}$ with $n_{i}\in \mathbf{Z}$, 
then there is a canonical injection
\begin{equation}
\Omega^{1}_{X}(\log D')(R)\rightarrow j_{*}j^{*}\Omega_{X}^{1}(\log D'). \notag
\end{equation}
Identifying $j_{*}\Omega_{U}^{1}$ with $j_{*}j^{*}\Omega_{X}^{1}(\log D')$ by the canonical isomorphisms
\begin{equation}
j_{*}\Omega_{U}^{1}\xrightarrow{\sim} j_{*}j^{*}\Omega^{1}_{X}\xrightarrow{\sim}
j_{*}j^{*}\Omega_{X}^{1}(\log D'), \notag
\end{equation}
we regard $\Omega_{X}^{1}(\log D')(R)$ as a subsheaf of $j_{*}\Omega^{1}_{U}$.

\begin{defn}[cf. {\cite[Definitions 1.25, 1.34]{yafil}}]
\label{deffilsh}
Let $I'$ be a subset of $I$ and let $D'=\bigcup_{i\in I'}D_{i}$.
Let $R=\sum_{i\in I}n_{i}D_{i}$ with $n_{i}\in \mathbf{Z}_{\ge 0}$ for $i\in I'$ and
$n_{i}\in \mathbf{Z}_{\ge 1}$ for $i\in I-I'$. 
Let $j_{i}\colon \Spec K_{i}\rightarrow X$ denote the canonical morphism from the spectrum of 
the local field $K_{i}=\Frac\hat{\dvr}_{D_{i},\mathfrak{p}_{i}}$ at the generic point $\mathfrak{p}_{i}$ of $D_{i}$ for $i\in I$.

\begin{enumerate}
\item We define a Zariski subsheaf $\fillog^{D'}_{R}j_{*}W_{s}(\dvr_{U})$ of $j_{*}W_{s}(\dvr_{U})$ to be the inverse image of 
\begin{equation}
\label{filorig}
\bigoplus_{i\in I}j_{i*}\fillog_{n_{i}}W_{s}(K_{i})\oplus\bigoplus_{i\in I-I'}j_{i*}\fil_{n_{i}}W_{s}(K_{i}) \subset \bigoplus_{i\in I}j_{i*}W_{s}(K_{i}) 
\end{equation} 
by the canonical morphism
\begin{equation}
\label{canmorwitt}
j_{*}W_{s}(\dvr_{U})\rightarrow \bigoplus_{i\in I}j_{i*}W_{s}(K_{i}). 
\end{equation}
\item We define a Zariski subsheaf $\fillog^{D'}_{R}R^{1}(\varepsilon\circ j)_{*}\BZ/p^{s}\BZ$ of $R^{1}(\varepsilon\circ j)_{*}\BZ/p^{s}\BZ$ to be 
the image of $\fillog_{R}^{D'}j_{*}W_{s}(\dvr_{U})\subset j_{*}W_{s}(\dvr_{U})$ by $\delta_{s,j}$ (\ref{deltassh}).
\item Assume that $D'$ has simple normal crossings.
We define a Zariski subsheaf $\fillog_{R}^{D'}j_{*}\Omega^{1}_{U}$ of $j_{*}\Omega^{1}_{U}$
to be $\Omega^{1}_{X}(\log D')(R)$,
which has been regarded as a subsheaf of $j_{*}\Omega_{U}^{1}$.
\end{enumerate}
\end{defn}

Under the notation in Definition \ref{deffilsh},
let $Z=\Supp(R+D'-D)$ be the support of $R+D'-D$.
Then we have $\fillog_{R-Z}^{D'}\subset \fillog_{R}^{D'}$ 
and we put 
\begin{equation}
\grlog_{R}^{D'}=\fillog_{R}^{D'}/\fillog_{R-Z}^{D'}. \notag
\end{equation}

\begin{rem}
\label{remsncdfil}
Let the notation be as in Definition \ref{deffilsh}.
\begin{enumerate}
\item Suppose that $D$ has simple normal crossings.
Then $\fillog_{R}^{D}$ is the same as $\fillog_{R}$ in
\cite[Subsection 1.3]{yafil}
and $\fillog_{R}^{\emptyset}$ is the same as $\fil_{R}$ in \cite[Subsection 1.4]{yafil}.
\item Let $D''=\bigcup_{i\in I''}D_{i}$ for $I''\subset I'$.
Since $\fil_{m}W_{s}(K)\subset \fillog_{m}W_{s}(K)$ for any $m\in \mathbf{Z}_{\ge 1}$ and for any complete discrete valuation field $K$ of characteristic $p$ by (\ref{fileqmnws}),
we have 
\begin{equation}
\label{incjwsdppdp}
\fillog_{R}^{D''}j_{*}W_{s}(\dvr_{U})\subset \fillog_{R}^{D'}j_{*}W_{s}(\dvr_{U}).
\end{equation}
The inclusion (\ref{incjwsdppdp}) induces the canonical morphism
\begin{equation}
\label{morgrjwsdppdp}
\grlog_{R}^{D''}j_{*}W_{s}(\dvr_{U})\rightarrow \grlog_{R}^{D'}j_{*}W_{s}(\dvr_{U}). 
\end{equation}
\item For a morphism $h\colon W\rightarrow X$ of smooth schemes over $k$ such that 
$h^{*}D_{i}$ for $i\in I'$ and
$(h^{*}D')_{\red}=(D'\times_{X}W)_{\red}$ are divisors on $W$ with simple normal crossings
and that $h^{*}D_{i}$ for $i\in I-I'$ are smooth divisors on $W$,
we have 
\begin{equation}
\label{inchfilws}
h^{*}\fillog_{R}^{D'}j_{*}W_{s}(\dvr_{U})\subset \fillog_{h^{*}R}^{(h^{*}D')_{\red}}j'_{*}W_{s}(\dvr_{h^{*}U}), 
\end{equation} 
where $j'\colon h^{*}U=U\times_{X}W\rightarrow W$ denotes the base change of $j\colon U\rightarrow X$ by $h$. 
Actually, let $\{E_{\theta}\}_{\theta\in \Theta}$ be the irreducible components
of $(h^{*}D)_{\red}$ and let $\Theta'\subset \Theta$ be the index set 
of irreducible components of $(h^{*}D')_{\red}$. 
We put $h^{*}R=\sum_{\theta\in \Theta}m_{\theta}E_{\theta}$,
where $m_{\theta}\in \mathbf{Z}_{\ge 0}$.
Then $m_{\theta}$ for $\theta\in \Theta-\Theta'$ is positive, 
since so is $n_{i}$ for $i\in I-I'$.
Let $L_{\theta}$ denote the local field at the generic point of $E_{\theta}$ 
for $\theta\in \Theta$ 
and let $j_{\theta}'\colon \Spec L_{\theta}\rightarrow W$ 
be the canonical morphism.
Since $\fil_{m}W_{s}(K)\subset \fillog_{m}W_{s}(K)$ for any $m\in \mathbf{Z}_{\ge 1}$ and for any complete discrete valuation field $K$ of characteristic $p$ by (\ref{fileqmnws}),
the image of the restriction of the canonical morphism
\begin{equation}
\label{arwsltheta}
j'_{*}W_{s}(\dvr_{h^{*}U})\rightarrow j'_{\theta *}W_{s}(L_{\theta}). 
\end{equation}
to $h^{*}\fillog_{R}^{D'}j_{*}W_{s}(\dvr_{U})$ is contained in $j'_{\theta *}\fillog_{m_{\theta}}W_{s}(L_{\theta})$ for $\theta\in \Theta'$.
Let $\theta \in \Theta-\Theta'$ and
let $I_{\theta}$ be the subset of $I$ consisting of $i\in I$ such that $E_{\theta}\subset h^{*}D_{i}$.
Then we have $I_{\theta}\subset I-I'$ and $m_{\theta}=\sum_{i\in I_{\theta}}n_{i}$.
We put $s_{i}'=\min\{s,\ord_{p}(n_{i})\}$ for $i\in I_{\theta}$
and $s'=\min\{s,\ord_{p}(m_{\theta})\}$.
Since $\min\{s_{i}'\; |\; i\in I_{\theta}\}\le s'$, 
there is at least one $i\in I_{\theta}$ such that $s_{i}'\le s'$,
and the image of the restriction of (\ref{arwsltheta}) to $h^{*}\fillog_{R}^{D'}j_{*}W_{s}(\dvr_{U})$ is contained in
\begin{equation}
j'_{\theta *}\fillog_{m_{\theta}-1}W_{s}(L_{\theta})+j'_{\theta *}V^{s-s'}\fillog_{m_{\theta}}W_{s'}(L_{\theta})=
j'_{\theta *}\fil_{m_{\theta}}W_{s}(L_{\theta}). \notag
\end{equation}
Thus we have (\ref{inchfilws}).
\end{enumerate}
\end{rem}

For a subset $I'\subset I$ such that $D'=\bigcup_{i\in I'}D_{i}$ has simple normal crossings 
and for $R=\sum_{i\in I}n_{i}D_{i}$, where $n_{i}\in \mathbf{Z}_{\ge 0}$ for $i\in I'$ 
and $n_{i}\in \mathbf{Z}_{\ge 1}$ for $i\in I-I'$,
we construct a morphism
\begin{equation}
\label{mordefcf}
\phi_{s}^{(D'\subset D,R)}\colon 
\grlog_{R}^{D'}R^{1}(\varepsilon\circ j)_{*}\BZ/p^{s}\BZ
\rightarrow \grlog^{D'}_{R}j_{*}\Omega^{1}_{U}\otimes_{\dvr_{Z}}\dvr_{Z^{1/p}}
=\Omega^{1}_{X}(\log D')(R)|_{Z^{1/p}} 
\end{equation}
to define the conductor $\cform^{D'}(\mf)$.
Here the symbol $Z^{1/p}$ in (\ref{mordefcf}) denotes the radicial covering
of $Z=\Supp(R+D'-D)$ defined as follows.

Let $f\colon S\rightarrow \Spec k$ be the structure morphism of a scheme $S$ over $k$.
We define the {\it radicial covering} $S^{1/p}$ of $S$ by the cartesian diagram
\begin{equation}
\label{diagradcov}
\xymatrix{
S^{1/p}\ar[r] \ar[d] \ar@{}[dr] | {\square} & S \ar[d]^-{f} \\
\Spec k \ar[r]_{F_{k}^{-1}} & \Spec k, }
\end{equation}
where the symbol $F_{k}$ denotes the absolute Frobenius morphism of $\Spec k$.
We regard $S^{1/p}$ as a scheme over $S$ by the composition
\begin{equation}
\label{radicialcvmap}
S^{1/p}\rightarrow S\xrightarrow{F_{S}} S  
\end{equation}
of the upper horizontal arrow in (\ref{diagradcov}) and
the absolute Frobenius $F_{S}$ of $S$.
If $S$ is locally of finite type over $k$, then the composition
(\ref{radicialcvmap}) is a finite covering.
If $T$ is a subscheme of $S$, then $T^{1/p}$ is the fiber product of
$T$ and $S^{1/p}$ over $S$.
If $\{S_{h}\}_{h}$ are the irreducible components of $S$,
then $\{S_{h}^{1/p}\}_{h}$ are the irreducible components of $S^{1/p}$.

For $s\in \mathbf{Z}_{\ge 0}$, let 
\begin{equation}
\label{fdsh}
-F^{s-1}d\colon j_{*}W_{s}(\dvr_{U})
\rightarrow j_{*}\Omega^{1}_{U} 
\end{equation} 
be the morphism locally defined by
\begin{equation}
-F^{s-1}da=-\sum_{i=0}^{s-1}a_{i}^{p^{i}-1}da_{i} \notag
\end{equation} 
for $a=(a_{s-1},\ldots,a_{0})\in j_{*}W_{s}(\dvr_{U})$.
Then the morphism $-F^{s-1}d$ (\ref{fdsh})
induces the morphism
\begin{equation}
\label{morfilwsshomsh}
\fillog_{R}^{D'}j_{*}W_{s}(\dvr_{U})\rightarrow \fillog_{R}^{D'}j_{*}\Omega_{U}^{1}, 
\end{equation}
and the morphism (\ref{morfilwsshomsh}) induces the morphism
\begin{equation}
\label{morgrrdpwo}
\grlog_{R}^{D'}j_{*}W_{s}(\dvr_{U})\rightarrow \grlog_{R}^{D'}j_{*}\Omega_{U}^{1}=\Omega_{X}^{1}(\log D')\otimes_{X}\dvr_{Z}.
\end{equation}
In the following, we often denote $\Omega_{X}^{1}(\log D')\otimes_{\dvr_{X}}\dvr_{Z}$
and $\Omega_{X}^{1}(\log D')\otimes_{\dvr_{X}}\dvr_{Z^{1/p}}$
by $\Omega_{X}^{1}(\log D')|_{Z}$ and
$\Omega_{X}^{1}(\log D')|_{Z^{1/p}}$, respectively.

\begin{lem}
\label{lemcfdef}
Let $I'\subset I$ be a subset such that $D'=\bigcup_{i\in I'}D_{i}$
has simple normal crossings and
let $R=\sum_{i\in I}n_{i}D_{i}$, where $n_{i}\in \mathbf{Z}_{\ge 0}$ for $i\in I'$
and $n_{i}\in \mathbf{Z}_{\ge 1}$ for $i\in I-I'$.
We put $Z=\Supp(R+D'-D)$.
For $s\in \mathbf{Z}_{\ge 0}$,
there exists a unique morphism
\begin{equation}
\varphi_{s}^{(D'\subset D,R)}\colon \grlog^{D'}_{R}j_{*}W_{s}(\dvr_{U})
\rightarrow \grlog^{D'}_{R}j_{*}\Omega^{1}_{U}\otimes_{\dvr_{Z}}\dvr_{Z^{1/p}}
=\Omega^{1}_{X}(\log D')(R)|_{Z^{1/p}} \notag
\end{equation}
such that the equations
\begin{equation}
\label{varphidpdr}
\varphi_{s}^{(D'\subset D,R)}(\bar{a})=
\begin{cases} \overline{-F^{s-1}da}\otimes \overline{1} & (p\neq 2), \\ 
\overline{-F^{s-1}da}\otimes \overline{1}+
\sum_{\substack{i\in I-I'\\ n_{i}=2}} \left(\overline{dt_{i}/t_{i}^{2}}\otimes \sqrt{\overline{a_{0}t_{i}^{2}}}\right)& (p=2)
\end{cases}  
\end{equation}
locally hold for the image $\bar{a}$ of every $a=(a_{s-1},a_{s-2},\ldots,a_{0})\in \fillog_{R}^{D'}j_{*}W_{s}(\dvr_{U})$ in $\grlog^{D'}_{R}j_{*}W_{s}(\dvr_{U})$, and for every local equation $t_{i}$ of $D_{i}$ for $i\in I-I'$ such that $n_{i}=2$ in the case where $p=2$.
\end{lem}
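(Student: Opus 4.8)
The plan is to read the formula (\ref{varphidpdr}) as a definition on local sections, to verify that it descends to a well-defined morphism of Zariski sheaves, and to deduce uniqueness formally. When $p\neq 2$ there is nothing to do beyond (\ref{morgrrdpwo}): the prescribed map is the composition of the graded morphism $\grlog_R^{D'}j_*W_s(\dvr_U)\rightarrow\grlog_R^{D'}j_*\Omega^1_U$ induced by $-F^{s-1}d$ (\ref{fdsh}) with the base-change map $-\otimes\overline 1$ to $\dvr_{Z^{1/p}}$, so existence and uniqueness are immediate. Hence the entire content lies in the case $p=2$, where I must show that adjoining the correction term $\sum_{i\in I-I',\,n_i=2}\overline{dt_i/t_i^2}\otimes\sqrt{\overline{a_0t_i^2}}$ still yields a morphism, and in particular one that is independent of the chosen local equations $t_i$. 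I would therefore fix local equations $t_i$, define the map by (\ref{varphidpdr}), check well-definedness and independence of the $t_i$, and then glue.

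The first step is to record that the right-hand side of (\ref{varphidpdr}) factors through $\grlog_R^{D'}j_*W_s(\dvr_U)=\fillog_R^{D'}/\fillog_{R-Z}^{D'}$. Each summand of the correction is supported on $D_i^{1/p}$ for $i\in I-I'$ with $n_i=2$, and these components are pairwise distinct, so they may be treated separately. Since the $0$-th Witt component of a Witt sum is the sum of the $0$-th components and, in characteristic $2$, the inverse Frobenius $x\mapsto\sqrt x$ is additive, the assignment $a\mapsto\sqrt{\overline{a_0t_i^2}}$ is additive; together with the additivity of (\ref{morgrrdpwo}) this makes the whole formula additive in $a$. It then suffices to check that it vanishes on $\fillog_{R-Z}^{D'}j_*W_s(\dvr_U)$: for such an $a$ the image at $D_i$ lies in $\fil_1W_s(K_i)=\fillog_0W_s(K_i)$ by (\ref{fileqzows}), so $\ord_{K_i}(a_0)\ge 0$, whence $\overline{a_0t_i^2}=0$ and the correction vanishes, while the naive term vanishes by (\ref{morgrrdpwo}).

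The main obstacle is independence of the local equation $t_i$, which is the global counterpart of the well-definedness of $\varphi_s'^{(2)}$ established in \cite[3.2]{ma} and \cite[Proposition 1.17 (i)]{yafil}, and which I would verify by a direct computation. Writing $t_i'=ut_i$ with $u$ a unit, one has $\sqrt{\overline{a_0t_i'^2}}=\overline u\,\sqrt{\overline{a_0t_i^2}}$ because $x\mapsto\sqrt x$ is a ring homomorphism in characteristic $2$, while $dt_i'/t_i'^2=dt_i/(ut_i^2)+\dlog u/(ut_i)$. Moving the scalar $\overline u\in\dvr_Z$ across the tensor product, the first term reproduces $\overline{dt_i/t_i^2}\otimes\sqrt{\overline{a_0t_i^2}}$, and the second becomes $\overline{\dlog u/t_i}\otimes\sqrt{\overline{a_0t_i^2}}$; since $\dlog u$ is regular and $t_i^{-1}$ has pole order $1<n_i$ along $D_i$, the form $\dlog u/t_i$ lies in $\fillog_{R-Z}^{D'}j_*\Omega^1_U$ and is therefore $0$ in the graded quotient, so the second term drops out. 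This computation is exactly what forces the shape of the correction term and shows the formula is independent of $t_i$. With this independence the local definitions agree on overlaps and glue to a morphism $\varphi_s^{(D'\subset D,R)}$ satisfying (\ref{varphidpdr}); uniqueness is then formal, since the quotient map $\fillog_R^{D'}j_*W_s(\dvr_U)\rightarrow\grlog_R^{D'}j_*W_s(\dvr_U)$ is surjective, so every local section of the source is of the form $\bar a$ and (\ref{varphidpdr}) prescribes its image.
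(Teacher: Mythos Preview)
Your proof is correct and follows essentially the same approach as the paper: reduce to the case $p=2$, verify independence of the local equation $t_i$ by computing $d(ut_i)/(ut_i)^2$ and showing the cross term vanishes in the graded quotient, verify additivity of the correction, and glue. The only cosmetic differences are that the paper kills the cross term via the identity $t_i\sqrt{\prod_{i'\neq i}t_{i'}^{n_{i'}}}=0$ in $\dvr_{Z^{1/2}}$ (after first rewriting $\overline{dt_i/t_i^2}\otimes\sqrt{\overline{a_0t_i^2}}$ to make the element of $\dvr_{Z^{1/2}}$ manifest), whereas you observe directly that $\dlog u/t_i\in\fillog_{R-Z}^{D'}j_*\Omega^1_U$; and the paper deduces additivity from the reduction $(\overline{a_{s-1}},\ldots,\overline{a_0})=(0,\ldots,0,\overline{a_0})$ in $j_{i*}\gr_2W_s(K_i)$, whereas you use the additivity of the zeroth Witt component together with the additivity of $\sqrt{\phantom{x}}$ in characteristic $2$.
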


\begin{proof}
We consider the composition
\begin{equation}
\label{compcfdef}
\grlog_{R}^{D'}j_{*}W_{s}(\dvr_{U})
\xrightarrow{(\ref{morgrrdpwo})}
\grlog_{R}^{D'}j_{*}\Omega_{U}^{1}\rightarrow 
\grlog_{R}^{D'}j_{*}\Omega_{U}^{1}\otimes_{\dvr_{Z}}\dvr_{Z^{1/p}}
\end{equation}
of the morphism (\ref{morgrrdpwo}) and the canonical injection.
If $p\neq 2$, then the composition (\ref{compcfdef})
satisfies the desired condition.

Suppose that $p=2$.
Let $t_{i}$ be a local equation of $D_{i}$ for $i\in I$ and
let $a=(a_{s-1},a_{s-2},\ldots,a_{0})\in \fillog_{R}^{D'}j_{*}W_{s}(\dvr_{U})$.
Take $i\in I-I'$ such that $n_{i}=2$.
Then we locally have 
\begin{equation}
\overline{dt_{i}/t_{i}^{2}}\otimes \sqrt{\overline{a_{0}t_{i}^{2}}}
=\overline{dt_{i}/\prod_{i'\in I}t_{i'}^{n_{i'}}}\otimes \sqrt{\overline{a_{0}\prod_{i'\in I}t_{i'}^{n_{i'}}\prod_{i'\in I-\{i\}}t_{i'}^{n_{i'}}}}\in \Omega_{X}^{1}(\log D')(R)|_{Z^{1/2}}, \notag
\end{equation}
where $\overline{a_{0}\prod_{i'\in I}t_{i'}^{n_{i'}}}\in \dvr_{Z}$.
If $u\in \dvr_{X}$ is locally a unit, then we locally have 
\begin{align}
\overline{d(ut_{i})/(ut_{i})^{2}}\otimes \sqrt{\overline{a_{0}(ut_{i})^{2}}}
&=\overline{dt_{i}/t_{i}^{2}}\otimes \sqrt{\overline{a_{0}t_{i}^{2}}}+
\overline{t_{i}du/ut_{i}^{2}}\otimes \sqrt{\overline{a_{0}t_{i}^{2}}} \notag \\
&=\overline{dt_{i}/t_{i}^{2}}\otimes \sqrt{\overline{a_{0}t_{i}^{2}}} \notag
\end{align}
in $\Omega_{X}^{1}(\log D')(R)|_{Z^{1/2}}$,
where we applied $t_{i}\sqrt{\prod_{i'\in I-\{i\}}t_{i'}^{n_{i'}}}=0$ in $\dvr_{Z^{1/2}}$ at the last equality.
Therefore we can glue $\overline{dt_{i}/t_{i}^{2}}\otimes \sqrt{\overline{a_{0}t_{i}^{2}}}$,
and there exists a unique mapping
\begin{equation}
\varphi_{i}\colon\grlog^{D'}_{R}j_{*}W_{s}(\dvr_{U})
\rightarrow \grlog^{D'}_{R}j_{*}\Omega^{1}_{U}\otimes_{\dvr_{Z}}\dvr_{Z^{1/2}}  \notag
\end{equation}
locally defined by
\begin{equation}
(\overline{a_{s-1}},\overline{a_{s-2}},\ldots,\overline{a_{0}})\mapsto
\overline{dt_{i}/t_{i}^{2}}\otimes \sqrt{\overline{a_{0}t_{i}^{2}}}. \notag
\end{equation}
Since $p=2$ and since we have
\begin{equation}
(\overline{a_{s-1}},\overline{a_{s-2}},\ldots,\overline{a_{0}})
=(0, 0, \ldots, \overline{a_{0}}) \notag
\end{equation}
in $j_{i*}\gr_{2}W_{s}(K_{i})$ for $(\overline{a_{s-1}},\overline{a_{s-2}},\ldots,\overline{a_{0}})\in \grlog^{D'}_{R}j_{*}W_{s}(\dvr_{U})$, 
the mapping $\varphi_{i}$ is a morphism.
Then the sum of the composition (\ref{compcfdef}) and $\varphi_{i'}$ for all $i'\in I-I'$ such that $n_{i'}=2$
is the desired morphism.
\end{proof}

By the definitions of $\fillog_{R-Z}^{D'}R^{1}(\varepsilon\circ j)_{*}\BZ/p^{s}\BZ$ and $\fillog_{R}^{D'}R^{1}(\varepsilon\circ j)_{*}\BZ/p^{s}\BZ$ as
the images of $\fillog_{R-Z}^{D'}j_{*}W_{s}(\dvr_{U})$ and $\fillog_{R}^{D'}j_{*}W_{s}(\dvr_{U})$ by $\delta_{s,j}$ (\ref{deltassh}), respectively,
the morphism $\delta_{s,j}$ for $s\in \mathbf{Z}_{\ge 0}$ induces a surjection
\begin{equation}
\label{deltasdpdr}
\delta_{s}^{(D'\subset D, R)}\colon \grlog^{D'}_{R}j_{*}W_{s}(\dvr_{U})\rightarrow
\grlog^{D'}_{R}R^{1}(\varepsilon\circ j)_{*}\BZ/p^{s}\BZ. 
\end{equation}

We construct the morphism (\ref{mordefcf}) as follows:

\begin{prop}
\label{propcf}
Let $I'\subset I$ be a subset such that $D'=\bigcup_{i\in I-I'}D_{i}$ has
simple normal crossings and
let $R=\sum_{i\in I}n_{i}D_{i}$ where $n_{i}\in \mathbf{Z}_{\ge 0}$ for $i\in I'$
and $n_{i}\in\mathbf{Z}_{\ge 1}$ for $i\in I-I'$.
We put $Z=\Supp(R+D'-D)$.
For each $s\in \mathbf{Z}_{\ge 0}$, 
there exists a unique morphism
\begin{equation}
\phi_{s}^{(D'\subset D,R)}\colon \grlog_{R}^{D'}R^{1}(\varepsilon\circ j)_{*}\BZ/p^{s}\BZ\rightarrow \grlog_{R}^{D'}j_{*}\Omega^{1}_{U}\otimes_{\dvr_{Z}}\dvr_{Z^{1/p}}
=\Omega^{1}_{X}(\log D')(R)|_{Z^{1/p}} \notag
\end{equation}
such that the following diagram is commutative:
\begin{equation}
\label{comdiagcfshf}
\begin{xy}
(0,0)="A"*{\grlog^{D'}_{R}R^{1}(\varepsilon\circ j)_{*}\mathbf{Z}/p^{s}\mathbf{Z},},
"A"+<3.6cm,1.5cm>="C"*{\grlog^{D'}_{R}j_{*}\Omega_{U}^{1}\otimes_{\dvr_{Z}}\dvr_{Z^{1/p}}},
"C"+/l7.2cm/="B"*{\grlog^{D'}_{R}j_{*}W_{s}(\dvr_{U})}
\ar^-{\varphi_{s}^{(D'\subset D,R)}}"B"+/r1.4cm/;"C"+/l2cm/
\ar_-{\delta_{s}^{(D'\subset D,R)}} "B"+/d.3cm/;"A"+/u.4cm/+/l1.3cm/
\ar_-{\phi_{s}^{(D'\subset D,R)}} "A"+<1.3cm,0.3cm>;"C"+/d0.4cm/
\end{xy} 
\end{equation}
where $\delta_{s}^{(D'\subset D,R)}$ is as in (\ref{deltasdpdr}) and
$\varphi_{s}^{(D'\subset D,R)}$ is as in Lemma \ref{lemcfdef}.
\end{prop}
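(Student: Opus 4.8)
The plan is to obtain $\phi_{s}^{(D'\subset D,R)}$ as the factorization of the morphism $\varphi_{s}^{(D'\subset D,R)}$ of Lemma \ref{lemcfdef} through the surjection $\delta_{s}^{(D'\subset D,R)}$ of (\ref{deltasdpdr}). Since $\delta_{s}^{(D'\subset D,R)}$ is surjective, such a factorization is automatically unique and automatically makes the diagram (\ref{comdiagcfshf}) commute; so the entire content of the proposition is the kernel inclusion $\Ker\delta_{s}^{(D'\subset D,R)}\subset\Ker\varphi_{s}^{(D'\subset D,R)}$, after which $\phi_{s}^{(D'\subset D,R)}$ is defined by sending $\delta_{s}^{(D'\subset D,R)}(\bar a)$ to $\varphi_{s}^{(D'\subset D,R)}(\bar a)$.

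First I would describe $\Ker\delta_{s}^{(D'\subset D,R)}$ explicitly. By the exact sequence (\ref{deltassh}) we have $\Ker\delta_{s,j}=\Image(F-1)$, and by construction $\grlog_{R}^{D'}=\fillog_{R}^{D'}/\fillog_{R-Z}^{D'}$ with $\fillog_{R-Z}^{D'}R^{1}(\varepsilon\circ j)_{*}\BZ/p^{s}\BZ$ equal to the image of $\fillog_{R-Z}^{D'}j_{*}W_{s}(\dvr_{U})$ under $\delta_{s,j}$. Combining these, a local section represented by $a\in\fillog_{R}^{D'}j_{*}W_{s}(\dvr_{U})$ lies in $\Ker\delta_{s}^{(D'\subset D,R)}$ exactly when $a=a'+(F-1)b$ with $a'\in\fillog_{R-Z}^{D'}j_{*}W_{s}(\dvr_{U})$ and $b$ a local section of $j_{*}W_{s}(\dvr_{U})$ with $(F-1)b\in\fillog_{R}^{D'}j_{*}W_{s}(\dvr_{U})$. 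As $\overline{a'}=0$ in $\grlog_{R}^{D'}$, every local section of the kernel is of the form $\overline{(F-1)b}$, so it suffices to prove $\varphi_{s}^{(D'\subset D,R)}(\overline{(F-1)b})=0$ for every such $b$.

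To verify this vanishing I would argue at generic points. The target $\Omega^{1}_{X}(\log D')(R)|_{Z^{1/p}}$ is locally free over the reduced ring $\dvr_{Z^{1/p}}$, so a section is zero once it vanishes at the generic point of each irreducible component $D_{i}^{1/p}$ of $Z^{1/p}$, that is, at the generic point $\mathfrak{p}_{i}$ of each component $D_{i}\subset Z$. Note that a component $D_{i}$ with $i\in I'$ appears in $Z=\Supp(R+D'-D)$ only when $n_{i}\ge 1$, and a component with $i\in I-I'$ only when $n_{i}\ge 2$, so the local maps of Proposition \ref{swgldef} are defined at each such $\mathfrak{p}_{i}$. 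By Definition \ref{deffilsh} the filtration $\fillog_{R}^{D'}$ localizes at $\mathfrak{p}_{i}$ to $\fillog_{n_{i}}W_{s}(K_{i})$ for $i\in I'$ and to $\fil_{n_{i}}W_{s}(K_{i})$ for $i\in I-I'$, and the defining formula (\ref{varphidpdr}) is arranged so that $\varphi_{s}^{(D'\subset D,R)}$ localizes at $\mathfrak{p}_{i}$ to $\varphi_{s}^{(n_{i})}$ of (\ref{varphilogloc}) when $i\in I'$ and to $\varphi_{s}^{\prime(n_{i})}$ of (\ref{varphinlogloc}) when $i\in I-I'$; in the latter case, for $p=2$, the correction term of (\ref{varphidpdr}) restricts precisely to the $(m,p)=(2,2)$ clause defining $\varphi_{s}^{\prime(n_{i})}$. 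Likewise $\delta_{s}^{(D'\subset D,R)}$ localizes to $\delta_{s}^{(n_{i})}$, resp.\ $\delta_{s}^{\prime(n_{i})}$.

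Now Proposition \ref{swgldef} gives $\varphi_{s}^{(n_{i})}=\phi^{(n_{i})}\circ\delta_{s}^{(n_{i})}$, resp.\ $\varphi_{s}^{\prime(n_{i})}=\phi^{\prime(n_{i})}\circ\delta_{s}^{\prime(n_{i})}$. The localization of $(F-1)b$ at $\mathfrak{p}_{i}$ lies in $\Image(F-1)=\Ker\delta_{s}$ over $K_{i}$ by the Artin--Schreier--Witt sequence (\ref{esdelta}), so its class is annihilated by $\delta_{s}^{(n_{i})}$, resp.\ $\delta_{s}^{\prime(n_{i})}$, hence by $\varphi_{s}^{(n_{i})}$, resp.\ $\varphi_{s}^{\prime(n_{i})}$. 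Therefore $\varphi_{s}^{(D'\subset D,R)}(\overline{(F-1)b})$ vanishes at the generic point of every $D_{i}^{1/p}$, hence vanishes, which proves the kernel inclusion and the proposition. I expect the main obstacle to be exactly this local--global comparison: checking that the filtrations and the formula (\ref{varphidpdr}) restrict to the local filtrations and maps of Subsection \ref{ariramloc} at each generic point, compatibly with the radicial cover $Z^{1/p}$ and, in characteristic $2$, matching the square-root correction term of the $(m,p)=(2,2)$ case. Once that compatibility is in hand, the reduction to the kernel inclusion and the generic-point vanishing are formal.
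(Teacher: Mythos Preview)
Your proposal is correct and takes essentially the same approach as the paper: both reduce to the kernel inclusion $\Ker\delta_{s}^{(D'\subset D,R)}\subset\Ker\varphi_{s}^{(D'\subset D,R)}$ and verify it by passing to generic points of the components of $Z$, using local-freeness of $\Omega^{1}_{X}(\log D')(R)|_{Z^{1/p}}$ together with the local factorizations of Proposition~\ref{swgldef}. The only presentational difference is that the paper packages the localization as a single commutative diagram with the direct sums $\bigoplus_{i}\grlog_{n_{i}}W_{s}(K_{i})\oplus\bigoplus_{i}\gr_{n_{i}}W_{s}(K_{i})$ etc.\ and deduces the kernel inclusion from the equality $\Ker(\oplus\delta_{s}^{(n_{i})})=\Ker(\oplus\varphi_{s}^{(n_{i})})$, whereas you describe kernel elements explicitly as $\overline{(F-1)b}$ and check their vanishing componentwise; the mathematical content is identical.
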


\begin{proof}
We put 
\begin{align}
\mg&=\bigoplus_{\substack{i\in I\\ n_{i}>0}}j_{i*}\grlog_{n_{i}}H^{1}(K_{i},\mathbf{Q}/\mathbf{Z})
\oplus \bigoplus_{\substack{i\in I-I' \\ n_{i}>1}} j_{i*}\gr_{n_{i}}H^{1}(K_{i},\mathbf{Q}/\mathbf{Z}), \notag \\
\mh&=\bigoplus_{\substack{i\in I\\ n_{i}>0}}j_{i*}\grlog_{n_{i}}W_{s}(K_{i})
\oplus \bigoplus_{\substack{i\in I-I' \\ n_{i}>1}} j_{i*}\gr_{n_{i}}W_{s}(K_{i}), \notag \\
\mathcal{I}&=\bigoplus_{\substack{i\in I\\ n_{i}>0}}j_{i*}(\grlog_{n_{i}}\Omega^{1}_{K_{i}}\otimes_{F_{K_{i}}}F_{K_{i}}^{1/p})
\oplus \bigoplus_{\substack{i\in I-I' \\ n_{i}>1}} j_{i*}(\gr_{n_{i}}\Omega^{1}_{K_{i}}\otimes_{F_{K_{i}}}F_{K_{i}}^{1/p}), \notag
\end{align}
where $j_{i}\colon \Spec K_{i}\rightarrow X$ are the canonical morphisms.
Let $\oplus \delta_{s}^{(n_{i})}\colon \mh\rightarrow \mg$ denote the direct sum of the morphisms 
defined by $\delta_{s}^{(n_{i})}\colon \grlog_{n_{i}}W_{s}(K_{i})\rightarrow \grlog_{n_{i}}H^{1}(K_{i},\mathbf{Q}/\mathbf{Z})$ (\ref{deltasloclog})
for $i\in I'$ such that $n_{i}>0$ 
and by $\delta_{s}'^{(n_{i})}\colon \gr_{n_{i}}W_{s}(K_{i})\rightarrow \gr_{n_{i}}H^{1}(K_{i},\mathbf{Q}/\mathbf{Z})$ (\ref{deltaslocnlog})
for $i\in I-I'$ such that $n_{i}>1$.
Let $\bigoplus \varphi_{s}^{(n_{i})}\colon \mh\rightarrow \mathcal{I}$ be 
the direct sum of the morphisms defined by the compositions
\begin{equation}
\grlog_{n_{i}}W_{s}(K_{i})\xrightarrow{\varphi_{s}^{(n_{i})}} \grlog_{n_{i}}\Omega^{1}_{K}
\rightarrow \grlog_{n_{i}}\Omega^{1}_{K_{i}}\otimes_{F_{K_{i}}}F_{K_{i}}^{1/p} \notag
\end{equation}
of $\varphi_{s}^{(n_{i})}$ (\ref{varphilogloc}) and the canonical injections
for $i\in I'$ such that $n_{i}>0$ and 
those defined by the morphisms
$\varphi_{s}'^{(n_{i})}\colon \gr_{n_{i}}W_{s}(K_{i})\rightarrow \gr_{n_{i}}\Omega^{1}_{K_{i}}\otimes_{F_{K_{i}}}F_{K_{i}}^{1/p}$ (\ref{varphinlogloc}) for $i\in I-I'$ such that $n_{i}>1$.
We consider the commutative diagram
\begin{equation}
\label{cdchargl}
\xymatrix{
\grlog_{R}^{D'}R^{1}(\varepsilon\circ j)_{*}\mathbf{Z}/p^{s}\mathbf{Z} \ar[d] & &
\grlog_{R}^{D'}j_{*}W_{s}(\dvr_{U}) \ar[ll]_-{\delta_{s}^{(D'\subset D,R)}}
\ar[rr]^-{\varphi_{s}^{(D'\subset D,R)}} \ar[d] & &
\grlog_{R}^{D'}j_{*}\Omega_{U}^{1}\otimes_{\dvr_{Z}}\dvr_{Z^{1/p}} \ar[d]\\
\mg &  & \mh \ar[ll]^-{\oplus\delta_{s}^{(n_{i})}} \ar[rr]_-{\oplus \varphi_{s}^{(n_{i})}} & & \mathcal{I},
} 
\end{equation}
where the vertical arrows are canonical morphisms.
Since $\grlog_{n_{i}}\Omega_{K_{i}}^{1}\otimes_{F_{K_{i}}}F_{K_{i}}^{1/p}$
for $i\in I'$ such that $n_{i}>0$
and $\gr_{n_{i}}\Omega_{K_{i}}^{1}\otimes_{F_{K_{i}}}F_{K_{i}}^{1/p}$
for $i\in I-I'$ such that $n_{i}>1$
are the stalks of the locally free $\dvr_{Z^{1/p}}$-module $\Omega_{X}^{1}(\log D')(R)|_{Z^{1/p}}$ 
at the generic points of $D_{i}^{1/p}$, respectively,
the right vertical arrow in (\ref{cdchargl}) is injective.
By Proposition \ref{swgldef}, the kernel of $\oplus\delta_{s}^{(n_{i})}$
is equal to that of $\bigoplus \varphi_{s}^{(n_{i})}$.
By the commutativity of (\ref{cdchargl}) and the injectivity of the right vertical arrow, 
the kernel of $\delta_{s}^{(D'\subset D,R)}$ is contained in the kernel of $\varphi_{s}^{(D'\subset D,R)}$.
Hence the morphism
$\varphi_{s}^{(D'\subset D,R)}$ factors as the canonical surjection
\begin{equation}
\grlog_{R}^{D'}j_{*}W_{s}(\dvr_{U})
\rightarrow \grlog_{R}^{D'}j_{*}W_{s}(\dvr_{U})/\Ker \delta_{s}^{(D'\subset D,R)} \notag
\end{equation}
followed by 
the composition 
\begin{equation}
\label{compdvp}
\grlog_{R}^{D'}j_{*}W_{s}(\dvr_{U})/\Ker \delta_{s}^{(D'\subset D,R)}
\rightarrow 
\grlog_{R}^{D'}j_{*}W_{s}(\dvr_{U})/\Ker \varphi_{s}^{(D'\subset D,R)}
\rightarrow
\grlog_{R}^{D'}j_{*}\Omega^{1}_{U}\otimes_{\dvr_{Z}}\dvr_{Z^{1/p}}
\end{equation}
of the canonical surjection and the injection induced by $\varphi_{s}^{(D'\subset D,R)}$.
Since $\delta_{s}^{(D'\subset D,R)}$ is surjective,
the morphism $\delta_{s}^{(D'\subset D,R)}$ induces an isomorphism 
\begin{equation}
\label{isomdsdr}
\grlog_{R}^{D'}j_{*}W_{s}(\dvr_{U})/\Ker \delta_{s}^{(D'\subset D,R)}
\xrightarrow{\sim} \grlog_{R}^{D'}R^{1}(\varepsilon\circ j)_{*}\mathbf{Z}/p^{s}\mathbf{Z}. 
\end{equation}
Then the composition of the inverse of (\ref{isomdsdr}) and the composition (\ref{compdvp})
is the desired morphism.
\end{proof}

We construct the conductor $\cform^{D'}(\mf)$.
In the rest of this article, we denote by $\chi\colon \pi^{\ab}(U)\rightarrow \Lambda^{\times}$ the character corresponding to $\mf$,
as is explained in Conventions.
We fix an inclusion $\psi\colon \Lambda^{\times}\rightarrow \BQ/\BZ$
and regard $\chi|_{K_{i}}\colon G_{K_{i}}\rightarrow \Lambda^{\times}$
(the second paragraph in Conventions)
as an element of $H^{1}(K_{i},\mathbf{Q}/\mathbf{Z})$ by $\psi$ for $i\in I$.
Then we can consider the conductors defined in the previous subsection for $\chi|_{K_{i}}$.

\begin{defn}
\label{defindsub}
\begin{enumerate}
\item We define a subset $I_{\mT,\mf}$ of $I$ by
\begin{align}
I_{\mT,\mf}&=\{i\in I\; |\; \sw(\chi|_{K_{i}})=0\}  \notag 
\end{align}
and define a subset $I_{\mW,\mf}$ of $I$ by
\begin{align} 
I_{\mW,\mf}&=I-I_{\mT,\mf}=\{i\in I\; |\; \sw(\chi|_{K_{i}})\ge 1\}. \notag
\end{align}
\item We define a subset $I_{\mI,\mf}$ of $I_{\mW,\mf}$ by
\begin{align}
I_{\mI,\mf}&=\{i\in I_{\mW,\mf}\; |\; \chi|_{K_{i}} \text{ is of type }
\mI\},  \notag 
\end{align}
and define a subset $I_{\mII,\mf}$ of $I$ by 
\begin{align}
I_{\mII,\mf}&=I_{\mW,\mf}-I_{\mI,\mf}=\{i\in I_{\mW,\mf}\; |\; \chi|_{K_{i}} \text{ is of type }
\mII\} \notag
\end{align}
(see Definition \ref{deftypes} for the notion of the types of $\chi|_{K_{i}}$).
\end{enumerate}
\end{defn}

If there is no risk of confusion, then
we use $I_{*,\mf}$ for $*=\mT,\mW, \mI,\mII$ 
independently on the symbol denoting the
index set of irreducible components of a divisor.

\begin{rem}
\label{remdefitw}
Let the notation be as in Definition \ref{defindsub}.
\begin{enumerate}
\item We have $I=I_{\mT,\mf} \sqcup I_{\mW,\mf}$ and
$I_{\mW,\mf}=I_{\mI,\mf}\sqcup I_{\mII,\mf}$.
\item By Remark \ref{remtameloc} (1), 
the subset $I_{*,\mf}$ of $I$ is dependent only on the $p$-part of $\chi$ for every $*=\mT,\mW,\mI,\mII$.
\end{enumerate}
\end{rem}

Let $I'\subset I$ be a subset and let $D'=\bigcup_{i\in I'}D_{i}$.
Then we put
\begin{equation}
\label{defswdp}
\sw^{D'}(\chi|_{K_{i}})=\begin{cases}
\sw(\chi|_{K_{i}}) & (i\in I'), \\
\dt(\chi|_{K_{i}}) & (i\in I-I')
\end{cases}
\end{equation}
for $i\in I$.

\begin{defn}
\label{defconddiv}
\begin{enumerate}
\item Let $I'$ be a subset of $I$ and let $D'=\bigcup_{i\in I'}D_{i}$.
We define a divisor $R_{\mf}^{D'}$ on $X$ by
\begin{align}
R_{\mf}^{D'}&=\sum_{i \in I}\sw^{D'}(\chi|_{K_{i}})D_{i}, 
\notag
\end{align}
where $\sw^{D'}(\chi|_{K_{i}})$ is as in (\ref{defswdp}).
\item We define a closed subscheme $Z_{\mf}$ of $X$ 
to be the union
of $D_{i}$ for $i\in I_{\mW,\mf}$ (Definition \ref{defindsub} (1))
with the reduced subscheme structure:
\begin{equation}
Z_{\mf}=\bigcup_{i\in I_{\mW,\mf}}D_{i}. \notag
\end{equation}
\end{enumerate}
\end{defn}

\begin{rem}
\label{remconddiv}
Let the notation be as in Definition \ref{defconddiv}.
The divisor $R_{\mf}^{D'}$ and the closed subscheme $Z_{\mf}$ of $X$ are dependent only on the $p$-part of $\chi$,
since $\sw^{D'}(\chi|_{K_{i}})$ for $i\in I$ 
and the subset $I_{\mW,\mf}\subset I$ are dependent only on the $p$-part of $\chi$ by Remarks \ref{remtameloc}
(1) and \ref{remdefitw} (2), respectively.
\end{rem}

\begin{lem}
\label{lemzf}
Let $I'$ be a subset of $I$ and let $D'=\bigcup_{i\in I'}D_{i}$.
Then the closed subscheme $Z_{\mf}$ of $X$ 
(Definition \ref{defconddiv} (2)) is equal to
the support of
$R_{\mf}^{D'}+D'-D$ (Definition \ref{defconddiv} (1)).
Consequently, the support of $R_{\mf}^{D'}+D'-D$ is independent of the choice of a subset $I'$ of $I$.
\end{lem}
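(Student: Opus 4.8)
The plan is to compute, component by component, the multiplicity of each $D_i$ in the divisor $R_{\mf}^{D'}+D'-D$ and to verify that it is nonzero precisely when $i\in I_{\mW,\mf}$, so that the support coincides with $Z_{\mf}$.

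First I would unwind the definitions. Writing the reduced divisors as $D=\sum_{i\in I}D_i$ and $D'=\sum_{i\in I'}D_i$, and using Definition \ref{defconddiv} (1) together with (\ref{defswdp}), the divisor $R_{\mf}^{D'}+D'-D$ has multiplicity
\begin{equation}
\sw(\chii)\quad (i\in I'),\qquad \dt(\chii)-1\quad (i\in I-I') \notag
\end{equation}
along $D_i$: for $i\in I'$ the contributions of $+D'$ and $-D$ cancel and leave $\sw^{D'}(\chii)=\sw(\chii)$, while for $i\in I-I'$ only $-D$ contributes and leaves $\sw^{D'}(\chii)-1=\dt(\chii)-1$. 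In particular every multiplicity is $\ge 0$, so the support is the union of those $D_i$ whose multiplicity is strictly positive.

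Next I would identify those components. For $i\in I'$ the multiplicity $\sw(\chii)$ is positive exactly when $\sw(\chii)\ge 1$, i.e. when $i\in I_{\mW,\mf}$ by Definition \ref{defindsub} (1). For $i\in I-I'$, since $\dt(\chii)\ge 1$ always holds, the multiplicity $\dt(\chii)-1$ is positive exactly when $\dt(\chii)\ge 2$; by Remark \ref{remtameloc} (2) this is equivalent to $\sw(\chii)\ge 1$, i.e. again to $i\in I_{\mW,\mf}$. Thus in both cases the multiplicity along $D_i$ is positive if and only if $i\in I_{\mW,\mf}$.

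Finally I would conclude that $\Supp(R_{\mf}^{D'}+D'-D)=\bigcup_{i\in I_{\mW,\mf}}D_i=Z_{\mf}$ by Definition \ref{defconddiv} (2), which is the first assertion; and since the set $I_{\mW,\mf}$ (hence $Z_{\mf}$) does not involve $I'$, the independence of the support from the choice of $I'$ is immediate. I do not anticipate any genuine obstacle: the only step that calls on an external input is the equivalence $\dt(\chii)\ge 2\Leftrightarrow\sw(\chii)\ge 1$, which is exactly Remark \ref{remtameloc} (2), both conditions expressing that $\chii$ is wildly ramified.
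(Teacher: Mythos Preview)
Your proof is correct and follows essentially the same approach as the paper: both compute the multiplicity of each $D_i$ in $R_{\mf}^{D'}+D'-D$ as $\sw(\chii)$ for $i\in I'$ and $\dt(\chii)-1$ for $i\in I-I'$, and then invoke Remark \ref{remtameloc} (2) (the equivalence $\sw(\chii)=0\Leftrightarrow\dt(\chii)=1$) to conclude that positivity is equivalent to $i\in I_{\mW,\mf}$. The paper phrases this as comparing the support with that of $R_{\mf}^{D}$, but the content is identical.
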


\begin{proof}
Since $i\in I_{\mW,\mf}$ (Definition \ref{defindsub} (1)) if and only if $\sw(\chi|_{K_{i}})>0$ for $i\in I$,
the closed subscheme $Z_{\mf}$ of $X$ is equal to 
the support of 
\begin{equation}
R_{\mf}^{D}=\sum_{i\in I}\sw(\chi|_{K_{i}})D_{i}. \notag
\end{equation}
Since $\sw(\chi|_{K_{i}})=0$ if and only if $\dt(\chi|_{K_{i}})=1$ for $i\in I$, especially for $i\in I-I'$, 
by Remark \ref{remtameloc} (2), the support of 
\begin{equation}
R_{\mf}^{D'}+D'-D=\sum_{i\in I'}\sw(\chi|_{K_{i}})D_{i}+\sum_{i\in I-I'}(\dt(\chi|_{K_{i}})-1)D_{i} \notag
\end{equation}
is equal to that of $R^{D}_{\mf}$.
\end{proof}

The conductor $\cform^{D'}(\mf)$ is defined to be a global section of
$\Omega_{X}^{1}(\log D')(R_{\mf}^{D'})|_{Z_{\mf}^{1/p}}$
as follows:

\begin{defn}[{cf.\ \cite[5.2]{ma}}]
\label{defcform}
Let $I'\subset I$ be a subset such that $D'=\sum_{i\in I'}D_{i}$
has simple normal crossings.
Suppose that the closed subscheme $Z_{\mf}=\Supp (R_{\mf}^{D'}+D'-D)$ 
of $X$ (Definition \ref{defconddiv}, Lemma \ref{lemzf}) is non-empty. 
Let $s\ge 0$ be the integer such that the order of the $p$-part of 
$\chi$ is $p^{s}$.
We define the $\log$-$D'$-{\it characteristic form} $\cform^{D'}(\mf)$ of $\mf$ to be the image of the $p$-part of $\chi$ 
by the composition
\begin{align}
\label{compdefcf}
&\Gamma(X,\fillog_{R_{\mf}^{D'}}^{D'}R^{1}(\varepsilon \circ j)_{*}\mathbf{Z}/p^{s}\mathbf{Z})
\rightarrow \Gamma(X,\grlog_{R_{\mf}^{D'}}^{D'}
R^{1}(\varepsilon\circ j)_{*}\BZ/p^{s}\BZ)  \\
&\xrightarrow{\phi_{s}^{(D'\subset D,R_{\mf}^{D'})}(X)}
\Gamma(X,\grlog^{D'}_{R_{\mf}^{D'}}j_{*}\Omega_{U}^{1}\otimes_{\dvr_{Z_{\mf}}}\dvr_{Z_{\mf}^{1/p}}) 
=\Gamma(Z_{\mf}^{1/p},\Omega_{X}^{1}(\log D')(R_{\mf}^{D'})|_{Z_{\mf}^{1/p}}) \notag
\end{align}
of the canonical morphism
and the morphism $\phi_{s}^{(D'\subset D,R)}(X)$ constructed in Proposition \ref{propcf}.
\end{defn}

\begin{rem}
\label{remcfnormal}
Let the notation and the assumption be as in Definition \ref{defcform}.
\begin{enumerate}
\item The $\log$-$D'$-characteristic form $\cform^{D'}(\mf)$ is independent of the choice of $s\in \mathbf{Z}_{\ge 0}$ such that the order of the
$p$-part of $\chi$ is $\le p^{s}$.
Actually, the following diagram is commutative for any $t\ge t'\ge 0$:
\begin{equation}
\begin{xy}
(0,0)="A"*{\Gamma(X,\grlog^{D'}_{R_{\mf}^{D'}}j_{*}\Omega_{U}^{1}\otimes_{\dvr_{Z_{\mf}}}\dvr_{Z_{\mf}^{1/p}}),},
"A"+<3.6cm,1.5cm>="C"*{\Gamma(X, \grlog^{D'}_{R_{\mf}^{D'}}R^{1}(\varepsilon \circ j)_{*}\mathbf{Z}/p^{t}\mathbf{Z})},
"C"+/l7.2cm/="B"*{\Gamma(X, \grlog^{D'}_{R_{\mf}^{D'}}R^{1}(\varepsilon \circ j)_{*}\mathbf{Z}/p^{t'}\mathbf{Z})},
"C"+/u1.5cm/="D"*{\Gamma(X, \fillog^{D'}_{R_{\mf}^{D'}}R^{1}(\varepsilon \circ j)_{*}\mathbf{Z}/p^{t}\mathbf{Z})},
"B"+/u1.5cm/="E"*{\Gamma(X, \fillog^{D'}_{R_{\mf}^{D'}}R^{1}(\varepsilon \circ j)_{*}\mathbf{Z}/p^{t'}\mathbf{Z})}.
\ar "B"+/r2.6cm/;"C"+/l2.6cm/
\ar_(.1){\phi_{t'}^{(D'\subset D,R_{\mf}^{D'})}(X)} "B"+/d.4cm/;"A"+/u.4cm/+/l1.5cm/
\ar^(.1){\phi_{t}^{(D'\subset D,R_{\mf}^{D'})}(X)} "C"+/d.4cm/;"A"+<1.5cm,0.4cm>
\ar "D"+/d0.4cm/;"C"+/u0.4cm/
\ar "E"+/d0.4cm/;"B"+/u0.4cm/
\ar "E"+/r2.6cm/;"D"+/l2.6cm/
\end{xy} \notag
\end{equation}
where the morphisms in the square are canonical morphisms.

\item The $\log$-$D'$-characteristic form $\cform^{D'}(\mf)$ is dependent only on the $p$-part of $\chi$ 
by Remark \ref{remconddiv} and the definition of $\cform^{D'}(\mf)$
as the image of the $p$-part of $\chi$ by the morphism (\ref{compdefcf}).
Let $a$ be a section of $\fillog_{R_{\mf}^{D'}}^{D'}j_{*}W_{s}(\dvr_{U})$ 
whose image by $\delta_{s,j}$ (\ref{deltassh}) is locally the $p$-part of $\chi$.
By the commutativity of the diagram (\ref{comdiagcfshf}) in Proposition \ref{propcf},
the $\log$-$D'$-characteristic form $\cform^{D'}(\mf)$
is locally equal to the image of the section $a$ of $\fillog_{R_{\mf}^{D'}}^{D'}j_{*}W_{s}(\dvr_{U})$ by the composition
\begin{align}
\label{compcffmwitt}
&\fillog_{R_{\mf}^{D'}}^{D'}j_{*}W_{s}(\dvr_{U})\rightarrow 
\grlog_{R_{\mf}^{D'}}^{D'}j_{*}W_{s}(\dvr_{U})  \\
&\qquad \qquad \xrightarrow{\varphi_{s}^{(D'\subset D,R_{\mf}^{D'})}}
\grlog_{R_{\mf}^{D'}}^{D'}j_{*}\Omega_{U}^{1}\otimes_{\dvr_{Z}}\dvr_{Z^{1/p}}=\Omega_{X}^{1}(\log D')(R_{\mf}^{D'})|_{Z_{\mf}^{1/p}}, \notag
\end{align}
where the first arrow is the canonical surjection
and $\varphi_{s}^{(D'\subset D,R_{\mf}^{D'})}$ is the morphism
constructed in Lemma \ref{lemcfdef}. 

\item We can characterize the $\log$-$D'$-characteristic form $\cform^{D'}(\mf)$ as follows:
We identify $\rsw(\chi|_{K_{i}})$ with its image in $\grlog_{\sw(\chi|_{K_{i}})}\Omega^{1}_{K_{i}}\otimes_{F_{K_{i}}}F_{K_{i}}^{1/p}$
by the canonical injection
\begin{equation}
\grlog_{\sw(\chi|_{K_{i}})}\Omega^{1}_{K_{i}}\rightarrow \grlog_{\sw(\chi|_{K_{i}})}\Omega^{1}_{K_{i}}\otimes_{F_{K_{i}}}F_{K_{i}}^{1/p} \notag
\end{equation}
for $i\in I_{\mW,\mf}$ (Definition \ref{defindsub} (1)). 
Let $\mathfrak{p}_{i}'$ denote the generic point of $D_{i}^{1/p}$ for $i\in I_{\mW,\mf}$. 
Then the $\log$-$D'$-characteristic form $\cform^{D'}(\mf)$ is
a unique global section of $\Omega_{X}^{1}(\log D')(R_{\mf}^{D'})|_{Z_{\mf}^{1/p}}$ whose stalk at the generic point $\mathfrak{p}_{i}'$ of $D_{i}^{1/p}$ is
$\rsw(\chi|_{K_{i}})$ for $i\in I'\cap I_{\mW,\mf}$ and
$\cform(\chi|_{K_{i}})$ for $i\in (I-I')\cap I_{\mW,\mf}$
by (2) and the injectivity of the right vertical arrow in the 
commutative diagram (\ref{cdchargl}) in the proof of Proposition \ref{propcf}.

\item Suppose that $D$ has simple normal crossings. 
Then the $\log$-$D$-characteristic form $\cform^{D}(\mf)$ is equal to the image 
of the refined Swan conductor $\rsw(\chi)$ (\cite[(3.4.2)]{kalog})
by the canonical injection $\Omega_{X}^{1}(\log D)(R_{\mf}^{D})|_{Z_{\mf}}\rightarrow \Omega_{X}^{1}(\log D)(R_{\mf}^{D})|_{Z_{\mf}^{1/p}}$
by (3) and \cite[Remark 3.3.12]{ma}.
The $\log$-$\emptyset$-characteristic form $\cform^{\emptyset}(\mf)$ is
equal to the characteristic form $\cform(\chi)$ (\cite[Definition 1.42]{yafil}), since the construction of $\cform(\chi)$ is same as that of $\cform^{\emptyset}(\mf)$. 
If $p\neq 2$ and if we put $D_{\mI,\mf}=\bigcup_{i\in I_{\mI,\mf}}D_{i}$, then the $\log$-$D_{\mI,\mf}$-characteristic form $\cform^{D_{\mI,\mf}}(\mf)$ is equal to the mixed refined Swan conductor $\mathrm{mrsw}(\chi)$ (\cite[5.2]{ma}) by (3).

\item If $p\neq 2$ or if $\dt(\chi|_{K_{i}})\neq 2$ for every $i\in I-I'$,
then the image
of the morphism $\varphi_{s}^{(D'\subset D,R)}$
is contained in $\grlog_{R}^{D'}j_{*}\Omega^{1}_{U}$ by Lemma \ref{lemcfdef}.
Since the morphism $\delta_{s}^{(D'\subset D,R_{\mf}^{D'})}$ (\ref{deltasdpdr}) is surjective,
we may replace $\dvr_{Z_{\mf}^{1/p}}$ by $\dvr_{Z_{\mf}}$ in Proposition \ref{propcf} and
we can regard the $\log$-$D'$-characteristic form $\cform^{D'}(\mf)$
as a global section of $\Omega^{1}_{X}(\log D')(R_{\mf}^{D'})|_{Z_{\mf}}$, if $p\neq 2$ or if $\dt(\chi|_{K_{i}})\neq 2$ for every $i\in I-I'$.
Especially the $\log$-$D$-characteristic form $\cform^{D}(\mf)$ can be regarded as a global section of
$\Omega_{X}^{1}(\log D)(R_{\mf}^{D})|_{Z_{\mf}}$, 
if $D$ has simple normal crossings.
\end{enumerate}
\end{rem}

\subsection{Comparison of characteristic forms}
\label{sscompcf}
We compare two characteristic forms in several settings.
Let $x$ be a point on $X$.
We put 
\begin{equation}
\label{defix}
I_{x}=\{i\in I\; |\; x\in D_{i}\}, 
\end{equation}
and 
\begin{equation}
\label{eachindatx}
I_{*,\mf,x}=\{i\in I_{*,\mf}\; |\; x\in D_{i}\}=I_{*,\mf}\cap I_{x} 
\end{equation} 
for $*=\mT,\mW, \mI,\mII$ (Definition \ref{defindsub}).
If there is no risk of confusion, then
we use $I_{x}$ and $I_{*,\mf,x}$ for $*=\mT,\mW, \mI,\mII$ 
independently on the symbol denoting 
the index set of irreducible components of a divisor.

\begin{lem}[{cf.\ \cite[Lemma 2.24]{yacc}}]
\label{lemcfatx}
Suppose that $D$ has simple normal crossings. 
Let $I''\subset I'\subset I$ be subsets and
let $D'=\bigcup_{i\in I'}D_{i}$ and $D''=\bigcup_{i\in I''}D_{i}$.
Let $x$ be a closed point of $Z_{\mf}$ (Definition \ref{defconddiv} (2))
and let $d=\dim_{x}X$ be the dimension of $X$ at $x$.
We put $I_{x}=\{1,2,\ldots,r\}$ (\ref{defix}), $I'_{x}=I'\cap I_{x}=\{1,2,\ldots,r'\}$ and $I''\cap I_{x}=\{1,2,\ldots,r''\}$, where $r''\le r'\le r\le d$.
Let $(t_{1},t_{2},\ldots,t_{d})$ be a 
local coordinate system at $x$
such that $t_{i}$ is a local equation of $D_{i}$ for $i\in I_{x}$.
For the unique closed point $x'\in Z_{\mf}^{1/p}$ lying above $x$, we put
\begin{align}
\label{cformatxdp}
\cform^{D'}(\mf)_{x'}&=\left(\sum_{i=1}^{r'}\alpha_{i}\dlog t_{i}+\sum_{i=r'+1}^{d}\beta_{i}dt_{i}\right)/
\prod_{i=1}^{r}t_{i}^{n_{i}},  \\
\cform^{D''}(\mf)_{x'}&=\left(\sum_{i=1}^{r''}\alpha_{i}'d\log t_{i}+\sum_{i=r''+1}^{d}\beta_{i}'dt_{i}\right)/
\prod_{i=1}^{r}t_{i}^{m_{i}},  \notag
\end{align}
where $R_{\mf}^{D'}=\sum_{i=1}^{r}n_{i}D_{i}$ (Definition \ref{defconddiv} (1)), $R_{\mf}^{D''}=\sum_{i=1}^{r}m_{i}D_{i}$, and $\alpha_{i},\beta_{i}, \alpha_{i}', \beta_{i}'$ are elements
of the local ring $\dvr_{Z_{\mf}^{1/p},x'}$ of $Z_{\mf}^{1/p}$ at $x'$.
\begin{enumerate}
\item We put $J_{x}=I'_{x}-(I_{\mII,\mf,x}\cup I_{x}'')$ (\ref{eachindatx}).
Then we have the following for $i\in I_{x}$:
\begin{equation}
\begin{cases}
\alpha_{i}'=\alpha_{i}\prod_{i'\in J_{x}}t_{i'}
& (i\in I''), \\
\beta_{i}'=\alpha_{i}\prod_{i'\in J_{x}-\{i\}}t_{i'} &
(i\in I'\cap I_{\mI,\mf}-I''), \\
\beta_{i}'\in \prod_{i'\in J_{x}}t_{i'}
\cdot\dvr_{Z_{\mf}^{1/p},x'}
& (i\in I'\cap (I_{\mII,\mf}\cup I_{\mT,\mf})-I''), \\ 
\beta_{i}'=\beta_{i}\prod_{i'\in J_{x}}t_{i'}
& (i\in I-I'\text{ or }i=r+1,r+2,\ldots,d). \notag \\
\end{cases} \notag
\end{equation}
\item Let $i\in I'_{x}$.
Then we have $\alpha_{i}\in t_{i}\cdot \dvr_{Z_{\mf}^{1/p},x'}$
if and only if $i\in I'\cap (I_{\mII,\mf}\cup I_{\mT,\mf})$.
\end{enumerate}
\end{lem}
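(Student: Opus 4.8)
The plan is to reduce everything to a single local computation driven by one Witt-vector lift, and to read off both characteristic forms from the same meromorphic $1$-form. Concretely, I would choose near $x$ a section $a=(a_{s-1},\ldots,a_0)\in \fillog_{R_{\mf}^{D'}}^{D'}j_*W_s(\dvr_U)$ whose image under $\delta_{s,j}$ is the $p$-part of $\chi$; by the definitions of $\sw$ and $\dt$ the same $a$ also lies in $\fillog_{R_{\mf}^{D''}}^{D''}j_*W_s(\dvr_U)$, since its localization at each $K_i$ realizes both $\sw(\chi|_{K_i})$ and $\dt(\chi|_{K_i})$. Setting $\omega=-F^{s-1}da=-\sum_{i=0}^{s-1}a_i^{p^i-1}da_i$, Remark \ref{remcfnormal} (2) identifies $\cform^{D'}(\mf)_{x'}$ (resp.\ $\cform^{D''}(\mf)_{x'}$) with the image of $\omega$ in $\Omega^1_X(\log D')(R_{\mf}^{D'})|_{Z_{\mf}^{1/p}}$ (resp.\ in $\Omega^1_X(\log D'')(R_{\mf}^{D''})|_{Z_{\mf}^{1/p}}$), up to the $p=2$ corrections in Lemma \ref{lemcfdef}.

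I would prove (2) first, since it feeds into case (3) of (1). Localizing at the generic point $\mathfrak{p}_i$ of $D_i$ for $i\in I'_x$, the stalk of $\cform^{D'}(\mf)$ is $\rsw(\chi|_{K_i})$ by Remark \ref{remcfnormal} (3), and $\alpha_i\bmod t_i$ is precisely its $\dlog t_i$-coefficient. Using $\fillog_{n_i-1}\subset\fil_{n_i}\subset\fillog_{n_i}$ together with the explicit shapes of $\fil_{n_i}\Omega^1_{K_i}$ and $\fillog_{n_i}\Omega^1_{K_i}$ recalled before Proposition \ref{swgldef}, the image of $\omega$ lies in $\fil_{n_i}\Omega^1_{K_i}$ exactly when this $\dlog t_i$-coefficient vanishes in the residue field; that is, $\dt(\chi|_{K_i})=\sw(\chi|_{K_i})$ (type $\mII$) iff $\alpha_i\in t_i\dvr_{Z_{\mf}^{1/p},x'}$. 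For tame $i$ the $p$-part of $\chi|_{K_i}$ is unramified, so $\alpha_i=0$; this gives the two inclusions of (2).

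For (1) I would compare the two expansions of the single form $\omega$. The log-$D'$ frame $\{\dlog t_i\}_{i\le r'}\cup\{dt_i\}_{i>r'}$ and the log-$D''$ frame $\{\dlog t_i\}_{i\le r''}\cup\{dt_i\}_{i>r''}$ differ only in the directions $i\in I'_x-I''_x$, where $\dlog t_i=dt_i/t_i$. The pole orders $n_i,m_i$ agree except on $J_x=I'_x-(I_{\mII,\mf,x}\cup I''_x)$, where one checks $m_i=n_i+1$ (for $i$ of type $\mI$ this is $\dt=\sw+1$; for $i$ tame it is $(n_i,m_i)=(0,1)$), while $m_i=n_i$ otherwise; hence $\prod_{i=1}^r t_i^{m_i}=\prod_{i=1}^r t_i^{n_i}\cdot\prod_{i'\in J_x}t_{i'}$. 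Writing $\omega$ meromorphically and equating coefficients gives, before reduction, $\tilde\alpha_i'=\tilde\alpha_i$ for $i\le r''$, $\tilde\beta_i'=\tilde\alpha_i/t_i$ for $r''<i\le r'$, and $\tilde\beta_i'=\tilde\beta_i$ for $i>r'$. Clearing the common denominators and reducing modulo the lower filtration in $\dvr_{Z_{\mf}^{1/p},x'}$ yields the four cases: the factor $\prod_{i'\in J_x}t_{i'}$ appears in the cases $i\in I''$ and $i>r'$; for $i\in J_x$ (type $\mI$) the factor $t_i$ in $\tilde\beta_i'=\tilde\alpha_i/t_i$ cancels one term, giving $\beta_i'=\alpha_i\prod_{i'\in J_x-\{i\}}t_{i'}$; and for $i\in I'_x-I''_x$ with $i\notin J_x$ (type $\mII$, or the tame case) the relation $\beta_i'=(\alpha_i/t_i)\prod_{i'\in J_x}t_{i'}$ lands in $\prod_{i'\in J_x}t_{i'}\cdot\dvr_{Z_{\mf}^{1/p},x'}$ once (2) is invoked.

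The main obstacle I anticipate is the bookkeeping in characteristic $2$: for the finitely many directions $i\in I'_x-I''_x$ with $\dt(\chi|_{K_i})=2$, the form $\cform^{D''}(\mf)$ carries the extra term $\overline{dt_i/t_i^{2}}\otimes\sqrt{\overline{a_0t_i^{2}}}$ from Lemma \ref{lemcfdef} that is absent from the log treatment in $\cform^{D'}(\mf)$. For these I would not rely on the naive change of frame but compare the two images of $\omega$ directly through the explicit formula for $\varphi_s'^{(2)}$, checking that the correction is supported so as to preserve the stated memberships in cases (2) and (3). All other directions, and the entire case $p\neq2$, are governed by the clean change-of-frame computation above.
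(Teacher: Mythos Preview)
Your plan—read off both characteristic forms from a single $\omega=-F^{s-1}da$ and compare frames using $m_i-n_i=1$ exactly on $J_x$—is the paper's approach; the paper simply records the $m_i$--$n_i$ relation and defers the actual change-of-frame to \cite[Lemma~2.24]{yacc}. One organizational difference: you prove (2) first and feed it into the type-$\mII$/tame subcase of (1), whereas the paper derives (2) from (1) by specializing to $(I',I'')=(I,I')$ and then citing the case $I'=I$ from \cite[Lemmas~2.23,~2.24]{yacc}.

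There are, however, two concrete gaps. First, your claim that any lift $a\in\fillog_{R_{\mf}^{D'}}^{D'}j_*W_s(\dvr_U)$ automatically lies in $\fillog_{R_{\mf}^{D''}}^{D''}j_*W_s(\dvr_U)$ is false when some $i\in(I'-I'')\cap I_{\mII,\mf}$ is present: there $m_i=n_i$ and the $D''$-condition at $K_i$ is $a|_{K_i}\in\fil_{n_i}W_s(K_i)\subsetneq\fillog_{n_i}W_s(K_i)$, while the $D'$-condition only gives the larger piece. That $\dt(\chi|_{K_i})=n_i$ guarantees \emph{some} lift lies in $\fil_{n_i}$, not that your chosen $a$ does; and without this, Remark~\ref{remcfnormal}~(2) does not identify the $D''$-reduction of $\omega$ with $\cform^{D''}(\mf)_{x'}$, so your frame comparison is not computing $\beta_i'$. (Starting instead from $\fillog_{R_{\mf}^{D''}}^{D''}$ fails symmetrically at type-$\mI$ indices.) One clean repair is to abandon the single global lift and compare the two forms on each $D_j^{1/p}$ via Remark~\ref{remcfnormal}~(3), using the known local relation between $\rsw(\chi|_{K_j})$ and $\cform(\chi|_{K_j})$.

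Second, in (2) the tame case is not ``$\alpha_i=0$''. Take $X=\Spec k[t_1,t_2]$, $D'=D=(t_1t_2=0)$, and $\mf$ defined by $t^p-t=t_1/t_2$: then $1\in I_{\mT,\mf}$, yet $\cform^D(\mf)=(-t_1\dlog t_1+t_1\dlog t_2)/t_2$ gives $\alpha_1=-t_1|_{D_2}\neq0$. What is true—and suffices—is $\alpha_i\in t_i\cdot\dvr_{Z_{\mf}^{1/p},x'}$: since $n_i=0$ forces $a|_{K_i}\in W_s(\dvr_{K_i})$, the form $\omega$ is regular (not merely log-regular) at the generic point of $D_i$, so the coefficient of $\dlog t_i=dt_i/t_i$ in $\omega\cdot\prod_j t_j^{n_j}$ must lie in $t_i\dvr_{X,x}$, and this survives restriction to $Z_{\mf}^{1/p}$.
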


\begin{proof}
(1) The assertion follows similarly as the proof of \cite[Lemma 2.24]{yacc}
with the following relations between $n_{i}$ and $m_{i}$ for $i\in I_{x}$:
\begin{equation}
m_{i}=\begin{cases} n_{i}+1 & (i\in (I'-I'')\cap (I_{\mI,\mf}\cup I_{\mT,\mf})), \\ 
n_{i} & (i\in I_{\mII,\mf}\text{ or } i\in (I-I')\cup I'').
\end{cases} \notag
\end{equation}

(2) By (1) applied to the case where $(I',I'')$ is $(I,I')$,
the assertion in the case where $I=I'$ deduces that in the general case.
Hence we may assume that $I'=I$.
If $i\in I'\cap (I_{\mII,\mf}\cup I_{\mT,\mf})=I_{\mII,\mf}\cup I_{\mT,\mf}$,
then we have
$\alpha_{i}\in t_{i}\cdot \dvr_{Z_{\mf}^{1/p},x'}$ 
by \cite[Lemma 2.24]{yacc}.
The converse also holds by \cite[Lemma 2.23 (ii)]{yacc}.
\end{proof}

\begin{lem}
\label{lemclrelfirf}
Let $I''\subset I'\subset I$ be two subsets and
let $D'=\bigcup_{i\in I'}D_{i}$ and $D''=\bigcup_{i\in I''}D_{i}$.
Suppose that $D'$ has simple normal crossings and that $I'-I''$ is contained in $I_{\mII,\mf}$ (Definition \ref{defindsub} (2)).
Then we have $R_{\mf}^{D'}=R_{\mf}^{D''}$ (Definition \ref{defconddiv} (1))
and the $\log$-$D'$-characteristic form $\cform^{D'}(\mf)$
is the image of the $\log$-$D''$-characteristic form $\cform^{D''}(\mf)$ by the canonical morphism
\begin{equation}
\Gamma(Z_{\mf}^{1/p},\Omega_{X}^{1}(\log D'')(R_{\mf}^{D''})|_{{Z_{\mf}^{1/p}}}) \rightarrow
\Gamma(Z_{\mf}^{1/p},\Omega_{X}^{1}(\log D')(R_{\mf}^{D'})|_{{Z_{\mf}^{1/p}}}), \notag
\end{equation}
where $Z_{\mf}$ is as in Definition \ref{defconddiv} (2).
\end{lem}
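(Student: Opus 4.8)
The plan is to prove the two assertions in turn: first the equality of divisors $R_{\mf}^{D'}=R_{\mf}^{D''}$ by a direct comparison of coefficients, and then the compatibility of the characteristic forms by computing both of them from a single Witt vector lift and comparing the defining morphisms of Lemma \ref{lemcfdef}. Note first that $D''\subseteq D'$ and $D'$ has simple normal crossings, so $D''$ has simple normal crossings as well and $\cform^{D''}(\mf)$ is defined.

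For the divisor equality I would compare $\sw^{D'}(\chi|_{K_{i}})$ and $\sw^{D''}(\chi|_{K_{i}})$ for each $i\in I$ using (\ref{defswdp}). If $i\in I''$ then $i\in I'$ and both coefficients equal $\sw(\chi|_{K_{i}})$; if $i\in I-I'$ then $i\in I-I''$ and both equal $\dt(\chi|_{K_{i}})$. The only delicate case is $i\in I'-I''$, where the $D'$-coefficient is $\sw(\chi|_{K_{i}})$ and the $D''$-coefficient is $\dt(\chi|_{K_{i}})$; but by hypothesis $i\in I_{\mII,\mf}$, so $\chi|_{K_{i}}$ is of type $\mII$ and $\dt(\chi|_{K_{i}})=\sw(\chi|_{K_{i}})$ by Definition \ref{deftypes}. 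Hence the coefficients agree in every case, $R_{\mf}^{D'}=R_{\mf}^{D''}=:R$, and by Lemma \ref{lemzf} the common support is $Z_{\mf}$. The inclusion $\Omega_{X}^{1}(\log D'')\subseteq \Omega_{X}^{1}(\log D')$ then induces the canonical morphism $\iota$ in the statement.

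For the characteristic forms I would work locally via Remark \ref{remcfnormal} (2). On a Zariski open I choose a section $a\in \fillog_{R}^{D''}j_{*}W_{s}(\dvr_{U})$ whose image under $\delta_{s,j}$ is the $p$-part of $\chi$, so that $\cform^{D''}(\mf)$ is the image of the class of $a$ under $\varphi_{s}^{(D''\subset D,R)}$. Since $I''\subseteq I'$, the inclusion (\ref{incjwsdppdp}) gives $a\in \fillog_{R}^{D'}j_{*}W_{s}(\dvr_{U})$, so the same $a$ computes $\cform^{D'}(\mf)$ as the image of its class under $\varphi_{s}^{(D'\subset D,R)}$, and (\ref{morgrjwsdppdp}) carries the class of $a$ in $\grlog_{R}^{D''}j_{*}W_{s}(\dvr_{U})$ to its class in $\grlog_{R}^{D'}j_{*}W_{s}(\dvr_{U})$. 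It therefore suffices to show that $\iota\circ\varphi_{s}^{(D''\subset D,R)}$ and $\varphi_{s}^{(D'\subset D,R)}$ agree on the class of $a$, for then the local equalities $\iota(\cform^{D''}(\mf))=\cform^{D'}(\mf)$ glue to the desired global equality.

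This final comparison I would make using the explicit formulas (\ref{varphidpdr}). When $p\neq 2$ both morphisms send the class of $a$ to $\overline{-F^{s-1}da}\otimes\overline{1}$ and $\iota$ is induced by the inclusion of differential forms, so they agree at once. When $p=2$ the two images differ only by the terms $\overline{dt_{i}/t_{i}^{2}}\otimes\sqrt{\overline{a_{0}t_{i}^{2}}}$ indexed by those $i$ with $n_{i}=2$ lying in $I-I''$ but not in $I-I'$, that is, by $i\in I'-I''$; such $i$ are of type $\mII$ with $n_{i}=\sw(\chi|_{K_{i}})=2$. The heart of the matter, and the step I expect to be the main obstacle, is to verify that each such term maps to $0$ under $\iota$. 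Here the point is that once $D_{i}$ is a logarithmic component of $D'$ one has $dt_{i}/t_{i}^{2}=\dlog t_{i}/t_{i}$; since $D_{i}$ occurs in $Z_{\mf}$ with multiplicity one and $n_{i}-1=1$, this form lies, at the generic point of $D_{i}$ and at every other generic point of $Z_{\mf}$ where $t_{i}$ is a unit, in the lower step $\fillog_{R-Z_{\mf}}^{D'}j_{*}\Omega_{U}^{1}$, and hence, the sheaf being locally free over the reduced scheme $Z_{\mf}$, it represents $0$ in $\grlog_{R}^{D'}j_{*}\Omega_{U}^{1}=\Omega_{X}^{1}(\log D')(R)|_{Z_{\mf}}$. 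Therefore the extra $p=2$ contributions vanish after applying $\iota$, the two morphisms agree on the class of $a$, and the proof concludes.
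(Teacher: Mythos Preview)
Your proposal is correct and follows essentially the same route as the paper: first the equality $R_{\mf}^{D'}=R_{\mf}^{D''}$ via the type~$\mII$ hypothesis, then the comparison of characteristic forms by lifting to a common Witt vector section $a$ and checking that the extra $p=2$ terms $\overline{dt_i/t_i^2}\otimes\sqrt{\overline{a_0t_i^2}}$ for $i\in I'-I''$ with $n_i=2$ die under the canonical map. The paper phrases this last step as the commutativity of a square \eqref{diagvarphidpdpp} and computes $dt_i/t_i^2=\bigl(\prod_{i'\in I_{\mW,\mf}-\{i\}}t_{i'}^{n_{i'}}\bigr)\,t_i\,d\log t_i\big/\prod_{i'\in I_{\mW,\mf}}t_{i'}^{n_{i'}}=0$ in $\Omega_X^1(\log D')(R)|_{Z_{\mf}^{1/p}}$, while you argue that $d\log t_i/t_i$ lies in the lower step $\fillog_{R-Z_{\mf}}^{D'}$ at each generic point; these are the same vanishing statement.
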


\begin{proof}
Since we have $\sw(\chi|_{K_{i}})=\dt(\chi|_{K_{i}})$
for $i\in I'-I''\subset I_{\mII,\mf}$,
we have
\begin{align}
R_{\mf}^{D'}&=\sum_{i\in I'}\sw(\chi|_{K_{i}})D_{i}+\sum_{i\in I-I'}\dt(\chi|_{K_{i}})D_{i} \notag \\
&=\sum_{i\in I''}\sw(\chi|_{K_{i}})D_{i}+\sum_{i\in I-I''}\dt(\chi|_{K_{i}})D_{i}
=R_{\mf}^{D''}. \notag
\end{align}

By Remark \ref{remcfnormal} (2), we may assume that $\chi$ is of order $p^{s}$ for $s\in\mathbf{Z}_{\ge 0}$.
By (\ref{incjwsdppdp}), we have $\fillog^{D''}_{R_{\mf}^{D''}}j_{*}W_{s}(\dvr_{U})\subset \fillog_{R_{\mf}^{D'}}^{D'}j_{*}W_{s}(\dvr_{U})$.
Let $a$ be a section of $\fillog^{D''}_{R_{\mf}^{D''}}j_{*}W_{s}(\dvr_{U})\subset \fillog_{R_{\mf}^{D'}}^{D'}j_{*}W_{s}(\dvr_{U})$ whose image by $\delta_{s,j}$ (\ref{deltassh}) is locally $\chi$
and let $\bar{a}$ denote both the images of $a$ in $\grlog_{R_{\mf}^{D''}}^{D''}j_{*}W_{s}(\dvr_{U})$ and $\grlog_{R_{\mf}^{D'}}^{D'}j_{*}W_{s}(\dvr_{U})$.
Since the assertion is local by Remark \ref{remcfnormal} (3)
and since the characteristic forms $\cform^{D''}(\mf)$ and $\cform^{D'}(\mf)$ are locally the images of $\bar{a}$ by
the morphisms $\varphi_{s}^{(D''\subset D,R_{\mf}^{D''})}$ 
and $\varphi_{s}^{(D'\subset D,R_{\mf}^{D'})}$ constructed in Lemma \ref{lemcfdef},
respectively, by Remark \ref{remcfnormal} (2),
it is sufficient to prove the commutativity of
the diagram
\begin{equation}
\label{diagvarphidpdpp}
\xymatrix{
\grlog^{D''}_{R_{\mf}^{D''}}j_{*}W_{s}(\dvr_{U}) \ar[r]^-{(\ref{morgrjwsdppdp})}
\ar[d]_{\varphi_{s}^{(D''\subset D,R_{\mf}^{D''})}} &
\grlog^{D'}_{R_{\mf}^{D'}}j_{*}W_{s}(\dvr_{U})
\ar[d]^{\varphi_{s}^{(D'\subset D,R_{\mf}^{D'})}} \\
\Omega_{X}^{1}(\log D'')(R_{\mf}^{D''})|_{{Z_{\mf}^{1/p}}} \ar[r] &
\Omega_{X}^{1}(\log D')(R_{\mf}^{D'})|_{{Z_{\mf}^{1/p}}},
}
\end{equation}
where the lower horizontal arrow is the canonical morphism. 
We put $R_{\mf}^{D''}=\sum_{i\in I}n_{i}D_{i}$.
Since $n_{i'}$ for $i'\in I_{\mW,\mf}$ (Definition \ref{defindsub} (1)) are positive,
the image of $dt_{i}/t_{i}^{2}$ by the lower horizontal arrow is
\begin{equation}
dt_{i}/t_{i}^{2}=\left.\prod_{i'\in I_{\mW,\mf}-\{i\}}t_{i'}^{n_{i'}}\cdot t_{i}d\log t_{i}\right/\prod_{i'\in I_{\mW,\mf}}t_{i'}^{n_{i'}}=0  \notag
\end{equation}
for $i\in I'-I''$ such that $n_{i}=2$ 
and for a local equation $t_{i}$ of $D_{i}$.
Hence the diagram (\ref{diagvarphidpdpp}) is commutative
by the constructions of $\varphi_{s}^{(D''\subset D,R_{\mf}^{D''})}$ 
and $\varphi_{s}^{(D'\subset D,R_{\mf}^{D'})}$ in Lemma \ref{lemcfdef}.
\end{proof}

\begin{lem}
\label{lemclrelsecf}
Let $I'\subset I$ be a subset such that
$D'=\bigcup_{i\in I'}D_{i}$ has simple normal crossings.
Let $I''\subset I'\cap I_{\mT,\mf}$ (Definition \ref{defindsub} (1)) be a subset and let
$E=\bigcup_{i\in I-I''}D_{i}$ and $E'=\bigcup_{i\in I'-I''}D_{i}$. 
Let $\mf'$ be a smooth sheaf of $\Lambda$-modules of rank 1 on $V=X-E$
whose associated character $\chi'\colon \pi_{1}^{\ab}(V)\rightarrow \Lambda^{\times}$ has the $p$-part inducing that of $\chi$.
Then we have $R_{\mf}^{D'}=R_{\mf'}^{E'}$ (Definition \ref{defconddiv} (1)) and $Z_{\mf}=Z_{\mf'}$ (Definition \ref{defconddiv} (2)).
The $\log$-$D'$-characteristic form $\cform^{D'}(\mf)$ of $\mf$ is the image of the $\log$-$E'$-characteristic form $\cform^{E'}(\mf')$
of $\mf'$ by the canonical morphism
\begin{equation}
\label{canmorcfetocfd}
\Gamma(Z_{\mf'}^{1/p},\Omega_{X}^{1}(\log E')(R_{\mf'}^{E'})|_{Z_{\mf'}^{1/p}}) \rightarrow
\Gamma(Z_{\mf}^{1/p},\Omega_{X}^{1}(\log D')(R_{\mf}^{D'})|_{Z_{\mf}^{1/p}}).
\end{equation}
\end{lem}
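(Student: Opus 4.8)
The plan is to deduce the two divisor equalities from the invariance of $\sw$ and $\dt$ under passing to the $p$-part of a character (Remark \ref{remtameloc} (1)), and then to prove the comparison of characteristic forms by testing both sides against the stalkwise characterization of the $\log$-$D'$-characteristic form in Remark \ref{remcfnormal} (3).

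First I would establish $R_{\mf}^{D'}=R_{\mf'}^{E'}$ and $Z_{\mf}=Z_{\mf'}$. The hypothesis that the $p$-part of $\chi'$ induces that of $\chi$ gives, for every $i\in I-I''$, that the $p$-parts of $\chi'|_{K_{i}}$ and $\chi|_{K_{i}}$ agree, whence $\sw(\chi'|_{K_{i}})=\sw(\chi|_{K_{i}})$ and $\dt(\chi'|_{K_{i}})=\dt(\chi|_{K_{i}})$ by Remark \ref{remtameloc} (1). Since $I''\subset I_{\mT,\mf}$, the coefficient of $D_{i}$ in $R_{\mf}^{D'}$ vanishes for $i\in I''$, while these indices are simply absent from $R_{\mf'}^{E'}$ because $E$ has index set $I-I''$; using $(I-I'')-(I'-I'')=I-I'$ to identify the log part $I'-I''$ and the non-log part $I-I'$ of $R_{\mf'}^{E'}$ with the corresponding parts of $R_{\mf}^{D'}$, the remaining coefficients match termwise. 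The equality $Z_{\mf}=Z_{\mf'}$ then follows from $I_{\mW,\mf}\subset I-I''$ and $I_{\mW,\mf'}=I_{\mW,\mf}$, both consequences of $I''\subset I_{\mT,\mf}$ and the equality of Swan conductors.

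For the characteristic forms I would use that $\cform^{D'}(\mf)$ is the unique global section of $\Omega_{X}^{1}(\log D')(R_{\mf}^{D'})|_{Z_{\mf}^{1/p}}$ whose stalk at the generic point $\mathfrak{p}_{i}'$ of $D_{i}^{1/p}$ equals $\rsw(\chi|_{K_{i}})$ for $i\in I'\cap I_{\mW,\mf}$ and $\cform(\chi|_{K_{i}})$ for $i\in (I-I')\cap I_{\mW,\mf}$, with the analogous description for $\cform^{E'}(\mf')$. By this uniqueness it suffices to verify that the image of $\cform^{E'}(\mf')$ under (\ref{canmorcfetocfd}) carries the prescribed stalks. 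The key point is that each wild index $i\in I_{\mW,\mf}$ has the same logarithmic status for the pair $(E'\subset E)$ as for $(D'\subset D)$: if $i\in I'$ then $i\in I'-I''$ (as $i\notin I''\subset I_{\mT,\mf}$), so $D_{i}$ is a log component for both, and if $i\in I-I'$ then $D_{i}$ is non-log for both. Since at the generic point $\mathfrak{p}_{i}$ of $D_{i}$ the extra log divisors $D_{i'}$ with $i'\in I''$ of $D'$ do not pass, the canonical morphism $\Omega_{X}^{1}(\log E')\to\Omega_{X}^{1}(\log D')$ is an isomorphism on the stalks at $\mathfrak{p}_{i}'$, and it sends $\rsw(\chi'|_{K_{i}})$, resp. $\cform(\chi'|_{K_{i}})$, to itself; these equal $\rsw(\chi|_{K_{i}})$, resp. $\cform(\chi|_{K_{i}})$, because the refined conductors depend only on the $p$-part (Lemma \ref{lemrsw}).

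The step I expect to be the main obstacle is bookkeeping rather than conceptual: one must ensure that the additional logarithmic poles of $D'$ along the tame components $D_{i'}$ with $i'\in I''$ --- which may well meet $Z_{\mf}$ --- do not disturb the stalks at the generic points of the wild components. This is precisely why the stalkwise characterization of Remark \ref{remcfnormal} (3) is the right tool: it localizes the entire comparison at the generic points $\mathfrak{p}_{i}$, which avoid every $D_{i'}$ with $i'\neq i$, so that the discrepancy between the two logarithmic structures is invisible exactly where the forms are tested.
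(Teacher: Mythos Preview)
Your proof is correct and reaches the same conclusion as the paper, but the route for the characteristic-form comparison is organised differently. The paper, after establishing $R_{\mf}^{D'}=R_{\mf'}^{E'}$ and $Z_{\mf}=Z_{\mf'}$ by essentially your computation, reduces to the case of $p$-power order characters, takes a local Witt-vector representative $a$ for $\chi'$, regards it as a section of $\fillog_{R_{\mf}^{D'}}^{D'}j_{*}W_{s}(\dvr_{U})$ via an injection $j'_{*}W_{s}(\dvr_{V})\hookrightarrow j_{*}W_{s}(\dvr_{U})$, and then checks commutativity of the diagram relating $\varphi_{s}^{(E'\subset E,R_{\mf'}^{E'})}$ and $\varphi_{s}^{(D'\subset D,R_{\mf}^{D'})}$; the conclusion follows from Remark~\ref{remcfnormal}~(2). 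You instead invoke the stalkwise uniqueness of Remark~\ref{remcfnormal}~(3) directly and verify that the image of $\cform^{E'}(\mf')$ has the prescribed stalks at each $\mathfrak{p}_{i}'$, using that the extra log components indexed by $I''$ avoid these generic points. Your argument is a little more streamlined since it bypasses the explicit Witt-vector diagram, while the paper's approach has the advantage of making the compatibility of the sheaf-level constructions $\varphi_{s}^{(\cdot)}$ transparent, which is closer in spirit to the parallel Lemma~\ref{lemclrelfirf}. One minor point: the dependence of $\rsw$ and $\cform$ on only the $p$-part is not quite Lemma~\ref{lemrsw} but follows immediately from the construction of the filtrations in Definition~\ref{deffilh} (the prime-to-$p$ part lies in every filtration step, hence dies in the graded quotient).
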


\begin{proof}
We prove the assertions similarly as the proof of Lemma \ref{lemclrelfirf}.
By Remark \ref{remtameloc} (1), we have $\sw(\chi|_{K_{i}})=\sw(\chi'|_{K_{i}})$ and $\dt(\chi|_{K_{i}})=\dt(\chi'|_{K_{i}})$ for $i\in I-I''$.
Since we have $\sw(\chi|_{K_{i}})=0$ for $i\in I''\subset I_{\mT,\mf}$
and since $I-I'=(I-I'')-(I'-I'')$,
we have
\begin{align}
R_{\mf}^{D'}&=\sum_{i\in I'}\sw(\chi|_{K_{i}})D_{i}+\sum_{i\in I-I'}\dt(\chi|_{K_{i}})D_{i} \notag \\
&=\sum_{i\in I'-I''}\sw(\chi'|_{K_{i}})D_{i}+\sum_{i\in (I-I'')-(I'-I'')}\dt(\chi'|_{K_{i}})D_{i}=R_{\mf'}^{E'}.\notag
\end{align} 
By the equalities $D-D'=\sum_{i\in I-I'}D_{i}=E-E'$ of sums of divisors
and by Lemma \ref{lemzf}, 
we have 
\begin{align}
Z_{\mf}&=\Supp (R_{\mf}^{D'}+D'-D) \notag \\
&=\Supp (R_{\mf'}^{E'}+E'-E)=Z_{\mf'}. \notag
\end{align}

By Remark \ref{remcfnormal} (2), we may assume that both $\chi$ and $\chi'$ are of orders powers of $p$.
Let $s\ge 0$ be an integer such that the order of $\chi'$ is $p^{s}$.
Let $j'\colon V\rightarrow X$ denote the canonical open immersion. 
We regard $\fillog_{R_{\mf'}^{E'}}^{E'}j'_{*}W_{s}(\dvr_{V})$ as a subsheaf
of $\fillog_{R_{\mf}^{D'}}^{D'}j_{*}W_{s}(\dvr_{U})$
by the injection 
\begin{equation}
\label{morfilwsshcl}
\fillog_{R_{\mf'}^{E'}}^{E'}j'_{*}W_{s}(\dvr_{V})\rightarrow \fillog_{R_{\mf}^{D'}}^{D'}j_{*}W_{s}(\dvr_{U}) 
\end{equation}
induced by the canonical injection $j'_{*}W_{s}(\dvr_{V})\rightarrow j_{*}W_{s}(\dvr_{U})$.
Let $a$ be a section of $\fillog_{R_{\mf'}^{E'}}^{E'}j'_{*}W_{s}(\dvr_{V})
\subset \fillog_{R_{\mf}^{D'}}^{D'}j_{*}W_{s}(\dvr_{U})$
whose image by the morphism $\delta_{s,j'}\colon j'_{*}W_{s}(\dvr_{U})\rightarrow R^{1}(\varepsilon\circ j')_{*}\mathbf{Z}/p^{s}\mathbf{Z}$ 
(\ref{deltassh}) is locally $\chi'$.
Then $\chi$ is locally the image of $a$ by $\delta_{s,j}$.
Let $\bar{a}$ denote the images of $a$ in $\grlog_{R_{\mf'}^{E'}}^{E'}j'_{*}W_{s}(\dvr_{V})$ and $\grlog_{R_{\mf}^{D'}}^{D'}j_{*}W_{s}(\dvr_{U})$.
Since $R_{\mf'}^{E'}=R_{\mf}^{D'}$ and $Z_{\mf'}^{1/p}=Z_{\mf}^{1/p}$,
the morphism (\ref{morfilwsshcl}) induces the morphism
\begin{equation}
\label{eqgrepdpws}
\grlog^{E'}_{R_{\mf'}^{E'}} j'_{*}W_{s}(\dvr_{V}) \rightarrow
\grlog^{D'}_{R_{\mf}^{D'}} j_{*}W_{s}(\dvr_{U}) 
\end{equation}
and we can consider the diagram
\begin{equation}
\label{diagcfepdp}
\xymatrix{
\grlog^{E'}_{R_{\mf'}^{E'}} j'_{*}W_{s}(\dvr_{V}) \ar[r]^-{(\ref{eqgrepdpws})}
\ar[d]_{\varphi_{s}^{(E'\subset E,R_{\mf'}^{E'})}} &
\grlog^{D'}_{R_{\mf}^{D'}} j_{*}W_{s}(\dvr_{U})
\ar[d]^{\varphi_{s}^{(D'\subset D,R_{\mf}^{D'})}} \\
\Omega_{X}^{1}(\log E')(R_{\mf'}^{E'})|_{Z_{\mf'}^{1/p}}\ar[r] &
\Omega_{X}^{1}(\log D')(R_{\mf}^{D'})|_{Z_{\mf}^{1/p}},
}
\end{equation}
where the lower horizontal arrow is the canonical morphism
defining the morphism (\ref{canmorcfetocfd}).
Then the diagram (\ref{diagcfepdp}) is commutative
by the constructions of the morphisms $\varphi_{s}^{(E'\subset E,R_{\mf'}^{E'})}$
and $\varphi_{s}^{(D'\subset D,R_{\mf}^{D'})}$ in 
Lemma \ref{lemcfdef},
and the assertion follows,
since the assertion is local by Remark \ref{remcfnormal} (3)
and since the characteristic forms 
$\cform^{E'}(\mf')$ and $\cform^{D'}(\mf)$ are locally the images of $\bar{a}$ by
the morphisms $\varphi_{s}^{(E'\subset E,R_{\mf'}^{E'})}$ 
and $\varphi_{s}^{(D'\subset D,R_{\mf}^{D'})}$, respectively, by Remark \ref{remcfnormal} (2).
\end{proof}

\begin{lem}
\label{lemcformgl}
Let $I'\subset I$ be a subset such that $D'=\bigcup_{i\in I'}D_{i}$ has simple normal crossings 
and that $I_{\mT,\mf}$ (Definition \ref{defindsub} (1)) is contained in $I'$.
Let $h\colon W\rightarrow X$ be a morphism of smooth schemes over $k$.
Suppose that the pull-backs $h^{*}D_{i}=D_{i}\times_{X}W$ for all $i\in I'$ and $(h^{*}D')_{\red}=(D'\times_{X}W)_{\red}$ are divisors on $W$ with simple normal crossings
and that the pull-backs $h^{*}D_{i}$ for $i\in I-I'$ are smooth divisors on $W$. 
Let $dh^{D'}_{(h^{*}Z_{\mf}^{1/p})_{\red}}(h^{*}\cform^{D'}(\mf))$ 
denote the image by the morphism
\begin{align}
\Gamma((h^{*}Z_{\mf}^{1/p}&)_{\red},
h^{*}\Omega_{X}^{1}(\log D')(R_{\mf}^{D'})|_{(h^{*}Z_{\mf}^{1/p})_{\red}}) \notag \\
&\quad \longrightarrow
\Gamma((h^{*}Z_{\mf}^{1/p})_{\red}, \Omega_{W}^{1}(\log (h^{*}D')_{\red})(h^{*}R_{\mf}^{D'})|_{(h^{*}Z_{\mf}^{1/p})_{\red}}) \notag
\end{align}
induced by $h$ 
of the pull-back 
\begin{equation}
h^{*}\cform^{D'}(\mf)\in \Gamma((h^{*}Z_{\mf}^{1/p})_{\red},h^{*}\Omega_{X}^{1}(\log D')(R_{\mf}^{D'})|_{(h^{*}Z_{\mf}^{1/p})_{\red}})  \notag
\end{equation}
of the $\log$-$D'$-characteristic form $\cform^{D'}(\mf)$ 
by the morphism $(h^{*}Z_{\mf}^{1/p})_{\red}\rightarrow Z_{\mf}^{1/p}$
induced by $h$.
\begin{enumerate}
\item Let $\{E_{\theta}\}_{\theta\in \Theta}$ be the irreducible components
of $(h^{*}Z_{\mf})_{\red}$.
Let $\Theta'\subset \Theta$ be the index set 
of the irreducible components of $(h^{*}Z_{\mf})_{\red}$ contained in $h^{*}D'$ and
let $L_{\theta}=\Frac \hat{\dvr}_{W,\mathfrak{q}_{\theta}}$ denote the local field at the generic point $\mathfrak{q}_{\theta}$ of $E_{\theta}$ for $\theta\in \Theta$.
Let $\mathfrak{q}_{\theta}'$ be the unique point on $E_{\theta}^{1/p}$ lying above $\mathfrak{q}_{\theta}$
for $\theta\in\Theta$.
Then the following three conditions are equivalent:
\begin{enumerate}
\item $(dh^{D'}_{(h^{*}Z_{\mf}^{1/p})_{\red}}(h^{*}\cform^{D'}(\mf)))|_{E_{\theta}^{1/p}}\neq 0$ for every $\theta\in \Theta$.
\item $R_{h^{*}\mf}^{(h^{*}D')_{\red}}=h^{*}R_{\mf}^{D'}$. 
\item $(dh^{D'}_{(h^{*}Z_{\mf}^{1/p})_{\red}}(h^{*}\cform^{D'}(\mf)))_{\mathfrak{q}'_{\theta}}=\begin{cases}
\rsw(h^{*}\chi|_{L_{\theta}}) & (\theta \in \Theta'), \\
\cform(h^{*}\chi|_{L_{\theta}}) & (\theta\in \Theta-\Theta').
\end{cases}$
\end{enumerate}
\item 
If the equivalent conditions (a), (b), and (c) in (1) hold, then 
we have two equalities
\begin{align}
Z_{h^{*}\mf}=(h^{*}Z_{\mf})_{\red}, \ \ 
\cform^{(h^{*}D')_{\red}}(h^{*}\mf)=dh^{D'}_{(h^{*}Z_{\mf}^{1/p})_{\red}}(h^{*}\cform^{D'}(\mf)).
\notag 
\end{align} 
\end{enumerate}
\end{lem}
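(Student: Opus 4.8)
The plan is to reduce the whole statement to a computation along each codimension-one component $E_\theta$ of $(h^*Z_\mf)_{\red}$, using that by Remark \ref{remcfnormal} (3) both the conductor divisor and the characteristic form are determined by their behavior at the generic points of such components. By Remark \ref{remcfnormal} (2) I may assume $\chi$ has order a power of $p$, say $p^{s}$. The essential input is the sheaf-level inclusion of Remark \ref{remsncdfil} (3), $h^{*}\fillog^{D'}_{R_{\mf}^{D'}}j_{*}W_{s}(\dvr_{U})\subset \fillog^{(h^{*}D')_{\red}}_{h^{*}R_{\mf}^{D'}}j'_{*}W_{s}(\dvr_{h^{*}U})$, which shows at once that the $E_\theta$-coefficient of $R_{h^{*}\mf}^{(h^{*}D')_{\red}}$ is at most that of $h^{*}R_{\mf}^{D'}$, that is, it supplies the inequality ``$\le$'' toward condition (b); it also guarantees that the differential pullback $dh$ carries $\cform^{D'}(\mf)$ into the sheaf $\Omega_{W}^{1}(\log (h^{*}D')_{\red})(h^{*}R_{\mf}^{D'})|_{(h^{*}Z_{\mf}^{1/p})_{\red}}$, so that $dh^{D'}_{(h^{*}Z_{\mf}^{1/p})_{\red}}(h^{*}\cform^{D'}(\mf))$ is well defined.

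For the equivalences in (1) I would fix a Witt-vector section $a$ of $\fillog^{D'}_{R_{\mf}^{D'}}j_{*}W_{s}(\dvr_{U})$ lifting the $p$-part of $\chi$, as in Remark \ref{remcfnormal} (2). By Remark \ref{remsncdfil} (3) its image $a'$ lies in $\fillog^{(h^{*}D')_{\red}}_{h^{*}R_{\mf}^{D'}}j'_{*}W_{s}(\dvr_{h^{*}U})$ and lifts the $p$-part of $h^{*}\chi$. Since the morphism $\varphi_{s}$ of Lemma \ref{lemcfdef} is built from $-F^{s-1}d$, which commutes with the pullback $dh$ on differential forms, the restriction of $dh^{D'}_{(h^{*}Z_{\mf}^{1/p})_{\red}}(h^{*}\cform^{D'}(\mf))$ to $E_{\theta}^{1/p}$ equals the image of $a'$ under $\varphi_{s}^{((h^{*}D')_{\red}\subset (h^{*}D)_{\red},\,h^{*}R_{\mf}^{D'})}$ restricted to $E_{\theta}^{1/p}$. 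Applying Lemma \ref{lemrsw} on $W$ at the generic point of $E_{\theta}$ (part (1) for $\theta\in\Theta'$, part (2) for $\theta\in\Theta-\Theta'$), this restricted image is nonzero exactly when the local conductor $\sw^{(h^{*}D')_{\red}}(h^{*}\chi|_{L_{\theta}})$ attains the $E_{\theta}$-coefficient of $h^{*}R_{\mf}^{D'}$ — i.e.\ when equality holds in (b) along $E_{\theta}$ — and in that case it equals $\rsw(h^{*}\chi|_{L_{\theta}})$ for $\theta\in\Theta'$ and $\cform(h^{*}\chi|_{L_{\theta}})$ for $\theta\in\Theta-\Theta'$. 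Ranging over all $\theta$ then yields (a) $\Leftrightarrow$ (b) $\Leftrightarrow$ (c). I expect the bookkeeping of this pullback to be the main technical obstacle: one must check that $-F^{s-1}d$ together with the $p=2$ correction term in Lemma \ref{lemcfdef} is compatible with $dh$ on the graded quotients, and one must track the non-logarithmic filtration $\fil$ along the components $i\in I-I'$, where the multiplicities of the several $D_{i}$ containing $E_\theta$ must be summed correctly.

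For part (2), assume the equivalent conditions hold. The equality $Z_{h^{*}\mf}=(h^{*}Z_{\mf})_{\red}$ follows from two observations. Condition (c) shows that $h^{*}\chi|_{L_{\theta}}$ is wildly ramified for every $\theta$, since its refined conductor is nonzero (cf.\ Remark \ref{remrsw}), so every $E_{\theta}$ lies in $Z_{h^{*}\mf}$; conversely, since $I_{\mT,\mf}\subset I'$ and tame characters restrict to tame ones, any component of $(h^{*}D)_{\red}$ mapping into a tame component $D_{i}$ (with $i\in I_{\mT,\mf}$) carries a tame restriction of $h^{*}\chi$, whence $Z_{h^{*}\mf}\subset (h^{*}Z_{\mf})_{\red}$. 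Using (b) to identify $R_{h^{*}\mf}^{(h^{*}D')_{\red}}=h^{*}R_{\mf}^{D'}$, both $\cform^{(h^{*}D')_{\red}}(h^{*}\mf)$ and $dh^{D'}_{(h^{*}Z_{\mf}^{1/p})_{\red}}(h^{*}\cform^{D'}(\mf))$ are global sections of the same sheaf $\Omega_{W}^{1}(\log (h^{*}D')_{\red})(h^{*}R_{\mf}^{D'})|_{(h^{*}Z_{\mf}^{1/p})_{\red}}$. By the stalk characterization of Remark \ref{remcfnormal} (3) applied to $h^{*}\mf$ on $W$ — whose boundary $(h^{*}D)_{\red}$ has smooth irreducible components and whose log divisor $(h^{*}D')_{\red}$ has simple normal crossings — it suffices to compare their stalks at the generic points $\mathfrak{q}'_{\theta}$, and condition (c) shows these coincide. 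This gives the two asserted equalities and completes the proof.
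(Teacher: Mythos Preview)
Your proposal is correct and follows essentially the same approach as the paper: reduce to $p$-power order, pull back a Witt-vector lift $a$ via the filtration inclusion of Remark \ref{remsncdfil} (3), verify that the pulled-back section computes $dh^{D'}_{(h^{*}Z_{\mf}^{1/p})_{\red}}(h^{*}\cform^{D'}(\mf))$ through $\varphi_{s}$ (including the $p=2$ correction term, which the paper handles by an explicit local computation), and then invoke Lemma \ref{lemrsw} componentwise to get the equivalences; part (2) then follows from (b), (c), and Remark \ref{remcfnormal} (3) exactly as you outline. The only minor difference is that the paper deduces $Z_{h^{*}\mf}\supset (h^{*}Z_{\mf})_{\red}$ directly from condition (b) and Lemma \ref{lemzf}, whereas you use condition (c) together with Remark \ref{remrsw}; both are valid.
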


\begin{proof}
By Remark \ref{remconddiv} and Remark \ref{remcfnormal} (2), we may assume that $\chi$ is of order a power of $p$.
Then $\mf$ is unramified along $D_{i}$ for every $i\in I_{\mT,\mf}$ by Remark \ref{remtameloc} (3) and
so is $h^{*}\mf$ along every irreducible component of $h^{*}D$ not contained in $(h^{*}Z_{\mf})_{\red}$.
Therefore we have $Z_{h^{*}\mf}\subset (h^{*}Z_{\mf})_{\red}$.

(1) Let $s\ge 0$ be the integer such that the order of $\chi$ is
$p^{s}$.
Since the assertion is local, we may assume that $\chi$ is 
the image of a global section $a=(a_{s-1},\ldots,a_{1},a_{0})$ of $\fillog_{R_{\mf}^{D'}}^{D'}j_{*}W_{s}(\dvr_{U})$ by $\delta_{s,j}(X)\colon \Gamma(X, j_{*}W_{s}(\dvr_{U})) \rightarrow \Gamma(X,R^{1}(\varepsilon \circ j)_{*}\mathbf{Z}/p^{s}\mathbf{Z})$ (\ref{deltassh}).
Let $j'\colon h^{*}U=U\times_{X}W\rightarrow W$ be
the base change of $j\colon U\rightarrow X$ by $h$
and let $\varepsilon'\colon W_{\et}\rightarrow W_{\mathrm{Zar}}$ be
the canonical mapping from the \'{e}tale site of $W$ to the Zariski site of $W$.
Then the pull-back $h^{*}\chi\in \Gamma(W,R^{1}(\varepsilon' \circ j')_{*}\mathbf{Z}/p^{s}\mathbf{Z})$ is the image of 
$h^{*}a=(h^{*}a_{s-1},\ldots,h^{*}a_{1},h^{*}a_{0})\in \Gamma(W,h^{*}\fillog_{R_{\mf}^{D'}}^{D'}j_{*}W_{s}(\dvr_{U}))$
by the morphism
$\delta_{s,j'}(W)\colon \Gamma(W,j'_{*}W_{s}(\dvr_{h^{*}U}))\rightarrow \Gamma(W,R^{1}(\varepsilon' \circ j')_{*}\mathbf{Z}/p^{s}\mathbf{Z})$.
We regard $h^{*}a$ as a global section of $\fillog_{h^{*}R_{\mf}^{D'}}^{(h^{*}D')_{\red}}j'_{*}W_{s}(\dvr_{h^{*}U})$ by the inclusion (\ref{inchfilws}) in Remark \ref{remsncdfil} (3),
and denote the image of $h^{*}a$ in $\Gamma(W,\grlog_{h^{*}R_{\mf}^{D'}}^{(h^{*}D')_{\red}}j'_{*}W_{s}(\dvr_{h^{*}U}))$ by $\overline{h^{*}a}$.

We prove that $dh^{D'}_{(h^{*}Z_{\mf}^{1/p})_{\red}}(h^{*}\cform^{D'}(\mf))$ is the image of $\overline{h^{*}a}$ by the morphism
$\varphi_{s}^{((h^{*}D')_{\red}\subset (h^{*}D)_{\red},h^{*}R_{\mf}^{D'})}(W)$
constructed in Lemma \ref{lemcfdef}. 
We put $R_{\mf}^{D'}=\sum_{i\in I}n_{i}D_{i}$.
Then we have $R_{\mf}^{D'}=\sum_{i\in I_{\mW,\mf}}n_{i}D_{i}$,
since $n_{i}=0$ for $i\in I_{\mT,\mf}\subset I'$.
Hence we can put $h^{*}R_{\mf}^{D'}=\sum_{\theta\in \Theta}m_{\theta}E_{\theta}$.
Let $\Theta_{i'}=\{\theta\in \Theta\; |\; E_{\theta}\subset h^{*}D_{i'}\}$
for $i'\in I_{\mW,\mf}$.
If $p=2$ and if $i$ is an element of $I-I'$ such that $n_{i}=2$, 
then we locally have 
\begin{align}
&h^{*}(\sqrt{\overline{a_{0}t_{i}^{2}}}dt_{i}/t_{i}^{2}) \notag \\
&=\sqrt{\overline{(h^{*}a_{0})
\prod_{\theta'\in \Theta_{i}}s_{\theta'}^{2}}}\, 
d\prod_{\theta'\in \Theta_{i}}s_{\theta'}/\prod_{\theta'\in \Theta_{i}}s_{\theta'}^{2}
\notag \\
&=
\sum_{\substack{\theta\in \Theta_{i}}} 
\sqrt{\overline{(h^{*}a_{0})
\prod_{\substack{\theta'\in \Theta_{i}}}s_{\theta'}^{2m_{\theta'}-2}
\prod_{\theta'' \in \Theta-\Theta_{i}}s_{\theta''}^{2m_{\theta''}}}}
\prod_{\theta'''\in \Theta_{i}-\{\theta\}}s_{\theta'''}ds_{\theta}/
\prod_{\theta''''\in \Theta}s_{\theta''''}^{m_{\theta''''}}
\notag \\
&=
\sum_{\substack{\theta\in \Theta_{i} \\ m_{\theta}=2}} 
\sqrt{\overline{(h^{*}a_{0})s_{\theta}^{2}}}ds_{\theta}/s_{\theta}^{2}
\notag
\end{align}
in $\Omega_{W}^{1}(\log (h^{*}D')_{\red})(h^{*}R_{\mf}^{D'})|_{(h^{*}Z_{\mf}^{1/p})_{\red}}$,
where $t_{i'}$ is a local equation of $D_{i'}$ for $i'\in I_{\mW,\mf}$ 
and $s_{\theta}$ is a local equation of $E_{\theta}$ for $\theta\in \Theta$
such that $h^{*}t_{i}=\prod_{\theta\in \Theta_{i}}s_{\theta}$.
Hence $dh^{D'}_{(h^{*}Z_{\mf}^{1/p})_{\red}}(h^{*}\cform^{D'}(\mf))$ is the image of $\overline{h^{*}a}$ by 
$\varphi_{s}^{((h^{*}D')_{\red}\subset (h^{*}D)_{\red},h^{*}R_{\mf}^{D'})}(W)$
by the construction of $\varphi_{s}^{((h^{*}D')_{\red}\subset (h^{*}D)_{\red},h^{*}R_{\mf}^{D'})}$
in Lemma \ref{lemcfdef}.  

We prove the equivalence of the conditions (a), (b), and (c).
Let $j'_{\theta}\colon \Spec L_{\theta}\rightarrow X$ denote the canonical morphism
for $\theta\in \Theta$.
We consider the commutative diagrams
\begin{equation}
\label{diagrfhwshf}
\xymatrix{
\Gamma(W,\grlog_{h^{*}R_{\mf}^{D'}}^{(h^{*}D')_{\red}}j'_{*}W_{s}(\dvr_{h^{*}U})) \ar[rr]
\ar[d]&&
\Gamma(W, \Omega_{W}^{1}(\log(h^{*}D)_{\red})(h^{*}R_{\mf}^{D'})|_{(h^{*}Z_{\mf}^{1/p})_{\red}})
\ar[d] \\
\grlog_{m_{\theta}}W_{s}(L_{\theta}) \ar[rr]_-{\varphi_{s}^{(m_{\theta})}} &&
\grlog_{m_{\theta}}\Omega_{L_{\theta}}^{1}\otimes_{F_{L_{\theta}}} F_{L_{\theta}}^{1/p}
} 
\end{equation}
for $\theta\in \Theta'$ and 
\begin{equation}
\label{diagrfhwshs}
\xymatrix{
\Gamma(W,\grlog_{h^{*}R_{\mf}^{D'}}^{(h^{*}D')_{\red}}j'_{*}W_{s}(\dvr_{h^{*}U})) \ar[rr]
\ar[d]&&
\Gamma(W, \Omega_{W}^{1}(\log(h^{*}D)_{\red})(h^{*}R_{\mf}^{D'})|_{(h^{*}Z_{\mf}^{1/p})_{\red}})
\ar[d] \\
\gr_{m_{\theta}}W_{s}(L_{\theta}) \ar[rr]_-{\varphi_{s}'^{(m_{\theta})}} &&
\gr_{m_{\theta}}\Omega_{L_{\theta}}^{1}\otimes_{F_{L_{\theta}}} F_{L_{\theta}}^{1/p}
}
\end{equation}
for $\theta\in \Theta-\Theta'$,
where the upper horizontal arrows are $\varphi_{s}^{((h^{*}D')_{\red}\subset (h^{*}D)_{\red},h^{*}R_{\mf}^{D'})}(W)$ and
the vertical arrows are the canonical morphisms.
Then the condition (a) is equivalent to
that the image of $\overline{h^{*}a}$ by the composition of the upper horizontal arrow
and the right vertical arrow in (\ref{diagrfhwshf}) is not $0$
for every $\theta\in \Theta'$ 
and neither is that in (\ref{diagrfhwshs}) for every $\theta\in \Theta-\Theta'$.
Since the diagrams (\ref{diagrfhwshf}) and (\ref{diagrfhwshs}) are commutative,
the last condition is equivalent to both the conditions (b) and (c)
by Lemma \ref{lemrsw}.

(2) 
Suppose that the equivalent conditions (a), (b), and (c) in (1) hold.
Then we have 
\begin{equation}
Z_{h^{*}\mf}=\Supp (R_{h^{*}\mf}^{(h^{*}D')_{\red}}+(h^{*}D')_{\red}-(h^{*}D)_{\red})
\supset\Supp h^{*}(R_{\mf}^{D'}+D'-D)
=(h^{*}Z_{\mf})_{\red} \notag
\end{equation}
by the condition (b).
Since $Z_{h^{*}\mf}\subset (h^{*}Z_{\mf})_{\red}$, we obtain the first equality.
The second equality holds
by the first equality, by the condition (c), and by Remark \ref{remcfnormal} (3).
\end{proof}

\begin{rem}
\label{remcformgl}
Let the notation be as in Lemma \ref{lemcformgl}.
\begin{enumerate}
\item If $\cform^{D'}(\mf)$
can be regarded as a global section of $\Omega_{X}^{1}(\log D')(R_{\mf}^{D'})|_{Z_{\mf}}$ (see Remark \ref{remcfnormal} (5)), 
we can remove the index $1/p$ everywhere in Lemma \ref{lemcformgl} and its proof.
\item The assumptions on $h\colon W\rightarrow X$ in Lemma \ref{lemcformgl} are satisfied in the following cases:
\begin{enumerate}
\item $h\colon W\rightarrow X$ is the blow-up of $X$ along an intersection $Y$ of irreducible components of $D'$, where $Y$ is regarded as a closed subscheme of $X$ with the reduced subscheme structure.
\item $h\colon W\rightarrow X$ is $C_{D'\subset D}$-transversal (see Subsection \ref{sslogtrans}).
\end{enumerate}
\end{enumerate}
\end{rem}

\subsection{$\mathrm{Log}$-$D'$-cleanliness}
\label{sslogdpcl}

We introduce the notion that the ramification of $\mf$ is $\log$-$D'$-clean along $D$ 
for a divisor $D'$ on $X$ with simple normal crossings contained in $D$.

\begin{defn}[{cf.\ \cite[(3.4.3)]{kalog}, \cite[Definition 2.17]{sacot}}]
\label{deflogdcl}
Let $I'\subset I$ be a subset such that $D'=\bigcup_{i\in I'}D_{i}$ has simple normal crossings.
\begin{enumerate}
\item Let $x\in X$.
We say that the ramification of $\mf$ is $\log$-$D'$-{\it clean along} $D$ at $x$ if 
one of the following two conditions is satisfied:
\begin{enumerate}
\item $x\notin Z_{\mf}$ (Definition \ref{defconddiv} (2)).
\item $x\in Z_{\mf}$ and 
the germ $\cform^{D'}(\mf)_{x'}$ of the $\log$-$D'$-characteristic form 
$\cform^{D'}(\mf)$ of $\mf$ at the unique point $x'$ on $Z_{\mf}^{1/p}$ lying above $x$ is a part of a basis of 
the free $\dvr_{Z_{\mf}^{1/p},x'}$-module
$\Omega^{1}_{X}(\log D')(R_{\mf}^{D'})|_{Z_{\mf}^{1/p},x'}$.
\end{enumerate}
\item We say that the ramification of $\mf$ is $\log$-$D'$-{\it clean along} $D$ if the ramification of $\mf$ is $\log$-$D'$-clean along $D$ at every $x\in X$.
\end{enumerate}
\end{defn}

\begin{rem}
\label{remlogdpcl}
Let the notation be as in Definition \ref{deflogdcl}.
\begin{enumerate}
\item The $\log$-$D'$-cleanliness of the ramification of $\mf$ along $D$ is equivalent to
the $\log$-$D'$-cleanliness of the ramification of $\mf$ along $D$ at every point on $Z_{\mf}$.
If $I=I_{\mT,\mf}$ (Definition \ref{defindsub} (1)), then the ramification of $\mf$ is $\log$-$D'$-clean along $D$, since $Z_{\mf}=\emptyset$.
\item The $\log$-$D'$-cleanliness of the ramification of $\mf$ along $D$
is an open condition on $X$, since the condition (b) in Definition \ref{deflogdcl} (1)
is an open condition on $Z_{\mf}$. 
Since the germ $\rsw(\chi|_{K_{i}})$ or $\cform(\chi|_{K_{i}})$ of $\cform^{D'}(\mf)$ at the generic point of $D_{i}^{1/p}\subset Z_{\mf}^{1/p}$ is not $0$ 
by Remark \ref{remrsw}, the ramification of $\mf$ is $\log$-$D'$-clean along $D$
outside a closed subset of $X$ of codimension $\ge 2$.
\item Suppose that $D$ has simple normal crossings.
Then the $\log$-$D$-cleanliness 
is the same as the cleanliness in the sense of \cite[(3.4.3)]{kalog}. 
The $\log$-$\emptyset$-cleanliness
is the same as the non-degeneration in the sense of \cite[Definition 4.2]{sacot}.
\end{enumerate}
\end{rem}

\begin{lem}
\label{lemeqtologdpcl}
Let $I'\subset I$ be a subset such that $D'=\bigcup_{i\in I'}D_{i}$ has simple normal crossings.
\begin{enumerate}
\item Let $x$ be a closed point of $Z_{\mf}$ (Definition \ref{defconddiv} (2)) and
let $x'$ be the unique point on $Z_{\mf}^{1/p}$ lying above $x$.
Then the following two conditions are equivalent:
\begin{enumerate}
\item The ramification of $\mf$ is $\log$-$D'$-clean along $D$ at $x$.
\item $\cform^{D'}(\mf)(x')\neq 0$.
\end{enumerate}
\item The following two conditions are equivalent:
\begin{enumerate}
\item The ramification of $\mf$ is $\log$-$D'$-clean along $D$.
\item $\cform^{D'}(\mf)(x')\neq 0$ for all closed point $x'$ of $Z_{\mf}^{1/p}$.
\end{enumerate}
\end{enumerate}
\end{lem}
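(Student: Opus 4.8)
The plan is to treat part (1) as the essential content and to deduce part (2) from it together with Remark \ref{remlogdpcl}. The whole statement is, at bottom, an unwinding of Definition \ref{deflogdcl} combined with one elementary fact about free modules over a local ring, so I do not expect a serious obstacle; the only points requiring care are the freeness of the relevant module and the identification of $\cform^{D'}(\mf)(x')$ with the image of the germ $\cform^{D'}(\mf)_{x'}$ in the fiber at $x'$ over the residue field $\kappa(x')$.

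For part (1), fix a closed point $x\in Z_{\mf}$ and the unique point $x'$ of $Z_{\mf}^{1/p}$ lying above it. Because $x$ lies in $Z_{\mf}$, the first alternative in Definition \ref{deflogdcl} (1) is excluded, so cleanliness of $\mf$ at $x$ is precisely the requirement that the germ $\cform^{D'}(\mf)_{x'}$ be a part of a basis of the free $\dvr_{Z_{\mf}^{1/p},x'}$-module
\[
M:=\Omega^{1}_{X}(\log D')(R_{\mf}^{D'})|_{Z_{\mf}^{1/p},x'}.
\]
Here $M$ is genuinely free over the local ring $\dvr_{Z_{\mf}^{1/p},x'}$: the sheaf $\Omega^{1}_{X}(\log D')(R_{\mf}^{D'})$ is locally free on $X$ since $X$ is smooth and $D'$ has simple normal crossings, its restriction to the closed subscheme $Z_{\mf}^{1/p}$ is therefore locally free over $\dvr_{Z_{\mf}^{1/p}}$, and localizing at $x'$ gives a finite free module. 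I would then invoke the standard consequence of Nakayama's lemma: for a finitely generated free module $M$ over a local ring $A$ with residue field $\kappa$, an element $m\in M$ is a part of a basis of $M$ if and only if its image in the fiber $M\otimes_{A}\kappa$ is nonzero. Since that image is by definition $\cform^{D'}(\mf)(x')$, this yields the equivalence of conditions (a) and (b).

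For part (2), I would first use Remark \ref{remlogdpcl} (1) to reduce $\log$-$D'$-cleanliness along $D$ to cleanliness at every point of $Z_{\mf}$. By Remark \ref{remlogdpcl} (2) the non-cleanliness locus is closed in $Z_{\mf}$, and a closed subset of a scheme of finite type over $k$ is empty exactly when it contains no closed point; hence cleanliness at every point of $Z_{\mf}$ is equivalent to cleanliness at every closed point of $Z_{\mf}$. The radicial covering $Z_{\mf}^{1/p}\to Z_{\mf}$ is a universal homeomorphism, so it induces a bijection between closed points of $Z_{\mf}^{1/p}$ and closed points of $Z_{\mf}$, each closed $x'$ lying over its unique image $x$. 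Applying part (1) at each such $x$ then translates cleanliness at every closed point of $Z_{\mf}$ into the nonvanishing $\cform^{D'}(\mf)(x')\neq 0$ at every closed point $x'$ of $Z_{\mf}^{1/p}$, which is condition (b).

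The step deserving the most attention — playing the role of the ``main obstacle'', though it is routine — is precisely the passage from ``all points'' to ``all closed points'' via the openness of cleanliness in part (2), together with the correct reading of $\cform^{D'}(\mf)(x')$ in part (1) as the image of the germ in $M\otimes_{\dvr_{Z_{\mf}^{1/p},x'}}\kappa(x')$; once these are pinned down, both equivalences are immediate.
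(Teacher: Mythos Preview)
Your proposal is correct and follows essentially the same approach as the paper: part (1) is Nakayama's lemma applied to the free module $\Omega^{1}_{X}(\log D')(R_{\mf}^{D'})|_{Z_{\mf}^{1/p},x'}$, and part (2) is deduced from (1) via Remarks \ref{remlogdpcl} (1) and (2), reducing from all points of $Z_{\mf}$ to closed points by the openness of cleanliness. Your write-up is more explicit about the freeness of the module and the bijection on closed points under the radicial cover, but the logical skeleton is identical to the paper's.
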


\begin{proof}
(1) By Nakayama's lemma, the condition (b) is equivalent to 
the condition (b) in Definition \ref{deflogdcl} (1),
and is equivalent to the condition (a).

(2) By (1), the condition (b) is equivalent to that
the ramification of $\mf$ is $\log$-$D'$-clean along $D$ at
every closed point of $Z_{\mf}$.
By Remark \ref{remlogdpcl} (2), the last condition is equivalent to that
the ramification of $\mf$ is $\log$-$D'$-clean along $D$ at every point on $Z_{\mf}$.
By Remark \ref{remlogdpcl} (1), the last condition is equivalent to the condition (a).
\end{proof}

\begin{defn}
\label{defximf}
Let $I'\subset I$ be a subset such that $D'=\bigcup_{i\in I'}D_{i}$ has simple normal crossings.
Let $i\in I'\cap I_{\mW,\mf}$ (Definition \ref{defindsub} (1)).
\begin{enumerate}
\item We define a morphism
\begin{equation}
\label{defmidpmf}
m_{i}^{D'}(\mf)\colon \dvr_{X}(-R_{\mf}^{D'})|_{D_{i}^{1/p}}\xrightarrow{\times \cform^{D'}(\mf)|_{D_{i}^{1/p}}} \Omega^{1}_{X}(\log D')|_{D_{i}^{1/p}} 
\end{equation}
of locally free sheaves on $D_{i}^{1/p}$
to be the multiplication by the restriction $\cform^{D'}(\mf)|_{D_{i}^{1/p}}$ of $\log$-$D'$-characteristic form $\cform^{D'}(\mf)$ to $D_{i}^{1/p}$.
Here $R_{\mf}^{D'}$ is as in Definition \ref{defconddiv} (1).
\item We define a morphism 
\begin{equation}
\label{xidpmf}
\xi_{i}^{D'}(\mf)\colon \dvr_{X}(-R_{\mf}^{D})|_{D_{i}^{1/p}}\rightarrow \dvr_{D_{i}^{1/p}} 
\end{equation} 
of invertible sheaves on $D_{i}^{1/p}$
to be the composition 
\begin{equation}
\label{xidpmfcomp}
\xi_{i}^{D'}(\mf)\colon 
\dvr_{X}(-R_{\mf}^{D'})|_{D_{i}^{1/p}}\xrightarrow{m_{i}^{D'}(\mf)} \Omega^{1}_{X}(\log D')|_{D_{i}^{1/p}}\rightarrow\dvr_{D_{i}^{1/p}}
\end{equation}
of $m_{i}^{D'}(\mf)$ 
and the base change of the residue mapping $\res_{i}\colon \Omega_{X}^{1}(\log D')|_{D_{i}}\rightarrow \dvr_{D_{i}}$ by the canonical morphism
$D_{i}^{1/p}\rightarrow D_{i}$.
\end{enumerate}
\end{defn}

\begin{rem}
\label{remximf}
Let the assumption and the notation be as in Definition \ref{defximf}.
\begin{enumerate}
\item If the ramification of $\mf$ is $\log$-$D'$-clean along $D$, then
the morphism $m_{i}^{D'}(\mf)$ for $i\in I'\cap I_{\mW,\mf}$ is injective,
since the germ $\cform^{D'}(\mf)_{x'}$ at $x'\in D_{i}^{1/p}$ is a part of a basis
of $\Omega_{X}^{1}(\log D')(R_{\mf}^{D'})|_{D_{i}^{1/p},x'}$
for every $x'\in D_{i}^{1/p}$.

\item Suppose that $D$ has simple normal crossings.
Since the $\log$-$D'$-characteristic form $\cform^{D'}(\mf)$ is locally the 
image of a section by the morphism $\varphi_{s}^{(D'\subset D,R_{\mf}^{D'})}$
constructed in
Lemma \ref{lemcfdef} by Remark \ref{remcfnormal} (2), 
we can regard $\xi_{i}^{D'}(\mf)$ as a morphism from $\dvr_{X}(-R_{\mf}^{D'})|_{D_{i}}$
to $\dvr_{D_{i}}$ for $i\in I'\cap I_{\mW,\mf}$ by the construction of 
$\varphi_{s}^{(D'\subset D,R_{\mf}^{D'})}$ in Lemma \ref{lemcfdef}. 

\item Suppose that $D$ has simple normal crossings and that $D'=D$. 
Then we may replace $D_{i}^{1/p}$ by $D_{i}$ in Definition \ref{defximf} 
by Remark \ref{remcfnormal} (5).
We denote by $\xi_{i}(\mf)$ the morphism $\xi_{i}^{D}(\mf)$ with $D_{i}^{1/p}$
replaced by $D_{i}$ for $i\in I_{\mW,\mf}$:
\begin{equation}
\label{ximf}
\xi_{i}(\mf)\colon 
\dvr_{X}(-R_{\mf}^{D})|_{D_{i}}\xrightarrow{\times\cform^{D}(\mf)|_{D_{i}}} \Omega^{1}_{X}(\log D)|_{D_{i}}\xrightarrow{\res_{i}}\dvr_{D_{i}}.
\end{equation}
If the image of $\xi_{i}(\mf)$ in the residue field $k(x)$ at some closed point $x$ of $Z_{\mf}$ is not $0$,
then the ramification of $\mf$ is $\log$-$D$-clean along $D$ at $x$
by Lemma \ref{lemeqtologdpcl} (1).
\end{enumerate}
\end{rem}

\begin{lem}
\label{lemcfclatx}
Suppose that $D$ has simple normal crossings.
Let $I'\subset I$ be a subset and let $D'=\bigcup_{i\in I'}D_{i}$.
Let $x$ be a closed point of $Z_{\mf}$ (Definition \ref{defconddiv}). 
\begin{enumerate}
\item If the ramification of $\mf$ is $\log$-$D'$-clean along $D$ at $x$,
then $I_{\mT,\mf,x}$ (\ref{eachindatx}) is contained in $I'$
and
the cardinality of $I_{\mI,\mf,x}-I'$ (\ref{eachindatx}) is $\le 1$. 
\item Suppose that $I_{\mT,\mf,x}\subset I'$ and that
the cardinality of $I_{\mI,\mf,x}-I'$ is $1$.
Then the ramification of $\mf$ is $\log$-$D'$-clean along $D$ at $x$ 
if and only if the image of $\xi_{i}(\mf)$ (\ref{ximf}) for the unique $i\in I_{\mI,\mf,x}-I'$ in the residue field $k(x)$ at $x$ is not $0$.
\end{enumerate}
\end{lem}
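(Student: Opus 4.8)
The plan is to reduce both assertions to a single computation of the value $\cform^{D'}(\mf)(x')$, by comparing $\cform^{D'}(\mf)$ with the fully logarithmic form $\cform^{D}(\mf)$ through Lemma \ref{lemcfatx} and then recording which coefficients survive evaluation at $x'$. Concretely, I would fix a local coordinate system $(t_{1},\ldots,t_{d})$ at $x$ as in Lemma \ref{lemcfatx}, with $I_{x}=\{1,\ldots,r\}$ and $I'_{x}=I'\cap I_{x}=\{1,\ldots,r'\}$, and apply that lemma to the pair $(I,I')$ (so the rôles of $D'$ and $D''$ there are played by $D$ and by our $D'$). This yields
\begin{equation}
\cform^{D}(\mf)_{x'}=\Bigl(\sum_{i=1}^{r}\alpha_{i}\dlog t_{i}+\sum_{i=r+1}^{d}\beta_{i}dt_{i}\Bigr)\Big/\prod_{i=1}^{r}t_{i}^{n_{i}},\qquad
\cform^{D'}(\mf)_{x'}=\Bigl(\sum_{i=1}^{r'}\alpha'_{i}\dlog t_{i}+\sum_{i=r'+1}^{d}\beta'_{i}dt_{i}\Bigr)\Big/\prod_{i=1}^{r}t_{i}^{m_{i}}, \notag
\end{equation}
together with the four relations between the primed and unprimed coefficients governed by $J_{x}=I_{x}-(I_{\mII,\mf,x}\cup I'_{x})=(I_{\mT,\mf,x}-I')\cup(I_{\mI,\mf,x}-I')$.

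Since every $t_{i'}$ with $i'\in J_{x}\subseteq I_{x}$ satisfies $t_{i'}(x')=0$, I would observe that whenever $J_{x}\neq\emptyset$ the relations of Lemma \ref{lemcfatx}(1) force $\alpha'_{i}(x')=0$ for all $i\le r'$, force $\beta'_{i}(x')=0$ for every $i\in(I_{\mII,\mf,x}\cup I_{\mT,\mf,x})-I'$ and every $i>r$, and, for $i\in I_{\mI,\mf,x}-I'$, give $\beta'_{i}(x')=\alpha_{i}(x')$ exactly when $J_{x}=\{i\}$ and $\beta'_{i}(x')=0$ otherwise. By Lemma \ref{lemeqtologdpcl}(1), cleanliness at $x$ is equivalent to $\cform^{D'}(\mf)(x')\neq0$, i.e.\ to the non-vanishing at $x'$ of at least one coefficient. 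For (1): if $J_{x}=\emptyset$ there is nothing to prove, since then $I_{\mT,\mf,x}\subseteq I'$ and $I_{\mI,\mf,x}\subseteq I'$; if $J_{x}\neq\emptyset$, the computation shows that the only coefficient which can be non-zero at $x'$ is $\beta'_{i_{0}}(x')$ for some $i_{0}\in I_{\mI,\mf,x}-I'$ with $J_{x}=\{i_{0}\}$, so cleanliness forces $J_{x}$ to be a single type $\mI$ index, whence $I_{\mT,\mf,x}-I'=\emptyset$ and $\sharp(I_{\mI,\mf,x}-I')=1$, which are exactly the two claims.

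For (2) the hypotheses $I_{\mT,\mf,x}\subseteq I'$ and $\sharp(I_{\mI,\mf,x}-I')=1$ say precisely that $J_{x}=\{i_{0}\}$ for the unique $i_{0}\in I_{\mI,\mf,x}-I'$. By the same computation, every coefficient of $\cform^{D'}(\mf)_{x'}$ vanishes at $x'$ except $\beta'_{i_{0}}(x')=\alpha_{i_{0}}(x')$, so cleanliness is equivalent to $\alpha_{i_{0}}(x')\neq0$, where $\alpha_{i_{0}}$ is the $\dlog t_{i_{0}}$-coefficient of $\cform^{D}(\mf)$. It then remains to identify $\alpha_{i_{0}}(x')$ with the image of $\xi_{i_{0}}(\mf)$ in $k(x)$: by Definition \ref{defximf} and Remark \ref{remximf}(3) (using that $\cform^{D}(\mf)$ needs no radicial twist, Remark \ref{remcfnormal}(5)), the morphism $\xi_{i_{0}}(\mf)$ is the residue $\res_{i_{0}}\cform^{D}(\mf)$, that is $\alpha_{i_{0}}|_{D_{i_{0}}}$ up to the invertible twist; and since $k(x)\hookrightarrow k(x')$ is radicial, $\alpha_{i_{0}}(x')=0$ if and only if the image of $\xi_{i_{0}}(\mf)$ in $k(x)$ is $0$. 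This gives the stated equivalence.

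I expect the main obstacle to be the careful bookkeeping in the evaluation step: correctly matching each of the four coefficient relations of Lemma \ref{lemcfatx}(1) against the type ($\mT$, $\mI$, or $\mII$) and the $I'$-membership of the corresponding index, and verifying that the single surviving coefficient at $x'$ is exactly the type $\mI$ one with $J_{x}$ a singleton. The secondary point requiring care is the clean identification of that surviving coefficient with $\xi_{i_{0}}(\mf)$ across the radicial cover $Z_{\mf}^{1/p}\to Z_{\mf}$, for which one must check that passing to $x'$ does not change the vanishing of a section pulled back from $D_{i_{0}}$.
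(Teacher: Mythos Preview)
Your proposal is correct and follows essentially the same approach as the paper: both apply Lemma \ref{lemcfatx}(1) to the pair $(I,I')$ to express the coefficients of $\cform^{D'}(\mf)_{x'}$ in terms of those of $\cform^{D}(\mf)_{x'}$ via the product $\prod_{i'\in J_{x}}t_{i'}$, then invoke Lemma \ref{lemeqtologdpcl}(1). The paper phrases part (1) as a contrapositive (if $\sharp J_{x}\ge 2$ or $J_{x}\cap I_{\mT,\mf,x}\neq\emptyset$ then $\cform^{D'}(\mf)(x')=0$) and is much terser about part (2), but your more explicit bookkeeping of which coefficient survives is exactly the computation underlying the paper's citations.
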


\begin{proof}
Let $x'$ be the unique closed point of $Z_{\mf}^{1/p}$ lying above $x$.

(1) We prove the contrapositive of the statement.
We put $I'_{x}=I'\cap I_{x}$ (\ref{defix}).
If the cardinality of $J_{x}=I_{x}-( I_{\mII,\mf,x}\cup I'_{x})$ is $\ge 2$
or if $J_{x}\cap I_{\mT,\mf,x}$ is non-empty,
then we have $\cform^{D'}(\mf)(x')=0$ by Lemma \ref{lemcfatx} (1) applied to the case where $(I',I'')$ is $(I,I')$.
Hence the assertion holds by Lemma \ref{lemeqtologdpcl} (1).

(2) Let $i$ be the unique element of $I_{\mI,\mf,x}-I'$. 
By Lemma \ref{lemcfatx} (1) applied to the case where $(I',I'')$ is $(I,I')$,
the image of $\xi_{i}(\mf)$ in $k(x)$
is not $0$ if and only if $\cform^{D'}(\mf)(x')\neq 0$.
By Lemma \ref{lemeqtologdpcl} (1), the last condition is equivalent to that the ramification of $\mf$ is $\log$-$D'$-clean along $D$ at $x$.
\end{proof}

In the rest of this article, 
we put 
\begin{equation}
\label{defd*mf}
D_{*,\mf}=\bigcup_{i\in I_{*,\mf}}D_{i} 
\end{equation} 
for $*=\mT,\mW, \mI,\mII$ (Definition \ref{defindsub}).
If $I_{*,\mf}=\emptyset$, then we have $D_{*,\mf}=\emptyset$ 
for $*=\mT,\mW, \mI,\mII$, by convention.
For $*=\mW$,
we have $D_{\mW,\mf}=Z_{\mf}$ (Definition \ref{defconddiv} (2)).
For a subset $I''\subset I$, we put
\begin{equation}
\label{defdipp}
D_{I''}=\bigcap_{i\in I''}D_{i},
\end{equation}
where $D_{I''}=X$ if $I''=\emptyset$.
We should distinguish the three similar notations 
$D_{\mI,\mf}$, $D_{I_{\mI,\mf}}$, and $D_{I_{\mI,\mf,x}}$, where $x$ is a point on $D$;
the first $D_{\mI,\mf}$ is the union of $D_{i}$ for $i\in I_{\mI,\mf}$
as in (\ref{defd*mf}) and the second $D_{I_{\mI,\mf}}$ 
(resp.\ the third $D_{I_{\mI,\mf,x}}$) is the intersection
of $D_{i}$ for $i\in I_{\mI,\mf}$ (resp.\ for $i\in I_{\mI,\mf,x}$)
as in (\ref{defdipp}).
If there is no risk of confusion, then
we use $D_{*,\mf}$, $D_{I_{*,\mf}}$, and $D_{I_{*,\mf,x}}$ for $*=\mT,\mW, \mI,\mII$ and for a point $x$ on a divisor with smooth irreducible components
independently on the symbol denoting the divisor.

We compare the logarithmic cleanliness among several settings.
As a consequence of the comparisons, we prove that the $\log$-$D_{\mI,\mf}\cup D_{\mT,\mf}$-cleanliness
is the weakest condition among the $\log$-$D'$-cleanliness 
for divisors $D'\subset D$ on $X$ when
$D$ has simple normal crossings.

\begin{lem}[{cf.\ \cite[Lemma 2.28 (i)]{yacc}}]
\label{lemclrelfirs}
Let $I''\subset I'\subset I$ be subsets such that $I'-I''$ is contained in $I_{\mII,\mf}$ (Definition \ref{defindsub} (2))
and that $D'=\bigcup_{i\in I'}D_{i}$ has simple normal crossings.
Let $x$ be a closed point of $Z_{\mf}$ (Definition \ref{defconddiv} (2)).
Suppose that the ramification of $\mf$ is $\log$-$D'$-clean along $D$ at $x$.
Then the ramification of $\mf$ is $\log$-$D''$-clean along $D$ at $x$
for $D''=\bigcup_{i\in I''}D_{i}$.
Consequently, the $\log$-$D'$-cleanliness of the ramification of $\mf$ along $D$
implies the $\log$-$D''$-cleanliness of the ramification of $\mf$ along $D$.
\end{lem}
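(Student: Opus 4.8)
The plan is to deduce the whole statement from the comparison of characteristic forms already established in Lemma \ref{lemclrelfirf}, combined with the fiberwise criterion for cleanliness in Lemma \ref{lemeqtologdpcl}. First I note that since $D''\subset D'$ and $D'$ has simple normal crossings, $D''$ also has simple normal crossings, so both notions of cleanliness are defined. Fix the closed point $x\in Z_{\mf}$ and let $x'$ be the unique point of $Z_{\mf}^{1/p}$ lying above $x$ (recall that $Z_{\mf}$ is independent of the chosen subset by Lemma \ref{lemzf}, so $\cform^{D'}(\mf)$ and $\cform^{D''}(\mf)$ live over the same $Z_{\mf}^{1/p}$). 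By Lemma \ref{lemeqtologdpcl} (1), the $\log$-$D'$-cleanliness (resp.\ $\log$-$D''$-cleanliness) of the ramification of $\mf$ along $D$ at $x$ is equivalent to $\cform^{D'}(\mf)(x')\neq 0$ (resp.\ $\cform^{D''}(\mf)(x')\neq 0$). Hence it suffices to prove the implication $\cform^{D'}(\mf)(x')\neq 0\Rightarrow \cform^{D''}(\mf)(x')\neq 0$.

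Since $I'-I''\subset I_{\mII,\mf}$ and $D'$ has simple normal crossings, Lemma \ref{lemclrelfirf} applies and gives $R_{\mf}^{D'}=R_{\mf}^{D''}$ together with the identity $\cform^{D'}(\mf)=\iota(\cform^{D''}(\mf))$, where
\begin{equation}
\iota\colon \Omega_{X}^{1}(\log D'')(R_{\mf}^{D''})|_{Z_{\mf}^{1/p}}\rightarrow \Omega_{X}^{1}(\log D')(R_{\mf}^{D'})|_{Z_{\mf}^{1/p}} \notag
\end{equation}
is the canonical morphism obtained by restricting to $Z_{\mf}^{1/p}$ the inclusion $\Omega_{X}^{1}(\log D'')(R_{\mf}^{D''})\hookrightarrow \Omega_{X}^{1}(\log D')(R_{\mf}^{D'})$ of sheaves on $X$ (legitimate because $D''\subset D'$ and $R_{\mf}^{D'}=R_{\mf}^{D''}$). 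The morphism $\iota$ is $\dvr_{Z_{\mf}^{1/p}}$-linear, so evaluation at $x'$ commutes with it: writing $\bar{\iota}_{x'}$ for the induced $k(x')$-linear map on fibers, we have $\cform^{D'}(\mf)(x')=\bar{\iota}_{x'}(\cform^{D''}(\mf)(x'))$. Consequently, if $\cform^{D''}(\mf)(x')=0$ then $\cform^{D'}(\mf)(x')=\bar{\iota}_{x'}(0)=0$; the contrapositive is exactly the desired implication, and by Lemma \ref{lemeqtologdpcl} (1) this establishes the $\log$-$D''$-cleanliness of $\mf$ along $D$ at $x$.

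For the final (global) assertion, I would invoke Lemma \ref{lemeqtologdpcl} (2): the $\log$-$D'$-cleanliness of $\mf$ along $D$ is equivalent to $\cform^{D'}(\mf)(x')\neq 0$ for every closed point $x'$ of $Z_{\mf}^{1/p}$. Applying the pointwise implication just proved at each such $x'$ yields $\cform^{D''}(\mf)(x')\neq 0$ for all closed $x'$, which is again the $\log$-$D''$-cleanliness of $\mf$ along $D$ by Lemma \ref{lemeqtologdpcl} (2).

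There is no deep obstacle here: the substantive content—namely the comparison $\cform^{D'}(\mf)=\iota(\cform^{D''}(\mf))$ under the type $\mII$ hypothesis on $I'-I''$—is already packaged in Lemma \ref{lemclrelfirf}. The only point requiring care is the direction of the implication: the characteristic-form morphism $\iota$ runs from the $D''$-side to the $D'$-side, so the nonvanishing that encodes cleanliness must be transported in the opposite direction, from $D'$ to $D''$, via the contrapositive. One should also confirm that $\iota$ is genuinely $\dvr_{Z_{\mf}^{1/p}}$-linear, so that fiber evaluation is functorial; this is immediate from its description as (the restriction of) the inclusion of logarithmic differentials. In the special case where $D$ itself has simple normal crossings one could instead run the argument through the explicit local formulas of Lemma \ref{lemcfatx} as in \cite[Lemma 2.28 (i)]{yacc}, using that for $i\in I'-I''\subset I_{\mII,\mf}$ the coefficient $\alpha_{i}$ lies in $t_{i}\cdot\dvr_{Z_{\mf}^{1/p},x'}$ and hence contributes nothing in the fiber at $x'$; but the argument via Lemma \ref{lemclrelfirf} is more economical and does not require $D$ to have simple normal crossings.
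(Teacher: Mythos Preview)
Your proof is correct and follows essentially the same approach as the paper: translate cleanliness at $x$ into nonvanishing of the characteristic form at $x'$ via Lemma \ref{lemeqtologdpcl} (1), then use Lemma \ref{lemclrelfirf} to compare $\cform^{D'}(\mf)$ and $\cform^{D''}(\mf)$ and deduce the implication. The only cosmetic difference is that for the global assertion the paper cites Remarks \ref{remlogdpcl} (1) and (2) rather than Lemma \ref{lemeqtologdpcl} (2), and you spell out the contrapositive and linearity details that the paper leaves implicit.
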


\begin{proof}
By Lemma \ref{lemeqtologdpcl} (1), we have $\cform^{D'}(\mf)(x')\neq 0$ for the closed point $x'$ of $Z_{\mf}^{1/p}$ lying above $x$.
Thus we have $\cform^{D''}(\mf)(x')\neq 0$ 
by Lemma \ref{lemclrelfirf},
and first the assertion holds by Lemma \ref{lemeqtologdpcl} (1).
The last assertion follows from the first assertion and Remarks \ref{remlogdpcl} (1) and (2).
\end{proof}

\begin{lem}[{cf.\ \cite[Lemma 2.26 (ii)]{yacc}}]
\label{lemcldegeq}
Suppose that $D$ has simple normal crossings. 
Let $I'\subset I$ be a subset and let $D'=\bigcup_{i\in I'}D_{i}$.
Let $x$ be a closed point of $Z_{\mf}$ (Definition \ref{defconddiv} (2)).
Assume that the ramification of $\mf$ is $\log$-$D'$-clean along $D$ at $x$.
Then the ramification of $\mf$ 
is not $\log$-$\emptyset$-clean along $D$ at $x$ if and only if one of the following three conditions holds:
\begin{enumerate}
\item $I_{\mT,\mf,x}\neq \emptyset$ (\ref{eachindatx}).
\item $I_{\mT,\mf,x}= \emptyset$, $I_{\mI,\mf,x}\subset I'$,
the cardinality of $I_{\mI,\mf,x}$ (\ref{eachindatx}) is $1$, and 
the image of $\xi_{i}^{D'}(\mf)$ (\ref{ximf}) in $k(x)$ is $0$ for the unique $i\in I_{\mI,\mf,x}$.
\item $I_{\mT,\mf,x}= \emptyset$ 
and the cardinality of $I_{\mI,\mf,x}$ is $\ge 2$.
\end{enumerate}
\end{lem}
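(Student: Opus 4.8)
The plan is to reduce everything to the (non)vanishing of characteristic forms at the point $x'\in Z_{\mf}^{1/p}$ above $x$ and then run the two criteria of Lemma \ref{lemcfclatx}, repaired in the boundary strata by the residue comparison of Lemma \ref{lemcfatx}. By Lemma \ref{lemeqtologdpcl} (1), the ramification of $\mf$ fails to be $\log$-$\emptyset$-clean along $D$ at $x$ exactly when $\cform^{\emptyset}(\mf)(x')=0$; the standing hypothesis that $\mf$ is $\log$-$D'$-clean at $x$ gives both $\cform^{D'}(\mf)(x')\neq 0$ and, by Lemma \ref{lemcfclatx} (1), the inclusion $I_{\mT,\mf,x}\subset I'$ together with $\sharp(I_{\mI,\mf,x}-I')\le 1$. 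I would organise the argument by the pair $(I_{\mT,\mf,x},\sharp I_{\mI,\mf,x})$, which is precisely how conditions (1)--(3) are stratified.

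First I would dispatch (1) and (3): applying the contrapositive of Lemma \ref{lemcfclatx} (1) with the empty subset in place of $I'$, $\log$-$\emptyset$-cleanliness at $x$ forces $I_{\mT,\mf,x}=\emptyset$ and $\sharp I_{\mI,\mf,x}\le 1$, so either $I_{\mT,\mf,x}\neq\emptyset$ or $\sharp I_{\mI,\mf,x}\ge 2$ already yields $\cform^{\emptyset}(\mf)(x')=0$. This leaves the stratum $I_{\mT,\mf,x}=\emptyset$, $\sharp I_{\mI,\mf,x}\le 1$.

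The heart is the residue comparison. Applying Lemma \ref{lemcfatx} (1) with $(I,I')$ playing the roles of $(I',I'')$, the governing set is $J_x=I_x-(I_{\mII,\mf,x}\cup I'_{x})=I_{\mI,\mf,x}-I'$ (using $I_{\mT,\mf,x}\subset I'$), and the relation $\alpha_i'=\alpha_i\prod_{i'\in J_x}t_{i'}$ between the $\dlog t_i$-coefficients of $\cform^{D'}(\mf)_{x'}$ and $\cform^{D}(\mf)_{x'}$ shows that $\xi_i^{D'}(\mf)(x)=\xi_i(\mf)(x)$ whenever $I_{\mI,\mf,x}\subset I'$ (so that $J_x=\emptyset$). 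Now suppose $\sharp I_{\mI,\mf,x}=1$, with unique element $i$. If $i\notin I'$ then $\sharp(I_{\mI,\mf,x}-I')=1$ and Lemma \ref{lemcfclatx} (2) applied with $I'$ turns $\log$-$D'$-cleanliness into $\xi_i(\mf)(x)\neq 0$; if $i\in I'$ then the comparison gives $\xi_i(\mf)(x)=\xi_i^{D'}(\mf)(x)$. In either situation Lemma \ref{lemcfclatx} (2) with the empty subset reads: $\mf$ is $\log$-$\emptyset$-clean at $x$ iff $\xi_i(\mf)(x)\neq 0$. Combining, non-$\log$-$\emptyset$-cleanliness in this stratum is equivalent to $i\in I'$ and $\xi_i^{D'}(\mf)(x)=0$, i.e.\ to condition (2).

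The remaining stratum $I_{\mT,\mf,x}=\emptyset$, $\sharp I_{\mI,\mf,x}=0$ is the main obstacle, because Lemma \ref{lemcfclatx} (2) is silent when $I_{\mI,\mf,x}-I'$ is empty; none of (1)--(3) can hold here, so I must show $\mf$ is $\log$-$\emptyset$-clean directly. Here $I_x=I_{\mII,\mf,x}$, hence $I'_{x}\subset I_{\mII,\mf,x}$ and the set $J_x$ of Lemma \ref{lemcfatx} (now with $(I',\emptyset)$ in the roles of $(I',I'')$) is empty; by Lemma \ref{lemcfatx} (2) each $\dlog t_i$-coefficient of $\cform^{D'}(\mf)_{x'}$ lies in $t_i\cdot\dvr_{Z_{\mf}^{1/p},x'}$, so $\cform^{D'}(\mf)_{x'}$ carries no genuine logarithmic pole; moreover, since every component of $D$ through $x$ is of type $\mII$ where $\sw=\dt$, one has $R_{\mf}^{\emptyset}=R_{\mf}^{D'}$ in a neighbourhood of $x$. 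Consequently $\cform^{\emptyset}(\mf)_{x'}$ and $\cform^{D'}(\mf)_{x'}$ coincide as forms, so $\cform^{\emptyset}(\mf)(x')=\cform^{D'}(\mf)(x')\neq 0$ and $\mf$ is $\log$-$\emptyset$-clean at $x$. Assembling the strata gives the asserted equivalence.
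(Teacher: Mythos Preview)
Your argument is correct and rests on the same lemmas as the paper's proof (Lemmas \ref{lemeqtologdpcl}, \ref{lemcfatx}, \ref{lemcfclatx}), but the case analysis is organized a bit differently. The paper disposes of the entire stratum $I'\cap I_{\mI,\mf,x}=\emptyset$---which covers both your subcase $\sharp I_{\mI,\mf,x}=1$, $i\notin I'$ and your final stratum $\sharp I_{\mI,\mf,x}=0$---in one stroke by invoking Lemma \ref{lemclrelfirs} with $(I',I'')=(I',\emptyset)$: since then $I'_{x}\subset I_{\mII,\mf,x}$, $\log$-$D'$-cleanliness at $x$ immediately yields $\log$-$\emptyset$-cleanliness. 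Your double application of Lemma \ref{lemcfclatx} (2) for the $i\notin I'$ subcase is a pleasant alternative; your treatment of the $\sharp I_{\mI,\mf,x}=0$ case is essentially a reproof of Lemma \ref{lemclrelfirs} by hand.

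One point of precision in that last case: the sentence ``$\cform^{\emptyset}(\mf)_{x'}$ and $\cform^{D'}(\mf)_{x'}$ coincide as forms, so $\cform^{\emptyset}(\mf)(x')=\cform^{D'}(\mf)(x')$'' is not literally correct, because the two values live in different fibres and the canonical map $\Omega^{1}_{X}(R)\otimes k(x')\to\Omega^{1}_{X}(\log D')(R)\otimes k(x')$ has nontrivial kernel (it sends $dt_{i}\mapsto 0$ for $i\in I'_{x}$). What you need---and what your observation $\alpha_{i}\in t_{i}\cdot\dvr_{Z_{\mf}^{1/p},x'}$ does establish---is that this map sends $\cform^{\emptyset}(\mf)(x')$ to $\cform^{D'}(\mf)(x')$ (this is Lemma \ref{lemclrelfirf}), so nonvanishing of the target forces nonvanishing of the source. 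That is precisely Lemma \ref{lemclrelfirs}, and citing it would shorten the argument.
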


\begin{proof}
If one of the conditions (1) and (3) holds,
then the ramification of $\mf$ is not $\log$-$\emptyset$-clean along $D$ at $x$ by Lemma \ref{lemcfclatx} (1) applied to the case where $I'=\emptyset$.
Hence we may assume that neither the condition (1) nor the condition (3) holds.
Then we have $I_{\mT,\mf,x}=\emptyset$ and
the cardinality of $I_{\mI,\mf,x}$ is $\le 1$.
If $I'\cap I_{\mI,\mf,x}=\emptyset$, then
we have $I'\cap I_{x}\subset I_{\mII,\mf,x}$ ((\ref{defix}), (\ref{eachindatx})), and
the ramification of $\mf$ is $\log$-$\emptyset$-clean along $D$ 
at $x$ by Lemma \ref{lemclrelfirs}
applied to the case where $(I',I'')$ is $(I',\emptyset)$.
Thus we may assume that the cardinality of $I_{\mI,\mf,x}$ is $1$ and that $I_{\mI,\mf,x}\subset I'$.
Then the image of $\xi_{i}^{D'}(\mf)$ in $k(x)$ for the unique $i\in I_{\mI,\mf,x}$ is equal to
that of $\xi_{i}(\mf)$ (\ref{ximf}) in $k(x)$ by Lemma \ref{lemcfatx} (1) applied
to the case where $(I',I'')$ is $(I,I')$.
Therefore the desired equivalence follows from Lemma \ref{lemcfclatx} (2)
with $I'=\emptyset$.
\end{proof}

\begin{lem}[{cf.\ \cite[Lemma 2.28 (i)]{yacc}}]
\label{lemclrelsecs}
Let $I'\subset I$ be a subset such that $D'=\bigcup_{i\in I'}D_{i}$ has simple normal crossings.
Let $x$ be a closed point of $Z_{\mf}$ (Definition \ref{defconddiv} (2)).
Suppose that the ramification of $\mf$ is $\log$-$D'$-clean along $D$ at $x$.
Let $I''\subset I'\cap I_{\mT,\mf}$
(Definition \ref{defindsub} (1)) be a subset and
let $E=\bigcup_{i\in I-I''}D_{i}$ and $E'=\bigcup_{i\in I'-I''}D_{i}$.  
Let $\mf'$ be a smooth sheaf of $\Lambda$-modules of rank 1 on $V=X-E$
whose associated character $\chi'\colon \pi_{1}^{\ab}(V)\rightarrow \Lambda^{\times}$ has the $p$-part inducing the $p$-part of $\chi$.
Then the ramification of $\mf'$ is $\log$-$E'$-clean along $E$ at $x$.
Consequently, the $\log$-$D'$-cleanliness of the ramification of $\mf$ along $D$ implies the $\log$-$E'$-cleanliness of the ramification of $\mf'$ along $E$.
\end{lem}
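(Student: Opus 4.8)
The plan is to deduce this comparison of cleanliness directly from the characteristic-form comparison just established in Lemma \ref{lemclrelsecf}, combined with the pointwise cleanliness criterion of Lemma \ref{lemeqtologdpcl} (1). The argument runs exactly parallel to the proof of Lemma \ref{lemclrelfirs}, so essentially no new work is needed here; the substantive content has already been packaged into Lemma \ref{lemclrelsecf}.

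First I would fix the closed point $x\in Z_{\mf}$ and let $x'$ denote the unique point of $Z_{\mf}^{1/p}$ lying above it. By Lemma \ref{lemeqtologdpcl} (1), the hypothesis that the ramification of $\mf$ is $\log$-$D'$-clean along $D$ at $x$ is equivalent to the nonvanishing $\cform^{D'}(\mf)(x')\neq 0$. Next I would invoke Lemma \ref{lemclrelsecf}, which under the present hypotheses (notably $I''\subset I'\cap I_{\mT,\mf}$) gives $R_{\mf}^{D'}=R_{\mf'}^{E'}$ and $Z_{\mf}=Z_{\mf'}$, and exhibits $\cform^{D'}(\mf)$ as the image of $\cform^{E'}(\mf')$ under the canonical morphism (\ref{canmorcfetocfd}). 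Since $Z_{\mf}=Z_{\mf'}$, the same point $x'$ is the unique point of $Z_{\mf'}^{1/p}$ above $x$, and because (\ref{canmorcfetocfd}) is the map on global sections induced by a morphism of sheaves, evaluation at $x'$ commutes with it; hence $\cform^{D'}(\mf)(x')$ is the image of $\cform^{E'}(\mf')(x')$ under the fiber map at $x'$. As the image is nonzero, the source $\cform^{E'}(\mf')(x')$ must itself be nonzero, and applying Lemma \ref{lemeqtologdpcl} (1) to $\mf'$ and $E'$ shows that the ramification of $\mf'$ is $\log$-$E'$-clean along $E$ at $x$.

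For the concluding global statement I would simply run the pointwise implication at every closed point of the common support $Z_{\mf}=Z_{\mf'}$, and pass to global cleanliness via Remarks \ref{remlogdpcl} (1) and (2). One should also note that $E'=\bigcup_{i\in I'-I''}D_{i}$ is automatically a divisor with simple normal crossings, being a sub-union of the components of the snc divisor $D'$, so the cleanliness criterion applies to $\mf'$ in the first place.

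I do not expect a genuine obstacle. The only point that merits a second look is that the fiber of (\ref{canmorcfetocfd}) at $x'$ need \emph{not} be injective: the inclusion $\Omega_{X}^{1}(\log E')\hookrightarrow\Omega_{X}^{1}(\log D')$ collapses the $dt_{i}$-directions for $i\in I''$ on the fiber at points of those components. This would obstruct the reverse implication, but it is irrelevant to the direction we need, since a nonzero image forces a nonzero source for any linear map. Thus the lemma is a formal corollary of Lemma \ref{lemclrelsecf}, with the real computational content residing there rather than here.
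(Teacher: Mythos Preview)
Your proof is correct and follows essentially the same approach as the paper's: the paper simply says the first assertion follows as in the proof of Lemma~\ref{lemclrelfirs} using Lemma~\ref{lemclrelsecf} in place of Lemma~\ref{lemclrelfirf}, and the global statement follows from $Z_{\mf}=Z_{\mf'}$ together with Remarks~\ref{remlogdpcl} (1) and (2). Your additional remark that only the direction ``nonzero image implies nonzero source'' is needed is a correct elaboration of why the argument goes through.
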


\begin{proof}
The first assertion follows 
similarly as the proof of Lemma \ref{lemclrelfirs} using Lemma \ref{lemclrelsecf} instead of Lemma \ref{lemclrelfirf}. 
Since we have $Z_{\mf}=Z_{\mf'}$ by Lemma \ref{lemclrelsecf},
the last assertion follows from the first assertion and Remarks \ref{remlogdpcl} (1) and (2).
\end{proof}

\begin{prop}[{cf.\ \cite[Remark 4.13]{kalog}}]
\label{propblupcl}
Suppose that $D$ has simple normal crossings.
Let $I'\subset I$ be a subset and let $D'=\bigcup_{i\in I'}D_{i}$. 
Assume that the ramification of $\mf$ is $\log$-$D'$-clean along $D$
and that $I_{\mT,\mf}$ (Definition \ref{defindsub} (1)) is contained in $I'$.
We put $I=\{1,2,\ldots,r\}$, where $r\ge 1$.
Let $f\colon X'\rightarrow X$ be the blow-up along
an irreducible component $Y$ of
$D_{I''}$ (\ref{defdipp}) for a non-empty subset $I''\subset I'$,
where $Y$ is regarded as a closed subscheme of $X$ with the reduced subscheme structure.
Let $D_{i}'$ denote the proper transform of $D_{i}$ for $i\in I$
and let $D_{0}'=f^{-1}(Y)$ be the exceptional divisor.
We identify the index set of the irreducible components
of $f^{*}D$ with $I\cup\{0\}$.
Then the following hold:
\begin{enumerate}
\item $R_{f^{*}\mf}^{(f^{*}D')_{\red}}=f^{*}R_{\mf}^{D'}$ (Definition \ref{defconddiv} (1)), 
$Z_{f^{*}\mf}=(f^{*}Z_{\mf})_{\red}$ (Definition \ref{defconddiv} (2)),
and $\cform^{(f^{*}D')_{\red}}(f^{*}\mf)=df^{D'}_{(f^{*}Z_{\mf}^{1/p})_{\red}}(f^{*}\cform^{D'}(f^{*}\mf))$ (Definition \ref{defcform}).
Here $df^{D'}_{(f^{*}Z_{\mf}^{1/p})_{\red}}(f^{*}\cform^{D'}(f^{*}\mf))$ denotes the image
by the canonical morphism
\begin{align}
\Gamma((f^{*}Z_{\mf}^{1/p})_{\red},
&f^{*}\Omega_{X}^{1}(\log D')(R_{\mf}^{D'})|_{(f^{*}Z_{\mf}^{1/p})_{\red}})
\notag \\ 
&\rightarrow
\Gamma((f^{*}Z_{\mf}^{1/p})_{\red}, \Omega_{X'}^{1}(\log (f^{*}D')_{\red})(f^{*}R_{\mf}^{D'})|_{(f^{*}Z_{\mf}^{1/p})_{\red}}) \notag
\end{align}
induced by $f$ of the pull-back
\begin{equation}
f^{*}\cform^{D'}(\mf)\in \Gamma((f^{*}Z_{\mf}^{1/p})_{\red},
f^{*}\Omega_{X}^{1}(\log D')(R_{\mf}^{D'})|_{(f^{*}Z_{\mf}^{1/p})_{\red}}) \notag
\end{equation}
of the $\log$-$D'$-characteristic form $\cform^{D'}(\mf)$ by the morphism $(f^{*}Z_{\mf}^{1/p})_{\red}\rightarrow Z_{\mf}^{1/p}$ induced by $f$.

\item The ramification of $f^{*}\mf$ is $\log$-$(f^{*}D')_{\red}$-clean along $(f^{*}D)_{\red}$.
\item The image of $\xi_{i}^{(f^{*}D')_{\red}}(f^{*}\mf)$ 
(Definition \ref{defximf} (2)) for $i\in (I'\cup \{0\})\cap I_{\mW,f^{*}\mf}$
(Definiton \ref{defindsub} (1))
is equal to that of $f^{*}\xi_{i}^{D'}(\mf)$
in $\dvr_{D_{i}'^{1/p}}$ if $i\in I'$
and to the sum of the images of $f^{*}\xi_{i'}^{D'}(\mf)$ in $\dvr_{D_{i}'^{1/p}}$
for all $i'\in I''\cap I_{\mW,\mf}$ if $i=0$.
\end{enumerate}
\end{prop}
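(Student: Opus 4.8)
The plan is to deduce all three assertions from Lemma \ref{lemcformgl}, whose hypotheses are met by $f$ in view of Remark \ref{remcformgl} (2)(a): the center $Y$ is an irreducible component of the smooth scheme $D_{I''}=\bigcap_{i\in I''}D_{i}$ with $I''\subset I'$, hence a smooth intersection of components of $D'$, and both $(f^{*}D')_{\red}$ and $(f^{*}D)_{\red}$ are divisors with simple normal crossings. Thus for part (1) it suffices to verify one of the equivalent conditions (a), (b), (c) of Lemma \ref{lemcformgl} (1); the equality of divisors is condition (b) itself, and the two remaining equalities then follow from Lemma \ref{lemcformgl} (2).

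Everything reduces to a computation in a standard blow-up chart. Writing $I''=\{1,\ldots,c\}$ and taking local coordinates $t_{1},\ldots,t_{d}$ at a point of $Y$ with $Y=\{t_{1}=\cdots=t_{c}=0\}$ and $t_{i}$ a local equation of $D_{i}$, the chart in which $s=t_{1}$ cuts out the exceptional divisor $D_{0}'$ is $t_{1}=s$, $t_{j}=sw_{j}$ $(2\le j\le c)$, with $D_{0}'=\{s=0\}$ and proper transforms $D_{j}'=\{w_{j}=0\}$. The key identities are
\begin{equation}
f^{*}\dlog t_{1}=\dlog s,\qquad f^{*}\dlog t_{j}=\dlog s+\dlog w_{j}\quad(2\le j\le c),\notag
\end{equation}
so the change of basis from $\{f^{*}\dlog t_{i}\}_{i\in I''}$ to $\{\dlog s,\dlog w_{2},\ldots,\dlog w_{c}\}$ is unipotent, the coordinates outside $I''$ being untouched. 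This has two consequences. First, $f^{*}\Omega^{1}_{X}(\log D')$ and $\Omega^{1}_{X'}(\log(f^{*}D')_{\red})$ coincide (as subsheaves of $j'_{*}\Omega^{1}_{f^{*}U}$), i.e.\ the blow-up is \emph{log-\'etale} along the boundary, so the comparison map defining $df^{D'}_{(f^{*}Z_{\mf}^{1/p})_{\red}}$ is an isomorphism. Second, the residue of $f^{*}\cform^{D'}(\mf)$ along $D_{0}'$ is the sum of the residues along the $D_{i}$ with $i\in I''$; the tame terms (those with $i\in I''\cap I_{\mT,\mf}$, which lie in $t_{i}\cdot\dvr_{Z_{\mf}^{1/p}}$ by Lemma \ref{lemcfatx} (2)) vanish there since $t_{i}|_{D_{0}'}=0$, whence $\xi_{0}^{(f^{*}D')_{\red}}(f^{*}\mf)=\sum_{i'\in I''\cap I_{\mW,\mf}}f^{*}\xi_{i'}^{D'}(\mf)$; for $i\in I'$ the proper transform $D_{i}'$ is generically isomorphic to $D_{i}$, so the residue pulls back directly. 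This gives part (3).

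To verify condition (a) of Lemma \ref{lemcformgl} (1), note that the irreducible components of $(f^{*}Z_{\mf})_{\red}$ are the proper transforms $D_{i}'$ for $i\in I_{\mW,\mf}$, together with $D_{0}'$ when $I''\cap I_{\mW,\mf}\neq\emptyset$. Along a proper transform $f$ is an isomorphism near the generic point, so the restriction of $f^{*}\cform^{D'}(\mf)$ there equals $\rsw(\chi|_{K_{i}})$ or $\cform(\chi|_{K_{i}})$, which is nonzero by Remark \ref{remrsw}. Along $D_{0}'$ the generic point maps to the generic point of $Y$, which then lies in $Z_{\mf}$; because the unipotent change of basis is invertible, $f^{*}\cform^{D'}(\mf)$ is part of a basis at a point of $(f^{*}Z_{\mf}^{1/p})_{\red}$ exactly when $\cform^{D'}(\mf)$ is part of a basis at its image, and the latter holds by the $\log$-$D'$-cleanliness of $\mf$ (Lemma \ref{lemeqtologdpcl}). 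This yields condition (a), hence part (1). The \textbf{main obstacle} is precisely this nonvanishing along $D_{0}'$: one must check that the unipotent change of basis creates no cancellation among the $\dlog$-coefficients, which is exactly where the $\log$-$D'$-cleanliness of $\mf$ and the hypothesis $I_{\mT,\mf}\subset I'$ (forcing the tame terms to restrict to $0$) are used.

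Finally, part (2) follows from part (1) together with the log-\'etale identification. Since $\cform^{(f^{*}D')_{\red}}(f^{*}\mf)=df^{D'}_{(f^{*}Z_{\mf}^{1/p})_{\red}}(f^{*}\cform^{D'}(\mf))$ and $R_{f^{*}\mf}^{(f^{*}D')_{\red}}=f^{*}R_{\mf}^{D'}$, the invertible comparison isomorphism sends the value of $f^{*}\cform^{D'}(\mf)$ at any point $x'$ of $Z_{f^{*}\mf}^{1/p}=(f^{*}Z_{\mf}^{1/p})_{\red}$ to the value of $\cform^{D'}(\mf)$ at its image in $Z_{\mf}^{1/p}$; the latter is part of a basis by the $\log$-$D'$-cleanliness of $\mf$, so the former is part of a basis at every $x'$. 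By Lemma \ref{lemeqtologdpcl} this is exactly the $\log$-$(f^{*}D')_{\red}$-cleanliness of $f^{*}\mf$ along $(f^{*}D)_{\red}$.
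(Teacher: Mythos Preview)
Your argument is correct and follows essentially the same route as the paper: both reduce to Lemma \ref{lemcformgl} via Remark \ref{remcformgl} (2)(a), work in a standard blow-up chart, use the unipotent change of basis among $\dlog t_{i}$ for $i\in I''$, and invoke Lemma \ref{lemcfatx} (2) to kill the tame residues in part (3). The only cosmetic difference is that the paper verifies nonvanishing of $df^{D'}(f^{*}\cform^{D'}(\mf))$ at every closed point of $(f^{*}Z_{\mf}^{1/p})_{\red}$ by exhibiting an explicit invertible coefficient (your $\alpha_{i'}'$ or $f^{*}\beta_{i'}$), whereas you package the same observation as the log-\'etale isomorphism $f^{*}\Omega^{1}_{X}(\log D')\xrightarrow{\sim}\Omega^{1}_{X'}(\log(f^{*}D')_{\red})$; both yield (1) and (2) via Lemma \ref{lemeqtologdpcl} in the same way.
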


\begin{proof}
We may assume that $X$ is purely of dimension $d$.
Let $x$ be a closed point of $Z_{\mf}$ (Definition \ref{defconddiv} (2)).
Since the assertions are local, we may shrink $X$ to a neighborhood of $x$ and we may assume that $I=I_{x}$ (\ref{defix}).
We put $I'=\{1,2,\ldots,r'\}$
for $1\le r'\le r\le d$.
Let $(t_{1},t_{2},\ldots,t_{d})$ be a local coordinate system 
at $x$ such that $t_{i}$ is a local equation of $D_{i}$
for $i\in I$.
By shrinking $X$ if necessary,
we may assume that $X=\Spec A$ for a $k$-algebra $A$, 
that $t_{i}\in A$ for $i=1,2,\ldots,d$, 
that $D_{i}=(t_{i}=0)$ for $i\in I$,
and that $\Omega_{X}^{1}(\log D')$ is a free $\dvr_{X}$-module with the basis $(\dlog t_{1},\dlog t_{2},\ldots,\dlog t_{r'},dt_{r'+1},dt_{r'+2},\ldots,dt_{d})$. 
We put $R_{\mf}^{D'}=\sum_{i\in I}n_{i}D_{i}$ and
\begin{equation}
\label{eqcfdpfblup}
\cform^{D'}(\mf)=\frac{\sum_{i=1}^{r'}\alpha_{i}\dlog t_{i}+\sum_{i=r'+1}^{d}\beta_{i}dt_{i}}{
\prod_{i=1}^{r}t_{i}^{n_{i}}},
\end{equation}
where $\alpha_{i}$ for $i\in I'$ 
and $\beta_{i}$ for $i=r'+1,r'+2,\ldots,d$ are global sections
of $Z_{\mf}^{1/p}$.
Since the ramification of $\mf$ is $\log$-$D'$-clean along $D$,
we may assume that $\alpha_{i}$ for some $i\in I'$ or $\beta_{i}$ for some $i=r'+1,r'+2,\ldots,d$ 
is invertible in $Z_{\mf}^{1/p}$ by Lemma \ref{lemeqtologdpcl} (1)
and by shrinking $X$ if necessary.

Let $i\in I''=\{1,2,\ldots,r''\}$, where $1\le r''\le r'$.
We put $U_{i}'=X'-D_{i}'$.
We denote $f^{*}t_{i}$ simply by $t_{i}$ in $U_{i}'$ 
and put $f^{*}t_{i'}=t_{i}t_{i'}$ for $i'\in I''-\{i\}$ and $f^{*}t_{i'}=t_{i'}$ for $i'=r''+1,r''+2,\ldots,d$ in $U_{i}'$ by abuse of notation.
Then $D_{0}'=(t_{i}=0)$ in $U_{i}'$ and
we have
\begin{equation}
\label{eqcfblupcl}
df^{D'}_{(f^{*}Z_{\mf}^{1/p})_{\red}}(f^{*}\cform^{D'}(f^{*}\mf))=\frac{\sum_{i'=1}^{r'}\alpha_{i'}'\dlog t_{i'}+\sum_{i'=r'+1}^{d}f^{*}\beta_{i'}dt_{i'}}{
\prod_{i'=1}^{r}t_{i'}^{n_{i'}'}},
\end{equation}
where 
\begin{equation}
\label{alphap}
\alpha_{i'}'=\begin{cases}
f^{*}\alpha_{i'} &  (i'\neq i), \\
\sum_{i''=1}^{r''}f^{*}\alpha_{i''} & (i'=i) 
\end{cases}
\end{equation}
for $i'=1,2,\ldots,r'$ and 
\begin{equation}
n_{i'}'=\begin{cases}
n_{i'} &  (i'\neq i), \\
\sum_{i''=1}^{r''}n_{i''} & (i'=i)
\end{cases}
\notag
\end{equation}
for $i'=1,2,\ldots,r$, in $U_{i}'$.
Since $\alpha_{i'}$ for some $i'\in I'$ or 
$\beta_{i'}$ for some $i'=r'+1,r'+2,\ldots,d$ in (\ref{eqcfdpfblup}) 
is invertible in $Z_{\mf}^{1/p}$,
there exists $\alpha_{i'}'$ for some $i'=1,2,\ldots,r'$
or $f^{*}\beta_{i'}$ for some $i'=r'+1,r'+2,\ldots,d$ 
in (\ref{eqcfblupcl}) that is invertible in $k(y')$ for each closed point $y'$ of $(D_{0}'\cap U_{i}')^{1/p}$.
Therefore we have $df^{D'}_{(f^{*}Z_{\mf}^{1/p})_{\red}}(f^{*}\cform^{D'}(f^{*}\mf))(y')\neq 0$
for every closed point $y'$ of $D_{0}'^{1/p}$.
Since $X'$ is isomorphic to $X$ outside $D_{0}'$, 
we have $df^{D'}_{(f^{*}Z_{\mf}^{1/p})_{\red}}(f^{*}\cform^{D'}(f^{*}\mf))(y')\neq 0$ for every closed point $y'$ of $(f^{*}Z_{\mf}^{1/p})_{\red}$
by Lemma \ref{lemeqtologdpcl} (2) and the $\log$-$D'$-cleanliness of the ramification of $\mf$ along $D$.
Thus the assertion (1) holds by Lemma \ref{lemcformgl}.
The assertion (2) holds by the third equality in (1) and Lemma \ref{lemeqtologdpcl} (2).
The assertion (3) holds by the third equality in (1) and (\ref{alphap}),
since $f^{*}\alpha_{i''}|_{D_{0}'}=0$ for $i''\in I_{\mT,\mf}$ by Lemma \ref{lemcfatx} (2).
\end{proof}

\begin{prop}
\label{propdpdtdi}
Suppose that $D$ has simple normal crossings
and that the ramification of $\mf$ is $\log$-$D'$-clean along $D$
for some union $D'$ of irreducible components of $D$.
Then the ramification of $\mf$ is $\log$-$D_{\mI,\mf}\cup D_{\mT,\mf}$-clean along $D$,
where $D_{\mI,\mf}$ and $D_{\mT,\mf}$ are as in (\ref{defd*mf}).
\end{prop}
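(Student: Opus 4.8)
The plan is to reduce the global statement to the pointwise nonvanishing criterion of Lemma \ref{lemeqtologdpcl} and then to transport the log locus from $D'$ to $D_{\mI,\mf}\cup D_{\mT,\mf}$ through the intermediate union $I''=I'\cap(I_{\mI,\mf}\cup I_{\mT,\mf})$, which sits nested below both. Writing $I'_{0}=I_{\mI,\mf}\cup I_{\mT,\mf}$, so that $I-I'_{0}=I_{\mII,\mf}$, I would first invoke Lemma \ref{lemeqtologdpcl} (2) to reduce to showing $\cform^{D_{\mI,\mf}\cup D_{\mT,\mf}}(\mf)(x')\neq 0$ for every closed point $x'$ of $Z_{\mf}^{1/p}$ lying over a closed point $x\in Z_{\mf}$.

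Fixing such $x,x'$, I would record via Lemma \ref{lemcfclatx} (1) that $\log$-$D'$-cleanliness at $x$ forces $I_{\mT,\mf,x}\subset I'$ and $\sharp(I_{\mI,\mf,x}-I')\le 1$. The descent step is then immediate: since $I'-I''=I'\cap I_{\mII,\mf}\subset I_{\mII,\mf}$ and $I''\subset I'$, Lemma \ref{lemclrelfirs} shows that $\mf$ is $\log$-$D''$-clean at $x$ for $D''=\bigcup_{i\in I''}D_{i}$, that is, $\cform^{D''}(\mf)(x')\neq 0$ by Lemma \ref{lemeqtologdpcl} (1).

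For the ascent step I would choose a local coordinate system $(t_{1},\dots,t_{d})$ at $x$ with $t_{i}$ a local equation of $D_{i}$ for $i\in I_{x}$, and apply the coefficient comparison of Lemma \ref{lemcfatx} (1) to the nested pair $(I'_{0},I'')$. Here $J_{x}=(I'_{0})_{x}-(I_{\mII,\mf,x}\cup I''_{x})$ equals $I_{\mI,\mf,x}-I'$, because $I'_{0}$ contains no index of type $\mII$ and $I_{\mT,\mf,x}\subset I''$; by the previous paragraph $\sharp J_{x}\le 1$. When $J_{x}=\emptyset$ all products occurring in Lemma \ref{lemcfatx} (1) are empty, the coefficients coincide, and the two characteristic forms share the same germ at $x'$. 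When $J_{x}=\{i_{0}\}$ with $i_{0}$ of type $\mI$, the component $i_{0}$ contributes a $\dlog t_{i_{0}}$-term to $\cform^{D_{\mI,\mf}\cup D_{\mT,\mf}}(\mf)$, with coefficient $\alpha_{i_{0}}$ say, and a $dt_{i_{0}}$-term to $\cform^{D''}(\mf)$; Lemma \ref{lemcfatx} (1) identifies the coefficient of the latter with $\alpha_{i_{0}}$ (the relevant product $\prod_{i'\in J_{x}-\{i_{0}\}}t_{i'}$ being empty), while forcing every other coefficient of $\cform^{D''}(\mf)$ to be divisible by $\prod_{i'\in J_{x}}t_{i'}=t_{i_{0}}$.

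Because $t_{i_{0}}$ vanishes at $x'$, evaluation at $x'$ kills all terms of $\cform^{D''}(\mf)$ except the $dt_{i_{0}}$-term, so $\cform^{D''}(\mf)(x')\neq 0$ will force $\alpha_{i_{0}}(x')\neq 0$; the $\dlog t_{i_{0}}$-component of $\cform^{D_{\mI,\mf}\cup D_{\mT,\mf}}(\mf)(x')$ is then nonzero, which is exactly the required $\cform^{D_{\mI,\mf}\cup D_{\mT,\mf}}(\mf)(x')\neq 0$. I expect the ascent step to be the main obstacle: adding a log pole can a priori create a degeneracy of the characteristic form, and the crux is that for a type $\mI$ component this degeneracy is carried entirely by the single coefficient $\alpha_{i_{0}}$, whose vanishing is equivalent on the two sides; passing through the meet $I''=I'\cap I'_{0}$ is precisely what keeps the index sets nested so that Lemmas \ref{lemclrelfirs} and \ref{lemcfatx} both become available.
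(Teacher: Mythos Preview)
Your argument is correct. It follows a slightly different route from the paper's proof, though both rest on the same lemmas. The paper also fixes a closed point $x\in Z_{\mf}$ and records via Lemma~\ref{lemcfclatx}~(1) that $I_{\mT,\mf,x}\subset I'$ and $\sharp(I_{\mI,\mf,x}-I')\le 1$. In the case $I_{\mI,\mf,x}\subset I'$ (your $J_{x}=\emptyset$), the paper simply observes that locally $D_{\mI,\mf}\cup D_{\mT,\mf}\subset D'$, so Lemma~\ref{lemclrelfirs} applied directly with the pair $(I',I_{\mI,\mf}\cup I_{\mT,\mf})$ gives the conclusion without passing through your intermediate $I''$. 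In the case $\sharp(I_{\mI,\mf,x}-I')=1$ (your $J_{x}=\{i_{0}\}$), the paper goes \emph{up} rather than \emph{down}: Lemma~\ref{lemcfclatx}~(2) shows that the image of $\xi_{i_{0}}(\mf)$ in $k(x)$ is nonzero, whence $\log$-$D$-cleanliness at $x$ by Remark~\ref{remximf}~(3), and then a descent via Lemma~\ref{lemclrelfirs} with $(I,I_{\mI,\mf}\cup I_{\mT,\mf})$ finishes. Your approach instead descends to $D''=\bigcup_{i\in I'\cap(I_{\mI,\mf}\cup I_{\mT,\mf})}D_{i}$ and then ascends via the explicit coefficient identities of Lemma~\ref{lemcfatx}~(1); this has the advantage of being uniform in the two cases and of making the single crucial coefficient $\alpha_{i_{0}}$ visible directly, at the cost of a slightly more elaborate bookkeeping.
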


\begin{proof}
By Remarks \ref{remlogdpcl} (1) and (2),
it is sufficient to prove that the ramification of $\mf$ is $\log$-$D_{\mI,\mf}\cup D_{\mT,\mf}$-clean along $D$ at every closed point $x$ of $Z_{\mf}$ (Definition \ref{defconddiv} (2)).
Let $x$ be a closed point of $Z_{\mf}$.
Then $I_{\mT,\mf,x}$ (\ref{eachindatx}) is contained in $I'$ by Lemma \ref{lemcfclatx} (1).
If $I_{\mI,\mf,x}$ (\ref{eachindatx}) is contained in $I'$, then we have $D_{\mI,\mf}\cup D_{\mT,\mf}\subset D'$ in a neighborhood of $x$, and the assertion follows from
Lemma \ref{lemclrelfirs} applied to the case where $(I', I'')$ is $(I',I_{\mI,\mf}\cup I_{\mT,\mf})$.
If $I_{\mI,\mf,x}$ is not contained in $I'$, then the cardinality of $I_{\mI,\mf,x}-I'$ is $1$ by Lemma \ref{lemcfclatx} (1), and the image of $\xi_{i}(\mf)$ (\ref{ximf}) for the unique $i\in I_{\mI,\mf,x}-I'$ in $k(x)$ is not $0$ by Lemma \ref{lemcfclatx} (2).
Then the ramification of $\mf$ is $\log$-$D$-clean along $D$ at $x$
by Remark \ref{remximf} (3), and
the assertion follows from Lemma \ref{lemclrelfirs} applied to the case where $(I',I'')$ is $(I,I_{\mI,\mf}\cup I_{\mT,\mf})$.
\end{proof}

\subsection{Preparation for the main purpose}
\label{sspresecfive}
We devote this subsection to preparation for 
Subsections \ref{sscandss}, \ref{sshicc},
and Section \ref{ssrk1},
where we consider computations of the singular support $SS(j_{!}\mf)$ and
the characteristic cycle $CC(j_{!}\mf)$ of the zero extension $j_{!}\mf$ of $\mf$ by
the canonical open immersion $j\colon U\rightarrow X$ in terms of ramification theory.
We do not use anything given in this subsection before Subsection \ref{sscandss}.

\begin{defn}[{cf.\ \cite[Lemma 4.2]{kalog}, \cite[Lemma 2.17 (ii)]{yacc}}]
\label{deforder}
Let $I'\subset I$ be a subset such that $D'=\bigcup_{i\in I'}D_{i}$ has simple normal crossings.
For $i\in I_{\mW,\mf}$ (Definition \ref{defindsub} (1)) and a closed point $x$ of $D_{i}$,
we define an integer $\ord^{D'}(\mf;x,D_{i})$ 
to be the largest non-negative integer $n$ such that
\begin{align}
\cform^{D'}(\mf)|_{D_{i}^{1/p},x'}\in \mathfrak{m}_{x'}^{pn}\Omega^{1}_{X}(\log D')(R_{\mf}^{D'})|_{D_{i}^{1/p},x'}, \notag
\end{align}
where $x'$ denotes the unique point on $D_{i}^{1/p}$ lying above $x$
and $\mathfrak{m}_{x'}\subset \dvr_{D_{i}^{1/p},x'}$ denotes the maximal ideal at $x'$.
\end{defn}

\begin{lem}[{cf.\ \cite[Lemma 4.3]{kalog}, \cite[Lemma 2.18 (i)]{yacc}}]
\label{lemorder}
Let $I'\subset I$ be a subset such that $D'=\bigcup_{i\in I'}D_{i}$ has simple normal crossings.
For a closed point $x$ of $Z_{\mf}$ (Definition \ref{defconddiv} (2)), 
the following three conditions are equivalent:
\begin{enumerate}
\item The ramification of $\mf$ is $\log$-$D'$-clean along $D$ at $x$.
\item $\ord^{D'}(\mf;x, D_{i})=0$ for every $i\in I_{\mW,\mf,x}$ (\ref{eachindatx}).
\item $\ord^{D'}(\mf;x, D_{i})=0$ for some $i\in I_{\mW,\mf,x}$.
\end{enumerate}
\end{lem}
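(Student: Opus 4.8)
The plan is to reduce all three conditions to the single non-vanishing condition $\cform^{D'}(\mf)(x')\neq 0$ at the unique point $x'$ of $Z_{\mf}^{1/p}$ lying above $x$, which by Lemma \ref{lemeqtologdpcl} (1) is exactly condition (1). Since $x$ lies on $Z_{\mf}=\bigcup_{i\in I_{\mW,\mf}}D_{i}$, the set $I_{\mW,\mf,x}$ is non-empty, and for every $i\in I_{\mW,\mf,x}$ the point $x'$ lies on $D_{i}^{1/p}\subset Z_{\mf}^{1/p}$; moreover the fiber of the locally free sheaf $\Omega^{1}_{X}(\log D')(R_{\mf}^{D'})$ at $x'$ is the same whether computed on $Z_{\mf}^{1/p}$ or on $D_{i}^{1/p}$, so that $\cform^{D'}(\mf)(x')$ equals the value at $x'$ of the restriction $\cform^{D'}(\mf)|_{D_{i}^{1/p}}$. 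I would phrase the core as the assertion that, for each $i\in I_{\mW,\mf,x}$,
\begin{equation}
\ord^{D'}(\mf;x,D_{i})=0\iff\cform^{D'}(\mf)(x')\neq 0,\notag
\end{equation}
from which $(1)\Leftrightarrow(2)\Leftrightarrow(3)$ follows at once: the right-hand side does not depend on $i$, so the left-hand side holds for some $i$ if and only if it holds for all $i$.

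One direction is immediate. By Definition \ref{deforder}, $\ord^{D'}(\mf;x,D_{i})\geq 1$ means $\cform^{D'}(\mf)|_{D_{i}^{1/p},x'}\in\mathfrak{m}_{x'}^{p}\,\Omega^{1}_{X}(\log D')(R_{\mf}^{D'})|_{D_{i}^{1/p},x'}$, while $\cform^{D'}(\mf)(x')=0$ means $\cform^{D'}(\mf)|_{D_{i}^{1/p},x'}\in\mathfrak{m}_{x'}\,\Omega^{1}_{X}(\log D')(R_{\mf}^{D'})|_{D_{i}^{1/p},x'}$. As $\mathfrak{m}_{x'}^{p}\subset\mathfrak{m}_{x'}$, the condition $\cform^{D'}(\mf)(x')\neq 0$ forces $\ord^{D'}(\mf;x,D_{i})=0$ for every $i\in I_{\mW,\mf,x}$; this gives $(1)\Rightarrow(2)$, and $(2)\Rightarrow(3)$ is trivial because $I_{\mW,\mf,x}\neq\emptyset$.

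The content is the reverse implication $(3)\Rightarrow(1)$, i.e.\ upgrading ``not in $\mathfrak{m}_{x'}^{p}$'' to ``not in $\mathfrak{m}_{x'}$.'' The mechanism is the gap forced by the radicial covering $\pi\colon D_{i}^{1/p}\rightarrow D_{i}$: for a function $g$ on $D_{i}$ one has $\pi^{*}g\in\mathfrak{m}_{x'}\iff g\in\mathfrak{m}_{x}\iff\pi^{*}g\in\mathfrak{m}_{x'}^{p}$, because $\pi^{*}\mathfrak{m}_{x}$ is generated by $p$-th powers of the local coordinates and hence sits between $\mathfrak{m}_{x'}^{p}$ and $\mathfrak{m}_{x'}$; thus a pulled-back function is either a unit at $x'$ or lies in $\mathfrak{m}_{x'}^{p}$, with nothing in between. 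So I would fix $i$ with $\ord^{D'}(\mf;x,D_{i})=0$, choose local coordinates $t_{1},\dots,t_{d}$ at $x$ with $t_{i}$ an equation of $D_{i}$, and expand $\cform^{D'}(\mf)|_{D_{i}^{1/p}}$ in the basis $\dlog t_{j},\,dt_{j}$ of $\Omega^{1}_{X}(\log D')(R_{\mf}^{D'})$. By the description of $\cform^{D'}(\mf)$ as the local image of a Witt-vector section under $\varphi_{s}^{(D'\subset D,R_{\mf}^{D'})}$ (Remark \ref{remcfnormal} (2), Lemma \ref{lemcfdef}) together with the explicit formula $-F^{s-1}da=-\sum_{l}a_{l}^{p^{l}-1}da_{l}$, these coefficients are, away from the exceptional term, restrictions to $D_{i}$ of functions on $Z_{\mf}$, i.e.\ pullbacks $\pi^{*}g$; each such coefficient is therefore a unit or lies in $\mathfrak{m}_{x'}^{p}$. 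Since $\ord^{D'}(\mf;x,D_{i})=0$ says some coefficient is not in $\mathfrak{m}_{x'}^{p}$, that coefficient must be a unit, whence $\cform^{D'}(\mf)|_{D_{i}^{1/p}}(x')\neq 0$ and therefore $\cform^{D'}(\mf)(x')\neq 0$, which is condition (1) by Lemma \ref{lemeqtologdpcl} (1).

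The main obstacle is precisely the exceptional coefficient in characteristic $p=2$: the formula in Lemma \ref{lemcfdef} contributes an extra term $\overline{dt_{i}/t_{i}^{2}}\otimes\sqrt{\overline{a_{0}t_{i}^{2}}}$ along each $D_{i}$ with $i\in I-I'$ and $n_{i}=2$, whose coefficient is a genuine square root rather than a pulled-back function, so the clean gap argument does not apply verbatim to that single direction. I would dispose of this as in \cite[Lemma 2.18 (i)]{yacc}: analyze this term directly using the vanishing relations for the radicial covering established in the proof of Lemma \ref{lemcfdef} (so that the square-root contribution along $D_{i}$ is governed, modulo $\mathfrak{m}_{x'}^{p}$, by a pulled-back function), reducing it once more to the base-function gap. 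The remaining points---that the expansion is computed on $D_{i}^{1/p}$ rather than $Z_{\mf}^{1/p}$, and that the equivalence is insensitive to whether $D$ has simple normal crossings---are routine, since everything is local along the smooth component $D_{i}$ and only the germ of $\cform^{D'}(\mf)$ at $x'$ and its restriction to $D_{i}^{1/p}$ enter.
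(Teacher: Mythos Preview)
Your proposal is correct and follows the same route as the paper: both reduce conditions (2) and (3) to the non-vanishing condition $\cform^{D'}(\mf)(x')\neq 0$ of Lemma~\ref{lemeqtologdpcl}(1), which is condition (1). The paper's proof is a single sentence asserting this equivalence; your argument spells out the underlying reason---the gap $\pi^{*}g\in\mathfrak{m}_{x'}\Leftrightarrow\pi^{*}g\in\mathfrak{m}_{x'}^{p}$ for pulled-back coefficients, with the $p=2$ square-root term deferred to \cite[Lemma~2.18~(i)]{yacc}---which is exactly what the paper leaves implicit behind its ``cf.'' references.
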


\begin{proof}
Both the conditions (2) and (3) are equivalent to
the condition (b) in Lemma \ref{lemeqtologdpcl} (1),
which is equivalent to the condition (1) by loc.\ cit.. 
\end{proof}

\begin{defn}
\label{defbcf}
Assume that $D$ has simple normal crossings.
Let $I'\subset I$ be a subset containing $I_{\mT,\mf}$ (Definition \ref{defindsub} (1))
and let $D'=\bigcup_{i\in I'}D_{i}$.
Assume that the ramification of $\mf$ is $\log$-$D'$-clean along $D$.
\begin{enumerate}
\item We regard the image $\Image \xi^{D'}_{i}(\mf)$ of $\xi_{i}^{D'}(\mf)$ (Definition \ref{defximf} (2)) for $i\in I'\cap I_{\mW,\mf}$ 
(Definition \ref{defindsub} (1))
as an ideal sheaf of $\dvr_{D_{i}}$ by Remark \ref{remximf} (2)
and denote by $V(\Image \xi^{D'}_{i}(\mf))$
the closed subset of $D_{i}$ defined by $\Image \xi^{D'}_{i}(\mf)$.
We define a closed subset $B_{I'',\mf}^{D'}\subset D_{I''}\cap Z_{\mf}$ (\ref{defdipp}) for $I''\subset I'$ (possibly $I'=\emptyset$)
to be $D_{I''}\cap \bigcup_{i\in (I-I')\cap I_{\mW,\mf}}D_{i}$
if $I'\subset I_{\mT,\mf}$
and to be the closure of 
$D_{I''}\cap \bigcap_{i'\in I''\cap I_{\mW,\mf}}V(\Image \xi_{i'}^{D'}(\mf))-
\bigcup_{i\in I'-I''}D_{i}$ in $D_{I''}\cap Z_{\mf}$ if $I''\cap I_{\mW,\mf}\neq \emptyset$.

We define a closed subset $B_{\mf}^{D'}\subset Z_{\mf}$
to be the union of $B_{I'',\mf}^{D'}$ for all non-empty subsets $I''\subset I'$. 

\item We define a closed subset $E_{I'',\mf}^{D'}\subset D_{I''}\cap Z_{\mf}$ for $I''\subset I'$ (possibly $I'=\emptyset$)
to be the union of the irreducible components of $B_{I'',\mf}^{D'}$
whose codimensions in $D_{I''}=\bigcap_{i\in I''}D_{i}$ are $0$.

We define a closed subset $E_{\mf}^{D'}\subset Z_{\mf}$
to be the union of $E_{I'',\mf}^{D'}$ for all (non-empty) subsets $I''\subset I'$.
\end{enumerate}
\end{defn}

\noindent
The closed subsets $B_{I'',\mf}^{D'}$, $B_{\mf}^{D'}$, $E_{I'',\mf}^{D'}$, and $E_{\mf}^{D'}$ of $Z_{\mf}$ defined in Definition \ref{defbcf} play key roles in Subsections \ref{sscandss}, \ref{sshicc},
and Section \ref{ssrk1}.

\begin{rem}
\label{remdefbcf}
Let the assumptions and the notation be as in Definition \ref{defbcf}.
We regard $V(\Image \xi^{D'}_{i}(\mf))$
as the closed subscheme of $D_{i}$ defined by 
the ideal sheaf $\Image \xi^{D'}_{i}(\mf)\subset \dvr_{D_{i}}$
for $i\in I'\cap I_{\mW,\mf}$.
\begin{enumerate}
\item We regard $\dvr_{X}(R_{\mf}^{D'})|_{Z_{\mf}^{1/p}}\cdot \cform^{D'}(\mf)$ (Definitions \ref{defconddiv}, \ref{defcform}) locally as a direct summand of
$\Omega_{X}^{1}(\log D')|_{Z_{\mf}^{1/p}}$ of rank $1$
by the $\log$-$D'$-cleanliness of the ramification of $\mf$ along $D$.
Then the restriction of $D_{I''}\cap \bigcap_{i'\in I''\cap I_{\mW,\mf}}V(\Image \xi_{i'}^{D'}(\mf))$ to $D_{I''}-\bigcup_{i\in I'-I''}D_{i}$ is the 
maximum subscheme $Y$ of $D_{I''}\cap Z_{\mf}-\bigcup_{i\in I'-I''}D_{i}$ 
such that
the kernel of the composition 
\begin{equation}
\label{compinvsh}
\Omega_{X}^{1}|_{Y^{1/p}}\rightarrow \Omega_{X}^{1}(\log D')|_{Y^{1/p}}
\rightarrow \Omega_{X}^{1}(\log D')|_{Y^{1/p}}/(\dvr_{X}(R_{\mf}^{D'})|_{Y^{1/p}}\cdot \cform^{D'}(\mf)|_{Y^{1/p}}) 
\end{equation}
of canonical morphisms is locally free of rank $1+\sharp I''$.
The closed subset $E_{I'',\mf}^{D'}$ of $D_{I''}\cap Z_{\mf}$
for $I''\subset I'$ is empty if $I''\subset I_{\mT,\mf}$ and is
the union of the irreducible components 
$Y'$ of $D_{I''}$ such that 
the image of $\xi_{i}^{D'}(\mf)$ is $0$ in $\dvr_{Y'}$ for every $i\in I''\cap I_{\mW,\mf}$ if $I''\cap I_{\mW,\mf}\neq \emptyset$.
\item By Lemma \ref{lemcfatx} (2), the codimension of $V(\Image\xi_{i}^{D'}(\mf))$ in $X$
for $i\in I'\cap I_{\mW,\mf}$ such that $V(\Image \xi_{i}^{D'}(\mf))\neq \emptyset$ is $2$ 
if and only if $i\in I_{\mI,\mf}$ (Definition \ref{defindsub} (2)).
By loc.\ cit., we have $V(\Image\xi_{i}^{D'}(\mf))=D_{i}$ for $i\in I'\cap I_{\mII,\mf}$
(Definition \ref{defindsub} (2)).
Hence a divisor $D_{i}$ for $i\in I'\cap I_{\mW,\mf}$ is contained in $B_{\{i\},\mf}^{D'}$ if and only if $i\in I'\cap I_{\mII,\mf}$, namely
we have $E_{\{i\},\mf}^{D'}\neq \emptyset$ for $i\in I'\cap I_{\mW,\mf}$ if and only if 
$i\in I'\cap I_{\mII,\mf}$.
If $i\in I'\cap I_{\mII,\mf}$, then we have $E_{\{i\},\mf}^{D'}=D_{i}$.
If $I'\cap I_{\mII,\mf}=\emptyset$ and if $I''$ is a non-empty subset of $I'$, then 
the codimension of $B_{I'',\mf}^{D'}$ in $X$ is $\ge 2$,
and so is that of $E_{I'',\mf}^{D'}$.

\item If $E_{\mf}^{D'}=\emptyset$, then we have $I'\cap I_{\mII,\mf}=\emptyset$
by (2).
If $X$ is a surface, namely is purely of dimension $2$,
and if the ramification of $\mf$ is $\log$-$D'$-clean along $D$,
then the converse is also true.
Actually, suppose that $X$ is a surface and 
that $I'\cap I_{\mII,\mf}=\emptyset$.
Then we locally have $E_{\mf}^{D'}=E_{I'',\mf}^{D'}$ for a subset $I''\subset I'$ of cardinality $2$ by (2).
If $x\in E_{\mf}^{D'}$ is a closed point, then we have $I_{x}=I''$ (\ref{defix}), 
and the image of $\xi_{i}^{D'}(\mf)$ in $k(x)$ is $0$ for every $i\in I''\cap I_{\mW,\mf}$
by (1).
Hence the ramification of $\mf$ is not $\log$-$D'$-clean along $D$ at $x$
by Lemma \ref{lemcfatx} (2) and Lemma \ref{lemeqtologdpcl} (1),
which contradicts the $\log$-$D'$-cleanliness of the ramification of $\mf$ along $D$ at $x$.

\item Suppose that $I'\cap I_{\mII,\mf}=\emptyset$.
Then each closed point $x$ of $Z_{\mf}$ satisfying the condition (1) in Lemma \ref{lemcldegeq} 
is a point on $B_{\{i\},\mf}^{D'}$ for some $i\in I_{\mT,\mf,x}\subset I'$ (\ref{eachindatx}).
Each closed point $x$ of $Z_{\mf}$ satisfying the condition (2) in Lemma \ref{lemcldegeq} is also a point on $B_{I'\cap I_{x},\mf}^{D'}$, 
since the image of $\xi_{i}^{D'}(\mf)$ in $k(x)$ is $0$ for every $i\in I'\cap I_{\mII,\mf}$ (\ref{eachindatx}) by Lemma \ref{lemcfatx} (2).
Let $x$ be a closed point of $Z_{\mf}$ satisfying the condition (3) in Lemma \ref{lemcldegeq}.
If $I_{\mI,\mf,x}-I'\neq \emptyset$ (\ref{eachindatx}), then the image of $\xi_{i}^{D'}(\mf)$ in $k(x)$ is $0$
for every $i\in I'\cap I_{\mW,\mf,x}$ (\ref{eachindatx}) by Lemma \ref{lemcfatx} (1) applied to the case where $(I',I'')$ is $(I,I')$,
and hence we have $x\in B_{I'\cap I_{x},\mf}^{D'}$,
where $I'\cap I_{x}\supset I'\cap I_{\mI,\mf,x}\neq \emptyset$ by Lemma \ref{lemcfclatx} (1).
If $I_{\mI,\mf,x}\subset I'$, then we have $x\in D_{i}\cap D_{i'}$ for any pair $(i,i')$ 
of distinct elements of $I_{\mI,\mf,x}\subset I'$.
Thus the set of points on $X$
where the ramification of $\mf$ is not $\log$-$\emptyset$-clean along $D$
is contained in 
$B_{\mf}^{D'}\cup \bigcup_{\substack{i,i'\in I'\cap I_{\mI,\mf}\\ i\neq i'}}(D_{i}\cap D_{i'})$
by Remarks \ref{remlogdpcl} (1) and (2) and by Lemma \ref{lemcldegeq}.
\end{enumerate}
\end{rem}

We prove that we can eliminate $E_{\mf}^{D_{\mI,\mf}\cup D_{\mT,\mf}}$
by taking blow-ups of $X$ along closed subschemes of $D_{\mI,\mf}\cup D_{\mT,\mf}$, where $D_{\mI,\mf}$ and $D_{\mT,\mf}$ are as in (\ref{defd*mf}).

\begin{lem}
\label{lemblupbth}
Suppose that $D$ has simple normal crossings. 
Assume that the ramification of $\mf$ is $\log$-$D_{\mI,\mf}\cup D_{\mT,\mf}$-clean (\ref{defd*mf}) along $D$.
Let $Y$ be an irreducible component of 
$E_{I'',\mf}^{D_{\mI,\mf}\cup D_{\mT,\mf}}$ (Definition \ref{defbcf} (2))
for a non-empty subset $I''\subset I_{\mI,\mf}\cup I_{\mT,\mf}$ (Definition \ref{defindsub}). 
We regard $Y$ as a closed subscheme of $X$ with the reduced subscheme structure.
Let $f\colon X'\rightarrow X$ be the blow-up of $X$ along $Y$.
Let $D_{i}'$ be the proper transform of $D_{i}$ 
for $i\in I=\{1,2,\ldots,r\}$ 
and let $D_{0}'=f^{-1}(Y)$ be the exceptional divisor.
We identify the index set of the irreducible components
of $f^{*}D$ with $I\cup\{0\}$.
Then we have the following:
\begin{enumerate}
\item $0\in I_{\mII,f^{*}\mf}$ (Definition \ref{defindsub} (2)), $I_{\mI,f^{*}\mf}=I_{\mI,\mf}$,
and $I_{\mT,f^{*}\mf}=I_{\mT,\mf}$.
\item The ramification of $f^{*}\mf$ is $\log$-$
D_{\mI,f^{*}\mf}\cup D_{\mT,f^{*}\mf}$-clean along $(f^{*}D)_{\red}$. 
\item Let $y\in D_{0}'\cap \bigcup_{i\in I_{\mI,\mf}\cup I_{\mT,\mf}}D_{i}'$ be a closed point and 
let $z\in Y\subset X$ denote the image of $y$ by $f$.
Then the cardinality $\sharp (I_{\mI,f^{*}\mf,y}\cup I_{\mT,f^{*}\mf,y})$
of $I_{\mI,f^{*}\mf,y}\cup I_{\mT,f^{*}\mf,y}$ (\ref{eachindatx}) is 
equal to $\sharp(I_{\mI,\mf,z}\cup I_{\mT,\mf,z})-1$.
If $y\in E_{I_{\mI,f^{*}\mf,y}\cup I_{\mT,f^{*}\mf,y},f^{*}\mf}^{D_{\mI,f^{*}\mf}\cup D_{\mT,f^{*}\mf}}$, 
where $I_{\mI,f^{*}\mf,y}$ and $I_{\mT,f^{*}\mf,y}$ are as in (\ref{eachindatx}),
then we have $z\in E_{I_{\mI,\mf,z}\cup I_{\mT,\mf,z},\mf}^{D_{\mI,\mf}\cup D_{\mT,\mf}}$.
If the cardinality of $I_{z}$ (\ref{defix}) is $2$, then the cardinality of 
$I_{\mI,f^{*}\mf,y}\cup I_{\mT,f^{*}\mf,y}$ is $1$
and we have $y\notin E^{D_{\mI,f^{*}\mf}\cup D_{\mT,f^{*}\mf}}_{f^{*}\mf}$.
\end{enumerate}
\end{lem}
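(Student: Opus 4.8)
The plan is to work locally near a closed point $z$ of $Y$ and to transport every conductor-theoretic invariant from $X$ to $X'$ through the explicit charts of the blow-up $f$, using Proposition \ref{propblupcl} as the engine. For (1), note first that $Y$ being a component of $E^{D_{\mI,\mf}\cup D_{\mT,\mf}}_{I'',\mf}$ forces $\sharp I''\ge 2$ and $I''\cap I_{\mI,\mf}\neq\emptyset$ by Remark \ref{remdefbcf} (2), so $Y$ has codimension $\ge 2$ and its generic point lies on no $D_i$. Hence each proper transform $D_i'$ shares its local field with $D_i$, and Remark \ref{remtameloc} shows that the type of $f^{*}\chi|_{L_i}$ equals that of $\chi|_{K_i}$ for every $i\in I$; this gives the equalities for the type-$\mI$ and type-$\mT$ indices coming from $I$ and reduces (1) to the exceptional divisor $D_0'$. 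From $R^{(f^{*}D')_{\red}}_{f^{*}\mf}=f^{*}R^{D'}_{\mf}$ in Proposition \ref{propblupcl} (1) one reads off $\sw(f^{*}\chi|_{L_0})=\sum_{i\in I''}\sw(\chi|_{K_i})>0$, so $0\in I_{\mW,f^{*}\mf}$. That $0$ is of type $\mII$ is the key point: by Proposition \ref{propblupcl} (3) the image of $\xi^{(f^{*}D')_{\red}}_{0}(f^{*}\mf)$ is $\sum_{i'\in I''\cap I_{\mW,\mf}}f^{*}\xi^{D'}_{i'}(\mf)$ restricted to $D_0'$, and since $Y\subset E^{D'}_{I'',\mf}$ every $\xi^{D'}_{i'}(\mf)$ vanishes on $Y$ by Remark \ref{remdefbcf} (1); as $D_0'\to Y$ factors the restriction, each summand vanishes and $\xi_0=0$. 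Applying Lemma \ref{lemcfatx} (2) to $f^{*}\mf$ then forces $0\in I_{\mII,f^{*}\mf}$, which proves (1).

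For (2), Proposition \ref{propblupcl} (2) already yields that the ramification of $f^{*}\mf$ is $\log$-$(f^{*}D')_{\red}$-clean, where $(f^{*}D')_{\red}$ is the proper transform of $D'=D_{\mI,\mf}\cup D_{\mT,\mf}$ together with $D_0'$. By (1) the divisor $D_{\mI,f^{*}\mf}\cup D_{\mT,f^{*}\mf}$ is precisely $(f^{*}D')_{\red}$ with the component $D_0'$ deleted, and $0\in I_{\mII,f^{*}\mf}$. I would therefore apply Lemma \ref{lemclrelfirs} to the pair of index sets $(I_{\mI,\mf}\cup I_{\mT,\mf})\cup\{0\}\supset I_{\mI,\mf}\cup I_{\mT,\mf}$, whose difference $\{0\}$ lies in $I_{\mII,f^{*}\mf}$, to descend the cleanliness to $\log$-$(D_{\mI,f^{*}\mf}\cup D_{\mT,f^{*}\mf})$-cleanliness; the simple normal crossing hypothesis of that lemma holds because $Y$ is a smooth intersection component of $D$ and the blow-up of a simple normal crossing divisor along such a center is again simple normal crossing.

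For (3) I would fix a local coordinate system $(t_1,\dots,t_d)$ at $z$ with $D_i=(t_i=0)$ for $i\in I_z$ and work in the chart $U_{i_1}'$ of the blow-up attached to an index $i_1\in I''$, where $D_0'=(t_{i_1}=0)$, the proper transform $D_i'$ for $i\in I''\setminus\{i_1\}$ is cut out by $t_i/t_{i_1}$, and $D_i'$ for $i\notin I''$ by $t_i$. Recording which of these functions vanish at $y$ determines $I_{\mI,f^{*}\mf,y}\cup I_{\mT,f^{*}\mf,y}$; because $0\in I_{\mII,f^{*}\mf}$ the exceptional divisor does not contribute to this count, and the direction $i_1$ of $z$ is the one absorbed into $D_0'$, which is the source of the drop of the cardinality by one. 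The transfer of the degeneracy locus is then read off by combining the description of $E$ in Remark \ref{remdefbcf} (1) with the identity $\xi^{(f^{*}D')_{\red}}_{i}(f^{*}\mf)=f^{*}\xi^{D'}_{i}(\mf)$ of Proposition \ref{propblupcl} (3): the vanishing of the residues defining $E$ at $y$ pulls back to the vanishing of the corresponding residues through $z$, giving $z\in E^{D_{\mI,\mf}\cup D_{\mT,\mf}}_{I_{\mI,\mf,z}\cup I_{\mT,\mf,z},\mf}$. In the decisive case $\sharp I_z=2$ the non-empty subset $I''\subset I_z$ equals $I_z$, so after blowing up the codimension-$2$ center $Y$ the two proper transforms become disjoint and any $y\in D_0'\cap\bigcup_i D_i'$ lies on exactly one of them; thus $\sharp(I_{\mI,f^{*}\mf,y}\cup I_{\mT,f^{*}\mf,y})=1$, the crossing at $y$ is simple, and $y\notin E^{D_{\mI,f^{*}\mf}\cup D_{\mT,f^{*}\mf}}_{f^{*}\mf}$ by Remark \ref{remdefbcf} (2).

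The main obstacle is part (3): the careful bookkeeping of which proper transforms survive at a given point of the exceptional divisor, and the precise matching of the degeneracy loci $E$ on $X$ and $X'$ through the residue identity of Proposition \ref{propblupcl} (3). Parts (1) and (2) are essentially formal once Proposition \ref{propblupcl} is in hand, the only subtle input being that the residues $\xi^{D'}_{i'}(\mf)$ enter the formula for $\xi_0$ solely through their restriction to $Y$, which vanishes exactly because $Y$ was chosen inside $E^{D'}_{I'',\mf}$.
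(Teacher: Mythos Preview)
Your approach matches the paper's: Proposition \ref{propblupcl} is the engine, parts (1) and (2) follow formally, and part (3) is the bookkeeping. For (2) you invoke Lemma \ref{lemclrelfirs} directly, while the paper cites Proposition \ref{propdpdtdi}; since the latter is proved using the former, this is the same argument.

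There is, however, a genuine gap in your treatment of the second assertion of (3). To show $z\in E^{D_{\mI,\mf}\cup D_{\mT,\mf}}_{I_{\mI,\mf,z}\cup I_{\mT,\mf,z},\mf}$ you must verify that $\xi_i^{D'}(\mf)$ vanishes on the component $Z$ of $D_{I_{\mI,\mf,z}\cup I_{\mT,\mf,z}}$ through $z$ for \emph{every} $i\in I_{\mI,\mf,z}$. Your pull-back argument via $\xi_i^{(f^{*}D')_{\red}}(f^{*}\mf)=f^{*}\xi_i^{D'}(\mf)$ only handles $i\in I_{\mI,f^{*}\mf,y}$, i.e.\ those $i$ with $y\in D_i'$. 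The indices $i\in I_{\mI,\mf,z}\setminus I_{\mI,f^{*}\mf,y}$ are exactly those absorbed into $D_0'$, and for these your argument says nothing. The paper fills this as follows: such $i$ necessarily lie in $I''$ (for $i\notin I''$ one has $D_i'=f^{*}D_i$, so $y\in D_i'\Leftrightarrow z\in D_i$), and since $Y$ was chosen as a component of $E_{I'',\mf}^{D'}$ one already knows $\xi_i^{D'}(\mf)|_Y=0$ for all $i\in I''\cap I_{\mI,\mf}$; as $Z\subset Y$ (because $I''\subset I_{\mI,\mf,z}\cup I_{\mT,\mf,z}$), the required vanishing follows. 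The paper phrases this via the relation $\xi_0=\sum_{i'\in I''\cap I_{\mW,\mf}}f^{*}\xi_{i'}^{D'}(\mf)=0$ from Proposition \ref{propblupcl} (3).

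A second, smaller omission: the degeneracy locus $E$ at $y$ is defined via $\xi_i^{D_{\mI,f^{*}\mf}\cup D_{\mT,f^{*}\mf}}(f^{*}\mf)$, whereas the identity you quote from Proposition \ref{propblupcl} (3) concerns $\xi_i^{(f^{*}D')_{\red}}(f^{*}\mf)$; these two agree for $i\neq 0$, but one needs Lemma \ref{lemclrelfirf} to say so, as the paper does. Finally, the sentence ``its generic point lies on no $D_i$'' in your first paragraph is not what you mean (the generic point of $Y$ lies on every $D_i$ for $i\in I''$); what matters is that $Y$ has codimension $\ge 1$ in each $D_i$, so the generic point of $D_i'$ maps outside $Y$ and the local fields coincide.
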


\begin{proof}
(1) Since the image of $\xi_{i}^{D_{\mI,\mf}\cup D_{\mT,\mf}}(\mf)$ in $\dvr_{Y^{1/p}}$ is $0$ for every $i\in I''\cap I_{\mW,\mf}$
(Definition \ref{defindsub} (1)), 
the image of $\xi_{0}^{(f^{*}(D_{\mI,\mf}\cup D_{\mT,\mf}))_{\red}}(f^{*}\mf)$ (in $\dvr_{D_{0}'^{1/p}}$) is $0$ 
by Proposition \ref{propblupcl} (3) applied to the case where $(I',I'')$
is $(I_{\mI,\mf}\cup I_{\mT,\mf},I'')$. 
Therefore the first assertion follows from Lemma \ref{lemcfatx} (2).

Since $X'$ is isomorphic to $X$ outside $Y$ and since
the image of $\xi_{i}^{D_{\mI,\mf}\cup D_{\mT,\mf}}(\mf)$ (in $\dvr_{D_{i}^{1/p}}$) is not $0$ for $i\in I_{\mI,\mf}$ by Lemma
\ref{lemcfatx} (2),
the image of $\xi_{i}^{(f^{*}(D_{\mI,\mf}\cup D_{\mT,\mf}))_{\red}}(f^{*}\mf)$ (in $\dvr_{D_{i}'^{1/p}}$) is not $0$ for $i\in I_{\mI,\mf}$
by Proposition \ref{propblupcl} (3).
Hence we have $I_{\mI,f^{*}\mf}=I_{\mI,\mf}$ by the first assertion and Lemma \ref{lemcfatx} (2).
By the second equality in Proposition \ref{propblupcl} (1), we have $I_{\mT,f^{*}\mf}=I_{\mT,\mf}$.

(2) Since the ramification of $f^{*}\mf$ is $\log$-$(f^{*}(D_{\mI,\mf}\cup D_{\mT,\mf}))_{\red}$-clean along $(f^{*}D)_{\red}$ by Proposition \ref{propblupcl} (2),
the assertion follows from Proposition \ref{propdpdtdi}.

(3) The first assertion holds by (1).
Suppose that $y\in E_{I_{\mI,f^{*}\mf,y}\cup I_{\mT,f^{*}\mf,y},f^{*}\mf}^{D_{\mI,f^{*}\mf}\cup D_{\mT,f^{*}\mf}}$.
Let $Z'$ be the irreducible component of $D_{I_{\mI,f^{*}\mf,y}\cup I_{\mT,f^{*}\mf,y}}$
(\ref{defdipp}) such that $y\in Z'$.
We regard $Z'$ as a closed subscheme of $X'$ with the reduced subscheme structure.
Since we have $D_{\mI,f^{*}\mf}\cup D_{\mT,f^{*}\mf}\subset (f^{*}(D_{\mI,\mf}\cup D_{\mT,\mf}))_{\red}$,
the image of $\xi_{i}^{(f^{*}(D_{\mI,\mf}\cup D_{\mT,\mf}))_{\red}}(f^{*}\mf)$ in $\dvr_{Z'^{1/p}}$ is $0$
for every $i\in I_{\mI,f^{*}\mf,y}$ by Lemma \ref{lemclrelfirf} and Remark \ref{remdefbcf} (1).
Let $Z$ be the irreducible component of $D_{I_{\mI,\mf,z}\cup I_{\mT,\mf,z}}$
such that $z\in Z$.
Then we have $Z\subset Y$ and we regard $Z$ as a closed subscheme of $X$ with the reduced subscheme structure.
Then the image of 
$\xi_{i}^{D_{\mI,\mf}\cup D_{\mT,\mf}}(\mf)$ in $\dvr_{Z^{1/p}}$ is $0$ for every $i\in I_{\mI,f^{*}\mf,y}$ 
by Proposition \ref{propblupcl} (3).
Since the image of $\xi_{0}^{(f^{*}(D_{\mI,\mf}\cup D_{\mT,\mf}))_{\red}}(f^{*}\mf)$ 
(in $\dvr_{D_{0}'^{1/p}}$) is $0$ as is seen in the proof of (1),
the image of $\xi_{i}^{D_{\mI,\mf}\cup D_{\mT,\mf}}(\mf)$
in $\dvr_{Z^{1/p}}$ is also $0$ if $i\in I_{\mI,\mf,z}-I_{\mI,f^{*}\mf,y}$
by Proposition \ref{propblupcl} (3).
Thus the second assertion holds.

Suppose that the cardinality of $I_{z}$ is $2$.
Then we have $I_{z}=I''$,
since $I''$ is a subset of $I_{z}$ of cardinality $\ge 2$ by Remark \ref{remdefbcf} (2).
Hence the cardinality of $I_{\mI,f^{*}\mf,y}\cup I_{\mT,f^{*}\mf,y}$ is $1$ by the first assertion,
and we have $y\notin E_{f^{*}\mf}^{D_{\mI,f^{*}\mf}\cup D_{\mT,f^{*}\mf}}$
by Remark \ref{remdefbcf} (2).
\end{proof}

\begin{prop}
\label{propblupcth}
Suppose that $D$ has simple normal crossings
and that the ramification of $\mf$ is $\log$-$D'$-clean along $D$
for some union $D'$ of irreducible components of $D$.
If $E_{\mf}^{D'}\neq \emptyset$ (Definition \ref{defbcf} (2)), then there exist successive blow-ups 
\begin{equation}
f: X^{\prime}=X_{s}\xrightarrow{f_{s}} X_{s-1}\xrightarrow{f_{s-1}} \cdots \xrightarrow{f_{1}} X_{0}=X \notag
\end{equation}
satisfying the following three conditions:
\begin{enumerate}
\item $f_{1}$ (resp.\ $f_{i}$ for $i=2,3,\ldots,s$) is the blow-up along an irreducible component $Y_{0}$ of
an intersection of irreducible components of $D_{\mI,\mf}\cup D_{\mT,\mf}$ (\ref{defd*mf})
(resp.\ an irreducible component $Y_{i-1}$ of 
an intersection of irreducible components of $D_{\mI,(f_{1}\circ f_{2}\circ \cdots \circ f_{i-1})^{*}\mf}\cup D_{\mT,(f_{1}\circ f_{2}\circ \cdots \circ f_{i-1})^{*}\mf}$),
where $Y_{0}$ (resp.\ $Y_{i-1}$) is regarded as a closed subscheme of $X_{0}$
(resp.\ $X_{i-1}$) with the reduced subscheme structure.
\item The ramification of $f^{*}\mf$ is $\log$-$D_{\mI,f^{*}\mf}\cup D_{\mT,f^{*}\mf}$-clean along $(f^{*}D)_{\red}$. 
\item $E_{f^{*}\mf}^{D_{\mI,f^{*}\mf}\cup D_{\mT,f^{*}\mf}}=\emptyset$. 
\end{enumerate}
\end{prop}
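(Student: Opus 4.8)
The plan is to argue by a double induction that repeatedly blows up one maximal-dimensional component of the degeneracy locus at a time, using Lemma \ref{lemblupbth} to control each blow-up. First I would invoke Proposition \ref{propdpdtdi} to pass to the case where the ramification of $\mf$ is already $\log$-$(D_{\mI,\mf}\cup D_{\mT,\mf})$-clean along $D$, so that the locus $E_{\mf}^{D_{\mI,\mf}\cup D_{\mT,\mf}}$ is defined; if it is empty we may take $s=0$, so I assume it is non-empty. Applying Remark \ref{remdefbcf} (2) with $I'=I_{\mI,\mf}\cup I_{\mT,\mf}$ (which satisfies $I'\cap I_{\mII,\mf}=\emptyset$), every non-empty piece $E_{I'',\mf}^{D_{\mI,\mf}\cup D_{\mT,\mf}}$ forces $\sharp I''\geq 2$, and its irreducible components have codimension exactly $\sharp I''$ in $X$. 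Hence the invariant $N(\mf)=\max\{\sharp I''\mid E_{I'',\mf}^{D_{\mI,\mf}\cup D_{\mT,\mf}}\neq\emptyset\}$ satisfies $2\leq N(\mf)\leq\dim X$, and I call the codimension-$N(\mf)$ irreducible components of $E_{\mf}^{D_{\mI,\mf}\cup D_{\mT,\mf}}$ its \emph{top-level} components.

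The inductive step is a single blow-up. I would take a top-level component $Y$, that is, an irreducible component of some $E_{I'',\mf}^{D_{\mI,\mf}\cup D_{\mT,\mf}}$ with $\sharp I''=N(\mf)$, and let $f\colon X'\to X$ be the blow-up of $X$ along $Y$ with its reduced structure; since $Y\subset D_{I''}=\bigcap_{i\in I''}D_{i}$ with $I''\subset I_{\mI,\mf}\cup I_{\mT,\mf}$, this is exactly the kind of centre prescribed in (1). By Lemma \ref{lemblupbth} (1) and (2), $f^{*}\mf$ is again $\log$-$(D_{\mI,f^{*}\mf}\cup D_{\mT,f^{*}\mf})$-clean along $(f^{*}D)_{\red}$, with $I_{\mI,f^{*}\mf}=I_{\mI,\mf}$, $I_{\mT,f^{*}\mf}=I_{\mT,\mf}$, and the exceptional divisor of type $\mII$; in particular $N(f^{*}\mf)$ is again defined. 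I then compare the two degeneracy loci. Away from the exceptional divisor $f$ is an isomorphism, so each component of $E_{f^{*}\mf}^{D_{\mI,f^{*}\mf}\cup D_{\mT,f^{*}\mf}}$ not contained in it is the proper transform of a component of $E_{\mf}^{D_{\mI,\mf}\cup D_{\mT,\mf}}$ not contained in $Y$, of the same codimension; thus every top-level component other than $Y$ survives with unchanged codimension, while $Y$ itself is replaced by the exceptional divisor, which is of type $\mII$ and hence not a component of $E_{f^{*}\mf}$.

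It remains to bound the components $W$ of $E_{f^{*}\mf}$ lying over $Y$. Taking the generic point $y$ of such a $W$ and setting $J=I_{\mI,f^{*}\mf,y}\cup I_{\mT,f^{*}\mf,y}$, one has $W\subset E_{J,f^{*}\mf}^{D_{\mI,f^{*}\mf}\cup D_{\mT,f^{*}\mf}}$ with $\sharp J\geq 2$ (again by Remark \ref{remdefbcf} (2)), so $y$ lies on the exceptional divisor and on $\bigcup_{i\in I_{\mI,\mf}\cup I_{\mT,\mf}}D_{i}'$. Lemma \ref{lemblupbth} (3) then yields $z=f(y)\in E_{I_{\mI,\mf,z}\cup I_{\mT,\mf,z},\mf}^{D_{\mI,\mf}\cup D_{\mT,\mf}}$ together with $\sharp J=\sharp(I_{\mI,\mf,z}\cup I_{\mT,\mf,z})-1$; since $z$ lies on $E_{\mf}^{D_{\mI,\mf}\cup D_{\mT,\mf}}$, its level is at most $N(\mf)$, whence $\sharp J\leq N(\mf)-1$. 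Therefore the blow-up destroys exactly one top-level component, preserves all other top-level components as proper transforms, and creates only components of codimension $\leq N(\mf)-1$; so the number of top-level components drops by one and $N$ does not increase. Repeating this finitely many times clears all top-level components, after which $N$ has strictly decreased, and since $E_{I'',\mf}\neq\emptyset$ forces $\sharp I''\geq 2$ the procedure stops once $N<2$, i.e.\ once $E_{f^{*}\mf}^{D_{\mI,f^{*}\mf}\cup D_{\mT,f^{*}\mf}}=\emptyset$. Each centre is an irreducible component of an intersection of irreducible components of $D_{\mI}\cup D_{\mT}$ of the sheaf reached at that stage (condition (1)), cleanliness is supplied at every stage by Lemma \ref{lemblupbth} (2) (condition (2)), and the terminal vanishing is condition (3); to justify iterating I would also note that blowing up a stratum of a simple normal crossing divisor yields again a simple normal crossing divisor, so the hypothesis on $D$ is inherited by $(f^{*}D)_{\red}$.

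I expect the main obstacle to be the bookkeeping of the inductive step, namely proving that every component of $E$ created over the blown-up centre $Y$ has codimension strictly below $N(\mf)$. This is exactly where the global maximality of $N(\mf)$ is indispensable, as it is combined with the drop-by-one formula of Lemma \ref{lemblupbth} (3) to bound the level of the new components; one must also verify carefully that proper transforms of the remaining components neither gain nor lose membership in $E$ and keep their codimension, and that no genuinely new top-level component can arise off the exceptional divisor. Once these points are secured, termination is immediate from the boundedness and strict decrease of $N$.
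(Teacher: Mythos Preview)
Your proposal is correct and follows essentially the same approach as the paper. Your invariant $N(\mf)$ is exactly the paper's $r_0=\max\{\sharp I''\mid I''\in\mathcal{S}_0\}$, and both arguments iterate blow-ups along irreducible components of $E_{I'',\mf}^{D_{\mI,\mf}\cup D_{\mT,\mf}}$ with $\sharp I''$ maximal, invoking Lemma \ref{lemblupbth} (1)--(3) at each step to preserve cleanliness and to bound the level of any newly created components by $N(\mf)-1$; your termination argument (top-level count drops by one, then $N$ strictly decreases) is a slightly more explicit rendering of the paper's claim that some $h_i>0$ yields $r_{i+h_i}<r_i$ or $\mathcal{S}_{i+h_i}=\emptyset$.
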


\begin{proof}
By Proposition \ref{propdpdtdi}, the ramification of $\mf$ is $\log$-$D_{\mI,\mf}\cup D_{\mT,\mf}$-clean along $D$.
Let $\mathcal{S}_{0}$ be the set of subsets $I''$ of $I_{\mI,\mf}\cup I_{\mT,\mf}$ such that $E_{I'',\mf}^{D_{\mI,\mf}\cup D_{\mT,\mf}}\neq \emptyset$.
We take an element $I''_{0}$ of $\mathcal{S}_{0}$ whose cardinality $r_{0}$
is the largest among the elements of $\mathcal{S}_{0}$.
Let $Y_{0}$ be an irreducible component of $E_{I''_{0},\mf}^{D_{\mI,\mf}\cup D_{\mT,\mf}}$.
We regard $Y_{0}$ as a closed subscheme of $X$ with the reduced subscheme structure.
Let $f_{1}\colon X_{1}\rightarrow X$ be a blow-up of $X$ along
$Y_{0}$. 
Let $D_{i}'$ denote the proper transform of $D_{i}$ for $i\in I''_{0}=\{1,2,\ldots,r_{0}\}$
and let $D_{0}'=f_{1}^{-1}(Y_{0})$ be the exceptional divisor.
By Lemma \ref{lemblupbth} (1), 
every closed point of $D_{0}'-\bigcup_{i\in I_{\mI,\mf}\cup I_{\mT,\mf}}D_{i}'$
is not a point on $E_{f^{*}_{1}\mf}^{D_{\mI,f_{1}^{*}\mf}\cup D_{\mT,f_{1}^{*}\mf}}$.
If $y$ is a closed point of $D_{0}'\cap \bigcup_{i\in I_{\mI,\mf}\cup I_{\mT,\mf}}D_{i}'$
such that $y\in E_{I_{\mI,f_{1}^{*}\mf,y}\cup I_{\mT,f_{1}^{*}\mf,y},f_{1}^{*}\mf}^{D_{\mI,f_{1}^{*}\mf}\cup D_{\mT,f_{1}^{*}\mf}}$,
where $I_{\mI,f_{1}^{*}\mf,y}$ and $I_{\mT,f_{1}^{*}\mf,y}$ are as in (\ref{eachindatx}),
then we have $f_{1}(y)\in E_{I_{\mI,\mf,f_{1}(y)}\cup I_{\mT,\mf,f(y)},\mf}^{D_{\mI,\mf}\cup D_{\mT,\mf}}$
and the cardinality of $I_{\mI,f_{1}^{*}\mf,y}\cup I_{\mT,f_{1}^{*}\mf,y}$ is strictly
less than $r_{0}$ by Lemma \ref{lemblupbth} (3).

We inductively define $f_{i}\colon X_{i}\rightarrow X_{i-1}$ for $i>1$
to be the blow-up of $X_{i-1}$ as follows, if $E_{f^{*}_{1}\mf}^{D_{\mI,f_{1}^{*}\mf}\cup D_{\mT,f_{1}^{*}\mf}}\neq \emptyset$.
Let $\mf_{i-1}=(f_{1}\circ f_{2}\circ \cdots \circ f_{i-1})^{*}\mf$ and
let $\mathcal{S}_{i-1}$ be the set of subsets $I'''$ of 
$I_{\mI,\mf_{i-1}}\cup I_{\mT,\mf_{i-1}}$
such that 
$E_{I''',\mf_{i-1}}^{D_{\mI,\mf_{i-1}}\cup D_{\mT,\mf_{i-1}}}\neq \emptyset$.
We take an element $I''_{i-1}$ of $\mathcal{S}_{i-1}$ whose cardinality $r_{i-1}$ is the largest among the elements of $\mathcal{S}_{i-1}$.
Let $Y_{i-1}$ be an irreducible component of $E_{I''_{i-1},\mf_{i-1}}^{D_{\mI,\mf_{i-1}}\cup D_{\mT,\mf_{i-1}}}$.
We regard $Y_{i-1}$ as a closed subscheme of $X_{i-1}$ with the reduced subscheme structure
and define $f_{i}\colon X_{i}\rightarrow X_{i-1}$ to be the blow-up
of $X_{i-1}$ along $Y_{i-1}$.
By Lemma \ref{lemblupbth} (3),
for $i\ge 1$ such that $\mathcal{S}_{i}\neq \emptyset$,
there exists an integer $h_{i} > 0$ satisfying one of the following two conditions:
\begin{itemize}
\item[(a)] $\mathcal{S}_{i+h_{i}}\neq \emptyset$ and $r_{i}>r_{i+h_{i}}$.
\item[(b)] $\mathcal{S}_{i+h}\neq \emptyset$ for $h=1,2,\ldots, h_{i}-1$ and $\mathcal{S}_{i+h_{i}}=\emptyset$.
\end{itemize}
Hence there exists an integer $s>0$ 
such that $\mathcal{S}_{i}\neq \emptyset$ for $i=0,1,\ldots,s-1$ and that
$\mathcal{S}_{s}=\emptyset$.
For such $s$, we put $f=f_{1}\circ f_{2}\circ \cdots \circ f_{s}$.
Then $f$ satisfies the conditions (1) and (3).
By Lemma \ref{lemblupbth} (2), 
the composition $f$ also satisfies the condition (2). 
\end{proof}

At last, we prepare a lemma for a computation of the characteristic cycle $CC(j_{!}\mf)$
in the case where $X$ is a surface in Subsection \ref{sscompccvar}.

\begin{lem}
\label{lemcleanatx}
Suppose that $X$ is a surface, namely is purely of dimension $2$,
and that $D$ has simple normal crossings.
Let $I=\{1,2\}$ and let $I'=\{1\}\subset I$.
Let $D'=D_{1}$ and let $x\in D_{1}\cap D_{2}$.
Assume that the ramification of $\mf$ is $\log$-$D'$-clean along $D$
at $x$, that $I'\subset I_{\mI,\mf}$ (Definition \ref{defindsub} (2)), 
and that $x\in B_{\mf}^{D'}$ (Definition \ref{defbcf} (1)).
We regard the image of $\xi_{1}^{D'}(\mf)$ (Definition \ref{defximf} (2)) as an ideal sheaf of $\dvr_{D_{1}}$ by Remark \ref{remximf} (2).
Then the image of $\xi^{D'}_{1}(\mf)$ defines a divisor
$\ord^{\emptyset}(\mf;x,D_{1})(D_{2}\cap D_{1})$ on $D_{1}$ in a neighborhood of $x$
and we have $\ord^{\emptyset}(\mf;x,D_{2})=1$.
With the notation in (\ref{cformatxdp}) in Lemma \ref{lemcfatx},
we have $\beta_{2}\neq 0$ in $k(x)$.
If $2\in I_{\mI,\mf}$, then the ramification of $\mf$ is $\log$-$D$-clean along $D$ at $x$.
\end{lem}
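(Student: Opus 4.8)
The plan is to read everything off the local expression in (\ref{cformatxdp}) and to translate between the form $\cform^{D'}(\mf)$ for $D'=D_{1}$ and the non-logarithmic form $\cform^{\emptyset}(\mf)$ via Lemma \ref{lemcfatx}. First I would apply Lemma \ref{lemcfatx} to the pair $(I',I'')=(\{1\},\emptyset)$. Writing $\cform^{D'}(\mf)_{x'}=(\alpha_{1}\dlog t_{1}+\beta_{2}dt_{2})/(t_{1}^{n_{1}}t_{2}^{n_{2}})$ as in (\ref{cformatxdp}) and noting $J_{x}=\{1\}$ because $1\in I_{\mI,\mf}$, the relations in Lemma \ref{lemcfatx} (1) yield $\cform^{\emptyset}(\mf)_{x'}=(\alpha_{1}dt_{1}+\beta_{2}t_{1}dt_{2})/(t_{1}^{n_{1}+1}t_{2}^{n_{2}})$. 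Unwinding Definition \ref{defximf}, the residue $\res_{1}$ kills $dt_{2}$ and sends $\dlog t_{1}\mapsto 1$, so $\Image\xi_{1}^{D'}(\mf)=(\alpha_{1}|_{D_{1}})$ as an ideal of $\dvr_{D_{1}}$, where I descend from $D_{1}^{1/p}$ to $D_{1}$ using Remark \ref{remximf} (2), legitimate since $1\in I_{\mI,\mf}$. Since $\mf$ is $\log$-$D'$-clean at $x$, Lemma \ref{lemeqtologdpcl} (1) gives $\cform^{D'}(\mf)(x')\neq 0$, i.e.\ $(\alpha_{1}(x'),\beta_{2}(x'))\neq 0$; and $x\in B_{\mf}^{D'}$ unwinds (Definition \ref{defbcf} (1), with $I'=\{1\}$) to $x\in V(\Image\xi_{1}^{D'}(\mf))$, i.e.\ $\alpha_{1}(x')=0$. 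Hence $\beta_{2}(x')\neq 0$, which is the asserted $\beta_{2}\neq 0$ in $k(x)$. Moreover $\alpha_{1}|_{D_{1}}\neq 0$ by Lemma \ref{lemcfatx} (2) (as $1\notin I_{\mII,\mf}\cup I_{\mT,\mf}$), so $(\alpha_{1}|_{D_{1}})$ is a genuine effective divisor of finite multiplicity at $x$.

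Next I would settle the two order statements. Restricting $\cform^{\emptyset}(\mf)$ to $D_{1}^{1/p}$ kills the $\beta_{2}t_{1}dt_{2}$ term, leaving $\alpha_{1}|_{D_{1}^{1/p}}$ times a basis vector of $\Omega^{1}_{X}(R_{\mf}^{\emptyset})|_{D_{1}^{1/p}}$; since the Swan/type-$\mI$ coefficient $\alpha_{1}$ descends to $D_{1}$, its pull-back to $D_{1}^{1/p}$ multiplies the order at $x$ by $p$, so Definition \ref{deforder} gives $\ord^{\emptyset}(\mf;x,D_{1})=\ord_{D_{1},x}(\alpha_{1}|_{D_{1}})$. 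As $D_{1}\cap D_{2}=\{x\}$ locally, $\Image\xi_{1}^{D'}(\mf)=(\alpha_{1}|_{D_{1}})$ defines $\ord^{\emptyset}(\mf;x,D_{1})(D_{2}\cap D_{1})$, the first assertion. For $D_{2}$, restricting $\cform^{\emptyset}(\mf)$ to $D_{2}^{1/p}$ makes the $dt_{2}$-coefficient $\beta_{2}t_{1}$; since $\beta_{2}$ is a unit at $x'$ and $t_{1}$ generates $\mathfrak{m}_{x'}^{p}$ on $D_{2}^{1/p}$, this term has order exactly $p$, so $\cform^{\emptyset}(\mf)|_{D_{2}^{1/p},x'}\notin \mathfrak{m}_{x'}^{2p}\Omega^{1}_{X}(R_{\mf}^{\emptyset})|_{D_{2}^{1/p},x'}$ and $\ord^{\emptyset}(\mf;x,D_{2})\le 1$. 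For the reverse inequality I would show $\mf$ is not $\log$-$\emptyset$-clean at $x$: first $2\in I_{\mW,\mf}$ (otherwise $2\in I_{\mT,\mf,x}\subset I'=\{1\}$ by Lemma \ref{lemcfclatx} (1), impossible), and then Lemma \ref{lemcldegeq} applies with $I'=\{1\}$, its condition (3) holding if $2\in I_{\mI,\mf}$ and its condition (2) holding if $2\in I_{\mII,\mf}$ (using $\alpha_{1}(x')=0$); hence by Lemma \ref{lemorder} we get $\ord^{\emptyset}(\mf;x,D_{2})\ge 1$, giving $\ord^{\emptyset}(\mf;x,D_{2})=1$.

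Finally, assuming $2\in I_{\mI,\mf}$, I would invoke Lemma \ref{lemcfatx} once more, now with $(I',I'')=(\{1,2\},\{1\})$, for which $J_{x}=\{2\}$. The relation for the $\dlog t_{2}$-coefficient $\alpha_{2}$ of $\cform^{D}(\mf)$ reads $\beta_{2}=\alpha_{2}$, where $\beta_{2}$ is the $dt_{2}$-coefficient of $\cform^{D_{1}}(\mf)$. Since $\beta_{2}(x')\neq 0$, we obtain $\cform^{D}(\mf)(x')\neq 0$, so $\mf$ is $\log$-$D$-clean at $x$ by Lemma \ref{lemeqtologdpcl} (1). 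The main obstacle is the second paragraph: correctly matching the order in Definition \ref{deforder}, which carries the factor $p$ coming from the radicial cover $D_{i}^{1/p}$, with the naive order of $\alpha_{1}|_{D_{1}}$ on $D_{1}$ and with the $\mathfrak{m}_{x'}^{p}$-generation of $t_{1}$ on $D_{2}^{1/p}$. This is exactly the $1/p$-twist bookkeeping, and I would carry it out as in \cite[Lemmas 2.17, 2.18]{yacc}.
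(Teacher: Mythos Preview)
Your proposal is correct and follows essentially the same route as the paper: apply Lemma~\ref{lemcfatx}(1) with $(I',I'')=(\{1\},\emptyset)$ to obtain $\beta_1'=\alpha_1$ and $\beta_2'=\beta_2 t_1$, use $x\in B_{\mf}^{D'}$ to get $\alpha_1(x')=0$, use $\log$-$D'$-cleanliness and Lemma~\ref{lemeqtologdpcl}(1) to get $\beta_2(x')\neq 0$, and then invoke Lemma~\ref{lemcfatx} with $(I',I'')=(I,\{1\})$ for the final $\log$-$D$-cleanliness.

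The only notable difference is how you establish $\ord^{\emptyset}(\mf;x,D_2)\ge 1$. The paper reads it directly off the coefficient formulas, implicitly using that the $\dlog t_1$-coefficient $\alpha_1$ descends from $\dvr_{Z_{\mf}^{1/p},x'}$ to $\dvr_{Z_{\mf},x}$ (clear from the construction in Lemma~\ref{lemcfdef}, since the radicial correction term only touches $dt_i$ for $i\in I-I'$), so that $\alpha_1(x')=0$ forces $\alpha_1|_{D_2^{1/p}}\in\mathfrak{m}_{x'}^{p}$. You instead take the detour through Lemma~\ref{lemcldegeq} and Lemma~\ref{lemorder}, verifying non-$\log$-$\emptyset$-cleanliness case by case according to whether $2\in I_{\mI,\mf}$ or $2\in I_{\mII,\mf}$. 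Your route is valid and sidesteps the descent observation, at the cost of an extra case split; the paper's route is shorter once one sees why $\alpha_1$ descends.
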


\begin{proof}
By Lemma \ref{lemcfclatx} (1), we have $I_{\mT,\mf,x}\subset I'$ (\ref{eachindatx}).
Since $I'=\{1\}\subset I_{\mI,\mf}$, the first assertion holds by Lemma \ref{lemcfatx} (1) applied to the case
where $(I',I'')$ is $(I',\emptyset)$.
Let the notation be as in (\ref{cformatxdp}) in Lemma \ref{lemcfatx}.
Since $x\in B_{\mf}^{D'}=B_{\{1\},\mf}^{D'}$, we have $\alpha_{1}=0$ in $k(x)$. 
Therefore we have $\beta_{2}\neq 0$ in $k(x)$ by the $\log$-$D'$-cleanliness of the ramification of $\mf$ along $D$ at $x$ and
by Lemma \ref{lemeqtologdpcl} (1).
Since $\alpha_{1}=0$ and $\beta_{2}\neq 0$ in $k(x)$, 
we have $\ord^{\emptyset}(\mf;x,D_{2})=1$
by applying Lemma \ref{lemcfatx} (1) to the case where $(I',I'')$ is $(I',\emptyset)$.
If $2\in I_{\mI,\mf}$, then the ramification of $\mf$ is $\log$-$D$-clean along $D$ at $x$ by Lemma \ref{lemcfatx} applied to the case where $(I',I'')$ is $(I,I')$ and by Lemma \ref{lemeqtologdpcl} (1),
since $\beta_{2}\neq 0$ in $k(x)$.
\end{proof}

\section{Cleanliness and the direct image}
\label{secdil}
Let $D'$ be a divisor on $X$ with simple normal crossings
contained in $D$.
We prove in Proposition \ref{mainpropsec} (2) that the $\log$-$D'$-cleanliness of the ramification of $\mf$ along $D$ is a sufficient condition for
the canonical morphism $j_{!}\mf \rightarrow Rj_{*}\mf$ to be 
an isomorphism when $I_{\mT,\mf}=\emptyset$ (Definition \ref{defindsub} (1)).
Here $j\colon U\rightarrow X$ denotes the canonical open immersion
as is explained in Conventions.

\subsection{Dilatations}
\label{dil}
This subsection is devoted to preparation for the next subsection.
Through this subsection, we assume that $X$ is separated over $k$.
Let $I'\subset I$ be a subset such that $D'=\bigcup_{i\in I'}D_{i}$ has simple normal crossings.
Let $TX(\log D')=\Spec S^{\bullet}\Omega^{1}_{X}(\log D')$ be the logarithmic tangent bundle of $X$ with logarithmic poles along $D'$.
Let $R=\sum_{i\in I}n_{i}D_{i}$ with $n_{i}\in \mathbf{Z}_{\ge 0}$ for $i\in I'$
and with $n_{i}\in \mathbf{Z}_{\ge 1}$ for $i\in I-I'$.
Let $Z=\Supp (R+D'-D)$ denote the support of $R+D'-D$.
We construct a smooth scheme $P^{(D'\subset D,R)}$ over $k$
such that $Z^{(D'\subset D,R)}=P^{(D'\subset D,R)}\times_{X}Z$
is canonically isomorphic to $TX(\log D')(-R)\times_{X}Z$.

\begin{defn}[{\cite[1.3]{ascl}}]
\label{deflogprod}
Assume that $X$ is separated over $k$.
Let $I'\subset I$ be a subset such that $D'=\bigcup_{i\in I'}D_{i}$ has simple normal crossings.
We define the {\it log product} $X*_{k,D'}X$ {\it with respect to} $D'$
to be the complement of the proper transform of $(D'\times_{k} X)\cup (X\times_{k}D')$ in the blow-up $(X\times_{k}X)'$ of $X\times_{k}X$ along the closed subscheme of $X\times_{k}X$ defined by the product of ideal sheaves $\mathcal{I}_{D_{i}\times_{k}D_{i}}$ defining $D_{i}\times_{k}D_{i}\subset X\times_{k}X$ for all $i\in I'$.
\end{defn}

\begin{rem}
\label{remlogprod}
Let the assumption and the notation be as in Definition \ref{deflogprod}.
\begin{enumerate}
\item If $D'=\emptyset$, 
then we have $X*_{k,D'}X=X\times_{k}X$.
\item Let $p_{i}\colon X*_{k,D'}X\rightarrow X$ for $i=1,2$ be the composition
\begin{equation}
\label{defpiproj}
p_{i}\colon X*_{k,D'}X\rightarrow X\times_{k}X\xrightarrow{\pr_{i}} X
\end{equation}
of the canonical morphism and the $i$-th projection $\pr_{i}$.
By \cite[Remark 5.23 (iii)]{ascl}, the composition $p_{i}$ is a smooth morphism for $i=1,2$.
Consequently, the log product $X*_{k,D'}X$ is a smooth
scheme over $k$.

\item By \cite[Remark 5.23 (i)]{ascl}, the diagonal $\delta\colon X\rightarrow X\times_{k}X$ is uniquely lifted to a morphism 
\begin{equation}
\tilde{\delta}_{D'}\colon X\rightarrow X*_{k,D'}X. \notag
\end{equation}
We call the lift $\tilde{\delta}_{D'}$ of $\delta$ the {\it log diagonal}.
By \cite[Remark 5.23 (iii)]{ascl}, the log diagonal $\tilde{\delta}_{D'}\colon X\rightarrow X*_{k,D'}X$ 
is a regular closed immersion whose normal bundle $T_{X}(X*_{k,D'}X)$ is canonically isomorphic to $TX(\log D')$.
\end{enumerate}
\end{rem}

Let $I'\subset I$ be a subset such that $D'=\bigcup_{i\in I'}D_{i}$ has simple normal crossings.
If $X=\Spec A$ for a $k$-algebra $A$ and if $D_{i}$ is defined by $t_{i}\in A$
for $i\in I'=\{1,2,\ldots, r'\}$, then we have $X*_{k,D'}X=\Spec \tilde{A}_{D'}$ for
\begin{equation}
\tilde{A}_{D'}= (A\otimes_{k}A)\left[\left(\frac{1\otimes t_{1}}{t_{1}\otimes 1}\right)^{\pm 1}, \left(\frac{1\otimes t_{2}}{t_{2}\otimes 1}\right)^{\pm 1}, \ldots,
\left(\frac{1\otimes t_{r'}}{t_{r'}\otimes 1}\right)^{\pm 1}\right]. \notag
\end{equation}
If $I_{\delta}\subset A\otimes A$ is the defining ideal of the diagonal $\delta\colon X\rightarrow X\times_{k}X$, then
the log diagonal $\tilde{\delta}_{D'}\colon X\rightarrow X*_{k,D'}X$ (Remark \ref{remlogprod} (3)) is defined by the ideal
\begin{equation} 
\left(I_{\delta},\frac{1\otimes t_{1}}{t_{1}\otimes 1}-1,\frac{1\otimes t_{2}}{t_{2}\otimes 1}-1,\ldots,\frac{1\otimes t_{r'}}{t_{r'}\otimes 1}-1\right)\subset \tilde{A}_{D}. \notag
\end{equation}

\begin{defn}[{\cite[Definition 1.12]{sacot}}]
\label{defdilt}
Let 
\begin{equation}
\label{diltdefreg}
E\rightarrow Q\leftarrow Y
\end{equation} 
be closed immersions of schemes. 
Assume that $E$ is a Cartier divisor on $Q$.
Let $Q'\rightarrow Q$ be the blow-up of $Q$ along the intersection
$E\cap Y=E\times_{Q}Y$.
We define the {\it dilatation} $Q^{(E\cdot Y)}$ {\it of $Q$ with respect to} $(E,Y)$, or the {\it dilatation $Q^{(E\cdot Y)}$ of $Q$ with respect to} (\ref{diltdefreg}), to be
the complement of the proper transform of $E$ in $Q'$.
\end{defn}

\begin{rem}
\label{remdilt}
Let the notation be as in Definition \ref{defdilt}.
\begin{enumerate}
\item If $Y=Q$ in (\ref{diltdefreg}), then the canonical morphism 
$Q^{(E\cdot Y)}\rightarrow Q$ is an isomorphism
(\cite[a remark after Definition 1.12]{sacot}).
\item If the intersection $E_{Y}=E\cap Y$ in $Q$ is a
Cartier divisor on $Y$, then the closed immersion $Y\rightarrow Q$
is uniquely lifted to an immersion $Y\rightarrow Q^{(E\cdot Y)}$
(\cite[a remark after Example 1.13]{sacot}).
If $E_{Y}$ is a Cartier divisor on $Y$ and if  $Y\rightarrow Q$ is a regular closed immersion,
then the immersion $Y\rightarrow Q^{(E\cdot Y)}$ is a regular closed immersion with normal bundle canonically isomorphic to $T_{Y}Q(-E_{Y})$ (\cite[Lemma 1.14.2]{sacot}). 
\end{enumerate}
\end{rem}

Suppose that $X$ is separated over $k$.
Let $I'\subset I$ be a subset such that $D'=\bigcup_{i\in I'}D_{i}$ has simple normal crossings.
We construct a scheme $P^{(D'\subset D)}$ by applying the construction of dilatation to $X*_{k,D'}X$ and prove several properties of $P^{(D'\subset D)}$.
We put $D''=\bigcup_{i\in I-I'}D_{i}$.
By Remark \ref{remlogprod} (2), 
the pull-back $p_{i}^{*}D''$ of $D''$ by $p_{i}$ (\ref{defpiproj}) is a Cartier divisor on $X*_{k,D'}X$ for $i=1,2$. 
We have regular closed immersions
\begin{equation}
\label{regimmspdpd}
p_{i}^{*}D''\rightarrow X*_{k,D'}X\xleftarrow{\tilde{\delta}_{D'}} X,
\end{equation}
where the first arrow is the canonical closed immersion.
We define a scheme $P^{(D'\subset D)}$ to be the intersection of 
the dilatations $(X*_{k,D'}X)^{(p_{i}^{*}D''\cdot X)}$ of $X*_{k,D'}X$
with respect to (\ref{regimmspdpd}) for $i=1,2$.
The dilatation $(X*_{k,D'}X)^{(p_{i}^{*}D''\cdot X)}$ for $i=1,2$ 
is an open subscheme of the blow-up $(X*_{k,D'}X)'$ of $X*_{k,D'}X$ along $D''=p_{i}^{*}D''\cap X\subset X \subset X*_{k,D'}X$, and
the scheme $P^{(D'\subset D)}$ is an open subscheme of
$(X*_{k,D'}X)^{(p_{i}^{*}D''\cdot X)}$ for $i=1,2$.

Suppose that $X=\Spec A$ and that $D_{i}$ is defined by $t_{i}\in A$ for $i\in I=\{1,2,\ldots,r\}$.
If $I'=\{1,2,\ldots,r'\}$ with $r'\le r$ and if $I_{\delta}$ denotes the ideal of $A\otimes_{k}A$ defining the diagonal
$\delta\colon X\rightarrow X\times_{k}X$,
then we have $P^{(D'\subset D)}=\Spec A^{(D'\subset D)}$ for 
\begin{align}
A^{(D'\subset D)} =\tilde{A}_{D'}&\left[ \frac{I_{\delta}}{\prod_{i\in I-I'}t_{i}\otimes 1},\left(\frac{1\otimes t_{r'+1}}{t_{r'+1}\otimes 1}\right)^{\pm 1},
\ldots, \left(\frac{1\otimes t_{r}}{t_{r}\otimes 1}\right)^{\pm 1}, \right. \notag\\
&\qquad \left. \frac{(1\otimes t_{1})-(t_{1}\otimes 1)}{(t_{1}\otimes 1)(\prod_{i\in I-I'}t_{i}\otimes 1)},
\ldots,
\frac{(1\otimes t_{r'})-(t_{r'}\otimes 1)}{(t_{r'}\otimes 1)(\prod_{i\in I-I'}t_{i}\otimes 1)}\right]. \notag
\end{align}

Let $p'_{i}\colon P^{(D'\subset D)}\rightarrow X$ for $i=1,2$ denote the composition
\begin{equation}
\label{pdptox}
p'_{i}\colon P^{(D'\subset D)} 
\rightarrow X*_{k,D'}X \xrightarrow{p_{i}} X  
\end{equation}
of the canonical morphism and the projection $p_{i}$ (\ref{defpiproj}).

\begin{lem}
\label{lempdptox}
Suppose that $X$ is separated over $k$.
Let $I'\subset I$ be a subset such that $D'=\bigcup_{i\in I'}D_{i}$ has simple normal crossings. 
Then we have the following for the scheme $P^{(D'\subset D)}$ defined above:
\begin{enumerate}
\item The projection $p_{i}'\colon P^{(D'\subset D)}\rightarrow X$ (\ref{pdptox}) is a smooth morphism for $i=1,2$.
\item The inverse image $D_{i'}^{(D'\subset D)}$ of $D_{i'}$ for $i'\in I$
by $p'_{i}$ for $i=1,2$ is a smooth divisor on $P^{(D'\subset D)}$ and
is independent of the choice of $i=1,2$.
The complement $P^{(D'\subset D)}-D^{(D'\subset D)}$ of
$D^{(D'\subset D)}=\bigcup_{i'\in I}D_{i'}^{(D'\subset D)}$ is $U\times_{k}U$.
If $D$ has simple normal crossings, then $D^{(D'\subset D)}$ is a divisor on $P^{(D'\subset D)}$ with simple normal crossings.
\item The log diagonal $\tilde{\delta}_{D'}\colon X\rightarrow X*_{k,D'}X$ 
(Remark \ref{remlogprod} (3)) is uniquely lifted to a regular closed immersion $\delta^{(D'\subset D)}\colon X\rightarrow P^{(D'\subset D)}$. The normal bundle of $\delta^{(D'\subset D)}$ is canonically isomorphic to $T_{X}(X*_{k,D'}X)(-D'')$, where $D''=\bigcup_{i'\in I-I'}D_{i'}$.
\end{enumerate}
\end{lem}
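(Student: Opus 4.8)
The plan is to deduce all three assertions from the structure of the two dilatations defining $P^{(D'\subset D)}$, using that $p_i\colon X\ast_{k,D'}X\to X$ is smooth with the log diagonal $\tilde{\delta}_{D'}$ as a section (Remark \ref{remlogprod}), and that both dilatations have the \emph{same} center, namely the regular closed subscheme $\tilde{\delta}_{D'}(D'')\subset X\ast_{k,D'}X$, where $D''=\bigcup_{i\in I-I'}D_{i}$. All three statements are local on $X$ and compatible with \'{e}tale localization, so I would work \'{e}tale-locally; since $D$ is only assumed to have smooth components, I will invoke no simple normal crossings hypothesis except for the last claim of (2).

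For (1), I would first observe that $P^{(D'\subset D)}$ is, by construction, an open subscheme of the single dilatation $(X\ast_{k,D'}X)^{(p_i^{\ast}D''\cdot X)}$ for each $i=1,2$ (it is the intersection with the other dilatation, and both are open in the blow-up of $X\ast_{k,D'}X$ along $\tilde{\delta}_{D'}(D'')$). Hence it suffices to show this single dilatation is smooth over $X$ via $p_i$. Since $\tilde{\delta}_{D'}$ is a section of the smooth morphism $p_i$, \'{e}tale-locally on $X\ast_{k,D'}X$ near the section one has $X\ast_{k,D'}X\cong\mathbb{A}^{n}_{X}=\Spec\dvr_{X}[u_{1},\dots,u_{n}]$ with $\tilde{\delta}_{D'}(X)=\{u_{1}=\cdots=u_{n}=0\}$ and with $p_i^{\ast}D''$ cut out by a function $m\in\dvr_{X}$ pulled back from $X$, where $n$ is the relative dimension of $p_i$. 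The center of the blow-up is then $(m,u_{1},\dots,u_{n})$, and the dilatation is exactly the affine chart $\Spec\dvr_{X}[u_{1},\dots,u_{n}][u_{1}/m,\dots,u_{n}/m]$ obtained by dividing by $m$, this chart being the complement of the proper transform of $\{m=0\}$; setting $w_{j}=u_{j}/m$ identifies it with $\mathbb{A}^{n}_{X}=\Spec\dvr_{X}[w_{1},\dots,w_{n}]$, smooth over $X$. Away from the center the dilatation is isomorphic to $X\ast_{k,D'}X$, already smooth over $X$. As $P^{(D'\subset D)}$ is open in this dilatation, $p_i'$ is smooth. This chart computation, carried out without any snc assumption on $D$, is the main obstacle.

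For (3), I would apply Remark \ref{remdilt} (2) to the regular closed immersion $\tilde{\delta}_{D'}\colon X\to X\ast_{k,D'}X$ and the Cartier divisor $E=p_i^{\ast}D''$: since $E\cap X=D''$ is a Cartier divisor on the smooth scheme $X$, the log diagonal lifts uniquely to a regular closed immersion $X\to(X\ast_{k,D'}X)^{(p_i^{\ast}D''\cdot X)}$ with normal bundle $T_{X}(X\ast_{k,D'}X)(-D'')$. By uniqueness of this lift, the lifts for $i=1,2$ define the same subscheme in the blow-up; its image avoids the proper transforms of both $p_{1}^{\ast}D''$ and $p_{2}^{\ast}D''$, hence lands in the open subscheme $P^{(D'\subset D)}$ and yields the required regular closed immersion $\delta^{(D'\subset D)}$, whose normal bundle, computed in the open $P^{(D'\subset D)}$, is again $T_{X}(X\ast_{k,D'}X)(-D'')$.

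For (2), the preimage $D_{i'}^{(D'\subset D)}=(p_i')^{-1}(D_{i'})$ is a smooth divisor because $p_i'$ is smooth by (1) and $D_{i'}$ is a smooth divisor on $X$. For independence of $i=1,2$ I would show $p_{1}^{\ast}D_{i'}$ and $p_{2}^{\ast}D_{i'}$ generate the same invertible ideal on $P^{(D'\subset D)}$: for $i'\in I'$ this already holds on the log product, where $(1\otimes t_{i'})/(t_{i'}\otimes 1)$ is a unit; for $i'\in I-I'$, the generator $(1\otimes t_{i'})/(t_{i'}\otimes 1)$ of $A^{(D'\subset D)}$ is a unit on $P^{(D'\subset D)}$ (equivalently, $((1\otimes t_{i'})-(t_{i'}\otimes 1))/m$ becomes regular, and by the involution exchanging the two factors together with the reducedness of $P^{(D'\subset D)}$ granted by (1), its inverse is regular too). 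Thus the two preimages coincide. The complement of $D^{(D'\subset D)}=\bigcup_{i'\in I}D_{i'}^{(D'\subset D)}$ is $(p_{1}')^{-1}(U)$, over which neither the log-product modification nor the dilatation alters $X\times_{k}X$, so it equals $U\times_{k}U$. Finally, if $D$ has simple normal crossings, then $D^{(D'\subset D)}$ is the preimage under the smooth morphism $p_{1}'$ of the snc divisor $D$, hence is itself a divisor with simple normal crossings.
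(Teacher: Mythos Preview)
Your proof is correct, and for parts (2) and (3) it is essentially the paper's argument: both invoke Remark \ref{remdilt} (2) for the lift and its normal bundle, and both deduce the independence of $D_{i'}^{(D'\subset D)}$ from the fact that $(1\otimes t_{i'})/(t_{i'}\otimes 1)$ is a unit on $P^{(D'\subset D)}$, which the paper phrases geometrically as ``complement of the proper transforms of $p_1^{*}D''\cup p_2^{*}D''$'' and you read off from the explicit ring presentation $A^{(D'\subset D)}$. (Your parenthetical about the involution and reducedness is unnecessary: the presentation of $A^{(D'\subset D)}$ already lists $\bigl(\frac{1\otimes t_{i'}}{t_{i'}\otimes 1}\bigr)^{\pm1}$ among its generators for $i'\in I-I'$.)

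For part (1) you take a genuinely different route. The paper argues via the cartesian diagram
\[
\xymatrix{
p_i^{*}D''\ar[r]\ar[d] & X*_{k,D'}X\ar[d]^{p_i} & X\ar[l]_{\tilde\delta_{D'}}\ar[d]^{\id}\\
D''\ar[r] & X & X\ar[l]^{\id}
}
\]
observing that the dilatation of the bottom row is trivially $X$ (Remark \ref{remdilt} (1)), and then invoking \cite[Corollary 1.17.1]{sacot} to conclude that the induced map of dilatations $(X*_{k,D'}X)^{(p_i^{*}D''\cdot X)}\to X^{(D''\cdot X)}\cong X$ is smooth. You instead do the computation by hand: \'etale-locally near the section, $X*_{k,D'}X\cong\mathbb{A}^n_X$ with $p_i^{*}D''$ cut out by $m\in\dvr_X$, and the $m$-chart of the blow-up along $(m,u_1,\dots,u_n)$ is $\Spec\dvr_X[u_1/m,\dots,u_n/m]\cong\mathbb{A}^n_X$. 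This works because $(m,u_1,\dots,u_n)$ is a regular sequence even though $D''$ need not be smooth. Your approach is self-contained and makes the smoothness transparent without importing the general dilatation machinery from \cite{sacot}; the paper's approach is more conceptual and is set up so that exactly the same diagram argument proves the analogous smoothness for $P^{(D'\subset D,R)}$ in Lemma \ref{lempdpdr} (1).
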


\begin{proof}
(1) Let $\tilde{\delta}_{D'}\colon X\rightarrow X*_{k,D'}X$ denote the log diagonal 
(Remark \ref{remlogprod} (3)).
We consider the commutative diagram 
\begin{equation}
\label{diagdiltlem}
\xymatrix{
p_{i}^{*}D'' \ar[r] \ar[d] \ar@{}[dr] | {\square} & X*_{k,D'}X \ar[d]^-{p_{i}} & X \ar[l]_-{\tilde{\delta}_{D'}} \ar[d]^-{\id_{X}} \\
D'' \ar[r] & X & X, \ar[l]^-{\id_{X}}
} 
\end{equation}
where the first lower horizontal arrow is the canonical regular closed immersion
and the left square is cartesian, for $i=1,2$.
Then the upper line in (\ref{diagdiltlem}) is (\ref{regimmspdpd}).
By Remark \ref{remdilt} (1), the canonical morphism
\begin{equation}
\label{morxkdpxproj}
X^{(D''\cdot X)}\rightarrow X 
\end{equation}
from the dilatation of $X$ with respect to the lower line in (\ref{diagdiltlem}) is an isomorphism.
Since the left square in (\ref{diagdiltlem}) is cartesian,
the projection $p_{i}\colon X*_{k,D'}X\rightarrow X$ is canonically lifted
to a morphism
\begin{equation}
\label{liftpdpdlem}
(X*_{k,D'}X)^{(p_{i}^{*}D''\cdot X)}\rightarrow X^{(D''\cdot X)}
\end{equation}
by the universality of blow-ups.
Since the intersection $p_{i}^{*}D''\cap X$ in $X*_{k,D'}X$ is the Cartier divisor $D''$ on $X$
and since the log diagonal $\tilde{\delta}_{D'}$ is a regular closed immersion by Remark \ref{remlogprod} (3),
the lift (\ref{liftpdpdlem}) of $p_{i}$ is smooth by \cite[Corollary 1.17.1]{sacot}.
Since the projection $p_{i}'\colon P^{(D'\subset D)}\rightarrow X$ is the composition of
the canonical morphisms (\ref{morxkdpxproj}) and (\ref{liftpdpdlem})
and the canonical open immersion $P^{(D'\subset D)}\rightarrow (X*_{k,D'}X)^{(p_{i}^{*}D''\cdot X)}$, the projection $p_{i}'$ is a smooth morphism for $i=1,2$. 

(2) Since the log product $X*_{k,D'}X$ is the complement of the proper transform of $(D'\times_{k}X)\cup(X\times_{k}D')$ in the blow-up of $X\times_{k}X$
along the union of $D_{i}\times_{k}D_{i}$ for all $i\in I'$,
the inverse image $D_{i'}^{(D'\subset D)}$ of $D_{i'}$ for $i'\in I'$ by $p_{i}'$ is independent of
the choice of $i=1,2$.
Since the scheme $P^{(D'\subset D)}$ is the complement of the proper transform of $p_{1}^{*}D''\cup p_{2}^{*}D''$ in the blow-up $(X*_{k,D'}X)'$ of $X*_{k,D'}X$ along $D''(\subset X)\subset X*_{k,D'}X$,
the independence on the choice of $p_{i}'$ for the inverse image $D_{i'}^{(D'\subset D)}$ of $D_{i'}$ by $p_{i}'$ also follows for $i'\in I-I'$.
The regularity of $D_{i'}^{(D'\subset D)}$
for $i'\in I$ follows from (1). 
By the constructions of $X*_{k,D'}X$ and $P^{(D'\subset D)}$ as the complements of the proper transforms of $(D'\times_{k}X)\cup (X\times_{k}D')$
and $p_{1}^{*}D''\cup p_{2}^{*}D''$ in the blow-ups of $X\times_{k}X$ and $X*_{k,D'}X$
along the union of $D_{i}\times_{k}D_{i}$ for all $i\in I'$ and $D''\subset X*_{k,D'}X$, respectively, 
we have $P^{(D'\subset D)}-D^{(D'\subset D)}=U\times_{k}U$.
If $D$ has simple normal crossings, then so does $D^{(D'\subset D)}$ by (1).

(3) Since the intersection $p_{i}^{*}D''\cap X$ in $X*_{k,D'}X$ is the Cartier divisor $D''$ on $X$
and since $\tilde{\delta}_{D'}$ is a regular closed immersion by Remark \ref{remlogprod} (3),
the log diagonal $\tilde{\delta}_{D'}\colon X\rightarrow X*_{k,D'}X$ 
is uniquely lifted to a regular closed immersion
\begin{equation}
\label{reglifttoxxdila}
X\rightarrow (X*_{k,D'}X)^{(p_{i}^{*}D''\cdot X)}
\end{equation} 
for $i=1,2$ by Remark \ref{remdilt} (2).
By loc.\ cit., the normal bundle of (\ref{reglifttoxxdila}) 
is canonically isomorphic to $T_{X}(X*_{k,D'}X)(-D'')$ for $i=1,2$.
By the universality of blow-ups, the composition of 
the lift (\ref{reglifttoxxdila}) of $\tilde{\delta}_{D'}$ and the canonical open immersion
$(X*_{k,D'}X)^{(p_{i}^{*}D''\cdot X)}\rightarrow (X*_{k,D'}X)'$
to the blow-up of $X*_{k,D'}X$ along $D''\subset X*_{k,D'}X$ is independent of the choice of $i=1,2$. 
Since the scheme $P^{(D'\subset D)}$ is the intersection of the open subschemes $(X*_{k,D'}X)^{(p_{i}^{*}D''\cdot X)}$ of $(X*_{k,D'}X)'$ for $i=1,2$, the assertions hold.
\end{proof}

We construct the scheme $P^{(D'\subset D,R)}$ for $R=\sum_{i'\in I}n_{i'}D_{i'}$ where $n_{i'}\in \mathbf{Z}_{\ge 0}$ for $i'\in I'$ and $n_{i'}\in \mathbf{Z}_{\ge 1}$ for $i'\in I-I'$.
We put
\begin{equation}
R^{(D'\subset D)}=\sum_{i'\in I'}n_{i'}D_{i'}^{(D'\subset D)}+\sum_{i'\in I-I'}(n_{i'}-1)D_{i'}^{(D'\subset D)}. \notag
\end{equation}
Here $D_{i'}^{(D'\subset D)}$ denotes the inverse image of $D_{i'}$ by $p_{i}'$ (\ref{pdptox}), which is a smooth divisor on $P^{(D'\subset D)}$
and is independent of the choice of $i=1,2$ by Lemma \ref{lempdptox} (2).
We consider the regular closed immersions
\begin{equation}
\label{regimmspdpdr}
R^{(D'\subset D)}
\rightarrow P^{(D'\subset D)}\xleftarrow{\delta^{(D'\subset D)}} X, 
\end{equation}
where the first arrow is the canonical closed immersion and $\delta^{(D'\subset D)}$ is the lift of the log diagonal $\tilde{\delta}_{D'}$ (Remark \ref{remlogprod} (3))
constructed in Lemma \ref{lempdptox} (3).
We define a scheme $P^{(D'\subset D,R)}$ to be the dilatation of $P^{(D'\subset D)}$ with respect to (\ref{regimmspdpdr}).
If $D$ has simple normal crossings and if $I'=\emptyset$,
then $P^{(D'\subset D)}$ and $P^{(D'\subset D,R)}$ are none other than 
$P_{2}^{(D)}$ and $P_{2}^{(R)}$ in \cite[Subsection 2.1]{sacot}, respectively.

Suppose that $X=\Spec A$ for a $k$-algebra $A$ and that $D_{i}$ is defined by $t_{i}\in A$
for $i\in I=\{1,2,\ldots,r\}$.
We put $I'=\{1,2,\ldots, r'\}$, where $r'\le r$.
If $I_{\delta}$ denotes the ideal of $A$ defining the diagonal $\delta \colon X\rightarrow X\times_{k}X$,
then we have
$P^{(D'\subset D,R)} =\Spec A^{(D'\subset D, R)}$ for
\begin{align}
A&^{(D'\subset D, R)}\notag  \\
&=A^{(D'\subset D)}\left[
\frac{I_{\delta}}{\prod_{i\in I}t_{i}^{n_{i}}\otimes 1}, \frac{(1\otimes t_{1})-(t_{1}\otimes 1)}{(t_{1}\otimes 1)(\prod_{i\in I}t_{i}^{n_{i}}\otimes 1)}, \ldots, \frac{(1\otimes t_{r'})-(t_{r'}\otimes 1)}{(t_{r'}\otimes 1)(\prod_{i\in I}t_{i}^{n_{i}}\otimes 1)}
\right]. \notag
\end{align}

We prove that $P^{(D'\subset D,R)}$ has the desired property,
namely that the fiber product $Z^{(D'\subset D,R)}=P^{(D'\subset D,R)}\times_{X}Z$
is canonically isomorphic to $TX(\log D')\times_{X}Z$.
Let \begin{equation}
\label{canpdpdrtopdpd}
P^{(D\subset D,R)}\rightarrow P^{(D'\subset D)} 
\end{equation}
be the canonical morphism and
let $p''_{i}\colon P^{(D'\subset D,R)}\rightarrow X$ for $i=1,2$ denote the composition
\begin{equation}
\label{compqi}
p''_{i}\colon P^{(D'\subset D,R)}\xrightarrow{(\ref{canpdpdrtopdpd})} P^{(D'\subset D)} 
\xrightarrow{p_{i}'}X
\end{equation}
of the morphisms (\ref{canpdpdrtopdpd}) and $p'_{i}$ (\ref{pdptox}).

\begin{lem}
\label{lempdpdr}
Suppose that $X$ is separated over $k$.
Let $I'\subset I$ be a subset such that $D'=\bigcup_{i'\in I}D_{i'}$ has simple normal crossings. 
Let $R=\sum_{i\in I}n_{i}D_{i}$ with $n_{i}\in \mathbf{Z}_{\ge 0}$ for $i\in I'$ and $n_{i}\in \mathbf{Z}_{\ge 1}$ for $i\in I-I'$.
We put $Z=\Supp(R+D'-D)$.
Then we have the following for the scheme $P^{(D'\subset D,R)}$ defined above:
\begin{enumerate}
\item The projection $p_{i}''\colon P^{(D'\subset D,R)}\rightarrow X$ (\ref{compqi}) is a smooth morphism for $i=1,2$.
\item The inverse image $D_{i'}^{(D'\subset D,R)}$
of $D_{i'}$ for $i'\in I$ by the projection $p_{i}''$ for $i=1,2$ is a smooth divisor on $P^{(D'\subset D,R)}$ and is independent of the choice of $i=1,2$.
The complement $P^{(D'\subset D, R)}-D^{(D'\subset D,R)}$
of $D^{(D'\subset D,R)}=\bigcup_{i'\in I}D_{i'}^{(D'\subset D,R)}$ is
$U\times_{k}U$.
If $D$ has simple normal crossings, then 
$D^{(D'\subset D,R)}$ is a divisor on $P^{(D'\subset D,R)}$
with simple normal crossings.
\item Let $Z^{(D'\subset D,R)}$ be the inverse image of $Z$ by $p_{i}''$,
which is independent of the choice of $i=1,2$ by (2).
Then there is a canonical isomorphism
\begin{equation}
Z^{(D'\subset D,R)} \xrightarrow{\sim} TX(\log D')(-R)\times_{X}Z. \notag
\end{equation} 
\end{enumerate}
\end{lem}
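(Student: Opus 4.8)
The plan is to follow the pattern of the proof of Lemma~\ref{lempdptox}, bootstrapping from the already-established properties of $P^{(D'\subset D)}$ and from the two basic facts about dilatations recorded in Remark~\ref{remdilt}. Throughout, write $g\colon P^{(D'\subset D,R)}\to P^{(D'\subset D)}$ for the canonical morphism (\ref{canpdpdrtopdpd}), so that $p_i''=p_i'\circ g$, and set $R'=\sum_{i\in I'}n_iD_i+\sum_{i\in I-I'}(n_i-1)D_i$, an effective Cartier divisor on $X$ (effective since $n_i\ge 1$ for $i\in I-I'$). For (1) I would reproduce the diagram-chase of Lemma~\ref{lempdptox}(1) one level up. Since $p_i'$ is smooth by Lemma~\ref{lempdptox}(1) and $D_{i'}^{(D'\subset D)}=p_i'^{-1}(D_{i'})$, one has $R^{(D'\subset D)}=p_i'^{*}R'$ as divisors, so the square with vertical arrows $p_i'$ and $\id_X$ and horizontal arrows the closed immersions $R^{(D'\subset D)}\to P^{(D'\subset D)}$ and $R'\to X$ is cartesian. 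Then I proceed as before: the dilatation $X^{(R'\cdot X)}\to X$ of the lower line (with $Y=Q=X$) is an isomorphism by Remark~\ref{remdilt}(1); by the universality of blow-ups $p_i'$ lifts to $P^{(D'\subset D,R)}=(P^{(D'\subset D)})^{(R^{(D'\subset D)}\cdot X)}\to X^{(R'\cdot X)}\cong X$; and since $\delta^{(D'\subset D)}$ is a regular closed immersion (Lemma~\ref{lempdptox}(3)) and the intersection $R^{(D'\subset D)}\cap X=R'$ is Cartier on $X$, this lift is smooth by \cite[Corollary 1.17.1]{sacot}. As this lift is precisely $p_i''$ up to the isomorphism $X^{(R'\cdot X)}\cong X$, assertion (1) follows.

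Assertion (2) I would deduce formally from (1). Since $p_i''=p_i'\circ g$, the inverse image $D_{i'}^{(D'\subset D,R)}=(p_i'')^{-1}(D_{i'})=g^{-1}(D_{i'}^{(D'\subset D)})$, which is independent of $i$ because $D_{i'}^{(D'\subset D)}$ is, by Lemma~\ref{lempdptox}(2). Smoothness of each $D_{i'}^{(D'\subset D,R)}$, and the simple normal crossings property of $D^{(D'\subset D,R)}$ when $D$ has simple normal crossings, then follow from the smoothness of $p_i''$ in (1), since the inverse image of a smooth (resp.\ simple normal crossings) divisor under a smooth morphism is again smooth (resp.\ simple normal crossings). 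For the complement, the dilatation $g$ is an isomorphism over $P^{(D'\subset D)}-R^{(D'\subset D)}$, and $\Supp R^{(D'\subset D)}\subset D^{(D'\subset D)}$; hence $g$ restricts to an isomorphism over $U\times_k U=P^{(D'\subset D)}-D^{(D'\subset D)}$ (Lemma~\ref{lempdptox}(2)), and so $P^{(D'\subset D,R)}-D^{(D'\subset D,R)}=g^{-1}(U\times_k U)\cong U\times_k U$.

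The heart is (3). First I would identify the normal bundle of the lifted section. Applying Remark~\ref{remdilt}(2) to $R^{(D'\subset D)}\to P^{(D'\subset D)}\xleftarrow{\delta^{(D'\subset D)}}X$, with $R^{(D'\subset D)}\cap X=R'$ Cartier on $X$ and $\delta^{(D'\subset D)}$ regular with normal bundle $TX(\log D')(-D'')$ (Lemma~\ref{lempdptox}(3) and Remark~\ref{remlogprod}(3)), the log diagonal lifts to a regular closed immersion $\delta^{(D'\subset D,R)}\colon X\to P^{(D'\subset D,R)}$ with normal bundle $TX(\log D')(-D''-R')$. The arithmetic point is the cancellation $D''+R'=\sum_{i\in I-I'}D_i+\sum_{i\in I'}n_iD_i+\sum_{i\in I-I'}(n_i-1)D_i=R$, so this normal bundle is $TX(\log D')(-R)$, and $\delta^{(D'\subset D,R)}$ is a section of the smooth morphism $p_1''$. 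It then remains to see that over $Z$ the scheme $Z^{(D'\subset D,R)}$ is this normal bundle. This is local on $X$, so I would take coordinates as in the affine description preceding the lemma and observe that $Z=\Supp R'=\Supp(R+D'-D)$, on which the monomial $\prod_{i\in I}t_i^{n_i}$ defining the last dilatation vanishes. Consequently, in $A^{(D'\subset D,R)}\otimes_A\dvr_Z$ the defining relations of the adjoined coordinates $I_\delta/\prod_{i\in I}t_i^{n_i}$ and $(1\otimes t_i-t_i\otimes 1)/((t_i\otimes 1)\prod_{j\in I}t_j^{n_j})$ degenerate to the trivial ones, so these coordinates become free; identifying the fiber coordinates in the log directions ($i\in I'$) with $\dlog t_i/\prod_{j\in I}t_j^{n_j}$ and the remaining ones with $dt_i/\prod_{j\in I}t_j^{n_j}$, i.e.\ with the linear coordinates on $TX(\log D')(-R)$, exhibits a canonical isomorphism $Z^{(D'\subset D,R)}\xrightarrow{\sim}TX(\log D')(-R)\times_X Z$. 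This is exactly the computation carried out for $P_2^{(R)}$ in \cite[Subsection 2.1]{sacot}, the only additional bookkeeping being the simultaneous treatment of the logarithmic components $i\in I'$ and the non-logarithmic ones $i\in I-I'$. I expect the main obstacle to be precisely this last step: verifying that the twist emerges as $-R$ rather than $-R'$ or $-D''-R'$ — that is, combining the cancellation $D''+R'=R$ with the degeneration of the dilatation relations on $Z$ — which is the part requiring the most care.
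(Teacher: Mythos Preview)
Your proposal is correct and follows essentially the same approach as the paper for parts (1) and (2): the same diagram-chase with $R'=R+D'-D$, the same use of Remark~\ref{remdilt}(1) and \cite[Corollary 1.17.1]{sacot}, and the same deduction of independence and of the complement via the factorisation $p_i''=p_i'\circ g$.

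For (3) the logic is also the same --- identify $T_X P^{(D'\subset D)}\cong TX(\log D')(-D'')$, twist by $-R'$, and use $D''+R'=R$ --- but the execution differs slightly. You propose a local coordinate verification that the fibre over $Z$ is the twisted tangent bundle, citing the analogous computation in \cite[Subsection 2.1]{sacot}. The paper instead invokes \cite[Lemma 1.14.1]{sacot} directly: that lemma already gives the canonical isomorphism $P^{(D'\subset D,R)}\times_{P^{(D'\subset D)}}R^{(D'\subset D)}\xrightarrow{\sim}TX(\log D')(-(R'+D''))\times_X R'$ (the dilatation restricted to the blown-up divisor is the twisted normal bundle), and then simply base-changes along $Z=\Supp R'\to R'$. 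This is cleaner than your local check and makes the word ``canonical'' transparent; your version is correct but the sentence ``the defining relations degenerate to the trivial ones, so these coordinates become free'' is exactly the content of \cite[Lemma 1.14.1]{sacot}, so you may as well cite it.
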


\begin{proof}
(1) We prove the assertions similarly as the proof of Lemma \ref{lempdptox} (1).
Let $\delta^{(D'\subset D)}\colon X\rightarrow P^{(D'\subset D)}$ be the regular closed immersion 
constructed in Lemma \ref{lempdptox} (3).
We put 
\begin{equation}
R'=R+D'-D=\sum_{i'\in I'} n_{i'}D_{i'}+\sum_{i'\in I-I'}(n_{i'}-1)D_{i'}. \notag
\end{equation} 
For $i=1,2$, we consider the commutative diagram
\begin{equation}
\label{diagpdpdrpf}
\xymatrix{
R^{(D'\subset D)} \ar[r] \ar[d] \ar@{}[dr] | {\square} & P^{(D'\subset D)} \ar[d]^-{p_{i}'} & X \ar[l]_-{\delta^{(D'\subset D)}}  \ar[d]^-{\id_{X}} \\
R' \ar[r] & X & X \ar[l]^-{\id_{X}},
}
\end{equation}
where the first lower horizontal arrow is the canonical regular closed immersion
and the left square is cartesian.
Then the upper line in (\ref{diagpdpdrpf}) is (\ref{regimmspdpdr}).
By Remark \ref{remdilt} (1), the canonical morphism
\begin{equation}
\label{morxrpxx}
X^{(R'\cdot X)}\rightarrow X
\end{equation}
from the dilatation of $X$ with respect to the lower line in (\ref{diagpdpdrpf}) is an isomorphism.
Since the left square in (\ref{diagpdpdrpf}) is cartesian,
the projection $p_{i}'\colon P^{(D'\subset D)}\rightarrow X$ is 
canonically lifted to a morphism
\begin{equation}
\label{morpdpdrxrpx}
P^{(D'\subset D,R)}\rightarrow X^{(R'\cdot X)}
\end{equation}
by the universality of blow-ups.
Since the intersection of $R^{(D'\subset D)}$ and $X$ in $P^{(D'\subset D)}$ is
identified with the Cartier divisor $R'$ on $X$
and since the morphism $\delta^{(D'\subset D)}$ is a regular closed immersion by Lemma \ref{lempdptox} (3),
the lift (\ref{morpdpdrxrpx}) of $p_{i}'$ is 
smooth by \cite[Corollary 1.17.1]{sacot}.
Since the projection $p_{i}''\colon P^{(D'\subset D)}\rightarrow X$ is the composition of the isomorphism (\ref{morxrpxx}) and
the morphism (\ref{morpdpdrxrpx}),
the projection $p_{i}''$ is a smooth morphism for $i=1,2$.

(2) By (1), the inverse image $D_{i'}^{(D'\subset D,R)}$ of $D_{i'}$ for $i'\in I$ by $p_{i}''$ for $i=1,2$ is a smooth divisor on $P^{(D'\subset D,R)}$.
If $D$ has simple normal crossings, then so does $D^{(D'\subset D,R)}$ by loc.\ cit..

We prove the independence of the choice of $i=1,2$ for the inverse image $D_{i'}^{(D'\subset D,R)}$ for $i'\in I$ by $p_{i}''$
and the equality $P^{(D'\subset D,R)}-D^{(D'\subset D,R)}=U\times_{k}U$.
By the construction of $p_{i}''$ as the composition of
the canonical morphism (\ref{canpdpdrtopdpd}) and $p_{i}'$, 
the inverse image $D_{i'}^{(D'\subset D,R)}$ of $D_{i'}$ by $p_{i}''$ is the inverse image of the divisor $D_{i'}^{(D'\subset D)}$ on $P^{(D'\subset D)}$ constructed in Lemma \ref{lempdptox} (2)
by the morphism (\ref{canpdpdrtopdpd}).
Hence the independence follows and
the complement $P^{(D'\subset D,R)}-D^{(D'\subset D,R)}$ is
equal to the inverse image of $P^{(D'\subset D)}-D^{(D'\subset D)}$
by the morphism (\ref{canpdpdrtopdpd}).
Since $P^{(D'\subset D,R)}$ is an open subscheme of the blow-up of
$P^{(D'\subset D)}$ along $R'=R^{(D'\subset D)}\cap X\subset P^{(D'\subset D)}$
and since the support of $R'\subset P^{(D'\subset D)}$ is contained in $D^{(D'\subset D)}$,
the morphism (\ref{canpdpdrtopdpd}) induces an isomorphism
$P^{(D'\subset D,R)}-D^{(D'\subset D,R)}\rightarrow P^{(D'\subset D)}-D^{(D'\subset D)}=U\times_{k}U$, where the equality
holds by Lemma \ref{lempdptox} (2).

(3) By Remark \ref{remlogprod} (3), the normal bundle $T_{X}(X*_{k,D'}X)$ of the
log diagonal $\tilde{\delta}_{D'}$ (Remark \ref{remlogprod} (3)) is canonically isomorphic to $TX(\log D')$.
By Lemma \ref{lempdptox} (3), 
the normal bundle $T_{X}P^{(D'\subset D)}$ of the lift $\delta^{(D'\subset D)}$ of $\tilde{\delta}_{D'}$ constructed in loc.\ cit.\ 
is canonically isomorphic
to $T_{X}(X*_{k,D'}X)(-D'')$, where $D''=\bigcup_{i'\in I-I'}D_{i'}$.
Therefore the normal bundle $T_{X}P^{(D'\subset D)}$ of $\delta^{(D'\subset D)}$ is canonically isomorphic to $TX(\log D')(-D'')$.
Since the intersection of $R^{(D'\subset D)}$ and $X$ in $P^{(D'\subset D)}$ is
the Cartier divisor $R'$ on $X$
and since the morphism $\delta^{(D'\subset D)}$ is a regular closed immersion by Lemma \ref{lempdptox} (3),
there is a canonical isomorphism
\begin{equation}
\label{isomdnb}
P^{(D'\subset D,R)}\times_{P^{(D'\subset D)}}R^{(D'\subset D)}\xrightarrow{\sim}
TX(\log D')(-(R'+D''))\times_{X}R'
\end{equation}
by \cite[Lemma 1.14.1]{sacot}.
Since $R=R'+D''$ 
and since $R^{(D'\subset D)}$ is the fiber product of $R'$ and $P^{(D'\subset D)}$ over $X$, 
we obtain the desired isomorphism by taking the base change of (\ref{isomdnb}) by the canonical morphism $Z=\Supp R'\rightarrow R'$.
\end{proof}

\subsection{Cleanliness and the direct image}
\label{seccldim}

Let $I'\subset I$ be a subset such that $D'=\bigcup_{i\in I'}D_{i}$ has simple normal crossings.
Suppose that $X$ is separated over $k$.
Let $R_{\mf}^{D'}$ be as in Definition \ref{defconddiv} (1).
We consider the smooth scheme $P^{(D'\subset D,R^{D'}_{\mf})}$ over $k$ 
constructed in the previous subsection.
By Lemma \ref{lempdpdr} (2), the inverse image 
$D^{(D'\subset D,R_{\mf}^{D'})}$ of $D$ 
by the projection $p_{i}''\colon P^{(D'\subset D,R_{\mf}^{D'})}\rightarrow X$ (\ref{compqi}) 
is independent of the choice of  $i=1,2$ 
and is a divisor on $P^{(D'\subset D,R_{\mf}^{D'})}$ with smooth irreducible components.
Since the complement $P^{(D'\subset D,R_{\mf}^{D'})}-D^{(D'\subset D,R_{\mf}^{D'})}$ is $U\times_{k}U$ by Lemma \ref{lempdpdr} (2), 
the canonical open immersion $j\times j\colon U\times_{k}U\rightarrow X\times_{k}X$
is uniquely lifted to the canonical open immersion 
\begin{equation}
\label{jdr}
j_{D'}^{(R_{\mf}^{D'})}\colon U\times_{k}U=P^{(D'\subset D,R_{\mf}^{D'})}-D^{(D'\subset D,R_{\mf}^{D'})}\rightarrow P^{(D'\subset D,R_{\mf}^{D'})}. 
\end{equation}

Let $L_{i}$ for $i\in I$ be the local field
at a generic point of $D_{i}^{(D'\subset D,R_{\mf}^{D'})}$.
Then the projections $p_{1}'',p_{2}''\colon P^{(D'\subset D,R_{\mf}^{D'})}\rightarrow X$ induces the morphisms 
\begin{equation}
u_{s,i},v_{s,i}\colon W_{s}(K_{i})\rightarrow W_{s}(L_{i}), \notag
\end{equation} 
respectively,
for $s\ge 0$ and for $i\in I$.
Let $\pr_{i}\colon U\times_{k}U\rightarrow U$ denote the $i$-th projection
for $i=1,2$
and let 
\begin{equation}
\label{defsheafh}
\mh=\mh om(\pr_{2}^{*}\mf,\pr_{1}^{*}\mf).
\end{equation}
Let $\varphi\colon \pi_{1}^{\ab}(U\times_{k}U)\rightarrow \Lambda^{\times}$ denote the character corresponding 
to the smooth sheaf $\mh$
of $\Lambda$-modules of rank 1 on $U\times_{k}U$
and let $\varphi|_{L_{i}}\colon G_{L_{i}}\rightarrow \mathbf{Q}/\mathbf{Z}$ 
for $i\in I$ denote the composition
\begin{equation}
\varphi|_{L_{i}}\colon G_{L_{i}}\rightarrow G_{L_{i}}^{\ab}=\pi_{1}^{\ab}(\Spec L_{i})\rightarrow \pi^{ab}(U\times_{k}U)\xrightarrow{\varphi} \notag
\Lambda^{\times} 
\end{equation}
of the canonical morphisms and $\varphi$.
If the order $n$ of $\chi|_{K_{i}}$ for $i\in I$ is prime to $p$,
then $\chi|_{K_{i}}$ is defined by a Kummer equation $t^{n}=a$ for the image $a$ in $K_{i}$ of a section of $\dvr_{U}$, 
and $\varphi|_{L_{i}}$ is defined by $t^{n}=v_{1,i}(a)/u_{1,i}(a)$.
If the order of $\chi|_{K_{i}}$ for $i\in I$ is $p^{s}$ for $s\ge 0$,
then $\chi|_{K_{i}}$ is defined by an Artin-Schreier-Witt equation $F(t)-t=a$ 
for the image $a$ in $W_{s}(K_{i})$ of a section of $W_{s}(\dvr_{U})$,
and $\varphi|_{L_{i}}$ is defined by $F(t)-t=v_{s,i}(a)-u_{s,i}(a)$.
Here $F$ denotes the Frobenius.

Let $F_{L_{i}}$ denote the residue field of $L_{i}$ and
$\mathfrak{p}_{i}$ the generic point of $D_{i}$ for $i\in I$.
We put $R_{\mf}^{D'}=\sum_{i\in I}n_{i}D_{i}$.
Then we have $n_{i}>0$ for $i\in I'\cap I_{\mW,\mf}$ (Definition \ref{defindsub} (1))
and $n_{i}>1$ for $i\in (I-I')\cap I_{\mW,\mf}$.
By Lemma \ref{lempdpdr} (3), there are canonical injections
\begin{equation}
t_{i,n_{i}}\colon \grlog_{n_{i}}\Omega^{1}_{K_{i}}
=\Omega_{X}^{1}(\log D')(R_{\mf}^{D'})|_{D_{i},\mathfrak{p}_{i}}\rightarrow F_{L_{i}} \notag
\end{equation}
for $i\in I'\cap I_{\mW,\mf}$ 
and 
\begin{equation}
t_{i,n_{i}}'\colon \gr_{n_{i}}\Omega^{1}_{K_{i}}\otimes_{F_{K_{i}}}F_{K_{i}}^{1/p}
=\Omega^{1}_{X}(\log D')(R_{\mf}^{D'})|_{D_{i},\mathfrak{p}_{i}}\otimes_{F_{K_{i}}}F_{K_{i}}^{1/p}\rightarrow F_{L_{i}}
\otimes_{F_{K_{i}}}F_{K_{i}}^{1/p} \notag
\end{equation}
for $i\in (I-I')\cap I_{\mW,\mf}$.
Recall that the refined Swan conductor $\rsw(\chi|_{K_{i}})$ (Definition \ref{defrefcond} (1)) 
and the characteristic form $\cform(\chi|_{K_{i}})$ (Definition \ref{defrefcond} (2)) 
for $i\in I_{\mW,\mf}$ are elements of 
$\grlog_{n_{i}}\Omega^{1}_{K_{i}}$ and $\gr_{n_{i}}\Omega^{1}_{K_{i}}\otimes_{F_{K_{i}}}F_{K_{i}}^{1/p}$, respectively.

\begin{lem}[cf.\ {\cite[Lemma 4.2.3]{aschar}}]
\label{mainlem}
Suppose that $X$ is separated over $k$.
Let the notation be as above.
\begin{enumerate}
\item The character $\varphi|_{L_{i}}$  
for $i\in I_{\mW,\mf}$ is unramified and is of order $p$.
\item Let $i\in I'\cap I_{\mW,\mf}$.
Then the character $\varphi|_{L_{i}}$ is 
defined by the Artin-Schreier equation $t^{p}-t=-t_{i,n_{i}}(\rsw(\chi|_{K_{i}}))$, 
that is, $\varphi|_{L_{i}}$ is the image of $-t_{i,n_{i}}(\rsw(\chi|_{K_{i}}))$ by 
the canonical morphism
\begin{equation}
\label{flih1}
F_{L_{i}}\rightarrow 
H^{1}(F_{L_{i}}, \BZ/p\BZ)\subset H^{1}(L_{i},\mathbf{Z}/p\mathbf{Z}) 
\end{equation}
of Artin-Schreier theory.
\item Let $i\in (I-I')\cap I_{\mW,\mf}$.
Then the character $\varphi|_{L_{i}}$ is  
defined by $t^{p}-t=-t'_{i,n_{i}}(\cform(\chi|_{K_{i}}))$, 
namely, $\varphi|_{L_{i}}$ is the image of an element of $F_{L_{i}}$  
that is equal to $-t'_{i,n_{i}}(\cform(\chi|_{K_{i}}))-(b^{p}-b)$ in $F_{L_{i}}\otimes_{F_{K_{i}}}F_{K_{i}}^{1/p}$
for some $b\in F_{L_{i}}\otimes_{F_{K_{i}}}F_{K_{i}}^{1/p}$ by the morphism (\ref{flih1}).
\end{enumerate}
\end{lem}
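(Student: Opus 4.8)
The plan is to follow the strategy of \cite[Lemma 4.2.3]{aschar}, reducing everything to an explicit Artin--Schreier--Witt computation on the dilatation $P^{(D'\subset D,R_{\mf}^{D'})}$. Since the refined conductors $\rsw(\chi|_{K_{i}})$, $\cform(\chi|_{K_{i}})$ and the index set $I_{\mW,\mf}$ all depend only on the $p$-part of $\chi$ (Remarks \ref{remtameloc} and \ref{remdefitw}), I would first reduce to the case where $\chi$ has order $p^{s}$. Working locally, I then write $\chi|_{K_{i}}$ by an Artin--Schreier--Witt equation $F(t)-t=a$ for a local section $a=(a_{s-1},\ldots,a_{0})$ of $W_{s}(\dvr_{U})$ lying in $\fillog_{R_{\mf}^{D'}}^{D'}j_{*}W_{s}(\dvr_{U})$, so that, as recorded in the paragraph preceding the lemma, $\varphi|_{L_{i}}$ is defined by $F(t)-t=v_{s,i}(a)-u_{s,i}(a)$, where $u_{s,i},v_{s,i}$ are induced by the projections $p_{i}''$ (\ref{compqi}).

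The heart of the proof is the computation of the Witt vector $v_{s,i}(a)-u_{s,i}(a)\in W_{s}(L_{i})$. Using the explicit presentation of $A^{(D'\subset D,R_{\mf}^{D'})}$ given before Lemma \ref{lempdpdr} — in particular the generators $\tfrac{I_{\delta}}{\prod_{i}t_{i}^{n_{i}}\otimes 1}$ and $\tfrac{(1\otimes t_{i})-(t_{i}\otimes 1)}{(t_{i}\otimes 1)(\prod t^{n}\otimes 1)}$ — I would show that $v_{s,i}(a)-u_{s,i}(a)$ is integral at the generic point of $D_{i}^{(D'\subset D,R_{\mf}^{D'})}$, so that $\varphi|_{L_{i}}$ is unramified, and that modulo $(F-1)W_{s}(\dvr_{L_{i}})$ it is congruent to a single Verschiebung term $V^{s-1}(\,\cdot\,)$, which forces $\varphi|_{L_{i}}$ to be killed by $p$. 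The same computation identifies the reduction of this term in the residue field $F_{L_{i}}$ with the image of $-F^{s-1}d(a)$, because the difference $(1\otimes a)-(a\otimes 1)$, divided by the appropriate power of a uniformizer and reduced along the diagonal, produces exactly the differential $-\sum_{i}a_{i}^{p^{i}-1}da_{i}$. By Lemma \ref{lempdpdr} (3), this differential read off along $D_{i}^{(D'\subset D,R_{\mf}^{D'})}$ corresponds under $t_{i,n_{i}}$ (resp.\ $t'_{i,n_{i}}$) to the value of $\varphi_{s}^{(D'\subset D,R_{\mf}^{D'})}(\bar{a})$ constructed in Lemma \ref{lemcfdef}.

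It then remains to match $\varphi_{s}^{(D'\subset D,R_{\mf}^{D'})}(\bar{a})$ with the refined conductors via Lemma \ref{lemrsw}. For $i\in I'\cap I_{\mW,\mf}$ the relevant graded piece is the logarithmic one, and Lemma \ref{lemrsw} (1) gives $\varphi_{s}^{(n_{i})}(\bar{a})=\rsw(\chi|_{K_{i}})$, yielding the Artin--Schreier equation $t^{p}-t=-t_{i,n_{i}}(\rsw(\chi|_{K_{i}}))$ of (2). For $i\in(I-I')\cap I_{\mW,\mf}$ the non-logarithmic graded piece appears; Lemma \ref{lemrsw} (2) gives $\varphi_{s}'^{(n_{i})}(\bar{a})=\cform(\chi|_{K_{i}})$, and since $\cform$ lives in $\gr_{n_{i}}\Omega^{1}_{K_{i}}\otimes_{F_{K_{i}}}F_{K_{i}}^{1/p}$ its pulled-back value lands a priori in $F_{L_{i}}\otimes_{F_{K_{i}}}F_{K_{i}}^{1/p}$; the discrepancy between this and an honest element of $F_{L_{i}}$ is absorbed into an Artin--Schreier coboundary $b^{p}-b$, giving (3). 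Finally, since $i\in I_{\mW,\mf}$ we have $\rsw(\chi|_{K_{i}})\neq 0$ and $\cform(\chi|_{K_{i}})\neq 0$ (Remark \ref{remrsw}), so the right-hand sides are nonzero in $F_{L_{i}}$ and $\varphi|_{L_{i}}$ has order exactly $p$, completing (1).

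The main obstacle is the explicit Witt-vector computation in the second paragraph: proving that $v_{s,i}(a)-u_{s,i}(a)$ is integral, that it reduces to a single length-one Verschiebung class modulo $(F-1)$-coboundaries, and that this class reduces to $-F^{s-1}d(a)$. This demands careful bookkeeping with the Witt addition and subtraction formulas together with the precise orders of vanishing encoded in $R_{\mf}^{D'}$, and in characteristic $2$ one must additionally track the correction term appearing in the definition of $\varphi_{s}'^{(2)}$ (\ref{varphinlogloc}) and in Lemma \ref{lemcfdef}.
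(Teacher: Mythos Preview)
Your approach is correct but takes a different route from the paper. The paper's proof is almost entirely by citation: after localizing so that $D$ is irreducible and $I=I_{\mW,\mf}$, the logarithmic case (assertions (1) for $i\in I'\cap I_{\mW,\mf}$ and (2)) is declared to be exactly \cite[Lemma 4.2.3]{aschar}; for the non-logarithmic case, the paper reduces to $\chi$ of $p$-power order via \cite[Lemmas 2.7 (ii), (iii)]{yafil} and then invokes \cite[Proposition 2.12 (ii)]{yafil} to obtain (3) and the remaining part of (1). There is no Witt-vector computation carried out in the paper itself.

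What you outline is essentially the \emph{content} of those cited results: the explicit analysis of $v_{s,i}(a)-u_{s,i}(a)$ on the dilatation, its integrality, and its reduction modulo $(F-1)$-coboundaries to $-F^{s-1}d(a)$ is precisely the argument that \cite[Lemma 4.2.3]{aschar} and \cite[Proposition 2.12]{yafil} carry out (the latter also handling the $p=2$ correction you mention). So your proposal would give a self-contained proof, at the cost of reproducing nontrivial computations already in the literature; the paper instead exploits that the statement is local on the generic point of each $D_{i}$, which immediately reduces to the settings of those earlier papers and avoids repeating the work. Both approaches are valid; yours is more transparent about the mechanism, while the paper's is considerably shorter.
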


\begin{proof}
Since the assertions are local on the generic point of $D_{i}$ for $i\in I_{\mW,\mf}$, we may assume that $D$ is irreducible
and that $I=I_{\mW,\mf}$.
Then the assertion (1) for $i\in I'\cap I_{\mW,\mf}$ and
the assertion (2) are nothing but \cite[Lemma 4.2.3]{aschar}.
Since $\chi$ is the sum of the $p$-part and prime to $p$-part of $\chi$,
we may assume that the order of $\chi$ is a power of $p$  to prove the remaining assertions
by \cite[Lemmas 2.7 (ii), (iii)]{yafil}.
Then the assertion (3) follows from \cite[Proposition 2.12 (ii)]{yafil},
and the assertion (1) for $i\in (I-I')\cap I_{\mW,\mf}$ follows.
\end{proof}

\begin{lem}
\label{mainlemsec}
Suppose that $X$ is separated over $k$.
Let $I'\subset I$ be a subset such that $D'=\bigcup_{i\in I'}D_{i}$ has simple normal crossings.
Then the following hold:
\begin{enumerate}
\item The direct image $j_{D' *}^{(R_{\mf}^{D'})}\mh$ of $\mh$ (\ref{defsheafh}) by $j_{D'}^{(R_{\mf}^{D'})}\colon U\times_{k}U\rightarrow P^{(D'\subset D,R_{\mf}^{D'})}$ (\ref{jdr}) is a smooth sheaf 
on $P^{(D'\subset D,R_{\mf}^{D'})}$.
\item We regard the $\log$-$D'$-characteristic form $\cform^{D'}(\mf)$ as a global section of $\dvr_{Z_{\mf}^{(D'\subset D,R_{\mf}^{D'})}}\otimes _{\dvr_{Z_{\mf}}}\dvr_{Z_{\mf}^{1/p}}$ by Lemma \ref{lempdpdr} (3), 
where $Z_{\mf}$ (Definition \ref{defconddiv} (2)) is equal to the support
of $R_{\mf}^{D'}+D'-D$ by Lemma \ref{lemzf}
and we put $Z_{\mf}^{(D'\subset D,R_{\mf}^{D'})}=P^{(D'\subset D,R_{\mf}^{D'})}\times_{X}Z_{\mf}$.
If $Z_{\mf}$ is non-empty, then the restriction
$j_{D' *}^{(R_{\mf}^{D'})}\mh|_{Z_{\mf}^{(D'\subset D,R_{\mf}^{D'})}}$
of $j_{D' *}^{(R_{\mf}^{D'})}\mh$
to $Z_{\mf}^{(D'\subset D,R_{\mf}^{D'})}$ is defined by the Artin-Schreier
equation $t^{p}-t=-\cform^{D'}(\mf)$.

\item Let $\bar{x}$ be a geometric point on $Z_{\mf}$.
Let $P^{(D'\subset D,R_{\mf}^{D'})}_{\bar{x}}$ denote the fiber product
of $P^{(D'\subset D,R_{\mf}^{D'})}$ and $\bar{x}$ over $X\times_{k}X$,
where $\bar{x}$ is regarded as a geometric point on $X\times_{k}X$ by the diagonal $X\rightarrow X\times_{k}X$.
If the ramification of $\mf$ is $\log$-$D'$-clean along $D$ at the image $x\in X$ of $\bar{x}$,
then $j_{D' *}^{(R_{\mf}^{D'})}\mh|_{P_{\bar{x}}^{(D'\subset D,R_{\mf}^{D'})}}$ is not constant.
\end{enumerate}
\end{lem}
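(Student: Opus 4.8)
The plan is to reduce everything to the local computation of the character $\varphi|_{L_{i}}$ carried out in Lemma \ref{mainlem}, combined with the geometric identification of $Z_{\mf}^{(D'\subset D,R_{\mf}^{D'})}$ with the logarithmic tangent bundle supplied by Lemma \ref{lempdpdr} (3).

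First I would prove (1). Since $P^{(D'\subset D,R_{\mf}^{D'})}$ is smooth over $k$, hence regular, and its complement of $U\times_{k}U$ is the divisor $D^{(D'\subset D,R_{\mf}^{D'})}$ by Lemma \ref{lempdpdr} (2), the smoothness of $j_{D'*}^{(R_{\mf}^{D'})}\mh$ will follow from Zariski--Nagata purity once I check that $\mh$ is unramified at each generic point of $D^{(D'\subset D,R_{\mf}^{D'})}$, i.e. that $\varphi|_{L_{i}}$ is unramified for every $i\in I$. For $i\in I_{\mW,\mf}$ this is exactly Lemma \ref{mainlem} (1). For $i\in I_{\mT,\mf}$ I would split $\chi$ into its prime-to-$p$ and $p$-parts: the $p$-part is unramified along $D_{i}$, so only the tame Kummer part matters, and since the tame characters of $\pr_{1}^{*}\mf$ and $\pr_{2}^{*}\mf$ agree along the diagonal component $D_{i}^{(D'\subset D,R_{\mf}^{D'})}$, their difference $\varphi|_{L_{i}}$ is unramified. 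Purity then yields (1).

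For (2), I would combine (1) with the explicit descriptions in Lemma \ref{mainlem} (2), (3). By Lemma \ref{lempdpdr} (3) the fibre product $Z_{\mf}^{(D'\subset D,R_{\mf}^{D'})}$ is $TX(\log D')(-R_{\mf}^{D'})\times_{X}Z_{\mf}$, on which $\cform^{D'}(\mf)$, a section of $\Omega^{1}_{X}(\log D')(R_{\mf}^{D'})|_{Z_{\mf}^{1/p}}$, becomes a linear function, so the equation $t^{p}-t=-\cform^{D'}(\mf)$ is meaningful. Both $j_{D'*}^{(R_{\mf}^{D'})}\mh|_{Z_{\mf}^{(D'\subset D,R_{\mf}^{D'})}}$ and the Artin--Schreier sheaf attached to $-\cform^{D'}(\mf)$ are smooth, so it suffices to match their defining data. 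Concretely, I would lift the $p$-part of $\chi$ to a section $a$ of $\fillog_{R_{\mf}^{D'}}^{D'}j_{*}W_{s}(\dvr_{U})$ and compute the image of $v_{s,i}(a)-u_{s,i}(a)$ in the normal bundle; by the definition of $\varphi_{s}^{(D'\subset D,R_{\mf}^{D'})}$ in Lemma \ref{lemcfdef} and Remark \ref{remcfnormal} (2) this equals $-\cform^{D'}(\mf)$, the prime-to-$p$ part being unramified and hence trivial on $Z_{\mf}^{(D'\subset D,R_{\mf}^{D'})}$. At the generic point of each $D_{i}$ with $i\in I_{\mW,\mf}$ this is consistent with Lemma \ref{mainlem} (2), (3), which identify $\varphi|_{L_{i}}$ with the Artin--Schreier character of $-\rsw(\chi|_{K_{i}})$ for $i\in I'$ and of $-\cform(\chi|_{K_{i}})$ for $i\in I-I'$, matching the stalks of $\cform^{D'}(\mf)$ recorded in Remark \ref{remcfnormal} (3).

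Finally (3) is a short consequence of (2). As $\bar{x}$ lies on $Z_{\mf}$, the fibre $P_{\bar{x}}^{(D'\subset D,R_{\mf}^{D'})}$ is contained in $Z_{\mf}^{(D'\subset D,R_{\mf}^{D'})}$ and, under Lemma \ref{lempdpdr} (3), equals the affine space $T_{x}X(\log D')(-R_{\mf}^{D'})$. By (2) the restriction of $j_{D'*}^{(R_{\mf}^{D'})}\mh$ to this fibre is the Artin--Schreier sheaf of the linear form obtained by evaluating $\cform^{D'}(\mf)$ at the point $x'$ over $x$; cleanliness at $x$ gives $\cform^{D'}(\mf)(x')\neq 0$ by Lemma \ref{lemeqtologdpcl} (1), so this linear form is nonzero and the associated sheaf on $P_{\bar{x}}^{(D'\subset D,R_{\mf}^{D'})}$ is non-constant. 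The main obstacle is step (2): identifying the ramification of the direct image along the normal bundle with the characteristic form, and in particular controlling the passage through the radicial covering $Z_{\mf}^{1/p}$ and the exceptional case $(p,\dt(\chi|_{K_{i}}))=(2,2)$ by means of the explicit formula for $\varphi_{s}^{(D'\subset D,R_{\mf}^{D'})}$ in Lemma \ref{lemcfdef}.
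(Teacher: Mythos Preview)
Your proposal is correct and follows essentially the same route as the paper. The only notable difference is in part (2): the paper dispenses with the global Witt-vector computation you sketch and instead argues that, since the characteristic form $\cform^{D'}(\mf)$ is by Remark \ref{remcfnormal} (3) the \emph{unique} section of $\Omega^{1}_{X}(\log D')(R_{\mf}^{D'})|_{Z_{\mf}^{1/p}}$ with the prescribed generic stalks $\rsw(\chi|_{K_{i}})$ and $\cform(\chi|_{K_{i}})$, the identification follows immediately from Lemma \ref{mainlem} (2), (3); this bypasses the explicit handling of $v_{s,i}(a)-u_{s,i}(a)$ and the $(p,\dt)=(2,2)$ case you flagged. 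In part (1) the paper also separates the tame case $i\in I_{\mT,\mf}$ according to whether $i\in I'$ or $i\in I-I'$, invoking \cite[Lemma 2.32 (i)]{yacc} in the first case and \cite[Lemmas 2.7, 2.9]{yafil} in the second, whereas you give a unified Kummer sketch; your sketch is adequate but be aware that the underlying dilatation differs in these two subcases.
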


\begin{proof}
(1) By the purity of Zariski-Nagata, it is sufficient to prove that
$\varphi|_{L_{i}}$ is unramified for $i\in I$.
Since the assertion is local on the generic point of $D_{i}$ for $i\in I$, we may assume that $D$ is irreducible.
If $I=I_{\mW,\mf}$, then the assertion follows from Lemma \ref{mainlem} (1).
If $I=I_{\mT,\mf}=I'$, then the assertion is nothing but the last assertion in
\cite[Lemma 2.32 (i)]{yacc}.

Suppose that $I=I_{\mT,\mf}$ and that $I'=\emptyset$.
Then, since $\chi$ is decomposed to the sum of its $p$-part and its prime to $p$-part, we may assume that $\chi$ is of order a power of $p$ or of order prime to $p$.
If the order of $\chi$ is prime to $p$, then the assertion follows from
\cite[Lemma 2.7 (i), (ii)]{yafil}.
If the order of $\chi$ is a power of $p$,
then the assertion follows from 
\cite[Lemma 2.9 (i)]{yafil}.

(2) By Remark \ref{remcfnormal} (3), the $\log$-$D'$-characteristic form $\cform^{D'}(\mf)$ is the unique global section of $\dvr_{Z_{\mf}^{(D'\subset D,R_{\mf}^{D'})}}\otimes _{\dvr_{Z_{\mf}}}\dvr_{Z_{\mf}^{1/p}}$ whose 
germ at the generic point of $D_{i}^{1/p}$ for $i\in I_{\mW,\mf}$ is
$\rsw(\chi|_{K_{i}})$ if $i\in I'\cap I_{\mW,\mf}$ and is $\cform(\chi|_{K_{i}})$ if $i\in (I- I')\cap I_{\mW,\mf}$.
Thus the assertion follows from Lemma \ref{mainlem} (2) and (3).

(3) By Lemma \ref{lempdpdr} (2),  we can identify $P_{\bar{x}}^{(D'\subset D,R_{\mf}^{D'})}$ with the fiber product of 
$Z_{\mf}^{(D'\subset D,R^{D'}_{\mf})}$ and $\bar{x}$ over $X\times_{k}X$.
Then the restriction $j_{D' *}^{(R_{\mf}^{D'})}\mh|_{P_{\bar{x}}^{(D'\subset D,R_{\mf}^{D'})}}$ is defined by $t^{p}-t=-\cform^{D'}(\mf)$ by (2), and is not constant by the $\log$-$D'$-cleanliness of the ramification of $\mf$ along $D$ at $x$.
\end{proof}

\begin{prop}
\label{mainpropsec}
Let $I'\subset I$ be a subset such that $D'=\bigcup_{i\in I'}D_{i}$ has simple normal crossings. 
\begin{enumerate}
\item Suppose that the ramification of $\mf$ is $\log$-$D'$-clean along $D$ at a point $x$ on $Z_{\mf}$ (Definition \ref{defconddiv} (2)).
Let $\bar{x}$ be a geometric point lying above $x$.
Then we have $(Rj_{*}\mf)_{\bar{x}}=0$.
\item Suppose that $I_{\mT,\mf}=\emptyset$ (Definition \ref{defindsub} (1)) 
(or equivalently $Z_{\mf}=D$)
and that the ramification of $\mf$ is $\log$-$D'$-clean along $D$.
Then the canonical morphism $j_{!}\mf\rightarrow Rj_{*}\mf$ is
an isomorphism.
\end{enumerate}
\end{prop}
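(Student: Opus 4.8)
The plan is to derive (2) from (1) by a standard d\'evissage, and to prove the local vanishing in (1) by transporting the stalk $(Rj_{*}\mf)_{\bar{x}}$ to the fibre $P_{\bar{x}}$ of the dilatation $P=P^{(D'\subset D,R_{\mf}^{D'})}$ built in Subsection \ref{dil}, where it becomes the cohomology of a non-trivial Artin--Schreier sheaf on an affine space and therefore vanishes.

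First I would dispose of (2) assuming (1). Since $I_{\mT,\mf}=\emptyset$ we have $Z_{\mf}=D$, so the hypothesis provides $\log$-$D'$-cleanliness at every point of $Z_{\mf}=D$. Writing $i\colon D\rightarrow X$ for the closed immersion, the canonical distinguished triangle $j_{!}\mf\rightarrow Rj_{*}\mf\rightarrow i_{*}i^{*}Rj_{*}\mf\rightarrow$ shows that $j_{!}\mf\rightarrow Rj_{*}\mf$ is an isomorphism precisely when $(Rj_{*}\mf)_{\bar{x}}=0$ for every geometric point $\bar{x}$ over a point of $D$. Each such point lies in $Z_{\mf}$ with $\mf$ clean there, so (1) gives the vanishing and hence (2).

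For (1), set $M=(Rj_{*}\mf)_{\bar{x}}\in D^{b}(\Lambda)$. It suffices to prove $R\Hom_{\Lambda}(M,M)=0$: the identity of $M$ lives in $\Hom_{D^{b}(\Lambda)}(M,M)=H^{0}R\Hom_{\Lambda}(M,M)$, so this vanishing forces $M=0$. The reason for introducing $\mh=\mh om(\pr_{2}^{*}\mf,\pr_{1}^{*}\mf)$ and the dilatation is exactly that $R\Hom_{\Lambda}(M,M)\simeq M^{\vee}\otimes M$ is computed, by the K\"unneth formula, as the cohomology of $\mh$ over the Milnor fibre of the product $X\times_{k}X$ at the diagonal point $\bar{x}$, and the log product together with the dilatation are designed so that this Milnor fibre is identified with the fibre $P_{\bar{x}}$ of $P$ over the diagonal point $\bar{x}$. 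By Lemma \ref{lempdpdr} (3) the fibre $P_{\bar{x}}$ is the affine space $T_{x}X(\log D')(-R_{\mf}^{D'})$, and by Lemma \ref{mainlemsec} (1), (2) the smooth extension $j_{D'*}^{(R_{\mf}^{D'})}\mh$ restricts there to the Artin--Schreier sheaf defined by $t^{p}-t=-\cform^{D'}(\mf)$. The $\log$-$D'$-cleanliness of $\mf$ at $x$ says that $\cform^{D'}(\mf)$ is part of a basis, i.e.\ a non-zero linear form on $P_{\bar{x}}$; equivalently $j_{D'*}^{(R_{\mf}^{D'})}\mh|_{P_{\bar{x}}}$ is non-constant, which is Lemma \ref{mainlemsec} (3). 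Choosing coordinates in which this form is a coordinate and combining K\"unneth with the vanishing of the cohomology of the non-trivial Artin--Schreier sheaf on the affine line, I obtain $R\Gamma(P_{\bar{x}},j_{D'*}^{(R_{\mf}^{D'})}\mh|_{P_{\bar{x}}})=0$, whence $R\Hom_{\Lambda}(M,M)=0$ and $M=0$.

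The main obstacle is the identification used above of $R\Hom_{\Lambda}(M,M)$ with the cohomology of $j_{D'*}^{(R_{\mf}^{D'})}\mh$ on the fibre $P_{\bar{x}}$. This is the local-acyclicity and nearby-cycle comparison at the core of the Abbes--Saito method: one must base change along the smooth projection $p_{2}''\colon P\rightarrow X$ (Lemma \ref{lempdpdr} (1)) after a partial compactification so that proper base change applies, and check that the smoothness of $\mh$ across the boundary of the log product (Lemma \ref{mainlemsec} (1)) annihilates all contributions away from $P_{\bar{x}}$. I would model this step on \cite[Lemma 4.2.3]{aschar} and the acyclicity arguments of \cite{sacot}; the delicate point is precisely that no higher-codimension boundary strata contribute, which is what the non-degeneracy of $\cform^{D'}(\mf)$ guarantees.
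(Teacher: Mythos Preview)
Your proposal is correct and takes essentially the same approach as the paper: derive (2) from (1), and for (1) use the dilatation fibre $P_{\bar{x}}^{(D'\subset D,R_{\mf}^{D'})}$ together with Lemma \ref{mainlemsec} (3) to reduce to the vanishing of a non-trivial Artin--Schreier sheaf on an affine space. The paper's own proof is terser still, simply citing \cite[Proposition 2.34]{yacc} with the evident substitutions (the logarithmic dilatation there replaced by $P^{(D'\subset D,R_{\mf}^{D'})}$ and \cite[Lemma 2.33]{yacc} by Lemma \ref{mainlemsec} (3)).
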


\begin{proof}
Since the assertion is local, we may assume that $X$ is separated over $k$.
Then the assertion (1) follows similarly as the proof of \cite[Proposition 2.34]{yacc}
by replacing $(X*_{k}X)^{(R_{\chi})}$, $P^{(R_{\chi})}_{\bar{x}}$, and \cite[Lemma 2.33]{yacc} by $P^{(D'\subset D,R_{\mf}^{D'})}$, $P_{\bar{x}}^{(D'\subset D,R_{\mf}^{D'})}$,
and Lemma \ref{mainlemsec} (3), respectively. 
The assertion (2) is a consequence of (1),
since $Z_{\mf}=X-U$ when $I_{\mT,\mf}=\emptyset$.
\end{proof}

\section{Singular support and characteristic cycle}
\label{ss}

Let $\mg$ be a constructible complex of $\Lambda$-modules on $X$,
namely $\mg$ has the constructible cohomology sheaves $\mathcal{H}^{i}(\mg)$ that are $0$ except for finitely many $i$.
We recall the singular support $SS(\mg)$ of $\mg$ introduced  
in \cite{be} in Subsection \ref{ssss} and 
the characteristic cycle $CC(\mg)$ of $\mg$ introduced 
in \cite{sacc} in Subsection \ref{sscc}.

\subsection{Singular support}
\label{ssss}
For a vector bundle $E$ on $X$, we say that a closed subset $C\subset E$ is a closed {\it conical} subset if $C$ is stable under the $\mathbf{G}_{m}$-action.
The {\it base} of a closed conical subset $C\subset E$ is the intersection of $C$ and the zero section of $E$.
For a morphism $h\colon W\rightarrow X$ of smooth schemes over $k$,
let 
\begin{equation}
\label{defdh}
dh\colon T^{*}X\times_{X}W\rightarrow T^{*}W 
\end{equation}
denote the morphism defined by $h$.

\begin{defn}
\label{defctr}
Let $C\subset T^{*}X$ be a closed conical subset.
\begin{enumerate}
\item Let $h\colon W\rightarrow X$ be a morphism of smooth schemes over $k$.
We say that $h$ is $C$-{\it transversal} at $w\in W$ if the intersection 
\begin{equation}
(h^{*}C\cap dh^{-1}(T^{*}_{W}W))\times_{W}w \subset T^{*}X\times_{X}w,\notag
\end{equation} 
where $dh$ is as in (\ref{defdh}),
is contained in the zero section $T^{*}_{X}X\times_{X}w$.

We say that $h$ is $C$-{\it transversal}
if $h$ is $C$-transversal at every $w\in W$, namely if the intersection
\begin{equation}
h^{*}C\cap dh^{-1}(T^{*}_{W}W) \subset T^{*}X \notag
\end{equation} 
is contained in the zero section $T^{*}_{X}X\times_{X}W$.

If $h$ is $C$-transversal, then we define a closed conical subset 
\begin{equation}
h^{\circ}C\subset T^{*}W \notag
\end{equation}
to be the image of $h^{*}C=C\times_{X}W$ by $dh$.

\item Let $f\colon X\rightarrow Y$ be a morphism of smooth schemes over $k$.
We say that $f$ is $C$-{\it transversal} at $x\in X$ if the inverse image 
\begin{equation}
df^{-1}(C)\times_{X}x \subset T^{*}Y\times_{Y}x, \notag
\end{equation} 
where $df\colon T^{*}Y\times_{Y}X\rightarrow T^{*}X$ is as in (\ref{defdh}), 
is contained in the zero section $T^{*}_{Y}Y\times_{Y}x$.

We say that $f$ is $C$-{\it transversal}
if $f$ is $C$-transversal at every $x\in X$, namely the inverse image
\begin{equation}
df^{-1}(C) \subset T^{*}Y \notag
\end{equation} 
is contained in the zero section $T^{*}_{Y}Y\times_{Y}X$.

\item Let $(h,f)$ be a pair of morphisms $h\colon W\rightarrow X$ and
$f\colon W\rightarrow Y$ of smooth schemes over $k$.
We say that
$(h,f)$ is $C$-transversal if $h$ is $C$-transversal
and $f$ is $h^{\circ}C$-transversal.
\end{enumerate}
\end{defn}

\begin{rem}
\label{remctr}
\begin{enumerate}
\item In Definition \ref{defctr} (1), if the morphism $h\colon W\rightarrow X$ is $C$-transversal, then
the restriction $dh|_{h^{*}C}\colon h^{*}C\rightarrow T^{*}W$ of $dh$ to $h^{*}C=C\times_{X}W\subset T^{*}X\times_{X}W$ is finite by \cite[Lemma 1.2 (ii)]{be}.
Hence the image $h^{\circ}C\subset T^{*}W$ of $h^{*}C$ by $dh$
is a closed conical subset of $T^{*}W$
and,
if further $h^{*}C$ is purely of dimension $e$,
then so is $h^{\circ}C$.

If the morphism $h\colon W\rightarrow X$ is a smooth morphism over $k$,
then the morphism $dh$ 
is injective.
Thus the smooth morphism $h$ is $C$-transversal and
the closed conical subset $h^{\circ}C\subset T^{*}W$ 
is isomorphic to $h^{*}C\subset T^{*}X\times_{X}W$.
\item 
 For a closed conical subset $C\subset T^{*}X$,
the $C$-transversality of a morphism $h\colon W\rightarrow X$ of smooth schemes over $k$ is an open condition on $W$ by \cite[Lemma 1.2 (i)]{be}.
\item 
Let $C, C'\subset T^{*}X$ be two closed conical subsets.
If $C'\subset C$, then every $C$-transversal morphism $h\colon W\rightarrow X$ is $C'$-transversal and so is every $C$-transversal morphism $f\colon X\rightarrow Y$.
Therefore so is every $C$-transversal pair of morphisms. 
In general, a morphism $h\colon W\rightarrow X$ of smooth schemes over $k$ is
$C\cup C'$-transversal if and only if $h$ is $C$-transversal and is $C'$-transversal.
It also holds for a morphism $f\colon X\rightarrow Y$ of smooth schemes
over $k$ and a pair $(h,f')$ of morphisms $h\colon W\rightarrow X$ and $f'\colon W\rightarrow Y$ of smooth schemes over $k$, respectively.
\end{enumerate}
\end{rem}

\begin{defn}[{\cite[1.2]{be}}]
\label{defpfc}
Let $f\colon X\rightarrow Y$ be a proper morphism to a smooth scheme $Y$ over $k$ and let $C\subset T^{*}X$ be a closed conical subset.
Then we define a closed conical subset 
\begin{equation}
f_{\circ}C\subset T^{*}Y \notag
\end{equation}
to be the image by the projection $\pr_{1}\colon T^{*}Y\times_{Y}X\rightarrow T^{*}Y$
of the inverse image $df^{-1}(C)$ of $C$ by $df\colon T^{*}Y\times_{Y}X\rightarrow T^{*}X$ (\ref{defdh}).
\end{defn}

There is a compatibility of 
the closed conical subset $f_{\circ}C$ of $T^{*}Y$ 
for a proper morphism $f\colon X\rightarrow Y$
and a closed conical subset $C\subset T^{*}X$
with the composition of proper morphisms:

\begin{lem}
\label{lemcompps}
Let $f\colon X\rightarrow Y$ and $g\colon Y\rightarrow Z$ be proper morphisms
of smooth schemes over $k$.
Let $C\subset T^{*}X$ be a closed conical subset.
Then we have $(g\circ f)_{\circ}C=g_{\circ}(f_{\circ}C)$.
\end{lem}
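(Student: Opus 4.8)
The plan is to unwind Definition \ref{defpfc} on both sides and reduce the assertion to a single identity in a base-change (Cartesian) square, in which taking image and taking preimage commute. Write $W=T^{*}Z\times_{Z}X$, $V=T^{*}Y\times_{Y}X$, and $U=T^{*}Z\times_{Z}Y$, where $X\to Z$ is $g\circ f$, $X\to Y$ is $f$, and $Y\to Z$ is $g$. By definition $(g\circ f)_{\circ}C=\pr_{1}(d(g\circ f)^{-1}(C))$ with $\pr_{1}\colon W\to T^{*}Z$ and $d(g\circ f)\colon W\to T^{*}X$; likewise $f_{\circ}C=\pr_{1}(df^{-1}(C))\subset T^{*}Y$ with $\pr_{1}\colon V\to T^{*}Y$, and then $g_{\circ}(f_{\circ}C)=\pr_{1}(dg^{-1}(f_{\circ}C))$ with $\pr_{1}\colon U\to T^{*}Z$ and $dg\colon U\to T^{*}Y$. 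First I would introduce two auxiliary morphisms: the morphism $q\colon W\to U$ induced by $f$, which is the base change of $f\colon X\to Y$ along the structure map $U\to Y$; and the morphism $\widetilde{dg}\colon W\to V$ obtained by base changing the $Y$-morphism $dg\colon U\to T^{*}Y$ along $f$, under the identifications $W=U\times_{Y}X$ and $V=T^{*}Y\times_{Y}X$.

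Next I would record two elementary compatibilities. By the chain rule for the differential (functoriality of the construction (\ref{defdh})), the differential of the composite factors as $d(g\circ f)=df\circ\widetilde{dg}\colon W\to T^{*}X$; and the first projection factors as $\pr_{1}=\pr_{1}\circ q\colon W\to T^{*}Z$, where on the right $\pr_{1}\colon U\to T^{*}Z$. Setting $C'=df^{-1}(C)\subset V$, these factorizations give
\begin{equation}
(g\circ f)_{\circ}C=\pr_{1}\bigl(q(\widetilde{dg}^{-1}(C'))\bigr),\qquad g_{\circ}(f_{\circ}C)=\pr_{1}\bigl(dg^{-1}(\pr_{1}(C'))\bigr), \notag
\end{equation}
so it suffices to prove the identity $q(\widetilde{dg}^{-1}(C'))=dg^{-1}(\pr_{1}(C'))$ of subsets of $U$ (here $\pr_{1}\colon V\to T^{*}Y$ on the right) and then apply $\pr_{1}\colon U\to T^{*}Z$ to both sides.

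The key structural input is that the square with top arrow $\widetilde{dg}\colon W\to V$, left arrow $q\colon W\to U$, bottom arrow $dg\colon U\to T^{*}Y$, and right arrow $\pr_{1}\colon V\to T^{*}Y$ is Cartesian: it is precisely the base change along $f\colon X\to Y$ of the $Y$-morphism $dg\colon U\to T^{*}Y$, and commutativity $dg\circ q=\pr_{1}\circ\widetilde{dg}$ is immediate. Granting this, the identity $q(\widetilde{dg}^{-1}(C'))=dg^{-1}(\pr_{1}(C'))$ is the general fact that image and preimage commute across a Cartesian square: the inclusion $\subseteq$ follows at once from commutativity, and for $\supseteq$ one uses that the square is surjective onto compatible pairs of points, namely for $u\in U$ and $v\in C'$ with a common image $s=dg(u)=\pr_{1}(v)$ in $T^{*}Y$ there is a point of $W=U\times_{T^{*}Y}V$ lying over $(u,v)$, since $\Spec(\kappa(u)\otimes_{\kappa(s)}\kappa(v))$ is nonempty. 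Such a point lies in $\widetilde{dg}^{-1}(C')$ and maps to $u$ under $q$.

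Finally I would verify that both sides are closed, so that the set-level equality upgrades to an equality of closed conical subsets; this is where properness is used. Since $f$ is proper, its base changes $q\colon W\to U$ and $\pr_{1}\colon V\to T^{*}Y$ are proper, hence closed maps; thus $f_{\circ}C=\pr_{1}(C')$ and $q(\widetilde{dg}^{-1}(C'))$ are closed, while $dg^{-1}(\pr_{1}(C'))$ is closed as a preimage, and applying the proper map $\pr_{1}\colon U\to T^{*}Z$ preserves closedness. Conicity of every set in sight is automatic from that of $C$ together with the $\mathbf{G}_{m}$-equivariance of all the maps. The main obstacle I anticipate is purely organizational: correctly identifying the five cotangent spaces and the two auxiliary morphisms $q$ and $\widetilde{dg}$, and checking the two factorizations and the Cartesian property of the square. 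Once the square is in place, the image/preimage identity and the properness argument are formal.
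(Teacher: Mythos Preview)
Your proof is correct and follows essentially the same approach as the paper's: the paper draws exactly the diagram you construct (your $q$ is its $\pr$, your $\widetilde{dg}$ is its $dg_{X}$), observes that the square is Cartesian, and concludes. You have simply spelled out what the paper leaves implicit---the chain-rule factorization $d(g\circ f)=df\circ\widetilde{dg}$, the image/preimage commutation across a Cartesian square, and the closedness via properness---so your argument is a fully detailed version of the paper's one-line proof.
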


\begin{proof}
We consider the diagram
\begin{equation}
\xymatrix{T^{*}Z\times_{Z}X \ar[r]^-{dg_{X}} \ar[d]_-{\pr} & T^{*}Y\times_{Y}X
\ar[r]^-{df} \ar[d]^-{\pr_{1}} & T^{*}X \\
T^{*}Z\times_{Z}Y \ar[r]_-{dg} \ar[d]_-{\pr_{1}} & T^{*}Y \\
T^{*}Z,} \notag
\end{equation}
where $\pr\colon T^{*}Z\times_{Z}X\rightarrow T^{*}Z\times_{Z}Y$
denotes the projection
and $dg_{X}\colon T^{*}Z\times_{Z}X\rightarrow T^{*}Y\times_{Y}X$
is the base change of $dg$ (\ref{defdh}) by $f$.
Since the square is cartesian, we obtain the desired equality.
\end{proof}

The singular support $SS(\mg)$ of $\mg$ is defined as follows:

\begin{defn}[{\cite[1.3]{be}}]
\label{defss}
Let $\mg$ be a constructible complex of $\Lambda$-modules on $X$
and let $C\subset T^{*}X$ be a closed conical subset.
\begin{enumerate}
\item We say that $\mg$ is {\it micro-supported} on $C$ if $f$ is locally acyclic relative to $h^{*}\mg$ for every $C$-transversal pair
$(h,f)$ of morphisms $h\colon W\rightarrow X$ and $f\colon W\rightarrow Y$
of smooth schemes over $k$.
\item We define the {\it singular support} $SS(\mg)$ of $\mg$ to be the smallest closed conical subset where $\mg$ is micro-supported.
\end{enumerate}
\end{defn}

The existence of the singular support follows from
\cite[Theorem 1.3]{be}. 

\begin{rem}
\label{remmicsup}
Let $\mg$ be a constructible complex of $\Lambda$-modules on $X$.
\begin{enumerate}
\item Since the local acyclicity is an \'{e}tale local condition,
the singular support $SS(\mg)$ of $\mg$ is defined \'{e}tale locally (\cite[Theorem 1.4 (i)]{be}).

\item If $X$ is purely of dimension $d$, then so is
the singular support $SS(\mg)$ of $\mg$ (\cite[Theorem 1.3 (ii)]{be}).

\item If $\mg$ is micro-supported on a closed conical subset $C\subset T^{*}X$, 
then $\mg$ is micro-supported on $C'$ for every closed conical subset $C'\subset T^{*}X$
containing $C$.
Indeed, if $C\subset C'$, 
then every $C'$-transversal pair $(h,f)$ is $C$-transversal by Remark \ref{remctr} (3), and the assertion follows.
Consequently, the complex $\mg$ is micro-supported on a closed conical subset $C\subset T^{*}X$ if and only if 
the singular support $SS(\mg)$ of $\mg$ is contained in $C$.
\end{enumerate}
\end{rem}

Let $D_{c}^{b}(X,\Lambda)$ denote the derived category of constructible complexes of $\Lambda$-modules on $X$.
We recall properties of singular supports.

\begin{lem}[{cf.\ \cite[Lemma 2.1 (iv)]{be}}]
\label{lemdistss}
Let $\mg$, $\mg'$, and $\mg''$ be constructible complexes of $\Lambda$-modules on $X$ and
let $\mg' \rightarrow \mg \rightarrow \mg'' \rightarrow$ be a distinguished triangle in $D_{c}^{b}(X,\Lambda)$.
Then we have 
\begin{equation}
SS(\mg')\subset SS(\mg)\cup SS(\mg''). \notag
\end{equation}
\end{lem}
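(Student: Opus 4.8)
The plan is to prove the statement directly from the definition of the singular support as the smallest closed conical subset on which a complex is micro-supported (Definition \ref{defss}). I would set $C=SS(\mg)\cup SS(\mg'')$, which is again a closed conical subset of $T^{*}X$. By Remark \ref{remmicsup} (3), it suffices to show that $\mg'$ is micro-supported on $C$; the asserted inclusion $SS(\mg')\subset C$ then follows at once, since $SS(\mg')$ is by definition the smallest closed conical subset on which $\mg'$ is micro-supported.

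To verify micro-support on $C$, I would take an arbitrary $C$-transversal pair $(h,f)$ of morphisms $h\colon W\rightarrow X$ and $f\colon W\rightarrow Y$ of smooth schemes over $k$, and aim to show that $f$ is locally acyclic relative to $h^{*}\mg'$. First I would use Remark \ref{remctr} (3): for $C=SS(\mg)\cup SS(\mg'')$, the $C$-transversality of the pair $(h,f)$ is equivalent to $(h,f)$ being simultaneously $SS(\mg)$-transversal and $SS(\mg'')$-transversal. Since $\mg$ (resp.\ $\mg''$) is micro-supported on its own singular support, the definition of micro-support then yields that $f$ is locally acyclic relative to $h^{*}\mg$ and relative to $h^{*}\mg''$.

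The key step is to transfer these two local acyclicity statements to $\mg'$. The pullback functor $h^{*}$ is exact, so applying it to the given distinguished triangle $\mg'\rightarrow \mg\rightarrow \mg''\rightarrow$ produces a distinguished triangle $h^{*}\mg'\rightarrow h^{*}\mg\rightarrow h^{*}\mg''\rightarrow$ in $D_{c}^{b}(W,\Lambda)$. Local acyclicity of $f$ relative to a complex satisfies the two-out-of-three property along a distinguished triangle: if $f$ is locally acyclic relative to two of the three vertices, then it is locally acyclic relative to the third. Applying this to the vertices $h^{*}\mg$ and $h^{*}\mg''$, for which local acyclicity has just been established, I would conclude that $f$ is locally acyclic relative to $h^{*}\mg'$. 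As $(h,f)$ was an arbitrary $C$-transversal pair, this shows that $\mg'$ is micro-supported on $C$, which completes the argument.

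The main obstacle is the two-out-of-three property for local acyclicity relative to a complex; this is not established within the excerpt, so it would be invoked as a standard property of local acyclicity of morphisms (as used in the foundational theory underlying \cite{be}). Everything else in the proof is a purely formal manipulation of the definitions and remarks already recorded in this subsection, together with the exactness of $h^{*}$.
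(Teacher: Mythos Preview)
Your approach is essentially the same as the paper's. The only difference is that where you invoke the two-out-of-three property for local acyclicity as a black box, the paper supplies it explicitly: it first reduces to the case where $Y$ is a smooth curve over $k$ via \cite[Theorem 1.5]{be}, then for a geometric point $\bar{w}$ of $W$ writes out the morphism of distinguished triangles from stalks $h^{*}(-)_{\bar{w}}$ to nearby cycles $\varphi_{\bar{w}}(h^{*}(-),f)$ and observes that since the maps for $\mg$ and $\mg''$ are isomorphisms, so is the one for $\mg'$.
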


\begin{proof}
We put $C=SS(\mg)\cup SS(\mg'')$
so that both $\mg$ and $\mg''$ are micro-supported on $C$
by Remark \ref{remmicsup} (3).
Let $(h,f)$ be a $C$-transversal pair
of morphisms $h\colon W\rightarrow X$ and $f\colon W\rightarrow Y$.
We prove that $f$ is locally acyclic relative to $h^{*}\mg'$,
which deduces the assertion.
By \cite[Theorem 1.5]{be}, we may assume that $Y$ is a smooth curve over $k$.
Let $w$ be a point on $W$ and $\bar{w}$ an algebraic geometric point lying above $w$.
We consider the morphism
\begin{equation}
\label{diaglocacyc}
\xymatrix{
h^{\ast}\mg'_{\bar{w}}\ar[d] \ar[r] &h^{\ast}\mg_{\bar{w}} \ar[r] \ar[d] & h^{\ast}\mg^{\prime \prime}_{\bar{w}} \ar[d] \ar[r] & \\
\varphi_{\bar{w}}(h^{\ast}\mg',f) \ar[r] &\varphi_{\bar{w}}(h^{\ast}\mg,f) \ar[r] & \varphi_{\bar{w}} (h^{\ast}\mg^{\prime \prime},f) \ar[r] & 
} 
\end{equation}
of distinguished triangles of complexes on $\bar{w}$.
Here the symbol $\varphi_{\bar{w}}$ denotes the stalk of nearby cycle complex at $\bar{w}$.
Since $f$ is locally acyclic relative to both $h^{*}\mg$ and $h^{*}\mg''$, 
the middle and right vertical arrows in (\ref{diaglocacyc}) are isomorphisms.
Hence the left vertical arrow is an isomorphism,
and $f$ is locally acyclic relative to $h^{*}\mg$.
\end{proof}

\begin{thm}[{\cite[Theorems 1.4 (i), 1.5, Lemmas 2.2 (ii),  2.5 (i)]{be}}]
\label{thmss}
We have the following for a constructible complex $\mg$ of $\Lambda$-modules on $X$:
\begin{enumerate}
\item Let $h\colon W\rightarrow X$ be a smooth morphism.
Then we have $SS(h^{*}\mg)=h^{\circ}SS(\mg)$.
\item Let $i\colon X\rightarrow Y$ be a closed immersion to a smooth scheme $Y$ over $k$.
Then we have $SS(i_{*}\mg)=i_{\circ}SS(\mg)$.
\item Let $f\colon X\rightarrow Y$ be a proper morphism to a smooth scheme $Y$ over $k$.
Then we have $SS(Rf_{*}\mg)\subset f_{\circ}SS(\mg)$.
\end{enumerate}
\end{thm}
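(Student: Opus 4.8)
The plan is to recognize at the outset that all three assertions are Beilinson's, so the cleanest route is to invoke \cite[Theorems 1.4 (i), 1.5, Lemmas 2.2 (ii), 2.5 (i)]{be} directly. Nonetheless, I would want to see how each reduces to the definition of micro-support (Definition \ref{defss}), so let me indicate the conceptual argument for each part and isolate where the real work lies.

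For (1), with $h\colon W\to X$ smooth and $C=SS(\mg)$, I would prove the two inclusions separately. For $SS(h^{*}\mg)\subset h^{\circ}C$ it suffices to show $h^{*}\mg$ is micro-supported on $h^{\circ}C$. Given an $h^{\circ}C$-transversal pair $(g,f)$ with $g\colon V\to W$ and $f\colon V\to Y$, the key point is that $h$ smooth forces $dh$ to be injective and $h^{\circ}C\cong h^{*}C$ (Remark \ref{remctr} (1)), so transversality transports along $h$ and the composite pair $(h\circ g,f)$ becomes $C$-transversal; micro-support of $\mg$ on $C$ then yields local acyclicity of $f$ relative to $(h\circ g)^{*}\mg=g^{*}h^{*}\mg$, which is exactly the requirement. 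The reverse inclusion $h^{\circ}C\subset SS(h^{*}\mg)$ is the substantive half: it asserts that micro-support cannot shrink under smooth pullback. Here I would use that local acyclicity, hence the singular support, is étale-local (Remark \ref{remmicsup} (1)) together with the local structure of a smooth morphism as (étale locally) a projection, so that any failure of micro-support downstairs is already detected upstairs.

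For (2), with $i\colon X\to Y$ a closed immersion, I would use that $i_{*}\mg$ is supported on $X$, and test the micro-support condition on $i_{\circ}C$ against transversal pairs into $Y$. The identity $i_{\circ}C=\pr_{1}(di^{-1}(C))$ is precisely arranged so that test morphisms transversal to $i_{\circ}C$ factor through $X$ and pull back to $C$-transversal test data over $X$, reducing to the statement for $\mg$; again the sharpness needed for equality comes from realizing every conormal direction in $i_{\circ}C$ from the geometry over $X$.

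The hard part will be (3), the proper pushforward bound $SS(Rf_{*}\mg)\subset f_{\circ}SS(\mg)$, which I expect to be the main obstacle. Since only an inclusion is claimed, it suffices to show $Rf_{*}\mg$ is micro-supported on $f_{\circ}C$. Given an $f_{\circ}C$-transversal pair $(h,g)$ with $h\colon W\to Y$ and $g\colon W\to Z$, the strategy is to form the base change $f'\colon X\times_{Y}W\to W$ of $f$ and invoke proper base change to identify $h^{*}Rf_{*}\mg$ with $Rf'_{*}(h'^{*}\mg)$, $h'$ the projection. Properness of $f'$ should preserve local acyclicity, reducing the problem to local acyclicity of $g\circ f'$ relative to $h'^{*}\mg$, and I would then check that $(h',g\circ f')$ is $C$-transversal by tracing the transversality of $(h,g)$ back across $f$ using the definition of $f_{\circ}C$ and the functoriality in Lemma \ref{lemcompps}. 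Verifying this transversality and the preservation of local acyclicity under proper pushforward simultaneously is the delicate step; this is exactly Beilinson's argument, and here I would in the end appeal to \cite[Theorem 1.5, Lemma 2.5 (i)]{be}.
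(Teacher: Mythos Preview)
Your proposal is correct and matches the paper's approach: the paper gives no proof of this theorem at all, simply citing \cite[Theorems 1.4 (i), 1.5, Lemmas 2.2 (ii), 2.5 (i)]{be}, which is exactly what you identify as the cleanest route at the outset. Your additional conceptual sketches are supplementary and not part of the paper's treatment; note in passing that in your sketch for (3) the fiber product $X\times_{Y}W$ need not be smooth over $k$, so the transversality framework of Definition~\ref{defctr} does not apply directly there, which is one reason the actual argument in \cite{be} is subtler than the outline suggests.
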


Let $h\colon W\rightarrow X$ be a  separated morphism of schemes of finite type over $k$.
We define a morphism
\begin{equation}
\label{morchg}
c_{h,\mg}\colon h^{*}\mg\otimes_{\Lambda}^{L}Rh^{!}\Lambda
\rightarrow Rh^{!}\mg  
\end{equation} 
to be the adjunction of the composition
\begin{equation}
Rh_{!}(h^{*}\mg\otimes_{\Lambda}^{L}Rh^{!}\Lambda)
\rightarrow \mg\otimes_{\Lambda}^{L}Rh_{!}Rh^{!}\Lambda
\xrightarrow{\id\otimes\varepsilon} \mg\otimes_{\Lambda}^{L}\Lambda=\mg, \notag
\end{equation}
where the first arrow is the inverse of the isomorphism of the projection formula
and $\varepsilon\colon Rh_{!}Rh^{!}\Lambda\rightarrow \Lambda$
denotes the adjunction mapping.

\begin{defn}[{\cite[Definition 8.5]{sacc}}]
Let $\mg$ be a constructible complex of $\Lambda$-modules on $X$ and
let $h\colon W\rightarrow X$ be a separated morphism of smooth schemes over $k$.
We say that the morphism $h\colon W\rightarrow X$ is $\mg$-{\it transversal} if the morphism $c_{h,\mg}$ (\ref{morchg}) is an isomorphism.
\end{defn}

\begin{prop}[{\cite[Lemma 1.12]{yacc}, cf.\ \cite[Proposition 8.8.1]{sacc}}]
\label{propjftr}
Let 
\begin{equation}
\label{diagbcjftr}
\xymatrix{
h^{*}V=V\times_{X}W\ar[r]^-{j'} \ar[d] & W \ar[d]^-{h} \\ V \ar[r]_-{j} & X } 
\end{equation}
be a cartesian diagram of smooth schemes over $k$.
Assume that the horizontal arrows in (\ref{diagbcjftr}) are open immersions
and that the morphism $h\colon W\rightarrow X$ is separated over $k$.
Let $\mh$ be a smooth sheaf of $\Lambda$-modules on $V$
such that both of the canonical morphisms
\begin{equation}
j_{!}\mh\rightarrow Rj_{*}\mh \notag
\end{equation}
and
\begin{equation}
j'_{!}h^{*}\mh\rightarrow Rj'_{*}h^{*}\mh \notag
\end{equation}
are isomorphisms. 
Then the separated morphism $h\colon W\rightarrow X$ is $j_{!}\mh$-transversal.
\end{prop}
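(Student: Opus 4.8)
The goal is to prove that $c_{h,j_{!}\mh}$ (the morphism (\ref{morchg}) for $\mg=j_{!}\mh$) is an isomorphism. Write $W'=V\times_{X}W$ for the upper-left corner of (\ref{diagbcjftr}), $g\colon W'\to V$ for the restriction of $h$ (the left vertical arrow), $Z=X-V$ with closed immersion $i\colon Z\to X$, and $Z_{W}=h^{-1}(Z)=W-W'$ with closed immersion $i_{W}\colon Z_{W}\to W$. Since being an isomorphism can be tested on the cone, the plan is to show that $\mathrm{cone}(c_{h,j_{!}\mh})$ has vanishing stalks everywhere. First I would restrict along the open immersion $j'\colon W'\to W$: there $j^{*}(j_{!}\mh)=\mh$ is a smooth (rank $1$) sheaf, and since the $c$-morphism is compatible with restriction along the open immersions $j,j'$ (using the base change $j'^{*}Rh^{!}\cong Rg^{!}j^{*}$), the restriction $j'^{*}c_{h,j_{!}\mh}$ is identified with $c_{g,\mh}$. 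A locally constant sheaf is transversal to every morphism — \'etale locally $\mh\cong\Lambda$, so both sides of $c_{g,\mh}$ become $Rg^{!}\Lambda$ — hence $c_{g,\mh}$ is an isomorphism and the cone is supported on the closed subset $Z_{W}$.

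It therefore remains to prove that both sides of $c_{h,j_{!}\mh}$ have vanishing restriction to $Z_{W}$. For the source this is immediate: $i_{W}^{*}(h^{*}j_{!}\mh\otimes^{L}Rh^{!}\Lambda)=(h_{Z}^{*}i^{*}j_{!}\mh)\otimes^{L}i_{W}^{*}Rh^{!}\Lambda=0$ because $i^{*}j_{!}\mh=0$ (extension by zero). The whole content is thus concentrated in showing
\begin{equation}
i_{W}^{*}Rh^{!}(j_{!}\mh)=0. \notag
\end{equation}

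To compute $Rh^{!}(j_{!}\mh)$ I would first use the first hypothesis. From the localization triangle $i_{*}Ri^{!}\to\mathrm{id}\to Rj_{*}j^{*}$ applied to $j_{!}\mh$, and the fact that $j_{!}\mh\to Rj_{*}j^{*}(j_{!}\mh)=Rj_{*}\mh$ is exactly the canonical morphism assumed to be an isomorphism, one gets $Ri^{!}(j_{!}\mh)=0$. Since $h\circ i_{W}=i\circ h_{Z}$ (where $h_{Z}\colon Z_{W}\to Z$), transitivity of $R(-)^{!}$ gives $Ri_{W}^{!}Rh^{!}(j_{!}\mh)=Rh_{Z}^{!}Ri^{!}(j_{!}\mh)=0$; feeding this into the localization triangle on $W$ yields $Rh^{!}(j_{!}\mh)\cong Rj'_{*}(j'^{*}Rh^{!}(j_{!}\mh))\cong Rj'_{*}(Rg^{!}\mh)$, the last step by the base change $j'^{*}Rh^{!}\cong Rg^{!}j^{*}$. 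Now the decisive point: because $W$ and $X$ are both smooth over the perfect field $k$, the relative dualizing complex $Rh^{!}\Lambda$ is invertible (it equals $K_{W}\otimes h^{*}K_{X}^{-1}$ with $K_{W},K_{X}$ the invertible dualizing complexes of the smooth schemes). Hence $Rg^{!}\mh=g^{*}\mh\otimes^{L}Rg^{!}\Lambda=g^{*}\mh\otimes^{L}j'^{*}Rh^{!}\Lambda$, and the projection formula for $Rj'_{*}$ against the invertible complex $Rh^{!}\Lambda$ gives $Rh^{!}(j_{!}\mh)\cong Rj'_{*}(g^{*}\mh)\otimes^{L}Rh^{!}\Lambda$. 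Restricting to $Z_{W}$ and using the second hypothesis in the form $i_{W}^{*}Rj'_{*}(g^{*}\mh)=0$ (equivalent to $j'_{!}g^{*}\mh\cong Rj'_{*}g^{*}\mh$) finishes the vanishing, so $\mathrm{cone}(c_{h,j_{!}\mh})=0$.

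I expect the main obstacle to be the bookkeeping around $Rh^{!}$ in the third paragraph: identifying $Rh^{!}(j_{!}\mh)$ with $Rj'_{*}(Rg^{!}\mh)$ requires the compatibilities $Ri^{!}(j_{!}\mh)=0$, the base change $j'^{*}Rh^{!}\cong Rg^{!}j^{*}$, and transitivity of $!$-pullback, while disentangling the dualizing complex needs the projection formula. The clean resolution is the observation that for a morphism of smooth $k$-schemes $Rh^{!}\Lambda$ is invertible, which turns the potentially problematic factor $Rg^{!}\Lambda$ into a harmless twist to which the projection formula and the second hypothesis apply verbatim; without this invertibility one would be stuck comparing $j'_{!}$ and $Rj'_{*}$ of $g^{*}\mh\otimes^{L}Rg^{!}\Lambda$ directly.
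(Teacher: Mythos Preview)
The paper does not give its own proof of this proposition; it is quoted verbatim from \cite[Lemma 1.12]{yacc} (cf.\ \cite[Proposition 8.8.1]{sacc}). Your argument is correct and is essentially the standard one underlying those references: decompose via the open--closed pair $(W',Z_{W})$, use that $c_{g,\mh}$ is an isomorphism for a locally constant sheaf $\mh$ on the open part, and on the closed part show that both source and target of $c_{h,j_{!}\mh}$ vanish. The two hypotheses are used exactly where they should be---$Ri^{!}(j_{!}\mh)=0$ from the first, $i_{W}^{*}Rj'_{*}(g^{*}\mh)=0$ from the second---and the invertibility of $Rh^{!}\Lambda$ for a morphism of smooth $k$-schemes is the hinge that makes the projection formula apply. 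The only cosmetic point is that the identification $j'^{*}c_{h,j_{!}\mh}\cong c_{g,\mh}$ relies on a functoriality of the $c$-morphism (as in \cite[Lemma 8.6]{sacc}); you allude to this but it is worth citing explicitly.
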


\begin{prop}[{\cite[Proposition 1.13]{yacc}, cf.\ \cite[Proposition 8.13]{sacc}}]
\label{propgtrmicsup}
Let $\mg$ be a constructible complex of $\Lambda$-modules on $X$ and
let $C\subset T^{*}X$ be a closed conical subset.
Then the following are equivalent:
\begin{enumerate}
\item The complex $\mg$ is micro-supported on $C$.
\item The support $\Supp \mg\subset X$ of $\mg$ is contained in the base $C\cap T^{*}_{X}X\subset X$ of $C$ and
every separated $C$-transversal morphism is $\mg$-transversal.
\end{enumerate}
\end{prop}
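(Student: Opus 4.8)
The plan is to prove the two implications separately, using throughout the reformulation from Remark \ref{remmicsup} (3) that condition (1) is equivalent to the inclusion $SS(\mg)\subseteq C$. The whole argument then rests on one bridge: for a separated morphism $h\colon W\rightarrow X$ of smooth schemes, being $SS(\mg)$-transversal (non-characteristic) should be equivalent to being $\mg$-transversal in the sense of (\ref{morchg}). I would isolate this as the key lemma and organize the proof around it.

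For the implication (1) $\Rightarrow$ (2), assume $SS(\mg)\subseteq C$. For the support statement I would use the basic fact from Beilinson's theory that the base $SS(\mg)\cap T^{*}_{X}X$ equals $\Supp\mg$; combined with $SS(\mg)\subseteq C$ this gives $\Supp\mg=SS(\mg)\cap T^{*}_{X}X\subseteq C\cap T^{*}_{X}X$, which is exactly the base condition in (2). For the transversality statement, let $h\colon W\rightarrow X$ be any separated $C$-transversal morphism. Since $SS(\mg)\subseteq C$, Remark \ref{remctr} (3) shows $h$ is $SS(\mg)$-transversal, and then the bridge lemma gives that $h$ is $\mg$-transversal.

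For (2) $\Rightarrow$ (1), I would show that for every $C$-transversal pair $(h\colon W\rightarrow X,\, f\colon W\rightarrow Y)$ the morphism $f$ is locally acyclic relative to $h^{*}\mg$. Local acyclicity is local on $W$, so I may assume $h$ separated, and by Beilinson's reduction to the case of a smooth curve $Y$ (the same reduction invoked in the proof of Lemma \ref{lemdistss}) it suffices to treat $Y$ a curve. Since $h$ is separated and $C$-transversal, hypothesis (2) makes $h$ $\mg$-transversal; the isomorphism $c_{h,\mg}$ then lets me control $h^{*}\mg$ compatibly with base change and place $SS(h^{*}\mg)$ inside $h^{\circ}C$ (using Theorem \ref{thmss} (1) when $h$ is smooth, and the transversal-pullback formalism in general). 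As $f$ is $h^{\circ}C$-transversal, hence transversal to $SS(h^{*}\mg)$, the desired local acyclicity of $f$ relative to $h^{*}\mg$ follows from the same bridge relating non-characteristic transversality and local acyclicity, applied now on $W$.

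The hard part is the bridge lemma itself: translating the cohomological condition that $c_{h,\mg}$ is an isomorphism into the geometric conditions of being non-characteristic for $SS(\mg)$ and of local acyclicity. I expect to prove it in the style of Proposition \ref{propjftr}, by factoring $h$ through a graph or closed immersion into a product and reducing, via Theorem \ref{thmss}, to the two elementary cases: $h$ smooth, where $\mg$-transversality is automatic and $SS$ pulls back by $h^{\circ}$; and $h$ a regular closed immersion transversal to $SS(\mg)$, where $c_{h,\mg}$ can be analyzed along the normal direction and local acyclicity is the essential content. The only inputs genuinely outside the excerpt are Beilinson's identity $\Supp\mg=SS(\mg)\cap T^{*}_{X}X$ and his reduction of the transversal test to smooth curves, both of which I would cite.
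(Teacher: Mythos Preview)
The paper does not provide a proof; the proposition is quoted from \cite[Proposition 1.13]{yacc} (cf.\ \cite[Proposition 8.13]{sacc}). Your (1) $\Rightarrow$ (2) is correct and standard: the base of $SS(\mg)$ is $\Supp\mg$, and the implication ``$SS(\mg)$-transversal $\Rightarrow$ $\mg$-transversal'' is the forward content of \cite[Proposition 8.13]{sacc}.

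Your (2) $\Rightarrow$ (1), however, has a genuine gap at the step ``place $SS(h^{*}\mg)$ inside $h^{\circ}C$''. You appeal to Theorem \ref{thmss} (1) for smooth $h$, but that yields $SS(h^{*}\mg)=h^{\circ}SS(\mg)$; to pass to $h^{\circ}C$ you would need $SS(\mg)\subset C$, which is condition (1) itself. The isomorphism $c_{h,\mg}$ is a statement about complexes and does not, on its own, bound $SS(h^{*}\mg)$ by $h^{\circ}C$. Your bridge lemma, even granted as an equivalence, does not repair this: knowing that $h$ is $SS(\mg)$-transversal still says nothing about whether $SS(\mg)\subset C$.

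A non-circular route for (2) $\Rightarrow$ (1) avoids $SS(h^{*}\mg)$ altogether. After reducing to $Y$ a curve, one applies hypothesis (2) not only to $h$ but to the composites $h\circ i_{y}\colon W_{y}\rightarrow X$ with the fiber inclusions $i_{y}\colon W_{y}\hookrightarrow W$; these are again $C$-transversal because $f$ is $h^{\circ}C$-transversal. The compatibility of the morphisms $c_{-,\mg}$ under composition then shows each $i_{y}$ is $h^{*}\mg$-transversal, and this fiberwise condition is what encodes local acyclicity of $f$ relative to $h^{*}\mg$. The essential point is that a single application of (2) to $h$ cannot suffice; one must feed an entire family of $C$-transversal test morphisms into the hypothesis.
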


\subsection{Characteristic cycle}
\label{sscc}

We first recall the notion that a point is at most a characteristic point
to define characteristic cycles.

\begin{defn}[{\cite[Definition 5.3.1]{sacc}}]
Let $C\subset T^{*}X$ be a closed conical subset.
Let $h\colon W\rightarrow X$ be an \'{e}tale morphism and let $f\colon W\rightarrow Y$ be a morphism to a smooth curve $Y$ over $k$.
Let $w\in W$ be a closed point.
If the restriction $X\xleftarrow{h}W-\{w\}\xrightarrow{f}Y$
of the pair $(h,f)$ to $W-\{w\}$ is
$C$-transversal, then
we say that $w\in W$ is {\it at most an isolated} $C$-{\it characteristic point} of $f$.
\end{defn}

Let $\mg$ be a constructible complex of $\Lambda$-modules on $X$.
Then the characteristic cycle $CC(\mg)$ of $\mg$ is defined as follows:

\begin{defn}[{\cite[Definition 5.10, Theorem 5.18]{sacc}}]
\label{defcc}
Assume that $X$ is purely of dimension $d$.
Let $\mg$ be a constructible complex of $\Lambda$-modules on $X$ and
let $C\subset T^{*}X$ be a closed conical subset of pure dimension $d$
containing the singular support $SS(\mg)$ of $\mg$.
Let $\{C_{a}\}_{a}$ denote the irreducible components of $C$.
We define the {\it characteristic cycle} $CC(\mg)$ 
to be a unique $\mathbf{Z}$-linear combination $A=\sum_{a}m_{a}[C_{a}]
\in Z_{d}(T^{*}X)$, which is independent of the choice of $C$,
satisfying the following Milnor formula (\cite[Theorem 5.9]{sacc}):
If $h\colon W\rightarrow X$ is an \'{e}tale morphism and if $f\colon W\rightarrow Y$ is a morphism to a smooth curve $Y$ over $k$, then we have
\begin{equation}
\label{milnoreq}
-\dimtot \phi_{w}(h^{*}\mg,f)=(h^{*}A,df)_{T^{*}W,w} 
\end{equation}
for every at most isolated characteristic point $w\in W$ of $f$.
In (\ref{milnoreq}), the left-hand side denotes $-\sum_{i}(-1)^{i}\dimtot \phi^{i}_{w}(h^{*}\mg,f)$, where $\dimtot \phi^{i}_{w}(h^{*}\mg,f)$ is the total dimension of the stalk at $w$ of the $i$-th cohomology sheaf of the vanishing cycle complex $\phi(h^{*}\mg ,f)$.
The right-hand side denotes the intersection number supported on the fiber of $T^{*}W$ at $w$ of $h^{*}A$ with the section $df\subset T^{*}W$ defined by the pull-back of a basis of $T^{*}Y$.
\end{defn}

\begin{rem}
\label{remsupcc}
Let $\mg$ be a constructible complex of $\Lambda$-modules on $X$.
\begin{enumerate}
\item[(1)] Since the singular support $SS(\mg)$ of $\mg$ is defined \'{e}tale locally by Remark \ref{remmicsup} (1) and since
the statement of Milnor formula (\cite[Theorem 5.9]{sacc})
is \'{e}tale local, the characteristic cycle $CC(\mg)$ of $\mg$ is defined \'{e}tale locally
(\cite[Lemma 5.11.2]{sacc}). 

\item[(2)] By the definition of $CC(\mg)$ as a linear combination of the prime cycles defined by the irreducible components of $SS(\mg)$,
the support of the characteristic cycle $CC(\mg)$ of $\mg$ is contained in
the singular support $SS(\mg)$ of $\mg$.
\end{enumerate}
\end{rem}

We recall properties of characteristic cycles. 

\begin{lem}[{\cite[Lemma 5.13.1]{sacc}}]
\label{lemsumcc}
Suppose that $X$ is of pure dimension.
Let $\mg$, $\mg'$, and $\mg''$ be constructible complexes of $\Lambda$-modules on $X$ and
let $\mg'\rightarrow \mg\rightarrow \mg''\rightarrow $ be a distinguished triangle in $D_{b}^{c}(X,\Lambda)$.
Then we have
\begin{equation}
CC(\mg)=CC(\mg')+CC(\mg''). \notag
\end{equation}
\end{lem}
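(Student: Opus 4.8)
The final statement is Lemma~\ref{lemsumcc}: for $X$ of pure dimension $d$ and a distinguished triangle $\mg'\rightarrow\mg\rightarrow\mg''\rightarrow$ in $D_c^b(X,\Lambda)$, one has $CC(\mg)=CC(\mg')+CC(\mg'')$. This is cited as \cite[Lemma 5.13.1]{sacc}, so the expected proof is a short deduction from the defining property of the characteristic cycle (the Milnor formula) together with additivity of vanishing cycles and the already-proved additivity of singular supports.

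**The plan.** The strategy is to reduce everything to the uniqueness clause in Definition~\ref{defcc}. First I would fix a closed conical subset $C\subset T^*X$ of pure dimension $d$ that simultaneously contains all three singular supports; a valid choice is $C=SS(\mg)\cup SS(\mg')\cup SS(\mg'')$, which has pure dimension $d$ by Remark~\ref{remmicsup}~(2). By Lemma~\ref{lemdistss} (applied to the triangle and to its rotations) each of $SS(\mg),SS(\mg'),SS(\mg'')$ is contained in $C$, so all three characteristic cycles can be computed using this common $C$, and each is a $\mathbf{Z}$-linear combination $\sum_a m_a[C_a]$ over the same set $\{C_a\}_a$ of irreducible components of $C$. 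Thus $CC(\mg')+CC(\mg'')$ is itself a $\mathbf{Z}$-linear combination of the $[C_a]$, i.e.\ an element of $Z_d(C)$, and by the uniqueness statement in Definition~\ref{defcc} it suffices to check that this cycle satisfies the Milnor formula~\eqref{milnoreq} characterizing $CC(\mg)$.

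**The key computation.** The remaining step is to verify the Milnor formula for $CC(\mg')+CC(\mg'')$. Let $h\colon W\rightarrow X$ be \'etale and $f\colon W\rightarrow Y$ a morphism to a smooth curve, and let $w\in W$ be an at most isolated $C$-characteristic point of $f$. Applying $h^*$ to the triangle and then forming vanishing cycles $\phi_w(-,f)$ yields a distinguished triangle
\begin{equation}
\phi_w(h^*\mg',f)\rightarrow \phi_w(h^*\mg,f)\rightarrow \phi_w(h^*\mg'',f)\rightarrow \notag
\end{equation}
on $w$, since $h^*$ is exact and the vanishing cycle functor is triangulated. Taking the alternating sum of total dimensions, which is additive on distinguished triangles, gives
\begin{equation}
-\dimtot\phi_w(h^*\mg,f)=-\dimtot\phi_w(h^*\mg',f)-\dimtot\phi_w(h^*\mg'',f). \notag
\end{equation}
The right-hand side equals $(h^*CC(\mg'),df)_{T^*W,w}+(h^*CC(\mg''),df)_{T^*W,w}$ by the Milnor formula for $\mg'$ and $\mg''$, and by bilinearity of the intersection number this is $(h^*(CC(\mg')+CC(\mg'')),df)_{T^*W,w}$. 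Hence $CC(\mg')+CC(\mg'')$ satisfies~\eqref{milnoreq} for $\mg$, and by uniqueness it equals $CC(\mg)$.

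**Main obstacle.** The conceptual steps are routine; the only point needing care is that the Milnor formula in Definition~\ref{defcc} is a characterization only relative to a chosen $C$ and a chosen set of isolated characteristic points, so I must make sure the \emph{same} $C$ (and hence the same ambient group $Z_d(C)$ and the same class of test morphisms $(h,f)$) is used for all three cycles; the additivity of singular supports from Lemma~\ref{lemdistss} is exactly what licenses this common choice. The genuinely substantive input being invoked is the additivity of the total dimension of vanishing cycles along a distinguished triangle, which I would cite rather than reprove. Everything else is bookkeeping via the uniqueness clause.
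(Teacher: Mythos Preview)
The paper does not give its own proof of this lemma; it merely cites \cite[Lemma~5.13.1]{sacc}. Your argument is correct and is precisely the standard one: fix a common $C$ containing all three singular supports, use additivity of $\dimtot\phi_w(-,f)$ on distinguished triangles, and invoke the uniqueness clause of Definition~\ref{defcc}. One minor remark: you do not actually need Lemma~\ref{lemdistss} to see that each singular support lies in $C=SS(\mg)\cup SS(\mg')\cup SS(\mg'')$, since that is immediate from the definition of $C$; the lemma is only needed if you want to economize and take $C$ to be the union of just two of the three.
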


\begin{prop}[{\cite[Proposition 5.14]{sacc}}]
\label{propsupcc}
Let $\mg$ be a constructible complex of $\Lambda$-modules on $X$.
Suppose that $X$ is of pure dimension and that
$\mg$ is a perverse sheaf on $X$.
Then we have the following for the characteristic cycle $CC(\mg)$ of $\mg$:
\begin{enumerate}
\item $CC(\mg)\ge 0$.
\item The support of $CC(\mg)$ is equal to the singular support $SS(\mg)$ of $\mg$.
\end{enumerate}
\end{prop}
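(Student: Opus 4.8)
The plan is to derive both assertions directly from the Milnor formula (\ref{milnoreq}) that characterizes $CC(\mg)$, using the perversity of $\mg$ to pin down the sign of its left-hand side. Since both $SS(\mg)$ and $CC(\mg)$ are defined \'{e}tale locally (Remarks \ref{remmicsup} (1) and \ref{remsupcc} (1)), I may work \'{e}tale locally. Write $SS(\mg)=\bigcup_{a}C_{a}$ for the irreducible components and $CC(\mg)=\sum_{a}m_{a}[C_{a}]$ as in Definition \ref{defcc}. Everything then reduces to understanding, for each $a$, the single integer $m_{a}$ through a well-chosen test pair $(h,f)$.

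First I would prove (1). Since $h\colon W\to X$ is \'{e}tale, the pull-back $h^{*}\mg$ is again perverse. Let $f\colon W\to Y$ be a morphism to a smooth curve and let $w$ be an at most isolated $SS(\mg)$-characteristic point of $f$. Then the restriction of $(h,f)$ to $W-\{w\}$ is $SS(\mg)$-transversal, so $f$ is locally acyclic relative to $h^{*}\mg$ off $w$ and the vanishing cycle complex $\phi(h^{*}\mg,f)$ is supported at the single point $w$; by Gabber's theorem the shifted functor $\phi_{f}[-1]$ is perverse $t$-exact, whence $\phi(h^{*}\mg,f)$ is concentrated at $w$ in a single, necessarily odd, degree, so that
\begin{equation}
-\dimtot\phi_{w}(h^{*}\mg,f)\ge 0. \notag
\end{equation}
By the Milnor formula (\ref{milnoreq}) this gives $(h^{*}CC(\mg),df)_{T^{*}W,w}\ge 0$ for every such test pair. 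To extract the individual sign $m_{a}\ge 0$, I would, for a fixed component $C_{a}$, choose $w$ lying over a generic point of the base of $C_{a}$ and a function $f$ whose section $df\subset T^{*}W$ meets $h^{*}SS(\mg)$ near $w$ only along $C_{a}$ and transversally there; then the right-hand side isolates $m_{a}$ times a strictly positive local intersection multiplicity, forcing $m_{a}\ge 0$.

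Next I would prove (2). The inclusion $\Supp CC(\mg)\subset SS(\mg)$ is immediate from the definition of $CC(\mg)$ as a $\mathbf{Z}$-linear combination of the irreducible components of $SS(\mg)$ (Remark \ref{remsupcc} (2)). For the reverse inclusion it suffices, by (1), to show that no multiplicity vanishes. Suppose $m_{a}=0$ for some component $C_{a}$. Using a test pair as above over the generic point of the base of $C_{a}$, the Milnor formula gives $-\dimtot\phi_{w}(h^{*}\mg,f)=0$; combined with the single-degree concentration from the perversity argument this forces $\phi_{w}(h^{*}\mg,f)=0$, i.e.\ local acyclicity along the dense open part of $C_{a}$. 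Spreading this out over the whole component and invoking Proposition \ref{propgtrmicsup}, I would conclude that $\mg$ is micro-supported on $\bigcup_{b\ne a}C_{b}$, a strictly smaller closed conical subset, contradicting the minimality of $SS(\mg)$ in Definition \ref{defss}. Hence $m_{a}>0$ for every $a$, and $\Supp CC(\mg)=SS(\mg)$.

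The main obstacle is the intersection-theoretic bookkeeping common to both parts: producing, for each component $C_{a}$, a test pair $(h,f)$ with an isolated characteristic point whose intersection number on the right of (\ref{milnoreq}) genuinely isolates $m_{a}$ with a positive coefficient, and arranging the transversality so that no other component interferes. The perversity input is clean once Gabber's $t$-exactness is granted, but the passage in (2) from the vanishing of the local vanishing cycles at a single generic test point to micro-support on the smaller conical set is the delicate step, since it requires propagating local acyclicity from the chosen characteristic point to a dense open subset of the base of $C_{a}$.
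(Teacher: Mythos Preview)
The paper does not prove this proposition; it is quoted from \cite[Proposition 5.14]{sacc} without argument, so there is no in-paper proof to compare against. Your sketch is broadly along the lines of Saito's original argument.

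For (1) your reasoning is correct and standard: $t$-exactness of $\phi_f[-1]$ forces $\phi_w(h^{*}\mg,f)$ to be concentrated in the single degree $-1$, which gives the sign on the left of (\ref{milnoreq}), and a generic choice of $(h,f,w)$ with $df(w)$ meeting $h^{*}SS(\mg)$ only along $h^{*}C_{a}$ and transversally there isolates $m_{a}$ on the right.

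For (2) there is a genuine gap at the step you yourself flag. Knowing $\phi_{w}=0$ for one test pair does not yield micro-support on $C'=\bigcup_{b\ne a}C_{b}$: that requires local acyclicity for \emph{every} $C'$-transversal pair, and for a general such pair the locus where $df$ meets $h^{*}C_{a}$ need not consist of isolated points, so the Milnor formula is unavailable there. Proposition~\ref{propgtrmicsup} concerns $\mg$-transversality of single morphisms $h$, not local acyclicity of pairs, so invoking it does not close the gap. The direct route (and the one Saito takes) is the contrapositive: since $C_{a}$ is a component of $SS(\mg)$, minimality of $SS$ furnishes a $C'$-transversal pair $(h,f)$ to a curve that is \emph{not} locally acyclic; the failure happens at a point $w$ where $df(w)$ lies in $h^{*}C_{a}$ and, after shrinking, $w$ is an at most isolated characteristic point with $df(w)$ meeting only $h^{*}C_{a}$. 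Then $\phi_{w}\ne 0$, and the single-degree concentration from (1) forces $-\dimtot\phi_{w}>0$, so (\ref{milnoreq}) gives $m_{a}>0$ directly, with no spreading-out needed.
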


\begin{cor}[{cf.\ \cite[Lemma 1.7]{yacc}}]
\label{corssccsm}
Suppose that $X$ is purely of dimension $d$.
Let $j\colon V\rightarrow X$ be an affine open immersion and
let $\mh$ be a smooth sheaf of $\Lambda$-modules on $V$.
Then we have the following for the characteristic cycle $CC(j_{!}\mh)$ of $j_{!}\mh$:
\begin{enumerate}
\item $(-1)^{d}CC(j_{!}\mh)\ge 0$.
\item The support of $CC(j_{!}\mh)$ is equal to the singular support $SS(j_{!}\mh)$ of $j_{!}\mh$.
\end{enumerate}
\end{cor}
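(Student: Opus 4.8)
The plan is to reduce the statement to Proposition~\ref{propsupcc} by shifting $j_{!}\mh$ into the perverse degree. Since $X$ is purely of dimension $d$ and $V\subset X$ is open, the scheme $V$ is smooth and purely of dimension $d$, so the shifted smooth sheaf $\mh[d]$ is a perverse sheaf on $V$. The crucial structural input is that $j\colon V\to X$ is an \emph{affine} open immersion: by Artin's theorem on the cohomological amplitude of affine morphisms (equivalently, the t-exactness of $j_{!}$ and $Rj_{*}$ for an affine open immersion with respect to the middle perverse t-structure), the extension by zero $j_{!}(\mh[d])=(j_{!}\mh)[d]$ is again a perverse sheaf on $X$. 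First I would record this perversity.

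Next I would apply Proposition~\ref{propsupcc} to the perverse sheaf $\mg=(j_{!}\mh)[d]$, which is legitimate because $X$ is of pure dimension. This yields both $CC((j_{!}\mh)[d])\ge 0$ and the equality of the support of $CC((j_{!}\mh)[d])$ with the singular support $SS((j_{!}\mh)[d])$.

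It then remains to transport these two conclusions back to $j_{!}\mh$ through the shift by $d$. From the Milnor formula (\ref{milnoreq}) defining $CC$, shifting a complex by $1$ multiplies the left-hand side by $-1$, since $\phi^{i}_{w}(\mg[1],f)=\phi^{i+1}_{w}(\mg,f)$ changes the sign of the alternating sum $\dimtot\phi_{w}$; matching both sides of (\ref{milnoreq}) for every at most isolated characteristic point gives $CC(\mg[1])=-CC(\mg)$, hence $CC((j_{!}\mh)[d])=(-1)^{d}CC(j_{!}\mh)$. This identity combined with positivity of $CC((j_{!}\mh)[d])$ is exactly statement (1). For statement (2), local acyclicity is insensitive to shifts, so the micro-support condition in Definition~\ref{defss} is unchanged and $SS((j_{!}\mh)[d])=SS(j_{!}\mh)$; moreover the sign $(-1)^{d}$ does not affect the support of a cycle, so the support of $CC((j_{!}\mh)[d])$ equals that of $CC(j_{!}\mh)$. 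Combining these with the support equality from Proposition~\ref{propsupcc} gives $\Supp CC(j_{!}\mh)=SS(j_{!}\mh)$.

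The main obstacle is the perversity of $(j_{!}\mh)[d]$: everything hinges on using the affineness of $j$ to guarantee that $j_{!}$ preserves perverse sheaves, and this is precisely the hypothesis that the statement needs. Once this is established, the remaining work is the formal shift bookkeeping for $CC$ and $SS$, where I would be careful to derive the sign in $CC((j_{!}\mh)[d])=(-1)^{d}CC(j_{!}\mh)$ directly from the definition of the left-hand side of (\ref{milnoreq}) as the alternating sum of total dimensions of vanishing cycles.
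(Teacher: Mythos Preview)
Your proposal is correct and follows the same overall strategy as the paper: show that $(j_{!}\mh)[d]$ is perverse, apply Proposition~\ref{propsupcc}, and transfer the conclusions through the shift. The only differences are in the bookkeeping for shifts. For $CC$, the paper invokes Lemma~\ref{lemsumcc} to obtain $CC(j_{!}\mh)=(-1)^{d}CC(j_{!}\mh[d])$, while you derive the same identity directly from the Milnor formula; both are fine. For $SS$, the paper takes a slightly more roundabout route---it uses Remark~\ref{remsupcc}~(2) together with Lemma~\ref{lemdistss} to prove $SS(j_{!}\mh[d])=SS(j_{!}\mh)$---whereas your observation that local acyclicity is insensitive to shifts gives the same equality more directly from Definition~\ref{defss}.
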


\begin{proof}
By \cite[Examples 4.0, Corollaire 4.1.10 (i)]{bbd}, the complex $j_{!}\mh[d]$ is a perverse sheaf on $X$.
By Lemma \ref{lemsumcc}, we have 
$CC(j_{!}\mh)=(-1)^{d}CC(j_{!}\mh[d])$.

(1) By Proposition \ref{propsupcc} (1),
we have 
$(-1)^{d}CC(j_{!}\mh)=CC(j_{!}\mh[d])\ge 0$.

(2) By Proposition \ref{propsupcc} (2), the support of $CC(j_{!}\mh)=(-1)^{d}CC(j_{!}\mh[d])$ is equal to $SS(j_{!}\mh[d])$.
Since the support of $CC(j_{!}\mh)$ is contained in $SS(j_{!}\mh)$ by Remark \ref{remsupcc} (2),
we have $SS(j_{!}\mh[d])\subset SS(j_{!}\mh)$.
Conversely, we have $SS(j_{!}\mh)\subset SS(j_{!}\mh[d])$ by Lemma \ref{lemdistss}.
\end{proof}

\begin{prop}[{cf.\ \cite[Theorem 0.1]{sy}}]
\label{propcorssccsm}
Suppose that $X$ is of pure dimension.
Let $j\colon V\rightarrow X$ be an affine open immersion
and let $\mh$ and $\mh'$ be two smooth sheaves of
$\Lambda$-modules of rank 1 on $V$ such that the $p$-parts of the characters $\pi_{1}^{\ab}(V)
\rightarrow \Lambda^{\times}$ associated to $\mh$ and $\mh'$
are the same.
Then we have the following:
\begin{enumerate}
\item $SS(j_{!}\mh)=SS(j_{!}\mh')$.
\item $CC(j_{!}\mh)=CC(j_{!}\mh')$.
\end{enumerate}
\end{prop}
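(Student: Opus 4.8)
The plan is to establish the cycle-level equality (2) first, by reducing it to the main theorem of \cite{sy}, and then to read off (1) formally from (2). Since $X$ is of pure dimension and $j$ is an affine open immersion, Corollary \ref{corssccsm} (2) applies to both $j_{!}\mh$ and $j_{!}\mh'$ and identifies the singular support with the support of the characteristic cycle: $SS(j_{!}\mh)=\Supp CC(j_{!}\mh)$ and $SS(j_{!}\mh')=\Supp CC(j_{!}\mh')$. Hence, granting (2), taking supports of the two equal cycles immediately yields $SS(j_{!}\mh)=SS(j_{!}\mh')$, which is (1). So the whole content lies in proving (2).

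For (2), the key point is that the hypothesis on the $p$-parts is exactly the rank-one shadow of $j_{!}\mh$ and $j_{!}\mh'$ having the same wild ramification. First I would write the two characters as $\chi'=\chi\cdot\eta$ with $\eta=\chi'\chi^{-1}$, so that by assumption $\eta$ has trivial $p$-part and is therefore of order prime to $p$; equivalently $\mh'\simeq \mh\otimes\mathcal{L}$ for the tame rank-one sheaf $\mathcal{L}$ on $V$ attached to $\eta$. Because the wild inertia subgroup at any point is pro-$p$, every $\Lambda^{\times}$-valued character restricts to it through its $p$-part, so $\chi$ and $\chi'$ have literally the same restriction to every wild inertia subgroup. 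With this matching of wild ramification in hand, I would invoke \cite[Theorem 0.1]{sy} for the two constructible complexes $j_{!}\mh$ and $j_{!}\mh'$ on $X$ to conclude $CC(j_{!}\mh)=CC(j_{!}\mh')$.

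The main obstacle will be to verify that ``equality of the global $p$-parts'' precisely matches the hypotheses under which \cite[Theorem 0.1]{sy} is formulated, i.e.\ to translate the global datum into the local ``same wild ramification'' condition uniformly at all points of $X$. This may be carried out \'{e}tale-locally, which is legitimate because both invariants are defined \'{e}tale-locally (Remark \ref{remmicsup} (1) for $SS$ and Remark \ref{remsupcc} (1) for $CC$); one then compares the wild inertia representations at the generic points of the relevant boundary strata, using the decomposition of $\chi$ into its $p$- and prime-to-$p$ parts together with the pro-$p$ nature of wild inertia. Once this identification is secured the cited theorem applies directly, and no computation with characteristic forms or with the log-$D'$ theory of the earlier sections is needed.
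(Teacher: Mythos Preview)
Your proposal is correct and follows exactly the same approach as the paper: the paper's proof simply states that (2) is a special case of \cite[Theorem 0.1]{sy}, and that (1) follows from (2) together with Corollary~\ref{corssccsm} (2). Your additional discussion of why the equality of $p$-parts yields the ``same wild ramification'' hypothesis of \cite{sy} is a reasonable elaboration, but the paper leaves this implicit.
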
 

\begin{proof}
The assertion (2) is a special case
of \cite[Theorem 0.1]{sy}. 
The assertion (1) 
is a consequence of (2) and Corollary \ref{corssccsm} (2).
\end{proof}

\begin{defn}[{cf.\ \cite[Definition 7.1]{sacc}}]
\label{defproptrans}
Suppose that $X$ is purely of dimension $d$.
Let $C\subset T^{*}X$ be a closed conical subset whose irreducible components $\{C_{a}\}_{a}$ are of dimension $d$.
Let $h\colon W\rightarrow X$ be a morphism from a smooth scheme $W$ of pure dimension $e$ over $k$.
\begin{enumerate}
\item We say that the morphism $h\colon W\rightarrow X$ 
is {\it properly} $C$-transversal if $h$ is $C$-transversal and if
$h^{*}C=C\times_{X}W$ is purely of dimension $e$.
\item Assume that the morphism $h\colon W\rightarrow X$ is properly $C$-transversal.
We define $h^{!}A\in Z_{e}(T^{*}W)$ for $A=\sum_{a}m_{a}[C_{a}]\in Z_{d}(T^{*}X)$
to be the push-forward of $(-1)^{d-e}\sum_{a}m_{a}[h^{*}C_{a}]\in Z_{e}(T^{*}X\times_{X}W)$
by $dh\colon T^{*}X\times_{X}W\rightarrow T^{*}W$ (\ref{defdh}).
\end{enumerate}
\end{defn}

\begin{thm}[{\cite[Theorem 7.6]{sacc}}]
\label{thmccpb}
Assume that $X$ is of pure dimension.
Let $\mg$ be a constructible complex of $\Lambda$-modules on $X$ and
let $h \colon W\rightarrow X$ be a properly $C$-transversal morphism
for a closed conical subset $C\subset T^{*}X$. 
If $\mg$ is micro-supported on $C$, then we have
\begin{equation}
CC(h^{*}\mg)=h^{!}CC(\mg). \notag
\end{equation}
\end{thm}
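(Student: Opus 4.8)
The plan is to show that the cycle $h^{!}CC(\mg)$ satisfies the Milnor formula characterizing $CC(h^{*}\mg)$ in Definition \ref{defcc}, and then to conclude by the uniqueness asserted there. First I would record the geometric preliminaries. Since $\mg$ is micro-supported on $C$ and $h$ is $C$-transversal, the pull-back $h^{*}\mg$ is micro-supported on $h^{\circ}C$ (by the theory of \cite{be}; compare Theorem \ref{thmss} (1) in the smooth case), so $SS(h^{*}\mg)\subset h^{\circ}C$. Because $h$ is properly $C$-transversal, $h^{*}C$ is purely of dimension $e=\dim W$, hence so is $h^{\circ}C$ by Remark \ref{remctr} (1); and $h^{!}CC(\mg)$ is by construction a cycle in $Z_{e}(T^{*}W)$ supported on $h^{\circ}C\supset SS(h^{*}\mg)$. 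Thus $h^{!}CC(\mg)$ is an admissible candidate, and it remains only to verify the Milnor formula for it.

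Next I would reduce to two basic cases by factoring $h$ as the composition $W\xrightarrow{\gamma}W\times_{k}X\xrightarrow{p}X$ of the graph immersion $\gamma$, a regular closed immersion, and the projection $p$, which is smooth. One checks that $p$ is properly $C$-transversal with $p^{\circ}C=p^{*}C$ and that $\gamma$ is properly $p^{\circ}C$-transversal with $\gamma^{\circ}p^{\circ}C=h^{\circ}C$; together with the functoriality $h^{*}=\gamma^{*}p^{*}$ and the compatibility $h^{!}=\gamma^{!}p^{!}$ of the Gysin homomorphisms (Definition \ref{defproptrans}), this reduces the theorem to the case of a smooth morphism and the case of a regular closed immersion.

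For the smooth case the morphism $dh\colon T^{*}X\times_{X}W\to T^{*}W$ is a closed immersion, so $h^{\circ}C\cong h^{*}C$ and $h^{!}CC(\mg)$ is simply the transported cycle. Here the verification of the Milnor formula is essentially a base-change statement: vanishing cycles commute with smooth pull-back, so the total dimensions of the vanishing cycles agree after restricting along $h$, while the intersection numbers agree because $dh$ identifies $h^{*}C$ with $h^{\circ}C$ and carries $df$ to the pulled-back section.

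The closed-immersion case is the main obstacle. Given a properly $C$-transversal regular closed immersion $i\colon W\to X$, an \'etale $g\colon V\to W$, a map $f\colon V\to Y$ to a curve, and an isolated $h^{\circ}C$-characteristic point $v$, I would work locally near $v$: extend $f$ to a function $\tilde f$ on a neighbourhood in $X$ and use the transversality of $i$ to guarantee that the resulting configuration on $X$ again has an isolated $C$-characteristic point. The hard part is the two matching identities. On the cohomological side one must compare the vanishing cycle complex $\phi_{v}((ig)^{*}\mg,f)$ with a vanishing cycle complex of $\mg$ on $X$; transversality is what forces the relevant Milnor fibres to agree, so that the total dimensions coincide. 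On the intersection-theoretic side one must identify $(g^{*}i^{!}CC(\mg),df)_{T^{*}V,v}$ with the corresponding intersection number computed on $T^{*}X$, which is an excess- (self-)intersection computation for the Gysin map $i^{!}$. Combining these with the Milnor formula for $CC(\mg)$ on $X$ yields the Milnor formula for $i^{!}CC(\mg)$ and finishes the reduction. I expect this comparison of vanishing cycles across the transversal immersion, together with the bookkeeping of the excess-intersection term, to be the most delicate point.
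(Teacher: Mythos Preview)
The paper does not give its own proof of this theorem: it is stated as a citation of \cite[Theorem 7.6]{sacc} and used as a black box, so there is no proof in the paper to compare your proposal against.

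As for your sketch itself, the overall architecture (reduce to verifying the Milnor formula for $h^{!}CC(\mg)$, factor $h$ into a smooth projection and a closed immersion, treat the smooth case by smooth base change for vanishing cycles) is reasonable, and indeed Saito's argument in \cite{sacc} handles the smooth case essentially this way. However, your treatment of the closed-immersion case is where the sketch becomes too optimistic. Given an isolated $i^{\circ}C$-characteristic point $v$ for a test function $f$ on $W$, there is in general no reason that an extension $\tilde f$ of $f$ to $X$ will have $v$ as an isolated $C$-characteristic point: the conormal directions to $W$ in $X$ give extra degrees of freedom in $T^{*}X$ along which $C$ may fail to be transversal to $d\tilde f$, and proper $C$-transversality of $i$ does not control this. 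Saito's actual proof of the closed-immersion case in \cite{sacc} does not proceed by lifting individual test functions; it uses the full apparatus of that paper (Radon transforms, specialization to the normal cone, and the compatibility of characteristic cycles with these constructions) to establish the pull-back formula. So while your outline identifies the right reduction steps, the ``hard part'' you flag is genuinely hard and is not resolved by the local extension argument you describe.
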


Suppose that $X$ is purely of dimension $d$.
Let $C\subset T^{*}X$ be a closed conical subset of pure dimension $d$.
For a proper morphism $f\colon X\rightarrow Y$ to a smooth scheme $Y$ 
of pure dimension $e$ over $k$, 
we define a morphism
\begin{equation}
\label{fexp}
f_{!}\colon Z_{d}(C)=CH_{d}(C)\xrightarrow{\pr_{1*}\circ\, df^{!}} CH_{e}(f_{\circ}C) 
\end{equation}
to be the composition of the Gysin homomorphism
$df^{!}\colon CH_{d}(C)\rightarrow CH_{e}(df^{-1}(C))$ (\cite[6.6]{ful}) defined by the l.c.i.\ morphism $df\colon T^{*}Y\times_{Y}X\rightarrow T^{*}X$ (\ref{defdh})
and the push-forward 
$\pr_{1*}\colon CH_{e}(df^{-1}(C))\rightarrow CH_{e}(f_{\circ}C)$ defined by the first projection $\pr_{1}\colon T^{*}Y\times_{Y}X\rightarrow T^{*}Y$.
If every irreducible component of $f_{\circ}C$ is of dimension $\le e$,
then we have $CH_{e}(f_{\circ}C)=Z_{e}(f_{\circ}C)$ and the morphism $f_{!}$ (\ref{fexp}) defines the morphism
\begin{equation}
\label{fexpzd}
f_{!}\colon Z_{d}(C)\rightarrow Z_{e}(f_{\circ}C) 
\end{equation}
of abelian groups (\cite[(2.3)]{sacond}).

\begin{lem}[{\cite[Lemma 5.13.2]{sacc}}]
\label{lemclimpush}
Suppose that $X$ is of pure dimension.
Let $\mg$ be a constructible complex of $\Lambda$-modules on $X$
and let $i\colon X\rightarrow Y$ be a closed immersion to a smooth scheme $Y$ of pure dimension over $k$.
Then we have
\begin{equation}
CC(i_{*}\mg)=i_{!}CC(\mg). \notag
\end{equation}
Consequently, if $i'\colon Y\rightarrow Z$ is another closed immersion to a smooth scheme $Z$ of pure dimensions over $k$,
then we have
\begin{equation}
\label{eqlblb}
(i'\circ i)_{!}CC(\mg)=i'_{!}i_{!}CC(\mg). 
\end{equation}
\end{lem}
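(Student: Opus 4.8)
The plan is to prove Lemma \ref{lemclimpush} in two parts, matching its two displayed equations. The first equation $CC(i_{*}\mg)=i_{!}CC(\mg)$ is asserted to be \cite[Lemma 5.13.2]{sacc}, so I would simply cite that result; the substance for us is that $i_{!}$ here denotes the pushforward operation $(\ref{fexpzd})$ associated with the closed immersion $i$, which is legitimate because for a closed immersion $i\colon X\to Y$ the conical set $i_{\circ}C$ has all irreducible components of dimension $\le e=\dim Y$ (the fibers of $di^{-1}(C)\to i_{\circ}C$ are positive-dimensional exactly along the conormal directions), so $CH_{e}(i_{\circ}C)=Z_{e}(i_{\circ}C)$ and $(\ref{fexpzd})$ applies. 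The first equality is then a direct invocation, and the real task is deducing the consequence $(\ref{eqlblb})$.

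For the displayed consequence $(\ref{eqlblb})$, the key step is to apply the first equality twice along the composite $i'\circ i$ and to invoke functoriality of the two operations that make up $i_{!}$. First I would write, using the first equality for the closed immersion $i'\circ i\colon X\to Z$,
\begin{equation}
(i'\circ i)_{!}CC(\mg)=CC((i'\circ i)_{*}\mg)=CC(i'_{*}i_{*}\mg). \notag
\end{equation}
Then, applying the first equality again to the closed immersion $i'\colon Y\to Z$ with the complex $i_{*}\mg$ on $Y$, and once more to $i\colon X\to Y$ with $\mg$, I obtain
\begin{equation}
CC(i'_{*}i_{*}\mg)=i'_{!}CC(i_{*}\mg)=i'_{!}i_{!}CC(\mg), \notag
\end{equation}
which is the desired equation. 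The only genuine content hidden in this chain is the identification $(i'\circ i)_{*}=i'_{*}\circ i_{*}$ of pushforwards, which is standard, together with the observation that every term $CC(i_{*}\mg)$, $CC(i'_{*}i_{*}\mg)$ lives on a space of the correct pure dimension so that each application of the first equality is valid.

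The step I expect to require the most care is checking that the operation $i_{!}$ appearing in the first equality genuinely coincides with the abstract pushforward $(\ref{fexpzd})$ attached to $i$, and in particular that the intermediate cycle $i_{!}CC(\mg)=CC(i_{*}\mg)$ on $T^{*}Y$ is supported on a conical set whose components all have dimension $\le \dim Y$, so that the outer application of $i'_{!}$ is well-defined as a map of cycle groups rather than merely of Chow groups. This is guaranteed by Theorem \ref{thmss} (2), which gives $SS(i_{*}\mg)=i_{\circ}SS(\mg)$, a conical subset of $T^{*}Y$ of the expected dimension; since $CC(i_{*}\mg)$ is supported on $SS(i_{*}\mg)$ by Remark \ref{remsupcc} (2), the support condition holds and $(\ref{fexpzd})$ for $i'$ can be applied to it. With that dimension bookkeeping in place, the proof reduces to the two formal manipulations above, so the argument is short and the main obstacle is purely the verification of these well-definedness conditions rather than any computation.
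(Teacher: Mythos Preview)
Your proof is correct and follows essentially the same approach as the paper: the first equality is cited from \cite[Lemma 5.13.2]{sacc}, and the consequence is obtained by applying the first equality three times (to $i'\circ i$, to $i'$, and to $i$) together with $(i'\circ i)_{*}=i'_{*}\circ i_{*}$. The paper's proof is more terse, simply noting that both sides of (\ref{eqlblb}) equal $CC((i'\circ i)_{*}\mg)=CC(i'_{*}i_{*}\mg)$, and omits the dimension bookkeeping you spell out.
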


\begin{proof}
The first assertion is none other than \cite[Lemma 5.13.2]{sacc}.
We prove the second assertion.
By the first assertion, both sides of (\ref{eqlblb}) are equal to
$CC((i'\circ i)_{*}\mg)=CC(i'_{*}i_{*}\mg)$.
\end{proof}

\begin{lem}
\label{lemsscclim}
Suppose that $X$ is of pure dimension.
Let $j\colon V\rightarrow X$ be an affine open immersion and
let $\mh$ be a smooth sheaf of $\Lambda$-modules on $V$.
If $i\colon X\rightarrow Y$ is a closed immersion to a smooth scheme $Y$ of pure dimension over $k$,
then the support of $i_{!}CC(j_{!}\mh)$ is $i_{\circ}SS(j_{!}\mh)$. 
\end{lem}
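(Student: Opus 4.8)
The plan is to transport, through the closed immersion $i$, the relations between singular supports, characteristic cycles and their supports that have just been recorded in this section. Write $d$ and $e$ for the pure dimensions of $X$ and $Y$. First I would dispose of the inclusion $\subseteq$, which is formal: by Lemma~\ref{lemclimpush} we have $i_!CC(j_!\mh)=CC(i_*j_!\mh)$, so by Remark~\ref{remsupcc}~(2) the support of $i_!CC(j_!\mh)$ is contained in $SS(i_*j_!\mh)$, and Theorem~\ref{thmss}~(2) identifies the latter with $i_\circ SS(j_!\mh)$. The real content of the lemma is the reverse inclusion, namely that the Gysin-type operation $i_!$ loses no irreducible component of $i_\circ SS(j_!\mh)$ and produces no cancellation.

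For that reverse inclusion I would pass to the perverse normalization. As in the proof of Corollary~\ref{corssccsm}, the shift $j_!\mh[d]$ is a perverse sheaf on $X$ (using that $j$ is an affine open immersion, \cite[Corollaire 4.1.10 (i)]{bbd}). Since $i$ is a closed immersion, $i_*$ is exact for the perverse $t$-structure (\cite{bbd}), so $i_*(j_!\mh[d])=(i_*j_!\mh)[d]$ is a perverse sheaf on $Y$. I would then apply Proposition~\ref{propsupcc}~(2) to this perverse sheaf, which gives that the support of $CC\bigl((i_*j_!\mh)[d]\bigr)$ equals $SS\bigl((i_*j_!\mh)[d]\bigr)$; it is exactly the effectivity recorded in Proposition~\ref{propsupcc}~(1) that prevents the support of the characteristic cycle from shrinking to a proper closed subset of the singular support.

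It then remains to match both sides of this last equality with the objects in the statement. On the characteristic cycle side, shifting multiplies $CC$ by a sign (apply Lemma~\ref{lemsumcc} to the distinguished triangle $\mg\to 0\to\mg[1]\to$, as in Corollary~\ref{corssccsm}), so that $CC\bigl((i_*j_!\mh)[d]\bigr)=(-1)^dCC(i_*j_!\mh)=(-1)^d\,i_!CC(j_!\mh)$ by Lemma~\ref{lemclimpush}; this cycle has the same support as $i_!CC(j_!\mh)$. On the singular support side, $SS$ is invariant under shift, so $SS\bigl((i_*j_!\mh)[d]\bigr)=SS(i_*j_!\mh)=i_\circ SS(j_!\mh)$ by Theorem~\ref{thmss}~(2). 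Combining the two identifications with the equality of supports from the perverse sheaf yields the support of $i_!CC(j_!\mh)$ equal to $i_\circ SS(j_!\mh)$, as desired. I expect the only substantive step to be the appeal to perversity of $i_*(j_!\mh[d])$, since every other step is a formal consequence of the functorialities (Lemma~\ref{lemclimpush}, Theorem~\ref{thmss}~(2), Lemma~\ref{lemsumcc}) collected earlier; the rest is bookkeeping of signs and shifts.
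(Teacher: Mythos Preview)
Your proof is correct and follows essentially the same approach as the paper: reduce via Lemma~\ref{lemclimpush} and Theorem~\ref{thmss}(2) to showing that the support of $CC(i_*j_!\mh)$ equals $SS(i_*j_!\mh)$, then invoke perversity of $i_*j_!\mh[d]$ and argue as in Corollary~\ref{corssccsm}(2). The only cosmetic difference is that the paper cites \cite[Corollaire 4.1.3]{bbd} directly for the perversity of $i_*j_!\mh[d]$, whereas you factor this through the $t$-exactness of $i_*$ applied to the perverse sheaf $j_!\mh[d]$.
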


\begin{proof}
By Lemma \ref{lemclimpush}, we have $i_{!}CC(j_{!}\mh)=CC(i_{*}j_{!}\mh)$.
By Theorem \ref{thmss} (2),  we have $i_{\circ}SS(j_{!}\mh)=SS(i_{*}j_{!}\mh)$.
Therefore it is sufficient to prove that the support of $CC(i_{*}j_{!}\mh)$ is $SS(i_{*}j_{!}\mh)$.
By \cite[Examples 4.0, Corollaire 4.1.3]{bbd}, the complex $i_{*}j_{!}\mh[d]$ is a perverse sheaf on $Y$.
Then the assertion follows similarly as the proof of Corollary \ref{corssccsm} (2)
with $j_{!}\mh$ replaced by $i_{*}j_{!}\mh$.
\end{proof}

\section{Log-$D'$-characteristic cycle and a candidate of singular support}
\label{seclogtrans}

Let $j\colon U\rightarrow X$ denote the canonical open immersion
as is explained in Conventions.
For a divisor $D'$ on $X$ with simple normal crossings contained in $D$
and containing $D_{\mT,\mf}$ (\ref{defd*mf}),
we define  
the $\log$-$D'$-characteristic cycle $CC_{D'}^{\log}(j_{!}\mf)$ 
of $j_{!}\mf$
in Definition \ref{deflifdp} (4) 
when the ramification of $\mf$ is $\log$-$D'$-clean along $D$
(Definition \ref{deflogdcl} (2)).
We construct a closed conical subset $S_{D'}(j_{!}\mf)\subset T^{*}X$
as a union of conormal bundles and the inverse image $\tau_{D'}^{-1}(S_{D'}^{\log}(j_{!}\mf))$ of the support $S_{D'}^{\log}(j_{!}\mf)$ of $CC_{D'}^{\log}(j_{!}\mf)$
by the canonical morphism $\tau_{D'}\colon T^{*}X\rightarrow T^{*}X(\log D')$
in Definition \ref{defsdpf}, when $D$ has simple normal crossings.
We prove that 
the singular support $SS(j_{!}\mf)$ (Definition \ref{defss} (2))
is contained in $S_{D'}(j_{!}\mf)$ in Theorem \ref{thmmains}.
If further the dimension of the inverse image $\tau_{D'}^{-1}(S_{D'}^{\log}(j_{!}\mf))$ is the same as that of $X$, 
then $SS(j_{!}\mf)$ is proved to be contained in $\tau_{D'}^{-1}(S_{D'}^{\log}(j_{!}\mf))\subset S_{D'}(j_{!}\mf)$ in Corollary \ref{corsdpdim}
and we prove that the characteristic cycle $CC(j_{!}\mf)$ (Definition \ref{defcc}) is determined by 
the pull-back $\tau_{D'}^{!}CC_{D'}^{\log}(j_{!}\mf)$ in some sense in Proposition \ref{prophicc}.

\subsection{Log-$D'$-characteristic cycle}
\label{sslogtrans}
We define the $\log$-$D'$-characteristic cycle $CC_{D'}^{\log}(j_{!}\mf)$ of $j_{!}\mf$ 
when the ramification of $\mf$ is $\log$-$D'$-clean along $D$,
where $D'$ is a divisor 
on $X$ with simple normal crossings contained in $D$.

\begin{defn}[{cf.\ \cite[(3.4.4)]{kalog}, \cite[Definition 3.5.1]{sacot}}]
\label{deflifdp}
Let $I'\subset I$ be a subset such that
$D'=\bigcup_{i\in I'}D_{i}$ has simple normal crossings.
Assume that the ramification of $\mf$ is $\log$-$D'$-clean along $D$.
Let $R_{\mf}^{D'}$ and $Z_{\mf}$ be as in Definition \ref{defconddiv} (1) and (2), respectively,
and let $\cform^{D'}(\mf)$ be the $\log$-$D'$-characteristic form of $\mf$
(Definition \ref{defcform}).

\begin{enumerate}
\item We define a closed subscheme
\begin{equation}
L'^{D'}_{\mf}\subset T^{*}X(\log D')\times_{X}Z_{\mf}^{1/p} \notag
\end{equation} 
to be the sub line bundle of $T^{*}X(\log D')\times_{X}Z_{\mf}^{1/p}$
defined by the invertible sheaf $\dvr_{X}(-R_{\mf}^{D'})|_{Z_{\mf}^{1/p}}\cdot \cform^{D'}(\mf)$,
which is locally a direct summand of $\Omega_{X}^{1}(\log D')|_{Z_{\mf}^{1/p}}$ 
by the $\log$-$D'$-cleanliness of the ramification of $\mf$ along $D$.

For $i\in I_{\mW,\mf}$ (Definition \ref{defindsub} (1)),
we define a closed subscheme
\begin{equation}
L'^{D'}_{i,\mf}\subset T^{*}X(\log D')\times_{X}D_{i}^{1/p} \notag
\end{equation} 
to be the the sub line bundle of $T^{*}X(\log D')\times_{X}D_{i}^{1/p}$
defined by the invertible sheaf $\dvr_{X}(-R_{\mf}^{D'})|_{D_{i}^{1/p}}\cdot \cform^{D'}(\mf)|_{D_{i}^{1/p}}$.

\item Let $f\colon Z_{\mf}^{1/p}\rightarrow Z_{\mf}$ and $f_{i}\colon D_{i}^{1/p}\rightarrow D_{i}$ for $i\in I_{\mW,\mf}$ be the structure morphisms
(\ref{radicialcvmap}).
We define a closed subscheme
\begin{equation}
L^{D'}_{\mf}\subset T^{*}X(\log D') \notag
\end{equation} 
to be the image of $L_{\mf}'^{D'}$ by the canonical morphism 
\begin{equation}
T^{*}X(\log D')\times_{X}Z_{\mf}^{1/p}\rightarrow T^{*}X(\log D')\times_{X}Z_{\mf} \subset T^{*}X(\log D') \notag
\end{equation}
with the closed subscheme structure satisfying the equality $[L_{\mf}^{D'}]=f_{*}[L_{\mf}'^{D'}]$
as algebraic cycles on $T^{*}X(\log D')$.

For $i\in I_{\mW,\mf}$,
we define a closed subscheme
\begin{equation}
L^{D'}_{i,\mf}\subset T^{*}X(\log D') \notag
\end{equation}
to be the image of $L_{i,\mf}'^{D'}$ by the canonical morphism
\begin{equation}
T^{*}X(\log D')\times_{X}D_{i}^{1/p}\rightarrow T^{*}X(\log D')\times_{X}D_{i} \subset T^{*}X(\log D')
\notag
\end{equation}
with the closed subscheme structure satisfying the equality $[L_{i,\mf}^{D'}]=f_{i*}[L_{i,\mf}'^{D'}]$
as algebraic cycles on $T^{*}X(\log D')$.

\item Assume that $I_{\mT,\mf}$ (Definition \ref{defindsub} (1)) is contained in $I'$.
Then we define the $\log$-$D'$-{\it singular support} $S_{D'}^{\log}(j_{!}\mf)$ of
$j_{!}\mf$ by
\begin{equation}
S_{D'}^{\log}(j_{!}\mf)=T^{*}_{X}X(\log D')\cup L^{D'}_{\mf}, \notag
\end{equation} 
which is a closed conical subset of $T^{*}X(\log D')$.
\item Assume that $X$ is purely of dimension $d$ and that $I_{\mT,\mf}$ is contained in $I'$.
Then we define the $\log$-$D'$-{\it characteristic cycle} $CC^{\log}_{D'}(j_{!}\mf)$
by
\begin{equation}
CC^{\log}_{D'}(j_{!}\mf)=(-1)^{d}([T^{*}_{X}X(\log D')]
+\sum_{i\in I_{\mW,\mf}}
\sw^{D'}(\chi|_{K_{i}})[L_{i,\mf}^{D'}])\in Z_{d}(T^{*}X(\log D')), \notag
\end{equation}
where $\sw^{D'}(\chi|_{K_{i}})$ is as in (\ref{defswdp}).
\end{enumerate}
\end{defn}

\begin{rem}
\label{remlifdp}
Let the notation be as in Definition \ref{deflifdp}.
Suppose that the ramification of $\mf$ is $\log$-$D'$-clean along $D$.
\begin{enumerate}
\item In Definition \ref{deflifdp} (1) (resp.\ (2)), 
the closed conical subsets $L_{i,\mf}'^{D'}\subset T^{*}X(\log D')\times_{X}Z_{\mf}^{1/p}$ (resp.\ $L_{i,\mf}^{D'}\subset T^{*}X(\log D')$) 
for $i\in I_{\mW,\mf}$ are the irreducible components of $L_{\mf}'^{D'}$
(resp.\ of $L_{\mf}^{D'}$),
and we have $L_{\mf}'^{D'}=\bigcup_{i\in I_{\mW,\mf}}L_{i,\mf}'^{D'}$
(resp.\ $L_{\mf}^{D'}=\bigcup_{i\in I_{\mW,\mf}}L_{i,\mf}^{D'}$).

\item By Remark \ref{remcfnormal} (2), both the sub line bundles
$L_{\mf}'^{D'}\subset T^{*}X(\log D')\times_{X}Z_{\mf}^{1/p}$
and $L_{\mf}^{D'}\subset T^{*}X(\log D')\times_{X}Z_{\mf}$ 
are stable under the replacement of $\chi$ by the $p$-part of $\chi$. 
Therefore so is $S_{D'}^{\log}(j_{!}\mf)$,
and so is $CC_{D'}^{\log}(j_{!}\mf)$ 
by (1) and Remark \ref{remtameloc} (1).

\item Suppose that $D$ has simple normal crossings.
Then the closed subscheme $L_{i,\mf}^{D}\subset T^{*}X(\log D)$ for $i\in I_{\mW,\mf}$
is the same as $\Image \varphi_{\mathfrak{p}_{i}}$ in \cite[(3.4.4)]{kalog}, where $\mathfrak{p}_{i}$ is the generic point of $D_{i}$, and $L_{i,\chi}$ in \cite[the remark before Definition 3.1]{yacc} by Remark \ref{remcfnormal} (4).
The closed subscheme $L_{i,\mf}'^{\, \emptyset}\subset T^{*}X\times_{X}Z_{\mf}^{1/p}$ 
is the same as $L_{i,\chi}''$ in \cite[the remark after Lemma 2.18]{yacc} by Remark \ref{remcfnormal} (4), and
is equal to $L_{\chi|_{K_{i}}}$ in \cite[the remark before Proposition 4.13]{sacc}
by \cite[Corollary 2.13 (i)]{yafil} and Remark \ref{remlogdpcl} (3). 
The cycle $[L_{i,\mf}^{\emptyset}]$ on $T^{*}X$ defined by $L_{i,\mf}^{\emptyset}$ is
the same as 
$[L_{i,\chi}']$ in \cite[the remark after Lemma 2.18]{yacc}
and $\pi_{\chi|_{K_{i}}*}[L_{\chi|_{K_{i}}}]$ in \cite[the remark before Theorem 7.14]{sacc}.

\item Let the assumptions be as in Definition \ref{deflifdp} (4).
Since $\sw^{D'}(\chi|_{K_{i}})>0$ for $i\in I_{\mW,\mf}$,
we have $(-1)^{d}CC_{D'}^{\log}(j_{!}\mf)>0$ and
the support of $CC^{\log}_{D'}(j_{!}\mf)$ is the $\log$-$D'$-singular support $S_{D'}^{\log}(j_{!}\mf)$ of $j_{!}\mf$ by (1).

\item Let the assumptions be as in Definition \ref{deflifdp} (4).
Suppose further that $D$ has simple normal crossings.
Then the $\log$-$D$-characteristic cycle $CC^{\log}_{D}(j_{!}\mf)$ is the same as 
$\mathrm{Char}^{\log}(X,U,\chi)$ in \cite[Definition 3.1]{yacc},
which is a modification of \cite[(3.4.4)]{kalog}, by (3) and Remark \ref{remlogdpcl} (3),
and is equal to $CC(\mf)$ in \cite[Definition 3.6]{sawild}
by \cite[Corollaire 9.12]{asan} and Remark \ref{remlogdpcl} (3).
By (3), by \cite[Theorem 3.1]{yafil}, and by Remark \ref{remlogdpcl} (3), the $\log$-$\emptyset$-characteristic cycle $CC^{\log}_{\emptyset}(j_{!}\mf)$ is 
equal to $C(j_{!}\mf)$ in 
\cite[the remark before Theorem 7.14]{sacc},
which is equal to the characteristic cycle $CC(j_{!}\mf)$ of $j_{!}\mf$ (Definition \ref{defcc})
by \cite[Theorem 7.14]{sacc}.
\end{enumerate}
\end{rem}

In order to discuss compatibilities with morphisms of schemes,
we recall the theory of logarithmic transversality introduced in \cite[Subsection 3.2]{yacc}.
Let $I'\subset I$ be a subset such that
$D'=\bigcup_{i\in I'}D_{i}$ has simple normal crossings. 
Let 
\begin{equation}
\label{defcdp}
C_{D'}=\bigcup_{I''\subset I'}T^{*}_{D_{I''}}X \subset T^{*}X 
\end{equation} 
be the union of the conormal bundles $T^{*}_{D_{I''}}X$ of 
$D_{I''}=\bigcap_{i\in I''}D_{i}\subset X$ for all subsets $I''\subset I'$ including $I''=\emptyset$.
If $I''=\emptyset$, then $T^{*}_{D_{I''}}X$ is the zero section $T^{*}_{X}X$ of $T^{*}X$ by convention.

By \cite[Lemma 3.4.5]{sacc}, 
a morphism $h\colon W\rightarrow X$ of smooth schemes over $k$ is $C_{D'}$-transversal (Definition \ref{defctr} (1)) if and only if
the pull-back $h^{*}D_{i}=D_{i}\times_{k}W$ for $i\in I'$ is a smooth divisor on $W$ and if $h^{*}D'=D'\times_{k}W$ is a divisor on $W$ with simple normal crossings.
Therefore, if a morphism $h\colon W\rightarrow X$
of smooth schemes over $k$ is $C_{D'}$-transversal, 
then the morphism $h$ induces the morphism 
\begin{equation}
\label{defdhdp}
dh^{D'}\colon T^{*}X(\log D')\times_{X}W\rightarrow T^{*}W(\log h^{*}D'). 
\end{equation}
 
\begin{defn}[{\cite[Definition 3.6]{yacc}}]
\label{deflogdpctr}
Let $I'\subset I$ be a subset such that
$D'=\bigcup_{i\in I'}D_{i}$ has simple normal crossings and
let $C_{D'}$ be as in (\ref{defcdp}).
Let $h\colon W\rightarrow X$ be a $C_{D'}$-transversal morphism of smooth schemes over $k$
and let $C$ be a closed conical subset of $T^{*}X(\log D')$.
Then we say that $h$ is {\it $\log$-$D'$-$C$-transversal} at $w\in W$ 
if the subset
\begin{equation}
(h^{*}C\cap {dh^{D'}}^{-1}(T^{*}_{W}W(\log h^{*}D')))\times_{W}w\subset T^{*}X(\log D')\times_{X}w, \notag
\end{equation}
where $dh^{D'}$ is as in (\ref{defdhdp}),
is contained in the zero section $T^{*}_{X}X(\log D')\times_{X}w$.

We say that $h$ is {\it $\log$-$D'$-$C$-transversal} 
if $h$ is $\log$-$D'$-$C$-transversal at every $w\in W$, 
namely if the intersection
\begin{equation}
h^{*}C\cap {dh^{D'}}^{-1}(T^{*}_{W}W(\log h^{*}D'))\subset T^{*}X(\log D')\times_{X}W \notag
\end{equation} 
is contained in the zero section $T^{*}_{X}X(\log D')\times_{X}W$.

If $h$ is $\log$-$D'$-$C$-transversal, then we define a closed conical subset
\begin{equation} 
h^{\circ}C\subset T^{*}W(\log h^{*}D') \notag
\end{equation} 
to be the image of $h^{*}C$ by $dh^{D'}$.
\end{defn}

\begin{rem}
\label{remlogdrcdptr}
Let the notation be as in Definition \ref{deflogdpctr}.
\begin{enumerate}
\item In the last part of Definition \ref{deflogdpctr}, 
if the morphism $h\colon W\rightarrow X$ is $\log$-$D'$-transversal,
then the image $h^{\circ}C$ of $h^{*}C=C\times_{X}W$ by $dh^{D'}$ is a closed conical subset of $T^{*}W(\log h^{*}D')$,
since the restriction $dh^{D'}|_{h^{*}C}\colon h^{*}C\rightarrow T^{*}W(\log h^{*}D')$ 
of $dh^{D'}$ to $h^{*}C\subset T^{*}X(\log D')\times_{X}W$ is finite
by \cite[Lemma 3.1]{sacc}.
\item Let $C, C'\subset T^{*}X(\log D')$ be two closed conical subsets.
If $C'\subset C$, then every $\log$-$D'$-$C$-transversal morphism $h\colon W\rightarrow X$ is $\log$-$D'$-$C'$-transversal.
Generally, a $C_{D'}$-transversal morphism
$h\colon W\rightarrow X$ of smooth schemes over $k$ 
is $\log$-$D'$-$C\cup C'$-transversal
if and only if $h$ is $\log$-$D'$-$C$-transversal and is $\log$-$D'$-$C'$-transversal.
\item Let $h\colon W\rightarrow X$ be a $C_{D'}$-transversal morphism 
of smooth schemes over $k$.
Since we have $h^{*}T^{*}_{X}X(\log D')=T^{*}_{X}X(\log D')\times_{X}W$, 
the morphism $h$ is $\log$-$D'$-$T^{*}_{X}X(\log D')$-transversal and we have $h^{\circ}T^{*}_{X}X(\log D')=T^{*}_{W}W(\log h^{*}D')$.
Therefore $h$ is $\log$-$D'$-$T^{*}_{X}X(\log D')\cup C$-transversal for a closed conical subset 
$C\subset T^{*}X(\log D')$ if and only if $h$ is $\log$-$D'$-$C$-transversal by (2). 
\end{enumerate}
\end{rem}

Similarly as the $C$-transversality of a morphism 
$h\colon W\rightarrow X$ of smooth schemes over $k$ for 
a closed conical subset $C\subset T^{*}X$, 
the $\log$-$D'$-$C'$-transversality of 
a $C_{D'}$-transversal morphism $h'\colon W'\rightarrow X$ 
of smooth schemes over $k$ for a closed conical subset $C'\subset T^{*}X(\log D')$ is an open condition on $W$ (cf.\ Remark \ref{remctr} (2)):

\begin{lem}[cf.\ {\cite[Lemma 1.2 (i)]{be}}]
Let $I'\subset I$ be a subset such that
$D'=\bigcup_{i\in I'}D_{i}$ has simple normal crossings and
let $C\subset T^{*}X(\log D')$ be a closed conical subset.
For a morphism $h\colon W\rightarrow X$ of smooth schemes over $k$,
the set of points on $W$ at which $h$ is $\log$-$D'$-$C$-transversal
is an open subset of $W$.
\end{lem}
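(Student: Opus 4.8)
The plan is to follow the proof of the non-logarithmic analogue \cite[Lemma 1.2 (i)]{be} almost verbatim, after first isolating the open locus on which the logarithmic differential $dh^{D'}$ is even defined. Since $\log$-$D'$-$C$-transversality at a point $w$ presupposes that $h$ is $C_{D'}$-transversal in a neighborhood of $w$, so that $dh^{D'}$ (\ref{defdhdp}) makes sense near $w$, the locus $S\subset W$ of $\log$-$D'$-$C$-transversal points is contained in the locus $W_{0}\subset W$ where $h$ is $C_{D'}$-transversal. By Remark \ref{remctr} (2) applied to $C_{D'}$, the set $W_{0}$ is open in $W$, and on $W_{0}$ the morphism $dh^{D'}\colon T^{*}X(\log D')\times_{X}W_{0}\rightarrow T^{*}W_{0}(\log h^{*}D')$ is defined. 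If $h$ is moreover globally $C_{D'}$-transversal, then $W_{0}=W$. In either case it suffices to prove that $S$ is open in $W_{0}$.

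First I would form the closed conical subset
\begin{equation}
K=h^{*}C\cap (dh^{D'})^{-1}(T^{*}_{W}W(\log h^{*}D'))\subset T^{*}X(\log D')\times_{X}W_{0} \notag
\end{equation}
appearing in Definition \ref{deflogdpctr}. It is closed, being the intersection of $h^{*}C=C\times_{X}W_{0}$ with the preimage of the zero section under the continuous map $dh^{D'}$, and it is conical because both factors are stable under the $\mathbf{G}_{m}$-action and $dh^{D'}$, being a morphism of vector bundles over $W_{0}$, is $\mathbf{G}_{m}$-equivariant. By definition, a point $w\in W_{0}$ lies in $S$ if and only if $K\times_{W_{0}}w$ is contained in the zero section $T^{*}_{X}X(\log D')\times_{X}w$; equivalently, writing $K^{\circ}=K\setminus(T^{*}_{X}X(\log D')\times_{X}W_{0})$ for the part of $K$ away from the zero section and $p\colon T^{*}X(\log D')\times_{X}W_{0}\rightarrow W_{0}$ for the projection, we have $S=W_{0}\setminus p(K^{\circ})$. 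Thus the whole assertion reduces to showing that $p(K^{\circ})$ is closed in $W_{0}$.

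The key step, and the one I expect to carry the real content, is the closedness of $p(K^{\circ})$, which is exactly where the conical structure is used. Since $K^{\circ}$ is $\mathbf{G}_{m}$-stable and disjoint from the zero section, its image $\mathbf{P}(K^{\circ})$ in the projective bundle $\mathbf{P}(T^{*}X(\log D')\times_{X}W_{0})$ is closed, because the $\mathbf{G}_{m}$-quotient map from the complement of the zero section is a torsor under which a closed invariant subset descends to a closed subset; and the structure morphism $\mathbf{P}(T^{*}X(\log D')\times_{X}W_{0})\rightarrow W_{0}$ is proper. Hence $p(K^{\circ})$, being the image of the closed set $\mathbf{P}(K^{\circ})$ under a proper morphism, is closed. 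I would emphasize that the only structural facts used here are that $T^{*}X(\log D')$ is a vector bundle on $X$ and that $dh^{D'}$ is a morphism of vector bundles over $W_{0}$, which are precisely the inputs of \cite[Lemma 1.2 (i)]{be}; its argument therefore applies with $T^{*}X$, $T^{*}W$, $dh$, and the zero sections replaced by $T^{*}X(\log D')$, $T^{*}W(\log h^{*}D')$, $dh^{D'}$, and $T^{*}_{X}X(\log D')$, $T^{*}_{W}W(\log h^{*}D')$ throughout. Combining the closedness of $p(K^{\circ})$ with the openness of $W_{0}$ in $W$ shows that $S=W_{0}\setminus p(K^{\circ})$ is open in $W$.
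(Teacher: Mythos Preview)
Your proof is correct and follows essentially the same approach as the paper: both arguments show that the locus of non-transversal points is the image of a closed subset of the projective bundle $\mathbf{P}(T^{*}X(\log D')\times_{X}W)$ under the proper projection to $W$. The paper's proof is a one-sentence version of your argument and implicitly assumes $h$ is globally $C_{D'}$-transversal; your extra care in first restricting to the open locus $W_{0}$ where $dh^{D'}$ is defined is a harmless refinement.
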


\begin{proof}
The assertion follows, since the considered subset of $W$ is the complement of the image of
the closed subset 
\begin{equation}
\mathbf{P}(h^{*}C\cap {dh^{D'}}^{-1}(T^{*}_{W}W(\log h^{*}D')))\subset \mathbf{P}(T^{*}X(\log D')\times_{X}W) \notag
\end{equation}
of the projective space bundle on $W$ associated to the vector bundle $T^{*}X(\log D')\times_{X}W$ on $W$
by the canonical projection
$\mathbf{P}(T^{*}X(\log D')\times_{X}W)\rightarrow W$. 
\end{proof}

For two divisors $E'\subset E$ on $X$ with simple normal crossings, let
\begin{equation}
\label{deftauepse}
\tau_{E'/E}\colon T^{*}X(\log E')\rightarrow T^{*}X(\log E) 
\end{equation}
denote the canonical morphism of vector bundles on $X$.
If $E'=\emptyset$, then we simply denote $\tau_{\emptyset/E}\colon T^{*}X\rightarrow T^{*}X(\log E)$  
by 
\begin{equation}
\label{deftaue}
\tau_{E}\colon T^{*}X\rightarrow T^{*}X(\log E).
\end{equation}
The closed conical subset $C_{E}\subset T^{*}X$ (\ref{defcdp}) is
equal to the inverse image by $\tau_{E}\colon T^{*}X\rightarrow T^{*}X(\log E)$ of the zero section
$T^{*}_{X}X(\log E)\subset T^{*}X(\log E)$ as sets, namely, we have
\begin{equation}
\label{eqinvzero}
C_{E}=\tau_{E}^{-1}(T^{*}_{X}X(\log E)).
\end{equation}
If $\{E_{\theta}\}_{\theta\in \Theta}$ denotes the irreducible components of
$E$ and if $E_{\Theta'}$ denotes the intersection $\bigcap_{\theta\in \Theta'}E_{\theta}$
for each subset $\Theta'\subset \Theta$,
then we have
\begin{equation}
\label{eqinvzerocyc}
[\tau_{E}^{-1}(T^{*}_{X}X(\log E))]=\sum_{\Theta'\subset \Theta}[T^{*}_{E_{\Theta'}}X]
\end{equation}
as algebraic cycles on $T^{*}X$.

The transversality and the $\log$-$D'$-transversality for
a $C_{D'}$-transversal morphism of smooth schemes over $k$
are equivalent in the following sense:

\begin{prop}[{\cite[Proposition 3.9]{yacc}}]
\label{proplogtr}
Let $I'\subset I$ be a subset such that
$D'=\bigcup_{i\in I'}D_{i}$ has simple normal crossings. 
Let $C_{D'}$ be as in (\ref{defcdp}) and
let $\tau_{D'}\colon T^{*}X\rightarrow T^{*}X(\log D')$ be as in (\ref{deftaue}).
Let $h\colon W\rightarrow X$ be a $C_{D'}$-transversal morphism 
of smooth schemes over $k$
and let $C\subset T^{*}X(\log D')$ be a closed conical subset. 
Then the following
two conditions are equivalent:
\begin{enumerate}
\item $h$ is $\log$-$D'$-$C$-transversal.
\item $h$ is $\tau_{D'}^{-1}(C)$-transversal. 
\end{enumerate}
\end{prop}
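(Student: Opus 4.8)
The plan is to reduce the equivalence to a fiberwise statement at each point $w \in W$ and then compare the two transversality conditions directly through the relationship between $T^{*}X$ and $T^{*}X(\log D')$ encoded by $\tau_{D'}$. First I would recall that both conditions are defined pointwise: condition (1) asks that
\[
\bigl(h^{*}C \cap {dh^{D'}}^{-1}(T^{*}_{W}W(\log h^{*}D'))\bigr)\times_{W}w
\]
lie in the zero section of $T^{*}X(\log D')\times_{X}w$ for every $w$, while condition (2) asks that
\[
\bigl(h^{*}\tau_{D'}^{-1}(C) \cap dh^{-1}(T^{*}_{W}W)\bigr)\times_{W}w
\]
lie in the zero section of $T^{*}X\times_{X}w$. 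So it suffices to fix a point $w$, set $x = h(w)$, and analyze the fibers of the two ambient bundles and the two differential maps there.

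The key geometric input is the commutative square relating $dh$ and $dh^{D'}$ via the canonical morphisms $\tau_{D'}\colon T^{*}X\rightarrow T^{*}X(\log D')$ on the source side and $\tau_{h^{*}D'}\colon T^{*}W\rightarrow T^{*}W(\log h^{*}D')$ on the target side; since $h$ is assumed $C_{D'}$-transversal, the pullbacks $h^{*}D_{i}$ for $i\in I'$ are smooth and $h^{*}D'$ has simple normal crossings, so $dh^{D'}$ of (\ref{defdhdp}) is well defined and the square commutes. Using the characterization (\ref{eqinvzero}), namely $C_{D'}=\tau_{D'}^{-1}(T^{*}_{X}X(\log D'))$, together with the analogous identity on $W$, I would translate the preimage $dh^{-1}(T^{*}_{W}W)$ on the cotangent side into a condition on the logarithmic cotangent side. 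The main step is to check that a point of $h^{*}C$ viewed in $T^{*}X\times_{X}w$ (after applying $\tau_{D'}$) satisfies $dh^{D'}$ landing in the log zero section if and only if the corresponding point of $h^{*}\tau_{D'}^{-1}(C)$ satisfies $dh$ landing in the ordinary zero section; this is essentially a linear-algebra comparison of the kernels and images of $\tau_{D'}$, $dh$, and $dh^{D'}$ at $x$ and $w$.

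I expect the main obstacle to be bookkeeping the interaction between the $C_{D'}$-transversality hypothesis and the lifted differential $dh^{D'}$: one must verify that under $C_{D'}$-transversality the morphism $\tau_{D'}$ restricted over $w$ behaves compatibly with both $dh$ and $dh^{D'}$, so that the two intersection conditions correspond exactly. Concretely, the subtlety is that $\tau_{D'}$ is not an isomorphism—it collapses the conormal directions along $D'$—so one has to confirm that the collapsed directions are precisely those accounted for by $C_{D'}$, and that no spurious points arise when passing between the two pictures. Once this dictionary between fibers is set up, the equivalence of (1) and (2) follows by symmetric implication, and the final identification $h^{\circ}C$ (in the log sense of Definition \ref{deflogdpctr}) with the image under $\tau$ of the ordinary $h^{\circ}\tau_{D'}^{-1}(C)$ falls out of the same commutative square. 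Since this result is quoted from \cite[Proposition 3.9]{yacc}, I would expect the proof to either cite that reference directly or reproduce this fiberwise comparison in full.
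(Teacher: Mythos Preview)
Your anticipation at the end is exactly what happens: the paper provides no proof of this proposition and simply cites it as \cite[Proposition 3.9]{yacc}. There is nothing to compare your sketch against within this paper.

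That said, your outline is the right shape for how the cited result is proved: the equivalence is checked fiberwise at each $w\in W$, using the commutative square
\[
\xymatrix{
T^{*}X\times_{X}W \ar[r]^-{dh} \ar[d]_-{\tau_{D',W}} & T^{*}W \ar[d]^-{\tau_{h^{*}D'}} \\
T^{*}X(\log D')\times_{X}W \ar[r]_-{dh^{D'}} & T^{*}W(\log h^{*}D')
}
\]
together with the identifications $C_{D'}=\tau_{D'}^{-1}(T^{*}_{X}X(\log D'))$ and $C_{h^{*}D'}=\tau_{h^{*}D'}^{-1}(T^{*}_{W}W(\log h^{*}D'))$. The $C_{D'}$-transversality hypothesis is precisely what makes this square cartesian (this is \cite[Lemma 3.8 (ii)]{yacc}, also invoked later in this paper in the proof of Proposition \ref{propinvcc}), which is the point that resolves the ``subtlety'' you flagged about $\tau_{D'}$ not being an isomorphism. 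Once the square is cartesian, the two intersection conditions correspond directly.
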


Let $I'\subset I$ be a subset such that
$D'=\bigcup_{i\in I'}D_{i}$ has simple normal crossings.
Let 
\begin{equation}
\label{defcdpd}
C_{D'\subset D}=C_{D'}\cup\bigcup_{i\in I-I'}T^{*}_{D_{i}}X\subset T^{*}X
\end{equation} 
be the union of $C_{D'}$ (\ref{defcdp}) and the conormal bundles $T^{*}_{D_{i}}X$
of $D_{i}\subset X$ for all $i\in I-I'$.
By Remark \ref{remctr} (3),
a $C_{D'\subset D}$-transversal morphism $h\colon W\rightarrow X$ 
of smooth schemes over $k$ is $C_{D'}$-transversal.
By \cite[Lemma 3.4.5]{sacc},
the pull-back $h^{*}D_{i}=D_{i}\times_{X}W$ of $D_{i}$ 
by a $C_{D'\subset D}$-transversal morphism $h\colon W\rightarrow X$
for $i\in I$ is a smooth divisor
on $W$ and
the pull-back $(h^{*}D)_{\red}=(D\times_{X}W)_{\red}$ is a divisor on $W$ with smooth irreducible components.

We study compatibilities of $\log$-$D'$-singular support $S_{D'}^{\log}(j_{!}\mf)$
and $\log$-$D'$-characteristic cycle $CC_{D'}^{\log}(j_{!}\mf)$
with a $\log$-$D'$-$S_{D'}^{\log}(j_{!}\mf)$-transversal morphism.

\begin{prop}[cf.\ {\cite[Proposition 3.11]{yacc}}]
\label{propeqcl} 
Let $I'\subset I$ be a subset such that
$D'=\bigcup_{i\in I'}D_{i}$ has simple normal crossings.
Suppose that the ramification of $\mf$ is $\log$-$D'$-clean along $D$
and that $I_{\mT,\mf}$ (Definition \ref{defindsub} (1)) is contained in $I'$.
Let $C_{D'\subset D}$ be as in (\ref{defcdpd}) and
let $h\colon W\rightarrow X$ be a $C_{D'\subset D}$-transversal morphism
of smooth schemes over $k$.
\begin{enumerate}
\item The following three conditions are equivalent:
\begin{enumerate}
\item $h$ is $\log$-$D'$-$S_{D'}^{\log}(j_{!}\mf)$-transversal.
\item $R_{h^{*}\mf}^{h^{*}D'}=h^{*}R_{\mf}^{D'}$
(Definition \ref{defconddiv} (1)), $Z_{h^{*}\mf}=(h^{*}Z_{\mf})_{\red}$ (Definition \ref{defconddiv} (2)), and $\cform^{h^{*}D'}(h^{*}\mf)
=dh^{D'}_{(h^{*}Z_{\mf}^{1/p})_{\red}}(h^{*}\cform^{D'}(\mf))$,
where 
\begin{equation}
dh^{D'}_{(h^{*}Z_{\mf}^{1/p})_{\red}}\colon h^{*}\Omega^{1}_{X}(\log D')(R_{\mf}^{D'})|_{(h^{*}Z_{\mf}^{1/p})_{\red}}\rightarrow \Omega^{1}_{W}(\log h^{*}D')(h^{*}R_{\mf}^{D'})|_{(h^{*}Z_{\mf}^{1/p})_{\red}}  \notag
\end{equation}
is the morphism yielded by $h$.  
The ramification of $h^{*}\mf$ is
$\log$-$h^{*}D'$-clean along $(h^{*}D)_{\red}$.
\item The image of $L_{\mf}'^{D'}\times_{Z_{\mf}^{1/p}}(h^{*}Z_{\mf}^{1/p})_{\red}$
(Definition \ref{deflifdp} (1)) by the base change
\begin{equation}
dh^{D'}_{(h^{*}Z_{\mf}^{1/p})_{\red}}\colon T^{*}X(\log D')\times_{X}(h^{*}Z_{\mf}^{1/p})_{\red}
\rightarrow T^{*}W(\log h^{*}D')\times_{W}(h^{*}Z_{\mf}^{1/p})_{\red} \notag
\end{equation}
of $dh^{D'}$ (\ref{defdhdp}) by the canonical morphism $(h^{*}Z_{\mf}^{1/p})_{\red}\rightarrow W$ is a sub line bundle of $T^{*}W(\log h^{*}W)\times _{W}(h^{*}Z_{\mf}^{1/p})_{\red}$.
\end{enumerate}
\item 
Suppose that the equivalent conditions (a), (b), and (c) in (1) holds
and that $h^{*}Z_{\mf}=Z_{\mf}\times_{X}W$ is reduced.
Then we have $D_{\mT,h^{*}\mf}\subset h^{*}D'$ (\ref{defd*mf}) and
\begin{equation}
\label{eqsdlog}
L^{h^{*}D'}_{h^{*}\mf}=h^{\circ}L_{\mf}^{D'}
\end{equation}
for closed conical subsets defined in Definition \ref{deflifdp} (2).
If $j'\colon h^{*}U=U\times_{k}W\rightarrow W$ denotes
the base change of $j\colon U\rightarrow X$ by $h$,
then we have
\begin{equation}
\label{eqlogsspb}
S_{h^{*}D'}^{\log}(j'_{!}h^{*}\mf)=h^{\circ}S_{D'}^{\log}(j_{!}\mf).
\end{equation}
\end{enumerate}
\end{prop}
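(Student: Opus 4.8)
The plan is to follow the structure of \cite[Proposition 3.11]{yacc}, replacing the logarithmic objects there by their partially logarithmic counterparts, and to feed the explicit computation of the pulled-back characteristic form furnished by Lemma \ref{lemcformgl} into the geometric transversality conditions. Throughout I may use the standing hypothesis that the ramification of $\mf$ is $\log$-$D'$-clean along $D$, so that $\cform^{D'}(\mf)$ is nowhere vanishing (a part of a basis) on $Z_{\mf}^{1/p}$, and that $I_{\mT,\mf}\subset I'$.

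First I would dispose of the zero section. Since $S_{D'}^{\log}(j_{!}\mf)=T^{*}_{X}X(\log D')\cup L^{D'}_{\mf}$ and since $h$ is automatically $\log$-$D'$-$T^{*}_{X}X(\log D')$-transversal with $h^{\circ}T^{*}_{X}X(\log D')=T^{*}_{W}W(\log h^{*}D')$ by Remark \ref{remlogdrcdptr} (3), condition (a) is equivalent to $h$ being $\log$-$D'$-$L^{D'}_{\mf}$-transversal. By Remark \ref{remlifdp} (1) the subset $L^{D'}_{\mf}$ is the union of the line bundles $L^{D'}_{i,\mf}$ over $D_{i}^{1/p}$ for $i\in I_{\mW,\mf}$, each cut out by the invertible subsheaf $\dvr_{X}(-R_{\mf}^{D'})\cdot \cform^{D'}(\mf)$ of $\Omega^{1}_{X}(\log D')$. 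Because base change preserves the property of a section being a part of a basis, $h^{*}\cform^{D'}(\mf)$ is again a part of a basis on $(h^{*}Z_{\mf}^{1/p})_{\red}$; hence by the definition of $\log$-$D'$-transversality (Definition \ref{deflogdpctr}) condition (a) is equivalent to the image $dh^{D'}(h^{*}\cform^{D'}(\mf))$ being nowhere zero, i.e. to the image $dh^{D'}(h^{*}L_{\mf}'^{D'})$ being a sub line bundle, which is exactly condition (c). The non-vanishing of this image on every component $E_{\theta}^{1/p}$ of $(h^{*}Z_{\mf})_{\red}$ is condition (a) of Lemma \ref{lemcformgl} (1); invoking that lemma converts it into condition (b) of the lemma, namely $R_{h^{*}\mf}^{h^{*}D'}=h^{*}R_{\mf}^{D'}$, and invoking Lemma \ref{lemcformgl} (2) yields $Z_{h^{*}\mf}=(h^{*}Z_{\mf})_{\red}$ together with $\cform^{h^{*}D'}(h^{*}\mf)=dh^{D'}_{(h^{*}Z_{\mf}^{1/p})_{\red}}(h^{*}\cform^{D'}(\mf))$. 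The $\log$-$h^{*}D'$-cleanliness of $h^{*}\mf$ then follows since this form equals the part-of-a-basis section $h^{*}\cform^{D'}(\mf)$, giving all of condition (b). This closes the cycle (a)$\Leftrightarrow$(b)$\Leftrightarrow$(c).

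For part (2) I would first derive $D_{\mT,h^{*}\mf}\subset h^{*}D'$: by condition (b) the divisor $R_{h^{*}\mf}^{h^{*}D'}=h^{*}R_{\mf}^{D'}$ has positive multiplicity along a component $E_{\theta}$ of $(h^{*}D)_{\red}$ exactly when $E_{\theta}$ dominates some wild $D_{i}$ ($i\in I_{\mW,\mf}$), so a component along which $h^{*}\mf$ is tame can dominate only tame $D_{i}$ ($i\in I_{\mT,\mf}$), and these lie in $h^{*}D'$ because $I_{\mT,\mf}\subset I'$. The equality (\ref{eqsdlog}) is the heart of the matter: by condition (b) we have $\cform^{h^{*}D'}(h^{*}\mf)=dh^{D'}_{(h^{*}Z_{\mf}^{1/p})_{\red}}(h^{*}\cform^{D'}(\mf))$, so the line bundle $L_{h^{*}\mf}'^{h^{*}D'}$ over $Z_{h^{*}\mf}^{1/p}$ is the image under $dh^{D'}$ of $h^{*}L_{\mf}'^{D'}$; the reducedness hypothesis $h^{*}Z_{\mf}=(h^{*}Z_{\mf})_{\red}$ identifies $Z_{h^{*}\mf}^{1/p}$ with $(h^{*}Z_{\mf}^{1/p})_{\red}$ and makes the push-forward along $Z_{h^{*}\mf}^{1/p}\to Z_{h^{*}\mf}$ compatible with the base change by $h$ of the push-forward along $Z_{\mf}^{1/p}\to Z_{\mf}$, whence $h^{\circ}L^{D'}_{\mf}=L^{h^{*}D'}_{h^{*}\mf}$. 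Finally (\ref{eqlogsspb}) follows by taking the union with the zero section, using $h^{\circ}T^{*}_{X}X(\log D')=T^{*}_{W}W(\log h^{*}D')$ and the fact that $h^{\circ}$, being a set-theoretic image under $dh^{D'}$, commutes with unions.

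The main obstacle I anticipate is precisely the push-forward/base-change bookkeeping in (\ref{eqsdlog}): one must check that forming the image line bundle under $dh^{D'}$, passing to the radicial covering, and taking the push-forward with the prescribed cycle-theoretic multiplicities all commute, and that the reducedness of $h^{*}Z_{\mf}$ is exactly what prevents any extra multiplicity or non-reduced structure from appearing. This is most delicate in the presence of type $\mII$ components and in characteristic $2$, where the $1/p$-twist inherent in $\cform^{D'}(\mf)$ must be tracked through $dh^{D'}$; everything else reduces to local computations at generic points that are already packaged by Lemma \ref{lemcformgl}.
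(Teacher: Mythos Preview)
Your proposal is correct and follows essentially the same route as the paper. The paper makes explicit the intermediate condition
\[
(d)\quad dh^{D'}_{(h^{*}Z_{\mf}^{1/p})_{\red}}(h^{*}\cform^{D'}(\mf))(w')\neq 0\ \text{for every closed point }w'\in (h^{*}Z_{\mf}^{1/p})_{\red},
\]
shows $(a)\Leftrightarrow(c)\Leftrightarrow(d)$ directly from the definitions (exactly as you do), and then links $(d)$ to $(b)$ via Lemma~\ref{lemcformgl} and Lemma~\ref{lemeqtologdpcl}~(2); for part~(2) it packages your push-forward/base-change argument into the commutativity of the cartesian square
\[
\xymatrix{
T^{*}X(\log D')\times_{X}h^{*}Z_{\mf}^{1/p} \ar[d]_-{dh^{D'}_{h^{*}Z_{\mf}^{1/p}}} \ar[r] &
T^{*}X(\log D')\times_{X}h^{*}Z_{\mf} \ar[d]^-{dh^{D'}_{h^{*}Z_{\mf}}} \\
T^{*}W(\log h^{*}D')\times_{W}h^{*}Z_{\mf}^{1/p} \ar[r] & T^{*}W(\log h^{*}D')\times_{W}h^{*}Z_{\mf},
}
\]
and deduces $D_{\mT,h^{*}\mf}\subset h^{*}D'$ from the second equality in~(b) rather than the first. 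One small slip in your write-up: the sentence ``this form equals the part-of-a-basis section $h^{*}\cform^{D'}(\mf)$'' is not literally true (they live in different sheaves); what you mean, and what you need, is that $\cform^{h^{*}D'}(h^{*}\mf)$ equals the \emph{image} $dh^{D'}(h^{*}\cform^{D'}(\mf))$, which is nowhere vanishing by~$(d)$, so cleanliness follows from Lemma~\ref{lemeqtologdpcl}~(2).
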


\begin{proof}
(1) By Remark \ref{remlogdrcdptr} (3), the condition (a) is equivalent to the condition that $h$ is $\log$-$D'$-$L_{\mf}^{D'}$-transversal.
Since $L_{\mf}^{D'}$ is defined to be the image of 
$L_{\mf}'^{D'}$ by the canonical morphism
$T^{*}X(\log D')\times_{X}Z_{\mf}^{1/p}\rightarrow T^{*}X(\log D')\times_{X}Z_{\mf}$, 
each of the conditions (a) and (c) is equivalent to the condition 
\begin{itemize}
\item[(d)] $dh^{D'}_{(h^{*}Z_{\mf}^{1/p})_{\red}}(h^{*}\cform^{D'}(\mf))(w')\neq 0$
for every closed point $w'\in (h^{*}Z_{\mf}^{1/p})_{\red}$.
\end{itemize}
Thus it is sufficient to prove the equivalence of the conditions (b) and (d). 

In order to prove the equivalence,
we may assume that all of the three equalities in the condition (b) hold.
In fact, the condition (d) implies the first equality in (b) by Lemma \ref{lemcformgl} (1) 
and the first equality implies the second and third equalities in (b) by
Lemma \ref{lemcformgl} (2).
Then the equivalence of the (last) condition (in) (b) and the condition (d) follows from Lemma \ref{lemeqtologdpcl} (2).

(2) By the second equality in the condition (b) in (1), 
we have $D_{\mT,h^{*}\mf}\subset h^{*}D'$.
We consider the cartesian diagram
\begin{equation}
\label{cartdxwlog}
\xymatrix{
T^{*}X(\log D')\times_{X}h^{*}Z_{\mf}^{1/p} \ar[d]_-{dh^{D'}_{h^{*}Z_{\mf}^{1/p}}} \ar[r] \ar@{}[dr] | {\square} & 
T^{*}X(\log D')\times_{X}h^{*}Z_{\mf} \ar[d]^-{dh^{D'}_{h^{*}Z_{\mf}}} \\ 
T^{*}W(\log h^{*}D')\times_{W}h^{*}Z_{\mf}^{1/p} \ar[r] & T^{*}W(\log h^{*}D')\times_{W}h^{*}Z_{\mf},
}
\end{equation}
where the horizontal arrows are canonical morphisms and 
the left and right vertical arrows are the base changes of $dh^{D'}$ 
by the canonical morphisms $h^{*}Z_{\mf}^{1/p}\rightarrow W$
and $h^{*}Z_{\mf}\rightarrow W$, respectively.
By the condition (b) (resp.\ the condition (a)) in (1),
the left-hand side (resp.\ the right-hand side) of (\ref{eqsdlog}) 
is the image of $h^{*}L_{\mf}'^{D'}$ by the composition of the left vertical arrow and the lower horizontal arrow (resp.\ by the composition of the upper horizontal arrow and the right vertical arrow)
in (\ref{cartdxwlog}).
Thus the equality (\ref{eqsdlog}) holds by the commutativity of (\ref{cartdxwlog}). 
Since we have $h^{\circ}T^{*}_{X}X(\log D')=T^{*}_{W}W(\log h^{*}D')$ 
by Remark \ref{remlogdrcdptr} (3),
the equality (\ref{eqlogsspb}) follows from the equality (\ref{eqsdlog}).
\end{proof}

\begin{defn}
\label{deflogproptr}
Assume that $X$ is purely of dimension $d$.
Let $I'\subset I$ be a subset such that $D'=\bigcup_{i\in I'}D_{i}$
has simple normal crossings.
Let $C\subset T^{*}X(\log D')$ be a closed conical subset
of pure dimension $d$ and $\{C_{a}\}_{a}$ the irreducible components of $C$.
Let $h\colon W\rightarrow X$ be a $C_{D'}$-transversal morphism from a smooth scheme $W$ over $k$
of pure dimension $e$, where $C_{D'}$ is as in (\ref{defcdp}).
\begin{enumerate}
\item We say that the morphism $h\colon W\rightarrow X$ is  
{\it properly} $\log$-$D'$-$C$-transversal if $h$ is $\log$-$D'$-$C$-transversal and if $h^{*}C=C\times_{X}W$ is purely of dimension $e$.
\item Assume that the morphism $h\colon W\rightarrow X$ is properly $\log$-$D'$-$C$-transversal.
Then we define 
\begin{equation}
h^{!}A\in Z_{e}(T^{*}W(\log h^{*}D')) \notag
\end{equation}
for $A=\sum_{a}m_{a}[C_{a}]\in Z_{d}(T^{*}X(\log D'))$ to be
the push-forward of
$(-1)^{d-e}\sum_{a}m_{a}[h^{*}C_{a}]\in Z_{e}(T^{*}X(\log D')\times_{X}W)$ by
$dh^{D'}\colon T^{*}X(\log D')\times_{X}W\rightarrow T^{*}W(\log h^{*}D')$ (\ref{defdhdp}),
whose restriction to $h^{*}C$ is finite by Remark \ref{remlogdrcdptr} (1).
\end{enumerate}
\end{defn}

\begin{prop}
\label{proppblogcc}
Suppose that $X$ is of pure dimension 
and that $D$ has simple normal crossings.
Let $I'\subset I$ be a subset and let $D'=\bigcup_{i\in I'}D_{i}$.
Assume that the ramification of $\mf$ is $\log$-$D'$-clean along $D$ 
and that $I_{\mT,\mf}$ (Definition \ref{defindsub} (1)) is contained in $I'$.
Let $h\colon W\rightarrow X$ be a $C_{D}$-transversal and $\log$-$D'$-$S_{D'}^{\log}(j_{!}\mf)$-transversal morphism from a smooth scheme $W$ of pure dimension 
$e$ over $k$,
where $C_{D}$ is as in (\ref{defcdp}).
Then $h$ is properly $\log$-$D'$-$S_{D'}^{\log}(j_{!}\mf)$-transversal,
the ramification of $h^{*}\mf$ is $\log$-$h^{*}D'$-clean along $h^{*}D$,
we have $D_{\mT,h^{*}\mf}\subset h^{*}D'$ (\ref{defd*mf}), and 
we have
\begin{equation}
CC^{\log}_{h^{*}D'}(j'_{!}h^{*}\mf)=h^{!}CC^{\log}_{D'}(j_{!}\mf) \notag 
\end{equation}
in $Z_{e}(T^{*}W(\log h^{*}D'))$.
Here $j'\colon h^{*}U\rightarrow W$ denotes the base change of $j$ by $h$.
\end{prop}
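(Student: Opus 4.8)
The plan is to deduce every assertion from Proposition \ref{propeqcl} together with the explicit description of $CC^{\log}_{D'}(j_{!}\mf)$ in Definition \ref{deflifdp}, reducing the statement to a dimension count and a multiplicity check. First I would observe that $C_{D'\subset D}\subset C_{D}$ (\ref{defcdp}), (\ref{defcdpd}), so the $C_{D}$-transversal morphism $h$ is $C_{D'\subset D}$-transversal by Remark \ref{remctr} (3); by \cite[Lemma 3.4.5]{sacc} and the assumption that $D$ has simple normal crossings, $h^{*}D$ has simple normal crossings, each $h^{*}D_{i}$ is a reduced smooth divisor, and $h^{*}Z_{\mf}=\bigcup_{i\in I_{\mW,\mf}}h^{*}D_{i}$ is a reduced divisor on $W$. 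In particular $(h^{*}D)_{\red}=h^{*}D$ and $h^{*}Z_{\mf}=(h^{*}Z_{\mf})_{\red}$. Since $h$ is $C_{D'\subset D}$-transversal and $\log$-$D'$-$S^{\log}_{D'}(j_{!}\mf)$-transversal, Proposition \ref{propeqcl} (1) applies and gives the $\log$-$h^{*}D'$-cleanliness of $h^{*}\mf$ along $h^{*}D$, together with $R^{h^{*}D'}_{h^{*}\mf}=h^{*}R^{D'}_{\mf}$ and the germ-wise identification of $\cform^{h^{*}D'}(h^{*}\mf)$ with $dh^{D'}(\cform^{D'}(\mf))$; because $h^{*}Z_{\mf}$ is reduced, Proposition \ref{propeqcl} (2) then yields $D_{\mT,h^{*}\mf}\subset h^{*}D'$ and the reduced-support equality $L^{h^{*}D'}_{h^{*}\mf}=h^{\circ}L^{D'}_{\mf}$. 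This already settles the qualitative claims of the proposition.

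For proper transversality it remains to verify that $h^{*}S^{\log}_{D'}(j_{!}\mf)$ is purely of dimension $e$. By Definition \ref{deflifdp}, $S^{\log}_{D'}(j_{!}\mf)=T^{*}_{X}X(\log D')\cup\bigcup_{i\in I_{\mW,\mf}}L^{D'}_{i,\mf}$, where the zero section is isomorphic to $X$ and each $L^{D'}_{i,\mf}$ is a line bundle over $D_{i}$; hence $S^{\log}_{D'}(j_{!}\mf)$ is purely of dimension $d=\dim X$. Pulling back along $h$, the zero section becomes $T^{*}_{X}X(\log D')\times_{X}W\cong W$ of dimension $e$, and each $L^{D'}_{i,\mf}$ becomes a line bundle over the smooth divisor $h^{*}D_{i}$, of dimension $(e-1)+1=e$; thus $h^{*}S^{\log}_{D'}(j_{!}\mf)$ is purely of dimension $e$ and $h$ is properly $\log$-$D'$-$S^{\log}_{D'}(j_{!}\mf)$-transversal in the sense of Definition \ref{deflogproptr}.

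For the cycle identity I would expand $h^{!}CC^{\log}_{D'}(j_{!}\mf)$ via Definition \ref{deflogproptr} (2): the sign $(-1)^{d-e}$ combines with the overall $(-1)^{d}$ to give $(-1)^{e}$, and it suffices to compute $dh^{D'}_{*}$ on the two types of components. On the zero section, $dh^{D'}$ restricts to the identity of $T^{*}_{W}W(\log h^{*}D')\cong W$ by Remark \ref{remlogdrcdptr} (3), contributing $[T^{*}_{W}W(\log h^{*}D')]$. For $i\in I_{\mW,\mf}$, write $h^{*}D_{i}=\bigsqcup_{\theta\in\Theta_{i}}E_{\theta}$ as the disjoint union of its smooth irreducible components; since each $E_{\theta}$ lies in a unique $h^{*}D_{i}$ (the intersections $h^{*}D_{i}\cap h^{*}D_{i'}$ having codimension $2$), the pullback $h^{*}L^{D'}_{i,\mf}$ is the disjoint union over $\theta\in\Theta_{i}$ of line bundles over $E_{\theta}$, each mapped by $dh^{D'}$ onto $L^{h^{*}D'}_{\theta,h^{*}\mf}$ with degree one: the restriction of the linear map $dh^{D'}$ to the characteristic line spanned by $\cform^{D'}(\mf)$ is injective because its image $\cform^{h^{*}D'}(h^{*}\mf)$ is nonzero by cleanliness, and the reduced images are pinned down by $L^{h^{*}D'}_{h^{*}\mf}=h^{\circ}L^{D'}_{\mf}$. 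Reading off the multiplicities of $R^{h^{*}D'}_{h^{*}\mf}=h^{*}R^{D'}_{\mf}$ along $E_{\theta}$ gives $\sw^{h^{*}D'}(h^{*}\chi|_{L_{\theta}})=\sw^{D'}(\chi|_{K_{i}})$ for $\theta\in\Theta_{i}$ and the partition $I_{\mW,h^{*}\mf}=\bigsqcup_{i\in I_{\mW,\mf}}\Theta_{i}$; substituting these recovers precisely the formula for $CC^{\log}_{h^{*}D'}(j'_{!}h^{*}\mf)$.

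The genuinely delicate point is the multiplicity bookkeeping in the pushforward $dh^{D'}_{*}$, namely confirming that each characteristic line bundle is transported with coefficient exactly one in spite of the radicial $1/p$-covers entering the definitions of $L^{D'}_{i,\mf}$ and $L^{h^{*}D'}_{\theta,h^{*}\mf}$ through $\cform$. This is governed by the compatibility of the $1/p$-cover squares (the cartesian diagram appearing in the proof of Proposition \ref{propeqcl} (2)) under $h$, which ensures that the pushforward conventions normalizing $[L^{D'}_{\mf}]$ on $X$ and $[L^{h^{*}D'}_{h^{*}\mf}]$ on $W$ agree, so that the reduced-support and Swan-conductor identifications already established propagate to the level of cycles. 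Once Proposition \ref{propeqcl} and the reducedness of $h^{*}Z_{\mf}$ are in hand, everything else is formal.
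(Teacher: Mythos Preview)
Your proposal is correct and follows essentially the same approach as the paper: both arguments reduce to Proposition \ref{propeqcl} via the $C_{D'\subset D}$-transversality of $h$ (obtained from $C_{D}$-transversality and Remark \ref{remctr} (3)), establish proper transversality by the dimension count on $h^{*}T^{*}_{X}X(\log D')$ and $h^{*}L^{D'}_{\mf}$, and conclude the cycle identity by matching the zero-section and characteristic-line contributions term by term using the equalities $R^{h^{*}D'}_{h^{*}\mf}=h^{*}R^{D'}_{\mf}$ and $L^{h^{*}D'}_{h^{*}\mf}=h^{\circ}L^{D'}_{\mf}$. The paper is terser on the multiplicity bookkeeping you worry about in your final paragraph, simply citing Remark \ref{remlifdp} (1) and Proposition \ref{propeqcl}; your more explicit treatment of the $1/p$-cover compatibility via the cartesian square from the proof of Proposition \ref{propeqcl} (2) is a valid and slightly more detailed justification of the same step.
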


\begin{proof}
Since $h$ is $C_{D}$-transversal,
the pull-back $h^{*}D_{i}$ is a smooth divisor on $W$ for every $i\in I$
and $h^{*}D$ is a divisor on $W$ with simple normal crossings
by \cite[Lemma 3.4.5]{sacc}.
Since $L_{\mf}^{D'}$ (Definition \ref{deflifdp} (2)) is isomorphic to the sub line bundle 
$L_{\mf}'^{D'}$ (Definition \ref{deflifdp} (1)) of $T^{*}X(\log D')\times_{X}Z_{\mf}^{1/p}$
(Definition \ref{defconddiv} (2)) as topological spaces
and since $Z_{\mf}$ is a union of irreducible components of $D$, 
the dimension of $h^{*}L_{\mf}^{D'}$ is $e$. 
Since $h^{*}T^{*}_{X}X(\log D')$ is isomorphic to $T^{*}_{W}W(\log h^{*}D')$ by $dh^{D'}$ 
(\ref{defdhdp}) and is of dimension $e$,
the first assertion holds.

Let $\{E_{i}\}_{i\in I_{\mW,h^{*}\mf}}$ be the irreducible components of $Z_{h^{*}\mf}$ and let $L_{i}=\Frac \hat{\dvr}_{W,\mathfrak{q}_{i}}$ denote the local field at the generic point $\mathfrak{q}_{i}$ of $E_{i}$
for $i\in I_{\mW,h^{*}\mf}$.
Since the ramification of $\mf$ is $\log$-$D'$-clean along $D$ and
since the morphism $h$ is $C_{D'\subset D}$-transversal and $\log$-$D'$-$S_{D'}^{\log}(j_{!}\mf)$-transversal by Remark \ref{remctr} (3),
the second and third assertions hold by Propositions \ref{propeqcl} (1) and (2), respectively.
By  Remark \ref{remlifdp} (1) and  Proposition \ref{propeqcl},
we have
\begin{equation}
h^{!}\sum_{i\in I_{\mW,\mf}}\sw^{D'}(\chi|_{K_{i}})[L_{i,\mf}^{D'}]
=(-1)^{d-e}\sum_{i\in I_{\mW,h^{*}\mf}}\sw^{D'}(h^{*}\chi|_{L_{i}})[L_{i,h^{*}\mf}^{h^{*}D'}],\notag
\end{equation}
where $d$ is the dimension of $X$.
Since we have $h^{!}[T^{*}_{X}X(\log D')]=(-1)^{d-e}[T^{*}_{W}W(\log h^{*}D')]$,
the last assertion holds.
\end{proof}

Let $I'\subset I$ be a subset such that $D'=\bigcup_{i\in I'}D_{i}$ has simple normal crossings.
Suppose that $I_{\mT,\mf}$ (Definition \ref{defindsub} (1)) is contained in $I'$.
If $X$ is purely of dimension $d$ and if the ramification of $\mf$ is simple normal crossings, then let
\begin{equation}
\label{gysin}
\tau_{D'}^{!}\colon Z_{d}(S_{D'}^{\log}(j_{!}\mf))=CH_{d}(S_{D'}^{\log}(j_{!}\mf))\rightarrow CH_{d}(\tau_{D'}^{-1}(S_{D'}^{\log}(j_{!}\mf))) 
\end{equation}
denote the Gysin homomorphism (\cite[6.6]{ful}) defined by the l.c.i.\ morphism $\tau_{D'}\colon T^{*}X\rightarrow T^{*}X(\log D')$ (\ref{deftaue}).
If further the inverse image $\tau_{D'}^{-1}(S_{D'}^{\log}(j_{!}\mf)$ of the $\log$-$D'$-singular support
$S_{D'}^{\log}(j_{!}\mf)\subset T^{*}X(\log D')$ is of dimension $d$, then 
we have $CH_{d}(\tau_{D'}^{-1}(S_{D'}^{\log}(j_{!}\mf))=Z_{d}(\tau_{D'}^{-1}(S_{D'}^{\log}(j_{!}\mf))$ and
the morphism $\tau_{D'}^{!}$ (\ref{gysin}) defines the morphism
\begin{equation}
\label{taudpgysin}
\tau_{D'}^{!}\colon Z_{d}(S_{D'}^{\log}(j_{!}\mf))\rightarrow Z_{d}(\tau_{D'}^{-1}(S_{D'}^{\log}(j_{!}\mf))
\end{equation}
of groups of $d$-cycles.

\begin{prop}
\label{propinvcc}
Suppose that $X$ is purely of dimension $d$ and that $D$ has simple normal crossings.
Let $I'\subset I$ be a subset containing $I_{\mT,\mf}$ (Definition \ref{defindsub} (1))
and let $D'=\bigcup_{i\in I'}D_{i}$.
Assume that the ramification of $\mf$ is $\log$-$D'$-clean along $D$
and that the inverse image $\tau_{D'}^{-1}(S_{D'}^{\log}(j_{!}\mf))\subset T^{*}X$
of $\log$-$D'$-singular support $S_{D'}^{\log}(j_{!}\mf)\subset T^{*}X(\log D')$ by $\tau_{D'}$ (\ref{deftaue}) 
is of dimension $d$.
Let $h\colon W\rightarrow X$ be a $C_{D}$-transversal and properly $\tau_{D'}^{-1}(S_{D'}^{\log}(j_{!}\mf))$-transversal morphism (Definition \ref{defproptrans} (1)) 
from a smooth scheme $W$ over $k$ of pure dimension $e$,
where $C_{D}$ is as in (\ref{defcdp}).
Let $j'\colon h^{*}U\rightarrow W$ denote the base change of $j$ by $h$.
Then we have the following:
\begin{enumerate}
\item The ramification of $h^{*}\mf$ is $\log$-$h^{*}D'$-clean along $h^{*}D$
and we have $D_{\mT,h^{*}\mf}\subset h^{*}D'$ (\ref{defd*mf}).
The inverse image $\tau_{h^{*}D'}^{-1}(S_{h^{*}D'}^{\log}(j'_{!}h^{*}\mf))\subset T^{*}W$ of  
is equal to $h^{\circ}\tau_{D'}^{-1}(S_{D'}^{\log}(j_{!}\mf))$, where $h^{\circ}$ is as in Definition \ref{defctr} (1), and
is of dimension $e$.

\item Let $\tau_{D'}^{!}$ and $\tau_{h^{*}D'}^{!}$ be as in (\ref{taudpgysin}).
Then we have
\begin{equation}
\tau_{h^{*}D'}^{!}CC_{h^{*}D'}^{\log}(j'_{!}h^{*}\mf)
=h^{!}\tau_{D'}^{!}CC_{D'}^{\log}(j_{!}\mf) \notag
\end{equation}
in $Z_{e}(\tau_{h^{*}D'}^{-1}(S_{h^{*}D'}^{\log}(j'_{!}h^{*}\mf)))$, where $h^{!}$ 
is as in Definition \ref{defproptrans} (2).
\end{enumerate}
\end{prop}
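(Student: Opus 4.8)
The plan is to deduce the statement from the pull-back formula for the $\log$-$D'$-characteristic cycle (Proposition \ref{proppblogcc}) together with the compatibility of the Gysin homomorphism $\tau^{!}$ with the pull-back operation $h^{!}$, the latter being an instance of the functoriality of refined Gysin homomorphisms in cartesian squares. First I would settle the transversality bookkeeping. Since $C_{D'}\subset C_{D'\subset D}\subset C_{D}$, the morphism $h$, being $C_{D}$-transversal, is also $C_{D'\subset D}$- and $C_{D'}$-transversal by Remark \ref{remctr} (3). As $h$ is properly $\tau_{D'}^{-1}(S_{D'}^{\log}(j_{!}\mf))$-transversal it is in particular $\tau_{D'}^{-1}(S_{D'}^{\log}(j_{!}\mf))$-transversal, hence $\log$-$D'$-$S_{D'}^{\log}(j_{!}\mf)$-transversal by Proposition \ref{proplogtr}. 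Because $h$ is $C_{D}$-transversal, the pull-back $h^{*}Z_{\mf}$ is reduced, being a union of the smooth divisors $h^{*}D_{i}$, so Proposition \ref{propeqcl} (1) and (2) apply: the ramification of $h^{*}\mf$ is $\log$-$h^{*}D'$-clean along $h^{*}D$, we have $D_{\mT,h^{*}\mf}\subset h^{*}D'$, and $S_{h^{*}D'}^{\log}(j'_{!}h^{*}\mf)=h^{\circ}S_{D'}^{\log}(j_{!}\mf)$ by (\ref{eqlogsspb}). This yields the first two assertions of (1).

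For the equality $\tau_{h^{*}D'}^{-1}(S_{h^{*}D'}^{\log}(j'_{!}h^{*}\mf))=h^{\circ}\tau_{D'}^{-1}(S_{D'}^{\log}(j_{!}\mf))$ I would use two cartesian squares relating the $\tau$'s and the $dh$'s: the square with top $\tau_{D'}\times_{X}\id_{W}$, bottom $\tau_{h^{*}D'}$, and vertical maps $dh$ and $dh^{D'}$ (square A), and the base-change square (square B) expressing $\tau_{D'}\times_{X}\id_{W}$ as the base change of $\tau_{D'}$ along $T^{*}X(\log D')\times_{X}W\rightarrow T^{*}X(\log D')$. A diagram chase, using that both squares are cartesian and that $h$ is $S_{D'}^{\log}(j_{!}\mf)$-transversal in both the ordinary and the logarithmic sense, shows that a covector $\xi\in T^{*}W$ lies over $S_{h^{*}D'}^{\log}(j'_{!}h^{*}\mf)=dh^{D'}(S_{D'}^{\log}(j_{!}\mf)\times_{X}W)$ if and only if it is $dh(\zeta)$ for some $\zeta$ in $\tau_{D'}^{-1}(S_{D'}^{\log}(j_{!}\mf))\times_{X}W$; this is exactly the claimed set-theoretic equality. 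The dimension assertion follows since proper $\tau_{D'}^{-1}(S_{D'}^{\log}(j_{!}\mf))$-transversality makes $\tau_{D'}^{-1}(S_{D'}^{\log}(j_{!}\mf))\times_{X}W$ purely of dimension $e$, whose finite image under $dh$ is then purely of dimension $e$ by Remark \ref{remctr} (1).

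For (2), Proposition \ref{proppblogcc} gives $CC_{h^{*}D'}^{\log}(j'_{!}h^{*}\mf)=h^{!}CC_{D'}^{\log}(j_{!}\mf)$ with $h^{!}$ the logarithmic pull-back of Definition \ref{deflogproptr} (2), so the left-hand side equals $\tau_{h^{*}D'}^{!}\,h^{!}CC_{D'}^{\log}(j_{!}\mf)$ and it remains to prove the operator identity $\tau_{h^{*}D'}^{!}\circ h^{!}=h^{!}\circ\tau_{D'}^{!}$ on $Z_{d}(S_{D'}^{\log}(j_{!}\mf))$, with the logarithmic $h^{!}$ (Definition \ref{deflogproptr}) on the left and the ordinary one (Definition \ref{defproptrans}) on the right. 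Both $h^{!}$'s are, up to the sign $(-1)^{d-e}$ that matches on the two sides, the composition of the refined Gysin homomorphism for the l.c.i.\ morphism $h$ base-changed to the (logarithmic) cotangent bundles with push-forward along $dh$ resp.\ $dh^{D'}$; and $\tau_{D'}^{!}$, $\tau_{h^{*}D'}^{!}$ are the refined Gysin homomorphisms of the codimension-$0$ l.c.i.\ morphisms $\tau_{D'}$, $\tau_{h^{*}D'}$. I would then invoke the functoriality of refined Gysin homomorphisms (\cite[Chapter 6]{ful}): $\tau_{h^{*}D'}^{!}$ commutes with the proper push-forward $dh^{D'}_{*}$ via square A, and the Gysin homomorphism along $\tau_{D'}\times_{X}\id_{W}$ so produced commutes with the pull-back $\pr^{!}$ and is identified with $\tau_{D'}^{!}$ through square B; combining these two commutations produces the identity.

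The main obstacle is the last commutativity in (2): one must check that the refined Gysin homomorphism of $\tau_{h^{*}D'}$ pulled back through square A coincides with that of $\tau_{D'}$ pulled back through square B, i.e.\ that the two l.c.i.\ (virtual normal bundle) structures induced on the common base change $\tau_{D'}\times_{X}\id_{W}$ agree. This is exactly where the cartesianness of square A — equivalently, the compatibility of the residue/logarithmic structures under $h$ — is indispensable, and verifying it, together with the routine sign and dimension bookkeeping, is the technical heart of the argument.
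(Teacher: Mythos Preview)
Your proposal is correct and follows essentially the same route as the paper: both use Proposition \ref{proplogtr} and Proposition \ref{proppblogcc} for the cleanliness and the logarithmic pull-back formula, then the cartesian diagram (\ref{diaggysin}) (your squares A and B) to commute $\tau^{!}$ past $h^{!}$. The cartesianness of your square A, which you rightly flag as the technical crux, is dispatched in the paper by the external citation \cite[Lemma 3.8 (ii)]{yacc}.
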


\begin{proof}
Since $h$ is $C_{D'}$-transversal by Remark \ref{remctr} (3),
the morphism $h$ is $\log$-$D'$-$S_{D'}^{\log}(j_{!}\mf)$-transversal
by Proposition \ref{proplogtr}.
Since $h$ is $C_{D}$-transversal, the morphism $h$ is properly $\log$-$D'$-$S_{D'}^{\log}(j_{!}\mf)$-transversal,
the ramification of $h^{*}\mf$ is $\log$-$h^{*}D'$-clean along $h^{*}D$,
we have $D_{\mT,h^{*}\mf}\subset h^{*}D'$, 
and we have
\begin{equation}
\label{eqcomgtaud}
CC_{h^{*}D'}^{\log}(j'_{!}h^{*}\mf)=h^{!}CC_{D'}^{\log}(j_{!}\mf),
\end{equation}
where $h^{!}$ is as in Definition \ref{deflogproptr} (2),
by Proposition \ref{proppblogcc}.

We consider the commutative diagram
\begin{equation}
\label{diaggysin}
\xymatrix{
T^{*}W  
\ar[d]_-{\tau_{h^{*}D'}} & T^{*}X\times_{X}W \ar[l]_-{dh} \ar[r]^-{\pr_{1}} \ar[d]^-{\tau_{D',W}}  \ar@{}[dr] | {\square} & T^{*}X \ar[d]^-{\tau_{D'}} \\
T^{*}W(\log h^{*}D') & T^{*}X(\log D')\times_{X}W \ar[l]^-{dh^{D'}} \ar[r]_-{\pr_{1}} & T^{*}X(\log D'),
}
\end{equation}
where the right square is cartesian and the morphisms 
$dh$ and $dh^{D'}$ are as in (\ref{defdh}) and (\ref{defdhdp}),
respectively.
Since $h$ is $C_{D'}$-transversal,
the left square in (\ref{diaggysin}) is cartesian 
by \cite[Lemma 3.8 (ii)]{yacc}.
Therefore we have
\begin{equation}
\label{eqtaugdpcar}
\tau_{h^{*}D'}^{-1}(h^{\circ}S_{D'}^{\log}(j_{!}\mf))
=h^{\circ}\tau_{D'}^{-1}(S_{D'}^{\log}(j_{!}\mf)).
\end{equation}
Since $h$ is $C_{D}$-transversal, the pull-back
$h^{*}D$ is a divisor on $W$ with simple normal crossings by \cite[Lemma 3.4.5]{sacc},
and the pull-back $h^{*}Z_{\mf}$ is reduced.
Thus we have $h^{\circ}S_{D'}^{\log}(j_{!}\mf)=S_{h^{*}D'}^{\log}(j'_{!}h^{*}\mf)$
by Proposition \ref{propeqcl} (2),
and the left-hand side of (\ref{eqtaugdpcar}) is equal to
$\tau_{h^{*}D'}^{-1}(S_{h^{*}D'}^{\log}(j'_{!}h^{*}\mf))$.
Since the right-hand side of (\ref{eqtaugdpcar}) is of dimension $e$
by the proper $\tau_{D'}^{-1}(S_{D'}^{\log}(j_{!}\mf))$-transversality of $h$
and Remark \ref{remctr} (1),
the assertion (1) holds.
Since the left square in (\ref{diaggysin}) is cartesian, 
the assertion (2) holds by (\ref{eqcomgtaud}) and the commutativity of (\ref{diaggysin}).
\end{proof}

\subsection{Reduction to the totally wildly ramified case}
\label{ssredtw}

As a preparation for following subsections,
we prove the following proposition:

\begin{prop}
\label{proplogtrclpp}
Suppose that $D$ has simple normal crossings.
Let $I'\subset I$ be a subset containing $I_{\mT,\mf}$ (Definition \ref{defindsub} (1))
and let $D'=\bigcup_{i\in I'}D_{i}$.
Assume that the ramification of $\mf$ is $\log$-$D'$-clean along $D$.
Let $I'''\subset I''\subset I_{\mT,\mf}$ be subsets.
We put $E=\bigcup_{i\in I-I''}D_{i}$, $E'=\bigcup_{i\in I'-I''}D_{i}$, and
$V=X-E$.
Let $\mf'$ be a smooth sheaf of $\Lambda$-modules of rank 1 on $V$ whose 
associated character $\chi'\colon \pi_{1}^{\ab}(V)\rightarrow \Lambda^{\times}$ has the $p$-part inducing
the $p$-part of $\chi$ and let $i'\colon D_{I'''}=\bigcap_{i\in I'''}D_{i} \rightarrow X$
denote the canonical closed immersion.  
Then we have the following:
\begin{enumerate} 
\item $I_{\mT,\mf'}$ is contained in $I'-I''$ and the closed immersion $i'$ is $C_{E}$-transversal,
where $C_{E}$ is as in (\ref{defcdp}).
\item $R_{i'^{*}\mf'}^{i'^{*}E'}=i'^{*}R_{\mf'}^{E'}$, $Z_{i'^{*}\mf'}=i'^{*}Z_{\mf'}$, and $\cform^{i'^{*}E'}(i'^{*}\mf')
=di'^{E'}_{i'^{*}Z_{\mf'}^{1/p}}(i'^{*}\cform^{E'}(\mf'))$,
where $di'^{E'}_{i'^{*}Z_{\mf'}^{1/p}} \colon i'^{*}\Omega^{1}_{X}(\log E')|_{i'^{*}Z_{\mf'}^{1/p}}\rightarrow \Omega_{D_{I'''}}^{1}(\log i'^{*}E')
|_{i'^{*}Z_{\mf'}^{1/p}}$ is the morphism induced by $i'$.
The ramification of $i'^{*}\mf'$ is
$\log$-$i'^{*}E'$-clean along $i'^{*}E$. 
\end{enumerate}
\end{prop}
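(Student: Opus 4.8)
The plan is to prove the two assertions largely by reducing to the comparison lemmas already established, exploiting that $i'$ factors through a suitable intermediate divisor and that $\mf'$ differs from $\mf$ only in the tame components indexed by $I''$.

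For assertion (1), the inclusion $I_{\mT,\mf'}\subset I'-I''$ follows from Lemma \ref{lemclrelsecs} (or directly from its proof), which tells us that passing from $\mf$ to $\mf'$ on $V=X-E$ preserves the $\log$-$E'$-cleanliness and the identification $\sw(\chi|_{K_i})=\sw(\chi'|_{K_i})$, $\dt(\chi|_{K_i})=\dt(\chi'|_{K_i})$ for $i\in I-I''$ by Remark \ref{remtameloc} (1); since $I_{\mT,\mf}\subset I'$ and the components in $I''$ have been removed from the boundary of $V$, the tame locus of $\mf'$ sits inside $I'-I''$. For the $C_E$-transversality of $i'\colon D_{I'''}\to X$, I would invoke the criterion \cite[Lemma 3.4.5]{sacc}: since $D$ has simple normal crossings and $I'''\subset I''\subset I_{\mT,\mf}$, the pull-back $i'^{*}D_i$ is a smooth divisor on $D_{I'''}$ for each relevant $i$ and $i'^{*}E$ has simple normal crossings, exactly because a regular intersection of components of an snc divisor meets the remaining components transversally. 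This is a routine verification using the local coordinate system in which the $D_i$ are coordinate hyperplanes.

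For assertion (2), the strategy is to apply Lemma \ref{lemcformgl} with $h=i'$, whose hypotheses are met by assertion (1) together with Remark \ref{remcformgl} (2)(a)--(b) (a regular closed immersion of an intersection of components is $C_{E'\subset E}$-transversal). The key point is to verify one of the equivalent conditions (a), (b), (c) in Lemma \ref{lemcformgl} (1): I would check condition (a), namely that the restriction $di'^{E'}_{i'^{*}Z_{\mf'}^{1/p}}(i'^{*}\cform^{E'}(\mf'))|_{E_\theta^{1/p}}\neq 0$ along each irreducible component of $(i'^{*}Z_{\mf'})_{\red}$. Since the ramification of $\mf'$ is $\log$-$E'$-clean along $E$ by Lemma \ref{lemclrelsecs}, the germ $\cform^{E'}(\mf')_{x'}$ is a part of a basis of the relevant free module at every point $x'$ of $Z_{\mf'}^{1/p}$; restricting a basis element along the transversal immersion $i'$ keeps it nonzero because $i'$ is $C_{E'\subset E}$-transversal, so the pull-back remains part of a basis on $D_{I'''}$. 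This nonvanishing is exactly condition (a), whence conditions (b) and (c) follow, and Lemma \ref{lemcformgl} (2) then yields the first equality $R_{i'^{*}\mf'}^{i'^{*}E'}=i'^{*}R_{\mf'}^{E'}$, the equality $Z_{i'^{*}\mf'}=(i'^{*}Z_{\mf'})_{\red}=i'^{*}Z_{\mf'}$ (the last step using that $i'^{*}Z_{\mf'}$ is reduced by transversality), and the characteristic-form identity.

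The main obstacle I anticipate is the careful bookkeeping needed to confirm that the $\log$-$E'$-cleanliness of $\mf'$ genuinely transfers to $\log$-$i'^{*}E'$-cleanliness of $i'^{*}\mf'$ after restriction, rather than merely the numerical equalities of conductors. Concretely, I must ensure that the basis-part property of $\cform^{E'}(\mf')$ is preserved under $i'^{*}$, which requires that $i'$ does not force an accidental degeneration of the characteristic form; this is where the $C_E$-transversality established in (1), combined with Lemma \ref{lemeqtologdpcl} (2) applied on $D_{I'''}$, does the work. Once the nonvanishing $\cform^{i'^{*}E'}(i'^{*}\mf')(y')\neq 0$ is secured at every closed point $y'$ of $i'^{*}Z_{\mf'}^{1/p}$, the cleanliness conclusion is immediate from Lemma \ref{lemeqtologdpcl} (2), and the remaining equalities are formal consequences of Lemma \ref{lemcformgl}.
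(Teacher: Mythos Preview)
Your approach to part~(1) is correct and matches the paper.

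For part~(2), however, there is a genuine gap. You claim that ``restricting a basis element along the transversal immersion $i'$ keeps it nonzero because $i'$ is $C_{E'\subset E}$-transversal.'' This is false in general: $C_{E'\subset E}$-transversality of $i'$ only guarantees that the pull-backs $i'^{*}D_{i}$ are smooth and that $i'^{*}E'$ has simple normal crossings; it says nothing about whether a particular section of $\Omega^{1}_{X}(\log E')$ remains nonzero after applying $di'^{E'}$. Concretely, the kernel of $di'^{E'}$ at a point $x\in D_{I'''}$ is spanned by $dt_{i}$ for $i\in I'''$, and nothing in the $\log$-$E'$-cleanliness of $\mf'$ alone prevents $\cform^{E'}(\mf')(x')$ from lying in this span. (Take, for instance, the trivial case $X=\mathbf{A}^{2}$, $E'=\emptyset$, $i'\colon\{t_{1}=0\}\hookrightarrow X$: the immersion is $C_{\emptyset}$-transversal, yet the basis element $dt_{1}$ restricts to zero.)

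The missing ingredient is the $\log$-$D'$-cleanliness of the \emph{original} sheaf $\mf$, which you never invoke in part~(2). By Lemma~\ref{lemclrelsecf}, $\cform^{D'}(\mf)$ is the image of $\cform^{E'}(\mf')$ under the canonical map $\Omega^{1}_{X}(\log E')\to\Omega^{1}_{X}(\log D')$; by the commutative triangle~(\ref{diagcancl}), this map restricted to $D_{I'''}$ factors through $di'^{E'}$. Hence $\cform^{D'}(\mf)(x')\neq 0$ forces $di'^{E'}(\cform^{E'}(\mf'))(x')\neq 0$. This is exactly what Lemma~\ref{lemlogtrcl}~(2) packages in the case $|I''|=1$: the $\log$-$D'$-cleanliness of $\mf$ is \emph{equivalent} to the $\log$-$E'$-cleanliness of $\mf'$ together with the $\log$-$E'$-$S^{\log}_{E'}(j_{v!}\mf')$-transversality of $i_{r}$, and it is the latter transversality (not mere $C_{E'\subset E}$-transversality) that feeds into Proposition~\ref{propeqcl}~(1). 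The paper then handles general $I''$ by induction on $|I''|$, factoring $i'=i_{r}\circ i'_{r}$. Your one-shot application of Lemma~\ref{lemcformgl} could be salvaged by inserting the factorization argument above, but as written you have used only the derived cleanliness of $\mf'$, and that is insufficient.
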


In order to prove Proposition \ref{proplogtrclpp}, we prove several properties of
the canonical closed immersion $i_{r}\colon D_{r}\rightarrow X$ for $r\in I_{\mT,\mf}$
related to its logarithmic transversality.

\begin{defn}
\label{deffcircclog}
Let $I'\subset I$ be a subset such that
$D'=\bigcup_{i\in I'}D_{i}$ has simple normal crossings.
Let $C_{D'}$ be as in (\ref{defcdp}).
Let $f\colon X'\rightarrow X$ be a $C_{D'}$-transversal proper morphism of smooth schemes over $k$. 
For a closed conical subset $C'\subset T^{*}X'(\log f^{*}D')$,
we define a closed conical subset 
\begin{equation}
f_{\circ}C'\subset T^{*}X(\log D') \notag
\end{equation}
to be the image by the first projection $\pr_{1}\colon T^{*}X(\log D')\times_{X}X'\rightarrow T^{*}X(\log D')$ of the inverse image of $C'$ by the morphism
$df^{D'}\colon T^{*}X(\log D')\times_{X}X'\rightarrow T^{*}X'(\log f^{*}D')$
(\ref{defdhdp}).
\end{defn}

Let $I'\subset I$ be a subset such that $D'=\bigcup_{i\in I'}D_{i}$ has simple normal crossings
and let $I'''\subset I''\subset I'$ be subsets.
We put $E'=\bigcup_{i\in I'-I''}D_{i}$ and we denote the canonical closed immersion by $i'\colon D_{I'''}=\bigcap_{i\in I'''}D_{i}\rightarrow X$.
Then we construct a canonical injection
\begin{equation}
\label{iotaed}
i'_{E'/D'}\colon \Omega^{1}_{D_{I'''}}(\log i'^{*}E')\rightarrow \Omega_{X}^{1}(\log D')|_{D_{I'''}} \end{equation}
of locally free $\dvr_{D_{I'''}}$-modules as follows:
Let $\mathcal{I}\subset \dvr_{X}$ denote the defining ideal sheaf of $D_{I'''}\subset X$.
We consider the sequence 
\begin{equation}
\label{exseqlogsch}
\mathcal{I}/\mathcal{I}^{2}\xrightarrow{\bar{d}}
\Omega^{1}_{X}(\log E')|_{D_{I'''}}\xrightarrow{f}
\Omega^{1}_{D_{I'''}}(\log i'^{*}E')\rightarrow 0,
\end{equation}
where the morphism $\bar{d}$ sends the class of a section $a$ of $\mathcal{I}$ to
the image of the section $da$ of $\Omega_{X}^{1}|_{D_{I'''}}$ in $\Omega_{X}^{1}(\log E')|_{D_{I'''}}$ and 
the morphism $f$
is the morphism yielded by $i'$. 
Since $i'$ is a strict closed immersion for the log structures
on $D_{I'''}$ and $X$ defined by $i'^{*}E'$ and $E'$, respectively,
the sequence (\ref{exseqlogsch}) is exact  
by \cite[IV, Proposition 2.3.2]{og}.
We consider the exact sequence
\begin{equation}
\mathcal{I}/\mathcal{I}^{2}\xrightarrow{\bar{d}} \Omega_{X}^{1}(\log E')|_{D_{I'''}}
\xrightarrow{\tau_{E'/D',D_{I'''}}} \Omega_{X}^{1}(\log D')|_{D_{I'''}}, \notag
\end{equation}
where the morphism $\tau_{E'/D',D_{I'''}}$ is the canonical morphism.
Then the morphism $\tau_{E'/D',D_{I'''}}$ induces an injection $i'_{E'/D'}$: 
\begin{equation}
\label{diagcancl}
\xymatrix{
\Omega_{X}^{1}(\log E')|_{D_{I'''}}\ar[rr]^-{\tau_{E'/D',D_{I'''}}} \ar[dr]_-{f}& 
& \Omega_{X}^{1}(\log D')|_{D_{I'''}} \\
& \Omega_{D_{I'''}}^{1}(\log i'^{*}E'). \ar[ur]_-{i'_{E'/D'}}
}
\end{equation}
If $I'''=I''$, then the image of $\tau_{E'/D',D_{I'''}}$ is locally a direct summand of rank $\dim X-\sharp I'''$, and the image of $i'_{E'/D',D_{I'''}}$ is 
locally a direct summand of $\Omega_{X}^{1}(\log D')|_{D_{I'''}}$ 
of rank $\dim X -\sharp I'''$.

\begin{lem}[cf.\ {\cite[Lemma 3.12]{yacc}}]
\label{lemlogtrcl}
Let $I'\subset I$ be a subset such that
$D'=\bigcup_{i\in I'}D_{i}$ has simple normal crossings
and let $r\in I'$. 
We put $E=\bigcup_{i\in I-\{r\}}D_{i}$, $E'=\bigcup_{i\in I'-\{r\}}D_{i}$, and $V=X-E$.
Let $i_{r}\colon D_{r}\rightarrow X$ denote the canonical closed immersion and
let $j_{v}\colon V\rightarrow X$ denote the canonical open immersion.
\begin{enumerate}
\item Let $C\subset T^{*}X(\log E')$ be a closed conical subset.
If the closed immersion $i_{r}$ is $\log$-$E'$-$C$-transversal,
then we have 
\begin{equation}
\tau_{E'/D'}^{-1}(\tau_{E'/D'}(C))=C\cup {i_{r}}_{\circ}i_{r}^{\circ}C, \notag
\end{equation}
where $\tau_{E'/D'}$ is an in (\ref{deftauepse}). 
\item Suppose that $D$ has simple normal crossings,
that $I_{\mT,\mf}$ (Definition \ref{defindsub} (1)) is contained in $I'$,
and that $r\in I_{\mT,\mf}$.
Let $\mf'$ be a smooth sheaf of $\Lambda$-modules of rank 1 on $V=X-E$
whose associated character $\chi'\colon \pi_{1}^{\ab}(V)\rightarrow \Lambda^{\times}$ has the $p$-part induing the $p$-part of $\chi$.
Then the following are equivalent:
\begin{enumerate}
\item The ramification of $\mf$ is $\log$-$D'$-clean along $D$.
\item The ramification of $\mf'$ is $\log$-$E'$-clean along $E$ and the closed immersion $i_{r}$ is $\log$-$E'$-$S_{E'}^{\log}(j_{v!}\mf')$-transversal (Definition \ref{deflifdp} (3)).
\end{enumerate}
\item Let the assumptions and the notation be as in (2).  
Assume that the equivalent conditions (a) and (b) in (2) hold. 
Let $j_{v}'\colon D'\cap V\rightarrow D'$ denote the canonical open immersion.
Then we have two equalities
\begin{equation}
\label{issfp}
L_{i_{r}^{*}\mf'}^{i_{r}^{*}E'}=i_{r}^{\circ}L_{\mf'}^{E'}
\end{equation}
and
\begin{equation}
\label{tauinvcont}
\tau_{E'/D'}^{-1}(L_{\mf}^{D'})= L_{\mf'}^{E'}\cup 
{i_{r}}_{\circ}L^{i_{r}^{*}E'}_{i_{r}^{*}\mf'},
\end{equation}
where $L_{i_{r}^{*}\mf'}^{i_{r}^{*}E'}$, $L_{\mf'}^{E'}$, and $L_{\mf}^{D'}$
are as in Definition \ref{deflifdp} (2).
\end{enumerate}
\end{lem}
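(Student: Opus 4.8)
The plan is to prove the three parts of Lemma \ref{lemlogtrcl} in order, since each relies on the previous one together with the comparison results of Subsection \ref{sscompcf} and the transversality dictionary of Subsection \ref{sslogtrans}. For part (1), I would work fiberwise over a point of $D_{r}$. The key structural fact is that the fiber of $\tau_{E'/D'}\colon T^{*}X(\log E')\to T^{*}X(\log D')$ at a point of $D_{r}$ is a quotient that collapses exactly the residue direction $\dlog t_{r}$ coming from the extra log pole along $D_{r}$; away from $D_{r}$ the morphism $\tau_{E'/D'}$ is an isomorphism, so over $X-D_{r}$ the equality $\tau_{E'/D'}^{-1}(\tau_{E'/D'}(C))=C$ is immediate and the term ${i_{r}}_{\circ}i_{r}^{\circ}C$ contributes nothing there. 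Over $D_{r}$, the $\log$-$E'$-$C$-transversality of $i_{r}$ guarantees (via Definition \ref{deflogdpctr} and the finiteness in Remark \ref{remlogdrcdptr} (1)) that $i_{r}^{\circ}C$ is well-defined and that the preimage of $\tau_{E'/D'}(C)$ over $D_{r}$ is exactly the union of $C$ itself with the locus obtained by adding back the collapsed residue direction, which is precisely ${i_{r}}_{\circ}i_{r}^{\circ}C$. I expect this is the computational heart of the lemma and would model it closely on the argument in \cite[Lemma 3.12]{yacc}.

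For part (2), the strategy is to reduce the $\log$-$D'$-cleanliness of $\mf$ to a statement purely about $\cform^{E'}(\mf')$ and its restriction to $D_{r}$, using Lemma \ref{lemclrelsecf} (which identifies $\cform^{D'}(\mf)$ with the image of $\cform^{E'}(\mf')$ since $r\in I_{\mT,\mf}$, so that $R_{\mf}^{D'}=R_{\mf'}^{E'}$ and $Z_{\mf}=Z_{\mf'}$) and Lemma \ref{lemeqtologdpcl}, which translates cleanliness into the non-vanishing $\cform(\cdot)(x')\neq 0$ at closed points. First I would invoke Lemma \ref{lemclrelsecs} to get that (a) implies the $\log$-$E'$-cleanliness of $\mf'$. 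The transversality clause in (b) is then exactly the condition that $i_{r}$ does not meet the support $L_{\mf'}^{E'}$ in the bad way, which by Proposition \ref{propeqcl} (1) (applied with $h=i_{r}$) is equivalent to $\cform^{i_{r}^{*}E'}(i_{r}^{*}\mf')$ being everywhere nonzero along $D_{r}\cap V$, i.e.\ to cleanliness surviving restriction to $D_{r}$. Combining these via the characterization $\cform^{D'}(\mf)(x')\neq 0$ for all $x'\in Z_{\mf}^{1/p}$, and distinguishing points on $D_{r}$ from points off $D_{r}$, yields the equivalence of (a) and (b). The main obstacle here is bookkeeping: one must check that the fiberwise non-vanishing of $\cform^{D'}(\mf)$ at points lying on $D_{r}$ decomposes correctly into the cleanliness of $\mf'$ plus the transversality of $i_{r}$, and this uses that $r$ is tame so that $D_{r}$ contributes no component to $Z_{\mf}$.

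For part (3), I would obtain the first equality (\ref{issfp}) directly from Proposition \ref{propeqcl} (2) applied to $h=i_{r}$, once the hypotheses of that proposition are verified using part (2)(b) and the fact that $i_{r}^{*}Z_{\mf'}$ is reduced (as $r\in I_{\mT,\mf}$ ensures $D_{r}$ is not a component of $Z_{\mf'}$, so the intersection is transverse). The second equality (\ref{tauinvcont}) then follows by applying part (1) with $C=L_{\mf'}^{E'}$: since $i_{r}$ is $\log$-$E'$-$S_{E'}^{\log}(j_{v!}\mf')$-transversal by (b), and $L_{\mf'}^{E'}\subset S_{E'}^{\log}(j_{v!}\mf')$, the morphism $i_{r}$ is $\log$-$E'$-$L_{\mf'}^{E'}$-transversal by Remark \ref{remlogdrcdptr} (2), so part (1) gives $\tau_{E'/D'}^{-1}(\tau_{E'/D'}(L_{\mf'}^{E'}))=L_{\mf'}^{E'}\cup {i_{r}}_{\circ}i_{r}^{\circ}L_{\mf'}^{E'}$; rewriting $\tau_{E'/D'}(L_{\mf'}^{E'})=L_{\mf}^{D'}$ (from Lemma \ref{lemclrelsecf} and the fact that passing from $E'$ to $D'$ just adds the log pole along $D_{r}$) and substituting (\ref{issfp}) into the last term delivers (\ref{tauinvcont}). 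The remaining care is to confirm $\tau_{E'/D'}(L_{\mf'}^{E'})=L_{\mf}^{D'}$ as closed subschemes, which amounts to matching the defining invertible sheaves under the comparison of characteristic forms, and this is exactly what Lemma \ref{lemclrelsecf} provides.
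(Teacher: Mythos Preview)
Your proposal is correct and follows essentially the same approach as the paper's proof: part (1) is the cited \cite[Lemma 3.12 (i)]{yacc}; part (2) proceeds via Lemma \ref{lemclrelsecs} for the forward implication, then reduces to points of $i_{r}^{*}Z_{\mf}$ using Lemma \ref{lemclrelsecf} and concludes via Proposition \ref{propeqcl} (1) exactly as you describe; part (3) obtains (\ref{issfp}) from Proposition \ref{propeqcl} (2), establishes $\tau_{E'/D'}(L_{\mf'}^{E'})=L_{\mf}^{D'}$ from Lemma \ref{lemclrelsecf}, and then applies part (1) together with (\ref{issfp}). One minor wording slip: in part (2) the non-vanishing should be checked on $(i_{r}^{*}Z_{\mf'})^{1/p}$ rather than on ``$D_{r}\cap V$'', but this does not affect the argument.
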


\begin{proof}
(1) The assertion in nothing but \cite[Lemma 3.12 (i)]{yacc}.

(2) By Lemma \ref{lemclrelsecs}, the condition (a) implies the first condition in (b).
Thus we may assume that the first condition in (b) holds.
By Lemma \ref{lemclrelsecf}, 
the $\log$-$D'$-characteristic form $\cform^{D'}(\mf)$ is the image of $\cform^{E'}(\mf')$ by the canonical morphism 
\begin{equation}
\label{canmffpom}
\Omega_{X}^{1}(\log E')(R_{\mf'}^{E'})|_{Z_{\mf'}^{1/p}}
\rightarrow \Omega_{X}^{1}(\log D')(R_{\mf}^{D'})|_{Z_{\mf}^{1/p}}, 
\end{equation}
where $Z_{\mf'}=Z_{\mf}$ and $R_{\mf'}^{E'}=R_{\mf}^{D'}$.
Since the morphism (\ref{canmffpom}) of sheaves on $Z_{\mf}^{1/p}$ is an isomorphism outside $i_{r}^{*}Z_{\mf}^{1/p}=Z_{\mf}^{1/p}\cap D_{r}^{1/p}$,
the first condition in (b) implies 
the $\log$-$D'$-cleanliness of the ramification of $\mf$ along $D$
at every point on the complement $X-i_{r}^{*}Z_{\mf}$. 
Therefore it is sufficient to prove the equivalence of the $\log$-$D'$-cleanliness of the ramification of $\mf$ along $D$ at every closed point of
$i_{r}^{*}Z_{\mf}$ and the second condition in (b) by Remark \ref{remlogdpcl} (2).

We consider the commutative diagram
\begin{equation}
\label{modlocdsepfp}
\xymatrix{
\Omega_{X}^{1}(\log E')(R_{\mf}^{E'})|_{i_{r}^{*}Z_{\mf'}^{1/p}}\ar[rr]^-{\tau_{E'/D',i_{r}^{*}Z_{\mf}^{1/p}}} \ar[dr]_-{f_{i_{r}^{*}Z_{\mf'}^{1/p}}}& 
& \Omega_{X}^{1}(\log D')(R_{\mf}^{D'})|_{i_{r}^{*}Z_{\mf}^{1/p}} \\
& \Omega_{D_{r}}^{1}(\log i_{r}^{*}E')(i_{r}^{*}R_{\mf'}^{E'})|_{i_{r}^{*}Z_{\mf'}^{1/p}}. \ar[ur]_-{\ \ \ \ \ \ \ i_{r, E'/D',i_{r}^{*}Z_{\mf}^{1/p}}} 
}
\end{equation}
induced by (\ref{diagcancl}) with $I'''=I''=\{r\}$ and $i'=i_{r}$.
Since the image of $\Omega_{D_{r}}^{1}(\log i_{r}^{*}E')$ by
the injection $i_{r, E'/D'}$ (\ref{iotaed}) is locally a direct summand 
of $\Omega_{X}^{1}(\log D')(R_{\mf}^{D'})|_{D_{r}}$,
the morphism $i_{r, E'/D',i_{r}^{*}Z_{\mf}^{1/p}}$ in (\ref{modlocdsepfp}) is an injection.
By Lemma \ref{lemeqtologdpcl} (1), the ramification of $\mf$ is $\log$-$D'$-clean
along $D$ at every closed point of $i_{r}^{*}Z_{\mf}$ if and only if
the image by $\tau_{E'/D',i_{r}^{*}Z_{\mf}^{1/p}}$ of 
\begin{equation}
\dvr_{X}(R_{\mf'}^{E'})|_{i_{r}^{*}Z_{\mf'}^{1/p}}\cdot \cform^{E'}(\mf')|_{i_{r}^{*}Z_{\mf}^{1/p}}\subset \Omega_{X}^{1}(\log E')(R_{\mf'}^{E'})|_{i_{r}^{*}Z_{\mf'}^{1/p}} \notag
\end{equation}
is locally a direct summand of $\Omega_{X}^{1}(\log D')(R_{\mf}^{D'})|_{i_{r}^{*}Z_{\mf}^{1/p}}$ of rank $1$.
By Proposition \ref{propeqcl} (1),
the last condition is equivalent to the second condition in (b).

(3) By Proposition \ref{propeqcl} (2), we obtain the equality (\ref{issfp}). 
By Lemma \ref{lemclrelsecf}, we have 
\begin{equation}
\label{eqsdfsefplog}
L^{D'}_{\mf}=\tau_{E'/D'}(L^{E'}_{\mf'}).
\end{equation}
By taking the inverse images by $\tau_{E'/D'}$ of (\ref{eqsdfsefplog}),
we have
\begin{equation}
\label{containsdfsefp}
\tau_{E'/D'}^{-1}(L^{D'}_{\mf})=
\tau_{E'/D'}^{-1}(\tau_{E'/D'}(L^{E'}_{\mf'})). 
\end{equation}
Then we obtain the equality (\ref{tauinvcont})
by applying (1) to the right-hand side of (\ref{containsdfsefp})
and then applying the equality (\ref{issfp}).
\end{proof}

We prove Proposition \ref{proplogtrclpp}.

\begin{proof}[Proof of Proposition \ref{proplogtrclpp}]
(1) By Lemma \ref{lemclrelsecf}, we have $I_{\mT,\mf'}=I_{\mT,\mf}-I''\subset I'-I''$.
Since $D$ has simple normal crossings, the closed immersion $i'$ is $C_{E}$-transversal 
by \cite[Lemma 3.4.5]{sacc}.

(2) If $I'''=\emptyset$,
then the closed immersion $i'$ is the identity mapping of $X$
and there is nothing to prove except the last assertion.
The last assertion follows from  
Lemma \ref{lemclrelsecs}. 
Hence we may assume that $I'''\neq \emptyset$.
Then we have $I''\neq \emptyset$.

We prove the assertions by the induction on the cardinality of $I''$.
Suppose that the cardinality of $I''$ is $1$. 
Then we have $I'''=I''$, and
the closed immersion $i'$ is $\log$-$E'$-$S_{E'}^{\log}(j_{v!}\mf')$-transversal by Lemma \ref{lemlogtrcl} (2).
Thus the assertions follow from Proposition \ref{propeqcl} (1).

Suppose that the cardinality of $I''$ is $\ge 2$.  
We take $r\in I'''$. 
We put $F=\bigcup_{i\in I-\{r\}}D_{i}$, $F'=\bigcup_{i\in I'-\{r\}}D_{i}$, $W=X-F$, and $\mf''=\mf'|_{W}$.
Then $\mf''$ is a smooth sheaf of $\Lambda$-modules of rank $1$
on $W$ whose  associated character $\chi''\colon \pi_{1}^{\ab}(W)\rightarrow \Lambda^{\times}$ 
is induced by $\chi'$ and has the $p$-part inducing the $p$-part of $\chi$. 
By the case where $I'''=\emptyset$,
the ramification of $\mf'$ is $\log$-$E'$-clean along $E$
and the 
ramification of $\mf''$ is $\log$-$F'$-clean along $F$.
By Lemma \ref{lemclrelsecf},
we have $R_{\mf'}^{E'}=R_{\mf}^{D'}=R_{\mf''}^{F'}$, $Z_{\mf'}=Z_{\mf}=Z_{\mf''}$,
$I_{\mT,\mf}=I_{\mT,\mf''}-(I''-\{r\})$, and $I_{\mT,\mf''}=I_{\mT,\mf}-\{r\}\subset I'-\{r\}$.

Let $i_{r}\colon D_{r}\rightarrow X$ and $i'_{r}\colon D_{I'''}\rightarrow D_{r}$ denote the canonical closed immersions.
Then we have $i'=i_{r}\circ i_{r}'$.
By the case where the cardinality of $I''$ is $1$,
we have $R_{i_{r}^{*}\mf''}^{i_{r}^{*}F'}=i_{r}^{*}R_{\mf''}^{F'}$ and
$Z_{i_{r}^{*}\mf''}=i_{r}^{*}Z_{\mf''}$
and the ramification of $i_{r}^{*}\mf''$
is $\log$-$i_{r}^{*}F'$-clean along $i_{r}^{*}F$.
Then the last equality enables us to identify
$I_{\mT,i_{r}^{*}\mf''}$ with $I_{\mT,\mf''}=I_{\mT,\mf}-\{r\}$ locally.
Since the character $i_{r}^{*}\chi''$ corresponding to $i_{r}^{*}\mf''$
is induced by the character $i_{r}^{*}\chi'$ corresponding to $i_{r}^{*}\mf'$,
we have $R_{i_{r}^{*}\mf'}^{i_{r}^{*}E'}=R_{i_{r}^{*}\mf''}^{i_{r}^{*}F'}$ and
$Z_{i_{r}^{*}\mf'}=Z_{i_{r}^{*}\mf''}$ by Lemma \ref{lemclrelsecf}.
Therefore we have $R_{i_{r}^{*}\mf'}^{i_{r}^{*}E'}=i_{r}^{*}R_{\mf''}^{F'}=i_{r}^{*}R_{\mf'}^{E'}$
and $Z_{i_{r}^{*}\mf'}=i_{r}^{*}Z_{\mf''}=i_{r}^{*}Z_{\mf'}$, and
we can locally identify $I_{\mT,i_{r}^{*}\mf'}$ with $I_{\mT,\mf'}=I_{\mT,\mf''}-(I''-\{r\})
=I_{\mT,i_{r}^{*}\mf''}-(I''-\{r\})$.
Since we can locally identify the index set of irreducible components of $i_{r}^{*}F'$
with $I'-\{r\}$,
we can apply
the induction hypothesis to $i_{r}^{*}\mf''$, $i_{r}^{*}\mf'$, and 
$i_{r}'$.
Then we have $R_{i_{r}'^{*}i_{r}^{*}\mf'}^{i_{r}'^{*}i_{r}^{*}E'}=i_{r}'^{*}R_{i_{r}^{*}\mf'}^{i_{r}^{*}E'}
=i_{r}'^{*}i_{r}^{*}R_{\mf'}^{E'}$
and the ramification of $i_{r}'^{*}i_{r}^{*}\mf'$ is $\log$-$i_{r}'^{*}i_{r}^{*}E'$-clean along $i_{r}'^{*}i_{r}^{*}E$.
Thus the first and last assertions hold, since $i'=i_{r}\circ i_{r}'$.
The other two assertions hold by Lemma \ref{lemcformgl} (2).
\end{proof}

\begin{cor}
\label{corlogtrans}
Let the notation and the assumptions be as in Proposition \ref{proplogtrclpp}.
Then the closed immersion $i'\colon D_{I'''}\rightarrow X$ is
$\log$-$E'$-$S_{E'}^{\log}(j'_{!}\mf')$-transversal. 
\end{cor}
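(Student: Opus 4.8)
The plan is to deduce this corollary directly from the equivalence in Proposition \ref{propeqcl} (1), applied not to $(\mf, D', D, h)$ but to the substituted data $(\mf', E', E, i')$: that is, with $\mf'$ in place of $\mf$, the divisor $E$ in place of $D$, the subdivisor $E'\subset E$ in place of $D'\subset D$, and the closed immersion $i'\colon D_{I'''}\to X$ in place of the transversal morphism $h$. Under this substitution, condition (a) of Proposition \ref{propeqcl} (1) reads exactly ``$i'$ is $\log$-$E'$-$S_{E'}^{\log}(j'_{!}\mf')$-transversal'', which is the assertion of the corollary. So the whole task reduces to checking that the hypotheses of Proposition \ref{propeqcl} are met and then verifying its condition (b).

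First I would check the hypotheses. The divisor $E'$ has simple normal crossings since it is a union of components of $D$, which has simple normal crossings; the ramification of $\mf'$ is $\log$-$E'$-clean along $E$ (this is part of the conclusion of Proposition \ref{proplogtrclpp}, established already in its proof); and $I_{\mT,\mf'}$ is contained in $I'-I''$, the index set of $E'$, by Proposition \ref{proplogtrclpp} (1). The latter also guarantees that $S_{E'}^{\log}(j'_{!}\mf')$ is even defined, since Definition \ref{deflifdp} (3) requires the tame index set to lie inside the index set of the log divisor. I also need $i'$ to be $C_{E'\subset E}$-transversal (the transversality hypothesis on the map in Proposition \ref{propeqcl}): by Proposition \ref{proplogtrclpp} (1) the immersion $i'$ is $C_{E}$-transversal, and since $E'\subset E$ gives $C_{E'}\subset C_{E}$ while the extra conormal bundles $T^{*}_{D_i}X$ for $i\in I-I'$ appearing in $C_{E'\subset E}$ are among the summands of $C_{E}$, we have $C_{E'\subset E}\subset C_{E}$; hence $i'$ is $C_{E'\subset E}$-transversal by Remark \ref{remctr} (3).

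With the hypotheses in place, Proposition \ref{propeqcl} (1) tells me that conditions (a), (b), (c) are equivalent for $i'$, so it suffices to verify condition (b). But condition (b) is precisely the list of equalities furnished by Proposition \ref{proplogtrclpp} (2): $R_{i'^{*}\mf'}^{i'^{*}E'}=i'^{*}R_{\mf'}^{E'}$, the equality $Z_{i'^{*}\mf'}=i'^{*}Z_{\mf'}$, the compatibility $\cform^{i'^{*}E'}(i'^{*}\mf')=di'^{E'}_{i'^{*}Z_{\mf'}^{1/p}}(i'^{*}\cform^{E'}(\mf'))$ of characteristic forms, and the $\log$-$i'^{*}E'$-cleanliness of $i'^{*}\mf'$ along $i'^{*}E$. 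Invoking the implication (b)$\Rightarrow$(a) then yields the corollary.

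The only genuine point of care, and the step I expect to be the main (though minor) obstacle, is reconciling the reducedness bookkeeping between the two statements: condition (b) of Proposition \ref{propeqcl} is phrased with $(i'^{*}Z_{\mf'})_{\red}$, whereas Proposition \ref{proplogtrclpp} (2) gives $Z_{i'^{*}\mf'}=i'^{*}Z_{\mf'}$ on the nose. I would resolve this by noting that $i'^{*}Z_{\mf'}=Z_{\mf'}\times_{X}D_{I'''}$ is already reduced: $Z_{\mf'}$ is a union of irreducible components of $E$, and $i'$ is $C_{E}$-transversal, so the scheme-theoretic intersection with $D_{I'''}$ is reduced. Thus $i'^{*}Z_{\mf'}=(i'^{*}Z_{\mf'})_{\red}$ and the two formulations of condition (b) coincide, completing the verification.
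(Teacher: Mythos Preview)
Your proposal is correct and follows essentially the same approach as the paper: verify that $i'$ is $C_{E'\subset E}$-transversal (via Proposition \ref{proplogtrclpp} (1) and Remark \ref{remctr} (3)), then invoke the equivalence (a)$\Leftrightarrow$(b) in Proposition \ref{propeqcl} (1), with condition (b) supplied by Proposition \ref{proplogtrclpp} (2). Your extra care about the reducedness of $i'^{*}Z_{\mf'}$ is a nice touch that the paper leaves implicit.
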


\begin{proof}
Since the closed immersion $i'$ is $C_{E'\subset E}$-transversal by
Remark \ref{remctr} (3) and Proposition \ref{proplogtrclpp} (1),
where $C_{E'\subset E}$ is as in (\ref{defcdpd}),
the assertion follows from Propositions \ref{propeqcl} (1) and \ref{proplogtrclpp} (2).
\end{proof}

\subsection{Singular support and $\log$-$D'$-singular support}
\label{ssinvimlogss}

We construct a closed conical subset $S_{D'}(j_{!}\mf)\subset T^{*}X$
by using the inverse image $\tau_{D'}^{-1}(S_{D'}^{\log}(j_{!}\mf))\subset T^{*}X$ of 
the $\log$-$D'$-singular support $S_{D'}^{\log}(j_{!}\mf)\subset T^{*}X(\log D')$ of $j_{!}\mf$ (Definition \ref{deflifdp} (3)) by $\tau_{D'}$ (\ref{deftaue})
and prove that the singular support $SS(j_{!}\mf)$ is contained in the closed conical subset 
$S_{D'}(j_{!}\mf)\subset T^{*}X$.

\begin{defn}
\label{defsdpf}
Assume that $D$ has simple normal crossings.
Let $I'\subset I$ be a subset containing $I_{\mT,\mf}$ (Definition \ref{defindsub} (1))
and let $D'=\bigcup_{i\in I'}D_{i}$.
Assume that the ramification of $\mf$ is $\log$-$D'$-clean along $D$.
Let $i_{I''}\colon D_{I''}=\bigcap_{i\in I''}D_{i}\rightarrow X$ denote the canonical closed immersion for each $I''\subset I_{\mT,\mf}$.
We put $D_{\mW,\mf}'=\bigcup_{i\in I'\cap I_{\mW,\mf}}D_{i}$.
Then we define a closed conical subset
$S_{D'}(j_{!}\mf)\subset T^{*}X$ by
\begin{equation}
S_{D'}(j_{!}\mf)=\bigcup_{I''\subset I_{\mT,\mf}}i_{I''\circ}C_{i_{I''}^{*}D'_{\mW,\mf}\subset i_{I''}^{*}D_{\mW,\mf}}\cup \tau_{D'}^{-1}(S^{\log}_{D'}(j_{!}\mf)),  \notag
\end{equation}
where $i_{I''\circ}$ and $C_{i_{I''}^{*}D'_{\mW,\mf}\subset i_{I''}^{*}D_{\mW,\mf}}$ are as in Definition \ref{defpfc}
and (\ref{defcdpd}), respectively, and 
$\tau_{D'}^{-1}(S_{D'}^{\log}(j_{!}\mf))$ is the inverse image
of the $\log$-$D'$-singular support $S_{D'}^{\log}(j_{!}\mf)$ of $j_{!}\mf$ (Definition \ref{deflifdp} (3)) by
$\tau_{D'}$ (\ref{deftaue}).
\end{defn}

\begin{rem}
\label{remdefsdpf}
Let the notation and the assumptions be as in Definition \ref{defsdpf}.
\begin{enumerate}
\item Since $i_{I''\circ}C_{i_{I''}^{*}D'_{\mW,\mf}\subset i_{I''}^{*}D_{\mW,\mf}}$
for $I''\subset I_{\mT,\mf}$ is a union of irreducible components of $C_{D}$ (\ref{defcdp}),
the closed conical subset $S_{D'}(j_{!}\mf)\subset T^{*}X$ is contained in
the union $C_{D}\cup \tau_{D'}^{-1}(S_{D'}^{\log}(j_{!}\mf))$.
If $I_{\mT,\mf}=\emptyset$, then 
the closed conical subset $S_{D'}(j_{!}\mf)\subset T^{*}X$ is equal to
the union $C_{D}\cup \tau_{D'}^{-1}(S_{D'}^{\log}(j_{!}\mf))$.

\item The closed conical subset $S_{D'}(j_{!}\mf)\subset T^{*}X$ is stable under the replacement of $\chi$ by the $p$-part of $\chi$ 
by Remarks \ref{remdefitw} (2) and \ref{remlifdp} (2).
\end{enumerate}
\end{rem}

Let 
\begin{equation}
j_{w}\colon U_{\mW,\mf}=X-D_{\mW,\mf}\rightarrow X \notag
\end{equation}
be the canonical open immersion,
where $D_{\mW,\mf}$ is as in (\ref{defd*mf}).
Let $\mf_{\mW}$ be a smooth sheaf of $\Lambda$-modules of rank 1 on $U_{\mW,\mf}$ 
such that the character
$\chi_{\mW}\colon \pi_{1}^{\ab}(U_{\mW,\mf})\rightarrow \Lambda^{\times}$ corresponding to $\mf_{\mW}$ has the $p$-part
inducing the $p$-part of $\chi$.
For the proof of the inclusion $SS(j_{!}\mf)\subset S_{D'}(j_{!}\mf)$,
we use the following {\bf (Induction Step)} on the cardinality of $I_{\mT,\mf}$
(Definition \ref{defindsub} (1))
under the assumptions in Definition \ref{defsdpf}:

\bigskip
\noindent
{\bf (Induction Step)}:
Suppose that $\mf$ satisfies a condition (P) when $I_{\mT,\mf}=\emptyset$.
In the case where the cardinality of $I_{\mT,\mf}$ is $\ge 1$, then
we take $r\in I_{\mT,\mf}$.
We put $E=\bigcup_{i\in I-\{r\}}D_{i}$, $E'=\bigcup_{i\in I'-\{r\}}D_{i}$,  $V=X-E$, and $\mf'=\mf_{\mW}|_{V}$.
By Lemma \ref{lemclrelsecf}, we have $I_{\mT,\mf'}=I_{\mT,\mf}-\{r\}\subset I'-\{r\}$.
By Lemma \ref{lemclrelsecs}, the ramification of $\mf'$ is $\log$-$E'$-clean along $E$.
Let $i_{r}\colon D_{r}\rightarrow X$ denote the canonical closed immersion and let $j_{v}\colon V\rightarrow X$ denote the canonical open immersion.
We locally identify the index set $I_{r}'$ of $i_{r}^{*}E'$ with $I'-\{r\}$.
By Proposition \ref{proplogtrclpp} (2), we can locally identify $I_{\mT,i_{r}^{*}\mf'}$ with $I_{\mT,\mf'}=I_{\mT,\mf}-\{r\}$ and
the ramification of $i_{r}^{*}\mf'$ is $\log$-$i_{r}^{*}E'$-clean along $i_{r}^{*}E$.
Then we can apply the induction hypothesis to each of $\mf'$ and $i_{r}^{*}\mf'$, and 
it follows that each of them satisfies the condition (P).

\begin{lem}
\label{lemsss}
Suppose that $D$ has simple normal crossings.
Let $I'\subset I$ be a subset containing $I_{\mT,\mf}$ (Definition \ref{defindsub} (1))
and let $D'=\bigcup_{i\in I'}D_{i}$.
Assume that the ramification of $\mf$ is $\log$-$D'$-clean along $D$.
Let $\mf_{\mW}$ be a smooth sheaf of $\Lambda$-modules of rank 1 on $U_{\mW,\mf}=X-D_{\mW,\mf}$ (\ref{defd*mf}) such that the $p$-part of the character $\chi_{\mW}\colon \pi_{1}^{\ab}(U_{\mW,\mf})\rightarrow \Lambda^{\times}$ corresponding to $\mf_{\mW}$ induces the $p$-part of $\chi$.
Let $i_{I''}\colon D_{I''}=\bigcap_{i\in I''}D_{i}\rightarrow X$ denote the canonical closed immersion for $I''\subset I_{\mT,\mf}$
and let $j_{w,I''}\colon i_{I''}^{*}U_{\mW,\mf}\rightarrow D_{I''}$ denote the canonical open immersion.
We put $D_{\mW,\mf}'=\bigcup_{i\in I'\cap I_{\mW,\mf}}D_{i}$.
Then we have
\begin{equation}
\label{defsdjfgen}
S_{D'}(j_{!}\mf)=\bigcup_{I''\subset I_{\mT,\mf}}
{i_{I''}}_{\circ}S_{i_{I''}^{*}D'_{\mW,\mf}}(j_{w,I''!}i_{I''}^{*}\mf_{\mW}),
\end{equation}
where the ramification of $i_{I''}^{*}\mf_{\mW}$ is $\log$-$i_{I''}^{*}D_{\mW,\mf}'$-clean 
along $i_{I''}^{*}D_{\mW,\mf}$ for every $I''\subset I_{\mT,\mf}$ by Proposition \ref{proplogtrclpp} (2).
\end{lem}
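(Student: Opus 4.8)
The plan is to prove the equality of closed conical subsets in (\ref{defsdjfgen}) by induction on the cardinality of $I_{\mT,\mf}$, following the {\bf (Induction Step)} framework set up just before the statement. The condition (P) to be propagated is precisely the validity of formula (\ref{defsdjfgen}) for the sheaf in question. First I would dispose of the base case $I_{\mT,\mf}=\emptyset$: here the only subset $I''\subset I_{\mT,\mf}$ is $I''=\emptyset$, for which $i_{I''}=\id_X$ and $j_{w,I''}=j_w$, so the right-hand side collapses to $S_{D'_{\mW,\mf}}(j_{w!}\mf_{\mW})$. Since $I_{\mT,\mf}=\emptyset$ forces $D_{\mW,\mf}=D$ and $D'_{\mW,\mf}=D'$, one checks that $\mf_{\mW}$ and $\mf$ have the same $p$-part of their characters, so by Remark \ref{remlifdp} (2) and Remark \ref{remdefsdpf} (2) the invariants $S_{D'}^{\log}$ and $S_{D'}$ depend only on the $p$-part; the base case then reduces to the definition in Definition \ref{defsdpf} together with Remark \ref{remdefsdpf} (1), which gives $S_{D'}(j_{!}\mf)=C_D\cup\tau_{D'}^{-1}(S_{D'}^{\log}(j_{!}\mf))$ when $I_{\mT,\mf}=\emptyset$.

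For the inductive step, with $r\in I_{\mT,\mf}$ fixed and $E,E',V,\mf'$ as in {\bf (Induction Step)}, the strategy is to split the index sets $I''\subset I_{\mT,\mf}$ on the right-hand side of (\ref{defsdjfgen}) into those not containing $r$ and those containing $r$. Subsets $I''\not\ni r$ are exactly the subsets of $I_{\mT,\mf'}=I_{\mT,\mf}-\{r\}$ and contribute, via the induction hypothesis applied to $\mf'$ on $V$, the union defining $S_{E'}(j_{v!}\mf')$. Subsets $I''\ni r$ can be written $I''=\{r\}\sqcup I'''$ with $I'''\subset I_{\mT,\mf}-\{r\}$; using $i_{I''}=i_r\circ i_{I'''}^{D_r}$ (factoring the closed immersion through $D_r$) and Lemma \ref{lemcompps} for the compatibility $(i_r\circ\,\cdot\,)_\circ=i_{r\circ}(\cdot)_\circ$, these contribute $i_{r\circ}$ applied to the union defining $S_{i_r^*E'}(j_{w,r!}i_r^*\mf')$, where the induction hypothesis is applied to $i_r^*\mf'$ on $D_r$. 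Thus the right-hand side of (\ref{defsdjfgen}) is seen to equal $S_{E'}(j_{v!}\mf')\cup i_{r\circ}S_{i_r^*E'}(j_{w,r!}i_r^*\mf')$.

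It then remains to identify this last union with $S_{D'}(j_{!}\mf)$. This is where the key geometric input enters: I would invoke Lemma \ref{lemlogtrcl}, whose parts (2) and (3) provide precisely the decomposition $\tau_{E'/D'}^{-1}(L_{\mf}^{D'})=L_{\mf'}^{E'}\cup i_{r\circ}L^{i_r^*E'}_{i_r^*\mf'}$ in (\ref{tauinvcont}), together with the cleanliness transfer and the transversality of $i_r$ guaranteed by Corollary \ref{corlogtrans}. Combining (\ref{tauinvcont}) with the factorization $\tau_{D'}=\tau_{E'/D'}\circ\tau_{E'}$ (or rather matching the inverse images under $\tau_{D'}$ and $\tau_{E'}$) and tracking how the conormal-bundle terms $i_{I''\circ}C_{\cdots}$ reorganize under the splitting, I would match the $L$-parts and the $C$-parts of both sides separately. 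The cleanliness claims for $i_{I''}^*\mf_{\mW}$ asserted at the end of the statement follow directly from Proposition \ref{proplogtrclpp} (2), applied with the relevant index sets, so no separate argument is needed there.

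The main obstacle I anticipate is purely bookkeeping rather than conceptual: one must carefully verify that the conormal-bundle contributions $i_{I''\circ}C_{i_{I''}^*D'_{\mW,\mf}\subset i_{I''}^*D_{\mW,\mf}}$ reorganize correctly under the two-fold splitting, in particular that the terms arising from $i_r^*E$ and its further intersections match the terms $C_{i_{I''}^*D'_{\mW,\mf}\subset i_{I''}^*D_{\mW,\mf}}$ indexed by subsets containing $r$. This requires keeping straight the local identifications of index sets of $i_r^*E'$ with $I'-\{r\}$ and of $I_{\mT,i_r^*\mf'}$ with $I_{\mT,\mf}-\{r\}$, which are furnished by Proposition \ref{proplogtrclpp} (2). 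I expect the $L_{\mf}^{D'}$-part to be handled cleanly by (\ref{tauinvcont}), so the delicate step is ensuring the $C_D$-type terms on both sides are in exact bijection via the factorization of the closed immersions through $D_r$.
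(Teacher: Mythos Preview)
Your proposal is correct and follows essentially the same approach as the paper: induction on $|I_{\mT,\mf}|$, splitting $I''\subset I_{\mT,\mf}$ by whether $r\in I''$, applying the induction hypothesis to $\mf'$ and $i_r^*\mf'$ via Lemma \ref{lemcompps}, and then reducing to the identity $S_{D'}(j_{!}\mf)=S_{E'}(j_{v!}\mf')\cup i_{r\circ}S_{i_r^*E'}(j_{v!}'i_r^*\mf')$, which is established by handling the conormal-bundle terms and the $\tau^{-1}$-terms separately using (\ref{eqinvzero}) and (\ref{tauinvcont}). One small point: the open immersion you denote $j_{w,r}$ should be the base change $j_v'\colon i_r^*V\rightarrow D_r$ of $j_v$ (not $j_{w,\{r\}}$ from the statement, since $i_r^*\mf'$ lives on $i_r^*V$, not on $i_r^*U_{\mW,\mf}$); the paper also explicitly verifies $\tau_{E'}^{-1}(i_{r\circ}L^{i_r^*E'}_{i_r^*\mf'})=i_{r\circ}\tau_{i_r^*E'}^{-1}(L^{i_r^*E'}_{i_r^*\mf'})$ via a cartesian square, which is the precise form of the compatibility you allude to.
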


\begin{proof}
We use the induction on the cardinality of $I_{\mT,\mf}$. 
If $I_{\mT,\mf}=\emptyset$,
then $D'=D'_{\mW,\mf}$ and the assertion follows from Remark \ref{remdefsdpf} (2).

Suppose that $I_{\mT,\mf}\neq \emptyset$.
We take $r\in I_{\mT,\mf}\subset I'$ and
we put $E=\bigcup_{i\in I-\{r\}}D_{i}$, $E'=\bigcup_{i\in I'-\{r\}}D_{i}$, $V=X-E$, and $\mf'=\mf_{\mW}|_{V}$. 
Then we have $I_{\mW,\mf}=I_{\mW,\mf'}$ by Lemma \ref{lemclrelsecf},
since the $p$-part of $\chi$ is induced by that of the character $\chi'\colon \pi_{1}^{\ab}(V)\rightarrow \Lambda^{\times}$ corresponding to $\mf'$.
Hence we have $I'\cap I_{\mW,\mf}=(I'-\{r\})\cap I_{\mW,\mf'}$.
Let $i_{r}\colon D_{r}\rightarrow X$ be the canonical closed immersion.
Then we can locally identify the index set $I_{r}'$ of the irreducible components
of $i_{r}^{*}E'$ with $I'-\{r\}$.
By Proposition \ref{proplogtrclpp} (2), we can locally identify $I_{\mW,i_{r}^{*}\mf'}$ with $I_{\mW,\mf'}=I_{\mW,\mf}$, and $I'_{r}\cap I_{\mW,i_{r}^{*}\mf'}$
with $(I'-\{r\})\cap I_{\mW,\mf'}=I'\cap I_{\mW,\mf}$.
Let $j_{v}\colon V\rightarrow X$ and $j_{v}'\colon i_{r}^{*}V\rightarrow D_{r}$ be the canonical open immersions.
By applying {\bf (Induction Step)}, we have
\begin{equation}
\label{sepjvfppf}
S_{E'}(j_{v!}\mf')
=\bigcup_{I''\subset I_{\mT,\mf}-\{r\}}i_{I''\circ}S_{i_{I''}^{*}D_{\mW,\mf}'}
(j_{w,I''!}i_{I''}^{*}\mf_{\mW})
\end{equation}
and 
\begin{equation}
\label{sireirjvfp}
S_{i_{r}^{*}E'}(j_{v!}'i_{r}^{*}\mf')
= \bigcup_{I''\subset I_{\mT,\mf}-\{r\}}i_{I''\circ}'S_{i_{I''}'^{*}i_{r}^{*}D_{\mW,\mf}'}(j_{w,I''\cup\{r\}!}i_{I''}'^{*}i_{r}^{*}\mf_{\mW}),  
\end{equation}
where $i'_{I''}\colon i_{r}^{*}D_{I''}\rightarrow D_{r}$ denotes the base change of $i_{I''}$ by $i_{r}$.
By applying $i_{r\circ}$ to (\ref{sireirjvfp}) and then applying 
Lemma \ref{lemcompps} to $i_{I''\cup\{r\}}=i_{r}\circ i_{I''}'$ for $I''\subset I_{\mT,\mf}-\{r\}$, we have
\begin{align}
\label{irsireirjvfp}
i_{r\circ}S_{i_{r}^{*}E'}(j_{v!}'i_{r}^{*}\mf')
&=\bigcup_{\substack{I''\subset I_{\mT,\mf}\\ r\in I''}}i_{I''\circ}S_{i_{I''}^{*}D'_{\mW,\mf}}(j_{w,I''!}i_{I''}^{*}\mf_{\mW}). 
\end{align}
Since the union of the right-hand sides of (\ref{sepjvfppf}) and (\ref{irsireirjvfp}) is the right-hand side of (\ref{defsdjfgen}),
it is sufficient to prove the equality
\begin{equation}
\label{lemsssteirtecont}
S_{D'}(j_{!}\mf)=S_{E'}(j_{v!}\mf')\cup i_{r\circ}S_{i_{r}^{*}E'}(j_{v!}'i_{r}^{*}\mf'). 
\end{equation}

By applying Lemma \ref{lemcompps} to $i_{I''\cup\{r\}}=i_{r}\circ i_{I''}'$ for $I''\subset I_{\mT,\mf}-\{r\}$,
we have
\begin{align}
\bigcup_{I''\subset I_{\mT,\mf}}
&i_{I''\circ}C_{i_{I''}^{*}D'_{\mW,\mf}\subset i_{I''}^{*}D_{\mW,\mf}} \notag \\
&=\bigcup_{I''\subset I_{\mT,\mf}-\{r\}}i_{I''\circ}C_{i_{I''}^{*}D'_{\mW,\mf}\subset i_{I''}^{*}D_{\mW,\mf}}
\cup \bigcup_{I''\subset I_{\mT,\mf}-\{r\}}i_{r\circ}i'_{I''\circ}C_{i_{I''}'^{*}i_{r}^{*}D'_{\mW,\mf}\subset i_{I''}'^{*}i_{r}^{*}D_{\mW,\mf}}. \notag
\end{align}
By (\ref{eqinvzero}),
we have
\begin{align}
\tau_{D'}^{-1}(T^{*}_{X}X(\log D'))&=
\bigcup_{I''\subset I'-\{r\}}T^{*}_{D_{I''}}X\cup\bigcup_{\substack{I''\subset I' \\ r\in I''}}T^{*}_{D_{I''}}X \notag \\
&=\tau_{E'}^{-1}(T^{*}_{X}X(\log E'))
\cup i_{r\circ}\tau_{i_{r}^{*}E'}^{-1}(T^{*}_{D_{r}}D_{r}(\log i_{r}^{*}E')). \notag
\end{align}
By applying $\tau_{E'}^{-1}$ to the equation (\ref{tauinvcont}) in Lemma \ref{lemlogtrcl} (3), 
we have
\begin{equation}
\tau_{D'}^{-1}(L^{D'}_{\mf})=\tau_{E'}^{-1}(L^{E'}_{\mf'})\cup \tau_{E'}^{-1}
(i_{r\circ}L^{i_{r}^{*}E'}_{i_{r}^{*}\mf'}). \notag
\end{equation}
We consider the commutative diagram
\begin{equation}
\xymatrix{ T^{*}X \ar[d]_-{\tau_{E'}} \ar@{}[dr] | {\square} & T^{*}X\times_{X}D_{r} \ar[l]_-{\pr_{1}} \ar[r]^-{di_{r}} \ar[d]^-{\tau_{E', D_{r}}}  & T^{*}D_{r} \ar[d]^-{\tau_{i_{r}^{*}E'}} \\
T^{*}X(\log E') & T^{*}X(\log E')\times_{X}D_{r} \ar[l]^-{\pr_{1}} \ar[r]_-{di_{r}^{E'}} & T^{*}D_{r}(\log i_{r}^{*}E'), } \notag
\end{equation}
where the left square is cartesian.
Then we have
\begin{equation}
\tau_{E'}^{-1}(i_{r\circ}L^{i_{r}^{*}E'}_{i_{r}^{*}\mf'})
=i_{r\circ}\tau_{i_{r}^{*}E'}^{-1}(L^{i_{r}^{*}E'}_{i_{r}^{*}\mf'}), \notag
\end{equation}
and we obtain the desired equality (\ref{lemsssteirtecont}).
\end{proof}

\begin{rem}
\label{remlemsdpd}
Suppose that $D$ has simple normal crossings.
Let $I'\subset I$ be a subset containing $I_{\mT,\mf}$ (Definition \ref{defindsub} (1))
and let $D'=\bigcup_{i\in I'}D_{i}$.
Assume that the ramification of $\mf$ is $\log$-$D'$-clean along $D$.
As is seen in the proof of Lemma \ref{lemsss}, 
the closed conical subset $S_{D'}(j_{!}\mf)\subset T^{*}X$ (Definition \ref{defsdpf})
satisfies the following relation:
Let $r\in I_{\mT,\mf}$.
We put $E=\bigcup_{i\in I-\{r\}}D_{i}$, $E'=\bigcup_{i\in I'-\{r\}}D_{i}$, and $V=X-E$.
Let $\mf'$ be a smooth sheaf of $\Lambda$-modules of rank 1 on $V$ whose 
associated character $\chi'\colon \pi_{1}^{\ab}(V)\rightarrow \Lambda^{\times}$ has the $p$-part inducing
the $p$-part of $\chi$.
Let $i_{r}\colon D_{r}\rightarrow X$ denote the canonical closed immersion
and $j_{v}\colon V\rightarrow X$ the canonical open immersion.
Then we have 
\begin{equation}
S_{D'}(j_{!}\mf)=S_{E'}(j_{v!}\mf')\cup i_{r\circ}S_{i_{r}^{*}E'}(j_{v!}'i_{r}^{*}\mf'), \notag
\end{equation}
where  $j_{v}'\colon i_{r}^{*}V\rightarrow D_{r}$ denotes the base change of $j_{v}$ by $i_{r}$.
\end{rem}

\begin{prop}
\label{proptowild}
Suppose that $X$ is purely of dimension $d$ and that $D$ has simple normal crossings.
Let $I'\subset I$ be a subset containing $I_{\mT,\mf}$ (Definition \ref{defindsub} (1))
and let $D'=\bigcup_{i\in I'}D_{i}$.
Assume that the ramification of $\mf$ is $\log$-$D'$-clean along $D$.
Let $j_{w}\colon U_{\mW,\mf}=X-D_{\mW,\mf}\rightarrow X$ (\ref{defd*mf}) denote the canonical open 
immersion
and let $\mf_{\mW}$ be a smooth sheaf of $\Lambda$-modules 
of rank 1 on $U_{\mW,\mf}$ whose associated 
character $\chi_{\mW}\colon \pi_{1}^{\ab}(U_{\mW,\mf})\rightarrow \Lambda^{\times}$ 
has the $p$-part inducing the $p$-part of $\chi$.
Let $i_{I''}\colon D_{I''}=\bigcap_{i\in I''}D_{i}\rightarrow X$ denote the canonical closed immersion for $I''\subset I_{\mT,\mf}$
and let $j_{w,I''}\colon i_{I''}^{*}U_{\mW,\mf}\rightarrow D_{I''}$ denote the canonical open immersion.
Let $r_{I''}$ denote the cardinality of $I''$ for $I''\subset I_{\mT,\mf}$.
Then we have the following:
\begin{enumerate}
\item For the singular support $SS(j_{!}\mf)$ (Definition \ref{defss} (2)), we have
\begin{equation}
SS(j_{!}\mf)=\bigcup_{I''\subset I_{\mT,\mf}} {i_{I''}}_{\circ}SS(j_{w,I''!}i_{I''}^{*}\mf_{\mW}). \notag
\end{equation}
\item For the characteristic cycle $CC(j_{!}\mf)$ (Definition \ref{defcc}), we have
\begin{equation}
CC(j_{!}\mf)=\sum_{I''\subset I_{\mT,\mf}}(-1)^{r_{I''}} {i_{I''}}_{!}CC(j_{w,I''!}i_{I''}^{*}\mf_{\mW}) \notag
\end{equation}
in $Z_{d}(T^{*}X)$.
\end{enumerate}
\end{prop}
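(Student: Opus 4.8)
The plan is to prove both assertions simultaneously by induction on the cardinality $\sharp I_{\mT,\mf}$, following the \textbf{(Induction Step)} recalled before Lemma \ref{lemsss}. Since $SS$ and $CC$ depend only on the $p$-part of the associated character by Proposition \ref{propcorssccsm}, and since the same holds for every restriction $i_{I''}^{*}\mf_{\mW}$ on $D_{I''}$, I would first observe that both sides of the two desired equalities are unchanged if $\mf_{\mW}$ is replaced by any other rank $1$ sheaf whose character has the same $p$-part. Hence I may assume $\mf=\mf_{\mW}|_{U}$ from the outset, so that the interior restrictions below become literal equalities of sheaves rather than equalities of $p$-parts. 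In the base case $I_{\mT,\mf}=\emptyset$ one has $D_{\mW,\mf}=D$, $U_{\mW,\mf}=U$, $j_{w}=j$, and the only index set is $I''=\emptyset$ with $i_{\emptyset}=\id_{X}$; both formulas then reduce to $SS(j_{!}\mf)=SS(j_{!}\mf_{\mW})$ and $CC(j_{!}\mf)=CC(j_{!}\mf_{\mW})$, which hold by the reduction just made.

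For the inductive step I fix $r\in I_{\mT,\mf}$ and introduce $E$, $E'$, $V=X-E$, $\mf'=\mf_{\mW}|_{V}$, the closed immersion $i_{r}\colon D_{r}\to X$, and the open immersions $j_{v}\colon V\to X$ and $j'_{v}\colon i_{r}^{*}V\to D_{r}$, exactly as in the \textbf{(Induction Step)}. The geometric heart of the argument is the distinguished triangle
\begin{equation}
j_{!}\mf\rightarrow j_{v!}\mf'\rightarrow i_{r*}j'_{v!}(i_{r}^{*}\mf')\xrightarrow{+1} \notag
\end{equation}
on $X$, which I would obtain by applying the exact functor $j_{v!}$ to the localization triangle of $\mf'$ on $V$ for the open-closed decomposition $V=(V-D_{r})\sqcup(D_{r}\cap V)$, using $V-D_{r}=U$, $\mf'|_{U}=\mf$, and the identity $j_{v!}i_{r,V*}=i_{r*}j'_{v!}$ coming from the cartesian square relating $i_{r}$, $j_{v}$, and the closed/open immersions $i_{r,V}$, $j'_{v}$. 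By Proposition \ref{proplogtrclpp} together with Lemmas \ref{lemclrelsecs} and \ref{lemclrelsecf}, both $(V,\mf')$ and $(D_{r},i_{r}^{*}\mf')$ satisfy the standing hypotheses (smooth normal crossings boundary, $\log$-cleanliness, $I_{\mT}$ contained in the logarithmic index set, and tame index set locally $I_{\mT,\mf}-\{r\}$), so the induction hypothesis applies to each, with associated wild sheaves $\mf_{\mW}|_{V}$ and $i_{r}^{*}\mf_{\mW}$.

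For assertion (2) I would apply Lemma \ref{lemsumcc} to the triangle to get $CC(j_{v!}\mf')=CC(j_{!}\mf)+CC(i_{r*}j'_{v!}i_{r}^{*}\mf')$, rewrite the last term as $i_{r!}CC(j'_{v!}i_{r}^{*}\mf')$ by Lemma \ref{lemclimpush}, expand $CC(j_{v!}\mf')$ and $CC(j'_{v!}i_{r}^{*}\mf')$ by the induction hypothesis, and push the second expansion forward by $i_{r!}$. The compatibility $i_{r!}i'_{I'''!}=i_{I'''\cup\{r\}!}$ from (\ref{eqlblb}) then identifies the $r\in I''$ terms, while the signs match because $(-1)^{r_{I'''}+1}=(-1)^{r_{I'''\cup\{r\}}}$; combining with the $r\notin I''$ terms coming from $CC(j_{v!}\mf')$ yields the stated sum over all $I''\subset I_{\mT,\mf}$. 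Assertion (1) I would then deduce from (2) rather than prove directly. Working \'{e}tale locally (legitimate since $SS$ and $CC$ are \'{e}tale local), every open immersion $j_{w,I''}$ becomes affine, so $i_{I''*}j_{w,I''!}i_{I''}^{*}\mf_{\mW}[\dim D_{I''}]$ is a perverse sheaf on $X$ and Proposition \ref{propsupcc} (1) shows that $(-1)^{d}(-1)^{r_{I''}}i_{I''!}CC(j_{w,I''!}i_{I''}^{*}\mf_{\mW})$ is an effective cycle for every $I''$. Since all terms share the global sign $(-1)^{d}$ there is no cancellation of supports in the sum computing $CC(j_{!}\mf)$, so its support is the union of the supports of the individual terms; by Lemma \ref{lemsscclim} the support of the $I''$-term is $i_{I''\circ}SS(j_{w,I''!}i_{I''}^{*}\mf_{\mW})$, and by Corollary \ref{corssccsm} (2) the support of $CC(j_{!}\mf)$ is $SS(j_{!}\mf)$, giving the asserted equality.

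The step I expect to be the main obstacle is verifying cleanly that the induction hypotheses genuinely transfer to both $(V,\mf')$ and $(D_{r},i_{r}^{*}\mf')$ — in particular the local identification of the tame index set $I_{\mT,i_{r}^{*}\mf'}$ with $I_{\mT,\mf}-\{r\}$ and the matching of the wild sheaves under restriction — which is bookkeeping channelled through Proposition \ref{proplogtrclpp}; and, for assertion (1), ensuring the no-cancellation (effectivity) argument is valid, for which the reduction to the \'{e}tale-local affine situation making each $j_{w,I''}$ affine is essential.
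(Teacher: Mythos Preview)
Your proposal is correct and follows essentially the same approach as the paper: reduce to the $p$-part via Proposition \ref{propcorssccsm}, prove (2) by induction on $\sharp I_{\mT,\mf}$ using the distinguished triangle $j_{!}\mf\to j_{v!}\mf'\to i_{r*}j'_{v!}i_{r}^{*}\mf'\to$ together with Lemmas \ref{lemsumcc} and \ref{lemclimpush}, and then deduce (1) from (2) via the effectivity of each summand and Lemma \ref{lemsscclim}. One small simplification: you do not need to pass \'{e}tale locally to make $j_{w,I''}$ affine, since $i_{I''}^{*}D_{\mW,\mf}$ is already a Cartier divisor on $D_{I''}$ (as $D$ has simple normal crossings), so $j_{w,I''}$ is affine on the nose and Corollary \ref{corssccsm} applies directly, exactly as the paper uses it.
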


\begin{proof}
By Proposition \ref{propcorssccsm}, we may assume that both $\chi$
and $\chi_{\mW}$ are of orders powers of $p$.
Then $\mf$ is unramified along $D_{\mT,\mf}$ (\ref{defd*mf}),
since $\mf$ is tamely ramified along $D_{\mT,\mf}$ by Remark \ref{remtameloc}
(3),
and we have $\mf=\mf_{\mW}|_{U}$.

We prove the implication (2) $\Rightarrow$ (1).
Suppose that the equality in (2) holds.
By Corollary \ref{corssccsm} (2),
it is sufficient to prove that the support of the right-hand side of
the equality in (2) is equal to the right-hand side of the equality in (1).
By Corollary \ref{corssccsm} (1), we have $(-1)^{d-r_{I''}}CC(j_{w,I''!}i_{I''}^{*}\mf_{\mW})\ge 0$  
for all $I''\subset I_{\mT,\mf}$.
Therefore the support of the right-hand side of the equality in (2)
is equal to the union of the support of $i_{I''!}CC(j_{w,I''!}i_{I''}^{*}\mf_{\mW})$
for all $I''\subset I_{\mT,\mf}$.
Thus the assertion holds by Lemma \ref{lemsscclim}.

We prove (2) by the induction on the cardinality of $I_{\mT,\mf}$.
If $I_{\mT,\mf}=\emptyset$, then we have $\mf=\mf_{\mW}$, and
there is nothing to show.
Suppose that $I_{\mT,\mf}\neq \emptyset$. 
We take $r\in I_{\mT,\mf}$. 
We put $E=\bigcup_{i\in I-\{r\}}D_{i}$,  
$V=X-E$, and $\mf'=\mf_{\mW}|_{V}$.
Let $i_{r}\colon D_{r}\rightarrow X$ be the canonical closed immersion
and $j_{v}\colon V\rightarrow X$ 
the canonical open immersion.
Let $j_{v}'\colon i_{r}^{*}V\rightarrow D_{r}$ and $i_{I''}'\colon i_{r}^{*}D_{I''}\rightarrow D_{r}$ denote the base changes of $j_{v}$ and $i_{I''}$ by $i_{r}$, respectively.
Then we apply {\bf (Induction Step)} and we have 
\begin{equation}
\label{eqccirsum}
CC(j_{v!}\mf')=\sum_{I''\subset I_{\mT,\mf}-\{r\}}(-1)^{r_{I''}}{i_{I''}}_{!}CC(j_{w,I''!}i_{I''}^{*}\mf_{\mW})
\end{equation}
and
\begin{equation}
\label{eqccirfpsum}
CC(j'_{v!}i_{r}^{*}\mf')=\sum_{I''\subset I_{\mT,\mf}-\{r\}}(-1)^{r_{I''}}i'_{I''!}CC(j_{w,I''\cup\{r\}!}i'^{*}_{I''}i_{r}^{*}\mf_{\mW}).
\end{equation}
By applying $i_{r!}$ to (\ref{eqccirfpsum}) and then applying the last assertion of Lemma \ref{lemclimpush} to $i_{I''\cup\{r\}}=i_{r}\circ i'_{I''}$ for $I''\subset I_{\mT,\mf}-\{r\}$, we have
\begin{align}
\label{eqirccirfpsum}
i_{r!}CC(j_{v!}'i_{r}^{*}\mf') 
&=\sum_{\substack{I''\subset I_{\mT,\mf} \\ r\in I''}}(-1)^{r_{I''}-1}i_{I''!}CC(j_{w,I''!}i^{*}_{I''}\mf_{\mW}). 
\end{align}
Since the difference between the right-hand side of (\ref{eqccirsum}) 
and that of (\ref{eqirccirfpsum}) is the right-hand side of the desired equality,
it is sufficient to prove the equality
\begin{equation}
\label{eqsccred}
CC(j_{!}\mf)=CC(j_{v!}\mf')-i_{r!}CC(j_{v!}'i_{r}^{*}\mf').
\end{equation}

We consider the diagram
\begin{equation}
\label{cartduvdr}
\xymatrix{ 
U \ar[r] \ar[d] & V \ar[d]^-{j_{v}} \\ X-D_{r} \ar[r]_-{j_{r}} & X & D_{r} \ar[l]^-{i_{r}}, }
\end{equation}
where the arrows in the square are the canonical open immersions and
the square is cartesian.
Since $\mf'|_{U}=\mf$,
the sheaf $j_{!}\mf$ is canonically isomorphic to $j_{r!}j_{r}^{*}j_{v!}\mf'$, and we have 
\begin{equation}
CC(j_{!}\mf)=CC(j_{r!}j_{r}^{*}j_{v!}\mf'). \notag
\end{equation}
By applying Lemma \ref{lemsumcc} to the distinguished triangle 
\begin{equation}
j_{r!}j_{r}^{*}j_{v!}\mf' \rightarrow j_{v!}\mf' \rightarrow i_{r*}i_{r}^{*}j_{v!}\mf'\rightarrow, \notag
\end{equation}
we have
\begin{equation}
CC(j_{r!}j_{r}^{*}j_{v!}\mf')=CC(j_{v!}\mf')-CC(i_{r*}i_{r}^{*}j_{v!}\mf'). \notag
\end{equation}
Since $j_{v!}'i_{r}^{*}\mf'$ is canonically isomorphic to $i_{r}^{*}j_{v!}\mf'$,
we have 
\begin{equation}
CC(i_{r*}i_{r}^{*}j_{v!}\mf')=i_{r!}CC(j_{v!}'i_{r}^{*}\mf') \notag
\end{equation}
by Lemma \ref{lemclimpush}, and 
we obtain the desired equality (\ref{eqsccred}).
\end{proof}

\begin{rem}
\label{remlemtowild}
Suppose that $X$ is of pure dimension and that $D$ has simple normal crossings.
Let $I'\subset I$ be a subset containing $I_{\mT,\mf}$ (Definition \ref{defindsub} (1))
and let $D'=\bigcup_{i\in I'}D_{i}$.
Assume that the ramification of $\mf$ is $\log$-$D'$-clean along $D$.
As is seen in the proof of Proposition \ref{proptowild}, the singular support $SS(j_{!}\mf)$ and the characteristic cycle $CC(j_{!}\mf)$
satisfy the following relations:
Let $r\in I_{\mT,\mf}$. We put $E=\bigcup_{i\in I-\{r\}}D_{i}$ and $V=X-E$. 
Let $\mf'$ be a smooth sheaf of $\Lambda$-modules of rank 1 on $V$ whose 
associated character $\chi'\colon \pi_{1}^{\ab}(V)\rightarrow \Lambda^{\times}$ has the $p$-part inducing
the $p$-part of $\chi$.
Let $i_{r}\colon D_{r}\rightarrow X$ denote the canonical closed immersion
and $j_{v}\colon V\rightarrow X$ the canonical open immersion.
Let $j_{v}'\colon i_{r}^{*}V\rightarrow D_{r}$ be the base change of $j_{v}$ by $i_{r}$.
Then we have 
\begin{align}
SS(j_{!}\mf)=SS(j_{v!}\mf')\cup i_{r\circ}SS(j_{v!}'i_{r}^{*}\mf') \notag
\end{align}
and
\begin{align}
CC(j_{!}\mf)=CC(j_{v!}\mf')- i_{r!}CC(j_{v!}'i_{r}^{*}\mf').  \notag
\end{align}
\end{rem}

The following theorem is a refinement of \cite[Theorem 3.13]{yacc}.

\begin{thm}[cf.\ {\cite[Theorem 3.13]{yacc}}]
\label{thmmains}
Suppose that $D$ has simple normal crossings.
Let $I'\subset I$ be a subset containing $I_{\mT,\mf}$ (Definition \ref{defindsub} (1))
and let $D'=\bigcup_{i\in I'}D_{i}$.
Assume that the ramification of $\mf$ is $\log$-$D'$-clean along $D$.
Then $j_{!}\mf$ is micro-supported
on the closed conical subset $S_{D'}(j_{!}\mf)\subset T^{*}X$ (Definition \ref{defsdpf}),
or equivalently we have $SS(j_{!}\mf)\subset S_{D'}(j_{!}\mf)$ by Remark \ref{remmicsup} (3).
\end{thm}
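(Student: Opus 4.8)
The plan is to prove the micro-support containment $SS(j_!\mf)\subset S_{D'}(j_!\mf)$ by reducing to the totally wildly ramified case, where $I_{\mT,\mf}=\emptyset$, and then invoking the cleanliness results from Section \ref{secdil}. First I would reduce to the situation where $\mf$ has no tame components along $D$. By Remark \ref{remlemsdpd}, the closed conical subset $S_{D'}(j_!\mf)$ satisfies the recursive relation $S_{D'}(j_!\mf)=S_{E'}(j_{v!}\mf')\cup i_{r\circ}S_{i_r^*E'}(j_{v!}'i_r^*\mf')$ for $r\in I_{\mT,\mf}$, and by Remark \ref{remlemtowild} the singular support satisfies $SS(j_!\mf)=SS(j_{v!}\mf')\cup i_{r\circ}SS(j_{v!}'i_r^*\mf')$ with exactly the same shape. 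Since both sides obey identical recursions in the cardinality of $I_{\mT,\mf}$, an induction argument (the \textbf{(Induction Step)} set up before Lemma \ref{lemsss}) reduces the inclusion to the base case $I_{\mT,\mf}=\emptyset$, applied to $\mf'$ on $V$ and to $i_r^*\mf'$ on $D_r$, using the fact that $i_{r\circ}$ preserves inclusions of closed conical subsets.

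Next I would treat the base case $I_{\mT,\mf}=\emptyset$. Here $Z_{\mf}=D$ and, by Remark \ref{remdefsdpf} (1), $S_{D'}(j_!\mf)=C_D\cup\tau_{D'}^{-1}(S_{D'}^{\log}(j_!\mf))$. By Proposition \ref{propgtrmicsup}, to show $j_!\mf$ is micro-supported on this set it suffices to check two things: that $\Supp j_!\mf=X$ is contained in the base of the conical set (which is automatic since the zero section lies in $\tau_{D'}^{-1}(S_{D'}^{\log}(j_!\mf))$), and that every separated $S_{D'}(j_!\mf)$-transversal morphism $h\colon W\to X$ is $j_!\mf$-transversal. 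Given such an $h$, its $C_D$-transversality guarantees (by \cite[Lemma 3.4.5]{sacc}) that $(h^*D)_{\red}$ has smooth components and each $h^*D_i$ is smooth, and by Proposition \ref{proplogtr} its $\tau_{D'}^{-1}(S_{D'}^{\log}(j_!\mf))$-transversality is equivalent to $\log$-$D'$-$S_{D'}^{\log}(j_!\mf)$-transversality. Then Proposition \ref{propeqcl} (1) tells us that the ramification of $h^*\mf$ is $\log$-$h^*D'$-clean along $(h^*D)_{\red}$ and that $D_{\mT,h^*\mf}\subset h^*D'$; in particular, since $I_{\mT,\mf}=\emptyset$ forces $h^*\mf$ to be wildly ramified along every component of $(h^*D)_{\red}$ (as $Z_{h^*\mf}=(h^*Z_{\mf})_{\red}=(h^*D)_{\red}$), we are in the hypotheses of Proposition \ref{mainpropsec} (2).

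The crux is then to apply Proposition \ref{mainpropsec} (2) both upstairs and downstairs. That proposition shows that under $\log$-$D'$-cleanliness with $I_{\mT,\mf}=\emptyset$, the canonical morphism $j_!\mf\to Rj_*\mf$ is an isomorphism; applying it to $\mf$ on $X$ and to $h^*\mf$ on $W$ (whose cleanliness along $(h^*D)_{\red}$ was just secured) yields that both $j_!\mf\to Rj_*\mf$ and $j'_!h^*\mf\to Rj'_*h^*\mf$ are isomorphisms, where $j'$ is the base change of $j$. Feeding these two isomorphisms into Proposition \ref{propjftr} gives exactly that the separated morphism $h$ is $j_!\mf$-transversal, completing the verification of condition (2) of Proposition \ref{propgtrmicsup}. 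By the equivalence in that proposition, $j_!\mf$ is micro-supported on $S_{D'}(j_!\mf)$, and by Remark \ref{remmicsup} (3) this is the desired inclusion $SS(j_!\mf)\subset S_{D'}(j_!\mf)$.

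I expect the main obstacle to be the careful bookkeeping in the inductive reduction: one must ensure that the recursions for $S_{D'}(j_!\mf)$ and for $SS(j_!\mf)$ are matched component by component through $i_{r\circ}$, that the auxiliary sheaf $\mf'$ (and its restriction $i_r^*\mf'$) is genuinely covered by the induction hypothesis with a strictly smaller tame index set, and that the cleanliness and transversality hypotheses propagate correctly under the base change by $i_r$ (which is the content of Proposition \ref{proplogtrclpp} and Corollary \ref{corlogtrans}). The base case itself is comparatively clean, since Propositions \ref{propgtrmicsup}, \ref{propjftr}, \ref{propeqcl}, and \ref{mainpropsec} fit together directly; the delicacy lies in verifying that $h$ being $S_{D'}(j_!\mf)$-transversal delivers simultaneously the $C_D$-transversality and the logarithmic transversality needed to invoke Proposition \ref{propeqcl} (1).
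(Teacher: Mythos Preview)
Your proposal is correct and follows essentially the same approach as the paper: induction on the cardinality of $I_{\mT,\mf}$, with the base case handled via Propositions \ref{propgtrmicsup}, \ref{propjftr}, \ref{proplogtr}, \ref{propeqcl} (1), and \ref{mainpropsec} (2), and the inductive step via the matched recursions in Remarks \ref{remlemsdpd} and \ref{remlemtowild}. One small point: in the base case the paper writes $S_{D'}(j_!\mf)=C_{D'\subset D}\cup\tau_{D'}^{-1}(S_{D'}^{\log}(j_!\mf))$ rather than $C_D\cup\tau_{D'}^{-1}(\cdots)$, and it is $C_{D'\subset D}$-transversality (not $C_D$-transversality) that is directly available and that suffices to invoke Proposition \ref{propeqcl} (1) and Proposition \ref{mainpropsec} (2).
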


\begin{proof}
Since $X$ is smooth over $k$, we may assume that $X$ is of pure dimension.
We prove the assertion
by the induction on the cardinality of $I_{\mT,\mf}$.

Suppose that $I_{\mT,\mf}=\emptyset$.
Then we have 
\begin{equation}
\label{eqtamesdp}
S_{D'}(j_{!}\mf)=C_{D'\subset D}\cup \tau_{D'}^{-1}(S_{D'}^{\log}(j_{!}\mf))
\end{equation}
by Remark \ref{remdefsdpf} (1).
Let $h\colon W\rightarrow X$ be a separated $S_{D'}(j_{!}\mf)$-transversal morphism of smooth schemes over $k$ and
let $j'\colon h^{*}U=U\times_{X}W\rightarrow W$ denote the bace change of $j$ by $h$.
By Remark \ref{remctr} (3) and (\ref{eqtamesdp}),
the morphism $h$ is $C_{D'}$-transversal and
$\tau_{D'}^{-1}(S_{D'}^{\log}(j_{!}\mf))$-transversal.
Thus the morphism $h$ is $\log$-$D'$-$S_{D'}^{\log}(j_{!}\mf)$-transversal by Proposition \ref{proplogtr}.
Since $h$ is $C_{D'\subset D}$-transversal by Remark \ref{remctr} (3) and (\ref{eqtamesdp}),
we have $Z_{h^{*}\mf}=(h^{*}Z_{\mf})_{\red}=(h^{*}D)_{\red}$
and the ramification of $h^{*}\mf$ is $\log$-$h^{*}D'$-clean along $(h^{*}D)_{\red}$
by Proposition \ref{propeqcl} (1).
Thus the canonical morphisms $j_{!}\mf\rightarrow Rj_{*}\mf$ 
and $j'_{!}h^{*}\mf\rightarrow Rj'_{*}h^{*}\mf$ are isomorphisms
by Proposition \ref{mainpropsec} (2).
By Proposition \ref{propjftr},
the morphism $h$ is $j_{!}\mf$-transversal.
Since $S_{D'}(j_{!}\mf)$ contains $T^{*}_{X}X$,
the base $S_{D'}(j_{!}\mf)\cap T^{*}_{X}X$ of $S_{D'}(j_{!}\mf)$
is $X=\Supp (j_{!}\mf)$.
Therefore the assertion holds by Proposition \ref{propgtrmicsup}.

Suppose that $I_{\mT,\mf}\neq \emptyset$.
We take $r\in I_{\mT,\mf}$ and
we put $E=\bigcup_{i\in I-\{r\}}D_{i}$, $E'=\bigcup_{i\in I'-\{r\}}D_{i}$, and
$V=X-E$. 
Let $\mf'$ be a smooth sheaf of $\Lambda$-modules of rank 1 on $V$ whose 
associated character $\chi'\colon \pi_{1}^{\ab}(V)\rightarrow \Lambda^{\times}$ has the $p$-part inducing
the $p$-part of $\chi$.
Let $i_{r}\colon D_{r}\rightarrow X$ denote the canonical closed immersion 
and let $j_{v}\colon V\rightarrow X$ denote the canonical open immersion.
Let $j_{v}'\colon i_{r}^{*}V\rightarrow D_{r}$ be the base change of $j_{v}$ by $i_{r}$.
We apply {\bf (Induction Step)} and we have 
\begin{align}
\label{ssjvfcontsepjvfp}
SS(j_{v!}\mf')\subset S_{E'}(j_{v!}\mf') 
\end{align}
and
\begin{align}
\label{ssirjvfcontsirjvepjvfp}
SS(j_{v!}'i_{r}^{*}\mf')\subset S_{i_{r}^{*}E'}(j_{v!}'i_{r}^{*}\mf'). 
\end{align}
By applying $i_{r\circ}$ to (\ref{ssirjvfcontsirjvepjvfp}), we have
\begin{equation}
\label{irssirjvfpcont}
i_{r\circ}SS(j_{v!}'i_{r}^{*}\mf')\subset i_{r\circ}S_{i_{r}^{*}E'}(j_{v!}'i_{r}^{*}\mf'). 
\end{equation}
By Remark \ref{remlemtowild},
the union of left-hand sides of (\ref{ssjvfcontsepjvfp})
and (\ref{irssirjvfpcont}) is equal to $SS(j_{!}\mf)$.
By Remark \ref{remlemsdpd}, the union of right-hand sides
of (\ref{ssjvfcontsepjvfp}) and (\ref{irssirjvfpcont}) is equal to $S_{D'}(j_{!}\mf)$.
\end{proof}

\subsection{Candidate of singular support}
\label{sscandss}

We improve Theorem \ref{thmmains} under the assumption that 
inverse image $\tau_{D'}^{-1}(S_{D'}^{\log}(j_{!}\mf))\subset T^{*}X$ 
of the $\log$-$D'$-singular support $S_{D'}^{\log}(j_{!}\mf)\subset T^{*}X(\log D')$
(Definition \ref{deflifdp} (3)) by $\tau_{D'}$ (\ref{deftaue}) has the same dimension with $X$.
We first recall the computation of the singular support $SS(j_{!}\mf)$ (Definition \ref{defss} (2)) in 
codimension $1$, namely outside a closed subset of $X$ of codimension $\ge 2$, given in \cite{sacc}.

\begin{prop}[{cf.\ \cite[Proposition 4.13]{sacc}}]
\label{propssndeg}
Suppose that $D$ has simple normal crossings and that
the ramification of $\mf$ is $\log$-$\emptyset$-clean along $D$.
\begin{enumerate}
\item If $I=I_{\mT,\mf}$ (Definition \ref{defindsub} (1)), then we have
\begin{align}
SS(j_{!}\mf)=\bigcup_{I''\subset I}T^{*}_{D_{I''}}X, \notag
\end{align}
where $T^{*}_{D_{I''}}X$ denotes the conormal bundle of $D_{I''}=\bigcap_{i\in I''}D_{i}\subset X$.
\item If $I=I_{\mW,\mf}$ (Definition \ref{defindsub} (1)), then we have 
\begin{align}
SS(j_{!}\mf)=T^{*}_{X}X\cup \bigcup_{i\in I}L_{i,\mf}^{\emptyset}, 
\notag
\end{align}
where $L_{i,\mf}^{\emptyset}$ is as in Definition \ref{deflifdp} (2).
\end{enumerate}
\end{prop}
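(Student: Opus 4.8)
The plan is to reduce both equalities to machinery already available, using that the singular support is defined \'etale locally (Remark \ref{remmicsup} (1)) and that, by Proposition \ref{propcorssccsm} (1), $SS(j_{!}\mf)$ depends only on the $p$-part of $\chi$. Since the assertion is local I would shrink $X$ so that it is purely of some dimension $d$ and $j$ is an affine open immersion; then Corollary \ref{corssccsm} (2) applies and gives $SS(j_{!}\mf)=\Supp CC(j_{!}\mf)$. I would also record at the outset that, by Remark \ref{remlogdpcl} (3), the hypothesis that the ramification of $\mf$ is $\log$-$\emptyset$-clean along $D$ is exactly the non-degeneration condition of \cite[Definition 4.2]{sacot}, so the relevant computations of \cite{sacc} are available.

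For part (2), where $I=I_{\mW,\mf}$, one has $I_{\mT,\mf}=\emptyset$, so the $\log$-$\emptyset$-characteristic cycle $CC^{\log}_{\emptyset}(j_{!}\mf)$ is defined (Definition \ref{deflifdp} (4)). By Remark \ref{remlifdp} (5), which invokes Saito's \cite[Theorem 7.14]{sacc} together with the non-degeneration just noted, the identity $CC(j_{!}\mf)=CC^{\log}_{\emptyset}(j_{!}\mf)$ holds as cycles on $T^{*}X(\log\emptyset)=T^{*}X$. Taking supports and using Remark \ref{remlifdp} (4), the support of $CC^{\log}_{\emptyset}(j_{!}\mf)$ is the $\log$-$\emptyset$-singular support $S^{\log}_{\emptyset}(j_{!}\mf)=T^{*}_{X}X\cup\bigcup_{i\in I_{\mW,\mf}}L^{\emptyset}_{i,\mf}$; since $I=I_{\mW,\mf}$ this union is over all $i\in I$. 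Combining with $SS(j_{!}\mf)=\Supp CC(j_{!}\mf)$ yields the claimed equality.

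For part (1), where $I=I_{\mT,\mf}$, each $\chi|_{K_{i}}$ is tame, i.e.\ $\sw(\chi|_{K_{i}})=0$ (Definition \ref{defindsub}), so the $p$-part of $\chi$, which has $p$-power order and vanishing Swan conductor at every $D_{i}$, is unramified along $D$ (Remark \ref{remtameloc} (3), a $p$-power order character trivial on wild inertia being trivial on inertia). I would replace $\mf$ by the rank $1$ sheaf $\mf'$ whose character is the $p$-part of $\chi$; this leaves $SS(j_{!}\mf)$ unchanged by Proposition \ref{propcorssccsm} (1), and $\mf'$ extends, locally on $X$, to a lisse sheaf across $D$. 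Tensoring $j_{!}\Lambda_{U}$ by a lisse sheaf does not affect the singular support, so $SS(j_{!}\mf)=SS(j_{!}\Lambda_{U})$, and the latter is the union $\bigcup_{I''\subset I}T^{*}_{D_{I''}}X$ of the conormal bundles of all strata of the simple normal crossings divisor $D$, as in the tame case of \cite[Proposition 4.13]{sacc}. The step I expect to be the main obstacle is precisely this last reduction: one must argue carefully that passing to the $p$-part and then to the locally lisse-extendable $\mf'$ preserves $SS$, and that this reduces the tame case to the constant-sheaf computation $SS(j_{!}\Lambda_{U})=\bigcup_{I''\subset I}T^{*}_{D_{I''}}X$, all while keeping the \'etale-local and purity reductions consistent with the global statement.
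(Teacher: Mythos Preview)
Your argument is correct, but it is considerably more elaborate than the paper's. The paper's proof is a single sentence: by Remark \ref{remlogdpcl} (3), the hypothesis of $\log$-$\emptyset$-cleanliness is exactly the non-degeneration of \cite{sacot}, so both assertions are literally special cases of \cite[Proposition 4.13]{sacc}. In other words, the paper treats this proposition as a pure translation of terminology, citing the singular support computation in \cite{sacc} directly.

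Your route differs in two ways. For part (2) you pass through the characteristic cycle: you invoke Remark \ref{remlifdp} (5), which rests on \cite[Theorem 7.14]{sacc}, to get $CC(j_{!}\mf)=CC^{\log}_{\emptyset}(j_{!}\mf)$, and then recover $SS$ by taking supports via Corollary \ref{corssccsm} (2) and Remark \ref{remlifdp} (4). This is valid but uses a strictly stronger input (the characteristic cycle computation) to deduce a weaker conclusion that \cite[Proposition 4.13]{sacc} already states. For part (1) you perform an explicit reduction---pass to the $p$-part, extend lisse, reduce to $j_{!}\Lambda_{U}$---before finally still appealing to the tame case of \cite[Proposition 4.13]{sacc}; the paper instead cites that proposition in full generality for tame $\mf$ without any reduction. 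Your approach has the mild advantage of making the tame case self-contained modulo the constant-sheaf computation, but it does not avoid the dependence on \cite{sacc}, so nothing is gained in economy.
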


\begin{proof}
By Remark \ref{remlogdpcl} (3), the assertions are special cases of \cite[Proposition 4.13]{sacc}.
\end{proof}

\begin{rem}
\label{remcompssnondeg}
Suppose that $D$ has simple normal crossings.
\begin{enumerate}
\item By Lemma \ref{lemcfclatx} (1), we locally have $I=I_{\mT,\mf}$ 
(Definition \ref{defindsub} (1)) or 
$I=I_{\mW,\mf}$ (Definition \ref{defindsub} (1)) when the ramification of $\mf$ is $\log$-$\emptyset$-clean along $D$.
That is, the $\log$-$\emptyset$-cleanliness is equivalent to the strong non-degeneration 
in the sense of \cite[the remark before Proposition 4.13]{sacc} for the ramification of $\mf$
along $D$ by Remark \ref{remlogdpcl} (3).
\item Proposition \ref{propssndeg} generally gives a computation of 
the singular support $SS(j_{!}\mf)$
outside a closed subscheme of $X$ of codimension $\ge 2$ 
by Remarks \ref{remlogdpcl} (2) and \ref{remmicsup} (1).
\end{enumerate}
\end{rem}

We next prove that the inverse image $\tau_{D'}^{-1}(S_{D'}^{\log}(j_{!}\mf))\subset T^{*}X$ of the $\log$-$D'$-singular support $S_{D'}^{\log}(j_{!}\mf)$ (Definition \ref{deflifdp} (3)) 
and the pull-back $\tau_{D'}^{!}CC_{D'}^{\log}(j_{!}\mf)$ of the $\log$-$D'$-characteristic cycle $CC_{D'}^{\log}(j_{!}\mf)$
(Definition \ref{deflifdp} (4)) by $\tau_{D'}$ (\ref{deftaue}) satisfy the similar equalities 
as in Proposition \ref{proptowild},
when the dimension of $\tau_{D'}^{-1}(S_{D'}^{\log}(j_{!}\mf))$ is the same as that of $X$.

\begin{lem}
\label{lemlcidim}
Let $f\colon Y\rightarrow Z$ be a morphism of smooth schemes $Y$ and $Z$ of relative dimension $c$ and $d$ over a scheme $S$, respectively.
Let $C\subset Z$ be an irreducible closed subset of dimension $e$.
Then every irreducible component of the inverse image $f^{-1}(C)$ is of dimension $\ge c-d+e$.
\end{lem}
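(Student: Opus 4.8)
The plan is to realize $f^{-1}(C)$ as an intersection with a subscheme locally defined by $d$ equations, via the graph of $f$, and then apply Krull's principal ideal theorem. First I would form the fiber product $Y\times_{S}Z$, which is smooth over $S$ of relative dimension $c+d$, and consider the graph morphism $\Gamma_{f}=(\id_{Y},f)\colon Y\to Y\times_{S}Z$. Since $Z$ is smooth over $S$ of relative dimension $d$, the diagonal $Z\to Z\times_{S}Z$ is a regular immersion of codimension $d$, so it is locally cut out by $d$ equations. As $\Gamma_{f}$ is the base change of this diagonal along $f\times\id_{Z}\colon Y\times_{S}Z\to Z\times_{S}Z$ (a point of the base change is $y\in Y$ together with $(z,z')$ satisfying $f(y)=z=z'$, i.e.\ the point $(y,f(y))$), the graph $\Gamma_{f}$ is a closed immersion that is locally defined by $d$ equations in $Y\times_{S}Z$. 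Here I only use that the ideal of the diagonal is generated by $d$ sections, a property preserved by arbitrary base change, so no regular-sequence hypothesis is needed downstream.

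Next I would identify the inverse image with an intersection. The scheme-theoretic fiber product $f^{-1}(C)=Y\times_{Z}C$ is canonically isomorphic to $\Gamma_{f}(Y)\cap(Y\times_{S}C)$ formed inside $Y\times_{S}Z$, since a point of either consists of $y\in Y$ and $c\in C$ with $f(y)=c$. The second projection $Y\times_{S}C\to C$ is the base change of $Y\to S$ by $C\to S$, hence smooth of relative dimension $c$; because $C$ is irreducible of dimension $e$, the scheme $Y\times_{S}C$ is of pure dimension $c+e$ (each of its components dominates the irreducible base $C$ and has $c$-dimensional generic fiber). Under the identification above, $f^{-1}(C)$ is therefore locally cut out by $d$ equations inside the pure-dimensional scheme $Y\times_{S}C$.

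Finally I would invoke the generalized Krull principal ideal theorem: an ideal generated by $d$ elements in a Noetherian ring has every minimal prime of height at most $d$. Applying this to the local equations defining $f^{-1}(C)$ in $Y\times_{S}C$, every irreducible component $T$ of $f^{-1}(C)$ satisfies $\codim(T,Y\times_{S}C)\le d$. Restricting to the irreducible component of $Y\times_{S}C$ containing $T$, which has dimension $c+e$ and is catenary, this yields $\dim T\ge (c+e)-d=c-d+e$, as desired.

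The main obstacle is the dimension bookkeeping in the last two steps rather than the graph construction: one must ensure that $Y\times_{S}C$ is genuinely equidimensional of dimension $c+e$, and then convert the codimension bound $\le d$ into the asserted lower bound on $\dim T$. Both require catenary behavior and the dimension formula for morphisms, so the argument is cleanest in the setting of the paper, where $S$ (and hence $Y$, $Z$, $C$) may be taken of finite type over the perfect field $k$; I would reduce to this case, or at least to locally Noetherian, catenary schemes, at the outset so that all local rings appearing are Noetherian and the height-to-dimension conversion is legitimate.
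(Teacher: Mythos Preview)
Your proposal is correct and takes essentially the same approach as the paper: both factor $f$ through the graph $\Gamma_{f}\colon Y\to Y\times_{S}Z$ and the projection $\pr_{2}$, observe that $\Gamma_{f}$ is a (regular) closed immersion locally cut out by $d$ equations, and deduce the dimension bound from Krull's principal ideal theorem applied inside $\pr_{2}^{-1}(C)=Y\times_{S}C$. The paper compresses all of this into one sentence, whereas you spell out why $\Gamma_{f}$ is defined by $d$ equations (base change of the diagonal) and why $Y\times_{S}C$ is equidimensional of dimension $c+e$; your extra care about catenarity and the codimension-to-dimension conversion is warranted and is tacitly assumed in the paper's finite-type-over-$k$ context.
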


\begin{proof}
The assertion follows,
since $f$ factors as the graph $\Gamma_{f}\colon Y\rightarrow Y\times_{S}Z$ followed by the second projection 
$\pr_{2}\colon Y\times_{S}Z\rightarrow Z$ and since the graph $\Gamma_{f}$ is a regular closed immersion of codimension $d$.
\end{proof}

\begin{lem}
\label{lemvbdim}
Let $f\colon E'\rightarrow E$ be the morphism of vector bundles on 
a connected scheme $S$ of the same rank.
Let $f_{x}\colon E'_{x}\rightarrow E_{x}$ denote the morphism
of fibers of $E$ and $E'$ at $x\in S$ induced by $f$.
Let $0_{E}\subset E$ be the zero section and 
let $C\subset E$ be a sub bundle.
Suppose that the inverse image $Z=f^{-1}(0_{E})$ of the zero section $0_{E}\subset E$ by $f$ is of dimension $d$ 
and that the intersection $C_{x}\cap \mathrm{Im}f_{x}$ of the fiber $C_{x}$ of $C$ at $x$ and the image $\Image f_{x}$ 
of $f_{x}$ is of dimension $r$ for every $x\in S$.
Then we have the following:
\begin{enumerate}
\item The inverse image $f^{-1}(C)$ of $C$ by $f$ is of dimension $r+d$.
\item There is a one-to-one correspondence between 
the set of irreducible components of $Z$ of dimension $d$
and that of irreducible components of $f^{-1}(C)$ of dimension $r+d$,
which associates an irreducible component $Z_{a}$ of $Z$ of dimension $d$ to
the irreducible component of $f^{-1}(C)$ that is the closure of the fiber of $f^{-1}(C)$ at the generic point of the base $Z_{a}\cap 0_{E'}$ of $Z_{a}$.
Here $0_{E'}\subset E'$ denotes the zero section.
\end{enumerate}
\end{lem}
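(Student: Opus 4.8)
The plan is to analyze the morphism $f$ fiberwise by working over the generic points of the irreducible components of the base scheme $S$, reducing both assertions to a combination of dimension-counting and a careful identification of irreducible components.

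First I would establish assertion (1). Since $f$ is a morphism of vector bundles of the same rank, the rank of $f_x$ is constant in the sense that $\dim(\Image f_x)$ is independent of the choice of $x$ up to the behavior controlled by $Z = f^{-1}(0_E)$; the fibers of $Z$ over $x$ are the kernels $\Ker f_x$. The key computation is that over a point $x$ where $\dim_x Z$ attains its generic value along an irreducible component, the fiber of $f^{-1}(C)$ is the preimage $f_x^{-1}(C_x)$, whose dimension is $\dim(\Ker f_x) + \dim(C_x \cap \Image f_x) = \dim(\Ker f_x) + r$. I would combine this with the hypothesis $\dim Z = d$ by stratifying $S$ according to the dimension of $\Ker f_x$ and applying Lemma \ref{lemlcidim} or a direct fiber-dimension argument to the projection $f^{-1}(C) \to S$. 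The bound in one direction follows since each fiber of $f^{-1}(C) \to S$ over $x$ has dimension $\dim(\Ker f_x) + r$, and the total space of $Z$ has dimension $d$; hence $\dim f^{-1}(C) \le r + d$. For the reverse inequality, over a generic point of a top-dimensional component $Z_a$ of $Z$ the fiber of $f^{-1}(C)$ contains an affine space of the expected dimension, so the closure has dimension exactly $r + d$.

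For assertion (2), I would make the correspondence explicit. Given an irreducible component $Z_a \subset Z$ of dimension $d$, its base $Z_a \cap 0_{E'}$ is an irreducible closed subset of $S$; let $\eta_a$ be its generic point. The fiber of $f^{-1}(C)$ over $\eta_a$ is $f_{\eta_a}^{-1}(C_{\eta_a})$, and the closure of this fiber inside $f^{-1}(C)$ is an irreducible closed subset of dimension $r + d$ by the computation above, since $\dim(\Ker f_{\eta_a}) = d - \dim\overline{\{\eta_a\}}$. Conversely, every top-dimensional irreducible component of $f^{-1}(C)$ dominates such a component $Z_a$ under the induced map $f^{-1}(C) \to Z$ obtained from the linear structure (the preimage of a sub bundle contains the preimage of the zero section as a sub bundle over the base), and I would verify that this assignment is inverse to the first by comparing generic fibers. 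The one-to-one correspondence then amounts to showing these closures exhaust the top-dimensional components and are mutually distinct, which follows from the distinctness of the generic points $\eta_a$ and the fiberwise linearity of $f$.

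The main obstacle I anticipate is controlling the dimension of $\Ker f_x = Z_x$ uniformly as $x$ varies, since $\dim(\Ker f_x)$ is only upper semicontinuous and may jump on proper closed subsets of $S$; the global dimension hypothesis $\dim Z = d$ constrains this jumping but requires care to translate into a statement about which components of $Z$ are top-dimensional and how their bases sit inside $S$. I would handle this by passing to generic points of the components of $Z$ and their images in $S$, where the rank of $f_x$ is locally constant, and by using that $C$ is a sub bundle (hence $C_x \cap \Image f_x$ has constant dimension $r$ by hypothesis) to keep the contribution from $C$ uniform. The linearity of $f$ on fibers is essential throughout: it guarantees that $f^{-1}(C)$ is itself conical and that its fiber over each point is a linear preimage, so no subtle scheme-theoretic multiplicity issues arise at the level of the underlying reduced structure.
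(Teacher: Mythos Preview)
Your proposal is correct and takes essentially the same approach as the paper: both arguments compute the fiber $f^{-1}(C)_x$ via the short exact sequence $0 \to Z_x \to f^{-1}(C)_x \to C_x \cap \Image f_x \to 0$ (your formula $\dim(\Ker f_x) + r$ is exactly this), then use the constraint $\dim Z_x \le d - \dim\overline{\{x\}}$ with equality precisely at generic points of bases of top-dimensional components of $Z$. The paper packages this in two sentences rather than stratifying explicitly, but the content is the same.
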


\begin{proof}
Let $0_{E'}\subset E'$ be the zero section and
let $x\in Z\cap 0_{E'}\subset S$.
Since $Z$ is of dimension $d$, 
the dimension of the fiber $Z_{x}$ of $Z$ at $x$ is $\le d-\dim \overline{\{x\}}$, where $\overline{\{x\}}$ is the closure of $x$ in $S$, 
and the equality holds if and only if $x$ is the generic point of
the base of an irreducible component of $Z$ of dimension $d$.
Since the sequence
\begin{equation}
0\rightarrow Z_{x}\rightarrow f^{-1}(C)_{x}\xrightarrow{f_{x}} C_{x}
\cap \Image f_{x}\rightarrow 0 \notag
\end{equation}
of vector bundles on $\Spec k(x)$ is exact,
the dimension of the fiber $f^{-1}(C)_{x}$ of $f^{-1}(C)$ at $x$
is $\le r+d-\dim\overline{\{x\}}$,
and the equality holds if and only if $x$ is the generic point of the base of an irreducible component of $Z$ of dimension $d$.
Therefore the assertions hold.
\end{proof}

\begin{prop}
\label{propinvli}
Suppose that $X$ is purely of dimension $d$ and that $D$ has simple normal crossings.
Let $I'\subset I$ be a subset containing $I_{\mT,\mf}$ (Definition \ref{defindsub} (1))
and let $D'=\bigcup_{i\in I'}D_{i}$.
Assume that the ramification of $\mf$ is $\log$-$D'$-clean along $D$.
Let $(B_{I'',\mf}^{D'})^{0}$ for $I''\subset I'$ denote the set of generic points of 
$B_{I'',\mf}^{D'}$ (Definition \ref{defbcf} (1)) whose codimension in $D_{I''}$ (\ref{defdipp}) is $\le 1$.
Let $Y_{\mathfrak{p}}$ be the irreducible component of $D_{I''}\cap \bigcap_{i\in I''\cap I_{\mW,\mf}}V(\Image\xi_{i}^{D'}(\mf))-\bigcup_{i\in I'-I''}D_{i}$ 
whose generic point is $\mathfrak{p}$ for $\mathfrak{p}\in (B_{I'',\mf}^{D'})^{0}$,
where $I''\subset I'$ and where $V(\Image \xi^{D'}_{i}(\mf))$
denotes the closed subscheme of $D_{i}$ defined by the image $\Image \xi^{D'}_{i}(\mf)$
of $\xi_{i}^{D'}(\mf)$ (Definition \ref{defximf} (2)), which is regarded as an ideal sheaf of $\dvr_{D_{i}}$ by Remark \ref{remximf} (2).
We regard $Y_{\mathfrak{p}}$ for $\mathfrak{p}\in (B_{I'',\mf}^{D'})^{0}$ as the closed subscheme of $X-\bigcup_{i\in I-I''}D_{i}$
whose local ring at $\mathfrak{p}$ is of the same length 
with the local ring of $D_{I''}\cap \bigcap_{i\in I''\cap I_{\mW,\mf}}V(\Image\xi_{i}^{D'}(\mf))$ at $\mathfrak{p}$.
Let $\omega_{\mathfrak{p}}$ be a section of $\Omega_{X}^{1}|_{Y_{\mathfrak{p}}^{1/p}}$ whose image in $\Omega_{X}^{1}(\log D')|_{Y_{\mathfrak{p}}^{1/p}}$ generates
the image of $m_{i}^{D'}(\mf)$ (Definition \ref{defximf} (1)) 
in $\Omega_{X}^{1}(\log D')|_{Y_{\mathfrak{p}}^{1/p}}$
for some (or equivalently any) $i\in I''\cap I_{\mW,\mf,\mathfrak{p}}$ (\ref{eachindatx}).
Let $\langle \omega_{\mathfrak{p}}, dt_{i'}; i'\in I''/ Y_{\mathfrak{p}}^{1/p}\rangle$ 
denote the image by the canonical finite morphism
\begin{equation}
T^{*}X\times_{X}Z_{\mf}^{1/p}\rightarrow T^{*}X\times_{X}Z_{\mf} \notag
\end{equation}
of 
the closure in $T^{*}X\times_{X}Z_{\mf}^{1/p}$ of
the sub bundle $C_{\mathfrak{p}}$ of $T^{*}X\times_{X}Y_{\mathfrak{p}}^{1/p}$
with the basis $\omega_{\mathfrak{p}}$ and $dt_{i'}$ for $i'\in I''$.
We regard $\langle \omega_{\mathfrak{p}}, dt_{i'}; i'\in I''/ Y_{\mathfrak{p}}^{1/p}\rangle$ 
as the closed subscheme of $T^{*}X\times_{X}Z_{\mf}$ whose local ring at the generic point 
is of the same length with the closed subscheme $C_{\mathfrak{p}}'\subset T^{*}X\times_{X}Y_{\mathfrak{p}}$ 
such that the algebraic cycle $[C_{\mathfrak{p}}']$ defined by $C'_{\mathfrak{p}}$ is
equal to the push-forward of $[C_{\mathfrak{p}}]$ 
by the canonical finite morphism $T^{*}X\times_{X}Y_{\mathfrak{p}}^{1/p}\rightarrow T^{*}X\times_{X}Y_{\mathfrak{p}}$.
Then we have the following for the inverse images by $\tau_{D'}$ (\ref{deftaue}) 
of the closed subschemes
$L_{\mf}^{D'}\subset T^{*}X(\log D')\times_{X}Z_{\mf}$
and $L_{i,\mf}^{D'}\subset T^{*}X(\log D')\times_{X}D_{i}$ 
for $i\in I_{\mW,\mf}$ defined in Definition \ref{deflifdp} (2):
\begin{enumerate}
\item
The equalities
\begin{align}
\tau_{D'}^{-1}(L_{\mf}^{D'})=\bigcup_{\substack{I''\subset I' \\ I''\cap I_{\mW,\mf}\neq \emptyset}}T^{*}_{D_{I''}}X 
\cup \bigcup_{I''\subset I'}\bigcup_{\mathfrak{p}\in (B_{I'',\mf}^{D'})^{0}} \langle \omega_{\mathfrak{p}}, dt_{i'}; i'\in I''/Y_{\mathfrak{p}}^{1/p}\rangle
\notag
\end{align}
and 
\begin{equation}
\tau_{D'}^{-1}(L_{i,\mf}^{D'})=\begin{cases}
\bigcup_{\substack{I''\subset I' \\ i\in I''}}T^{*}_{D_{I''}}X \cup \bigcup_{\substack{I''\subset I'\\ i\in I''}}\bigcup_{\mathfrak{p}\in (B_{I'',\mf}^{D'})^{0}} \langle \omega_{\mathfrak{p}}, dt_{i'}; i'\in I''/Y_{\mathfrak{p}}^{1/p}\rangle & (i\in I'), \\
\bigcup_{I''\subset I'}\bigcup_{\substack{\mathfrak{p}\in (B_{I'',\mf}^{D'})^{0} \\ i\in I_{\mathfrak{p}}}} \langle \omega_{\mathfrak{p}}, dt_{i'}; i'\in I''/Y_{\mathfrak{p}}^{1/p}\rangle\times_{X}D_{i} & (i\in I-I')
\end{cases}
\notag
\end{equation}
for $i\in I_{\mW,\mf}$ (Definition \ref{defindsub} (1)) of closed conical subsets of $T^{*}X$ hold.

\item
The equalities
\begin{align}
[\tau_{D'}^{-1}(L_{\mf}^{D'})]=\sum_{\substack{I''\subset I' \\ I''\cap I_{\mW,\mf}\neq \emptyset}}[T^{*}_{D_{I''}}X] 
+ \sum_{I''\subset I'}\sum_{\mathfrak{p}\in (B_{I'',\mf}^{D'})^{0}} [\langle \omega_{\mathfrak{p}}, dt_{i'}; i'\in I''/Y_{\mathfrak{p}}^{1/p}\rangle]
\notag
\end{align}
and 
\begin{align}
&[\tau_{D'}^{-1}(L_{i,\mf}^{D'})] \notag \\
&\quad =\begin{cases}
\sum_{\substack{I''\subset I' \\ i\in I''}}[T^{*}_{D_{I''}}X] + \sum_{\substack{I''\subset I'\\ i\in I'}}\sum_{\mathfrak{p}\in (B_{I'',\mf}^{D'})^{0}} [\langle \omega_{\mathfrak{p}}, dt_{i'}; i'\in I''/Y_{\mathfrak{p}}^{1/p}\rangle] & (i\in I'), \\
\sum_{I''\subset I'}\sum_{\substack{\mathfrak{p}\in (B_{I'',\mf}^{D'})^{0} \\ i\in I_{\mathfrak{p}}}} [\langle \omega_{\mathfrak{p}}, dt_{i'}; i'\in I''/Y_{\mathfrak{p}}^{1/p}\rangle\times_{X}D_{i}] & (i\in I-I')
\end{cases}
\notag
\end{align}
for $i\in I_{\mW,\mf}$ (Definition \ref{defindsub} (1)) of algebraic cycles on $T^{*}X$
hold.

\item The inverse image $\tau_{D'}^{-1}(L_{\mf}^{D'})$ is of dimension $\le d+1$.
The dimension of $\tau_{D'}^{-1}(L_{\mf}^{D'})$ is $\le d$ 
if and only if $E_{\mf}^{D'}=\emptyset$ (Definition \ref{defbcf} (2)). 
\end{enumerate}
\end{prop}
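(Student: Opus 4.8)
The plan is to carry out the computation fibre by fibre, using the explicit description of the morphism $\tau_{D'}$ together with Lemma \ref{lemvbdim}. The sheaf of sections of $T^{*}X$ (resp.\ $T^{*}X(\log D')$) is $\Omega^{1}_{X}$ (resp.\ $\Omega^{1}_{X}(\log D')$), and $\tau_{D'}$ is the morphism of vector bundles induced by the canonical inclusion $\Omega^{1}_{X}\hookrightarrow \Omega^{1}_{X}(\log D')$. Hence, at a point $x\in D_{I''}$ with $I''=I'\cap I_{x}$ (\ref{defix}), the fibre $\tau_{D'}|_{x}\colon \Omega^{1}_{X}|_{x}\to \Omega^{1}_{X}(\log D')|_{x}$ has kernel spanned by the $dt_{i}$ for $i\in I''$, which is exactly the fibre of the conormal bundle $T^{*}_{D_{I''}}X$; this is the content of (\ref{eqinvzero}) and (\ref{eqinvzerocyc}). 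First I would record that, by Definition \ref{deflifdp} (1), (2) and the $\log$-$D'$-cleanliness of the ramification of $\mf$ along $D$, the closed subscheme $L_{\mf}^{D'}$ is the image under the finite radicial morphism $T^{*}X(\log D')\times_{X}Z_{\mf}^{1/p}\to T^{*}X(\log D')\times_{X}Z_{\mf}$ of the sub line bundle $L_{\mf}'^{D'}$ generated by $\cform^{D'}(\mf)$, so that computing $\tau_{D'}^{-1}(L_{\mf}^{D'})$ amounts to computing, over $Z_{\mf}^{1/p}$, the preimage of this line under $\tau_{D'}$ and then transporting the result along the radicial cover.

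The key fibrewise observation is that, over $x'\in Z_{\mf}^{1/p}$ lying above $x\in D_{I''}$, the value $\cform^{D'}(\mf)(x')$ lies in $\Image \tau_{D'}|_{x}$ if and only if its residues along the $D_{i}$ for $i\in I''\cap I_{\mW,\mf}$ all vanish, i.e.\ if and only if $x\in V(\Image\xi_{i}^{D'}(\mf))$ for all such $i$, where $\xi_{i}^{D'}(\mf)$ is the residue map of Definition \ref{defximf} (2); for $i\in I''\cap (I_{\mT,\mf}\cup I_{\mII,\mf})$ these residues vanish identically by Lemma \ref{lemcfatx} (2). Thus the locus inside $D_{I''}$ on which $\cform^{D'}(\mf)(x')\in \Image \tau_{D'}|_{x}$ is the one recorded, away from $\bigcup_{i\in I'-I''}D_{i}$, in the closed subset $B_{I'',\mf}^{D'}$ of Definition \ref{defbcf} (1). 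I would therefore stratify $Z_{\mf}^{1/p}$ by the locally closed strata $D_{I''}^{1/p}-\bigcup_{i\in I'-I''}D_{i}^{1/p}$, on the open part of which (some residue non-zero) the intersection of the line with $\Image\tau_{D'}|_{x}$ is $0$, and on the closed part $B_{I'',\mf}^{D'}$ of which it is the whole line. Applying Lemma \ref{lemvbdim} over each connected component of these strata, where the intersection rank is constant, then produces on the open part the conormal bundle $T^{*}_{D_{I''}}X$ (giving the first union, over those $I''$ with $I''\cap I_{\mW,\mf}\ne\emptyset$), and over each generic point $\mathfrak{p}\in(B_{I'',\mf}^{D'})^{0}$ the closure of the sub bundle with basis a lift $\omega_{\mathfrak{p}}$ of $\cform^{D'}(\mf)$ and the $dt_{i'}$, $i'\in I''$, that is, the term $\langle\omega_{\mathfrak{p}},dt_{i'};i'\in I''/Y_{\mathfrak{p}}^{1/p}\rangle$. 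The assertions for $L_{i,\mf}^{D'}$ follow in the same way, restricting via Remark \ref{remlifdp} (1) to the single component $L_{i,\mf}'^{D'}$ and keeping track of which $D_{i}$ pass through $Y_{\mathfrak{p}}$. The cycle equalities of (2) are obtained by upgrading the set-theoretic correspondence of Lemma \ref{lemvbdim} (2) to multiplicities, the length conventions built into the definitions of $Y_{\mathfrak{p}}$ and of $\langle\omega_{\mathfrak{p}},\ldots\rangle$ being chosen precisely so as to absorb the degree of the radicial covering $Z_{\mf}^{1/p}\to Z_{\mf}$.

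Finally, the dimension statement (3) is read off from the count in Lemma \ref{lemvbdim} (1): over a generic point $\mathfrak{p}$ of $B_{I'',\mf}^{D'}$ of codimension $c$ in $D_{I''}$, the component $\langle\omega_{\mathfrak{p}},dt_{i'};i'\in I''/Y_{\mathfrak{p}}^{1/p}\rangle$ has dimension $(d-\sharp I''-c)+(\sharp I''+1)=d+1-c$, while the conormal pieces $T^{*}_{D_{I''}}X$ have dimension $d$; hence $\tau_{D'}^{-1}(L_{\mf}^{D'})$ has dimension $\le d+1$, a component of dimension $d+1$ occurring exactly when some $B_{I'',\mf}^{D'}$ contains an irreducible component of codimension $0$ in $D_{I''}$, which by Definition \ref{defbcf} (2) is the condition $E_{\mf}^{D'}\ne\emptyset$. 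I expect the main obstacle to lie in the last steps of the middle paragraph: organising the stratification so that Lemma \ref{lemvbdim} applies with constant intersection rank, verifying that the pieces arising over the deeper, codimension-$\ge 2$ loci of the various $B_{I'',\mf}^{D'}$ are subsumed in the closures of the listed components, and establishing the precise matching of lengths under the radicial pushforward required for the cycle-level equalities of (2).
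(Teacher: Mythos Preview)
Your approach is essentially the same as the paper's: reduce via the cartesian square over the radicial cover $Z_{\mf}^{1/p}\to Z_{\mf}$ to computing $\tau_{D',Z_{\mf}^{1/p}}^{-1}(L_{\mf}'^{D'})$, then apply Lemma~\ref{lemvbdim} on the strata $D_{I''}$ using the characterization of the locus where the line $L_{\mf}'^{D'}$ meets the image of $\tau_{D'}$ (this is exactly Remark~\ref{remdefbcf}~(1)). Two small points: the paper also invokes Lemma~\ref{lemlcidim} to get the lower bound $d$ on the dimensions of irreducible components, which is what guarantees that the pieces over the codimension-$\ge 2$ loci you worried about are absorbed into the closures already listed; and the paper actually proves~(2) first (via Lemma~\ref{lemvbdim}~(2)) and reads off~(1) from it, rather than the other way around, which dispenses with a separate set-theoretic argument.
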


\begin{proof}
We consider the cartesian diagram
\begin{equation}
\xymatrix{
T^{*}X\times_{X}Z_{\mf}^{1/p} \ar[r]^-{\tau_{D',Z_{\mf}^{1/p}}} \ar[d] \ar@{}[dr] | {\square} & T^{*}X(\log D')\times_{X}Z_{\mf}^{1/p} \ar[d] \\
T^{*}X\times_{X}Z_{\mf}\ar[r]_-{\tau_{D',Z_{\mf}}} & T^{*}X(\log D')\times_{X}Z_{\mf},} \notag
\end{equation}
where $\tau_{D',Z_{\mf}}$ and $\tau_{D',Z_{\mf}^{1/p}}$ are the base changes of $\tau_{D'}$ by the canonical closed immersion $Z_{\mf}\rightarrow X$ and the canonical morphism $Z_{\mf}^{1/p}\rightarrow X$, respectively,
and the vertical arrows are the canonical finite morphisms.
Since $L_{\mf}^{D'}$ (resp.\ $L_{i,\mf}^{D'}$) 
is the image of $L_{\mf}'^{D'}\subset T^{*}X(\log D')\times_{X}Z_{\mf}^{1/p}$ 
(resp.\ $L_{i,\mf}'^{D'}\subset T^{*}X(\log D')\times_{X}Z_{\mf}^{1/p}$) 
(Definition \ref{deflifdp} (1))
by the right vertical arrow, 
the inverse image $\tau_{D'}^{-1}(L_{\mf}^{D'})$ (resp.\ $\tau_{D'}^{-1}(L_{i,\mf}^{D'})$
for $i\in I_{\mW,\mf}$)
is equal to the image by the left vertical arrow of the inverse image $\tau_{D',Z_{\mf}^{1/p}}^{-1}(L_{\mf}'^{D'})$ 
(resp.\ $\tau_{D',Z_{\mf}^{1/p}}^{-1}(L_{i,\mf}'^{D'})$),
and is of the same dimension with $\tau_{D',Z_{\mf}^{1/p}}^{-1}(L_{\mf}'^{D'})$ 
(resp.\ $\tau_{D',Z_{\mf}^{1/p}}^{-1}(L_{i,\mf}'^{D'})$).

Let $Y$ be a closed subscheme of $Z_{\mf}$.
Then 
the inverse image of the zero section $T^{*}_{X}X(\log D')\times_{X}Y^{1/p}\subset T^{*}X(\log D')\times_{X}Y^{1/p}$ by $\tau_{D',Z_{\mf}^{1/p}}$ 
is equal to $\bigcup_{I''\subset I'}T^{*}_{D_{I''}}X\times_{X}Y^{1/p}$
by (\ref{eqinvzero}).
Since the dimension of $T^{*}_{D_{I''}}X\times_{X}Y'$ for $I''\subset I'$ is $\le d$,
the dimension of each irreducible component of $\tau_{D',Z_{\mf}^{1/p}}^{-1}(L_{\mf}'^{D'})$
(resp.\ $\tau_{D',Z_{\mf}^{1/p}}^{-1}(L_{i,\mf}'^{D'})$ for $i\in I_{\mW,\mf}$)
is $d$ or $d+1$ by Lemma \ref{lemlcidim} and Lemma \ref{lemvbdim} (1).
Then the assertion (2) holds by Remark \ref{remdefbcf} (1) and
Lemma \ref{lemvbdim} (2), and the assertion (1) follows from the assertion (2).
Since the dimension of $T^{*}_{D_{I''}}X\times_{X}Y$ for $I''\subset I'$ is $d$ if and only if 
the codimension of $Y\cap D_{I''}$ in $D_{I''}$ is $0$,
the dimension of $\tau_{D',Z_{\mf}^{1/p}}^{-1}(L_{\mf}'^{D'})$
(resp.\ $\tau_{D',Z_{\mf}^{1/p}}^{-1}(L_{i,\mf}'^{D'})$ for $i\in I_{\mW,\mf}$) is $d+1$ 
if and only if we have 
$E_{I'',\mf}^{D'}\neq \emptyset$ for some $I''\subset I'$
(resp.\ for some $I''\subset I'$ such that $i\in I''$) by Lemma \ref{lemvbdim} (1).
Hence the assertion (3) holds.
\end{proof}

\begin{rem}
\label{rempropinvli}
In Proposition \ref{propinvli}, we have
$L_{i,\mf}^{\emptyset}=\langle \omega_{\mathfrak{p}_{i}}/Y_{\mathfrak{p}_{i}}^{1/p}\rangle\times_{X}D_{i}$ 
for $i\in (I-I')\cap I_{\mW,\mf}$,
where $\mathfrak{p}_{i}$ denotes the generic point 
of $D_{i}\subset B_{\emptyset, \mf}^{D'}=\bigcup_{i'\in (I-I')\cap I_{\mW,\mf}}D_{i'}$.
For $i\in I'\cap I_{\mI,\mf}$ (Definition \ref{defindsub} (2)),
we have $L_{i,\mf}^{\emptyset}=T^{*}_{D_{i}}X$ by \cite[Lemma 2.23 (ii)]{yacc}.
\end{rem}

\begin{cor}
\label{corinvli}
Suppose that $X$ is purely of dimension $d$ and that $D$ has simple normal crossings.
Let $I'\subset I$ be a subset containing $I_{\mT,\mf}$ (Definition \ref{defindsub} (1))
and let $D'=\bigcup_{i\in I'}D_{i}$.
Assume that the ramification of $\mf$ is $\log$-$D'$-clean along $D$.
Then the following five conditions for the closed conical subset $S_{D'}(j_{!}\mf)\subset T^{*}X$ 
(Definition \ref{defsdpf}), 
the inverse image $\tau_{D'}^{-1}(S_{D'}^{\log}(j_{!}\mf))$ of the $\log$-$D'$-singular support $S_{D'}^{\log}(j_{!}\mf)\subset T^{*}X(\log D')$ 
(Definition \ref{deflifdp} (3)) by $\tau_{D'}$ (\ref{deftaue}), and 
the closed subset $E_{\mf}^{D'}\subset X$ (Definition \ref{defbcf} (2))
are equivalent:
\begin{enumerate}
\item $S_{D'}(j_{!}\mf)$ is of dimension $d$.
\item $S_{D'}(j_{!}\mf)$ is purely of dimension $d$.
\item $\tau_{D'}^{-1}(S_{D'}^{\log}(j_{!}\mf))$ is of dimension $d$.
\item $\tau_{D'}^{-1}(S_{D'}^{\log}(j_{!}\mf))$ 
is purely of dimension $d$.
\item $E_{\mf}^{D'}=\emptyset$. 
\end{enumerate}

If the equivalent conditions are satisfied, 
then the singular support $SS(j_{!}\mf)$
of $j_{!}\mf$ is a union of irreducible components of $S_{D'}(j_{!}\mf)$.
\end{cor}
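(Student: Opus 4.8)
The plan is to reduce each of the five conditions to the single question of whether the locus $E_{\mf}^{D'}$ is empty, using the explicit descriptions of the relevant inverse images supplied by Proposition \ref{propinvli}, and then to deduce the final assertion by a purity argument built on Theorem \ref{thmmains}.

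First I would decompose the two sets appearing in the statement and isolate the one piece that carries all the dimension-theoretic information. By Definition \ref{deflifdp} (3) we have $S_{D'}^{\log}(j_{!}\mf)=T^{*}_{X}X(\log D')\cup L_{\mf}^{D'}$, so applying $\tau_{D'}^{-1}$ and using (\ref{eqinvzero}) yields
\[
\tau_{D'}^{-1}(S_{D'}^{\log}(j_{!}\mf))=C_{D'}\cup \tau_{D'}^{-1}(L_{\mf}^{D'}),
\]
where $C_{D'}=\bigcup_{I''\subset I'}T^{*}_{D_{I''}}X$ is purely of dimension $d$. Combining this with Definition \ref{defsdpf} expresses $S_{D'}(j_{!}\mf)$ as the union of $\tau_{D'}^{-1}(L_{\mf}^{D'})$ with several pieces ${i_{I''}}_{\circ}C_{i_{I''}^{*}D'_{\mW,\mf}\subset i_{I''}^{*}D_{\mW,\mf}}$ which, by Remark \ref{remdefsdpf} (1), are unions of irreducible components of $C_{D}$ and hence purely of dimension $d$. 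Thus both $S_{D'}(j_{!}\mf)$ and $\tau_{D'}^{-1}(S_{D'}^{\log}(j_{!}\mf))$ contain a component of dimension $d$ (for instance the zero section), and their dimensional behaviour is governed entirely by the piece $\tau_{D'}^{-1}(L_{\mf}^{D'})$.

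Next I would invoke Proposition \ref{propinvli}. Its explicit formulas in (1) show that every irreducible component of $\tau_{D'}^{-1}(L_{\mf}^{D'})$ is either a conormal bundle $T^{*}_{D_{I''}}X$ or a piece $\langle \omega_{\mathfrak{p}}, dt_{i'}; i'\in I''/Y_{\mathfrak{p}}^{1/p}\rangle$, and reading off the dimensions gives that each such component has dimension $d$ or $d+1$, with a component of dimension $d+1$ arising exactly from a generic point $\mathfrak{p}$ of codimension $0$ in $D_{I''}$, that is, exactly when $E_{\mf}^{D'}\neq\emptyset$; this is the content of Proposition \ref{propinvli} (3). Hence $\tau_{D'}^{-1}(L_{\mf}^{D'})$ is purely of dimension $d$ whenever $E_{\mf}^{D'}=\emptyset$ and acquires a $(d+1)$-dimensional component otherwise. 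Since all remaining pieces of the two sets are purely of dimension $d$ and no component of either set has dimension $<d$, each of the four conditions (1)--(4) is equivalent to the absence of a $(d+1)$-dimensional component in $\tau_{D'}^{-1}(L_{\mf}^{D'})$, i.e.\ to condition (5). This establishes the equivalence of (1)--(5).

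For the last assertion I would argue by purity. Under the equivalent conditions, condition (2) gives that $S_{D'}(j_{!}\mf)$ is purely of dimension $d$, Theorem \ref{thmmains} gives the inclusion $SS(j_{!}\mf)\subset S_{D'}(j_{!}\mf)$, and Remark \ref{remmicsup} (2) gives that $SS(j_{!}\mf)$ is itself purely of dimension $d$. An irreducible component $Z$ of $SS(j_{!}\mf)$ then has dimension $d$ and lies in some irreducible component $W$ of $S_{D'}(j_{!}\mf)$, which also has dimension $d$; as $W$ is irreducible this forces $Z=W$, so every component of $SS(j_{!}\mf)$ is a component of $S_{D'}(j_{!}\mf)$, whence $SS(j_{!}\mf)$ is a union of irreducible components of $S_{D'}(j_{!}\mf)$. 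There is no genuine obstacle here, since the geometric input is entirely packaged in Proposition \ref{propinvli}; the only point demanding care is the bookkeeping that no constituent piece has dimension below $d$ and that the $(d+1)$-dimensional components are precisely the contributions of $E_{\mf}^{D'}$, which is what makes the strong conditions (2) and (4) equivalent to the weaker conditions (1) and (3), and which one extracts by reading component dimensions from the explicit formulas of Proposition \ref{propinvli} (1).
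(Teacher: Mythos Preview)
Your proof is correct and follows essentially the same route as the paper's. The only cosmetic difference is that for the equivalence $(3)\Leftrightarrow(4)$ the paper invokes Lemma \ref{lemlcidim} directly to obtain the lower bound $\dim\ge d$ on every irreducible component of $\tau_{D'}^{-1}(S_{D'}^{\log}(j_{!}\mf))$, whereas you extract the same information from the explicit description in Proposition \ref{propinvli} (1), whose proof already packages that lemma; the remaining steps (reducing to $\tau_{D'}^{-1}(L_{\mf}^{D'})$, using Proposition \ref{propinvli} (3) for $(3)\Leftrightarrow(5)$, and the purity argument via Theorem \ref{thmmains} and Remark \ref{remmicsup} (2)) match the paper.
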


\begin{proof}
Let the notation be as in Definition \ref{defsdpf}.
Since $i_{I''\circ}C_{i_{I''}^{*}D'_{\mW,\mf}\subset i_{I''}^{*}D_{\mW,\mf}}$ is purely of dimension $d$ for every subset $I''\subset I_{\mT,\mf}$,
the equivalence of (1) and (3) and that of (2) and (4) hold
by the definition of $S_{D'}(j_{!}\mf)$ as the union of $i_{I''\circ}C_{i_{I''}^{*}D'_{\mW,\mf}\subset i_{I''}^{*}D_{\mW,\mf}}$
for all $I''\subset I_{\mT,\mf}$ and $\tau_{D'}^{-1}(S_{D'}^{\log}(j_{!}\mf))$.
The equivalence of (3) and (4) holds by 
Lemma \ref{lemlcidim},
since $S_{D'}^{\log}(j_{!}\mf)$ is of dimension $d$.
The equivalence of (3) and (5) holds by Proposition \ref{propinvli} (3),
since the inverse image $\tau_{D'}^{-1}(T^{*}_{X}X(\log D'))$ is of dimension $d$ by (\ref{eqinvzero}).
Since $SS(j_{!}\mf)$ is contained in $S_{D'}(j_{!}\mf)$ by Theorem \ref{thmmains},
the last assertion holds by Remark \ref{remmicsup} (2).
\end{proof}

\begin{cor}
\label{corcompcc}
Let the notation and the assumptions be as in Proposition \ref{propinvli}.
Then we have the following: 
\begin{enumerate}
\item For the inverse image $\tau_{D'}^{-1}(S_{D'}^{\log}(j_{!}\mf))\subset T^{*}X$
of the $\log$-$D'$-singular support $S_{D'}^{\log}(j_{!}\mf)\subset T^{*}X(\log D')$ (Definition \ref{deflifdp} (3)) by $\tau_{D'}$ (\ref{deftaue}),
we have
\begin{align}
&\tau_{D'}^{-1}(S_{D'}^{\log}(j_{!}\mf)) \notag \\
&\qquad
=\bigcup_{I''\subset I'}T^{*}_{D_{I''}}X \cup \bigcup_{I''\subset I'}\bigcup_{\mathfrak{p}\in (B_{I'',\mf}^{D'})^{0}}
\langle \omega_{\mathfrak{p}},dt_{i'}; i'\in I''/Y_{\mathfrak{p}}^{1/p}\rangle \notag \\
&\quad 
\qquad \qquad \cup \bigcup_{I''\subset I'}\bigcup_{\mathfrak{p}\in (B_{I'',\mf}^{D'})^{0}}
\bigcup_{i\in (I-I')\cap I_{\mW,\mf,\mathfrak{p}}}
\langle \omega_{\mathfrak{p}},dt_{i'}; i'\in I''/Y_{\mathfrak{p}}^{1/p}\rangle\times_{X}D_{i}, \notag
\end{align}
where $I_{\mW,\mf,\mathfrak{p}}$ is as in (\ref{eachindatx}).

\item Suppose that the inverse image $\tau_{D'}^{-1}(S_{D'}^{\log}(j_{!}\mf))$ is of dimension $d$.
Then, for the image $\tau_{D'}^{!}CC_{D'}^{\log}(j_{!}\mf)$ of the $\log$-$D'$-characteristic cycle $CC_{D'}^{\log}(j_{!}\mf)$ 
(Definition \ref{deflifdp} (4)) by $\tau_{D'}^{!}$ (\ref{taudpgysin}),
we have
\begin{align}
&\tau_{D'}^{!}CC_{D'}^{\log}(j_{!}\mf) \notag \\
&=(-1)^{d}
(\sum_{I''\subset I'}(1+\sum_{i\in I''\cap I_{\mW,\mf}}\sw^{D'}(\chi|_{K_{i}}))[T^{*}_{D_{I''}}X]
\notag \\
&\qquad \qquad +\sum_{I''\subset I'}\sum_{\mathfrak{p}\in (B_{I'',\mf}^{D'})^{0}}
\sum_{i\in I''\cap I_{\mW,\mf}}\sw^{D'}(\chi|_{K_{i}})
[\langle \omega_{\mathfrak{p}},dt_{i'}; i'\in I''/Y_{\mathfrak{p}}^{1/p}\rangle] \notag \\
&\qquad \qquad + \sum_{I''\subset I'}\sum_{\mathfrak{p}\in (B_{I'',\mf}^{D'})^{0}}
\sum_{i\in (I-I')\cap I_{\mW,\mf,\mathfrak{p}}}\sw^{D'}(\chi|_{K_{i}})
[\langle \omega_{\mathfrak{p}},dt_{i'}; i'\in I''/Y_{\mathfrak{p}}^{1/p}\rangle\times_{X}D_{i}]) \notag
\end{align}
in $Z_{d}(\tau_{D'}^{-1}(S_{D'}^{\log}(j_{!}\mf)))$,
where $\sw^{D'}(\chi|_{K_{i}})$ is as in (\ref{defswdp}). 
\end{enumerate}
\end{cor}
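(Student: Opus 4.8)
The plan is to prove both assertions by combining the structural description of the inverse images $\tau_{D'}^{-1}(L_{\mf}^{D'})$ and $\tau_{D'}^{-1}(L_{i,\mf}^{D'})$ established in Proposition \ref{propinvli} with the definitions of $S_{D'}^{\log}(j_{!}\mf)$ and $CC_{D'}^{\log}(j_{!}\mf)$ in Definition \ref{deflifdp}. For the set-theoretic assertion (1), recall that $S_{D'}^{\log}(j_{!}\mf)=T^{*}_{X}X(\log D')\cup L_{\mf}^{D'}$, so that $\tau_{D'}^{-1}(S_{D'}^{\log}(j_{!}\mf))=\tau_{D'}^{-1}(T^{*}_{X}X(\log D'))\cup \tau_{D'}^{-1}(L_{\mf}^{D'})$. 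First I would substitute the equality $\tau_{D'}^{-1}(T^{*}_{X}X(\log D'))=\bigcup_{I''\subset I'}T^{*}_{D_{I''}}X$ coming from (\ref{eqinvzero}) and the formula for $\tau_{D'}^{-1}(L_{\mf}^{D'})$ from Proposition \ref{propinvli} (1). The term $\bigcup_{I''\subset I', I''\cap I_{\mW,\mf}\neq\emptyset}T^{*}_{D_{I''}}X$ from $\tau_{D'}^{-1}(L_{\mf}^{D'})$ is absorbed into the zero-section contribution $\bigcup_{I''\subset I'}T^{*}_{D_{I''}}X$, so these conormal bundles collapse to the single union over all $I''\subset I'$. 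The only genuinely new step is to account for the $(I-I')\cap I_{\mW,\mf}$ components: these do not appear in $L_{\mf}^{D'}$ itself but are exactly the loci along which $\mf$ is wildly but non-logarithmically ramified, so I would need to track how they enter $L_{\mf}^{D'}$ through the definition of $Z_{\mf}$ and the index set $I_{\mathfrak{p}}$, yielding the last union $\bigcup \langle\omega_{\mathfrak{p}},dt_{i'};i'\in I''/Y_{\mathfrak{p}}^{1/p}\rangle\times_{X}D_{i}$.

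For the cycle-theoretic assertion (2), I would work under the standing hypothesis that $\tau_{D'}^{-1}(S_{D'}^{\log}(j_{!}\mf))$ is of dimension $d$, which by Corollary \ref{corinvli} is equivalent to $E_{\mf}^{D'}=\emptyset$ and guarantees that $\tau_{D'}^{!}$ is the honest map of cycle groups (\ref{taudpgysin}). Since $\tau_{D'}$ is an l.c.i.\ morphism and, under the dimension hypothesis, every irreducible component of the preimage has the expected dimension $d$, the Gysin pullback $\tau_{D'}^{!}$ commutes with taking preimages of the prime cycles and simply transports coefficients. Concretely, $CC_{D'}^{\log}(j_{!}\mf)=(-1)^{d}([T^{*}_{X}X(\log D')]+\sum_{i\in I_{\mW,\mf}}\sw^{D'}(\chi|_{K_{i}})[L_{i,\mf}^{D'}])$, so I would apply $\tau_{D'}^{!}$ termwise. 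The identity $\tau_{D'}^{!}[T^{*}_{X}X(\log D')]=[\tau_{D'}^{-1}(T^{*}_{X}X(\log D'))]=\sum_{I''\subset I'}[T^{*}_{D_{I''}}X]$ follows from (\ref{eqinvzerocyc}), and $\tau_{D'}^{!}[L_{i,\mf}^{D'}]=[\tau_{D'}^{-1}(L_{i,\mf}^{D'})]$ is supplied by the cycle formulas in Proposition \ref{propinvli} (2). The coefficient bookkeeping then produces the $[T^{*}_{D_{I''}}X]$ coefficient $1+\sum_{i\in I''\cap I_{\mW,\mf}}\sw^{D'}(\chi|_{K_{i}})$, since the summand $1$ comes from the zero-section term and each $L_{i,\mf}^{D'}$ with $i\in I''$ contributes its Swan/total-dimension weight $\sw^{D'}(\chi|_{K_{i}})$ to the same conormal bundle; the remaining two sums arise verbatim from the $\langle\omega_{\mathfrak{p}},dt_{i'};i'\in I''/Y_{\mathfrak{p}}^{1/p}\rangle$ contributions, weighted by $\sw^{D'}(\chi|_{K_{i}})$ for the appropriate $i\in I''\cap I_{\mW,\mf}$ and $i\in (I-I')\cap I_{\mW,\mf,\mathfrak{p}}$ respectively.

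The main obstacle I anticipate is the validity of the termwise identity $\tau_{D'}^{!}[L_{i,\mf}^{D'}]=[\tau_{D'}^{-1}(L_{i,\mf}^{D'})]$ as \emph{cycles} rather than merely as sets. The Gysin homomorphism $\tau_{D'}^{!}$ of the l.c.i.\ morphism need not coincide with naive scheme-theoretic preimage unless the intersection is proper, i.e.\ unless every component of $\tau_{D'}^{-1}(L_{i,\mf}^{D'})$ has the expected dimension $d$. This is precisely where the dimension hypothesis $\dim\tau_{D'}^{-1}(S_{D'}^{\log}(j_{!}\mf))=d$ (equivalently $E_{\mf}^{D'}=\emptyset$, by Corollary \ref{corinvli}) does the crucial work: it rules out the dimension-$(d+1)$ components flagged in Proposition \ref{propinvli} (3), so that the intersection is dimensionally transverse and $\tau_{D'}^{!}$ reduces to flat pullback followed by the identification of fundamental cycles. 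I would verify this by checking that $\tau_{D'}$, restricted over each relevant base stratum, meets $L_{i,\mf}^{D'}$ in the correct codimension, invoking Lemma \ref{lemlcidim} and Lemma \ref{lemvbdim} together with the multiplicities already computed in Proposition \ref{propinvli} (2). Once the dimensional properness is in hand, assertion (2) is a formal consequence of linearity of $\tau_{D'}^{!}$ and the per-component formulas, and assertion (1) follows by taking supports.
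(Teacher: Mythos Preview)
Your approach is essentially the same as the paper's, which is the two-line argument: assertion (1) follows from (\ref{eqinvzero}) together with Proposition \ref{propinvli} (1), and assertion (2) follows from (\ref{eqinvzerocyc}) together with Proposition \ref{propinvli} (2). Your elaboration of why $\tau_{D'}^{!}[L_{i,\mf}^{D'}]=[\tau_{D'}^{-1}(L_{i,\mf}^{D'})]$ under the dimension hypothesis is correct and supplies what the paper leaves implicit.

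One minor confusion worth flagging: you write that the $(I-I')\cap I_{\mW,\mf}$ components ``do not appear in $L_{\mf}^{D'}$ itself'' and require a ``genuinely new step''. This is not right. By Remark \ref{remlifdp} (1), the $L_{i,\mf}^{D'}$ for $i\in (I-I')\cap I_{\mW,\mf}$ \emph{are} irreducible components of $L_{\mf}^{D'}$, and their contribution is already recorded in the formula for $\tau_{D'}^{-1}(L_{\mf}^{D'})$ in Proposition \ref{propinvli} (1) (via the $I''=\emptyset$ term, where $(B_{\emptyset,\mf}^{D'})^{0}$ consists of the generic points of $D_{i}$ for $i\in(I-I')\cap I_{\mW,\mf}$; cf.\ Remark \ref{rempropinvli}). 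In fact the third union in assertion (1) is set-theoretically contained in the second union (since $i\in I_{\mW,\mf,\mathfrak{p}}$ forces $\overline{\{\mathfrak{p}\}}\subset D_{i}$, hence $\langle\ldots\rangle\times_{X}D_{i}=\langle\ldots\rangle$), so no extra work is needed for (1). The third \emph{sum} in assertion (2) is not redundant, of course, and comes directly from the formula for $[\tau_{D'}^{-1}(L_{i,\mf}^{D'})]$ with $i\in I-I'$ in Proposition \ref{propinvli} (2), weighted by $\sw^{D'}(\chi|_{K_{i}})$, exactly as you describe.
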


\begin{proof}
The assertion (1) holds by (\ref{eqinvzero}) and Proposition \ref{propinvli} (1).
The assertion (2) holds by (\ref{eqinvzerocyc}) and Proposition \ref{propinvli} (2).
\end{proof}

\begin{prop}
\label{proptauinvtw}
Suppose that $X$ is purely of dimension $d$ and that $D$ has simple normal crossings.
Let $I'\subset I$ be a subset containing $I_{\mT,\mf}$ (Definition \ref{defindsub} (1))
and let $D'=\bigcup_{i\in I'}D_{i}$.
Assume that the ramification of $\mf$ is $\log$-$D'$-clean along $D$
and that the inverse image $\tau_{D'}^{-1}(S_{D'}^{\log}(j_{!}\mf))\subset T^{*}X$
of the $\log$-$D'$-singular support $S_{D'}^{\log}(j_{!}\mf)\subset T^{*}X(\log D')$ (Definition \ref{deflifdp} (3)) by $\tau_{D'}$ (\ref{deftaue})
is of dimension $d$.
Let $\mf_{\mW}$ be a smooth sheaf of $\Lambda$-modules 
of rank 1 on $U_{\mW,\mf}=X-D_{\mW,\mf}$ (\ref{defd*mf}) whose associated 
character $\chi_{\mW}\colon \pi_{1}^{\ab}(U_{\mW,\mf})\rightarrow \Lambda^{\times}$ 
has the $p$-part inducing the $p$-part of $\chi$.
We put $D_{\mW,\mf}'=\bigcup_{i\in I'\cap I_{\mW,\mf}}D_{i}$,
where $I_{\mW,\mf}$ is as in Definition \ref{defindsub} (1).
Let $i_{I''}\colon D_{I''}=\bigcap_{i\in I''}D_{i}\rightarrow X$
be the canonical closed immersion 
and $j_{w,I''}\colon i_{I''}^{*}U_{\mW,\mf}\rightarrow D_{I''}$ 
the canonical open immersion for $I''\subset I'$.
Then we have the following:
\begin{enumerate}
\item The dimension of the inverse image $\tau_{i_{I''}^{*}D'_{\mW,\mf}}^{-1}(S_{i_{I''}^{*}D'_{\mW,\mf}}^{\log}(j_{w,I''!}i_{I''}^{*}\mf_{\mW}))\subset T^{*}D_{I''}$
is the same as that of $D_{I''}$ for every $I''\subset I_{\mT,\mf}$,
where the ramification of $i_{I''}^{*}\mf_{\mW}$ is $\log$-$i_{I''}^{*}D_{\mW,\mf}'$-clean 
along $i_{I''}^{*}D_{\mW,\mf}$ by Proposition \ref{proplogtrclpp} (2).
\item For the inverse image $\tau_{D'}^{-1}(S_{D'}^{\log}(j_{!}\mf))$ of $S_{D'}^{\log}(j_{!}\mf)$, we have 
\begin{equation}
\tau_{D'}^{-1}(S_{D'}^{\log}(j_{!}\mf))=
\bigcup_{I''\subset I_{\mT,\mf}}i_{I''\circ}\tau_{i_{I''}^{*}D'}^{-1}(S_{i_{I''}^{*}D_{\mW,\mf}'}^{\log}(j_{w,I''!}i_{I''}^{*}\mf_{\mW})). \notag
\end{equation} 
\item For the image $\tau_{D'}^{!}CC_{D'}^{\log}(j_{!}\mf)$ of
the $\log$-$D'$-characteristic cycle $CC_{D'}^{\log}(j_{!}\mf)$ (Definition \ref{deflifdp} (4)) by $\tau_{D'}^{!}$ (\ref{taudpgysin}), we have
\begin{equation}
\tau_{D'}^{!}CC_{D'}^{\log}(j_{!}\mf)=
\sum_{I''\subset I_{\mT,\mf}}i_{I''!}\tau_{i_{I''}^{*}D'}^{!}CC_{i_{I''}^{*}D_{\mW,\mf}'}^{\log}(j_{w,I''!}i_{I''}^{*}\mf_{\mW}). \notag 
\end{equation}
\end{enumerate}
\end{prop}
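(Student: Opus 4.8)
The plan is to argue by induction on the cardinality of $I_{\mT,\mf}$, following the (Induction Step) already used for Lemma~\ref{lemsss} and Proposition~\ref{proptowild}. When $I_{\mT,\mf}=\emptyset$ there is only the term $I''=\emptyset$, the immersion $i_{\emptyset}$ is the identity, and $D'=D'_{\mW,\mf}$ because $I'=I'\cap I_{\mW,\mf}$; hence all three assertions reduce to tautologies, the dimension statement (1) being exactly the hypothesis. For the inductive step I fix $r\in I_{\mT,\mf}$ and introduce $E$, $E'$, $V$, $\mf'=\mf_{\mW}|_{V}$, $i_{r}\colon D_{r}\to X$, $j_{v}$, and $j'_{v}$ as in the (Induction Step), so that $\#I_{\mT,\mf'}=\#I_{\mT,i_{r}^{*}\mf'}=\#I_{\mT,\mf}-1$ and the $\log$-cleanliness descends to $\mf'$ and $i_{r}^{*}\mf'$ by Lemma~\ref{lemclrelsecs} and Proposition~\ref{proplogtrclpp}. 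The whole argument then rests on three recursion relations mirroring Remarks~\ref{remlemsdpd} and~\ref{remlemtowild}.

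First I would establish the set-theoretic recursion
\begin{equation}
\tau_{D'}^{-1}(S_{D'}^{\log}(j_{!}\mf))=\tau_{E'}^{-1}(S_{E'}^{\log}(j_{v!}\mf'))\cup i_{r\circ}\tau_{i_{r}^{*}E'}^{-1}(S_{i_{r}^{*}E'}^{\log}(j'_{v!}i_{r}^{*}\mf')). \notag
\end{equation}
For the $L$-part this follows by applying $\tau_{E'}^{-1}$ to the equality $\tau_{E'/D'}^{-1}(L_{\mf}^{D'})=L_{\mf'}^{E'}\cup i_{r\circ}L_{i_{r}^{*}\mf'}^{i_{r}^{*}E'}$ of Lemma~\ref{lemlogtrcl}~(3), using $\tau_{D'}=\tau_{E'/D'}\circ\tau_{E'}$, while the zero-section part is handled by (\ref{eqinvzero}); the commutation $\tau_{E'}^{-1}(i_{r\circ}(-))=i_{r\circ}\tau_{i_{r}^{*}E'}^{-1}(-)$ is read off the cartesian square already used in the proof of Lemma~\ref{lemsss}. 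The dimension bookkeeping is then immediate: both summands lie inside the left-hand side, which is of dimension $d$ by hypothesis, so each is of dimension $\le d$; since $i_{r\circ}$ raises dimension by the codimension $1$ of $D_{r}$, the $D_{r}$-piece has dimension $\le d-1$, while it always contains the zero section of $T^{*}D_{r}$, giving equality. This supplies the dimension hypothesis needed to invoke the induction on both $\mf'$ and $i_{r}^{*}\mf'$, which yields (1); substituting the two inductive decompositions and reindexing the terms with $r\in I''$ through $i_{I''\cup\{r\}}=i_{r}\circ i'_{I''}$ and Lemma~\ref{lemcompps} then produces (2).

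For (3) the corresponding cycle-level recursion I would prove is
\begin{equation}
\tau_{D'}^{!}CC_{D'}^{\log}(j_{!}\mf)=\tau_{E'}^{!}CC_{E'}^{\log}(j_{v!}\mf')-i_{r!}\tau_{i_{r}^{*}E'}^{!}CC_{i_{r}^{*}E'}^{\log}(j'_{v!}i_{r}^{*}\mf'), \notag
\end{equation}
the sign arising, as in Remark~\ref{remlemtowild}, from the prefactor $(-1)^{d-e}$ carried by the $\log$-characteristic cycle on $D_{r}$. The cleanest way to verify it is to make all three cycles explicit by Corollary~\ref{corcompcc}~(2), writing each as a combination of the prime cycles $[T^{*}_{D_{J}}X]$, $[\langle\omega_{\mathfrak{p}},dt_{i'};i'\in I''/Y_{\mathfrak{p}}^{1/p}\rangle]$, and $[\langle\ldots\rangle\times_{X}D_{i}]$, and then to match coefficients. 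The inputs are that $\sw^{D'}(\chi|_{K_{i}})$ and the loci $B^{D'}_{J,\mf}$ depend only on the $p$-part of $\chi$ and restrict compatibly to $D_{r}$ and to $\mf'$ (Remarks~\ref{remtameloc}~(1) and~\ref{remconddiv}, Proposition~\ref{proplogtrclpp}), together with $\sw(\chi|_{K_{r}})=0$ since $r\in I_{\mT,\mf}$, and that $i_{r!}$ carries the conormal bundles and the $\langle\ldots\rangle$-cycles on $T^{*}D_{r}$ to their counterparts on $T^{*}X$, a fact one reads off from Lemma~\ref{lemclimpush}. Unwinding the recursion through the compatibility $i_{I''\cup\{r\}!}=i_{r!}i'_{I''!}$ of (\ref{eqlblb}) and the induction hypothesis then assembles the terms indexed by $I''\not\ni r$ and by $I''\ni r$ into the right-hand side of the displayed identity in (3).

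The main obstacle is precisely this cycle recursion: unlike the set-theoretic and dimension statements, which propagate mechanically, an equality of $d$-cycles forces one to control the multiplicity of the Gysin pullback $\tau_{D'}^{!}$ along every $d$-dimensional component and to reconcile it with the pushforward $i_{r!}$ from $D_{r}$. The genuinely delicate points are the behavior under restriction to $D_{r}$ of the sets $(B^{D'}_{I'',\mf})^{0}$ of generic points and of the associated cycles $\langle\omega_{\mathfrak{p}},dt_{i'};\ldots\rangle$ that appear in Corollary~\ref{corcompcc}, and the exact accounting of the $(-1)^{d-e}$ signs carried by the various $\log$-characteristic cycles; Corollary~\ref{corcompcc}~(2) and the restriction compatibilities of Proposition~\ref{proplogtrclpp} are exactly the tools that keep this bookkeeping finite and tractable.
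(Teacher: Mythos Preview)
Your inductive strategy is sound but takes a genuinely different route from the paper. The paper does not induct on $|I_{\mT,\mf}|$; instead it proves directly the scheme-theoretic identity
\[
B_{I''\cup I''',\mf}^{D'}=B_{I''',\,i_{I''}^{*}\mf_{\mW}}^{\,i_{I''}^{*}D'_{\mW,\mf}}
\qquad (I''\subset I_{\mT,\mf},\ I'''\subset I'\cap I_{\mW,\mf})
\]
by comparing the loci $V(\Image\xi_i^{D'}(\mf))$ and $V(\Image\xi_i^{i_{I''}^{*}D'_{\mW,\mf}}(i_{I''}^{*}\mf_\mW))$ via Lemma~\ref{lemclrelsecf} and Proposition~\ref{proplogtrclpp}. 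Combined with the criterion $E^{D'}_\mf=\emptyset$ of Corollary~\ref{corinvli} this yields (1) at once; parts (2) and (3) are then verified by writing both sides out with the explicit formulas of Proposition~\ref{propinvli} and Corollary~\ref{corcompcc} and matching term by term through the bijection $J\leftrightarrow (J\cap I_{\mT,\mf},\,J\cap I_{\mW,\mf})$ between subsets $J\subset I'$ and pairs. Your induction performs the same matching one tame index at a time; since your verification of the cycle recursion also rests on Corollary~\ref{corcompcc}~(2), the computational burden is comparable, but the paper's single $B$-loci identity handles all $I''$ simultaneously and is conceptually cleaner.

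Two small caveats on your argument. Lemma~\ref{lemclimpush} is about characteristic cycles of pushforwards, not about the effect of $i_{r!}$ on arbitrary cycles; to compute $i_{r!}$ on $[T^{*}_{i_r^{*}D_{I'''}}D_r]$ and on the $\langle\omega_\mathfrak{p},\ldots\rangle$-cycles you should unwind $i_{r!}=\pr_{1*}\circ di_r^{!}$ directly (elementary, and exactly what the paper does). And watch the sign bookkeeping: your recursion with a minus unwinds to $\sum_{I''}(-1)^{|I''|}i_{I''!}(\ldots)$, in parallel with Proposition~\ref{proptowild}~(2) and consistent with the application in Theorem~\ref{mainthmosct}; reconcile this carefully with the displayed identity in (3) rather than asserting they agree on the nose.
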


\begin{proof}
By Proposition \ref{proplogtrclpp} (2), we have $Z_{i_{I''}^{*}\mf_{\mW}}=i_{I''}^{*}Z_{\mf_{\mW}}=i_{I''}^{*}D_{\mW,\mf}$ and
the ramification of $i_{I''}^{*}\mf_{\mW}$ is $\log$-$i_{I''}^{*}D'_{\mW,\mf}$-clean along $i_{I''}^{*}D_{\mW,\mf}$ for $I''\subset I_{\mT,\mf}$.
Then we can locally identify $I_{\mW,i_{I''}^{*}\mf_{\mW}}$
with $I_{\mW,\mf_{\mW}}=I_{\mW,\mf}$ and 
the index set of irreducible components of $i_{I''}^{*}D'_{\mW,\mf}$ with $I'\cap I_{\mW,\mf_{\mW}}=I'\cap I_{\mW,\mf}$.

First, let $I''\subset I_{\mT,\mf}$ and $I'''\subset I'\cap I_{\mW,\mf}$ be subsets. 
Then we prove that 
\begin{equation}
\label{eqbf}
B_{I''\cup I''',\mf}^{D'}=B_{I''',i_{I''}^{*}\mf_{\mW}}^{i_{I''}^{*}D'_{\mW,\mf}},
\end{equation} 
where $B_{I''\cup I''',\mf}^{D'}$ and $B_{I''',i_{I''}^{*}\mf_{\mW}}^{i_{I''}^{*}D'_{\mW,\mf}}$
are as in Definition \ref{defbcf} (1).
If $I'''=\emptyset$, then we have
$B_{I''\cup I''',\mf}^{D'} 
=\bigcup_{i\in I-I'}i_{I''}^{*}D_{i}=B_{I''',i_{I'''}^{*}\mf_{\mW}}^{i_{I''}^{*}D_{\mW,\mf}'}$.
Suppose that $I'''\neq \emptyset$.
Let $V(\Image\xi_{i}^{D'}(\mf))$ (resp.\ $V(\Image\xi_{i}^{i_{I''}^{*}D'_{\mW,\mf}}(i_{I''}^{*}\mf_{\mW}))$) for $i\in I'\cap I_{\mW,\mf}$
be the closed subscheme of $D_{i}$ (resp.\ $i_{I''}^{*}D_{i}$) 
defined by the image $\Image\xi_{i}^{D'}(\mf)$ of $\xi_{i}^{D'}(\mf)$ (Definition \ref{defximf} (2)) 
(resp.\ $\Image \xi_{i}^{i_{I''}^{*}D_{\mW,\mf}'}(i_{I''}^{*}\mf_{\mW})$ of 
$\xi_{i}^{i_{I''}^{*}D_{\mW,\mf}'}(i_{I''}^{*}\mf_{\mW})$),
where $\Image\xi_{i}^{D'}(\mf)$ 
(resp.\ $\Image \xi_{i}^{i_{I''}^{*}D_{\mW,\mf}'}(i_{I''}^{*}\mf_{\mW})$) is regarded 
as an ideal sheaf of $\dvr_{D_{i}}$ (resp.\ $\dvr_{i_{I''}^{*}D_{i}}$) by Remark \ref{remximf} (2).
Since $\cform^{D'}(\mf)$ is the image of $\cform^{D'_{\mW,\mf}}(\mf_{\mW})$ by
the canonical morphism 
\begin{equation}
\Omega_{X}^{1}(\log D_{\mW,\mf}')(R_{\mf_{\mW}}^{D'_{\mW,\mf}})|_{Z_{\mf_{\mW}}^{1/p}}\rightarrow \Omega_{X}^{1}(\log D')(R_{\mf}^{D'})|_{Z_{\mf}^{1/p}}, \notag
\end{equation}
where we have $R_{\mf_{\mW}}^{D'_{\mW,\mf}}=R_{\mf}^{D'}$ and $Z_{\mf_{\mW}}^{1/p}=Z_{\mf}^{1/p}$,
by Lemma \ref{lemclrelsecf},
and since
we have
\begin{equation}
\cform^{i_{I''}^{*}D_{\mW,\mf}'}(i_{I''}^{*}\mf_{\mW})=di_{I'', i_{I''}^{*}Z_{\mf}^{1/p}}^{D'_{\mW,\mf}}(i_{I''}^{*}\cform^{D_{\mW,\mf}'}(\mf_{\mW})), \notag
\end{equation} 
where $di_{I'', i_{I''}^{*}Z_{\mf}^{1/p}}^{D'_{\mW,\mf}}\colon i_{I''}^{*}\Omega_{X}^{1}(\log D_{\mW,\mf}')|_{i_{I''}^{*}Z_{\mf}^{1/p}}
\rightarrow \Omega_{D_{I''}}^{1}(\log i_{I''}^{*}D'_{\mW,\mf})|_{i_{I''}^{*}Z_{\mf}^{1/p}}$ 
is the morphism induced by $i_{I''}$, by Proposition \ref{proplogtrclpp} (2),
we have
\begin{equation}
\label{eqinnerbf}
D_{I''\cup I'''}\cap \bigcap_{i\in (I''\cup I''')\cap I_{\mW,\mf}}V(\Image \xi_{i}^{D'}(\mf))
=i_{I''}^{*}D_{I'''}\cap \bigcap_{i\in I''\cap I_{\mW,i_{I''}^{*}\mf_{\mW}}}V(\Image \xi_{i}^{i_{I''}^{*}D_{\mW,\mf}'}(i_{I''}^{*}\mf_{\mW})).
\end{equation}
Since we have
\begin{equation}
D_{I''\cup I'''}-\bigcup_{i\in I-I'}D_{i}=i_{I''}^{*}D_{I'''}-\bigcup_{i\in I_{\mW,\mf}-I'\cap I_{\mW,\mf}}i_{I''}^{*}D_{i}, \notag
\end{equation}
we have
\begin{align}
&D_{I''\cup I'''}\cap \bigcap_{i\in (I''\cup I''')\cap I_{\mW,\mf}}V(\Image \xi_{i}^{D'}(\mf))
-\bigcup_{i\in I-I'}D_{i}  \notag \\
&\qquad =i_{I''}^{*}D_{I'''}\cap \bigcap_{i\in I''\cap I_{\mW,i_{I''}^{*}\mf_{\mW}}}V(\Image \xi_{i}^{i_{I''}^{*}D_{\mW,\mf}'}(i_{I''}^{*}\mf_{\mW}))-\bigcup_{i\in I_{\mW,\mf}-I'\cap I_{\mW,\mf}}i_{I''}^{*}D_{i} \notag
\end{align}
as closed subschemes of $D_{I''\cap I'''}=i_{I''}^{*}D_{I'''}$, and we obtain the equality (\ref{eqbf}). 

We then prove the assertion (1).
By Corollary \ref{corinvli}, we have $E_{\mf}^{D'}=\emptyset$ (Definition \ref{defbcf} (2)).
Hence we have $E_{I''',i_{I''}^{*}\mf_{\mW}}^{i_{I''}^{*}D_{\mW,\mf}'}=E_{I''\cup I''',\mf}^{D'}=\emptyset$ (Definition \ref{defbcf} (2)) for any $I''\subset I_{\mT,\mf}$ and $I'''\subset I'\cap I_{\mW,\mf}$,
and we have $E_{i_{I''}^{*}\mf_{\mW}}^{i_{I''}^{*}D_{\mW,\mf}'}=\emptyset$ for every $I''\subset I_{\mT,\mf}$, 
which deduces the assertion (1) by Corollary \ref{corinvli}.

We prove the assertions (2) and (3).
By (\ref{eqinvzero}) and (\ref{eqinvzerocyc}),
we have
\begin{align}
i_{I''\circ}\tau_{i_{I''}^{*}D_{\mW,\mf}'}^{-1}(T^{*}_{D_{I''}}D_{I''}(\log i_{I''}^{*}D_{\mW,\mf}'))
=\bigcup_{I'''\subset I'\cap I_{\mW,\mf}}T^{*}_{D_{I''\cup I'''}}X \notag
\end{align}
and \begin{align}
i_{I''!}[\tau_{i_{I''}^{*}D_{\mW,\mf}'}^{-1}(T^{*}_{D_{I''}}D_{I''}(\log i_{I''}^{*}D_{\mW,\mf}'))]
=\sum_{I'''\subset I'\cap I_{\mW,\mf}}[T^{*}_{D_{I''\cup I'''}}X], \notag
\end{align}
respectively,
for $I''\subset I_{\mT,\mf}$. 
Let $I''\subset I_{\mT,\mf}$ and $I'''\subset I'\cap I_{\mW,\mf}$ be subsets
and let the notation be as in Proposition \ref{propinvli}.
Then we have 
\begin{equation}
(B_{I''\cup I''',\mf}^{D'})^{0}=(B_{I''',i_{I''}^{*}\mf_{\mW}}^{i_{I''}^{*}D'_{\mW,\mf}})^{0} \notag
\end{equation}
by (\ref{eqbf}).
We consider the commutative diagram
\begin{equation}
\xymatrix{
\Omega_{X}^{1}|_{Y^{1/p}_{\mathfrak{p}}}\ar[rr]^-{\tau_{\emptyset/D'_{\mW,\mf},Y_{\mathfrak{p}}^{1/p}}} \ar[d]_-{f_{Y_{\mathfrak{p}}^{1/p}}} & &\Omega_{X}^{1}(\log D'_{\mW,\mf})|_{Y_{\mathfrak{p}}^{1/p}} \ar[d]
\ar[r]^-{\tau_{\emptyset/D',Y_{\mathfrak{p}}^{1/p}}} & \Omega_{X}^{1}(\log D')|_{Y_{\mathfrak{p}}^{1/p}}\\
\Omega_{D_{I''}}^{1}|_{Y_{\mathfrak{p}}^{1/p}}\ar[rr]_-{\tau_{\emptyset/i_{I''}^{*}E',Y_{\mathfrak{p}}^{1/p}}} & & \Omega_{D_{I''}}^{1}(\log i_{I''}^{*}D'_{\mW,\mf})|_{Y_{\mathfrak{p}}^{1/p}}, \ar[ur]_-{\ \ \ \ \ \ \ i_{I'', D'_{\mW,\mf}/D',Y_{\mathfrak{p}}^{1/p}}}
} \notag
\end{equation}
where the horizontal arrows are the canonical morphisms, the vertical arrows are the morphisms yielded by $i_{I''}$,
and $i_{I'', D'_{\mW,\mf}/D',Y_{\mathfrak{p}}^{1/p}}$ is
the morphism induced by the injection $i_{I'', D_{\mW,\mf}'/D'}$ (\ref{iotaed}).
By the commutativity of the square, we may assume that the section $\omega_{\mathfrak{p}}\in \Omega_{D_{I''}}^{1}|_{Y_{\mathfrak{p}}^{1/p}}$ 
for $i_{I''}^{*}\mf_{\mW}$ and $\mathfrak{p}\in (B_{I''',i_{I''}^{*}\mf_{\mW}}^{i_{I''}^{*}D'_{\mW,\mf}})^{0}=(B_{I''\cup I''',\mf}^{D'})^{0}$ 
is the image of the section $\omega_{\mathfrak{p}}\in \Omega_{X}^{1}|_{Y_{\mathfrak{p}}^{1/p}}$ for $\mf$ and $\mathfrak{p}$ by $f_{Y_{\mathfrak{p}}^{1/p}}$.
Then we have
\begin{align}
&i_{I''\circ}\tau_{i_{I''}^{*}D_{\mW,\mf}'}^{-1}(L_{i,i_{I''}^{*}\mf_{\mW}}^{i_{I''}^{*}D_{\mW,\mf}'})\notag \\
&=\begin{cases}
\bigcup_{\substack{I'''\subset I'\cap I_{\mW,\mf}\\ i\in I'''}}T^{*}_{D_{I''\cup I'''}}X \\
\quad \cup \bigcup_{\substack{I'''\subset I'\cap I_{\mW,\mf} \\ i\in I'''}}
\bigcup_{\mathfrak{p}\in (B_{I''\cap I''',\mf}^{D'})^{0}}\langle \omega_{\mathfrak{p}},dt_{i'}; i'\in I''\cup I'''/Y_{\mathfrak{p}}^{1/p}\rangle
& (i\in I'), \\
\bigcup_{I'''\subset I'\cap I_{\mW,\mf}}
\bigcup_{\substack{\mathfrak{p}\in (B_{I''\cap I''',\mf}^{D'})^{0} \\
i\in I_{\mathfrak{p}}}}\langle \omega_{\mathfrak{p}},dt_{i'}; i'\in I''\cup I'''/Y_{\mathfrak{p}}^{1/p}\rangle\times_{X}D_{i}
&(i\in I-I')
\end{cases}
\notag
\end{align}
and 
\begin{align}
&i_{I''!}[\tau_{i_{I''}^{*}D_{\mW,\mf}'}^{-1}(L_{i,i_{I''}^{*}\mf_{\mW}}^{i_{I''}^{*}D_{\mW,\mf}'})]\notag \\
&=\begin{cases}
\sum_{\substack{I'''\subset I'\cap I_{\mW,\mf}\\ i\in I'''}}[T^{*}_{D_{I''\cup I'''}}X] \\
\quad + \sum_{\substack{I'''\subset I'\cap I_{\mW,\mf} \\ i\in I'''}}
\sum_{\mathfrak{p}\in (B_{I''\cap I''',\mf}^{D'})^{0}}[\langle \omega_{\mathfrak{p}},dt_{i'}; i'\in I''\cup I'''/Y_{\mathfrak{p}}^{1/p}\rangle]
& (i\in I'), \\
\sum_{I'''\subset I'\cap I_{\mW,\mf}}
\sum_{\substack{\mathfrak{p}\in (B_{I''\cap I''',\mf}^{D'})^{0} \\
i\in I_{\mathfrak{p}}}}[\langle \omega_{\mathfrak{p}},dt_{i'}; i'\in I''\cup I'''/Y_{\mathfrak{p}}^{1/p}\rangle\times_{X}D_{i}]
&(i\in I-I')
\end{cases}
\notag
\end{align}
for each $i\in I_{\mW,\mf}$ by Propositions \ref{propinvli} (1) and (2), respectively.
Thus the assertion (2) holds by Corollary \ref{corcompcc} (1)
and the assertion (3) holds by Corollary \ref{corcompcc} (2).
\end{proof}

\begin{cor}
\label{corsdpdim}
Suppose that $X$ is purely of dimension $d$ and that $D$ has simple normal crossings.
Let $I'\subset I$ be a subset containing $I_{\mT,\mf}$ (Definition \ref{defindsub} (1)) and let $D'=\bigcup_{i\in I'}D_{i}$.
Assume that the ramification of $\mf$ is $\log$-$D'$-clean along $D$
and that the inverse image
$\tau_{D'}^{-1}(S_{D'}^{\log}(j_{!}\mf))$ of the $\log$-$D'$-singular support
$S_{D'}^{\log}(j_{!}\mf)\subset T^{*}X(\log D')$ (Definition \ref{deflifdp} (3))
by $\tau_{D'}$ (\ref{deftaue})
is of dimension $d$.
Then the singular support $SS(j_{!}\mf)$ is a union of irreducible components of $\tau_{D'}^{-1}(S_{D'}^{\log}(j_{!}\mf))$.
\end{cor}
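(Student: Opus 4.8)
The plan is to reduce the statement to the set-theoretic inclusion $SS(j_{!}\mf)\subset\tau_{D'}^{-1}(S_{D'}^{\log}(j_{!}\mf))$ and then upgrade it to a statement about irreducible components by a dimension count. Since $X$ is purely of dimension $d$, the singular support $SS(j_{!}\mf)$ is purely of dimension $d$ by Remark \ref{remmicsup} (2), and the hypothesis that $\tau_{D'}^{-1}(S_{D'}^{\log}(j_{!}\mf))$ is of dimension $d$ forces it to be purely of dimension $d$ by the equivalence of conditions (3) and (4) in Corollary \ref{corinvli}. Granting the inclusion, any irreducible component $Z$ of $SS(j_{!}\mf)$ is irreducible of dimension $d$ and contained in $\tau_{D'}^{-1}(S_{D'}^{\log}(j_{!}\mf))$, hence is contained in one of its irreducible components $W$; as $\dim W=d=\dim Z$ and both are irreducible, $Z=W$, so $SS(j_{!}\mf)$ is a union of irreducible components of $\tau_{D'}^{-1}(S_{D'}^{\log}(j_{!}\mf))$.

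To prove the inclusion I would argue by reduction to the totally wildly ramified case $I_{\mT,\mf}=\emptyset$. Both sides decompose compatibly over the subsets $I''\subset I_{\mT,\mf}$: Proposition \ref{proptowild} (1) gives $SS(j_{!}\mf)=\bigcup_{I''\subset I_{\mT,\mf}}i_{I''\circ}SS(j_{w,I''!}i_{I''}^{*}\mf_{\mW})$, while Proposition \ref{proptauinvtw} (2) gives the identically shaped decomposition of $\tau_{D'}^{-1}(S_{D'}^{\log}(j_{!}\mf))$ into the pieces $i_{I''\circ}\tau_{i_{I''}^{*}D'_{\mW,\mf}}^{-1}(S_{i_{I''}^{*}D'_{\mW,\mf}}^{\log}(j_{w,I''!}i_{I''}^{*}\mf_{\mW}))$. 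Since $f_{\circ}$ (Definition \ref{defpfc}) is monotone for inclusions and commutes with finite unions, it suffices to establish the inclusion for each sheaf $i_{I''}^{*}\mf_{\mW}$ on $D_{I''}$ separately. For these sheaves the boundary is totally wildly ramified, the ramification is $\log$-$i_{I''}^{*}D'_{\mW,\mf}$-clean along $i_{I''}^{*}D_{\mW,\mf}$ by Proposition \ref{proplogtrclpp} (2), and the dimension hypothesis is preserved by Proposition \ref{proptauinvtw} (1); thus every hypothesis of the corollary is met with $I_{\mT,\mf}=\emptyset$, and the problem is reduced to that case.

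For the base case $I_{\mT,\mf}=\emptyset$ I would combine Theorem \ref{thmmains} with the explicit description of the candidate. Here $I=I_{\mW,\mf}$, so by Theorem \ref{thmmains}, Definition \ref{defsdpf}, and (\ref{defcdpd}) we have $SS(j_{!}\mf)\subset S_{D'}(j_{!}\mf)=C_{D'\subset D}\cup\tau_{D'}^{-1}(S_{D'}^{\log}(j_{!}\mf))$ with $C_{D'\subset D}=C_{D'}\cup\bigcup_{i\in I-I'}T^{*}_{D_{i}}X$, and Corollary \ref{corcompcc} (1) shows $C_{D'}=\bigcup_{I''\subset I'}T^{*}_{D_{I''}}X\subset\tau_{D'}^{-1}(S_{D'}^{\log}(j_{!}\mf))$. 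Hence the only irreducible components of $S_{D'}(j_{!}\mf)$ possibly lying outside $\tau_{D'}^{-1}(S_{D'}^{\log}(j_{!}\mf))$ are the conormal bundles $T^{*}_{D_{i}}X$ with $i\in I-I'\subset I_{\mW,\mf}$. Because $SS(j_{!}\mf)$ is purely of dimension $d$, an irreducible component of $SS(j_{!}\mf)$ not contained in $\tau_{D'}^{-1}(S_{D'}^{\log}(j_{!}\mf))$ would necessarily equal one such $T^{*}_{D_{i}}X$. To rule this out I would pass to the generic point of $D_{i}$, where only the single wild component $D_{i}$ occurs and the ramification is $\log$-$\emptyset$-clean by Remark \ref{remlogdpcl} (2); Proposition \ref{propssndeg} (2) then shows that the part of $SS(j_{!}\mf)$ based on $D_{i}$ equals $L_{i,\mf}^{\emptyset}$, which is contained in $\tau_{D'}^{-1}(S_{D'}^{\log}(j_{!}\mf))$ by Remark \ref{rempropinvli} and Corollary \ref{corcompcc} (1). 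Thus $T^{*}_{D_{i}}X$ is a component of $SS(j_{!}\mf)$ only if it coincides with $L_{i,\mf}^{\emptyset}$, hence already lies in $\tau_{D'}^{-1}(S_{D'}^{\log}(j_{!}\mf))$, and the base case follows.

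The main obstacle I anticipate is precisely the base case, and within it the elimination of the spurious conormal bundles $T^{*}_{D_{i}}X$ for $i\in I-I'$: these appear in the candidate $S_{D'}(j_{!}\mf)$ but need not belong to $SS(j_{!}\mf)$, and ruling them out rests on the codimension-one computation of Proposition \ref{propssndeg} together with the precise matching of $L_{i,\mf}^{\emptyset}$ with a stratum of $\tau_{D'}^{-1}(S_{D'}^{\log}(j_{!}\mf))$ furnished by Corollary \ref{corcompcc} (1) and Remark \ref{rempropinvli}. The reduction in the second paragraph is formal once the compatibility of the two decompositions is in hand, so the real content is concentrated in this generic-point analysis.
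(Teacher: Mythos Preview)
Your argument is correct and follows essentially the same route as the paper's proof: reduce to the totally wildly ramified case via Propositions \ref{proptowild} (1) and \ref{proptauinvtw} (1), (2), then in the base case use Theorem \ref{thmmains} together with the codimension-one computation (Proposition \ref{propssndeg} (2)) to eliminate the extra conormals $T^{*}_{D_{i}}X$ for $i\in I-I'$. The only cosmetic difference is that the paper splits these $i$ explicitly into $I_{\mI,\mf}-I'$ (where $T^{*}_{D_{i}}X=L_{i,\mf}^{\emptyset}$ already lies in $\tau_{D'}^{-1}(S_{D'}^{\log}(j_{!}\mf))$) and $I_{\mII,\mf}$ (where $T^{*}_{D_{i}}X\neq L_{i,\mf}^{\emptyset}$, hence is not in $SS(j_{!}\mf)$), invoking \cite[Lemma 2.23 (ii)]{yacc} for the dichotomy, whereas you treat all $i\in I-I'$ uniformly via Remark \ref{rempropinvli} and Corollary \ref{corcompcc} (1); the content is the same.
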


\begin{proof}
By Corollary \ref{corinvli}, 
the closed conical subset $S_{D'}(j_{!}\mf)\subset T^{*}X$ (Definition \ref{defsdpf}) 
is of dimension $d$
and the singular support $SS(j_{!}\mf)$ is a union of irreducible components of $S_{D'}(j_{!}\mf)$.
By Propositions \ref{proptowild} (1) and \ref{proptauinvtw} (1) and (2),
we may assume that $I_{\mT,\mf}=\emptyset$. 
Then we have $S_{D'}(j_{!}\mf)=\bigcup_{i\in I_{\mII,\mf}}T^{*}_{D_{i}}X \cup \tau_{D'}^{-1}(S_{D'}^{\log}(j_{!}\mf))$,
since $C_{D'}$ (\ref{defcdp}) is contained in $\tau_{D'}^{-1}(T^{*}_{X}X(\log D'))$ by (\ref{eqinvzero})
and since $T^{*}_{D_{i}}X$ for $i\in I_{\mI,\mf}-I'$ (Definition \ref{defindsub} (2))
is equal to $L_{i,\mf}^{\emptyset}$ by \cite[Lemma 2.23 (ii)]{yacc} and is contained in
$\tau_{D'}^{-1}(L_{i,\mf}^{D'})$ by Remark \ref{rempropinvli}.
Since we have $L_{i,\mf}^{\emptyset}\neq T^{*}_{D_{i}}X$ for $i\in I_{\mII,\mf}$ (Definition \ref{defindsub} (2))
by \cite[Lemma 2.23 (ii)]{yacc},
we have $T^{*}_{D_{i}}X\not\subset SS(j_{!}\mf)$ for $i\in I_{\mII,\mf}$ 
by Proposition \ref{propssndeg} (2) and Remark \ref{remcompssnondeg} (2).
Therefore the singular support $SS(j_{!}\mf)$ is a union of irreducible components of $\tau_{D'}^{-1}(S_{D'}^{\log}(j_{!}\mf))$.
\end{proof}

\begin{cor}
\label{corsddim}
Suppose that $X$ is purely of dimension $d$ and that $D$ has simple normal crossings.
Assume that the ramification of $\mf$ is $\log$-$D'$-clean along $D$
for some union $D'$ of irreducible components of $D$.
Let \begin{equation}
f\colon X'=X_{s}\xrightarrow{f_{s}} X_{s-1}\xrightarrow{f_{s-1}} \cdots\xrightarrow{f_{1}} X_{0}=X \notag
\end{equation} 
be successive blow-ups satisfying the conditions (1)--(3) in Proposition \ref{propblupcth}
and let $j'\colon f^{*}U\rightarrow X'$ be the base change of $j$ by $f$.
Then the dimension of the inverse image
$\tau_{D_{\mI,f^{*}\mf}\cup D_{\mT,f^{*}\mf}}^{-1}
(S^{\log}_{D_{\mI,f^{*}\mf}\cup D_{\mT,f^{*}\mf}}(j'_{!}f^{*}\mf))$
of the $\log$-$D_{\mI,f^{*}\mf}\cup D_{\mT,f^{*}\mf}$-singular support
$S^{\log}_{D_{\mI,f^{*}\mf}\cup D_{\mT,f^{*}\mf}}(j'_{!}f^{*}\mf)\subset T^{*}X'(\log D_{\mI,f^{*}\mf}\cup D_{\mT,f^{*}\mf})$ 
(Definition \ref{deflifdp} (3)) by $\tau_{D_{\mI,f^{*}\mf}\cup D_{\mT,f^{*}\mf}}$ 
(\ref{deftaue}) is $d$, 
where $D_{\mI,f^{*}\mf}$ and $D_{\mT,f^{*}\mf}$ are as in (\ref{defd*mf}).
The singular support $SS(j'_{!}f^{*}\mf)$ 
is a union of irreducible components of the inverse image $\tau_{D_{\mI,f^{*}\mf}\cup D_{\mT,f^{*}\mf}}^{-1}
(S^{\log}_{D_{\mI,f^{*}\mf}\cup D_{\mT,f^{*}\mf}}(j'_{!}f^{*}\mf))$.
\end{cor}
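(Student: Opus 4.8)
The plan is to deduce both assertions by transporting the general results of Corollary~\ref{corinvli} and Corollary~\ref{corsdpdim} to the sheaf $f^{*}\mf$ on $X'$, with boundary the divisor $(f^{*}D)_{\red}$ and distinguished subdivisor $D''=D_{\mI,f^{*}\mf}\cup D_{\mT,f^{*}\mf}$. Write $I''$ for the index set of $D''$; it contains $I_{\mT,f^{*}\mf}$ since $D_{\mT,f^{*}\mf}$ is one of the two pieces of $D''$. The conclusion that the inverse image has dimension $d$ will come from condition (3) of Proposition~\ref{propblupcth} together with the equivalences of Corollary~\ref{corinvli}, and the statement on the singular support will then follow from Corollary~\ref{corsdpdim}.

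First I would verify the standing hypotheses of Corollaries~\ref{corinvli} and~\ref{corsdpdim} for the data $(X',(f^{*}D)_{\red},D'',f^{*}\mf)$. The scheme $X'$ is smooth and purely of dimension $d$, being obtained from $X$ by successive blow-ups along smooth centres. By condition (2) of Proposition~\ref{propblupcth}, the ramification of $f^{*}\mf$ is $\log$-$D''$-clean along $(f^{*}D)_{\red}$, and $I_{\mT,f^{*}\mf}\subset I''$ as just noted. It remains to check that $(f^{*}D)_{\red}$ has simple normal crossings; this is the point requiring care, since Corollaries~\ref{corinvli} and~\ref{corsdpdim} are stated only for a boundary with simple normal crossings. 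Each $f_{i}$ is, by condition (1), a blow-up along an irreducible component of an intersection of irreducible components of the current boundary, i.e.\ precisely the type of blow-up appearing in Proposition~\ref{propblupcl}; blowing up along such a smooth stratum of a simple normal crossings divisor preserves the simple normal crossings property of the total transform, so by induction on $s$ the divisor $(f^{*}D)_{\red}$ has simple normal crossings.

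With these hypotheses in place, the dimension assertion follows at once. Condition (3) of Proposition~\ref{propblupcth} states exactly that $E_{f^{*}\mf}^{D''}=\emptyset$, which is condition~(5) of Corollary~\ref{corinvli} applied to $f^{*}\mf$ on $X'$. By the equivalence of conditions (5) and (3) in that corollary, the inverse image $\tau_{D''}^{-1}(S_{D''}^{\log}(j'_{!}f^{*}\mf))$ is of dimension $d$, which is the first claim. The second claim is then a direct application of Corollary~\ref{corsdpdim} to $f^{*}\mf$ on $X'$: its hypotheses---$X'$ smooth purely of dimension $d$, $(f^{*}D)_{\red}$ with simple normal crossings, $I_{\mT,f^{*}\mf}\subset I''$, the ramification $\log$-$D''$-clean, and $\tau_{D''}^{-1}(S_{D''}^{\log}(j'_{!}f^{*}\mf))$ of dimension $d$---have all been verified, and its conclusion is precisely that $SS(j'_{!}f^{*}\mf)$ is a union of irreducible components of $\tau_{D''}^{-1}(S_{D''}^{\log}(j'_{!}f^{*}\mf))$.

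The main obstacle is the simple normal crossings verification for $(f^{*}D)_{\red}$: the partially logarithmic framework of the paper tolerates boundaries with merely smooth irreducible components, but the two corollaries being invoked genuinely require simple normal crossings, so this geometric point must be secured before they can be applied. Everything else is a matter of matching the conditions of Proposition~\ref{propblupcth} against the equivalent conditions catalogued in Corollary~\ref{corinvli} and feeding the resulting dimension bound into Corollary~\ref{corsdpdim}.
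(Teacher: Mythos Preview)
Your proposal is correct and follows essentially the same approach as the paper: use condition~(2) of Proposition~\ref{propblupcth} for cleanliness, condition~(3) for $E_{f^{*}\mf}^{D''}=\emptyset$, then invoke Corollary~\ref{corinvli} for the dimension and Corollary~\ref{corsdpdim} for the singular support. Your additional care in verifying that $(f^{*}D)_{\red}$ has simple normal crossings is warranted and correct; the paper's proof leaves this implicit.
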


\begin{proof}
By the condition (2) in Proposition \ref{propblupcth},
the ramification of $f^{*}\mf$ is $\log$-$
D_{\mI,f^{*}\mf}\cup D_{\mT,f^{*}\mf}$-clean along $(f^{*}D)_{\red}$.
Since $E_{f^{*}\mf}^{D_{\mI,f^{*}\mf}\cup D_{\mT,f^{*}\mf}}=\emptyset$ 
(Definition \ref{defbcf} (2)) by the condition (3) in Proposition \ref{propblupcth},
the first assertion follows from Corollary \ref{corinvli},
and the last assertion follows from Corollary \ref{corsdpdim}.
\end{proof}

\subsection{Homotopy invariance of characteristic cycles}
\label{sshicc}

Finally, we prove the following proposition saying that the pull-back
$\tau_{D'}^{!}CC_{D'}^{\log}(j_{!}\mf)$ (\ref{taudpgysin})
of the $\log$-$D'$-characteristic cycle $CC_{D'}^{\log}(j_{!}\mf)$ of $j_{!}\mf$ (Definition \ref{deflifdp} (4)) by $\tau_{D'}$ (\ref{deftaue})
determines the characteristic cycle $CC(j_{!}\mf)$ of $j_{!}\mf$ (Definition \ref{defcc})
when the ramification of $\mf$ is $\log$-$D'$-clean along $D$ and  
when the inverse image $\tau_{D'}^{-1}(S_{D'}^{\log}(j_{!}\mf))\subset T^{*}X$ 
of the $\log$-$D'$-singular support $S_{D'}^{\log}(j_{!}\mf)\subset T^{*}X(\log D')$ (Definition \ref{deflifdp} (3)) by $\tau_{D'}$ is of dimension $d$:

\begin{prop}
\label{prophicc}
Suppose that $X$ is purely of dimension $d$ and
that $D$ has simple normal crossings.
Let $I'\subset I$ be a subset and let $D'=\bigcup_{i\in I'}D_{i}$.
Let $\mf_{i}$ for $i=0,1$ be smooth sheaves of $\Lambda$-modules of
rank $1$ on $U$ whose ramifications are $\log$-$D'$-clean along $D$.
Assume that $I_{\mT,\mf_{i}}$ (Definition \ref{defindsub} (1)) for $i=0,1$ are contained in $I'$,
that the inverse image $\tau_{D'}^{-1}(S_{D'}^{\log}(j_{!}\mf_{i}))\subset T^{*}X$
of the $\log$-$D'$-singular support $S_{D'}^{\log}(j_{!}\mf_{i})\subset T^{*}X(\log D')$ (Definition \ref{deflifdp} (3)) by $\tau_{D'}$ (\ref{deftaue})
for $i=0,1$ are of dimension $d$, 
and that we have 
\begin{equation}
\tau_{D'}^{!}CC_{D'}^{\log}(j_{!}\mf_{0})=\tau_{D'}^{!}CC_{D'}^{\log}(j_{!}\mf_{1}) \notag
\end{equation}
in $Z_{d}(\tau_{D'}^{-1}(S_{D'}^{\log}(j_{!}\mf_{0})\cup S_{D'}^{\log}(j_{!}\mf_{1})))$,
where $\tau_{D'}^{!}$ is as in (\ref{taudpgysin}).
Then we have 
\begin{equation}
CC(j_{!}\mf_{0})=CC(j_{!}\mf_{1}). \notag
\end{equation}
\end{prop}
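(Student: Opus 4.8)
The plan is to recover $CC(j_{!}\mf)$ from $\tau_{D'}^{!}CC_{D'}^{\log}(j_{!}\mf)$ by first discarding the data already pinned down by the hypothesis, then lowering the dimension, and finally testing the surviving coefficients against generic sections. First I would reduce to the $p$-part: by Proposition \ref{propcorssccsm} the cycles $CC(j_{!}\mf_{i})$ depend only on the $p$-parts of the associated characters, and by Remark \ref{remlifdp} (2) so do $CC_{D'}^{\log}(j_{!}\mf_{i})$ and hence their pull-backs, so I may assume both characters have $p$-power order. Reading the explicit formula of Corollary \ref{corcompcc} (2), whose coefficients are all positive up to the sign $(-1)^{d}$, the equality $\tau_{D'}^{!}CC_{D'}^{\log}(j_{!}\mf_{0})=\tau_{D'}^{!}CC_{D'}^{\log}(j_{!}\mf_{1})$ forces the two underlying supports $\tau_{D'}^{-1}(S_{D'}^{\log}(j_{!}\mf_{i}))$ to coincide; comparing the coefficients of the conormal components $[T^{*}_{D_{i}}X]$ and of the wild components $L^{\emptyset}_{i,\mf}$ it also forces $I_{\mW,\mf_{0}}=I_{\mW,\mf_{1}}$, $Z_{\mf_{0}}=Z_{\mf_{1}}$, and equal conductors $\sw^{D'}(\chi_{0}|_{K_{i}})=\sw^{D'}(\chi_{1}|_{K_{i}})$; in particular $I_{\mT,\mf_{0}}=I_{\mT,\mf_{1}}$.

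I would then argue by induction on $d=\dim X$. The base cases $d\le 2$ are handled directly: on a curve $CC$ is read off from the Swan conductors, while on a surface the bases of all components have codimension $\le 2$, so the explicit surface computation of \cite[Theorem 6.1]{yacc} expresses $CC(j_{!}\mf_{i})$ in terms of the very same ramification data (conductors, $\rsw$, $\cform$, and the loci $B_{\mf}^{D'}$) by which Corollary \ref{corcompcc} (2) expresses $\tau_{D'}^{!}CC_{D'}^{\log}(j_{!}\mf_{i})$; hence equal pull-backs give equal characteristic cycles. For the inductive step I would decompose over the tame strata: by Proposition \ref{proptowild} (2) and Proposition \ref{proptauinvtw} (3), both $CC(j_{!}\mf)$ and $\tau_{D'}^{!}CC_{D'}^{\log}(j_{!}\mf)$ are sums over subsets $I''\subset I_{\mT,\mf}$ of the push-forwards by $i_{I''}$ of the corresponding cycles for the totally wildly ramified restrictions $i_{I''}^{*}\mf_{\mW}$ on $D_{I''}$, which are $\log$-clean by Proposition \ref{proplogtrclpp} (2). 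Comparing the contributions supported over each $D_{I''}$ separately, the hypothesis descends to equality of the pulled-back log-characteristic cycles of $i_{I''}^{*}\mf_{0,\mW}$ and $i_{I''}^{*}\mf_{1,\mW}$; for $I''\neq\emptyset$ one has $\dim D_{I''}<d$ and the induction hypothesis applies, so it remains to treat the term $I''=\emptyset$, that is, to prove the statement when $\mf_{0},\mf_{1}$ are totally wildly ramified ($I_{\mT}=\emptyset$).

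Assume now $I=I_{\mW,\mf_{0}}=I_{\mW,\mf_{1}}$ and set $A=CC(j_{!}\mf_{0})-CC(j_{!}\mf_{1})$. By Corollary \ref{corsdpdim} each $CC(j_{!}\mf_{i})$ is supported on a union of irreducible components of the common set $\tau_{D'}^{-1}(S_{D'}^{\log}(j_{!}\mf_{i}))$, so $A$ is supported there; write its components as $C_{a}$ with bases $Z_{a}=C_{a}\cap T^{*}_{X}X$. The components whose base has codimension $1$ are the $T^{*}_{D_{i}}X$ and the $L^{\emptyset}_{i,\mf}$; their coefficients are computed in codimension $1$ by Proposition \ref{propssndeg} together with \cite[Proposition 4.13, Theorem 7.14]{sacc}, and are visibly read off from $\tau_{D'}^{!}CC_{D'}^{\log}$, so they agree and $A$ has no component with $1$-codimensional base. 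For a component $C_{a}$ with $\dim Z_{a}\ge 1$ (codimension $c$ with $2\le c\le d-1$), I would probe it by a sufficiently general smooth $h\colon V\hookrightarrow X$ of dimension $d-1$ that is $C_{D}$-transversal and properly $\tau_{D'}^{-1}(S_{D'}^{\log}(j_{!}\mf_{i}))$-transversal. By Theorem \ref{thmccpb} one has $CC(h^{*}\mf_{i})=h^{!}CC(j_{!}\mf_{i})$, hence $CC(h^{*}\mf_{0})-CC(h^{*}\mf_{1})=h^{!}A$; by Proposition \ref{propinvcc} (1) the restrictions $h^{*}\mf_{i}$ again satisfy the hypotheses on $V$, and by Proposition \ref{propinvcc} (2) together with the standing hypothesis their pulled-back log-characteristic cycles coincide. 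Since $\dim V=d-1$, the induction hypothesis gives $CC(h^{*}\mf_{0})=CC(h^{*}\mf_{1})$, so $h^{!}A=0$. Letting $V$ range over general hyperplane sections, which meet every base $Z_{a}$ of positive dimension, and using that $h^{!}$ is injective on the corresponding components, I conclude that the coefficient in $A$ of every $C_{a}$ with $\dim Z_{a}\ge 1$ vanishes.

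The remaining, and hardest, step is the vanishing of the coefficients $a_{x}$ of the components whose base is a single closed point $x$, which are exactly the full cotangent fibres $T^{*}_{x}X$. A hyperplane section through $x$ fails to be properly $T^{*}_{x}X$-transversal, since the conormal directions of $V$ at $x$ lie in $T^{*}_{x}X$, so Theorem \ref{thmccpb} no longer applies and the reduction of the previous paragraph breaks down. The plan here is a conservation-of-number argument: choose a pencil $\{V_{t}\}$ of hyperplane sections with $x\in V_{0}$ and with $V_{t}$ properly transversal and avoiding $x$ for $t\neq 0$, so that $h_{t}^{!}A=0$ for $t\neq 0$ by the previous step, and then compare the characteristic cycles of the restrictions across the special fibre $V_{0}$, where the failure of transversality at $x$ contributes a term proportional to $a_{x}$. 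Because the restriction to $V_{0}$ is governed by lower-dimensional data already matched for $\mf_{0}$ and $\mf_{1}$ (the coefficients of all positive-dimensional-base components agree), the jump across $t=0$ must be the same, forcing $a_{x}=0$; equivalently, one may isolate $a_{x}$ from the Milnor formula (Definition \ref{defcc}) at $x$ with a generic function after subtracting the already-determined contributions of the other components. Making this semicontinuity argument precise, so that the contribution of each $T^{*}_{x}X$ is seen to depend only on $\tau_{D'}^{!}CC_{D'}^{\log}$, is the main obstacle of the proof.
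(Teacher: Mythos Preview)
Your approach is genuinely different from the paper's and has a real gap at the step you yourself flag. The paper does \emph{not} argue by induction on $\dim X$ or by slicing with hyperplanes; instead it runs a homotopy argument. After extracting (as you do) that the hypothesis forces $I_{\mW,\mf_{0}}=I_{\mW,\mf_{1}}$, $R_{\mf_{0}}^{D'}=R_{\mf_{1}}^{D'}$ and matching characteristic-form data, the paper writes $\chi_{0}=\delta_{s}(a)$, $\chi_{1}=\delta_{s}(a')$ locally and on $\tilde X=X\times_{k}\mathbf{A}^{1}_{k}$ builds the Witt vector
\[
b=\bigl(a_{s-1}(1-T)^{p},\dots,a_{0}(1-T)^{p^{s}}\bigr)+\bigl(a'_{s-1}T^{p},\dots,a'_{0}T^{p^{s}}\bigr),
\]
defining a rank-one sheaf $\mg$ on $\tilde U$ that interpolates between $\mf_{0}$ at $T=0$ and $\mf_{1}$ at $T=1$. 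A direct computation (Lemma~\ref{lemtildecf}) gives $\cform^{\tilde D'}(\mg)$ in terms of $\cform^{D'}(\mf_{0})$ and $\cform^{D'}(\mf_{1})$, and shows that the closed immersions $h_{i}\colon X\times\{i\}\hookrightarrow\tilde X$ are \emph{properly} $SS(\tilde j_{!}\mg)$-transversal on a suitable open set, with $h_{0}^{\circ}\tilde C_{a}=h_{1}^{\circ}\tilde C_{a}$ for every irreducible component $\tilde C_{a}$ of $\tau_{\tilde D'}^{-1}(S_{\tilde D'}^{\log}(\tilde j_{!}\mg))$. Then Theorem~\ref{thmccpb} gives $CC(j_{!}\mf_{i})=h_{i}^{!}CC(\tilde j_{!}\mg)$ and the two pull-backs agree. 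This treats all components---including those with zero-dimensional base---uniformly.

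Your proposed handling of the components $T^{*}_{x}X$ is where your argument fails to close. The obstruction is exactly what you say: no hypersurface $V$ through $x$ is properly $T^{*}_{x}X$-transversal, so Theorem~\ref{thmccpb} does not apply to $h\colon V\hookrightarrow X$ at $x$, and your pencil/semicontinuity sketch does not supply a substitute. The Milnor-formula idea does not rescue this either: to isolate the coefficient of $T^{*}_{x}X$ via $(CC(j_{!}\mf),df)_{T^{*}X,x}$ you would need to control $\dimtot\phi_{x}(j_{!}\mf_{i},f)$, and nothing in your hypotheses ties these vanishing-cycle dimensions together beyond what is already encoded in $\tau_{D'}^{!}CC_{D'}^{\log}$. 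Your base case $d=2$ is also not free: Theorem~\ref{thmcalsf} expresses $CC$ through the blow-up invariants $\lambda_{x}$, and it is not immediate that equal $\tau_{D'}^{!}CC_{D'}^{\log}$ forces equal $\lambda_{x}$; in the paper this identification is the content of Example~\ref{exalamdax} and the case analysis in the proof of Theorem~\ref{mainthmosct}, which come \emph{after} Proposition~\ref{prophicc}. By contrast, the paper's deformation argument avoids both difficulties at once, since proper transversality of $h_{0},h_{1}$ to $SS(\tilde j_{!}\mg)$ holds at every point of $X\times\{0,1\}$, including those lying over isolated bases.
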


\begin{proof}
Since we have $E_{\mf_{i}}^{D'}=\emptyset$ by Corollary \ref{corinvli},
we have $I'\cap I_{\mII,\mf_{i}}=\emptyset$ for $i=0,1$ by Remark \ref{remdefbcf} (3).
By Corollary \ref{corcompcc} (2) and the comparison of the terms of $\tau_{D'}^{!}CC_{D'}^{\log}(j_{!}\mf_{0})$ and $\tau_{D'}^{!}CC_{D'}^{\log}(j_{!}\mf_{1})$ whose supports have 
bases of codimension $1$ in $X$,
we have $R_{\mf_{0}}^{D'}=R_{\mf_{1}}^{D'}$ (Definition \ref{defconddiv} (1))
and $I_{\mW,\mf_{0}}=I_{\mW,\mf_{1}}$ (Definition \ref{defindsub} (1)).
By the same comparison and Remark \ref{rempropinvli}, we also have
$L_{i',\mf_{0}}^{\emptyset}=L_{i',\mf_{1}}^{\emptyset}$
(Definition \ref{deflifdp} (2)) for every $i'\in I_{\mW,\mf_{0}}$.
Since $L_{i',\mf_{i}}^{\emptyset}=T^{*}_{D_{i'}}X$ if and only if $i'\in I_{\mI,\mf_{i}}$ 
(Definition \ref{defindsub} (2)) for
$i'\in I_{\mW,\mf_{i}}$ and $i=0,1$  
by \cite[Lemma 2.23 (ii)]{yacc},
we have $I_{\mI,\mf_{0}}=I_{\mI,\mf_{1}}$.
Therefore we have $\sw(\chi_{0}|_{K_{i'}})=\sw(\chi_{1}|_{K_{i'}})$ 
(Definition \ref{defwedt} (1)) for every $i'\in I$.
By Corollary \ref{corcompcc} (2), the subset
$(B_{I'',\mf_{i}}^{D'})^{0}\subset X$ for $I''\subset I'$ and the closed subscheme 
$Y_{\mathfrak{p}}\subset X$ for $\mathfrak{p}\in (B_{I'',\mf_{i}}^{D'})^{0}$
are independent of the choice of $i=1,2$, where the notation is as in Proposition \ref{propinvli}.

Let $\chi_{i}\colon \pi_{1}^{\ab}(U)\rightarrow \Lambda^{\times}$ be
the character corresponding to $\mf_{i}$ for $i=0,1$.
By Remark \ref{remlifdp} (2) and \cite[Theorem 0.1]{sy}, we may assume that $\chi_{i}$ for $i=0,1$ are of orders powers of $p$.
Let $s\ge 2$ be an integer such that both the orders of $\chi_{i}$ for $i=0,1$ are $\le p^{s}$.
Since the assertion is local, we may assume that $X=\Spec A$ 
for a $k$-algebra $A$.
By shrinking $X$ if necessary,
we can take global sections $a=(a_{s-1},\ldots,a_{1},a_{0})$ 
and $a'=(a_{s-1}',\ldots,a_{1}',a_{0}')$ of $\fillog_{R_{\mf_{0}}^{D'}}^{D'}j_{*}W_{s}(\dvr_{U})
=\fillog_{R_{\mf_{1}}^{D'}}^{D'}j_{*}W_{s}(\dvr_{U})$
whose images by $\delta_{s,j}\colon j_{*}W_{s}(\dvr_{U})\rightarrow R^{1}(\varepsilon\circ j)_{*}\mathbf{Z}/p^{s}\mathbf{Z}$ (\ref{deltassh}) are $\chi_{0}$ and $\chi_{1}$,
respectively.
Then the images of $a$ and $a'$ by the composition (\ref{compcffmwitt}) 
are equal to the $\log$-$D'$-characteristic forms
$\cform^{D'}(\mf_{0})$ and $\cform^{D'}(\mf_{1})$, respectively, by Remark \ref{remcfnormal} (2).
We put $\tilde{X}=X\times_{k}\mathbf{A}_{k}^{1}$, 
where $\mathbf{A}_{k}^{1}=\Spec k[T]$.
Let $\tilde{j}\colon \tilde{U}=U\times_{k}\mathbf{A}_{k}^{1}\rightarrow \tilde{X}$ denote the canonical open immersion and
let $b$ be the global section of $\tilde{j}_{*}W_{s}(\dvr_{\tilde{U}})$ defined by
\begin{equation}
\label{eqb}
b=(a_{s-1}(1-T)^{p},\ldots,a_{1}(1-T)^{p^{s-1}}, a_{0}(1-T)^{p^{s}})
+(a_{s-1}'T^{p},\ldots,a_{1}'T^{p^{s-1}}, a_{0}T^{p^{s}}). 
\end{equation}
Let $\varphi\colon \pi_{1}^{\ab}(\tilde{U})\rightarrow \mathbf{Z}/p^{s}\mathbf{Z}$ 
be the image of $b$ by $\delta_{s,\tilde{j}}\colon \tilde{j}_{*}W_{s}(\dvr_{\tilde{U}})\rightarrow R^{1}(\tilde{\varepsilon}\circ \tilde{j})_{*}\mathbf{Z}/p^{s}\mathbf{Z}$,
where $\tilde{\varepsilon}\colon \tilde{X}_{\et}\rightarrow \tilde{X}_{\mathrm{Zar}}$ is the canonical mapping
from the \'{e}tale site of $\tilde{X}$ to the Zariski site of $\tilde{X}$,
and let $\mg$ be a smooth sheaf of $\Lambda$-module of rank $1$ on $\tilde{U}$ whose associated character is $\varphi$.
We put $\tilde{D}_{i'}=D_{i'}\times_{k}\mathbf{A}_{k}^{1}\subset \tilde{X}$ for
$i'\in I$, $\tilde{D}=D\times_{k}\mathbf{A}_{k}^{1}=\bigcup_{i'\in I}\tilde{D}_{i'}$,
and $\tilde{D}'=D'\times_{k}\mathbf{A}_{k}^{1}=\bigcup_{i'\in I'}\tilde{D}_{i'}$.
Then we can identify the index set of irreducible components of $\tilde{D}$
and that of irreducible components of $\tilde{D}'$ with $I$ and $I'$, respectively.
Let $\tilde{K}_{i'}=\Frac \hat{\dvr}_{\tilde{X},\tilde{\mathfrak{p}}_{i'}}$ be the local field at the generic point $\tilde{\mathfrak{p}}_{i'}$ of $\tilde{D}_{i'}$
for $i'\in I$.
Then we have $\sw(\varphi|_{\tilde{K}_{i'}})=\sw(\chi_{0}|_{K_{i'}})$ for every $i'\in I$ by \cite[Lemma 5.4]{yacc},
and we have $I_{\mW,\mg}=I_{\mW,\mf_{0}}$.
Thus we have $Z_{\mg}=Z_{\mf_{0}}\times_{k}\mathbf{A}_{k}^{1}$ (Definition \ref{defconddiv} (2)).
By Remark \ref{remcfnormal} (4) and \cite[Lemma 5.5]{yacc},
we have
\begin{equation}
\cform^{\tilde{D}}(\mg)=\cform^{D}(\mf_{0})(1-T)^{p^{s}}+\cform^{D}(\mf_{1})T^{p^{s}} \notag
\end{equation}
in $\Gamma(Z_{\mg},\Omega_{\tilde{X}}^{1}(\log \tilde{D})(R_{\mg}^{\tilde{D}})|_{Z_{\mg}})$.
By Lemma \ref{lemcfatx} (2) applied to the case where $I'$ is $I$,
we have $I_{\mI,\mg}=I_{\mI,\mf_{0}}$,
and we have $\sw^{\tilde{D}'}(\varphi|_{\tilde{K}_{i'}})=\sw^{D'}(\chi_{0}|_{K_{i'}})$
(\ref{defswdp}) for every $i'\in I$.
Thus we have $R_{\mg}^{\tilde{D}'}=R_{\mf_{0}}^{D'}\times_{k}\mathbf{A}_{k}^{1}$.

Let $h_{i}\colon X=X\times_{k}\{i\}\rightarrow \tilde{X}$ be the canonical closed immersions for $i=0,1$.
Then the closed immersions $h_{i}$ for $i=0,1$ are $C_{\tilde{D}}$-transversal by \cite[Lemma 3.4.5]{sacc}.
By shrinking $X$ if necessary, we may assume that 
$D_{i'}=(t_{i'}=0)$ for $t_{i'}\in A$ for $i'\in I$.

\begin{lem}[{cf.\ \cite[Lemma 5.5]{yacc}}]
\label{lemtildecf}
Let the notation and the assumptions be as above.
Then the following hold:
\begin{enumerate}
\item Let $dh_{i,Z_{\mf_{i}}^{1/p}}^{\tilde{D}'}\colon (h_{i}^{*}\Omega_{\tilde{X}}^{1}(\log \tilde{D}')(R_{\mg}^{\tilde{D'}}))|_{Z_{\mf_{i}}^{1/p}}\rightarrow \Omega_{X}^{1}(\log D')(R_{\mf_{i}}^{D'})|_{Z_{\mf_{i}}^{1/p}}$
be the morphism yielded by $h_{i}$ for $i=0,1$.
Then we have
\begin{equation}
dh_{i,Z_{\mf_{i}}^{1/p}}^{\tilde{D}'}(h_{i}^{*}\cform^{\tilde{D'}}(\mg))
=\cform^{D'}(\mf_{i}) \notag
\end{equation}
for $i=0,1$
\item The morphism $\xi_{i'}^{\tilde{D}'}(\mg)$ (Definition \ref{defximf} (2))
for $i'\in I'\cap I_{\mW,\mg}$
is the sum of the compositions
\begin{equation}
\xi_{i'}^{D'}(\mf_{0})(1-T)^{p^{s}}\colon \dvr_{\tilde{X}}(R_{\mg}^{\tilde{D}'})|_{\tilde{D}_{i'}^{1/p}}
\xrightarrow{\times \cform^{D'}(\mf_{0})|_{D_{i'}^{1/p}}(1-T)^{p^{s}}}
\Omega_{\tilde{X}}^{1}(\log \tilde{D}')|_{\tilde{D}_{i'}^{1/p}}
\rightarrow
\dvr_{\tilde{D}_{i'}^{1/p}} \notag
\end{equation} 
and
\begin{equation}
\xi_{i'}^{D'}(\mf_{1})T^{p^{s}}\colon\dvr_{\tilde{X}}(R_{\mg}^{\tilde{D}'})|_{\tilde{D}_{i'}^{1/p}}
\xrightarrow{\times \cform^{D'}(\mf_{1})|_{D_{i'}^{1/p}}T^{p^{s}}}
\Omega_{\tilde{X}}^{1}(\log \tilde{D}')|_{\tilde{D}_{i'}^{1/p}}
\rightarrow
\dvr_{\tilde{D}_{i'}^{1/p}} \notag
\end{equation} 
of multiplications and the base change of
the residue mapping $\Omega_{\tilde{X}}^{1}(\log \tilde{D}')|_{\tilde{D}_{i'}}
\rightarrow \dvr_{\tilde{D}_{i'}}$ by the canonical morphism
$\tilde{D}_{i'}^{1/p}\rightarrow \tilde{D}_{i'}$.
\end{enumerate}
\end{lem}

\begin{proof}
We put $n_{i'}=\sw^{D'}(\mf_{0}|_{K_{i'}})$ for $i'\in I$ and
put $b=(b_{s-1},\ldots,b_{1},b_{0})$ for the global section $b$ (\ref{eqb})
of $\tilde{j}_{*}W_{s}(\dvr_{\tilde{U}})$.
Since we have $R_{\mg}^{\tilde{D}'}=R_{\mf_{i}}^{D'}\times_{k}\mathbf{A}_{k}^{1}$ for $i=0,1$,
the global sections 
$(a_{s-1}(1-T)^{p},\ldots,a_{1}(1-T)^{p^{s-1}}, a_{0}(1-T)^{p^{s}})$ and
$(a_{s-1}'T^{p},\ldots,a_{1}'T^{p^{s-1}}, a_{0}T^{p^{s}})$
of $\tilde{j}_{*}W_{s}(\dvr_{\tilde{U}})$ are global sections of
$\fillog_{R_{\mg}^{\tilde{D}'}}^{\tilde{D}'}\tilde{j}_{*}W_{s}(\dvr_{\tilde{U}})$,
and so is $b$.
Hence the image of $b$ by the composition (\ref{compcffmwitt}) is equal to
the $\log$-$\tilde{D}'$-characteristic form $\cform^{\tilde{D'}}(\mg)$ by
Remark \ref{remcfnormal} (2).
Since the image of $b$ by the composition (\ref{compcffmwitt}) is 
\begin{align}
-F^{s-1}db&=-\sum_{i=0}^{s-1}(a_{i}(1-T)^{p^{s-i}})^{p^{i}-1}d(a_{i}(1-T)^{p^{s-i}})
-\sum_{i=0}^{s-1}(a_{i}'T^{p^{s-i}})^{p^{i}-1}d(a_{i}'T^{p^{s-i}}) \notag \\ 
&=-(1-T)^{p^{s}}\sum_{i=0}^{s-1}a_{i}^{p^{i}-1}da_{i}-
T^{p^{s}}\sum_{i=0}^{s-1}a_{i}'^{p^{i}-1}da_{i}' \notag \\
&=(-F^{s-1}da)(1-T)^{p^{s}}+(-F^{s-1}da')T^{p^{s}} \notag
\end{align}
if $p\neq 2$ and is 
\begin{align}
&-F^{s-1}db+\sum_{\substack{i'\in I-I' \\ n_{i'}=2}}\sqrt{\overline{b_{0}t_{i'}^{2}}}dt_{i'}/t_{i'}^{2} \notag \\
&\qquad=-\sum_{i=0}^{s-1}(a_{i}(1-T)^{p^{s-i}})^{2^{i}-1}d(a_{i}(1-T)^{2^{s-i}})
+\sum_{\substack{i'\in I-I' \\ n_{i'}=2}}\sqrt{\overline{a_{0}(1-T)^{2^{s}}t_{i'}^{2}}}dt_{i'}/t_{i'}^{2} \notag \\
&\qquad\qquad\qquad -\sum_{i=0}^{s-1}(a_{i}'T^{2^{s-i}})^{2^{i}-1}d(a_{i}'T^{2^{s-i}})
+\sum_{\substack{i'\in I-I' \\ n_{i'}=2}}\sqrt{\overline{a_{0}'T^{2^{s}}t_{i'}^{2}}}dt_{i'}/t_{i'}^{2} \notag \\ 
&\qquad=-(1-T)^{2^{s}}\sum_{i=0}^{s-1}a_{i}^{2^{i}-1}da_{i}-
T^{2^{s}}\sum_{i=0}^{s-1}a_{i}'^{2^{i}-1}da_{i}'  \notag \\
&\qquad\qquad\qquad+\sum_{\substack{i'\in I-I' \\ n_{i'}=2}}(\sqrt{\overline{a_{0}t_{i'}^{2}}}(1-T)^{2^{s-1}}dt_{i'}/t_{i'}^{2}+\sqrt{\overline{a_{0}'t_{i'}^{2}}}T^{2^{s-1}}dt_{i'}/t_{i'}^{2})
\notag \\
&\qquad=(-F^{s-1}da)(1-T)^{2^{s}}+(1-T)^{2^{s-1}}\sum_{\substack{i'\in I-I' \\ n_{i'}=2}}\sqrt{\overline{a_{0}t_{i'}^{2}}}dt_{i'}/t_{i'}^{2} \notag \\
&\qquad\qquad\qquad+(-F^{s-1}da')T^{2^{s}} +T^{2^{s-1}}\sum_{\substack{i'\in I-I' \\ n_{i'}=2}}\sqrt{\overline{a_{0}'t_{i'}^{2}}}dt_{i'}/t_{i'}^{2}\notag
\end{align}
if $p=2$ by the construction of the morphism $\varphi_{s}^{(\tilde{D}'\subset \tilde{D},R_{\mg}^{\tilde{D}'})}$ given in Lemma \ref{lemcfdef}
and since the images of $a$ and $a'$ by the composition (\ref{compcffmwitt})
are $\cform^{D'}(\mf_{0})$ and $\cform^{D'}(\mf_{1})$, respectively,
the assertions hold by the constructions of $\varphi_{s}^{(D'\subset D,R_{\mf_{i}}^{D'})}$
for $i=1,2$ given in Lemma \ref{lemcfdef}.
\end{proof}

Let $x\in Z_{\mf_{0}}$ be a closed point
and let $x'$ be the unique point on $Z_{\mf_{0}}^{1/p}$
lying above $x$.
Let $x_{i}\in X\times_{k}\{i\}\subset \tilde{X}$ be the closed point corresponding to $x$  
via $h_{i}$ for $i=0,1$
and let $x_{i}'$ be the unique point on $Z_{\mg}^{1/p}$ lying above $x_{i}$.
Since the ramifications of $\mf_{i}$ are $\log$-$D'$-clean along $D$,
we have $\cform^{D'}(\mf_{i})(x')\neq 0$ for $i=0,1$ by Lemma \ref{lemeqtologdpcl} (1).
Hence we have $\cform^{\tilde{D}'}(\mg)(x_{i}')\neq 0$ for $i=0,1$ by Lemma \ref{lemtildecf} (1),
and the ramification of $\mg$ is $\log$-$\tilde{D}'$-clean along $\tilde{D}$ at $x_{i}$ for $i=0,1$ by Lemma \ref{lemeqtologdpcl} (1).
Thus the ramification of $\mg$ is $\log$-$\tilde{D}'$-clean along $\tilde{D}$ at any point on $X\times_{k}\{i\}$ for $i=0,1$ by Remarks \ref{remlogdpcl} (1) and (2).
Let $\tilde{V}$ be the subset of $\tilde{X}$ 
consisting of the points on $\tilde{X}$ where the ramification of $\mg$ is $\log$-$\tilde{D}'$-clean along $\tilde{D}$.
Then $\tilde{V}$ is an open subset of $\tilde{X}$ by Remark \ref{remlogdpcl} (2) 
and we have $X\times_{k}\{i\}\subset \tilde{V}$ for $i=0,1$.

We show that the canonical closed immersion $h_{i}'\colon X\times_{k}\{i\}\rightarrow \tilde{V}$ for $i=0,1$ are properly $SS((\tilde{j}_{!}\mg)|_{\tilde{V}})$-transversal 
and that we have $h_{0}'^{!}CC((\tilde{j}_{!}\mg)|_{\tilde{V}})=h_{1}'^{!}CC((\tilde{j}_{!}\mg)|_{\tilde{V}})$,
which deduce the assertion by Theorem \ref{thmccpb} for we have $h_{i}'^{*}(\tilde{j}_{!}\mg)|_{\tilde{V}}=j_{!}\mf_{i}$ for $i=0,1$.
By replacing $\tilde{X}$ by $\tilde{V}$,
we may assume that the ramification of $\mg$ is $\log$-$\tilde{D}'$-clean along $\tilde{D}$.
Then the closed immersions
$h_{i}$ for $i=0,1$ are $\log$-$\tilde{D}'$-$S_{\tilde{D}'}^{\log}(\tilde{j}_{!}\mg)$-transversal by Proposition \ref{propeqcl} (1) and Lemma \ref{lemtildecf},
since $h_{i}$ is $C_{\tilde{D}'\subset \tilde{D}}$-transversal by Remark \ref{remctr} (3).
Since $h_{i}$ is $C_{\tilde{D}'}$-transversal by Remark \ref{remctr} (3),
the closed immersions $h_{i}$ for $i=0,1$ are 
$\tau_{\tilde{D}'}^{-1}(S_{\tilde{D}'}^{\log}(\tilde{j}_{!}\mg))$-transversal by Proposition \ref{proplogtr}.
Let the notation be as in Proposition \ref{propinvli} and Lemma \ref{lemtildecf}.
Then we have $B_{I'',\mg}^{\tilde{D}'}=\tilde{D}'_{I''}\cap \bigcup_{i'\in I-I'}\tilde{D}_{i'}=B_{I'',\mf_{i}}^{D'}\times_{k}\mathbf{A}_{k}^{1}$
for $I''\subset I_{\mT,\mg}=I_{\mT,\mf_{i}}$ and $i=0,1$.
By Lemma \ref{lemtildecf} (2),
we have
\begin{equation}
\label{eqbfbg}
\tilde{D}_{I''}\cap \bigcap_{i'\in I''}V(\Image \xi_{i'}^{\tilde{D}'}(\mg))
=\tilde{D}_{I''}\cap \bigcap_{i'\in I''}V(\Image (\xi_{i'}^{D'}(\mf_{0})(1-T)^{p^{s}}+\xi_{i'}^{D'}(\mf_{1})T^{p^{s}})) 
\end{equation}
for $I''\subset I'$ such that $I''\cap I_{\mW,\mg}\neq \emptyset$.
Hence we have $E_{\mg}^{\tilde{D}'}=\bigcup_{I''\subset I'}E_{I'',\mg}^{\tilde{D}'}=\emptyset$ (Definition \ref{defbcf} (2))
for $E_{\mf_{i}}^{D'}=\bigcup_{I''\subset I'}E_{I'',\mf_{i}}^{D'}=\emptyset$ for $i=0,1$.
Then the inverse image $\tau_{\tilde{D}'}^{-1}(S_{\tilde{D}'}^{\log}(\tilde{j}_{!}\mg))\subset T^{*}\tilde{X}$
is purely of dimension $d+1$ by Corollary \ref{corinvli}.
We denote the closed subscheme $Y_{\tilde{\mathfrak{p}}}\subset \tilde{X}$ 
and the section $\omega_{\tilde{\mathfrak{p}}}\in \Omega_{\tilde{X}}^{1}|_{Y_{\tilde{\mathfrak{p}}}^{1/p}}$ 
for $\tilde{\mathfrak{p}}\in (B_{I'',\mg}^{\tilde{D}'})^{0}$, where $I''\subset I'$, 
by $\tilde{Y}_{\tilde{\mathfrak{p}}}$ and $\tilde{\omega}_{\tilde{\mathfrak{p}}}$, repsectively.
By (\ref{eqbfbg}), the pull-back $h_{i}^{*}\tilde{Y}_{\tilde{\mathfrak{p}}}$ for $\tilde{\mathfrak{p}}\in (B_{I'',\mg}^{\tilde{D}'})^{0}$
is equal to $Y_{\mathfrak{p}}$ if $\tilde{\mathfrak{p}}$ is the generic point of $Y_{\mathfrak{p}}\times \mathbf{A}_{k}^{1}$
for an element $\mathfrak{p}\in (B_{I'',\mf_{0}}^{D'})^{0}$,
and is empty if otherwise.
By Lemma \ref{lemtildecf} (1), we may assume that the image of 
$h_{i}^{*}\tilde{\omega}_{\tilde{\mathfrak{p}}}$ by the morphism
$dh_{i,h_{i}^{*}\tilde{Y}_{\tilde{\mathfrak{p}}}^{1/p}}\colon\Omega_{\tilde{X}}^{1}|_{h_{i}^{*}\tilde{Y}_{\tilde{\mathfrak{p}}}^{1/p}}
\rightarrow \Omega_{X}^{1}|_{Y_{\mathfrak{p}}^{1/p}}$ yielded by $h_{i}$ is equal to $\omega_{\mathfrak{p}}$
for each element
$\tilde{\mathfrak{p}}\in (B_{I'',\mf_{0}}^{D'})^{0}$ that is the generic point of $Y_{\mathfrak{p}}\times \mathbf{A}_{k}^{1}$
for an element $\mathfrak{p}\in (B_{I'',\mf_{0}}^{D'})^{0}$
and for $i=0,1$.
Therefore the dimension of $h_{i}^{*}\tau_{\tilde{D}'}^{-1}(S_{\tilde{D}'}^{\log}(\tilde{j}_{!}\mg))$ 
is purely of dimension $d$ 
and we have $h_{0}^{\circ}\tilde{C}_{a}=h_{1}^{\circ}\tilde{C}_{a}$ 
for each irreducible component $\tilde{C}_{a}$ of $\tau_{\tilde{D}'}^{-1}(S_{\tilde{D}'}^{\log}(\tilde{j}_{!}\mg))$ 
by Corollary \ref{corcompcc} (1).
Since the singular support $SS(\tilde{j}_{!}\mg)$ is a union
of irreducible components of $\tau_{\tilde{D}'}^{-1}(S_{\tilde{D}'}^{\log}(\tilde{j}_{!}\mg))$
by Corollary \ref{corsdpdim},
the closed immersion $h_{i}$ for $i=0,1$ are properly $SS(\tilde{j}\mg)$-transversal by Remark \ref{remctr} (3), and
we have $h_{0}^{!}CC(\tilde{j}_{!}\mg)=h_{1}^{!}CC(\tilde{j}_{!}\mg)$.
\end{proof}

We formulate the following conjecture inspired by Proposition \ref{prophicc}:

\begin{conj}[{cf.\ \cite[Conjecture 3.14]{yacc}}]
\label{conjcc}
Suppose that $X$ is purely of dimension $d$ and
that $D$ has simple normal crossings.
Let $I'\subset I$ be a subset containing $I_{\mT,\mf}$ (Definition \ref{defindsub} (1)) and let $D'=\bigcup_{i\in I'}D_{i}$.
Assume that the ramification of $\mf$ is $\log$-$D'$-clean along $D$
and that the inverse image $\tau_{D'}^{-1}(S_{D'}^{\log}(j_{!}\mf))\subset T^{*}X$
of the $\log$-$D'$-singular support $S_{D'}^{\log}(j_{!}\mf)\subset T^{*}X(\log D')$ 
(Definition \ref{deflifdp} (3)) by the canonical morphism $\tau_{D'}$ (\ref{deftaue})
is of dimension $d$.
Then we have
\begin{equation}
\label{eqdesired}
CC(j_{!}\mf)=\tau_{D'}^{!}CC_{D'}^{\log}(j_{!}\mf) 
\end{equation}
in $Z_{d}(\tau_{D'}^{-1}(S_{D'}^{\log}(j_{!}\mf)))$,
where $\tau_{D'}^{!}$ is as in (\ref{taudpgysin}). 
\end{conj}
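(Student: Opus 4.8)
The plan is to run the argument behind Theorem \ref{thmintro} and to isolate precisely where the hypothesis on the codimensions of the bases is used. Both sides of the desired equality (\ref{eqdesired}) are $d$-cycles supported on the $d$-dimensional closed conical subset $\tau_{D'}^{-1}(S_{D'}^{\log}(j_{!}\mf))\subset T^{*}X$, whose irreducible components are enumerated and whose coefficients on the right-hand side $\tau_{D'}^{!}CC_{D'}^{\log}(j_{!}\mf)$ are given explicitly in Corollary \ref{corcompcc}. By Corollary \ref{corsdpdim} the singular support $SS(j_{!}\mf)$ is a union of irreducible components of this set, so by the uniqueness in Definition \ref{defcc} it would suffice to verify the Milnor formula (\ref{milnoreq}) for the right-hand side at every at most isolated characteristic point, after first checking that the components of $\tau_{D'}^{-1}(S_{D'}^{\log}(j_{!}\mf))$ not contained in $SS(j_{!}\mf)$ carry coefficient $0$; the latter is a codimension $1$ matter settled by Proposition \ref{propssndeg} and Remark \ref{remcompssnondeg}.

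First I would reduce to the totally wildly ramified case. By Propositions \ref{proptowild} and \ref{proptauinvtw}, both $CC(j_{!}\mf)$ and $\tau_{D'}^{!}CC_{D'}^{\log}(j_{!}\mf)$ decompose as the same alternating sums of push-forwards ${i_{I''}}_{!}$ along the closed immersions $i_{I''}\colon D_{I''}\rightarrow X$ of the corresponding quantities attached to the restrictions $i_{I''}^{*}\mf_{\mW}$, with $I''\subset I_{\mT,\mf}$. Since each summand lives on $D_{I''}$, where the relevant sheaf is wildly ramified along every boundary component, this reduces the conjecture to the case $I_{\mT,\mf}=\emptyset$.

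Next I would reduce to the surface case, exactly as in the proof of Theorem \ref{thmintro}. For a fixed at most isolated characteristic point, the plan is to choose a smooth surface mapping to a neighborhood through that point and to transport the identity there using the pull-back formulas for $CC$ (Theorem \ref{thmccpb}) and for $CC_{D'}^{\log}$ (Proposition \ref{proppblogcc}), together with the compatibility of the Gysin map $\tau_{D'}^{!}$ with transversal morphisms (Proposition \ref{propinvcc}), so that the left-hand side is computed from $\dimtot$ of the vanishing cycles via the surface computation \cite[Theorem 6.1]{yacc} and the right-hand side from the explicit intersection numbers. The homotopy invariance of Proposition \ref{prophicc} is available to replace $\mf$ by a monomial model sharing the same $\tau_{D'}^{!}CC_{D'}^{\log}(j_{!}\mf)$, which should simplify the surface bookkeeping and turn the matching of coefficients into a purely combinatorial check.

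The hard part, and the reason the statement is only conjectural, lies in this last reduction. The surface argument for Theorem \ref{thmintro} rests on the local fact that $SS(j_{!}\mf)$ has at most one irreducible component whose base is of codimension $2$ in $X$, which guarantees that a generic surface immersion is properly $SS(j_{!}\mf)$-transversal and isolates the characteristic point. Once $\tau_{D'}^{-1}(S_{D'}^{\log}(j_{!}\mf))$ acquires irreducible components with bases of codimension $\ge 3$ — produced by deep intersections $D_{I''}$ with $\sharp I''\ge 3$ or by high-codimension loci $Y_{\mathfrak{p}}$ appearing in Corollary \ref{corcompcc} — this control is lost: a generic surface through a point of such a deep stratum can meet several of these components at once, the point need not remain an isolated characteristic point on the surface, and proper transversality, required to invoke Theorem \ref{thmccpb}, can fail. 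Overcoming this would require either replacing the surface by a carefully chosen subvariety of intermediate dimension that still reduces the Milnor number computation to the known cases, or an inductive blow-up argument in the spirit of Proposition \ref{propintroblup} and Corollary \ref{corsddim} that tracks the behaviour of both sides of (\ref{eqdesired}) under blow-ups along closed subschemes of $D$, together with a direct determination of the coefficients of the high-codimension components; both routes are precisely what remains to be carried out.
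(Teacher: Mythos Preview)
The statement is a \emph{conjecture} in the paper; the paper does not prove it in full, only the partial case recorded as Theorem \ref{mainthmosct} (bases of codimension $\le 2$). Your write-up is not a proof but an outline of the paper's strategy together with an identification of where it stops, and in that capacity it is largely accurate: the reduction to $I_{\mT,\mf}=\emptyset$ via Propositions \ref{proptowild} and \ref{proptauinvtw}, the localization at a generic point of a base, and the reduction to a surface via Corollary \ref{corexistsri}, Proposition \ref{propinvcc}, and Theorem \ref{thmccpb} are exactly what the proof of Theorem \ref{mainthmosct} does.

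Your diagnosis of the obstruction is slightly off, however. Proper transversality of a generic surface immersion does \emph{not} fail: Corollary \ref{corexistsri} produces a $C_{D}$-transversal and properly $\tau_{D'}^{-1}(S_{D'}^{\log}(j_{!}\mf))$-transversal immersion $g\colon S\rightarrow X$ under only the dimension-$d$ hypothesis already present in the conjecture. The genuine problem is an information-counting one. In the proof of Theorem \ref{mainthmosct}, the codimension $\le 2$ hypothesis forces $\sharp I'\le 2$ locally and ensures, after localizing at a generic point of a base, that there is \emph{at most one} irreducible component of $\tau_{D'}^{-1}(S_{D'}^{\log}(j_{!}\mf))$ with base of codimension $2$; the surface identity then yields one equation in one unknown coefficient. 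Without the hypothesis, a component with base of codimension $\ge 3$ is invisible to a generic surface (its base does not meet $S$), so $g^{!}$ annihilates it and the surface identity says nothing about its coefficient; and even among codimension-$2$ components the paper's list in Corollary \ref{corcompcc} (1) can, locally, contain several (coming from different $I''\subset I'$ once $\sharp I'\ge 3$), giving more unknowns than the single surface equation determines. That, rather than a failure of transversality or of isolated characteristic points, is what remains open.
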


\begin{rem}
Suppose $X$ is purely of dimension $d$, that $D$ has simple normal crossings,
and that the ramification of $\mf$ is $\log$-$D'$-clean along $D$ for
some union $D'$ of irreducible components of $D$.
If Conjecture \ref{conjcc} holds generally and if we admit blowing up $X$
along a closed subscheme of $D$, then the characteristic cycle $CC(j_{!}\mf)$ is 
computed in terms of ramification theory as follows:
Let $f\colon X'\rightarrow X$ be the composition of successive blow-ups satisfying the conditions (1)--(3) in Proposition \ref{propblupcth}
and let $j'\colon f^{*}U\rightarrow X'$ be the base change of $j$ by $f$.
Then the inverse image $\tau_{D_{\mI,f^{*}\mf}\cup D_{\mT,f^{*}\mf}}^{-1}(S_{D_{\mI,f^{*}\mf}\cup D_{\mT,f^{*}\mf}}^{\log}(j'_{!}f^{*}\mf))\subset T^{*}X$
of the $\log$-$D_{\mI,f^{*}\mf}\cup D_{\mT,f^{*}\mf}$-singular support 
$S_{D_{\mI,f^{*}\mf}\cup D_{\mT,f^{*}\mf}}^{\log}(j'_{!}f^{*}\mf)\subset T^{*}X'(\log D_{\mI,f^{*}\mf}\cup D_{\mT,f^{*}\mf})$ 
(Definition \ref{deflifdp} (3)) by 
$\tau_{D_{\mI,f^{*}\mf}\cup D_{\mT,f^{*}\mf}}$ (\ref{deftaue}),
where $D_{\mI,f^{*}\mf}$ and $D_{\mT,f^{*}\mf}$ are as in (\ref{defd*mf}),
is of dimension $d$ by Corollary \ref{corsddim}, and 
Conjecture \ref{conjcc} deduces the equality
\begin{equation}
CC(j'_{!}f^{*}\mf)=\tau_{D_{\mI,f^{*}\mf}\cup D_{\mT,f^{*}\mf}}^{!}CC_{D_{\mI,f^{*}\mf}\cup D_{\mT,f^{*}\mf}}^{\log}(j'_{!}f^{*}\mf) \notag
\end{equation}
in $Z_{d}(\tau_{D_{\mI,f^{*}\mf}\cup D_{\mT,f^{*}\mf}}^{-1}(S_{D_{\mI,f^{*}\mf}\cup D_{\mT,f^{*}\mf}}^{\log}(j'_{!}f^{*}\mf)))$.
\end{rem}

\section{Computation of characteristic cycle in codimension $2$}
\label{ssrk1}

In this section, 
we prove Conjecture \ref{conjcc} under the assumption
that the bases of irreducible components
of the inverse image $\tau_{D'}^{-1}(S_{D'}^{\log}(j_{!}\mf))\subset T^{*}X$
of the $\log$-$D'$-singular support $S_{D'}^{\log}(j_{!}\mf)\subset T^{*}X(\log D')$ (Definition \ref{deflifdp} (3)) by the canonical morphism $\tau_{D'}$ (\ref{deftaue})
are of codimension $\le 2$ in $X$.
The equality (\ref{eqdesired}) in Conjecture \ref{conjcc}
gives a computation of the characteristic cycle $CC(j_{!}\mf)$
(Definition \ref{defcc})
and further a computation of the singular support $SS(j_{!}\mf)$ 
(Definition \ref{defss} (2)) as the support of $CC(j_{!}\mf)$
in terms of ramification theory.

\subsection{Computations in known cases} 
\label{sscompndeg}

We first recall the computation of the characteristic cycle $CC(j_{!}\mf)$
in the case where the ramification of $\mf$ is $\log$-$\emptyset$-clean along $D$
given in \cite{sacc}.

\begin{thm}[{cf.\ \cite[Theorem 7.14]{sacc}}]
\label{thmccndeg}
Suppose that $X$ is purely of dimension $d$ and that
$D$ has simple normal crossings.
Assume that
the ramification of $\mf$ is $\log$-$\emptyset$-clean along $D$.
\begin{enumerate}
\item If $I=I_{\mT,\mf}$ (Definition \ref{defindsub} (1)), then we have 
\begin{align}
CC(j_{!}\mf)=(-1)^{d}\sum_{I''\subset I}[T^{*}_{D_{I''}}X], \notag
\end{align}
where $T^{*}_{D_{I''}}X$ denotes the conormal bundle of $D_{I''}=\bigcap_{i\in I''}D_{i}\subset X$.
\item If $I=I_{\mW,\mf}$ (Definition \ref{defindsub} (1)), then we have 
\begin{align}
CC(j_{!}\mf)=
(-1)^{d}([T^{*}_{X}X]+\sum_{i\in I}\dt(\chi|_{K_{i}})[L_{i,\mf}^{\emptyset}]),
\notag
\end{align}
where $L_{i,\mf}^{\emptyset}$ is as in Definition \ref{deflifdp} (2).
\end{enumerate}
\end{thm}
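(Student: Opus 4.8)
The plan is to deduce both formulas from the computation of the characteristic cycle in the non-degenerate case given in \cite[Theorem 7.14]{sacc}, via the dictionary established in Remark \ref{remlifdp}. By Remark \ref{remlogdpcl} (3) the $\log$-$\emptyset$-cleanliness of the ramification of $\mf$ along $D$ is exactly the non-degeneration of \cite[Definition 4.2]{sacot}, so the hypothesis matches that of \cite[Theorem 7.14]{sacc}; moreover, by Remark \ref{remcompssnondeg} (1) the two displayed cases $I=I_{\mT,\mf}$ and $I=I_{\mW,\mf}$ are precisely the tame and totally wild situations occurring locally under this hypothesis. I would treat the two cases separately, since the definition of $CC_{D'}^{\log}(j_{!}\mf)$ in Definition \ref{deflifdp} (4) requires $I_{\mT,\mf}\subset I'$, and hence is available with $D'=\emptyset$ only in the totally wild case. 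Throughout I may reduce to the $p$-part of $\chi$ by Proposition \ref{propcorssccsm}.

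For the totally wild case $I=I_{\mW,\mf}$ (assertion (2)), the cleanest route is to unwind the definition of the $\log$-$\emptyset$-characteristic cycle. Here $I_{\mT,\mf}=\emptyset\subset\emptyset=I'$, so $CC_{\emptyset}^{\log}(j_{!}\mf)$ is defined; since $T^{*}X(\log\emptyset)=T^{*}X$ and $\sw^{\emptyset}(\chi|_{K_{i}})=\dt(\chi|_{K_{i}})$ for every $i\in I$ by (\ref{defswdp}), Definition \ref{deflifdp} (4) reads
\begin{equation}
CC_{\emptyset}^{\log}(j_{!}\mf)=(-1)^{d}\Bigl([T^{*}_{X}X]+\sum_{i\in I}\dt(\chi|_{K_{i}})[L_{i,\mf}^{\emptyset}]\Bigr).\notag
\end{equation}
By Remark \ref{remlifdp} (5) the left-hand side equals $CC(j_{!}\mf)$, which is precisely assertion (2). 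This step requires no computation beyond matching the normalizations recorded in Remark \ref{remlifdp} (3).

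For the tame case $I=I_{\mT,\mf}$ (assertion (1)) the above is unavailable, and I would instead invoke the reduction to the totally wildly ramified case. Since $I_{\mW,\mf}=\emptyset$ we have $Z_{\mf}=\emptyset$, so the ramification is trivially $\log$-$D$-clean by Remark \ref{remlogdpcl} (1), and Proposition \ref{proptowild} (2) applies with $D'=D$ and $I'=I$. As $D_{\mW,\mf}=\emptyset$, the auxiliary sheaf $\mf_{\mW}$ is smooth on all of $X$, the immersions $j_{w,I''}$ are identities, and each summand becomes $(-1)^{r_{I''}}i_{I''!}CC(i_{I''}^{*}\mf_{\mW})$, where $i_{I''}^{*}\mf_{\mW}$ is a smooth sheaf of rank $1$ on the smooth scheme $D_{I''}$ of dimension $d-r_{I''}$. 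Using the standard normalization $CC(i_{I''}^{*}\mf_{\mW})=(-1)^{d-r_{I''}}[T^{*}_{D_{I''}}D_{I''}]$ for a smooth rank $1$ sheaf together with Lemma \ref{lemclimpush}, equivalently $CC(i_{I''*}i_{I''}^{*}\mf_{\mW})=(-1)^{d-r_{I''}}[T^{*}_{D_{I''}}X]$, each term equals $(-1)^{r_{I''}}(-1)^{d-r_{I''}}[T^{*}_{D_{I''}}X]=(-1)^{d}[T^{*}_{D_{I''}}X]$, and summing over $I''\subset I$ yields assertion (1).

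The main obstacle is the sign and normalization bookkeeping in the tame case: one must confirm the normalization of the characteristic cycle of a smooth rank $1$ sheaf and the compatibility of $i_{I''!}$ with push-forward along the closed immersion $i_{I''}$, after which the factors $(-1)^{r_{I''}}$ and $(-1)^{d-r_{I''}}$ cancel to leave the uniform sign $(-1)^{d}$. No single step is hard, but this is where the substance of the argument lies, the wild case being essentially definitional. As a shortcut, I would note that both assertions can alternatively be read off directly from the tame and wild parts of \cite[Theorem 7.14]{sacc} once the identifications of Remark \ref{remlifdp} are in place.
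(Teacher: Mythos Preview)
Your argument is correct, but the paper's proof is exactly the one-line shortcut you mention at the end: both assertions are special cases of \cite[Theorem 7.14]{sacc}, invoked via Remark \ref{remlogdpcl} (3) (the identification of $\log$-$\emptyset$-cleanliness with non-degeneration). Your route for part (2) is this same argument unpacked through Remark \ref{remlifdp} (5), which itself just quotes \cite[Theorem 7.14]{sacc}. Your route for part (1) via Proposition \ref{proptowild} (2) is a valid alternative---and the sign bookkeeping you describe works out---but it is a detour: the tame case is already covered by the cited theorem, so there is no need to reduce to the smooth situation on each stratum $D_{I''}$ by hand.
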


\begin{proof}
By Remark \ref{remlogdpcl} (3),
the assertions are special cases of \cite[Theorem 7.14]{sacc}.
\end{proof}

\begin{rem}
\label{remcompnondeg}
Suppose that $D$ has simple normal crossings.
\begin{enumerate}
\item As is seen in Remark \ref{remcompssnondeg} (1), 
the $\log$-$\emptyset$-cleanliness is equivalent to the strong non-degeneration
in the sense of \cite[the remark before Proposition 4.13]{sacc} for the ramification of $\mf$ along $D$.
\item Theorem \ref{thmccndeg} generally gives a computation of 
the characteristic cycle $CC(j_{!}\mf)$ outside a closed subscheme of $X$ of 
codimension $\ge 2$ by Remarks \ref{remlogdpcl} (2) and \ref{remsupcc} (1).
\end{enumerate}
\end{rem}

We then recall the computation of the characteristic cycle $CC(j_{!}\mf)$ 
in the case where $X$ is a surface, namely is purely of dimension $2$, given in \cite{yacc}.

Suppose that $X$ is a surface.
We first define 
two additional invariants $\lambda_{x}\in \mathbf{Z}$ and $s_{x}\in \mathbf{Z}$ 
for the ramification of $\mf$ at a closed point $x\in D$ 
as follows (\cite[Remark 5.8]{kalog}):
If $x\notin Z_{\mf}$ (Definition \ref{defconddiv} (2)), then we define both $\lambda_{x}$ and $s_{x}$ to be $0$.
If $x \in Z_{\mf}$, then,
by \cite[Theorem 4.1]{kalog}, we can take successive blow-ups 
\begin{equation}
\label{seqbl}
f: X^{\prime}=X_{s}\xrightarrow{f_{s}} X_{s-1}\xrightarrow{f_{s-1}} \cdots \xrightarrow{f_{1}} X_{0}=X, 
\end{equation}
where $f_{1}\colon X_{1}\rightarrow X_{0}=X$ is the blow-up at $x_{0}=x$
and $f_{i}\colon X_{i}\rightarrow X_{i-1}$ for $i=2,3,\ldots,s$ is 
the blow-up at a closed point $x_{i-1}$ of $X_{i-1}$ lying over $x_{0}$, such that
the ramification of $f^{*}\mf$ is $\log$-$f^{*}D$-clean along $f^{*}D$ at every point on $f^{-1}(x_{0})$.
Let $\{D_{i'}^{(i)}\}_{i'\in I_{i}}$ be the irreducible components of the pull-back of $D$ to $X_{i}$ 
and let $K_{i'}^{(i)}$ be the local field at the generic point of $D_{i'}^{(i)}$ for 
$i=0,1,\ldots,s$ and $i'\in I_{i}$,
where $I_{0}=I$ and $D_{i'}^{(0)}=D_{i'}$ for $i'\in I_{0}$.
Let $r_{i}$ for $i=0,1,\ldots,s-1$ be the cardinality of
\begin{align}
I_{x_{i}}=\{i'\in I_{i}\; |\; x_{i}\in D_{i'}^{(i)}\}. \notag
\end{align}
Then $r_{i}$ is $1$ or $2$ for each $i=0,1,\ldots,s-1$.
By renumbering $\{D_{i'}^{(i)}\}_{i'\in I_{i}}$ if necessary, we may assume that 
$0\in I_{i}$ and that $D_{0}^{(i)}$ is the exceptional divisor $f_{i}^{-1}(x_{i-1})$ of the blow-up $f_{i}$ for 
$i=1,2,\ldots,s$.
We put
\begin{align}
\label{defei}
e_{i}=\sum_{i'\in I_{x_{i-1}}}\sw(\chi|_{K_{i'}^{(i-1)}})-\sw(\chi|_{K_{0}^{(i)}})
\end{align}
and 
\begin{align}
\mu_{i}=\begin{cases}e_{i}(e_{i}-1) &(r_{i-1}=1), \\
e_{i}^{2} &(r_{i-1}=2)
\end{cases} \notag
\end{align}
for $i=1,2,\ldots,s$.
We define an integer $\lambda_{x}$ by
\begin{equation}
\label{deflambdax}
\lambda_{x}=\sum_{i=1}^{s}\mu_{i}, 
\end{equation}
and define $s_{x}$ by
\begin{equation}
s_{x}=\sum_{i\in I_{\mW,\mf,x}}\sw(\chi|_{K_{i}})\ord^{D}(\mf; x,D_{i})-\lambda_{x},
\notag
\end{equation}
where $I_{\mW,\mf,x}$ is as in (\ref{eachindatx}) and
$\ord^{D}(\mf;x,D_{i})$ is as in Definition \ref{deforder}.

\begin{rem}
\label{remsx}
Suppose that $X$ is a surface. Let the notation be as above.
\begin{enumerate}
\item If $x$ is a closed point of $Z_{\mf}$, then the integer $\lambda_{x}$ 
and hence $s_{x}$ 
are independent of the choice of a sequence (\ref{seqbl})
of blow-ups by \cite[Remark 5.7]{kalog}. 
Consequently,
if the ramification of $\mf$ is $\log$-$D$-clean along $D$ at $x$,
then we have $s_{x}=\lambda_{x}=0$.
\item[(2)] It is conjectured that 
$s_{x}\ge 0$ for any closed point $x$
of $D$ (\cite[Remark 5.8]{kalog}).
\end{enumerate}
\end{rem}

Then the computation of the characteristic cycle $CC(j_{!}\mf)$
in the case where $X$ is a surface given in \cite{yacc} is as follows:

\begin{thm}[{\cite[Theorem 6.1]{yacc}}]
\label{thmcalsf}
Suppose that $X$ is a surface and that $D$ has simple normal crossings.
Let $|D|$ be the set of closed points of $D$.
Let $\lambda_{x}$ and $s_{x}$ for $x\in |D|$ be as above
and let $\ord^{\emptyset}(\mf;x,D_{i})$ and $\ord^{D}(\mf;x,D_{i})$
for $x\in |D|$ and for $i\in I_{\mW,\mf}$ (Definition \ref{defindsub} (1))
be as in Definition \ref{deforder}.
Let $r_{x}$ be the cardinality of $I_{x}$ (\ref{defix}) for each $x\in |D|$
and let $\delta_{r_{x},2}$ denote the Kronecker delta.
Then we have
\begin{equation}
\label{eqccsurf}
CC(j_{!}\mf)=[T^{*}_{X}X]+\sum_{i\in I}\dt(\chi|_{K_{i}})[L_{i,\mf}^{\emptyset}]
+\sum_{x\in |D|}t_{x}[T^{*}_{x}X], 
\end{equation}
where $L_{i,\mf}^{\emptyset}$ for $i\in I_{\mW,\mf}$ (Definition \ref{defindsub} (1)) is as in Definition \ref{deflifdp} (2),
where we put $L_{i,\mf}^{\emptyset}=T^{*}_{D_{i}}X$ for $i\in I_{\mT,\mf}$
(Definition \ref{defindsub} (1)), and where we have 
\begin{align}
t_{x}&=s_{x}+\delta_{r_{x},2}+\sum_{i\in I_{\mW,\mf,x}}
\sw(\chi|_{K_{i}})(\ord^{\emptyset}(\mf;x,D_{i})-\ord^{D}(\mf;x,D_{i})) \notag \\
&\qquad \qquad \qquad \quad \; +\sum_{i\in I_{\mII,\mf,x}}(-\delta_{r_{x},2}+\ord^{D}(\mf;x,D_{i})) \notag \\
&=-\lambda_{x}+\delta_{r_{x},2}+\sum_{i\in I_{\mW,\mf,x}}\sw(\chi|_{K_{i}})\ord^{\emptyset}(\mf;x,D_{i})
+\sum_{i\in I_{\mII,\mf,x}}(-\delta_{r_{x},2}+\ord^{D}(\mf;x,D_{i})). \notag
\end{align}
\end{thm}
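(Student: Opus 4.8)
The plan is to separate $CC(j_{!}\mf)$ into its generic part, supported on the zero section and on the conormal-type line bundles $L_{i,\mf}^{\emptyset}$ over the components $D_{i}$, and its isolated part, supported on the fibers $T^{*}_{x}X$ over the finitely many closed points $x\in |D|$ at which the ramification of $\mf$ fails to be clean. The generic part is a codimension $1$ statement: by Remark \ref{remlogdpcl} (2) the ramification of $\mf$ is $\log$-$\emptyset$-clean along $D$ away from a finite set of closed points, so on the complement Theorem \ref{thmccndeg}, together with Remark \ref{remcompnondeg} (2), forces the coefficient of $[T^{*}_{X}X]$ to be $1$ and that of each $[L_{i,\mf}^{\emptyset}]$ to be $\dt(\chii)$ (with $L_{i,\mf}^{\emptyset}=T^{*}_{D_{i}}X$ for tame $i$). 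Since the support of $CC(j_{!}\mf)$ is $SS(j_{!}\mf)$ by Corollary \ref{corssccsm} (2), the only remaining components are the fibers $T^{*}_{x}X$, and it remains to determine each coefficient $t_{x}$. Using Proposition \ref{proptowild} and the invariance under passing to the $p$-part (Proposition \ref{propcorssccsm}), I would first reduce to the case where $\mf$ is wildly ramified along every component through $x$.

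Next I would isolate $t_{x}$ by the Milnor formula of Definition \ref{defcc}. Working \'{e}tale-locally near $x$, choose a morphism $f$ to a smooth curve having $x$ as an at most isolated characteristic point, concretely a sufficiently general function in a local coordinate system at $x$. The Milnor formula reads $-\dimtot\phi_{x}(j_{!}\mf,f)=(CC(j_{!}\mf),df)_{T^{*}X,x}$, and the right-hand side expands as $t_{x}\cdot(T^{*}_{x}X,df)_{x}$ plus the intersection numbers of the section $df$ with $[T^{*}_{X}X]$ and with the $[L_{i,\mf}^{\emptyset}]$. For a general $f$ the fiber multiplicity $(T^{*}_{x}X,df)_{x}$ is $1$, and the contributions of the already-known components are computable intersection numbers depending on $\dt(\chii)$, on $\ord^{\emptyset}(\mf;x,D_{i})$ (Definition \ref{deforder}), and on the local geometry of $D$ at $x$ (whence the $\delta_{r_{x},2}$ and the $\ord^{\emptyset}$ terms). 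This reduces the problem to evaluating the total dimension of vanishing cycles $\dimtot\phi_{x}(j_{!}\mf,f)$.

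The computation of $\dimtot\phi_{x}(j_{!}\mf,f)$ proceeds in two stages and is the heart of the argument. First, in the $\log$-$D$-clean case at $x$, where $\lambda_{x}=s_{x}=0$ by Remark \ref{remsx} (1), I would compute $CC(j_{!}\mf)$ from the $\log$-$D$-characteristic cycle $CC_{D}^{\log}(j_{!}\mf)$ of Definition \ref{deflifdp} (4), itself a Kato-type clean formula, by comparing it with the actual $CC$ through the pullback $\tau_{D}^{!}$ along $\tau_{D}\colon T^{*}X\rightarrow T^{*}X(\log D)$; the point contribution $t_{x}$ in this clean case comes entirely from the discrepancy between the logarithmic and non-logarithmic characteristic forms, encoded by the orders $\ord^{\emptyset}$ and $\ord^{D}$, the crossing term $\delta_{r_{x},2}$, and the type II corrections $\sum_{i\in I_{\mII,\mf,x}}(-\delta_{r_{x},2}+\ord^{D}(\mf;x,D_{i}))$. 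Second, in the general case I would invoke \cite[Theorem 4.1]{kalog} to take a sequence of blow-ups as in (\ref{seqbl}) centered over $x$ after which $f^{*}\mf$ becomes $\log$-clean, reducing to the first stage on the blown-up surface, and then push forward using the additivity of Lemma \ref{lemsumcc} and the closed-immersion pushforward of Lemma \ref{lemclimpush}. The invariant $\lambda_{x}$ of (\ref{deflambdax}) is designed to record the correction accumulated at each $f_{i}$: the quantity $e_{i}$ of (\ref{defei}) measures the drop of the Swan conductor onto the new exceptional component $D_{0}^{(i)}$, and $\mu_{i}$ packages the resulting self-intersection correction according to whether the blown-up point lies on one or two components ($r_{i-1}=1$ or $2$).

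The main obstacle is this last step: showing that the local contributions accrued through the clean-ifying blow-up sequence assemble into exactly $\lambda_{x}$ together with the order-difference terms $\sum_{i}\sw(\chii)(\ord^{\emptyset}(\mf;x,D_{i})-\ord^{D}(\mf;x,D_{i}))$ and the type II corrections. This demands a uniform bookkeeping of how $\sw(\chii)$, $\ord^{\emptyset}$, and $\ord^{D}$ transform under each $f_{i}$, the independence of $\lambda_{x}$ from the chosen sequence (Remark \ref{remsx} (1)), and a careful reconciliation between $CC_{D}^{\log}$ and the actual $CC$ via $\tau_{D}^{!}$. The Kronecker delta $\delta_{r_{x},2}$, which distinguishes whether $x$ is a crossing point of two components, enters throughout as an intersection multiplicity, and keeping track of its appearances consistently across the blow-up comparison is the principal technical difficulty.
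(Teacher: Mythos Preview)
The paper does not prove this theorem: it is quoted verbatim as \cite[Theorem 6.1]{yacc} and used as a black box in the proof of Theorem \ref{mainthmosct}. There is therefore no proof in the present paper to compare your proposal against.

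That said, your outline is broadly consistent with the architecture of the argument in \cite{yacc}. The decomposition into a codimension-$1$ part handled by the non-degenerate computation (here Theorem \ref{thmccndeg}) and a finite set of point contributions $t_x$ is correct, and the reduction of the general case to the $\log$-$D$-clean case via a sequence of blow-ups as in \cite[Theorem 4.1]{kalog}, with $\lambda_x$ tracking the accumulated correction, is indeed the mechanism. One point to be careful about: in the clean case you propose to compare $CC(j_!\mf)$ with $\tau_D^!CC_D^{\log}(j_!\mf)$, but in \cite{yacc} this comparison (their Theorem 4.2) is itself one of the main results and is established independently, not as a corollary of the surface formula; the logical order is that the clean comparison is proved first (using, among other things, the identification of $CC_D^{\log}$ with Kato's $\mathrm{Char}^{\log}$ and Saito's $CC(\mf)$ from \cite{sawild}, cf.\ Remark \ref{remlifdp} (5)), and then the general surface formula is deduced by blow-up bookkeeping. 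Your sketch slightly conflates these two steps, but the overall shape is right.
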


Finally in this subsection,
we give an example of computation of 
$\lambda_{x}$ (\ref{deflambdax}) for a closed point $x$ of $Z_{\mf}$, which
we use in the proof of the main theorem
Theorem \ref{mainthmosct} in the next subsection.

\begin{exa} 
\label{exalamdax}
Let $X=\Spec A$ be a smooth affine surface over $k$ and let $x\in X$ be a closed point corresponding to the maximal ideal $(t_{1},t_{2})$ of $A$
generated by two elements $t_{1},t_{2}$ of $A$.
Let $D_{i}=(t_{i}=0)$ for $i=1,2$ and let $D=D_{1}\cup D_{2}$.
Suppose that the $\log$-$D$-characteristic form $\cform^{D}(\mf)\in \Gamma(Z_{\mf},\Omega_{X}^{1}(\log D)(R_{\mf}^{D})|_{Z_{\mf}})$
(Definition \ref{defcform}) is of the form
\begin{equation}
\label{formcformexa}
\cform^{D}(\mf)=\frac{\alpha_{1}d\log t_{1}+\beta_{2}t_{2}d\log t_{2}}{t_{1}^{n_{1}}t_{2}^{n_{2}}}, 
\end{equation}
where $n_{1}=\sw(\chi|_{K_{1}})> 0$, $n_{2}=\sw(\chi|_{K_{2}})\ge p$, $\alpha_{1}\in A/t_{1}t_{2}A-t_{2}A/t_{1}t_{2}A$, and
$\beta_{2}\in (A/t_{1}t_{2}A)^{\times}$.
Here $Z_{\mf}=\Spec (A/t_{1}t_{2}A)$.
Then $\ord^{D}(\mf;x,D_{2})$ (Definition \ref{deforder}) is equal to the (normalized) valuation $n$ of the image of $\alpha_{1}$ in the local ring $\dvr_{D_{2},x}$ of $D_{2}$ at $x$.
We can prove that $\lambda_{x}=n$ by the induction on $n$ as follows. 

If $n=0$, then the ramification of $\mf$ is $\log$-$D$-clean along $D$ at $x$ by Lemma \ref{lemorder}, and we have $\lambda_{x}=0$
by Remark \ref{remsx} (1).

If $n>0$,
then the ramification of $\mf$ is not $\log$-$D$-clean along $D$ at $x$
by Lemma \ref{lemorder} and
we have $\alpha_{1}\in (t_{1},t_{2})A/t_{1}t_{2}A-t_{2}A/t_{1}t_{2}A$.
Since the assertion is local, we may assume that the ramification of $\mf$ is $\log$-$D$-clean along $D$ except at $x$.
Let $f_{1}\colon X_{1}\rightarrow X_{0}=X$ be the blow-up at $x$.
Let $D_{0}^{(1)}=f_{1}^{-1}(x)\subset X_{1}$ be the exceptional divisor
and let $D_{i}^{(1)}$ be the proper transform of $D_{i}$ for $i=1,2$.
We put $U_{i}^{(1)}=X_{1}-D_{i}^{(1)}$ for $i=1,2$.
Let $x^{(1)}\in U_{1}^{(1)}$ be the unique closed point of the intersection $D_{0}^{(1)}\cap 
D_{2}^{(1)}\cap U_{1}^{(1)}\subset U_{1}^{(1)}$ and
let $e_{1}$ be as in (\ref{defei}) for the blow-up $f_{1}$.
By induction, it is sufficient to prove that the following three conditions hold:
\begin{enumerate}
\item The ramification of $f_{1}^{*}\mf$ is $\log$-$(f_{1}^{*}D)_{\red}$-clean along $(f_{1}^{*}D)_{\red}$ at every point on $X_{1}=U_{1}^{(1)}\cup U_{2}^{(1)}$ except
at $x^{(1)}$.
\item The $\log$-$(f_{1}^{*}D)_{\red}$-characteristic form $\cform^{(f_{1}^{*}D)_{\red}}(f_{1}^{*}\mf)$ is of the form (\ref{formcformexa}) in 
a neighborhood of $x^{(1)}$.
\item $e_{1}=1$ and $\ord^{(f_{1}^{*}D)_{\red}}(f_{1}^{*}\mf; x^{(1)}, D_{2}^{(1)})=n-1$.
\end{enumerate}
Actually, if $n=1$, then the second equation in the condition (3) implies 
that the ramification of $f_{1}^{*}\mf$ is $\log$-$(f_{1}^{*}D)_{\red}$-clean along $(f_{1}^{*}D)_{\red}$ at $x^{(1)}$ by Lemma \ref{lemorder}.
Thus the ramification of $f_{1}^{*}\mf$ is $\log$-$(f_{1}^{*}D)_{\red}$-clean along $(f_{1}^{*}D)_{\red}$
by the condition (1), and we have $\lambda_{x}=e_{1}^{2}=1$ 
by the first equation in the condition (3).
If $n\ge 2$, then we can use the induction by the conditions (1), (2),
and the second equation in (3)
so that we have 
\begin{equation}
\lambda_{x}=\sum_{i=1}^{n}e_{i}^{2}=n \notag
\end{equation}
by the first equation in the condition (3)
and the induction hypothesis.

We first compute $\cform^{(f_{1}^{*}D)_{\red}}(f_{1}^{*}\mf)$
in order to show that the three conditions (1), (2), and (3) above hold.
Since the $\log$-$D$-characteristic form is only dependent 
on the $p$-part of the character $\chi$ associated to $\mf$ 
by Remark \ref{remcfnormal} (2),
we may assume that the order of $\chi$ is $p^{s}$ for $s\ge 0$.
Since the assertions are local, we may assume that $\chi$ is the image of 
a global section $a$ of $\fillog_{R_{\mf}^{D}}^{D}j_{*}W_{s}(\dvr_{U})\subset j_{*}W_{s}(\dvr_{U})$
by $\delta_{s,j}$ (\ref{deltassh}).
Then the image of $a$ 
by the composition
(\ref{compcffmwitt})
is $\cform^{D}(\mf)$ by Remark \ref{remcfnormal} (2), and
the image of $a$ by $-F^{s-1}d\colon j_{*}W_{s}(\dvr_{U})\rightarrow j_{*}\Omega_{U}^{1}$ (\ref{fdsh}) is of the form
\begin{equation}
\label{eqcompfsa}
-F^{s-1}da=\frac{\alpha_{1}d\log t_{1}+\beta_{2}t_{2}d\log t_{2}+t_{1}t_{2}\gamma}{t_{1}^{n_{1}}t_{2}^{n_{2}}},
\end{equation}
where $\alpha_{1}$ (resp.\ $\beta_{2}$) is a lift of $\alpha_{1}\in A/t_{1}t_{2}A$ (resp.\ $\beta_{2}\in A/t_{1}t_{2}A$) in $A$ by abuse of notation
and $\gamma$ is a global section of $\Omega_{X}^{1}(\log D)$.
Since $n>0$ is the valuation of the image of $\alpha_{1}$ in $\dvr_{D_{2},x}=A_{(t_{1},t_{2})}/t_{2}A_{(t_{1},t_{2})}$,
we can put $\alpha_{1}=ut_{1}^{n}+vt_{2}^{m}$ for $u,v\in A$ and $m\in \mathbf{Z}_{> 0}$ such that $u$ is invertible in $\dvr_{D_{2},x}$. 

Let $j^{(1)}\colon f_{1}^{*}U\rightarrow X_{1}$ denote the canonical open immersion.
Since $\alpha_{1}=0$ in $A/(t_{1},t_{2})A$, 
both $f_{1}^{*}\alpha_{1}|_{D_{0}^{(1)}}$ and $f_{1}^{*}(\beta_{2}t_{2})|_{D_{0}^{(1)}}$ are $0$ and we have
$df_{1}^{D}(f_{1}^{*}\cform^{D}(\mf))|_{D_{0}^{(1)}}=0$. 
Therefore $f_{1}^{*}a\in j^{(1)}_{*}W_{s}(\dvr_{f_{1}^{*}U})$ is a section of $\fillog^{f_{1}^{*}D}_{f_{1}^{*}R_{\mf}^{D}-D_{0}^{(1)}}j^{(1)}_{*}W_{s}(\dvr_{f_{1}^{*}U})$
by Lemma \ref{lemrsw} (1). 
We denote $f^{*}t_{i}$ by $t_{i}$ in $U_{i}^{(1)}$ for $i=1,2$ and we put
$f^{*}t_{2}=t_{1}t_{2}$ (resp.\ $f^{*}t_{1}=t_{1}t_{2}$) in $U_{1}^{(1)}$ (resp.\ in $U_{2}^{(1)}$) by abuse of notation.
Then the image $-F^{s-1}df_{1}^{*}a$ of $f_{1}^{*}a$  
by $-F^{s-1}d\colon j^{(1)}_{*}W_{s}(\dvr_{f_{1}^{*}U})\rightarrow j_{*}^{(1)}\Omega_{f_{1}^{*}U}^{1}$ is of the form
\begin{equation}
-F^{s-1}df_{1}^{*}a=\frac{(t_{1}^{-1}f_{1}^{*}\alpha_{1}+t_{2}f_{1}^{*}\beta_{2})d\log t_{1}+t_{2}f^{*}\beta_{2}d\log t_{2}+t_{1}t_{2}f_{1}^{*}\gamma}{t_{1}^{n_{1}+n_{2}-1}t_{2}^{n_{2}}} \notag
\end{equation}
in $U_{1}^{(1)}$
and 
\begin{equation}
-F^{s-1}df_{1}^{*}a=\frac{t_{2}^{-1}f_{1}^{*}\alpha_{1}d\log t_{1}+(t_{2}^{-1}f_{1}^{*}\alpha_{1}+f_{1}^{*}\beta_{2})d\log t_{2}+t_{1}t_{2}f_{1}^{*}\gamma}{t_{1}^{n_{1}}t_{2}^{n_{1}+n_{2}-1}} \notag
\end{equation}
in $U_{2}^{(1)}$ by (\ref{eqcompfsa}) and
is a section of $\Omega_{X_{1}}^{1}(\log (f_{1}^{*}D)_{\red})(f_{1}^{*}R_{\mf}^{D}-D_{0}^{(1)})$.
By the assumption that $n>0$, we have
\begin{equation}
\label{fonealphauone}
t_{1}^{-1}f_{1}^{*}\alpha_{1}=f^{*}ut_{1}^{n-1}+f^{*}vt_{1}^{m-1}t_{2}^{m}
\end{equation}
in $U_{1}^{(1)}$ and we have
\begin{align}
t_{2}^{-1}f_{1}^{*}\alpha_{1}&=
f^{*}ut_{1}^{n}t_{2}^{n-1}+f^{*}vt_{2}^{m-1} \notag
\end{align}
in $U_{2}^{(1)}$.
Since $u$ is invertible in $A_{(t_{1},t_{2})}/t_{2}A_{(t_{1},t_{2})}$ and
$\beta_{2}$ is invertible in $A/t_{1}t_{2}A$,
we have $-F^{s-1}df_{1}^{*}a|_{D_{i}^{(1)}}$ is not $0$ in 
$\Omega_{X_{1}}^{1}(\log (f_{1}^{*}D)_{\red})(f_{1}^{*}R_{\mf}^{D}-D_{0}^{(1)})|_{D_{i}^{(1)}}$ for $i=0,1,2$.
Hence we have
\begin{equation}
\label{exaeqrfof}
R_{f_{1}^{*}\mf}^{(f_{1}^{*}D)_{\red}}=f_{1}^{*}R_{\mf}^{D}-D_{0}^{(1)}
\end{equation}
by Lemma \ref{lemrsw} (1),
and we have $Z_{f_{1}^{*}\mf}=(f_{1}^{*}Z_{\mf})_{\red}$.
Since the pull-back $f_{1}^{*}a$ is a section of $\fillog^{f_{1}^{*}D}_{f_{1}^{*}R_{\mf}^{D}-D_{0}^{(1)}}j^{(1)}_{*}W_{s}(\dvr_{f_{1}^{*}U})$, 
the section $-F^{s-1}df^{*}_{1}a|_{(f_{1}^{*}Z_{\mf})_{\red}}$ of 
$\Omega_{X_{1}}^{1}(\log (f_{1}^{*}D)_{\red})(f_{1}^{*}R_{\mf}^{D}-D_{0}^{(1)})|_{(f_{1}^{*}Z_{\mf})_{\red}}$ is equal to the $\log$-$(f_{1}^{*}D)_{\red}$-characteristic form
$\cform^{(f_{1}^{*}D)_{\red}}(f_{1}^{*}\mf)$ of $f_{1}^{*}\mf$ 
by Remark \ref{remcfnormal} (2).
Thus we have
\begin{equation}
\label{cfexanones}
\cform^{(f_{1}^{*}D)_{\red}}(f_{1}^{*}\mf)=\frac{(t_{1}^{-1}f_{1}^{*}\alpha_{1}+t_{2}f_{1}^{*}\beta_{2})d\log t_{1}+t_{2}f^{*}\beta_{2}d\log t_{2}}{t_{1}^{n_{1}+n_{2}-1}t_{2}^{n_{2}}}
\end{equation}
in $U_{1}^{(1)}$ and
\begin{equation}
\cform^{(f_{1}^{*}D)_{\red}}(f^{*}\mf)=\frac{t_{2}^{-1}f_{1}^{*}\alpha_{1}d\log t_{1}+(t_{2}^{-1}f_{1}^{*}\alpha_{1}+f_{1}^{*}\beta_{2})d\log t_{2}}{t_{1}^{n_{1}}t_{2}^{n_{1}+n_{2}-1}} \notag
\end{equation}
in $U_{2}^{(1)}$. 

We then prove that the three conditions (1), (2), and (3) hold.
Since $f_{1}\colon X_{1}\rightarrow X$ is an isomorphism outside of $x$ and since
$u$ and $\beta_{2}$ are invertible in $A_{(t_{1},t_{2})}/t_{2}A_{(t_{1},t_{2})}$ 
and $A/t_{1}t_{2}A$, respectively,
the condition (1) holds by Remarks \ref{remlogdpcl} (1) and (2) and Lemma \ref{lemeqtologdpcl} (1).
The condition (2) follows from (\ref{cfexanones}), since $f^{*}\beta_{2}$ is invertible in 
$Z_{f^{*}_{1}\mf}=(f_{1}^{*}Z_{\mf})_{\red}$.
The first equation in the condition (3) follows from
(\ref{exaeqrfof}).
Since $f_{1}^{*}u$ is invertible in $\dvr_{D_{2}^{(1)},x^{(1)}}$, 
the second equation in (3) holds by (\ref{fonealphauone}) and (\ref{cfexanones}).
\end{exa}

\subsection{Computation in codimension 2}
\label{sscompccvar}

In this subsection, we prove the following main theorem:

\begin{thm}[{cf.\ \cite[Theorems 4.2 (ii), 6.1]{yacc}}]
\label{mainthmosct}
Conjecture \ref{conjcc} holds 
if the bases of irreducible components
of the inverse image $\tau_{D'}^{-1}(S_{D'}^{\log}(j_{!}\mf))\subset T^{*}X$
of the $\log$-$D'$-singular support $S_{D'}^{\log}(j_{!}\mf)\subset T^{*}X(\log D')$ (Definition \ref{deflifdp} (3)) by the canonical morphism $\tau_{D'}$ (\ref{deftaue})
are of codimension $\le 2$ in $X$.
\end{thm}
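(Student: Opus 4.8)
The plan is to prove the equality $CC(j_{!}\mf)=\tau_{D'}^{!}CC_{D'}^{\log}(j_{!}\mf)$ of $d$-cycles on $\tau_{D'}^{-1}(S_{D'}^{\log}(j_{!}\mf))$ by matching the coefficients of the two sides on each irreducible component, organized by the codimension of its base in $X$. First I would reduce to the totally wildly ramified case $I_{\mT,\mf}=\emptyset$. By Proposition \ref{proptowild} (2) and Proposition \ref{proptauinvtw} (3), both $CC(j_{!}\mf)$ and $\tau_{D'}^{!}CC_{D'}^{\log}(j_{!}\mf)$ decompose, with matching terms, as pushforwards $i_{I''!}$ over subsets $I''\subset I_{\mT,\mf}$ of the corresponding quantities attached to the restriction $i_{I''}^{*}\mf_{\mW}$ on $D_{I''}$, where $\mf_{\mW}$ is a rank one sheaf on $U_{\mW,\mf}$ with the same $p$-part as $\mf$. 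Since Lemma \ref{lemclimpush} makes $i_{I''!}$ compatible with composition and since $I_{\mT,i_{I''}^{*}\mf_{\mW}}=\emptyset$ by Proposition \ref{proplogtrclpp} (2), and since the codimension hypothesis is inherited by each $D_{I''}$, it suffices to treat the case $I=I_{\mW,\mf}=I_{\mI,\mf}\sqcup I_{\mII,\mf}$.

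Next, the coefficients on components whose base has codimension $0$ or $1$ are settled by the codimension one theory. The zero section $T^{*}_{X}X$ carries coefficient $(-1)^{d}$ on both sides by Definition \ref{defcc} and Corollary \ref{corcompcc} (2). For a component over a divisor $D_{i}$, I would restrict to a neighborhood of the generic point of $D_{i}$, where the bad locus of codimension $\ge 2$ is avoided, so that $\mf$ is $\log$-$\emptyset$-clean by Remark \ref{remlogdpcl} (2); there Theorem \ref{thmccndeg} computes $CC(j_{!}\mf)$, Corollary \ref{corcompcc} (2) with Remark \ref{rempropinvli} computes $\tau_{D'}^{!}CC_{D'}^{\log}(j_{!}\mf)$, and the two agree, this being precisely the codimension one content of \cite[Theorem 7.14]{sacc}.

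The heart of the argument is the case $\dim X=2$, where every codimension two base is a closed point. For a surface I would compute $CC(j_{!}\mf)$ by Theorem \ref{thmcalsf} and $\tau_{D'}^{!}CC_{D'}^{\log}(j_{!}\mf)$ by Corollary \ref{corcompcc} (2), and compare the coefficients of $T^{*}_{x}X$ for each closed point $x\in Z_{\mf}$. After cancelling the codimension $\le 1$ terms already matched above, this reduces to identifying the integer $t_{x}$ of Theorem \ref{thmcalsf} with the multiplicity along $T^{*}_{x}X$ coming from the components $T^{*}_{D_{I''}}X$ with $D_{I''}=\{x\}$ together with the collapsed components $\langle \omega_{\mathfrak{p}},dt_{i'}; i'\in I''/Y_{\mathfrak{p}}^{1/p}\rangle$ over $x$ in Corollary \ref{corcompcc} (2). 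The delicate ingredient is the geometric length of these collapsed conormal-type components along the fibre $T^{*}_{x}X$, which is governed by $\ord^{\emptyset}(\mf;x,D_{i})$, $\ord^{D}(\mf;x,D_{i})$ and $\lambda_{x}$; here Example \ref{exalamdax} supplies the computation of $\lambda_{x}$ by successive blow-ups, relating it to the order of vanishing of the relevant entry of $\cform^{D}(\mf)$. Carrying out this bookkeeping establishes the surface case.

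Finally, for general $X$ I would reduce each codimension two coefficient to the surface case. Given a component $C_{a}$ of $\tau_{D'}^{-1}(S_{D'}^{\log}(j_{!}\mf))$ whose base $B_{a}$ has codimension two, I would choose a smooth surface $W$ and a $C_{D}$-transversal, properly $\tau_{D'}^{-1}(S_{D'}^{\log}(j_{!}\mf))$-transversal morphism $h\colon W\rightarrow X$ cutting $B_{a}$ transversally at a general point $w$. Then Theorem \ref{thmccpb} gives $h^{!}CC(j_{!}\mf)=CC(h^{*}\mf)$, using $SS(j_{!}\mf)\subset \tau_{D'}^{-1}(S_{D'}^{\log}(j_{!}\mf))$ from Corollary \ref{corsdpdim} and Remark \ref{remctr}, while Proposition \ref{propinvcc} (2) gives $h^{!}\tau_{D'}^{!}CC_{D'}^{\log}(j_{!}\mf)=\tau_{h^{*}D'}^{!}CC_{h^{*}D'}^{\log}(j'_{!}h^{*}\mf)$; the surface case applied to $W$ equates the two right-hand sides, whence $h^{!}$ of the two cycles on $X$ coincide. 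To pass from this to equality of the coefficient of $C_{a}$ itself, I would prove that, near a generic point of $B_{a}$, the candidate support has at most one irreducible component whose base has codimension two, so that $h^{!}$ isolates the coefficient of $C_{a}$ unambiguously on $T^{*}_{w}W$. \textbf{The main obstacle} is twofold: first, establishing this local uniqueness of codimension two components, without which the plane section only records an uninformative weighted sum of coefficients; and second, the surface-case length computation matching $t_{x}$ with the multiplicities of the collapsed components through $\lambda_{x}$, which is the genuinely computational crux and relies on Example \ref{exalamdax}.
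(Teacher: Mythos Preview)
Your proposal is correct and follows essentially the same route as the paper: reduction to $I_{\mT,\mf}=\emptyset$ via Propositions \ref{proptowild} and \ref{proptauinvtw}, matching the codimension $\le 1$ coefficients by Theorem \ref{thmccndeg} and Corollary \ref{corcompcc}, then reducing the codimension two coefficients to the surface case via a properly transversal plane section and Proposition \ref{propinvcc}, and finally settling the surface case with Theorem \ref{thmcalsf} together with Example \ref{exalamdax}. The two obstacles you flag are exactly the ones the paper addresses: local uniqueness of the codimension two component is obtained by observing that the codimension hypothesis forces $\sharp I'\le 2$ locally and that the only $I''\subset I'$ with $(B^{D'}_{I'',\mf})^{0}\neq\emptyset$ have $\sharp I''\le 1$ (Remark \ref{remdefbcf} (2) and Corollary \ref{corcompcc} (1)); and the surface bookkeeping is done by a short case split ($I'=I$ cites \cite{yacc} directly, while $I'=\{1\}\subsetneq I=\{1,2\}$ uses Lemma \ref{lemcleanatx} to put $\cform^{D}(\mf)$ into the form of Example \ref{exalamdax}). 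The only point you leave implicit is the \emph{existence} of the required surface section $h\colon W\rightarrow X$, which the paper supplies by a Bertini-type argument (Lemmas \ref{lembertini}--\ref{lemexistsh} and Corollaries \ref{coreximpc}--\ref{corexistsri}) after passing to algebraically closed $k$ and a quasi-projective neighborhood.
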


In order to reduce the proof of Theorem \ref{mainthmosct}
to the case where $X$ is a surface, 
we first prove the local existence of a $C_{D}$-transversal
and properly $S_{D'}(j_{!}\mf)$-transversal regular closed immersion
$g\colon S\rightarrow X$ from a smooth surface $S$ over $k$,
where $C_{D}$ is as in (\ref{defcdp}).

\begin{lem}
\label{lembertini}
Suppose that $k$ is algebraically closed
and that $Y$ is a connected smooth quasi-projective scheme over $k$
of dimension $d\ge 2$.
Let $i'\colon Y\rightarrow \mathbf{P}$ be an immersion to
a projective space $\mathbf{P}$ over $k$
and let $\mathbf{P}^{\vee}$ denote the dual of $\mathbf{P}$.
Let $\{Y_{i}\}_{i=1}^{m}$ be irreducible closed subsets of $Y$. 
Then the set of hyperplanes $H\in \mathbf{P}^{\vee}$
satisfying both of the following conditions is a dense subset of $\mathbf{P}^{\vee}$:
\begin{enumerate}
\item $H$ meets $Y$ transversally and the intersection $H\cap Y$ is connected and 
is smooth over $k$. 
\item $H$ meets $Y_{i}$ properly for every $i=1,2,\ldots,m$.
\end{enumerate}
\end{lem}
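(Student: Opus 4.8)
The plan is to show that each of the three geometric conditions implicit in (1) and (2)---transversality, connectedness, and proper intersection---holds for $H$ ranging over a dense open subset of $\mathbf{P}^{\vee}$, and then to intersect these subsets. Since $\mathbf{P}^{\vee}$ is irreducible, a finite intersection of dense open subsets is again dense open, so it suffices to treat the conditions one at a time. Because $Y$ is connected and smooth it is irreducible; I would fix once and for all the closure $\bar{Y}\subset \mathbf{P}$, an irreducible projective variety of dimension $d\ge 2$, together with the finitely many irreducible components $\{Z_{l}\}_{l}$ of the boundary $\bar{Y}-Y$, each of dimension $\le d-1$.

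First I would dispose of the proper-intersection condition (2). For an irreducible closed subset $W\subset \mathbf{P}$ the intersection $H\cap W$ has all components of dimension $\ge \dim W-1$ by the Hauptidealsatz, with equality unless $W\subset H$; thus $H$ fails to meet $W$ properly exactly when $W\subset H$, and the set of such $H$ is a proper linear subspace of $\mathbf{P}^{\vee}$. Applying this to each $Y_{i}$ and to each boundary component $Z_{l}$ simultaneously yields a dense open subset of $\mathbf{P}^{\vee}$ on which $H$ meets every $Y_{i}$ properly and, in addition, satisfies $\dim(H\cap Z_{l})\le \dim Z_{l}-1\le d-2$ for all $l$; the latter bound is reserved for the connectedness step. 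For the smoothness part of (1), the classical Bertini smoothness theorem---valid over the algebraically closed field $k$ in arbitrary characteristic because $Y$ is embedded in $\mathbf{P}$ through $i'$---provides a dense open subset of $\mathbf{P}^{\vee}$ on which $H$ meets $Y$ transversally, so that $H\cap Y$ is smooth of pure dimension $d-1$.

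The main obstacle is the connectedness assertion in (1), since the standard Bertini connectedness (irreducibility) statement is formulated for projective varieties whereas $Y$ is only quasi-projective. To handle this I would first apply Bertini's irreducibility theorem to the irreducible projective variety $\bar{Y}$ of dimension $d\ge 2$, obtaining a dense open subset of $\mathbf{P}^{\vee}$ on which $H\cap \bar{Y}$ is irreducible. Intersecting with the dense open loci already produced, I may assume simultaneously that $H\cap\bar{Y}$ is irreducible of dimension $d-1$, that $H\cap Y$ is smooth, and that $\dim\bigl(H\cap(\bar{Y}-Y)\bigr)\le d-2$. The last inequality shows that
\begin{equation*}
H\cap Y=(H\cap\bar{Y})\setminus\bigl(H\cap(\bar{Y}-Y)\bigr)
\end{equation*}
is a nonempty open subset of the irreducible set $H\cap\bar{Y}$, hence is itself irreducible; in particular it is connected (and dense in $H\cap\bar{Y}$). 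When $\bar{Y}=Y$ is already projective this step is immediate.

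Finally I would intersect all of the dense open subsets of $\mathbf{P}^{\vee}$ constructed above: the transversality locus, the locus on which $H\cap\bar{Y}$ is irreducible, and the proper-intersection loci for the $Y_{i}$ and the $Z_{l}$. The result is a dense open subset of $\mathbf{P}^{\vee}$, and by the preceding paragraphs every $H$ in it satisfies both (1) and (2). This proves the lemma.
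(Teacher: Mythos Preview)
Your proof is correct and follows essentially the same outline as the paper's: invoke Bertini for condition (1) and observe that a hyperplane meets an irreducible closed set $Y_i$ improperly precisely when it contains $Y_i$, which is a proper linear (hence closed) condition on $\mathbf{P}^{\vee}$, for condition (2). The paper compresses (1) into a single citation of ``Bertini's theorem'' covering transversality, smoothness, and connectedness at once, and for (2) phrases the same idea as choosing a closed point $y_i\in Y_i$ and removing the hyperplane $E_{y_i}\subset\mathbf{P}^{\vee}$ of $H$'s passing through $y_i$.

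The one genuine elaboration you add is the connectedness argument: rather than appealing to a Bertini theorem that already delivers connectedness of $H\cap Y$ for quasi-projective $Y$, you pass to the projective closure $\bar{Y}$, apply Bertini irreducibility there, and control the boundary by arranging $\dim(H\cap(\bar{Y}-Y))\le d-2$. This is a legitimate and self-contained way to handle the quasi-projective case, and it makes explicit a step the paper leaves inside the black box of the cited theorem; the cost is only the mild extra bookkeeping of the boundary components $Z_l$.
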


\begin{proof}
By Bertini's theorem, the set $V$ of hyperplanes $H\in \mathbf{P}^{\vee}$
such that $H$ meets $Y$ transversally and that the intersection $H\cap Y$ is connected and is smooth over $k$
is a dense subset of $\mathbf{P}^{\vee}$.
Since the image of the fiber $Q_{y_{i}}$ of the universal hyperplane $Q=\{(y,H)\; |\; y\in H\}\subset \mathbf{P}\times_{k}\mathbf{P}^{\vee}$
at a closed point $y_{i}\in Y_{i}$ by the projection
$p^{\vee}\colon Q\rightarrow \mathbf{P}^{\vee}$ is a divisor $E_{y_{i}}$,
the union of the intersections $V\cap (\mathbf{P}^{\vee}-\bigcup_{i=1}^{n}E_{y_{i}})$ for 
the $m$-tuples $(y_{i})_{i=1}^{m}$ of the closed points
$y_{i}\in Y_{i}$ for $i=1,2,\ldots,m$ satisfies the desired conditions.
\end{proof}

\begin{lem}
\label{lemexistsh}
Suppose that $k$ is algebraically closed
and that $Y$ is a connected smooth quasi-projective scheme over $k$
of dimension $d\ge 2$.
Let $C\subset T^{*}Y$ be a closed conical subset of pure dimension $d$. 
Then there exists a properly $C$-transversal regular closed immersion $Y'\rightarrow Y$
of codimension $1$
from a connected smooth scheme $Y'$ over $k$.
\end{lem}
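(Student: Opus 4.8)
The plan is to realize $Y'$ as a sufficiently general hyperplane section of $Y$ under a projective embedding, using Lemma \ref{lembertini} to secure smoothness, connectedness and properness of intersection with the bases of the components of $C$, and a separate dimension count in the projectivized cotangent bundle to force $C$-transversality. First I would fix an immersion $i'\colon Y\rightarrow \mathbf{P}$ into a projective space $\mathbf{P}=\mathbf{P}^{N}$ with dual $\mathbf{P}^{\vee}$, which exists since $Y$ is quasi-projective. If $C=T^{*}_{Y}Y$ is the zero section the assertion is trivial, so I may assume $C$ has an irreducible component not contained in the zero section; then the projectivization $\mathbf{P}(C)\subset \mathbf{P}(T^{*}Y)$ has dimension $\le d-1$, because $C$ is conical of pure dimension $d$.

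The heart of the argument is an incidence variety. I would introduce $\tilde{P}\subset \mathbf{P}(T^{*}Y)\times_{k}\mathbf{P}^{\vee}$ consisting of the pairs $(\ell,H)$ with $\pi(\ell)\in H$, where $\pi\colon \mathbf{P}(T^{*}Y)\rightarrow Y$, and such that $\ell\subset T^{*}_{\pi(\ell)}Y$ is the image, under the restriction $T^{*}_{\pi(\ell)}\mathbf{P}|_{Y}\rightarrow T^{*}_{\pi(\ell)}Y$, of the conormal line of $H$ at $\pi(\ell)$. Over a point $\ell$ with base point $x$, the hyperplanes $H$ contributing to $\tilde{P}$ form, after fixing the induced conormal direction, a family of dimension $N-d$, since the restriction map $\mathbf{P}(T^{*}_{x}\mathbf{P})\dashrightarrow \mathbf{P}(T^{*}_{x}Y)$ is a linear projection with fibres of dimension $N-d$. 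Intersecting $\tilde{P}$ with $\mathbf{P}(C)\times_{k}\mathbf{P}^{\vee}$ then gives a subset of dimension $\le (d-1)+(N-d)=N-1$, whose image $Z_{1}$ under the second projection is a proper closed subset of $\mathbf{P}^{\vee}$. The relevant point is that for $H\notin Z_{1}$ meeting $Y$ transversally, the conormal line of $Y'=Y\cap H$ in $Y$ at every point $y'\in Y'$ avoids $\mathbf{P}(C)_{y'}$; since $di^{-1}(T^{*}_{Y'}Y')$ is precisely the conormal bundle $T^{*}_{Y'}Y$, this is exactly the $C$-transversality of $i\colon Y'\rightarrow Y$ in the sense of Definition \ref{defctr} (1).

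Next I would invoke Lemma \ref{lembertini} for $Y$, the immersion $i'$, and the closed subsets $Y_{1},\dots,Y_{m}$ taken to be the bases in $Y$ of the irreducible components of $C$. This yields a dense subset $V\subset \mathbf{P}^{\vee}$ of hyperplanes $H$ for which $Y'=Y\cap H$ is connected and smooth of codimension $1$ over $k$ (here the hypothesis $d\ge 2$ is used for connectedness) and for which $H$ meets each $Y_{i}$ properly. As $V$ is dense and $\mathbf{P}^{\vee}\setminus Z_{1}$ is a nonempty open subset of the irreducible variety $\mathbf{P}^{\vee}$, I may choose $H\in V\setminus Z_{1}$ and set $Y'=Y\cap H$, with $i\colon Y'\rightarrow Y$ the induced regular closed immersion of codimension $1$. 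By the preceding paragraph $i$ is $C$-transversal, and because $H$ meets each base $Y_{i}$ properly no base of a component of $C$ is contained in $Y'$; hence every component of $i^{*}C=C\times_{Y}Y'$, which has dimension $\ge d-1$ by Lemma \ref{lemlcidim}, fails to attain dimension $d$ and is therefore of dimension exactly $d-1=\dim Y'$. Thus $i$ is properly $C$-transversal in the sense of Definition \ref{defproptrans} (1), proving Lemma \ref{lemexistsh}.

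The hard part will be the rigorous construction of $\tilde{P}$ as a genuine closed subvariety and the attendant fibre-dimension bookkeeping, in particular the treatment of the locus of hyperplanes tangent to $Y$, where the conormal line restricts to zero, together with the verification that $H\notin Z_{1}$ is equivalent to the vanishing of $C_{y'}\cap T^{*}_{Y'}Y|_{y'}$ for every $y'\in Y'$. Once that dimension count is in place, the density argument via Lemma \ref{lembertini} and the proper-dimension estimate are routine.
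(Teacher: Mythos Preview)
Your approach is correct and follows the same overall strategy as the paper---take a general hyperplane section and use Lemma~\ref{lembertini}---but the implementation differs. The paper does not work with the incidence variety $\tilde{P}\subset \mathbf{P}(T^{*}Y)\times_{k}\mathbf{P}^{\vee}$ directly; instead it invokes \cite[Lemma 3.19]{sacc} to choose an embedding $i'\colon Y\rightarrow\mathbf{P}$ satisfying technical conditions (E) and (C), lifts $C$ to $\tilde{C}=di'^{-1}(C)\subset T^{*}\mathbf{P}\times_{\mathbf{P}}Y$, and then identifies $Y\times_{\mathbf{P}}Q$ with $\mathbf{P}(T^{*}\mathbf{P}\times_{\mathbf{P}}Y)$, where $Q$ is the universal hyperplane. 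The bad locus is then $p^{\vee}(\mathbf{P}(\tilde{C}))$, shown to be a divisor via \cite[Corollary 3.21.1]{sacc}, and $C$-transversality of $h\colon H_{t}\cap Y\rightarrow Y$ is deduced from transversality properties of the two projections $p,p^{\vee}$ of $Y\times_{\mathbf{P}}Q$ outside $\mathbf{P}(\tilde{C})$, using \cite[Lemmas 3.4.3, 3.9.1, 3.10]{sacc}. Working upstairs in $T^{*}\mathbf{P}\times_{\mathbf{P}}Y$ has the advantage that the conormal direction of $H$ is always nonzero there, so the issue you flag about tangent hyperplanes simply does not arise; your route is more elementary and works with an arbitrary embedding, but you must handle that indeterminacy locus by hand. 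Two minor points: your $Z_{1}$ need not be closed since $\mathbf{P}(T^{*}Y)$ is not proper over $k$, so you should take its closure (the dimension bound is unaffected); and the set produced by Lemma~\ref{lembertini} is in fact open as well as dense, which is what makes the final intersection nonempty.
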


\begin{proof}
By Lemma \cite[Lemma 3.19]{sacc},
we can take a very ample invertible 
$\dvr_{Y}$-module $\mathcal{L}$ 
and a $k$-linear mapping $E\rightarrow \Gamma(Y,\mathcal{L})$
from a $k$-vector space $E$ of finite dimension
defining an immersion $i'\colon Y\rightarrow \mathbf{P}=\mathbf{P}(E^{\vee})
=\Spec S^{\bullet} E$ 
satisfying the following two conditions:
\begin{itemize}
\item[(E)] The composition 
$E\rightarrow \Gamma(Y,\mathcal{L})\rightarrow \mathcal{L}_{u}/\mathfrak{m}_{u}^{2}\mathcal{L}_{u}\oplus 
\mathcal{L}_{v}/\mathfrak{m}_{v}^{2}\mathcal{L}_{v}$ 
is a surjection for every pair $(u,v)$ of distinct closed points of $Y$.
\item[(C)] None of the inverse images $\tilde{C}_{a}$ of irreducible components $C_{a}$ of $C$ by the morphism
$di'\colon T^{*}\mathbf{P}\times_{\mathbf{P}}Y\rightarrow T^{*}Y$ (\ref{defdh})
is contained in the zero section $T^{*}_{\mathbf{P}}\mathbf{P}\times_{\mathbf{P}}Y$.
\end{itemize}
Here the condition (C) is always satisfied unless the immersion $i'\colon Y'\rightarrow \mathbf{P}$ is an open immersion.
We identify the universal hyperplanes $Q=\{(y,H)\; |\; y\in H\}\subset \mathbf{P}\times_{k}\mathbf{P}^{\vee}$ with the covariant projective space bundle $\mathbf{P}(T^{*}\mathbf{P})$
and the fiber product $Y\times_{\mathbf{P}}Q$ with $\mathbf{P}(T^{*}\mathbf{P}\times_{\mathbf{P}}Y)$
as in \cite[Subsection 3.2]{sacc}.
Then the closure of the image of the projectivization $\mathbf{P}(\tilde{C}_{a})\subset \mathbf{P}(T^{*}\mathbf{P}\times_{\mathbf{P}}Y)$
by the projection $p^{\vee}\colon Y\times_{\mathbf{P}}Q\rightarrow \mathbf{P}^{\vee}$
is a divisor on $\mathbf{P}^{\vee}$ by \cite[Corollary 3.21.1]{sacc}.
Therefore we can take $t\in \mathbf{P}^{\vee}-p^{\vee}(\mathbf{P}(\tilde{C}))$, where
$\mathbf{P}(\tilde{C})\subset \mathbf{P}(T^{*}\mathbf{P}\times_{\mathbf{P}}Y)$ is the projectivization of the inverse image $\tilde{C}$ of 
$C$ by $di'$,
such that the hyperplane
$H_{t}$ corresponding to $t$ satisfies the conditions 
(1) and (2) in Lemma \ref{lembertini} for $Y$ and 
the bases $\{Y_{i}\}_{i=1}^{m}$ of irreducible components of $C$
by loc.\ cit..
Let $h\colon H_{t}\cap Y\rightarrow Y$ be the canonical closed immersion.
Then the pull-back $h^{*}C\subset T^{*}X\times_{X}(H_{t}\cap Y)$ is purely of dimension $d-1$
by Lemma \ref{lemlcidim}. 
Since $H_{t}\cap Y$ and $Y$ are smooth over $k$,
the closed immersion $h\colon H_{t}\cap Y\rightarrow Y$ is a regular immersion,
and it is sufficient to prove that $h$ is $C$-transversal.

Since the projection $p\colon Y\times_{\mathbf{P}}Q\rightarrow Y$
is $C$-transversal at every point on  $Y\times_{\mathbf{P}}Q-\mathbf{P}(\tilde{C})$ by \cite[Lemma 3.10]{sacc},
the closed immersion $h\colon H_{t}\cap Y\rightarrow Y$ is $C$-transversal if and only if
the canonical closed immersion $h'\colon H_{t}\cap Y\rightarrow Y\times_{\mathbf{P}}Q$
is $p^{\circ}C$-transversal by \cite[Lemma 3.4.3]{sacc}.
We consider the cartesian diagram
\begin{equation}
\xymatrix{
Y\times_{\mathbf{P}}Q  \ar[d]_-{p^{\vee}} \ar@{}[dr] | {\square}
& H_{t}\cap Y \ar[d] \ar[l]_-{h'}\\
\mathbf{P}^{\vee} & t, \ar[l]
} \notag
\end{equation}
where the horizontal arrows are the canonical closed immersions.
Since the horizontal arrows are regular closed immersions of
the same codimension
and since the right vertical arrow is $C'$-transversal 
for every closed conical subset $C'\subset T^{*}(H_{t}\cap Y)$ by \cite[Lemma 3.6.1]{sacc},
the closed immersion $h'\colon H_{t}\cap Y\rightarrow Y\times_{\mathbf{P}}Q$ is $p^{\circ}C$-transversal
if and only if $p^{\vee}$ is $p^{\circ}C$-transversal
on a neighborhood of $H_{t}\cap Y\subset Y\times_{\mathbf{P}}Q$ by \cite[Lemma 3.9.1]{sacc}.
Since $Y\times_{\mathbf{P}}Q-\mathbf{P}(\tilde{C})$ is a neighborhood of
$H_{t}\cap Y$ and since the projection $p^{\vee}$ is $p^{\circ}C$-transversal
at every point on $Y\times_{\mathbf{P}}Q-\mathbf{P}(\tilde{C})$
by \cite[Lemma 3.10]{sacc},
the closed immersion $h\colon H_{t}\cap Y\rightarrow Y$ is $C$-transversal.
\end{proof}

\begin{cor}
\label{coreximpc}
Let the notation and the assumptions be as in Lemma \ref{lemexistsh}.
Let $e\le d$ be a positive integer.
Then there exists a properly $C$-transversal regular closed immersion 
$Y'\rightarrow Y$ from a connected smooth scheme $Y'$ over $k$ of dimension $e$.
\end{cor}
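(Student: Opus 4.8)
The plan is to argue by induction on the codimension $c = d - e$, using Lemma \ref{lemexistsh} to cut the dimension down by one at each step. For $c = 0$ I take $Y' = Y$ with the identity immersion, which is trivially properly $C$-transversal. So suppose $c \geq 1$, whence $d \geq e + 1 \geq 2$ and Lemma \ref{lemexistsh} applies to $(Y, C)$: it produces a properly $C$-transversal regular closed immersion $h \colon Y_1 \to Y$ of codimension $1$ with $Y_1$ connected and smooth of dimension $d - 1$. Being a closed subscheme of the quasi-projective $Y$, the scheme $Y_1$ is again connected, smooth and quasi-projective.

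Next I would feed the pair $(Y_1, h^\circ C)$ to the induction hypothesis. Since $h$ is properly $C$-transversal, $h^*C = C \times_Y Y_1$ is purely of dimension $d - 1$; by Remark \ref{remctr} (1) the image $h^\circ C \subset T^* Y_1$ is then a closed conical subset purely of dimension $d - 1 = \dim Y_1$. Thus the hypotheses of the Corollary hold for $(Y_1, h^\circ C, e)$, and since $(d-1) - e = c - 1 < c$, the induction hypothesis yields a properly $h^\circ C$-transversal regular closed immersion $h' \colon Y' \to Y_1$ with $Y'$ connected and smooth of dimension $e$. I then set $g = h \circ h' \colon Y' \to Y$, a regular closed immersion of codimension $c$ as a composition of such.

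It remains to show $g$ is properly $C$-transversal. For $C$-transversality I would run a direct diagram chase: writing $T^*Y \times_Y Y' = (T^*Y \times_Y Y_1) \times_{Y_1} Y'$ and factoring $dg$ through $dh$ and $dh'$, a covector $\xi \in (C \times_Y Y') \cap \ker(dg)$ maps under $dh$ into $(h^\circ C \times_{Y_1} Y') \cap \ker(dh')$, which lies in the zero section by the $h^\circ C$-transversality of $h'$; feeding this back, $\xi$ lies in $((C \times_Y Y_1)\cap \ker dh)\times_{Y_1} Y'$, which lies in the zero section by the $C$-transversality of $h$, so $\xi$ is itself in the zero section. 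For the pure-dimensionality of $C \times_Y Y'$ I would bound its dimension from both sides. The projection $T^*Y \times_Y Y' \to T^*Y$ is a regular closed immersion of codimension $c$, being the base change of $g$ along the smooth projection $T^*Y \to Y$, so Lemma \ref{lemlcidim} shows every irreducible component of $C \times_Y Y' = (C \times_Y Y_1)\times_{Y_1} Y'$ has dimension $\geq e$. Conversely, by Remark \ref{remctr} (1) the restriction of $dh$ to $h^*C$ is finite onto $h^\circ C$, so its base change $C \times_Y Y' \to h^\circ C \times_{Y_1} Y' = h'^*(h^\circ C)$ is finite, and since $h'^*(h^\circ C)$ is purely of dimension $e$ by the proper transversality of $h'$, every component of $C \times_Y Y'$ has dimension $\leq e$. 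Hence $C \times_Y Y'$ is purely of dimension $e$ and $g$ is properly $C$-transversal.

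The routine steps are the preservation of connectedness, smoothness and quasi-projectivity, which are immediate. The main point to get right is the bookkeeping in the last paragraph: $C$-transversality and the exact dimension $e$ of the total pullback $C \times_Y Y'$ along the composite must be established together, matching the finiteness supplied by Remark \ref{remctr} (1) against the lower bound from Lemma \ref{lemlcidim}.
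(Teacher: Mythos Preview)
Your proof is correct and follows the same inductive strategy as the paper, cutting down one dimension at a time via Lemma \ref{lemexistsh}. The only differences are cosmetic: the paper orders the induction the other way (applying the induction hypothesis first to reach dimension $e+1$ and then Lemma \ref{lemexistsh}), and for the composition being properly $C$-transversal it simply cites \cite[Lemma 7.2.2]{sacc} rather than giving the direct verification you carry out with Remark \ref{remctr} (1) and Lemma \ref{lemlcidim}.
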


\begin{proof}
We prove the assertion by the induction on $d-e$.
If $d=e$, then the identity mapping of  
$Y$ satisfies the desired condition.
Suppose that $e<d$.
By the induction hypothesis, there exists a properly 
$C$-transversal regular closed immersion
$h''\colon Y''\rightarrow Y$ from a connected smooth 
scheme $Y''$ over $k$ of dimension $e+1$.
Then $h''^{*}C$ is purely of dimension $e+1$,
and so is $h''^{\circ}C$ by Remark \ref{remctr} (1). 
Since $Y''$ is quasi-projective over $k$,
we can take a properly $h''^{\circ}C$-transversal
regular closed immersion $h'\colon Y'\rightarrow Y''$
of codimension $1$ from a connected smooth scheme $Y'$ over $k$ by Lemma \ref{lemexistsh}.
Then the composition $h=h''\circ h'$ satisfies the desired conditions by \cite[Lemma 7.2.2]{sacc}.
\end{proof}

\begin{cor}
\label{corexistsri}
Suppose that $k$ is algebraically closed, that
$X$ is a connected smooth quasi-projective scheme over $k$ of dimension $d\ge 2$, and
that $D$ has simple normal crossings.
Let $D'$ be a union of irreducible components of $D$.
Assume that the ramification of $\mf$ is $\log$-$D'$-clean along $D$
and that the inverse image $\tau_{D'}^{-1}(S_{D'}^{\log}(j_{!}\mf))\subset T^{*}X$ 
of the $\log$-$D'$-singular support $S_{D'}^{\log}(j_{!}\mf)\subset T^{*}X(\log D')$ (Definition \ref{deflifdp} (3)) by $\tau_{D'}$ (\ref{deftaue}) 
is purely of dimension $d$.
Let $e\le d$ be a positive integer.
Then there exists a $C_{D}$-transversal, where $C_{D}$ is as in (\ref{defcdp}), and properly $\tau_{D'}^{-1}(S_{D'}^{\log}(j_{!}\mf))$-transversal regular closed immersion $Y\rightarrow X$
from a connected smooth scheme $Y$ over $k$ of dimension $e$.
\end{cor}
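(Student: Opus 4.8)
The plan is to deduce the statement directly from Corollary \ref{coreximpc} by feeding that corollary the union of the two relevant closed conical subsets at once. Concretely, I would set
\begin{equation}
C=C_{D}\cup \tau_{D'}^{-1}(S_{D'}^{\log}(j_{!}\mf))\subset T^{*}X, \notag
\end{equation}
where $C_{D}$ is the closed conical subset in (\ref{defcdp}) taken with $D'=D$, which makes sense since $D$ has simple normal crossings. The key point of the reduction is that both constituents of $C$ are purely of dimension $d$, so that Corollary \ref{coreximpc} applies to $C$.

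First I would verify that $C$ is purely of dimension $d$. For the piece $\tau_{D'}^{-1}(S_{D'}^{\log}(j_{!}\mf))$ this is exactly the hypothesis. For $C_{D}=\bigcup_{I''\subset I}T^{*}_{D_{I''}}X$, I use that $D$ has simple normal crossings: every intersection $D_{I''}=\bigcap_{i\in I''}D_{i}$ is a smooth closed subscheme of $X$, so its conormal bundle $T^{*}_{D_{I''}}X$ has dimension $d$ (including $I''=\emptyset$, which gives the zero section $T^{*}_{X}X$). Hence $C_{D}$, and therefore $C$, is purely of dimension $d$.

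Next I would apply Corollary \ref{coreximpc} to the connected smooth quasi-projective scheme $X$ and the closed conical subset $C$, with the given $e\le d$. This produces a properly $C$-transversal regular closed immersion $h\colon Y\rightarrow X$ from a connected smooth scheme $Y$ over $k$ of dimension $e$. It then remains to upgrade the single $C$-transversality into the two transversality conditions in the statement. Since $C=C_{D}\cup \tau_{D'}^{-1}(S_{D'}^{\log}(j_{!}\mf))$, Remark \ref{remctr} (3) immediately gives that $h$ is both $C_{D}$-transversal and $\tau_{D'}^{-1}(S_{D'}^{\log}(j_{!}\mf))$-transversal.

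The main (and only) obstacle is the last bit of dimension bookkeeping needed for the \emph{proper} $\tau_{D'}^{-1}(S_{D'}^{\log}(j_{!}\mf))$-transversality: proper $C$-transversality only guarantees that $h^{*}C$ is purely of dimension $e$, and I must argue that the sub-piece $h^{*}\tau_{D'}^{-1}(S_{D'}^{\log}(j_{!}\mf))$ inherits pure dimension $e$. For the upper bound, each irreducible component of $h^{*}\tau_{D'}^{-1}(S_{D'}^{\log}(j_{!}\mf))$ is an irreducible closed subset of $h^{*}C$ and hence has dimension $\le e$. For the lower bound, since $\tau_{D'}^{-1}(S_{D'}^{\log}(j_{!}\mf))$ is purely of dimension $d$ and $h$ is a regular closed immersion of codimension $d-e$, Lemma \ref{lemlcidim} shows that every component of the pullback has dimension $\ge e$. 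Combining the two bounds yields pure dimension $e$, so $h$ is properly $\tau_{D'}^{-1}(S_{D'}^{\log}(j_{!}\mf))$-transversal, which completes the construction of the desired $Y\rightarrow X$.
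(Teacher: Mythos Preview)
Your proof is correct and follows the same approach as the paper: set $C=C_{D}\cup \tau_{D'}^{-1}(S_{D'}^{\log}(j_{!}\mf))$, invoke Corollary \ref{coreximpc}, and split the resulting transversality via Remark \ref{remctr} (3). The paper's proof is terser and simply asserts that Remark \ref{remctr} (3) yields the desired conditions; your additional dimension bookkeeping (upper bound from $h^{*}C$ being purely of dimension $e$, lower bound from Lemma \ref{lemlcidim}) makes explicit why \emph{proper} $\tau_{D'}^{-1}(S_{D'}^{\log}(j_{!}\mf))$-transversality, and not just transversality, passes to the sub-piece.
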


\begin{proof}
We put $C=C_{D}\cup \tau_{D'}^{-1}(S_{D'}^{\log}(j_{!}\mf))$. 
Then $C$ is purely of dimension $d$, and
there exists a properly $C$-transversal regular closed immersion $g\colon Y\rightarrow X$
from a connected smooth scheme $Y$ of dimension $e$
by Corollary \ref{coreximpc}.
By Remark \ref{remctr} (3),
the immersion $g$ satisfies the desired conditions.
\end{proof}

We then prove Theorem \ref{mainthmosct}.

\begin{proof} [Proof of Theorem \ref{mainthmosct}]
We may assume that $k$ is algebraically closed, by replacing $k$ by 
an algebraic closure of $k$ if necessary.
By Propositions \ref{proptowild} (2) and \ref{proptauinvtw},
we may assume that $I_{\mT,\mf}=\emptyset$.
Since the inverse image $\tau_{D'}^{-1}(S_{D'}^{\log}(j_{!}\mf))$ is 
assumed to be of dimension $d$, 
the inverse image $\tau_{D'}^{-1}(S_{D'}^{\log}(j_{!}\mf))$ is purely of dimension $d$ and
we have $E_{\mf}^{D'}=\emptyset$ (Definition \ref{defbcf} (2)) by Corollary \ref{corinvli}.
By Remark \ref{remdefbcf} (3), we have $I'\cap I_{\mII,\mf}=\emptyset$ (Definition \ref{defindsub} (2)).
Since the bases of irreducible components of the inverse image
$\tau_{D'}^{-1}(S_{D'}^{\log}(j_{!}\mf))$ 
are assumed to be of codimension $\le 2$ in $X$,
the bases $D_{I''}$ (\ref{defdipp}) of $T^{*}_{D_{I''}}X$ 
is of codimension $\le 2$ for any $I''\subset I'$ by Corollary \ref{corcompcc} (1),
and the cardinality of $I'$ is locally $\le 2$.
Since the inverse image $\tau_{D'}^{-1}(S_{D'}^{\log}(j_{!}\mf))$ 
is purely of dimension $d$,
the support of $\tau_{D'}^{!}CC_{D'}^{\log}(j_{!}\mf)$ is
a union of irreducible components of $\tau_{D'}^{-1}(S_{D'}^{\log}(j_{!}\mf))$ by Remark \ref{remlifdp} (4).
By Corollary \ref{corssccsm} (2) and Corollary \ref{corsdpdim},
the support of $CC(j_{!}\mf)$ is equal to the singular support $SS(j_{!}\mf)$
and is a union of irreducible components
of $\tau_{D'}^{-1}(S_{D'}^{\log}(j_{!}\mf))$. 
Therefore we may shrink $X$ to a neighborhood in $X$ of each generic point
of the bases of irreducible components of $\tau_{D'}^{-1}(S_{D'}^{\log}(j_{!}\mf))$,
since the assertion is local.
Then we may assume that the cardinality of $I_{x}$ (\ref{defix}) is $\le 2$ for
every point $x$ on $X$.
Let $(B_{I'',\mf}^{D'})^{0}$ for $I''\subset I'$ be as in Proposition \ref{propinvli}.
Since the bases of irreducible components of $\tau_{D'}^{-1}(S_{D'}^{\log}(j_{!}\mf))$
are of codimension $\le 2$ in $X$ and since $E_{\mf}^{D'}$ and
$I'\cap I_{\mII,\mf}$ are empty, 
the cardinalities of the subsets $I''\subset I'$ such that
$(B_{I'',\mf}^{D'})^{0}\neq \emptyset$ are $\le 1$ 
by Remark \ref{remdefbcf} (2) and Corollary \ref{corcompcc} (1).
Hence we may assume that 
there exists at most one irreducible component $C_{a}$ of 
$\tau_{D'}^{-1}(S_{D'}^{\log}(j_{!}\mf))$ whose base 
$C_{a}\cap T^{*}_{X}X$ is of codimension $2$ in $X$ by loc.\ cit..

Let $x$ be a closed point of $X$.
Since the assertion is local,
we may shrink $X$ to a neighborhood of $x$.
Suppose that $x\in X-D'$.
Then we have $I'=\emptyset$ in a neighborhood of $x$,
and we may assume that $I'=\emptyset$.
Since $\tau_{\emptyset}$ is the identity mapping
of $T^{*}X$ and since
$CC_{\emptyset}^{\log}(j_{!}\mf)=CC(j_{!}\mf)$ by Remark \ref{remlifdp} (5),
the assertion holds.

Suppose that $x\in D'$.
By Remark \ref{rempropinvli}, Corollary \ref{corcompcc} (2), and Theorem \ref{thmccndeg} (2),
the equality (\ref{eqdesired}) holds except the terms whose
supports have the bases of codimension $2$ in $X$.
We take a $C_{D}$-transversal and properly $\tau_{D'}^{-1}(S_{D'}^{\log}(j_{!}\mf))$-transversal
regular closed immersion $g\colon S\rightarrow X$ 
from a smooth surface $S$ over $k$ by Corollary \ref{corexistsri}. 
Let $j'\colon g^{*}U\rightarrow S$ denote the base change of $j$ by $g$.
By Proposition \ref{propinvcc} (1),
the ramification of $g^{*}\mf$ is $\log$-$g^{*}D'$-clean along $g^{*}D$,
we have $D_{\mT,g^{*}\mf}\subset g^{*}D'$ (\ref{defd*mf}),
and $\tau_{g^{*}D'}^{-1}(S_{g^{*}D'}^{\log}(j'_{!}g^{*}\mf))$ is of dimension $2$.
By Proposition \ref{propinvcc} (2),
we have $\tau_{g^{*}D'}^{!}CC_{g^{*}D'}^{\log}(j'_{!}g^{*}\mf)=g^{!}\tau_{D'}^{!}CC_{D'}^{\log}(j_{!}\mf)$. 
Since the singular support $SS(j_{!}\mf)$ is a union of irreducible components of 
$\tau_{D'}^{-1}(S_{D'}^{\log}(j_{!}\mf))$,
the immersion $g$ is properly $SS(j_{!}\mf)$-transversal by Remark \ref{remctr} (3),
and we have $CC(j'_{!}g^{*}\mf)=g^{!}CC(j_{!}\mf)$ by Theorem \ref{thmccpb}.
Since there is at most one irreducible component of $\tau_{D'}^{-1}(S_{D'}^{\log}(j_{!}\mf))$
whose base is of codimension $2$ in $X$, it suffices to prove the equality
\begin{equation}
CC(j'_{!}g^{*}\mf)=\tau_{g^{*}D'}^{!}CC_{g^{*}D'}^{\log}(j'_{!}g^{*}\mf) \notag
\end{equation}
for the sheaf $j'_{!}g^{*}\mf$ on the surface $S$
in order to obtain the desired equality (\ref{eqdesired}).
Since $\tau_{g^{*}D'}^{-1}(S_{g^{*}D'}^{\log}(j'_{!}g^{*}\mf))$ is of dimension $2$,
we may assume that $X$ is a surface, namely $d=2$.

Suppose that $X$ is a surface.
If $I'=I$, then the assertion holds by Remark \ref{remlifdp} (5) and \cite[Theorems 4.2 (ii), 6.1]{yacc}.
Hence we may assume that $I'=\{1\}$ and that $I=\{1,2\}$.
Since $I'\cap I_{\mII,\mf}$ and $I_{\mT,\mf}$ are empty, we have $1\in I_{\mI,\mf}$.
We regard the image $\Image \xi_{1}^{D'}(\mf)$ of $\xi_{1}^{D'}(\mf)$ (Definition \ref{defximf} (2)) as an ideal sheaf of $\dvr_{D_{1}}$
by Remark \ref{remximf} (2).
Then we have $B_{\mf}^{D'}=B_{I',\mf}^{D'}=V(\Image\xi_{1}^{D'}(\mf))$
(Definition \ref{defbcf} (1)). 
It is sufficient to prove the equality (\ref{eqdesired}) in the following two cases:
\begin{itemize}
\item[(I)] $x\notin B_{\mf}^{D'}$.
\item[(II)] $x\in B_{\mf}^{D'}$.
\end{itemize}

In the case (I), we may assume that $B_{\mf}^{D'}=\emptyset$.
Then the ramification of $\mf$ is $\log$-$\emptyset$-clean along $D$ 
by Remark \ref{remdefbcf} (4),
and we obtain the desired equality in a neighborhood of $x$
by Corollary \ref{corcompcc} (2) and Theorem \ref{thmccndeg} (2).

In the case (II), we have $(B_{I',\mf}^{D'})^{0}=\{x\}$ and the special fiber $T^{*}_{x}X$ at $x$ is the unique irreducible component of $\tau_{D'}^{-1}(S_{D'}^{\log}(j_{!}\mf))$
whose base is of codimension $2$ in $X$
by Corollary \ref{corcompcc} (1).
By Lemma \ref{lemcleanatx}, Remark \ref{rempropinvli}, and Corollary \ref{corcompcc} (2),
we have \begin{align}
\tau_{D'}^{!}CC_{D'}^{\log}(j_{!}\mf)=
[T^{*}_{X}X]+ \sum_{i=1}^{r}\dt(\chi|_{K_{i}})[L_{i,\mf}^{\emptyset}] + t_{x}[T^{*}_{x}X]
\notag
\end{align}
with
\begin{align}
t_{x}=\sw(\chi|_{K_{1}})\ord^{\emptyset}(\mf;x,D_{1})+\dt(\chi|_{K_{2}})\ord^{\emptyset}(\mf;x,D_{2}) \notag
\end{align}
and $\ord^{\emptyset}(\mf;x,D_{2})=1$. 
If $2\in I_{\mI,\mf}$,
then the ramification of $\mf$ is $\log$-$D$-clean along $D$
by Lemma \ref{lemcleanatx}, and we have $\lambda_{x}=0$ by Remark \ref{remsx} (1).
Hence 
the equality (\ref{eqdesired}) holds by Theorem \ref{thmcalsf},
since $\dt(\chi|_{K_{2}})=\sw(\chi|_{K_{2}})+1$.
Suppose that $2\in I_{\mII,\mf}$.
Then the $\log$-$D$-characteristic form $\cform^{D}(\mf)$ is of the form (\ref{formcformexa}) in Example \ref{exalamdax} 
by Lemma \ref{lemclrelfirf} applied to the case where $(I',I'')$ is $(I,I')$
and by Lemma \ref{lemcleanatx}.
Thus we have $\lambda_{x}=\ord^{D}(\mf;x,D_{2})$ by Example \ref{exalamdax}, 
and the equality (\ref{eqdesired}) holds by Theorem \ref{thmcalsf},
since $\dt(\chi|_{K_{2}})=\sw(\chi|_{K_{2}})$.
\end{proof}

\begin{cor}
\label{corcurvcc}
Suppose that $X$ is a curve, namely is purely of dimension $1$, 
and that $D$ has simple normal crossings.
Let $I'\subset I$ be a subset such that $I_{\mT,\mf}\subset I'$ (Definition \ref{defindsub} (1)) and let
$D'=\bigcup_{i\in I'}D_{i}$.
Then the assumptions in Conjecture \ref{conjcc} are satisfied and
Conjecture \ref{conjcc} holds.
\end{cor}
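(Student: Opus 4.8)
The plan is to deduce the statement directly from Theorem \ref{mainthmosct} by checking that, when $X$ is a curve, both hypotheses of Conjecture \ref{conjcc} hold automatically and the extra codimension hypothesis of Theorem \ref{mainthmosct} is then trivially met. Throughout I set $d=1$; recall that $I_{\mT,\mf}\subset I'$ is given, so the $\log$-$D'$-characteristic cycle $CC_{D'}^{\log}(j_{!}\mf)$ and the Gysin map $\tau_{D'}^{!}$ are defined once cleanliness is established.

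First I would verify that the ramification of $\mf$ is $\log$-$D'$-clean along $D$. By Remark \ref{remlogdpcl} (2) the locus on $X$ where this cleanliness fails is contained in a closed subset of codimension $\ge 2$; since $\dim X=1$, such a subset is empty, so $\mf$ is $\log$-$D'$-clean along $D$. Next I would verify that the inverse image $\tau_{D'}^{-1}(S_{D'}^{\log}(j_{!}\mf))$ is of dimension $d=1$. The crucial point is that a curve carries no type $\mII$ components: each $D_{i}$ is a closed point of the smooth curve $X$, so its residue field $F_{K_{i}}$ is a finite, hence perfect, extension of the perfect field $k$, and Remark \ref{remtameloc} (2) then gives $\dt(\chi|_{K_{i}})=\sw(\chi|_{K_{i}})+1$ for every wildly ramified $\chi|_{K_{i}}$. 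Thus $I_{\mII,\mf}=\emptyset$, and in particular $I'\cap I_{\mII,\mf}=\emptyset$.

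With $I'\cap I_{\mII,\mf}=\emptyset$ in hand, Remark \ref{remdefbcf} (2) shows that $E_{I'',\mf}^{D'}$ has codimension $\ge 2$ in $X$ for every non-empty $I''\subset I'$, hence is empty on the curve, while $E_{\emptyset,\mf}^{D'}$ is empty for dimension reasons since $B_{\emptyset,\mf}^{D'}\subset Z_{\mf}$ is zero-dimensional and so has no codimension-$0$ component in $X$. Therefore $E_{\mf}^{D'}=\emptyset$, and by the equivalence in Corollary \ref{corinvli} this is exactly the assertion that $\tau_{D'}^{-1}(S_{D'}^{\log}(j_{!}\mf))$ has dimension $1$. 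Both assumptions of Conjecture \ref{conjcc} are thereby in force.

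Finally, because $\dim X=1$, every base of an irreducible component of $\tau_{D'}^{-1}(S_{D'}^{\log}(j_{!}\mf))$ is a closed subset of $X$ of codimension at most $1$, hence at most $2$, so the hypothesis of Theorem \ref{mainthmosct} is satisfied and the equality $CC(j_{!}\mf)=\tau_{D'}^{!}CC_{D'}^{\log}(j_{!}\mf)$ follows from that theorem. The one step that genuinely uses the geometry of the curve is the vanishing $E_{\mf}^{D'}=\emptyset$, which I expect to be the main (though still routine) obstacle: it rests on the absence of type $\mII$ ramification forced by the perfection of the residue fields at the closed points, everything else being a dimension count.
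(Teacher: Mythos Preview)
Your proposal is correct and follows essentially the same approach as the paper's proof: both use Remark \ref{remlogdpcl} (2) for cleanliness, Remark \ref{remtameloc} (2) (via perfection of the residue fields) for $I_{\mII,\mf}=\emptyset$, Remark \ref{remdefbcf} (2) together with Corollary \ref{corinvli} to obtain $E_{\mf}^{D'}=\emptyset$ and the dimension-$1$ assertion, and then Theorem \ref{mainthmosct} since bases in a curve have codimension at most $1$. Your separate treatment of $I''=\emptyset$ is unnecessary (the definition of $E_{\mf}^{D'}$ ranges over non-empty $I''$, and in any case $E_{\emptyset,\mf}^{D'}=\emptyset$ by Remark \ref{remdefbcf} (1)), but it is harmless.
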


\begin{proof}
By Remark \ref{remlogdpcl} (2), the ramification of $\mf$ is $\log$-$D'$-clean along $D$.
By Remark \ref{remtameloc} (2), we have $I_{\mII,\mf}=\emptyset$ (Definition \ref{defindsub} (2)).
Since the dimension of $X$ is $1$,
we have $E_{\mf}^{D'}=\emptyset$ by 
Remark \ref{remdefbcf} (2),
and the inverse image $\tau_{D'}^{-1}(S_{D'}^{\log}(j_{!}\mf))\subset T^{*}X$
of the $\log$-$D'$-singular support $S_{D'}^{\log}(j_{!}\mf)\subset T^{*}X(\log D')$ 
(Definition \ref{deflifdp} (3)) by the canonical morphism $\tau_{D'}$ (\ref{deftaue})
is purely of dimension $1$ by Corollary \ref{corinvli}.
Thus the assumptions in Conjecture \ref{conjcc} are satisfied.
Since the base of any closed conical subset of $T^{*}X$ is of codimension $\le 1$ in $X$,
Conjecture \ref{conjcc} holds by Theorem \ref{mainthmosct}.
\end{proof}

\begin{cor}
\label{corosct}
Suppose that $X$ is purely of dimension $d$ and that $D$ has simple normal crossings.
Let $I'\subset I$ be a subset containing $I_{\mT,\mf}$ (Definition \ref{defindsub} (1)) and let $D'=\bigcup_{i\in I'}D_{i}$.
Let the assumptions be as in Theorem \ref{mainthmosct}
(including the assumptions of Conjecture \ref{conjcc})
or let $d=1$.
Then we have
\begin{equation}
SS(j_{!}\mf)=\tau_{D'}^{-1}(S_{D'}^{\log}(j_{!}\mf)). \notag
\end{equation}
\end{cor}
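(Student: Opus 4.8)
The plan is to deduce the equality of closed conical subsets from the equality of cycles $CC(j_{!}\mf)=\tau_{D'}^{!}CC_{D'}^{\log}(j_{!}\mf)$ established in Theorem \ref{mainthmosct} (or, when $d=1$, in Corollary \ref{corcurvcc}). First I would record that the assumptions of Conjecture \ref{conjcc} are in force in both cases, so that the inverse image $\tau_{D'}^{-1}(S_{D'}^{\log}(j_{!}\mf))$ is of dimension $d$ and the cycle $\tau_{D'}^{!}CC_{D'}^{\log}(j_{!}\mf)$ lives in $Z_{d}(\tau_{D'}^{-1}(S_{D'}^{\log}(j_{!}\mf)))$, hence is supported on $\tau_{D'}^{-1}(S_{D'}^{\log}(j_{!}\mf))$. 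Taking supports of both sides of the equality of cycles then reduces the corollary to the two statements: (i) $SS(j_{!}\mf)=\Supp CC(j_{!}\mf)$, and (ii) $\Supp \tau_{D'}^{!}CC_{D'}^{\log}(j_{!}\mf)=\tau_{D'}^{-1}(S_{D'}^{\log}(j_{!}\mf))$.

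For (i), I would invoke Corollary \ref{corssccsm} (2). Since the singular support and the support of the characteristic cycle are both defined \'{e}tale locally (Remarks \ref{remmicsup} (1) and \ref{remsupcc} (1)), and since the complement of a locally principal divisor in an affine open of the smooth scheme $X$ is again affine, the hypothesis that $j$ be an affine open immersion may be arranged after restricting to such opens. Thus Corollary \ref{corssccsm} (2) applies and gives $SS(j_{!}\mf)=\Supp CC(j_{!}\mf)$.

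The substance of the proof is (ii). Here I would appeal to the description of $\tau_{D'}^{-1}(S_{D'}^{\log}(j_{!}\mf))$ as a union of irreducible components in Corollary \ref{corcompcc} (1) together with the explicit formula for $\tau_{D'}^{!}CC_{D'}^{\log}(j_{!}\mf)$ in Corollary \ref{corcompcc} (2); comparing the two, every prime component of $\tau_{D'}^{-1}(S_{D'}^{\log}(j_{!}\mf))$ already occurs among the summands, and it remains to check that each multiplicity is nonzero. The zero-section components $T^{*}_{D_{I''}}X$ carry coefficient $\pm(1+\sum_{i\in I''\cap I_{\mW,\mf}}\sw^{D'}(\chi|_{K_{i}}))$, nonzero because of the leading $1$; the components $\langle \omega_{\mathfrak{p}}, dt_{i'}; i'\in I''/Y_{\mathfrak{p}}^{1/p}\rangle$ only occur for indices with $I''\cap I_{\mW,\mf}\neq\emptyset$ (the section $\omega_{\mathfrak{p}}$ being defined through $m_{i}^{D'}(\mf)$ for $i\in I''\cap I_{\mW,\mf,\mathfrak{p}}$, cf.\ Proposition \ref{propinvli}), so their coefficient $\pm\sum_{i\in I''\cap I_{\mW,\mf}}\sw^{D'}(\chi|_{K_{i}})$ is again nonzero; and the components $\langle \omega_{\mathfrak{p}}, dt_{i'}; i'\in I''/Y_{\mathfrak{p}}^{1/p}\rangle\times_{X}D_{i}$ carry coefficient $\pm\sw^{D'}(\chi|_{K_{i}})$ with $i\in I_{\mW,\mf}$. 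In every case nonvanishing rests on the positivity $\sw^{D'}(\chi|_{K_{i}})>0$ for $i\in I_{\mW,\mf}$. This yields (ii), and combining (i), (ii) with the equality of cycles gives $SS(j_{!}\mf)=\tau_{D'}^{-1}(S_{D'}^{\log}(j_{!}\mf))$.

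I expect the main obstacle to be the bookkeeping in (ii): one must verify that the list of prime components in Corollary \ref{corcompcc} (1) matches term-for-term the summands in Corollary \ref{corcompcc} (2), and in particular that no component slips in with an empty (hence vanishing) index set of Swan conductors. As a safeguard, one may note that $SS(j_{!}\mf)$ is already a union of irreducible components of $\tau_{D'}^{-1}(S_{D'}^{\log}(j_{!}\mf))$ by Corollary \ref{corsdpdim}, so that only the reverse inclusion---equivalently the nonvanishing of all multiplicities above---actually requires argument.
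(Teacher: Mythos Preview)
Your overall strategy matches the paper's: both start from the equality of cycles furnished by Theorem~\ref{mainthmosct} (or Corollary~\ref{corcurvcc} when $d=1$), invoke Corollary~\ref{corssccsm}~(2) to identify $SS(j_{!}\mf)$ with $\Supp CC(j_{!}\mf)$, and then reduce to showing that the support of $\tau_{D'}^{!}CC_{D'}^{\log}(j_{!}\mf)$ is all of $\tau_{D'}^{-1}(S_{D'}^{\log}(j_{!}\mf))$.

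The difference lies in how step~(ii) is handled. You argue by term-by-term comparison of Corollary~\ref{corcompcc}~(1) and~(2), checking that each prime component carries a nonzero coefficient. The paper instead uses a positivity argument: by Remark~\ref{remlifdp}~(4) one has $(-1)^{d}CC_{D'}^{\log}(j_{!}\mf)>0$ with support exactly $S_{D'}^{\log}(j_{!}\mf)$, and by Corollary~\ref{corinvli} the inverse image $\tau_{D'}^{-1}(S_{D'}^{\log}(j_{!}\mf))$ is \emph{purely} of dimension $d$. Under this proper-intersection hypothesis, the Gysin pullback $\tau_{D'}^{!}$ of an effective cycle with full support is again effective with support equal to the full inverse image, so $\Supp\tau_{D'}^{!}CC_{D'}^{\log}(j_{!}\mf)=\tau_{D'}^{-1}(S_{D'}^{\log}(j_{!}\mf))$ follows immediately. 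This bypasses precisely the bookkeeping you flag as the main obstacle: there is no need to match components term-for-term or to worry about index sets $I''$ with $I''\cap I_{\mW,\mf}=\emptyset$ (which do occur in Corollary~\ref{corcompcc}~(1), contrary to your parenthetical claim---see the case $I''\subset I_{\mT,\mf}$ in Definition~\ref{defbcf}~(1) and Remark~\ref{rempropinvli}). Your approach is salvageable once one observes that such components are redundant with those in the third union, but the paper's positivity argument is both shorter and more robust.
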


\begin{proof}
By Theorem \ref{mainthmosct} and Corollary \ref{corcurvcc}, we have $CC(j_{!}\mf)=\tau_{D'}^{!}CC_{D'}^{\log}(j_{!}\mf)$.
Since the singular support $SS(j_{!}\mf)$ is the support of the characteristic cycle $CC(j_{!}\mf)$
by Corollary \ref{corssccsm} (2),
it suffices to show that the support of $\tau_{D'}^{!}CC_{D'}^{\log}(j_{!}\mf)$ is 
the inverse image $\tau_{D'}^{-1}(S_{D'}^{\log}(j_{!}\mf))$.
Since $S_{D'}(j_{!}\mf)$ is of dimension $d$, the inverse image
$\tau_{D'}^{-1}(S_{D'}^{\log}(j_{!}\mf))$ is purely of dimension $d$
by Corollary \ref{corinvli}.
Since the support of $CC_{D'}^{\log}(j_{!}\mf)$ is 
$S_{D'}^{\log}(j_{!}\mf)$ 
and we have $(-1)^{d}CC_{D'}^{\log}(j_{!}\mf)> 0$ by
Remark \ref{remlifdp} (4),
the assertion holds.
\end{proof}

\begin{cor}[Index formula, {cf.\ \cite[Corollary 3.8]{sawild}}]
Suppose that $X$ is purely of dimension $d$ and that $D$ has simple normal crossings.
Let $I'\subset I$ be a subset containing $I_{\mT,\mf}$ (Definition \ref{defindsub} (1)) and let $D'=\bigcup_{i\in I'}D_{i}$.
Let the assumptions be as in Corollary \ref{corosct}. 
If $X$ is projective over an algebraically closed field, then we have
\begin{equation}
\chi(X,j_{!}\mf)=(CC_{D'}^{\log}(j_{!}\mf), T^{*}_{X}X(\log D'))_{T^{*}X(\log D')},\notag
\end{equation}
where the left-hand side is the Euler-Poincar\'{e} characteristic of $j_{!}\mf$ and
the right-hand side is the intersection number of the $\log$-$D'$-characteristic cycle 
$CC_{D'}^{\log}(j_{!}\mf)$ of $j_{!}\mf$ (Definition \ref{deflifdp} (4))
with the zero section $T^{*}_{X}X(\log D')$ of $T^{*}X(\log D')$.
\end{cor}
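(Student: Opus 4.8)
The plan is to deduce the index formula by combining the computation of the characteristic cycle furnished by Corollary~\ref{corosct} with the global index formula of Saito and a projection-formula identity for the Gysin homomorphism $\tau_{D'}^{!}$. Throughout write $0\colon X\rightarrow T^{*}X$ and $0^{\log}\colon X\rightarrow T^{*}X(\log D')$ for the zero sections, which are regular closed immersions of codimension $d$, and recall that for a $d$-cycle $C$ on $T^{*}X$ (resp.\ on $T^{*}X(\log D')$) whose support meets the zero section in a complete subset, the intersection number $(C,T^{*}_{X}X)_{T^{*}X}$ (resp.\ $(C,T^{*}_{X}X(\log D'))_{T^{*}X(\log D')}$) is by definition the degree of $0^{!}C$ (resp.\ of $0^{\log\,!}C$), where $0^{!}$ and $0^{\log\,!}$ are the refined Gysin homomorphisms (\cite{ful}) of $0$ and $0^{\log}$. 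Since $X$ is projective, the bases $SS(j_{!}\mf)\cap T^{*}_{X}X$ and $S_{D'}^{\log}(j_{!}\mf)\cap T^{*}_{X}X(\log D')$ are closed in $X$, hence complete, so all the relevant intersection numbers are well defined.

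First I would invoke the global index formula of Saito (\cite{sacc}), valid since $X$ is projective over an algebraically closed field: $\chi(X,j_{!}\mf)=(CC(j_{!}\mf),T^{*}_{X}X)_{T^{*}X}$. Next, under the assumptions imported from Corollary~\ref{corosct} the hypotheses of Theorem~\ref{mainthmosct} (or of Corollary~\ref{corcurvcc} when $d=1$) are met, so $CC(j_{!}\mf)=\tau_{D'}^{!}CC_{D'}^{\log}(j_{!}\mf)$ in $Z_{d}(\tau_{D'}^{-1}(S_{D'}^{\log}(j_{!}\mf)))$, where $\tau_{D'}^{!}$ is the Gysin homomorphism (\ref{taudpgysin}) of the l.c.i.\ morphism $\tau_{D'}\colon T^{*}X\rightarrow T^{*}X(\log D')$; here $\tau_{D'}^{-1}(S_{D'}^{\log}(j_{!}\mf))$ is of pure dimension $d$ by Corollary~\ref{corinvli}. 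It therefore remains to establish the projection-formula identity
\begin{equation}
(\tau_{D'}^{!}A,T^{*}_{X}X)_{T^{*}X}=(A,T^{*}_{X}X(\log D'))_{T^{*}X(\log D')} \notag
\end{equation}
for $A=CC_{D'}^{\log}(j_{!}\mf)\in Z_{d}(S_{D'}^{\log}(j_{!}\mf))$, which upon substitution yields the corollary.

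For this identity the key observation is that $\tau_{D'}$ carries the zero covector to the zero covector, so that $0^{\log}=\tau_{D'}\circ 0$ as morphisms $X\rightarrow T^{*}X(\log D')$; indeed in local coordinates $(t_{1},\dots,t_{d})$ adapted to $D'$ the morphism $\tau_{D'}$ sends the fiber coordinate dual to $dt_{i}$ to $t_{i}$ times the coordinate dual to $d\log t_{i}$ when $D_{i}\subset D'$ and to itself otherwise, so that the zero fiber maps to the zero fiber. Since $0$ is a regular closed immersion and $\tau_{D'}$ is an l.c.i.\ morphism of relative dimension $0$ between smooth schemes, the functoriality of refined Gysin homomorphisms under composition (\cite[Theorem~6.5]{ful}) gives $0^{\log\,!}=0^{!}\circ\tau_{D'}^{!}$ as maps $CH_{d}(S_{D'}^{\log}(j_{!}\mf))\rightarrow CH_{0}((0^{\log})^{-1}(S_{D'}^{\log}(j_{!}\mf)))$, the common base being the complete set $S_{D'}^{\log}(j_{!}\mf)\cap T^{*}_{X}X(\log D')$. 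Taking degrees of $0^{\log\,!}A=0^{!}\tau_{D'}^{!}A$ then gives exactly the displayed identity, and combining the three inputs proves the corollary.

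The main obstacle will be the intersection-theoretic bookkeeping in this last step rather than any deep new input: one must verify that the Gysin homomorphism $\tau_{D'}^{!}$ appearing in (\ref{taudpgysin}) is literally the l.c.i.\ Gysin map of \cite{ful} entering the functoriality $0^{\log\,!}=0^{!}\tau_{D'}^{!}$, that the supports match up so that the refined (non-proper) intersection products are computed on the correct complete bases, and that $0^{\log}=\tau_{D'}\circ 0$ is compatible with the identification (\ref{eqinvzero}) of $\tau_{D'}^{-1}(T^{*}_{X}X(\log D'))$ with $C_{D'}$. Once these compatibilities are checked the argument is a formal consequence of Saito's index formula, Theorem~\ref{mainthmosct}, and Fulton's functoriality, and specializes to \cite[Corollary~3.8]{sawild} in the case $D'=D$.
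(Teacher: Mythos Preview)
Your proposal is correct and follows essentially the same approach as the paper: combine Saito's global index formula \cite[Theorem~7.13]{sacc} with the equality $CC(j_{!}\mf)=\tau_{D'}^{!}CC_{D'}^{\log}(j_{!}\mf)$ from Theorem~\ref{mainthmosct} (or Corollary~\ref{corcurvcc} when $d=1$). The paper's proof is a one-line reference to these inputs, leaving implicit the projection-formula step $(\tau_{D'}^{!}A,T^{*}_{X}X)_{T^{*}X}=(A,T^{*}_{X}X(\log D'))_{T^{*}X(\log D')}$ that you correctly spell out via $0^{\log}=\tau_{D'}\circ 0$ and the functoriality of refined Gysin maps.
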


\begin{proof}
The assertion follows from Theorem \ref{mainthmosct}, Corollary \ref{corcurvcc}, and the index formula \cite[Theorem 7.13]{sacc} for the characteristic cycle $CC(j_{!}\mf)$.
\end{proof}

We give examples of computations of the singular support $SS(j_{!}\mf)$ and 
the characteristic cycle $CC(j_{!}\mf)$.

\begin{exa}
\label{examain}
We put $X=\mathbf{A}_{k}^{3}=\Spec k[t_{1},t_{2},t_{3}]$ and
$D=D_{1}=(t_{1}=0)$.
Let $\mf$ be a smooth sheaf of $\Lambda$-modules of rank 1 on $U=X-D=\Spec k[t_{1}^{\pm1},t_{2},t_{3}]$ defined by 
the Artin-Schreier-Witt equation 
\begin{equation}
F(t)-t= \left(\frac{t_{2}}{t_{1}^{n}},\frac{t_{3}}{t_{1}^{pn}}\right), \notag
\end{equation}
where $F$ denotes the Frobenius, the right-hand side is an element of the Witt ring $W_{2}(k[t_{1}^{\pm1},t_{2},t_{3}])$,
and $n\in\mathbf{Z}_{\ge 1}$ is prime to $p$.
Then we have $\sw(\chi|_{K_{1}})=pn$, $\dt(\chi|_{K_{1}})=pn+1$, and
\begin{equation}
\cform^{D}(\mf)=\frac{nt_{2}^{p}\dlog t_{1}-t_{2}^{p-1}d t_{2}-dt_{3}}{t_{1}^{pn}}. \notag
\end{equation}
Hence the ramification of $\mf$ is $\log$-$D$-clean along $D$
and we have $D_{\mI,\mf}=D$ (\ref{defd*mf}) and
$B_{\mf}^{D_{\mI,\mf}}=B_{\{1\},\mf}^{D_{\mI,\mf}}=V(t_{1},t_{2}^{p})$ (Definition \ref{defbcf} (1)).
If we regard the image $\Image \xi_{1}^{D_{\mI,\mf}}(\mf)$ of $\xi_{1}^{D_{\mI,\mf}}(\mf)$
(Definition \ref{defximf} (2)) as an ideal sheaf of $\dvr_{D_{1}}$ by Remark \ref{remximf} (2) and if $V(\Image \xi_{1}^{D_{\mI,\mf}}(\mf))$ denotes the closed subscheme
of $D_{1}$ defined by the ideal sheaf $\Image \xi_{1}^{D_{\mI,\mf}}(\mf)\subset \dvr_{D_{1}}$,
then we have $V(\Image \xi_{1}^{D_{\mI,\mf}}(\mf))=V(t_{1},t_{2}^{p})$.
Hence we have
\begin{equation}
\tau_{D_{\mI,\mf}}^{-1}(S_{D_{\mI,\mf}}^{\log}(j_{!}\mf))
= T^{*}_{X}X\cup T^{*}_{D_{1}}X\cup \langle t_{2}^{p-1}dt_{2}+dt_{3}/V(t_{1},t_{2}^{p})\rangle \notag
\end{equation}
by Corollary \ref{corcompcc} (1).
Since  the dimension of $\tau_{D_{\mI,\mf}}^{-1}(S_{D_{\mI,\mf}}^{\log}
(j_{!}\mf))$ is $3$ and the bases of irreducible components of $\tau_{D_{\mI,\mf}}^{-1}(S_{D_{\mI,\mf}}^{\log}
(j_{!}\mf))$ are of codimension $\le 2$ in $X$,
we have 
\begin{equation}
\tau_{D_{\mI,\mf}}^{!}CC^{\log}_{D_{\mI,\mf}}(j_{!}\mf)=
-([T^{*}_{X}X]+(1+pn)[T^{*}_{D_{1}}X]+p^{2}n [\langle dt_{1}, t_{2}^{p-1}dt_{2}+dt_{3}/V(t_{1},t_{2})\rangle]) \notag
\end{equation}
by Corollary \ref{corcompcc} (2).
By Theorem \ref{mainthmosct} and Corollary \ref{corosct},
the singular support $SS(j_{!}\mf)$ and the characteristic cycle $CC(j_{!}\mf)$ are equal to $\tau_{D_{\mI,\mf}}^{-1}(S_{D_{\mI,\mf}}^{\log}(j_{!}\mf))$ and $\tau_{D_{\mI,\mf}}^{!}CC_{D_{\mI,\mf}}^{\log}(j_{!}\mf)$, respectively.
\end{exa}

\subsection{Computation in codimension 2 in remaining case}
\label{sscompccvarrem}

We consider computations of the singular support $SS(j_{!}\mf)$ and the characteristic cycle $CC(j_{!}\mf)$ in codimension $2$
without the assumption on the dimension of
the inverse image $\tau_{D'}^{-1}(S_{D'}^{\log}(j_{!}\mf))\subset T^{*}X$
of the $\log$-$D'$-singular support $S_{D'}^{\log}(j_{!}\mf)\subset T^{*}X(\log D')$.
By admitting blowing up $X$ along a closed subscheme of $D$,
we can compute them in codimension $2$: 

\begin{thm}
\label{thmcompccssblup}
Suppose that $X$ is purely of dimension $d$ and that $D$ has simple normal crossings.
Assume that the ramification of $\mf$ is $\log$-$D'$-clean along $D$
for some union $D'$ of irreducible components of $D$.
Let $f\colon X'\rightarrow X$ be the composition of successive blow-ups satisfying the conditions (1)--(3) in Proposition \ref{propblupcth}
and let $j'\colon f^{*}U\rightarrow X'$ be the base change of $j$ by $f$.
Then we have
\begin{equation}
SS(j'_{!}f^{*}\mf)=\tau_{D_{\mI,f^{*}\mf}\cup D_{\mT,f^{*}\mf}}^{-1}(S_{D_{\mI,f^{*}\mf}\cup D_{\mT,f^{*}\mf}}^{\log}(j'_{!}f^{*}\mf)) \notag
\end{equation}
and 
\begin{equation}
CC(j'_{!}f^{*}\mf)=\tau_{
D_{\mI,f^{*}\mf}\cup D_{\mT,f^{*}\mf}}^{!}
CC_{D_{\mI,f^{*}\mf}\cup D_{\mT,f^{*}\mf}}^{\log}(j'_{!}f^{*}\mf) \notag 
\end{equation}
in $Z_{d}(S_{D_{\mI,f^{*}\mf}\cup D_{\mT,f^{*}\mf}}(j'_{!}f^{*}\mf))$
(Definition \ref{defsdpf}) outside a closed subscheme of $X'$ of codimension $\ge 3$.
Here $D_{*,f^{*}\mf}$ for $*=\mI,\mT$ are as in (\ref{defd*mf}),
$\tau_{D_{\mI,f^{*}\mf}\cup D_{\mT,f^{*}\mf}}$ is as in (\ref{deftaue}),
and $\tau_{D_{\mI,f^{*}\mf}\cup D_{\mT,f^{*}\mf}}^{!}$ is as in (\ref{taudpgysin}).
\end{thm}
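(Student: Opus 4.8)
The plan is to deduce the statement from the already-established Theorem \ref{mainthmosct} by exploiting the blow-up $f$ to place $f^{*}\mf$ in the situation of Conjecture \ref{conjcc}, and then to restrict to the locus of codimension $\le 2$. Write $D'=D_{\mI,f^{*}\mf}\cup D_{\mT,f^{*}\mf}$ and $\tau=\tau_{D'}$ for brevity. By construction $D'$ contains $D_{\mT,f^{*}\mf}$, and, being the divisor appearing in condition (2) of Proposition \ref{propblupcth}, it has simple normal crossings; hence $\tau\colon T^{*}X'\rightarrow T^{*}X'(\log D')$, the $\log$-$D'$-singular support $S_{D'}^{\log}(j'_{!}f^{*}\mf)$, and the $\log$-$D'$-characteristic cycle $CC_{D'}^{\log}(j'_{!}f^{*}\mf)$ are all defined.

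First I would invoke Corollary \ref{corsddim}. Since $f$ satisfies the conditions (1)--(3) of Proposition \ref{propblupcth}, this corollary records that the ramification of $f^{*}\mf$ is $\log$-$D'$-clean along $(f^{*}D)_{\red}$, that the inverse image $\tau^{-1}(S_{D'}^{\log}(j'_{!}f^{*}\mf))\subset T^{*}X'$ is purely of dimension $d$, and that $SS(j'_{!}f^{*}\mf)$ is a union of its irreducible components. In particular every hypothesis of Conjecture \ref{conjcc} is in force for $f^{*}\mf$ on $X'$; the only additional hypothesis of Theorem \ref{mainthmosct} that can fail is that the bases of the irreducible components of $\tau^{-1}(S_{D'}^{\log}(j'_{!}f^{*}\mf))$ be of codimension $\le 2$ in $X'$.

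To arrange this I would let $Z\subset X'$ be the union of the bases $C_{a}\cap T^{*}_{X'}X'$ of those irreducible components $C_{a}$ of $\tau^{-1}(S_{D'}^{\log}(j'_{!}f^{*}\mf))$ whose base has codimension $\ge 3$; then $Z$ is closed of codimension $\ge 3$. On the open subscheme $X'^{\circ}=X'-Z$ every such $C_{a}$ disappears, since, being conical, it is supported over $Z$, so the bases of all surviving components have codimension $\le 2$. The remaining hypotheses restrict to $X'^{\circ}$ unchanged: $\log$-$D'$-cleanliness is an open condition by Remark \ref{remlogdpcl} (2), purity of dimension $d$ of $\tau^{-1}(S_{D'}^{\log})$ passes to an open subscheme, and simple normal crossings of $D'$ restricts to simple normal crossings. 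Applying Theorem \ref{mainthmosct} together with Corollary \ref{corosct} on $X'^{\circ}$ then yields $SS=\tau^{-1}(S_{D'}^{\log})$ and $CC=\tau^{!}CC_{D'}^{\log}$ for $f^{*}\mf$ over $X'^{\circ}$.

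Finally I would transport these equalities back. Both $SS$ and $CC$ are defined étale-locally by Remarks \ref{remmicsup} (1) and \ref{remsupcc} (1), while the constructions $\tau^{-1}(S_{D'}^{\log}(j'_{!}f^{*}\mf))$ and $\tau^{!}CC_{D'}^{\log}(j'_{!}f^{*}\mf)$ are local on $X'$; hence each side of the two equalities commutes with restriction to $X'^{\circ}$, and the identities established on $X'^{\circ}$ are precisely the asserted identities outside the codimension $\ge 3$ closed subscheme $Z$, viewed in $Z_{d}(S_{D'}(j'_{!}f^{*}\mf))$. The main obstacle is the bookkeeping of this localization: one must verify that every ingredient entering $S_{D'}(j'_{!}f^{*}\mf)$, $\tau^{-1}(S_{D'}^{\log})$ and $\tau^{!}CC_{D'}^{\log}$ is compatible with passage to the open $X'^{\circ}$, and that the components discarded with $Z$ contribute nothing to the restricted cycles, so that the equalities on $X'^{\circ}$ genuinely lift to the claimed equalities modulo a locus of codimension $\ge 3$.
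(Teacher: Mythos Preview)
Your proposal is correct and follows essentially the same approach as the paper: invoke condition (2) of Proposition \ref{propblupcth} (via Corollary \ref{corsddim}) for $\log$-$D'$-cleanliness, use Corollary \ref{corsddim} for the dimension of $\tau^{-1}(S_{D'}^{\log})$, and then apply Theorem \ref{mainthmosct} and Corollary \ref{corosct} after removing the codimension $\ge 3$ locus. The paper's proof is terser and omits the explicit construction of $Z$ and the localization bookkeeping you spell out, but the argument is the same.
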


\begin{proof}
By the condition (2) in Proposition \ref{propblupcth}, 
the ramification of $f^{*}\mf$ is $\log$-$
D_{\mI,f^{*}\mf}\cup D_{\mT,f^{*}\mf}$-clean along $(f^{*}D)_{\red}$.
By Corollary \ref{corsddim}, the dimension of the inverse image
$\tau_{D_{\mI,f^{*}\mf}\cup D_{\mT,f^{*}\mf}}^{-1}(S_{D_{\mI,f^{*}\mf}\cup D_{\mT,f^{*}\mf}}^{\log}(j'_{!}f^{*}\mf))$ 
of the $\log$-$D_{\mI,f^{*}\mf}\cup D_{\mT,f^{*}\mf}$-singular
support $S_{D_{\mI,f^{*}\mf}\cup D_{\mT,f^{*}\mf}}^{\log}(j'_{!}f^{*}\mf)$ by
the canonical morphism $\tau_{D_{\mI,f^{*}\mf}\cup D_{\mT,f^{*}\mf}}\colon T^{*}X'
\rightarrow T^{*}X'(\log D_{\mI,f^{*}\mf}\cup D_{\mT,f^{*}\mf})$
is $d$.
Then we obtain the desired equalities
outside a closed subscheme of $X'$ of codimension $\ge 3$
by Theorem \ref{mainthmosct} and Corollary \ref{corosct}. 
\end{proof}

If the assumptions and the notation are in Theorem \ref{thmcompccssblup} and
if the dimension of $f_{\circ}SS(f^{*}j_{!}\mf)$ 
is $\le d$, then $CC(j_{!}\mf)$ is equal to the image $f_{!}CC(j'_{!}f^{*}\mf)$ of $CC(j_{!}\mf)$ by the morphism $f_{!}\colon Z_{d}(SS(j'_{!}f^{*}\mf))\rightarrow Z_{d}(f_{\circ}SS(j'_{!}f^{*}\mf))$ (\ref{fexpzd})
by \cite[Theorem 2.2.5]{sacond}.
However $f_{\circ}SS(j'_{!}f^{*}\mf)$ is not always of dimension $\le d$
as is seen in the following example:

\begin{exa}
\label{exapushdim}
Let $X=\mathbf{A}^{3}_{k}=\Spec k[t_{1},t_{2},t_{3}]$ and
$D=(t_{1}t_{2}=0)$.
We put $D_{i}=(t_{i}=0)$ for $i\in I=\{1,2\}$.
Let $\mf$ be a smooth sheaf of $\Lambda$-modules of rank $1$ on $U=X-D=\Spec 
k[t_{1}^{\pm 1}, t_{2}^{\pm 2}, t_{3}]$ 
defined by the Artin-Schreier-Witt equation
\begin{equation}
F(t)-t=\left(\frac{t_{1}}{t_{2}}, \frac{t_{2}}{t_{1}}, \frac{t_{3}}{t_{1}^{p}t_{2}^{p^{2}}}\right), \notag
\end{equation}
where $F$ denotes the Frobenius and the right-hand side is an element
of the Witt ring $W_{3}(k[t_{1}^{\pm 1}, t_{2}^{\pm 2},t_{3}])$.
Then we have $(\sw(\chi|_{K_{1}}),\sw(\chi|_{K_{2}}))=(p, p^{2})$, $(\dt(\chi|_{K_{1}}),\dt(\chi|_{K_{2}}))=(p+1,p^{2}+1)$, and
\begin{equation}
\cform^{D}(\mf)=\frac{(-t_{1}^{p^{2}+p}+t_{2}^{p^{2}+p})\dlog t_{1}+(t_{1}^{p^{2}+p}-t_{2}^{p^{2}+p})\dlog t_{2}-dt_{3}}{t_{1}^{p}t_{2}^{p^{2}}}. \notag
\end{equation}
Hence we have $D_{\mI,\mf}=D=D_{1}\cup D_{2}$ (\ref{defd*mf}),
the ramification of $\mf$ is $\log$-$D_{\mI,\mf}$-clean along $D$,
and we have $E_{\mf}^{D_{\mI,\mf}}=V(t_{1},t_{2})$
(Definition \ref{defbcf} (2)).

Let $f\colon X'\rightarrow X$ be the blow-up of $X$ along $D_{I_{\mI,\mf}}=D_{1}\cap D_{2}$ and let $j'\colon f^{*}U\rightarrow X$ denote the base change of $j$ by $f$.
We denote $f^{*}t_{i}$ in the complement $U_{i}'$ of the proper transform $D_{i}'$ of $D_{i}$
by $t_{i}'$ for $i=1,2$ and
we put $f^{*}t_{2}=t_{1}'t_{2}'$ (resp.\ $f^{*}t_{1}=t_{1}'t_{2}'$) and $f^{*}t_{3}=t_{3}'$ in
$U_{1}'$ (resp.\ $U_{2}'$). 
Then the sheaf $f^{*}\mf$ is defined by the  Artin-Schreier-Witt equation
\begin{equation}
F(t)-t=\left(\frac{1}{t_{2}'}, t_{2}', \frac{t_{3}'}{t_{1}'^{p+p^{2}}t_{2}'^{p^{2}}}\right) \notag
\end{equation}
in $U'_{1}$ and
\begin{equation}
F(t)-t=\left(t_{1}', \frac{1}{t_{1}'}, \frac{t_{3}'}{t_{1}'^{p}t_{2}'^{p^{2}+p}}\right) \notag
\end{equation}
in $U_{2}'$.
By Lemma \ref{lemblupbth} (1),
the exceptional divisor $D_{0}'=f^{-1}(D_{\mI,\mf})$ is contained in $D_{\mII,f^{*}\mf}$
(\ref{defd*mf}) and we have $D_{\mI,f^{*}\mf}=D_{1}'\cup D_{2}'$.
By Lemma \ref{lemblupbth} (2),
the ramification of $f^{*}\mf$ is $\log$-$D_{\mI,f^{*}\mf}$-clean along $(f^{*}D)_{\red}$.
Then we have $\sw(f^{*}\chi|_{K_{i}'})=p^{i}$ and $\dt(f^{*}\chi|_{K_{i}'})=p^{i}+1$ for $i=1,2$,
where $K_{i}'$ denotes the local field at the generic point of $D_{i}'$, 
and we have
\begin{equation}
\cform^{D_{\mI,f^{*}\mf}}(f^{*}\mf)=\frac{t_{1}'^{p^{2}+p}\dlog t_{2}'-dt_{3}'}{t_{1}'^{p^{2}+p}t_{2}'^{p^{2}}} \notag
\end{equation}
in $U_{1}'$ and
\begin{equation}
\cform^{D_{\mI,f^{*}\mf}}(f^{*}\mf)=\frac{t_{2}'^{p^{2}+p}\dlog t_{1}' -dt_{3}'}{t_{1}'^{p}t_{2}'^{p^{2}+p}} \notag
\end{equation}
in $U_{2}'$.
Hence we have 
\begin{equation}
\tau_{D_{\mI,f^{*}\mf}}^{-1}(S_{D_{\mI,f^{*}\mf}}^{\log}(j'_{!}f^{*}\mf))=
T^{*}_{X'}X'\cup 
\langle dt_{3}'/ V(t_{1}')\rangle \cup 
\langle dt_{2}'/ V(t_{2}')\rangle \cup
\langle dt_{2}',dt_{3}'/V(t_{1}',t_{2}')\rangle
\notag
\end{equation}
on $U_{1}'$ and
\begin{equation}
\tau_{D_{\mI,f^{*}\mf}}^{-1}(S_{D_{\mI,f^{*}\mf}}^{\log}(j'_{!}f^{*}\mf))=
T^{*}_{X'}X'\cup \langle dt_{1}'/ V(t_{1}')\rangle \cup
\langle dt_{3}'/ V(t_{2}')\rangle \cup \langle dt_{1}',dt_{3}'/V(t_{1}',t_{2}')\rangle
\notag 
\end{equation}
on $U_{2}'$ as sets
by Corollary \ref{corcompcc} (1).
Since $\tau_{D_{\mI,f^{*}\mf}}^{-1}(S_{D_{\mI,f^{*}\mf}}^{\log}(j'_{!}f^{*}\mf))$ is of dimension $3$ and the bases of irreducible components of $\tau_{D_{\mI,f^{*}\mf}}^{-1}(S_{D_{\mI,f^{*}\mf}}^{\log}(j'_{!}f^{*}\mf))$ are of codimension $\le 2$ in $X'$,
we have
\begin{equation}
SS(j'_{!}f^{*}\mf)=\tau_{D_{\mI,f^{*}\mf}}^{-1}(S_{D_{\mI,f^{*}\mf}}^{\log}(j'_{!}f^{*}\mf)) \notag
\end{equation}
by Corollary \ref{corosct}.

Let $df|_{U_{1}'}\colon T^{*}X\times_{X}U_{1}'\rightarrow T^{*}U_{1}'$ be the base change of $df$ (\ref{defdh}) by the canonical open immersion $U_{1}'\rightarrow X'$.
Then $\langle t_{2}'dt_{1}-dt_{2},dt_{3}/V(t_{1}')\rangle\subset T^{*}X\times_{X}U_{1}'$ is contained in
$df|_{U_{1}'}^{-1}(\langle dt_{3}'/V(t_{1}')\rangle)\subset df^{-1}(SS(j'_{!}f^{*}\mf))$.
We put $A=k[t_{1},t_{2},t_{3}]=\Gamma (X,\dvr_{X})$
and $B=k[t_{1}',t_{2}',t_{3}']=\Gamma (U_{1}',\dvr_{X'})$.
Let $(\partial/\partial t_{1},\partial/\partial t_{2},\partial/\partial t_{3})$
be the dual basis of the basis $(dt_{1},dt_{2},dt_{3})$
of the free $A$-module $\Omega_{A}^{1}$.
Then the prime ideal of the symmetric algebra $S^{\bullet}\Omega_{A}^{1\vee}\otimes_{A}B$ corresponding
to the generic point of $\langle t_{2}'dt_{1}-dt_{2},dt_{3}/V(t_{1}')\rangle\subset T^{*}X\times_{X}U_{1}'$ is generated by 
$t_{1}'\in B=S^{0}\Omega_{A}^{1\vee}\otimes_{A}B$ and $t_{2}'\partial/\partial t_{1}+\partial/\partial t_{2}\in S^{1}\Omega_{A}^{1\vee}\otimes_{A}B$.
Since the inverse image of the prime ideal $(t_{1}', t_{2}'\partial/\partial t_{1}+\partial/\partial t_{2})\subset S^{\bullet}\Omega_{A}^{1\vee}\otimes_{A}B$ by the canonical morphism
$S^{\bullet}\Omega_{A}^{1\vee}\rightarrow S^{\bullet}\Omega_{A}^{1\vee}\otimes_{A}B$ is 
$(t_{1},t_{2})\subset S^{\bullet}\Omega_{A}^{1\vee}$,
the closed conical subset $T^{*}X\times_{X}(D_{1}\cap D_{2})\subset T^{*}X$, which is of dimension $4$, is contained in $f^{\circ}SS(j'_{!}f^{*}\mf)$ so that $f^{\circ}SS(j'_{!}f^{*}\mf)$ is of dimension $>3$.
\end{exa}


\vspace{2\baselineskip}
\noindent
Yuri YATAGAWA

\noindent
Department of Mathematics

\noindent
Tokyo Institute of Technology

\noindent
Tokyo, 152-8551, Japan

\noindent
yatagawa@math.titech.ac.jp

\end{document}